\date{today}
\numberwithin{equation}{section}
\newtheorem{Theorem}{Theorem}[section]
\newtheorem{Corollary}[Theorem]{Corollary}
\newtheorem{Lemma}[Theorem]{Lemma}
\theoremstyle{remark}
\newtheorem{Remark}{Remark}[section]
\theoremstyle{algorithm}
\theoremstyle{definition}
\newtheorem{Definition}{Definition}[section]
\newtheorem{Example}{Example}[section]
\title{Structure-Preserving Flows of Symplectic Matrix Pairs}
\author{\and Yueh-Cheng
Kuo\footnote{Department of Mathematics, National University of
Kaohsiung, Kaohsiung, 811, Taiwan \texttt{(yckuo@nuk.edu.tw)}} \and
Wen-Wei Lin\thanks{Department of Applied Mathematics, National Chiao Tung University, Hsinchu 300, Taiwan \texttt{ (wwlin@math.nctu.edu.tw)}}\and
Shih-Feng Shieh\thanks{Department of Mathematics, National Taiwan
Normal University, Taipei 116, Taiwan
\texttt{(sfshieh@ntnu.edu.tw)}}}
\begin{document}
\date{}



\maketitle

\tableofcontents

\addcontentsline{toc}{section}{Abstract}
\begin{abstract}
We construct a nonlinear differential equation of matrix pairs $(\mathcal{M}(t),\mathcal{L}(t))$ that is invariant (the \textbf{Structure-Preserving Property}) in the class of symplectic matrix pairs
\begin{align*}
\mathbb{S}_{\mathcal{S}_1,\mathcal{S}_2}=\left\{\left(\mathcal{M},\mathcal{L}\right)| \ \mathcal{M}=\left[%
\begin{array}{cc}
  X_{12} & 0 \\
 X_{22} & I \\
\end{array}%
\right]\mathcal{S}_2,\ \
\mathcal{L}=\left[%
\begin{array}{cc}
  I & X_{11} \\
 0 & X_{21} \\
\end{array}%
\right]\mathcal{S}_1\right.\nonumber\\ \left. \text{ and }X=\left[%
\begin{array}{cc}
  X_{11} & X_{12} \\
 X_{21} & X_{22} \\
\end{array}%
\right]\text{ is Hermitian}\right\}
\end{align*}
for certain fixed symplectic matrices $\mathcal{S}_1$ and $\mathcal{S}_2$. Its solution also preserves invariant subspaces on the whole orbit (the \textbf{Eigenvector-Preserving Property}).  Such a flow is called a \textit{structure-preserving flow} and is governed by a Riccati differential equation (RDE) having the form
\begin{align*}
\begin{array}{l}
\dot{W}(t)=[-W(t),I]\mathscr{H}[I ,W(t)^{\top} ]^{\top},\\
W(0)=W_0,
\end{array}
\end{align*}
for some suitable Hamiltonian matrix $\mathscr{H}$. In addition, Radon's lemma (\cite{Radon27} or see Theorem~\ref{thm3.8}) leads to the explicit form $W(t)=P(t)Q(t)^{-1}$ where $[
Q(t)^{\top},
P(t)^{\top}]^{\top}= e^{\mathscr{H}t}[
I,
W_0^{\top}]^{\top}$.
Therefore, blow-ups for the structure-preserving flows may happen at a finite $t$ whenever $Q(t)$ is singular. To continue, we then utilize the Grassmann manifolds to extend the domain of the structure-preserving flow to the whole $\mathbb{R}$ subtracting some isolated points.

On the other hand, the Structure-Preserving Doubling Algorithm (SDA) is an efficient numerical method for solving algebraic Riccati equations and nonlinear matrix equations. In conjunction with the structure-preserving flow, we consider the following two special classes of symplectic pairs: $\mathbb{S}_1=\mathbb{S}_{I_{2n},I_{2n}}$ and $\mathbb{S}_2=\mathbb{S}_{-I_{2n},\mathcal{J}}$ and the corresponding algorithms SDA-1 and SDA2. It is shown  that at $t=2^{k-1},k\in \mathbb{Z}$ this flow passes through the iterates generated by SDA-1 and SDA-2, respectively.  Therefore, the SDA and its corresponding structure-preserving flow have identical asymptotic behaviors, including the stability, instability, periodicity, and quasi-periodicity of the dynamics.

Taking advantage of the special structure and properties of the Hamiltonian matrix, we apply a symplectically similar transformation introduced by \cite{Lin1999469} to reduce $\mathscr{H}$ to a Hamiltonian Jordan canonical form $\mathfrak{J}$. The asymptotic analysis of the structure-preserving flows and RDEs is studied by using $e^{\mathfrak{J}t}$. The convergence of the SDA as well as its rate can thus result from the study of the structure-preserving flows. A complete asymptotic dynamics of the SDA is investigated, including the linear and quadratic convergence studied in the literature \cite{ccghlx09,GuoLin:2010,HuangLin:2009}.
\end{abstract}


\section{Introduction}\label{sec1}
We first introduce the algebraic structures that we consider in this paper.  Let
\begin{align*}
\mathcal{J}_n=\left[\begin{array}{cc}0 & I_n \\-I_n & 0\end{array}\right],
\end{align*}
where $I_n$ is the $n\times n$ identity matrix. For convenience, we use  $\mathcal{J}$ for $\mathcal{J}_n$ by dropping the subscript ``$n$" if the order of $\mathcal{J}_n$ is clear in the context.

\begin{Definition}\label{def1.1}
\begin{itemize}
\item[1.] A matrix $\mathcal{H}\in \mathbb{C}^{2n\times 2n}$ is  Hamiltonian if $\mathcal{H}\mathcal{J}=(\mathcal{H}\mathcal{J})^H$.
\item[2.] A matrix pair $(\mathcal{M}_h, \mathcal{L}_h)$ with $\mathcal{M}_h, \mathcal{L}_h\in \mathbb{C}^{2n\times 2n}$ is called a Hamiltonian pair if $\mathcal{M}_h\mathcal{J}\mathcal{L}_h^H=-\mathcal{L}_h\mathcal{J}\mathcal{M}_h^H$.
\item[3.] A matrix $\mathcal{S}\in \mathbb{C}^{2n\times 2n}$ is  symplectic if $\mathcal{S}\mathcal{J}\mathcal{S}^H=\mathcal{J}$.
\item[4.] A matrix pair $(\mathcal{M}_s, \mathcal{L}_s)$ with $\mathcal{M}_s, \mathcal{L}_s \in \mathbb{C}^{2n\times 2n}$ is called a symplectic pair if $\mathcal{M}_s\mathcal{J}\mathcal{M}_s^H=\mathcal{L}_s\mathcal{J}\mathcal{L}_s^H$.
\end{itemize}
\end{Definition}
 Denote by $Sp(n)$  the multiplicative group of all $2n\times2n$ symplectic matrices and by $\mathbb{H}(2n)$ the additive group of all $2n\times 2n$ Hermitian matrices. The matrix pairs $(A_1, B_1)$ and $(A_2,B_2)\in \mathbb{C}^{n\times n}\times \mathbb{C}^{n\times n}$ are said to be left equivalent, denoted by
\begin{align*}
(A_1, B_1)\overset{\textrm{l.e.}}{\sim} (A_2, B_2)
\end{align*}
if $A_1=CA_2$, $B_1=CB_2$ for some invertible matrix $C$. A matrix pair $(A, B)$ is said to be  \textit{regular} if det$(A-\lambda B)\neq 0$ for some $\lambda\in \mathbb{C}$. It is well-known that for a regular matrix pair $(A, B)$ there are invertible matrices $P$ and $Q$ which transform $(A, B)$ to the Kronecker canonical form \cite{Gantmacher59} as
\begin{align*}
PAQ=\left[%
\begin{array}{cc}
  J & 0 \\
 0 & I \\
\end{array}%
\right],\ \ \ \ PBQ=\left[%
\begin{array}{cc}
  I & 0 \\
 0 & N \\
\end{array}%
\right],
\end{align*}
where $J$ is a Jordan matrix corresponding to the finite eigenvalues of $(A, B)$ and $N$ is a nilpotent Jordan matrix corresponding to the infinity eigenvalues. The index of a matrix pair $(A, B)$ is the index of nilpotency of  $N$, i.e., the matrix pair $( A,B)$ is
of index $\nu$, denoted by $\nu={\rm ind}_{\infty}(A,B)$, if $N^{\nu-1}\neq 0$ and $N^{\nu}=0$. By convention,
if $B$ is invertible, the pair $(A, B)$ is said to be of index zero.

The following three types of Riccati-type equations appear in many fields of applied sciences.
\begin{itemize}
\item Continuous-time Algebraic Riccati Equation (CARE) \cite{Lancaster95,Mehrmann91}:
\begin{align}\label{eq1.1}
-XGX+A^HX+XA+H=0,
\end{align}
where $A,H,G\in \mathbb{C}^{n\times n}$ with $G=G^H\geqslant 0$, $H=H^H\geqslant 0$ (positive semi-definite).
\item Discrete-time Algebraic Riccati Equation (DARE) \cite{Lancaster95,Mehrmann91}:
\begin{align}\label{eq1.2}
X=A^HX(I+GX)^{-1}A+H,
\end{align}
where $A,H,G\in \mathbb{C}^{n\times n}$ with $G=G^H\geqslant 0$, $H=H^H\geqslant 0$.
\item Nonlinear Matrix Equation (NME) \cite{err93}:
\begin{align}\label{eq1.3}
X+A^HX^{-1}A=Q,
\end{align}
where $A,Q\in \mathbb{C}^{n\times n}$ with $Q=Q^H> 0$.
\end{itemize}
These classical Riccati-type matrix equations occur in many important applications (see \cite{Anderson90,Ferrante1996359,Lancaster95,Mehrmann91}
 and references therein). The CAREs and DAREs have been studied extensively (see \cite{Ammar199155,Bai_Demmel_Gu1997,Benner:1997,Benner_Byers:2001,Chu_Fan_Lin_Wang:2004,Gudmundsson:1992,Guo:1998,Hench_Laub:1994,Kimura:1988,Lainiotis1994,Lancaster95,Laub:1979,Lu_Lin1993,Lu_Lin_Pearce:1999,Mehrmann91,Paige1981,VanDooren1981}). The NMEs have been studied in \cite{Anderson90,Engwerda:1993,Ferrante1996359,Guo:2001}.
The solutions of the Riccati-type equations can be solved by iterative methods such as the fixed-point iteration, the Newton's method, and the Structure-Preserving Doubling Algorithms (SDAs) \cite{err93,gula99,Lancaster95,lin06,Mehrmann91,Mehrmann12}. Recently, SDAs for solving the stabilizing solutions of the three Riccati-type equations have been applied successfully in many industrial applications. For instance, in the vibration analysis of fast trains \cite{GuoLin:2010} and Green's function calculation in nano research \cite{GuoLin_Sci:2010}, $Q=Q^T$ and $A=A^T$ in (\ref{eq1.3}) instead of being Hermitian. In the $\text{H}_{\infty}$- optimal controls \cite{Francis:1987,Mehrmann91}, the Riccati-type equations used are (\ref{eq1.1}) and (\ref{eq1.2})
but with $G$ and $H$ being Hermitian but not definite.  Lack of positive semi-definiteness of $G$ and $H$ in general may cause possible breakdown in
iteration formula containing an $(I+GH)^{-1}$  term such as the one in  (\ref{eq1.4}) below with $G=G_k,H=H_k$, but in the above applications some extra physical properties were used to show that the breakdown would never happen. Since the SDAs developed in papers \cite{err93,gula99,Lancaster95,lin06,Mehrmann91,Mehrmann12} enjoy well-defined iterates and favorable convergence rates, it is tempting to design SDAs that can be applied to new Riccati-type matrix equations in which the matrices $G, H$ and $Q$ are just Hermitian. Indeed, we shall demonstrate that a class of SDAs can be designed to produce sequences of symplectic matrix pairs in special forms as in \eqref{eq1.6} below. Furthermore, their convergence behavior and general property can be studied by a related continuous dynamical system which is structure-preserving such that each symplectic pair generated by the SDA coincides with the solution of the structure-preserving flow at some time-step. We now describe these SDAs for solving DARE/CARE and NME {\it with the matrices
$G,H$ and $Q$ being Hermitian, not necessarily positive semi-definite.}
\begin{itemize}
\item For solving DAREs \eqref{eq1.2}, the symplectic pairs $(\mathcal{M}_k,\mathcal{L}_k)=\left(\left[\begin{array}{cc} A_k & 0 \\ -H_k & I\end{array}\right],\left[\begin{array}{cc} I & G_k \\ 0 & A_k^H\end{array}\right]\right)$ are generated by
    \begin{itemize}
    \item[]\textbf{Algorithm }SDA-1.
    \begin{align}\label{eq1.4}
    \begin{array}{l}
    A_0=A,~G_0=G,~H_0=H,\\
    A_{k+1}=A_k(I+G_kH_k)^{-1}A_k,\\
    G_{k+1}=G_k+A_kG_k(I+H_kG_k)^{-1}A_k^H,\\
    H_{k+1}=H_k+A_k^H(I+H_kG_k)^{-1}H_kA_k.
    \end{array}
    \end{align}
    \end{itemize}
    It has been shown in \cite{HuangLin:2009,lin06} that under some mild conditions, the sequence of symplectic pairs $(\mathcal{M}_k,\mathcal{L}_k)$ quadratically/linearly converges, in which, as $k\to\infty$
    \begin{align*}
     & H_k\to\text{ the stabilizing solution of \eqref{eq1.2}},\\
     & G_k\to\text{ the stabilizing solution of the dual equation of \eqref{eq1.2}},\\
     & A_k\to 0.
    \end{align*}
Here the dual equation of \eqref{eq1.2} is of the form
    \begin{align*}
    Y=AY(I+HY)^{-1}A^H+G.
    \end{align*}
\item For solving CAREs \eqref{eq1.1}, one can transform it into a DARE \eqref{eq1.2} by using a suitable Cayley transformation \cite{Mehrmann1996}. Then \textbf{Algorithm }SDA-1 can be employed to find the desired stabilizing solution of CAREs.
\item For solving NMEs \eqref{eq1.3}, the symplectic pairs $(\mathcal{M}_k,\mathcal{L}_k)=\left(\left[\begin{array}{cc} A_k & 0 \\ Q_k & -I\end{array}\right],\left[\begin{array}{cc} -P_k & I \\ A^H_k & 0\end{array}\right]\right)$ are generated by
\begin{itemize}
\item[] \textbf{Algorithm }SDA-2.
    \begin{align}\label{eq1.5}
    \begin{array}{l}
     A_0=A,~Q_0=Q,~P_0=0,\\
     A_{k+1}=A_k(Q_k-P_k)^{-1}A_k,\\
     Q_{k+1}=Q_k-A_k^H(Q_k-P_k)^{-1}A_k,\\
     P_{k+1}=P_k+A_k(Q_k-P_k)^{-1}A_k^H.
    \end{array}
    \end{align}
\end{itemize}
It has been shown in \cite{ccghlx09,lin06} that under some conditions, the sequence of symplectic pairs  $(\mathcal{M}_k,\mathcal{L}_k)$ quadratically/linearly converges, in which, as $k\to\infty$
   \begin{align*}
     & Q_k\to\text{ the maximal solution of \eqref{eq1.3}},\\
     & P_k\to\text{ the minimal solution of \eqref{eq1.3}},\\
     & A_k\to 0.
    \end{align*}
\end{itemize}

\begin{description}
\item[\textbf{Eigenvector-Preserving Property:}] For each case above, if $\mathcal{M}_kU=\mathcal{L}_kUS$ or $\mathcal{M}_kVT=\mathcal{L}_kV$, where $U,V\in \mathbb{C}^{2n\times r}$ and $S,T\in \mathbb{C}^{r\times r}$, then $\mathcal{M}_{k+1}U=\mathcal{L}_{k+1}US^2$ or  $\mathcal{M}_{k+1}VT^2=\mathcal{L}_{k+1}V$, i.e., the SDA preserves the invariant subspaces for each $k$ and the squares of eigenvalues;
\item[\textbf{Structure-Preserving Property:}] The sequences of symplectic pairs $\{(\mathcal{M}_k,\mathcal{L}_k)\}_{k=1}^{\infty}$ generated by \textbf{Algorithms }SDA-1 and SDA-2 are, respectively, invariant in the sets
\begin{subequations}\label{eq1.6}
\begin{align}\label{eq1.6a}
\mathbb{S}_1=\left\{\left(\left[%
\begin{array}{cc}
  A & 0 \\
 H & I \\
\end{array}%
\right],\left[%
\begin{array}{cc}
  I & G \\
  0 & A^{H} \\
\end{array}%
\right]\right)\ |\ A,\ H=H^{H},\ G=G^{H}\in
\mathbb{C}^{n\times n}\right\},
\end{align}
and
\begin{align}\label{eq1.6b}
\mathbb{S}_2=\left\{\left(\left[%
\begin{array}{cc}
  A & 0 \\
  Q & -I \\
\end{array}%
\right],\left[%
\begin{array}{cc}
  P& I \\
   A^{H} &0\\
\end{array}%
\right]\right)\ |\ A,\ Q=Q^{H},\ P=P^{H}\in
\mathbb{C}^{n\times n}\right\}.
\end{align}
\end{subequations}
\end{description}
To study the symplectic pairs, we first quote the following theorem in \cite{Mehrmann12} regarding a simple left equivalence for regular symplectic pairs.

\begin{Theorem}\label{thm1.1}(see \cite{Mehrmann12}) Let $(\mathcal{M}, \mathcal{L})$ be a regular symplectic pair with $\mathcal{M},\ \mathcal{L}\in\mathbb{C}^{2n\times 2n}$. Then there exist $\mathcal{S}_1$, $\mathcal{S}_2\in Sp(n)$ and a Hermitian matrix $X=\left[%
\begin{array}{cc}
  X_{11} & X_{12} \\
 X_{21} & X_{22} \\
\end{array}%
\right]$ such that
\begin{align*}
(\mathcal{M}, \mathcal{L})\overset{\rm{l.e.}}{\sim}\left(\left[%
\begin{array}{cc}
  X_{12} & 0 \\
 X_{22} & I \\
\end{array}%
\right]\mathcal{S}_2, \left[%
\begin{array}{cc}
  I & X_{11} \\
 0 & X_{21} \\
\end{array}%
\right]\mathcal{S}_1\right).
\end{align*}
\end{Theorem}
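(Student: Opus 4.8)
The plan is to start from a regular symplectic pair $(\mathcal{M},\mathcal{L})$ and produce the required left-equivalent representative by a sequence of multiplications on the left by invertible matrices, exploiting at each stage the symplectic identity $\mathcal{M}\mathcal{J}\mathcal{M}^H=\mathcal{L}\mathcal{J}\mathcal{L}^H$. First I would use regularity to choose a scalar $\lambda$ with $\mathcal{M}-\lambda\mathcal{L}$ invertible and, replacing the pair by the left-equivalent pair $((\mathcal{M}-\lambda\mathcal{L})^{-1}\mathcal{M},(\mathcal{M}-\lambda\mathcal{L})^{-1}\mathcal{L})$ if convenient, reduce to a normalized situation; more importantly, I would observe that left equivalence lets me act by any $C\in GL_{2n}(\mathbb{C})$ and I want to land in the prescribed block-triangular-times-symplectic form. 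The key structural observation is that a $2n\times 2n$ matrix of the form $\left[\begin{smallmatrix} Y_{12} & 0\\ Y_{22} & I\end{smallmatrix}\right]$ has full column rank automatically, and that the $2n\times 4n$ matrix $[\mathcal{M}\ \ \mathcal{L}]$ has rank $2n$ (by regularity), so the approach is to read off the desired blocks from a suitable basis of the row space or column space of $[\mathcal{M}\ \ \mathcal{L}]$.

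Concretely, the main steps would be: (i) Write $[\mathcal{M},\mathcal{L}]=[\mathcal{M},\mathcal{L}]$ and note that $\ker$ considerations give a $4n\times 2n$ matrix $\left[\begin{smallmatrix}\mathcal{V}_1\\ \mathcal{V}_2\end{smallmatrix}\right]$ of full column rank with $\mathcal{M}\mathcal{V}_1=\mathcal{L}\mathcal{V}_2$ encoding a deflating-type subspace; but more directly, I would pick $\mathcal{S}_2\in Sp(n)$ so that $\mathcal{M}\mathcal{S}_2^{-1}$ has its trailing $n$ columns equal to $\left[\begin{smallmatrix}0\\ I\end{smallmatrix}\right]$ up to a left factor — this is where one needs a symplectic Gram–Schmidt / symplectic $QR$-type factorization of an appropriate submatrix. (ii) Simultaneously, use the symplectic freedom in $\mathcal{S}_1$ to force $\mathcal{L}\mathcal{S}_1^{-1}$ to have leading $n$ columns of the form $\left[\begin{smallmatrix}I\\ 0\end{smallmatrix}\right]$ up to the same left factor $C$. (iii) After these two symplectic reductions, the pair is left-equivalent to $\left(\left[\begin{smallmatrix}X_{12}&0\\ X_{22}&I\end{smallmatrix}\right],\left[\begin{smallmatrix}I&X_{11}\\ 0&X_{21}\end{smallmatrix}\right]\right)$ for some blocks $X_{ij}$; (iv) Finally, feed the symplectic-pair identity $\mathcal{M}\mathcal{J}\mathcal{M}^H=\mathcal{L}\mathcal{J}\mathcal{L}^H$ through this normal form — since left equivalence by $C$ turns it into $C\mathcal{M}_0\mathcal{J}\mathcal{M}_0^H C^H = C\mathcal{L}_0\mathcal{J}\mathcal{L}_0^H C^H$, hence $\mathcal{M}_0\mathcal{J}\mathcal{M}_0^H=\mathcal{L}_0\mathcal{J}\mathcal{L}_0^H$ for the reduced pair — and expand both sides in $2\times2$ blocks. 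A direct block computation of $\left[\begin{smallmatrix}X_{12}&0\\ X_{22}&I\end{smallmatrix}\right]\mathcal{J}\left[\begin{smallmatrix}X_{12}&0\\ X_{22}&I\end{smallmatrix}\right]^H$ and of $\left[\begin{smallmatrix}I&X_{11}\\ 0&X_{21}\end{smallmatrix}\right]\mathcal{J}\left[\begin{smallmatrix}I&X_{11}\\ 0&X_{21}\end{smallmatrix}\right]^H$, equating the $(1,1)$, $(1,2)$ and $(2,2)$ blocks, should yield $X_{12}=X_{12}^H$, $X_{21}=X_{21}^H$, $X_{22}=X_{11}^H$ (after possibly renaming), i.e. that $X=\left[\begin{smallmatrix}X_{11}&X_{12}\\ X_{21}&X_{22}\end{smallmatrix}\right]$ is Hermitian; one may need a harmless further left multiplication by a block unit-upper-triangular matrix to symmetrize and to absorb any leftover invertible factor.

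An alternative, possibly cleaner route is to invoke Theorem~\ref{thm1.1} itself only implicitly and instead argue via the Lagrangian subspace picture: a regular symplectic pair corresponds (via its graph $\mathrm{Im}\left[\begin{smallmatrix}\mathcal{M}^H\\ \mathcal{L}^H\end{smallmatrix}\right]$, or a related construction) to a Lagrangian subspace of $\mathbb{C}^{4n}$ with respect to a suitable $4n\times4n$ skew form built from $\mathcal{J}_n$, and such a subspace, after acting by $Sp(n)\times Sp(n)$ on the two $\mathbb{C}^{2n}$ factors, can be put in a standard Lagrangian normal form whose generator is exactly a Hermitian $X$. This reduces the theorem to the classical fact that any Lagrangian subspace is, up to the structure group, the graph of a Hermitian operator.

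The hard part will be step (i)–(ii): organizing the two symplectic transformations $\mathcal{S}_1$ and $\mathcal{S}_2$ so that the \emph{same} invertible left factor $C$ works for both halves of the pair, which is exactly where regularity of $(\mathcal{M},\mathcal{L})$ and the symplectic constraint must be used in tandem rather than separately — one cannot normalize $\mathcal{M}$ and $\mathcal{L}$ independently. A convenient way to handle this is to first reduce to the case $\mathcal{L}$ invertible (using regularity plus a Cayley-type shift), normalize $\mathcal{L}$ to be block $\left[\begin{smallmatrix}I&\ast\\ 0&\ast\end{smallmatrix}\right]\mathcal{S}_1$ by a symplectic column operation and a left multiplication, then read off what the symplectic identity forces on $\mathcal{M}$, and finally remove the invertibility assumption by a limiting/density argument or by noting the final statement is closed under left equivalence and the constructed $C$ depends rationally on the data. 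Verifying Hermiticity of $X$ in step (iv) is then a routine but slightly lengthy block calculation that I would not carry out in full here.
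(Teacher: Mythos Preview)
The paper does not prove Theorem~1.1; it merely cites it from \cite{Mehrmann12} and uses the result. So there is no proof in the paper to compare your proposal against.

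On the merits of your sketch: the overall architecture is sound and is indeed the standard route to such canonical forms. Your step~(iv) is correct in spirit, though the block identities you list are garbled. Carrying out the computation with $\mathcal{J}=\left[\begin{smallmatrix}0&I\\-I&0\end{smallmatrix}\right]$ gives
\[
\left[\begin{smallmatrix}X_{12}&0\\X_{22}&I\end{smallmatrix}\right]\mathcal{J}\left[\begin{smallmatrix}X_{12}&0\\X_{22}&I\end{smallmatrix}\right]^{H}
=\left[\begin{smallmatrix}0&X_{12}\\-X_{12}^{H}&X_{22}-X_{22}^{H}\end{smallmatrix}\right],
\qquad
\left[\begin{smallmatrix}I&X_{11}\\0&X_{21}\end{smallmatrix}\right]\mathcal{J}\left[\begin{smallmatrix}I&X_{11}\\0&X_{21}\end{smallmatrix}\right]^{H}
=\left[\begin{smallmatrix}X_{11}^{H}-X_{11}&X_{21}^{H}\\-X_{21}&0\end{smallmatrix}\right],
\]
so equality forces $X_{11}=X_{11}^{H}$, $X_{22}=X_{22}^{H}$, and $X_{12}=X_{21}^{H}$, i.e.\ $X$ Hermitian---not the relations you wrote.

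The genuine work, as you correctly flag, is steps~(i)--(ii): producing $\mathcal{S}_1,\mathcal{S}_2\in Sp(n)$ and a \emph{single} invertible $C$ that simultaneously normalize both halves. Your proposed reduction to the case $\mathcal{L}$ invertible via a shift is reasonable, but the subsequent limiting/density argument to remove that assumption is delicate: the $\mathcal{S}_i$ and $C$ you build depend on the shifted data, and you would need to show they stay bounded (or can be chosen to converge) as the shift is removed. The rational-dependence remark does not settle this, since poles can appear precisely when $\mathcal{L}$ degenerates. A cleaner argument, closer to how \cite{Mehrmann12} proceeds, works directly with the full-rank $2n\times 4n$ matrix $[\mathcal{M},\mathcal{L}]$ and performs a single structured elimination that respects the symplectic constraint on the column blocks, rather than treating $\mathcal{M}$ and $\mathcal{L}$ sequentially. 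As written, your proposal is a plausible outline but not yet a proof.
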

\noindent  Theorem~\ref{thm1.1} provides us a classification for symplectic pairs.  Specifically, let $\mathcal{S}_1,\mathcal{S}_2\in Sp(n)$. We denote the class of symplectic pairs generated by $\mathcal{S}_1,\mathcal{S}_2$ as
\begin{subequations}\label{eq1.7}
\begin{align}\label{eq1.7a}
\mathbb{S}_{\mathcal{S}_1,\mathcal{S}_2}=\left\{\left(\mathcal{M},\mathcal{L}\right)| \ \mathcal{M}=\left[%
\begin{array}{cc}
  X_{12} & 0 \\
 X_{22} & I \\
\end{array}%
\right]\mathcal{S}_2,\ \
\mathcal{L}=\left[%
\begin{array}{cc}
  I & X_{11} \\
 0 & X_{21} \\
\end{array}%
\right]\mathcal{S}_1\right.\nonumber\\ \left. \text{ and }X=\left[%
\begin{array}{cc}
  X_{11} & X_{12} \\
 X_{21} & X_{22} \\
\end{array}%
\right]\in \mathbb{H}(2n)\right\}.
\end{align}
It is easily seen that  each pair $(\mathcal{M},\mathcal{L})\in \mathbb{S}_{\mathcal{S}_1,\mathcal{S}_2}$ is symplectic.  The bijective correspondence between $\mathbb{H}(2n)$ and $\mathbb{S}_{\mathcal{S}_1,\mathcal{S}_2}$  can be constructed by the transformation $T_{\mathcal{S}_1,\mathcal{S}_2}:\mathbb{H}(2n)\to\mathbb{S}_{\mathcal{S}_1,\mathcal{S}_2}$ with
\begin{align}\label{eq1.7b}
T_{\mathcal{S}_1,\mathcal{S}_2}(X)=\left(\left[%
\begin{array}{cc}
  X_{12} & 0 \\
 X_{22} & I \\
\end{array}%
\right]\mathcal{S}_2,\left[%
\begin{array}{cc}
  I & X_{11} \\
 0 & X_{21} \\
\end{array}%
\right]\mathcal{S}_1\right).
\end{align}
\end{subequations}

Therefore, the invariant sets for SDA-1 and SDA-2, i.e., $\mathbb{S}_1$ and $\mathbb{S}_2$, respectively,  given in \eqref{eq1.6}, can be  rewritten as $\mathbb{S}_1= \mathbb{S}_{I_{2n},I_{2n}}$ and $\mathbb{S}_2= \mathbb{S}_{-I_{2n},\mathcal{J}}$. Note that  $\mathbb{S}_1\nsubseteq \mathbb{S}_2$,  $\mathbb{S}_2\nsubseteq \mathbb{S}_1$.  In \cite{Kuo12}, a parameterized curve is constructed in $\mathbb{S}_2$  passing through the iterates generated by the fixed-point iteration,  the SDA and the Newton's method with some additional conditions.  Finding a smooth curve with a specific structure that passes through a sequence of iterates generated by some numerical algorithm is a popular topic studied by many researchers, especially in the study of the so-called Toda flow that links matrices/matrix pairs generated by QR/QZ-algorithm \cite{Chu:1984,Chu:1984c,Chu:1985,Chu:1995,Chu:2008,Symes:1982}. The  Toda flow is the solution of a nonlinear ordinary differential matrix equation  in which the eigenvalues are preserved, but the eigenvectors are changed in $t$. Rather than the invariance property of Toda flows, in this paper we shall focus on the flows $(\mathcal{M}(t),\mathcal{L}(t))$ on a specified $\mathbb{S}_{\mathcal{S}_1,\mathcal{S}_2}$ (i.e., the \textbf{Structure-Preserving Property}) that has \textbf{Eigenvector-Preserving Property}. More precisely, for a flow $\{(\mathcal{M}(t),\mathcal{L}(t))\ | \ t\in \mathbb{R}\}\subseteq \mathbb{S}_{\mathcal{S}_1,\mathcal{S}_2}$ satisfying the initial value problem with an initial regular pair $(\mathcal{M}(1),\mathcal{L}(1))=(\mathcal{M}_1,\mathcal{L}_1)$, the \textbf{Eigenvector-Preserving Property} of this flow can be stated as follows:
\begin{itemize}
\item[] {\it Assume that
\begin{align}\label{eq1.8}
\mathcal{M}_1U_{0}=0,\ \ \mathcal{L}_1U_{\infty}=0,\ \ \mathcal{M}_1U_1=\mathcal{L}_1U_1S,\ \ \mathcal{M}_1V_1T=\mathcal{L}_1V_1,
\end{align}
where $[U_{0},U_\infty,U_1,V_1]\in \mathbb{C}^{2n\times 2n}$ is invertible, and  $S$ and $T$  have no semi-simple zero eigenvalues. Then
\begin{align}\label{eq1.9}
\mathcal{M}(t)U_{0}=0,\ \ \mathcal{L}(t)U_{\infty}=0,\ \ \mathcal{M}(t)U_1=\mathcal{L}(t)U_1S^t,\ \ \mathcal{M}(t)V_1T^t=\mathcal{L}(t)V_1
\end{align}
hold.}
\end{itemize}

Here in \eqref{eq1.9},  $S^{t}$ and $T^t$, for $t\in \mathbb{R}$, represent the matrix exponentials.  Because $z^t=\exp(t\log(z))$ for each
$z\in \mathbb{C}\setminus\{0\}$, it follows from \cite[Definition 1.11 and Theorem 1.17]{Higham08} that the matrix exponentials $S^{t}$ and $T^t$ are well-defined if $S$ and $T$ are invertible. On the other hand, if $S$ (or $T$) is singular, then $S^{t}$ (or $T^t$) for $t\in \mathbb{R}$ is undefined. Hence,  to make the \textbf{Eigenvector-Preserving Property} meaningful, we  assume that the matrices $S$ and $T$ in \eqref{eq1.8} are invertible. This coincides with the assumption that the regular symplectic pair $(\mathcal{M}_1,\mathcal{L}_1)$ has only semi-simple zero and infinite eigenvalues (if exists). We shall show in Theorem~\ref{thm2.3} that this assumption for $(\mathcal{M}_1,\mathcal{L}_1)$ can result from the condition ${\rm ind}_{\infty}(\mathcal{M}_1,\mathcal{L}_1)\leqslant 1$.  Throughout this paper, we assume that the initial matrix pair $(\mathcal{M}_1,\mathcal{L}_1)$ is  regular and symplectic  with ${\rm ind}_{\infty}(\mathcal{M}_1,\mathcal{L}_1)\leqslant 1$. Note that if a matrix pair $(A,B)$
is regular and of index at most one, the corresponding time-invariant continuous system
\begin{align*}
B\frac{dx}{dt}=Ax(t)+f(t)
\end{align*}
has a unique solution for all admissible $f(t)$ with consistent initial conditions  \cite{Geerts93, Mehrmann91}. However, if the index of $(A,B)$ is larger than one,  impulses can occur in the time-invariant continuous system \cite{Geerts93}.

This paper is organized as
follows. In Section~\ref{sec2}, we introduce some preliminary results. In Sections~\ref{sec3}, we construct a differential equation such that its solution is invariant in $\mathbb{S}_{\mathcal{S}_1,\mathcal{S}_2}$ and has \textbf{Eigenvector-Preserving Property} for certain fixed symplectic matrices $\mathcal{S}_1$, $\mathcal{S}_2$. Such a flow is called a \textit{structure-preserving flow}. On the other hand, we also study the algebraic equation that is determined by both the \textbf{Eigenvector-Preserving Property} and the \textbf{Structure-Preserving Property}, in which the solution curve is denoted by $\mathcal{C}_{\mathcal{M}_1,\mathcal{L}_1}$.  We further show that the phase portrait of the structure-preserving flow is  $\mathcal{C}_{\mathcal{M}_1,\mathcal{L}_1}$. In  Subsection~\ref{sec3.2}, it will be shown that structure-preserving flows are governed by the Riccati differential equations (RDE) of the form
\begin{align*}
\begin{array}{l}
\dot{W}(t)=[-W(t),I]\mathscr{H}[I, W(t)^{\top} ]^{\top},\\
W(0)=W_0,
\end{array}
\end{align*}
where   $\mathscr{H}$ is a suitable Hamiltonian matrix. In addition, Radon's lemma (\cite{Radon27} or see Theorem~\ref{thm3.8}) leads to the explicit form $W(t)=P(t)Q(t)^{-1}$, where $[
Q(t)^{\top},
P(t)^{\top}]^{\top}= e^{\mathscr{H}t}[%
I,
W_0^{\top}
]^{\top}$.
This important relationship between linear differential equations and Riccati differential equations will be used to obtain an explicit representation formula for all solutions of RDEs as well as the structure-preserving flows. Therefore, the blow-up can occur at some finite time $t$ whenever $Q(t)$ is singular. In Subsection~\ref{sec3.3}, we adopt the Grassmann manifold to extend the domain of the structure-preserving flow to the whole $\mathbb{R}$ except some isolated points.  For two special classes of symplectic pairs $\mathbb{S}_1=\mathbb{S}_{I_{2n},I_{2n}}$ and $\mathbb{S}_2=\mathbb{S}_{-I_{2n},\mathcal{J}}$, it is shown in Subsection~\ref{sec3.4} that the structure-preserving flow passes through the iterates generated by SDA-1 and SDA-2, respectively.  Therefore, the SDA and its associated structure-preserving flow have identical asymptotic behaviors, including the stability, instability, periodicity, and quasi-periodicity of the dynamics. In Section~\ref{sec4}, we investigate the asymptotic behavior of $[
Q(t)^{\top},
P(t)^{\top}]^{\top}$ and use the results to analyze the convergence of SDAs.  Due to the special structure and properties of the Hamiltonian matrix, we apply a symplectic similarity transformation introduced by \cite{Lin1999469} to reduce $\mathscr{H}$ to a Hamiltonian Jordan canonical form $\mathfrak{J}$.  In Subsections~\ref{sec4.1} and \ref{sec4.2}, we first study the structure of $e^{\mathfrak{J}t}$ and then the asymptotic behaviors of $W(t)$ and $Q(t)^{-1}$, as $t\to\pm\infty$, with $\mathfrak{J}$ being of elementary cases.  The results for general $\mathfrak{J}$ are given in Subsection~\ref{sec4.3}.  The asymptotic analysis of SDAs as well as its convergence rate by using the asymptotic behavior of RDEs are shown in Subsection~\ref{sec4.4}.  Complementary proofs in Sections \ref{sec2}  and \ref{sec4} are given in Appendix.

\section{Preliminaries}\label{sec2}
In this section, we introduce notation, definitions and some preliminary results. For a matrix $A\in \mathbb{C}^{n\times n}$,  $A^{H}$ and $A^{\top}$ are the conjugate
transpose and the transpose of $A$, respectively.  $\sigma(A)$ denotes the spectrum  of $A$.  For each $\lambda_0\in \sigma(A)$, $\mathcal{R}_{\lambda_0}(A)=\{x| (A-\lambda_0 I)^{\nu}x=0\text{ for some }\nu\in \mathbb{N}\}$ is the generalized eigenspace of $A$ corresponding to the eigenvalue $\lambda_0$.  For a regular matrix pair $(A, B)$ with $A,B\in\mathbb{C}^{n\times n}$,  $\sigma(A,B)$  denotes the spectrum of $(A, B)$. Note that the matrix pair $(A, B)$ is said to have eigenvalues at infinity if $B$ is singular.

\begin{Definition}\label{def2.1}
Two subspaces $\mathbb{U}$ and $\mathbb{V}$ of $\mathbb{C}^{2n}$ are called $\mathcal{J}$-orthogonal if $u^H\mathcal{J}v=0$ for each $u\in \mathbb{U}$ and $v\in \mathbb{V}$. A subspace $\mathbb{U}$ of $\mathbb{C}^{2n}$ is called isotropic
if $x^{H}\mathcal{J}y=0$ for any $x,y\in \mathbb{U}$. An $n$-dimensional isotropic subspace is called a Lagrangian subspace.
\end{Definition}
Suppose that $\mathcal{H}\in\mathbb{C}^{n\times n}$ is Hamiltonian. It is well-known that for $\lambda, \mu\in \sigma(\mathcal{H})$ with $\lambda\neq -\bar{\mu}$, the subspaces $\mathcal{R}_{\lambda}(\mathcal{H})$ and $\mathcal{R}_{\mu}(\mathcal{H})$ are $\mathcal{J}$-orthogonal. Similarly, for a symplectic matrix $\mathcal{S}\in Sp(n)$ and $\lambda, \mu\in \sigma(\mathcal{S})$ with $\lambda\neq 1/\bar{\mu}$,  $\mathcal{R}_{\lambda}(\mathcal{S})$ and $\mathcal{R}_{\mu}(\mathcal{S})$ are $\mathcal{J}$-orthogonal. The $\mathcal{J}$-orthogonality also holds for invariant subspaces of Hamiltonian pairs and symplectic pairs. To prove this, we need the following lemma.

\begin{Lemma}\label{lem2.1}
Suppose that $(A_1,B_1)$ and $(A_2,B_2)\in \mathbb{C}^{n\times n}\times \mathbb{C}^{n\times n}$ are regular matrix pairs. If $\sigma(A_{1},B_{1})\cap\sigma(-A_{2},B_{2})=\emptyset$, then the equation
\begin{align}\label{eq2.1}
A_2XB_1+B_2XA_1=0
\end{align}
 has only trivial solution.
\end{Lemma}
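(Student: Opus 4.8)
The plan is to reduce the matrix equation \eqref{eq2.1} to a standard generalized Sylvester equation and then invoke the spectral separation hypothesis. First I would use the regularity of $(A_1,B_1)$ and $(A_2,B_2)$ together with the Kronecker canonical form recalled in the introduction: there exist invertible $P_i,Q_i$ such that $P_1 A_1 Q_1 = \left[\begin{smallmatrix} J_1 & 0 \\ 0 & I\end{smallmatrix}\right]$, $P_1 B_1 Q_1 = \left[\begin{smallmatrix} I & 0 \\ 0 & N_1\end{smallmatrix}\right]$, and likewise for $(A_2,B_2)$ with data $J_2,N_2$. Substituting $X = Q_2 \widetilde{X} P_1^{-1}$ (so that $B_2 X B_1$ becomes $P_2^{-1}(P_2 B_2 Q_2)\widetilde{X}(P_1 B_1 Q_1)Q_1^{-1}$, etc.) turns \eqref{eq2.1} into an equivalent equation of the same shape but with $A_i,B_i$ replaced by their canonical forms, and the solution is trivial iff $\widetilde{X}$ is.

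Next I would partition $\widetilde{X}$ conformally into four blocks matching the finite/infinite splitting of the two pairs. Because $J_i$ is the finite-spectrum block and $N_i$ the nilpotent (infinite) block, the equation $A_2 \widetilde{X} B_1 + B_2 \widetilde{X} A_1 = 0$ decouples into four block equations of the forms $J_2 X_{11} + X_{11} J_1 = 0$, $J_2 X_{12} N_1 + X_{12} = 0$, $N_2 X_{21} J_1 + X_{21} = 0$, and $N_2 X_{22} + X_{22} N_1 = 0$. The first is a standard Sylvester equation $J_2 X_{11} = -X_{11} J_1$, which has only the trivial solution precisely because $\sigma(J_2)\cap\sigma(-J_1)=\emptyset$; this last fact is exactly the finite part of the hypothesis $\sigma(A_1,B_1)\cap\sigma(-A_2,B_2)=\emptyset$, noting that $\sigma(-A_2,B_2)$ as a pair corresponds to $\sigma(-J_2)$ on the finite part. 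For the second equation, iterate: $X_{12} = -J_2 X_{12} N_1 = J_2^2 X_{12} N_1^2 = \cdots = 0$ since $N_1$ is nilpotent, and similarly $X_{21}=0$. The fourth, $N_2 X_{22} = -X_{22} N_1$, is a Sylvester equation with both coefficients nilpotent, hence $\sigma(N_2)\cap\sigma(-N_1)=\{0\}\cap\{0\}$ is nonempty — but here I should instead argue directly that $N_2^j X_{22} = (-1)^j X_{22} N_1^j$ and take $j$ large enough that both $N_1^j=0$ and $N_2^j=0$, which forces $X_{22}=0$ once one unwinds the recursion carefully (alternatively, vectorize and observe the coefficient $I\otimes N_2 + N_1^\top\otimes I$ is nilpotent, hence invertible only if... — no, nilpotent plus the identity term; in fact the right bookkeeping is that the block equation is $N_2 X_{22} + X_{22}N_1 = 0$ with no identity term, so one genuinely needs the nilpotency iteration).

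The main obstacle I anticipate is precisely this bookkeeping in the mixed finite/infinite blocks: making sure that "the pair $(A,B)$ has an eigenvalue at infinity" is handled consistently on both sides when we write $\sigma(-A_2,B_2)$, and that the hypothesis $\sigma(A_1,B_1)\cap\sigma(-A_2,B_2)=\emptyset$ is interpreted so that it rules out a common infinite eigenvalue as well as common finite ones — because if both pairs were allowed to have infinite eigenvalues the block $N_2 X_{22} + X_{22} N_1 = 0$ could have nontrivial solutions (e.g. $N_1=N_2=0$ gives $X_{22}$ free), so regularity alone is not enough and one must check whether the convention in the paper makes $\infty\in\sigma(A_1,B_1)\cap\sigma(-A_2,B_2)$ whenever both $N_1,N_2$ are nontrivial. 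Under the natural reading that $\sigma$ includes $\infty$ and the hypothesis excludes it, at most one of $N_1,N_2$ is nonzero-sized, so at least one of them is $0\times 0$ (void), the block $X_{22}$ is void, and the equation collapses — this is the clean route and I would adopt it. Once all four blocks are shown to vanish, $\widetilde{X}=0$, hence $X=0$, completing the proof.
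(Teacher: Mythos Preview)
Your argument is correct and follows essentially the same route as the paper: reduce via the Kronecker canonical form, split into block Sylvester-type equations, and use the spectral hypothesis (including $\infty$) to kill each block. The paper is slightly more economical in that it observes at the outset that $\sigma(A_1,B_1)\cap\sigma(-A_2,B_2)=\emptyset$ forces at least one of $B_1,B_2$ to be invertible, so it only puts the singular pair into Kronecker form and works with $B_2^{-1}A_2$ on the other side---this yields two block equations rather than your four, and the troublesome $X_{22}$ block (which you correctly diagnose as void) never appears.
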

\begin{proof}
We first consider the case that both $B_1$ and $B_2$ are invertible. Since $\sigma(B_{1}^{-1}A_{1})\cap\sigma(-B_{2}^{-1}A_{2})=\sigma(A_{1},B_{1})\cap\sigma(-A_{2},B_{2})=\emptyset$, Eq. \eqref{eq2.1}  has only trivial solution.

For the general case, we may assume that $B_1$ is singular. Therefore $(A_1,B_1)$ has eigenvalues at infinity. Since $\sigma(A_{1},B_{1})\cap\sigma(-A_{2},B_{2})=\emptyset$,  $B_2$ must be nonsingular. Let $\widehat{A}_2=B_2^{-1}A_2$. Since $(A_1,B_1)$ is regular, there are nonsingular matrices $P$ and $Q$ such that
\begin{align*}
PA_1Q=\left[%
\begin{array}{cc}
  J_1 & 0 \\
 0 & I \\
\end{array}%
\right],\ \ \ \ PB_1Q=\left[%
\begin{array}{cc}
  I & 0 \\
 0 & N_1 \\
\end{array}%
\right],
\end{align*}
where $N_1$ is nilpotent. Then \eqref{eq2.1} can be transformed into
\begin{align*}
\begin{array}{l}
\widehat{A}_2\widehat{X}_1+\widehat{X}_1J_1=0,\\
\widehat{A}_2\widehat{X}_2N_1+\widehat{X}_2=0,
\end{array}
\end{align*}
where $[ \widehat{X}_1,\widehat{X}_2]:=XP^{-1}$. Since $\sigma(\widehat{A}_2)\cap\sigma(-J_1)=\emptyset$ and $\sigma(N_1^{\top}\otimes \widehat{A}_2)=\{0\}$, we have $\widehat{X}_1=0$ and $\widehat{X}_2=0$, then  $X=[ \widehat{X}_1,\widehat{X}_2]P=0$. Hence, Eq. \eqref{eq2.1} has only trivial solution.
\end{proof}

\begin{Theorem}\label{thm2.2}
Let $(\mathcal{M},\mathcal{L})$, $(R_1,T_1)$ and $(R_2,T_2)$ be regular pairs and
$U_1$ and $U_2$ be of full column rank satisfying
\begin{align}
\mathcal{M}U_1T_1=\mathcal{L}U_1R_1,\text{ and } \mathcal{M}U_2T_2=\mathcal{L}U_2R_2.\label{eq2.2}
\end{align}
\begin{itemize}
\item[(i)] If $(\mathcal{M},\mathcal{L})$ is Hamiltonian and $\sigma(R_1,T_1)\cap\sigma(-R^H_2,T^H_2)=\emptyset$, then $U_1$ and $U_2$ are $\mathcal{J}$-orthogonal.
\item[(ii)] If $(\mathcal{M},\mathcal{L})$ is symplectic and $\sigma(R_1,T_1)\cap\sigma(T^H_2,R^H_2)=\emptyset$, then $U_1$ and $U_2$ are $\mathcal{J}$-orthogonal.
\end{itemize}
\end{Theorem}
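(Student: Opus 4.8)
The plan is to reduce the statement to the uniqueness result of Lemma~\ref{lem2.1}. Write $Y:=U_1^H\mathcal{J}U_2$; the claim is $Y=0$. I would produce a generalized Sylvester equation of the form $A_2YB_1+B_2YA_1=0$ in which $(A_1,B_1)$ is (left-equivalent to) $(R_2,T_2)$ and $(A_2,B_2)$ is $(R_1^H,T_1^H)$ up to a harmless sign, and then verify that the disjointness hypothesis of the theorem is exactly the condition $\sigma(A_1,B_1)\cap\sigma(-A_2,B_2)=\emptyset$ required by Lemma~\ref{lem2.1}.

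The crux is to make the structure relation bite without assuming $\mathcal{M}$ or $\mathcal{L}$ invertible (the naive route through the matrix $\mathcal{L}^{-1}\mathcal{M}$ is unavailable, and indeed both $\mathcal{M}$ and $\mathcal{L}$ can be singular for the pencils produced by the SDA). Set $\mathcal{K}:=\ker[\mathcal{M},\,-\mathcal{L}]\subseteq\mathbb{C}^{4n}$. Since $(\mathcal{M},\mathcal{L})$ is regular, $[\mathcal{M},\,\mathcal{L}]$ has full row rank $2n$, so $\dim\mathcal{K}=2n$. In case (i) the Hamiltonian identity $\mathcal{M}\mathcal{J}\mathcal{L}^H+\mathcal{L}\mathcal{J}\mathcal{M}^H=0$ says precisely that the columns of the $4n\times 2n$ matrix $\begin{bmatrix}\mathcal{J}\mathcal{L}^H\\ -\mathcal{J}\mathcal{M}^H\end{bmatrix}$ lie in $\mathcal{K}$; this matrix has rank $2n$ (left multiplication by the invertible $\mathrm{diag}(\mathcal{J},-\mathcal{J})$ does not change rank, and $\mathrm{rank}\,\begin{bmatrix}\mathcal{L}^H\\ \mathcal{M}^H\end{bmatrix}=2n$), so its column space is all of $\mathcal{K}$. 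Because $\mathcal{M}U_1T_1=\mathcal{L}U_1R_1$ puts the columns of $\begin{bmatrix}U_1T_1\\ U_1R_1\end{bmatrix}$ into $\mathcal{K}$, there is a matrix $Z_1$ with $U_1T_1=\mathcal{J}\mathcal{L}^HZ_1$ and $U_1R_1=-\mathcal{J}\mathcal{M}^HZ_1$. In case (ii) the same argument applied to $\mathcal{M}\mathcal{J}\mathcal{M}^H-\mathcal{L}\mathcal{J}\mathcal{L}^H=0$ gives $U_1T_1=\mathcal{J}\mathcal{M}^HZ_1$ and $U_1R_1=\mathcal{J}\mathcal{L}^HZ_1$.

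A short computation using $\mathcal{J}^H=-\mathcal{J}$, $\mathcal{J}^2=-I$ and $\mathcal{M}U_2T_2=\mathcal{L}U_2R_2$ then finishes the algebra. In case (i), conjugate-transposing the two relations for $U_1$ gives $T_1^HU_1^H\mathcal{J}=Z_1^H\mathcal{L}$ and $R_1^HU_1^H\mathcal{J}=-Z_1^H\mathcal{M}$, hence $T_1^HYR_2=Z_1^H\mathcal{L}U_2R_2$ and $R_1^HYT_2=-Z_1^H\mathcal{M}U_2T_2=-Z_1^H\mathcal{L}U_2R_2$, so that $R_1^HYT_2+T_1^HYR_2=0$. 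Applying Lemma~\ref{lem2.1} with the regular pairs $(A_1,B_1)=(R_2,T_2)$ and $(A_2,B_2)=(R_1^H,T_1^H)$, this is exactly $A_2YB_1+B_2YA_1=0$, and because $\sigma(-R^H,T^H)=\{-\bar\mu:\mu\in\sigma(R,T)\}$ one checks that $\sigma(R_2,T_2)\cap\sigma(-R_1^H,T_1^H)=\emptyset$ is equivalent to the hypothesis $\sigma(R_1,T_1)\cap\sigma(-R_2^H,T_2^H)=\emptyset$; hence $Y=0$. Case (ii) is identical: the analogous computation yields $T_1^HYT_2-R_1^HYR_2=0$, and the reciprocal/conjugate bookkeeping $\sigma(T^H,R^H)=\{1/\bar\mu:\mu\in\sigma(R,T)\}$ shows that the condition needed for Lemma~\ref{lem2.1} coincides with $\sigma(R_1,T_1)\cap\sigma(T_2^H,R_2^H)=\emptyset$. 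In either case $U_1^H\mathcal{J}U_2=0$, i.e., $U_1$ and $U_2$ are $\mathcal{J}$-orthogonal.

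I expect the representation step of the second paragraph to be the main obstacle: identifying $\mathcal{K}$ as the column space of $\begin{bmatrix}\mathcal{J}\mathcal{L}^H\\ -\mathcal{J}\mathcal{M}^H\end{bmatrix}$ (resp. $\begin{bmatrix}\mathcal{J}\mathcal{M}^H\\ \mathcal{J}\mathcal{L}^H\end{bmatrix}$) is what lets the Hamiltonian/symplectic identity act even when $\mathcal{M}$ and $\mathcal{L}$ are both singular. The remaining ingredients — the full-rank statement for regular pencils, the translation of the $\sigma(\cdot)$-conditions through conjugation and reciprocation (including the behaviour at $0$ and $\infty$), and the fact that Lemma~\ref{lem2.1}, although stated for square matrices, applies verbatim to the rectangular $Y\in\mathbb{C}^{r_1\times r_2}$ appearing here — are routine.
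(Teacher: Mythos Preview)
Your proof is correct and follows essentially the same route as the paper: identify the null space of $[\mathcal{M},-\mathcal{L}]$ (the paper writes $[\mathcal{M}\mathcal{J},\mathcal{L}]$, which is the same after a block scaling) as the column space of the matrix furnished by the Hamiltonian/symplectic identity, use this to represent $\begin{bmatrix}U_1T_1\\U_1R_1\end{bmatrix}$, combine with the second invariance relation to obtain a generalized Sylvester equation in $U_1^H\mathcal{J}U_2$, and invoke Lemma~\ref{lem2.1}. The only cosmetic difference is that the paper works with $U_2^H\mathcal{J}^HU_1$ and arrives at the conjugate-transposed Sylvester equation, so the hypothesis of Lemma~\ref{lem2.1} matches the theorem's assumption directly without the spectrum-translation step you carry out.
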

\begin{proof}
$(i)$ Since $(\mathcal{M},\mathcal{L})$ is a regular Hamiltonian pair, we have
\begin{align*}
\left[\mathcal{M}\mathcal{J},\mathcal{L}\right]\left[%
\begin{array}{c}
  \mathcal{L}^H \\
  \mathcal{J} \mathcal{M}^H\\
\end{array}%
\right]=0,
\end{align*}
and rank$[\mathcal{M}\mathcal{J},\mathcal{L}]=2n$. Hence, the column vectors of $\left[%
\begin{array}{c}
  \mathcal{L}^H \\
  \mathcal{J} \mathcal{M}^H\\
\end{array}%
\right]$ form a basis of
null space of $[\mathcal{M}\mathcal{J},\mathcal{L}]$. On the other hand, it follows from \eqref{eq2.2} that
\begin{align}
\left[\mathcal{M}\mathcal{J},\mathcal{L}\right]\left[%
\begin{array}{c}
  \mathcal{J}^HU_1T_1 \\
  -U_1R_1\\
\end{array}%
\right]=0,\label{eq2.3}\\
\left[R_2^HU_2^H,-T_2^HU_2^H\mathcal{J}^H\right]\left[%
\begin{array}{c}
  \mathcal{L}^H \\
  \mathcal{J} \mathcal{M}^H\\
\end{array}%
\right]=0.\label{eq2.4}
\end{align}
Therefore, by \eqref{eq2.3} there is a nonsingular matrix $W$ such that
\begin{align*}
\left[%
\begin{array}{c}
  \mathcal{L}^H \\
  \mathcal{J} \mathcal{M}^H\\
\end{array}%
\right]W=\left[%
\begin{array}{c}
  \mathcal{J}^HU_1T_1 \\
  -U_1R_1\\
\end{array}%
\right].
\end{align*}
Multiplying $W$ from the right of \eqref{eq2.4}, we have
\begin{align*}
0&=\left[R_2^HU_2^H,-T_2^HU_2^H\mathcal{J}^H\right]\left[%
\begin{array}{c}
  \mathcal{J}^HU_1T_1 \\
  -U_1R_1\\
\end{array}%
\right]\\&=R_2^H(U_2^H\mathcal{J}^HU_1)T_1+T_2^H(U_2^H\mathcal{J}^HU_1)R_1.
\end{align*}
Since $\sigma(R_1,T_1)\cap\sigma(-R^H_2,T^H_2)=\emptyset$, it follows from Lemma \ref{lem2.1} that $U_2^H\mathcal{J}U_1=0$.

$(ii)$ Similarly, if $(\mathcal{M},\mathcal{L})$ is a regular symplectic pair, then from equations of \eqref{eq2.2} we have
\begin{align*}
T_2^H(U_2^H\mathcal{J}U_1)T_1-R_2^H(U_2^H\mathcal{J}U_1)R_1=0.
\end{align*}
Since $\sigma(R_1,T_1)\cap\sigma(T^H_2,R^H_2)=\emptyset$, it follows from Lemma \ref{lem2.1} that $U_2^H\mathcal{J}U_1=0$.
\end{proof}

From now on, we assume that the condition ${\rm ind}_{\infty}(\mathcal{M},\mathcal{L})\leqslant 1$ holds for a regular symplectic pair $(\mathcal{M},\mathcal{L})$, i.e., either the matrix  pair $(\mathcal{M},\mathcal{L})$ has no eigenvalue at infinity or the Jordan block corresponding to the eigenvalues at infinity is a zero matrix.

\begin{Theorem}\label{thm2.3}
Suppose $(\mathcal{M}, \mathcal{L})$ is a regular symplectic pair with $\mathcal{M}, \mathcal{L}\in \mathbb{C}^{2n\times 2n}$ and ${\rm ind}_{\infty}(\mathcal{M},\mathcal{L})\leqslant 1$. Then there is $\hat{n}\leqslant n$ such that ${\rm rank}(\mathcal{M})={\rm rank}(\mathcal{L})=n+\hat{n}$.  In addition, there exist $U_{0}, U_{\infty}\in \mathbb{C}^{2n\times \ell}$, $U_1\in \mathbb{C}^{2n\times 2\hat{n}}$ with $\ell=n-\hat{n}$ and a symplectic matrix $\widehat{\mathcal{S}}\in \mathbb{C}^{2\hat{n}\times 2\hat{n}}$ such that
\begin{align}\label{eq2.5}
\mathbf{U}^H\mathcal{J}_{n}\mathbf{U}=\left[%
\begin{array}{c|c}
  \mathcal{J}_{\hat{n}}&0 \\\hline
  0&\mathcal{J}_{\ell}\\
\end{array}%
\right]
\end{align}
with $\mathbf{U}=[U_1|U_0,U_\infty]$, and
\begin{align}\label{eq2.6}
\mathcal{M}U_{0}=0,\ \ \mathcal{L}U_{\infty}=0,\ \ \mathcal{M}U_1=\mathcal{L}U_1\widehat{\mathcal{S}}.
\end{align}
\end{Theorem}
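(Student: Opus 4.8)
The plan is to exploit the Kronecker canonical form of the regular pair $(\mathcal{M},\mathcal{L})$ together with the index-one hypothesis, and then to use the $\mathcal{J}$-orthogonality machinery from Theorem~\ref{thm2.2}. First I would observe that since ${\rm ind}_{\infty}(\mathcal{M},\mathcal{L})\leqslant 1$, the nilpotent block $N$ in the Kronecker form is the zero matrix, so $(\mathcal{M},\mathcal{L})$ is left-equivalent (via some invertible $P$, $Q$) to a pair of the block-diagonal shape $\big(\mathrm{diag}(J,0_\ell,I_m),\ \mathrm{diag}(I,I_\ell,0_m)\big)$ after splitting the finite spectrum into the nonzero eigenvalues (collected in the invertible Jordan matrix $J$ of size, say, $2\hat n$ — here one must argue the number of nonzero eigenvalues is even) and the semisimple zero eigenvalues (block $0_\ell$). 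Reading off ranks immediately gives ${\rm rank}(\mathcal{M})={\rm rank}(\mathcal{L})=2\hat n+\ell=n+\hat n$, with $\ell=n-\hat n$, and forces $m=\ell$ as well (by squareness $2\hat n+\ell+m=2n$). I would then define $U_0$ to be a basis for $\ker\mathcal{M}$ (the $Q$-columns corresponding to the $0_\ell$ block — note $\mathcal{L}U_0$ has full rank so these are genuine zero eigenvectors), $U_\infty$ a basis for $\ker\mathcal{L}$ (the columns for the $0_m$ block, the infinite eigenvectors), and $U_1$ the $2\hat n$ columns of $Q$ spanning the deflating subspace for the nonzero finite spectrum, so that $\mathcal{M}U_1=\mathcal{L}U_1\widehat{\mathcal{S}}$ with $\widehat{\mathcal{S}}$ similar to $J$; in particular $\sigma(\widehat{\mathcal{S}})$ contains no zero eigenvalue, which will matter below. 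This establishes \eqref{eq2.6} and the dimension count, and that $\mathbf{U}=[U_1\,|\,U_0,U_\infty]$ is invertible since $Q$ is.

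The substantive part is the $\mathcal{J}$-structure identity \eqref{eq2.5}. Since $(\mathcal{M},\mathcal{L})$ is symplectic, $\mathcal{M}\mathcal{J}\mathcal{M}^H=\mathcal{L}\mathcal{J}\mathcal{L}^H$. Applying this with the eigenrelations I would compute the various blocks $U_i^H\mathcal{J}U_j$. For the off-diagonal block $U_1^H\mathcal{J}U_0$: write $\mathcal{M}U_1=\mathcal{L}U_1\widehat{\mathcal{S}}$ (a pair $(R_1,T_1)=(\widehat{\mathcal{S}},I)$) and $\mathcal{M}U_0\cdot I=\mathcal{L}U_0\cdot 0$ — i.e. $(R_2,T_2)=(0,I)$ reading $\mathcal{M}U_0 = \mathcal{L}U_0\cdot 0$; then $\sigma(R_1,T_1)\cap\sigma(T_2^H,R_2^H)=\sigma(\widehat{\mathcal{S}})\cap\sigma(I,0)=\sigma(\widehat{\mathcal{S}})\cap\{\infty\}=\emptyset$ because $\widehat{\mathcal{S}}$ is invertible, so Theorem~\ref{thm2.2}(ii) gives $U_1^H\mathcal{J}U_0=0$. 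The block $U_1^H\mathcal{J}U_\infty$ is handled the same way using $\mathcal{M}U_\infty\cdot 0=\mathcal{L}U_\infty$, i.e. $(R_2,T_2)=(I,0)$, and $\sigma(\widehat{\mathcal{S}})\cap\{0\}=\emptyset$. The two $\ell\times\ell$ diagonal sub-blocks $U_0^H\mathcal{J}U_0$ and $U_\infty^H\mathcal{J}U_\infty$ vanish by a direct substitution: $\mathcal{M}U_0=0$ forces, via $\mathcal{M}\mathcal{J}\mathcal{M}^H=\mathcal{L}\mathcal{J}\mathcal{L}^H$ sandwiched appropriately against the left null vectors, that $\mathcal{L}U_0$ spans an isotropic space and hence (since $\mathcal{L}U_0$ is injective on that subspace) $U_0^H\mathcal{J}U_0=0$; symmetrically for $U_\infty$. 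Finally $U_0^H\mathcal{J}U_\infty$ and the $U_1$ self-block require a touch more care: $[U_0,U_\infty]$ together span a $2\ell$-dimensional subspace and the argument pairs the zero eigenvectors of $(\mathcal{M},\mathcal{L})$ with the infinite ones via the symplectic relation, showing the mixed block is the nondegenerate $\mathcal{J}_\ell$ (after possibly a change of basis within $U_0,U_\infty$ to normalize it), while $U_1^H\mathcal{J}U_1$ is a nondegenerate skew form that, since $\widehat{\mathcal{S}}$ is symplectic with respect to it, can be normalized to $\mathcal{J}_{\hat n}$ by Williamson-type reduction.

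Concretely I would prefer to realize the normalization cleanly: apply Theorem~\ref{thm1.1} (or Theorem~\ref{thm2.2}) to see that $\mathbf{U}^H\mathcal{J}\mathbf{U}$ is block-diagonal with an invertible skew-Hermitian $\hat n$-th block $B_1$ (on the $U_1$ slot) and an invertible skew-Hermitian $\ell$-th block $B_2$ (on the $[U_0,U_\infty]$ slot) — invertibility of the whole form is automatic because $\mathbf{U}$ is invertible and $\mathcal{J}$ is; then choose invertible matrices $C_1\in\mathbb{C}^{2\hat n\times 2\hat n}$ and $C_2\in\mathbb{C}^{2\ell\times 2\ell}$ with $C_1^H B_1 C_1=\mathcal{J}_{\hat n}$ and $C_2^H B_2 C_2=\mathcal{J}_\ell$ (any invertible skew-Hermitian form is congruent to $\mathcal{J}$ of the right size), and replace $U_1\leftarrow U_1 C_1$, $[U_0,U_\infty]\leftarrow [U_0,U_\infty]C_2$. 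One must check that this replacement keeps $\mathcal{M}U_0=0$, $\mathcal{L}U_\infty=0$ (true as long as $C_2$ is block upper/lower triangular compatible with the $U_0$/$U_\infty$ split, which one can arrange, or one argues directly that $\ker\mathcal{M}$ and $\ker\mathcal{L}$ are each isotropic and $\mathcal{J}$-paired so a symplectic basis adapted to the pair $(\ker\mathcal{M},\ker\mathcal{L})$ exists), and that $\mathcal{M}U_1=\mathcal{L}U_1\widehat{\mathcal{S}}$ persists with $\widehat{\mathcal{S}}$ replaced by $C_1^{-1}\widehat{\mathcal{S}}C_1$, which is still symplectic with respect to $\mathcal{J}_{\hat n}$ by construction. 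The main obstacle is precisely this last bookkeeping: verifying that one can simultaneously (a) make $\mathbf{U}^H\mathcal{J}_n\mathbf{U}$ equal to the prescribed block form, (b) keep the three eigenrelations \eqref{eq2.6} intact, and (c) keep $\widehat{\mathcal{S}}$ symplectic — the nontrivial claim being that the isotropic subspaces $\ker\mathcal{M}$ and $\ker\mathcal{L}$ are $\mathcal{J}$-nondegenerate paired with each other and $\mathcal{J}$-orthogonal to the nonzero-eigenvalue deflating subspace, which is exactly what Theorem~\ref{thm2.2}(ii) delivers once the spectra are checked to be disjoint as above.
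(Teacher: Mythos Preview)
Your overall strategy matches the paper's: extract the three deflating subspaces from the index-one Kronecker form, use Theorem~\ref{thm2.2}(ii) to get the $\mathcal{J}$-orthogonality pattern, then normalize. Two points, however, are genuine gaps.

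First, the claim ``any invertible skew-Hermitian form is congruent to $\mathcal{J}$ of the right size'' is false. For instance $iI_{2k}$ is skew-Hermitian but $i(iI_{2k})=-I_{2k}$ is negative definite, whereas $i\mathcal{J}_k$ has signature $(k,k)$; they are not congruent. What rescues the situation here is that $\mathbf{U}^H\mathcal{J}_n\mathbf{U}$ is \emph{congruent} to $\mathcal{J}_n$ (since $\mathbf{U}$ is invertible), so Sylvester's inertia law forces the $U_1$-block $K_1$ to satisfy that $iK_1$ has signature $(\hat n,\hat n)$, after one checks that the $[U_0,U_\infty]$-block already has the $\mathcal{J}_\ell$-signature. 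The paper makes exactly this inertia argument; you need it too.

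Second, and more seriously, you assert ``since $\widehat{\mathcal{S}}$ is symplectic with respect to $U_1^H\mathcal{J}U_1$'' and later ``still symplectic $\ldots$ by construction,'' but nowhere do you prove this. Nothing in the Kronecker set-up or in the statement of Theorem~\ref{thm2.2}(ii) gives it: applying Theorem~\ref{thm2.2}(ii) with $U_1$ against itself would require $\sigma(\widehat{\mathcal{S}},I)\cap\sigma(I,\widehat{\mathcal{S}}^H)=\emptyset$, which fails precisely for symplectic spectra. What does work is to rerun the \emph{proof} of Theorem~\ref{thm2.2}(ii) (the Sylvester identity it derives before invoking spectral disjointness) to obtain $K_1=\widehat{\mathcal{S}}^H K_1\widehat{\mathcal{S}}$; alternatively, the paper gives a direct argument via the null-space description $\ker[\mathcal{M},\mathcal{L}]=\mathrm{Im}\,[\mathcal{J}\mathcal{M}^H;-\mathcal{J}\mathcal{L}^H]$. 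Either way this is a nontrivial step that your outline skips.

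Two smaller remarks: your ``direct substitution'' for $U_0^H\mathcal{J}U_0=0$ via ``left null vectors'' does not parse ($U_0$ is a right null space), but Theorem~\ref{thm2.2}(ii) with $(R,T)=(0,I)$ on both sides handles it cleanly, as the paper does. And the equality ${\rm rank}\,\mathcal{M}={\rm rank}\,\mathcal{L}$ (hence $m=\ell$ and the evenness of the nonzero-eigenvalue block) is not automatic from the Kronecker form; the paper gets it from the structure in Theorem~\ref{thm1.1} via $X_{21}=X_{12}^H$.
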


\begin{Remark}\label{rem2.1}
From \eqref{eq2.5}, it is easily seen that $\mathbf{U}^{-1}=(\mathcal{J}_{\hat{n}}\oplus\mathcal{J}_{\ell})^{H}\mathbf{U}^H\mathcal{J}$.
\end{Remark}

\begin{proof}[Proof of Theorem~\ref{thm2.3}]
From Theorem~\ref{thm1.1}, the pair  $(\mathcal{M}, \mathcal{L})$  is left equivalent to the pair of the form $\left(\left[%
\begin{array}{cc}
  X_{12} & 0 \\
 X_{22} & I \\
\end{array}%
\right]\mathcal{S}_2, \left[%
\begin{array}{cc}
  I & X_{11} \\
 0 & X_{21} \\
\end{array}%
\right]\mathcal{S}_1\right)$ for some $X=[X_{ij}]_{1\le i,j\le 2}\in\mathbb{H}(2n)$ and $\mathcal{S}_1$, $\mathcal{S}_2\in Sp(n)$.  Therefore, the relation $X_{12}^H=X_{21}$, and the nonsingularity of $\mathcal{S}_1$ and $\mathcal{S}_2$ imply that ${\rm rank}(\mathcal{M})={\rm rank}(\mathcal{L})$.  Since ${\rm ind}_{\infty}(\mathcal{M},\mathcal{L})\leqslant 1$, there exist $U_{0}, U_{\infty}\in \mathbb{C}^{2n\times \ell}$  and $U_1\in \mathbb{C}^{2n\times 2(n-\ell)}$ such that the invariances of \eqref{eq2.6} hold,
where $\widehat{\mathcal{S}}\in \mathbb{C}^{ 2(n-\ell)\times 2(n-\ell)}$ is nonsingular and the column spaces spanned by $U_{0}$, $U_{\infty}$ and $U_1$ are the eigenspaces of $(\mathcal{M},\mathcal{L})$ corresponding to zero, infinity and finite-nonzero eigenvalues, respectively.  Applying Theorem~\ref{thm2.2}(ii) by setting $T_1=T_2=I$, $R_1=R_2=0$ and $U_1=U_2=U_0$, respectively, we have $U_0^H\mathcal{J}_nU_0=0$. Similarly, $U_1^H\mathcal{J}_nU_0$, $U_1^H\mathcal{J}_nU_\infty$ and $U_\infty^H\mathcal{J}_nU_\infty$ are also zero matrices. In addition, noting that $\mathbf{U}$ is nonsingular, we have
\begin{align}\label{eq2.7}
[U_1|U_0,U_\infty]^H\mathcal{J}_{n}[U_1|U_0,U_\infty]=\left[
\begin{array}{c|cc}
K_1&0&0\\
\hline
0&0&K_2\\
0&-K_2^H&0
\end{array}
\right],
\end{align}
where  $K_1$ is nonsingular skew-Hermitian and  $K_2$ is nonsingular. Resetting  $U_\infty:=U_\infty K_2^{-1}$, we then have $\mathcal{L}U_\infty=0$ and $[U_0,U_\infty]^H\mathcal{J}_n[U_0,U_\infty]=\mathcal{J}_\ell$. From the congruence transformation of \eqref{eq2.7}, it is easily seen that Hermitian matrices $iK_1$ and  $i\mathcal{J}_{\hat{n}}$ have the same inertia. Hence, there exists an invertible matrix $W$ such that $W^HK_1W=\mathcal{J}_{\hat{n}}$. Resetting $U_1:=U_1W$ and $\widehat{\mathcal{S}}:=W^{-1}\widehat{\mathcal{S}}W$, we then have $\mathcal{M}U_1=\mathcal{L}U_1\widehat{\mathcal{S}}$ and \eqref{eq2.5}.

Now, we show that  $\widehat{\mathcal{S}}$ is symplectic.
Since $(\mathcal{M},\mathcal{L})$ is  a regular symplectic pair, as in the proof of Theorem \ref{thm2.2} above,
the columns of $\left[%
\begin{array}{c}
   \mathcal{J} \mathcal{M}^H \\
 - \mathcal{J} \mathcal{L}^H\\
\end{array}%
\right]$ form a basis of
null space of $[\mathcal{M},\mathcal{L}]$. From \eqref{eq2.6}, we have
$\left[\mathcal{M},\mathcal{L}\right]\left[%
\begin{array}{c}
  U_1 \\
  -U_1\widehat{\mathcal{S}}\\
\end{array}%
\right]=0$. Hence there is a matrix $W\in \mathbb{C}^{2n\times 2\hat{n}}$ of full column rank such that
\begin{align}\label{eq2.8}
\left[%
\begin{array}{c}
   \mathcal{J} \mathcal{M}^H \\
 - \mathcal{J} \mathcal{L}^H\\
\end{array}%
\right]W=
\left[%
\begin{array}{c}
  U_1 \\
  -U_1\widehat{\mathcal{S}}\\
\end{array}%
\right].
\end{align}
Taking the conjugate transpose of \eqref{eq2.6}, we obtain
\begin{align*}
0=\left[U_1^H,-\widehat{\mathcal{S}}^HU_1^H\right]\left[%
\begin{array}{c}
  \mathcal{M}^H \\
  \mathcal{L}^H\\
\end{array}%
\right]=\left[-U_1^H\mathcal{J},-\widehat{\mathcal{S}}^HU_1^H\mathcal{J}\right]\left[%
\begin{array}{c}
 \mathcal{J} \mathcal{M}^H \\
  -\mathcal{J}\mathcal{L}^H\\
\end{array}%
\right].
\end{align*}
Applying \eqref{eq2.5} and \eqref{eq2.8} to the last equation yields that
\begin{align*}
-\mathcal{J}_{\hat{n}}+\widehat{\mathcal{S}}^H\mathcal{J}_{\hat{n}}\widehat{\mathcal{S}}=\left[-U_1^H\mathcal{J},-\widehat{\mathcal{S}}^HU_1^H\mathcal{J}\right]\left[%
\begin{array}{c}
  U_1 \\
  -U_1\widehat{\mathcal{S}}\\
\end{array}%
\right]=0.
\end{align*}
Thus, $\widehat{\mathcal{S}}$ is a symplectic matrix.
\end{proof}

Note that the matrix $\widehat{\mathcal{S}}$ in Theorem~\ref{thm2.3} is symplectic. It is proven in Theorem~\ref{thm5.1} that  there is a Hamiltonian matrix $\widehat{\mathcal{H}}$ satisfying $e^{\widehat{\mathcal{H}}}=\widehat{\mathcal{S}}$. Using $\widehat{\mathcal{H}}$, we shall construct a Hamiltonian matrix $\mathcal{H}$ which has invariant subspaces spanned by $U_0$, $U_\infty$, and $U_1$.

\begin{Theorem}\label{thm2.4}
Suppose $(\mathcal{M},\mathcal{L})$ is a regular symplectic pair with $\mathcal{M},\mathcal{L}\in{\mathbb C}^{2n\times2n}$ and ${\rm ind}_{\infty}(\mathcal{M},\mathcal{L})\leqslant 1$. Let the matrices $\mathbf{U}$ and $\widehat{\mathcal{S}}$ be given as in Theorem~\ref{thm2.3}, and $\widehat{\mathcal{H}}\in \mathbb{C}^{2\hat{n}\times 2\hat{n}}$ be the Hamiltonian matrix such that
\begin{align}\label{eq2.9}
e^{\widehat{\mathcal{H}}}=\widehat{\mathcal{S}}.
\end{align}
Then the matrix \begin{align}\label{eq2.10}
\mathcal{H}=\mathbf{U}\left[%
\begin{array}{cc}
  \widehat{\mathcal{H}}&0 \\
 0&0\\
\end{array}%
\right](\mathcal{J}_{\hat{n}}\oplus\mathcal{J}_{\ell})^{H}\mathbf{U}^H\mathcal{J}
\end{align}
is Hamiltonian.
\end{Theorem}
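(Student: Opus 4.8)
The plan is to verify the Hamiltonian property of $\mathcal{H}$ directly from the definition: a matrix $\mathcal{A}\in\mathbb{C}^{2n\times 2n}$ is Hamiltonian if and only if $(\mathcal{A}\mathcal{J})^H=\mathcal{A}\mathcal{J}$, i.e. $\mathcal{J}\mathcal{A}^H\mathcal{J}=\mathcal{A}$ (using $\mathcal{J}^H=-\mathcal{J}=\mathcal{J}^{-1}$), or equivalently $\mathcal{A}^H\mathcal{J}+\mathcal{J}\mathcal{A}=0$. So it suffices to compute $\mathcal{H}\mathcal{J}$ from \eqref{eq2.10} and check that it is Hermitian. First I would record the basic algebraic facts I will lean on: by Remark~\ref{rem2.1} we have $\mathbf{U}^{-1}=(\mathcal{J}_{\hat n}\oplus\mathcal{J}_\ell)^H\mathbf{U}^H\mathcal{J}$, equivalently $\mathbf{U}^H\mathcal{J}\mathbf{U}=\mathcal{J}_{\hat n}\oplus\mathcal{J}_\ell$; that $\widehat{\mathcal{H}}$ is Hamiltonian, i.e. $\widehat{\mathcal{H}}^H\mathcal{J}_{\hat n}+\mathcal{J}_{\hat n}\widehat{\mathcal{H}}=0$; and the trivial fact that the zero block together with $\widehat{\mathcal{H}}$ forms a Hamiltonian matrix of size $2n$ with respect to $\mathcal{J}_{\hat n}\oplus\mathcal{J}_\ell$, since the $0$ block is (vacuously) Hamiltonian with respect to $\mathcal{J}_\ell$.

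The computational core is then short. Write $\mathcal{D}=\operatorname{diag}(\widehat{\mathcal{H}},0)$ and $\mathbb{J}=\mathcal{J}_{\hat n}\oplus\mathcal{J}_\ell$, so that \eqref{eq2.10} reads $\mathcal{H}=\mathbf{U}\mathcal{D}\,\mathbb{J}^H\mathbf{U}^H\mathcal{J}$. Using $\mathbb{J}^H\mathbf{U}^H\mathcal{J}=\mathbf{U}^{-1}$ from Remark~\ref{rem2.1}, we get the clean similarity form $\mathcal{H}=\mathbf{U}\mathcal{D}\mathbf{U}^{-1}$. Now compute
\begin{align*}
\mathcal{H}\mathcal{J}=\mathbf{U}\mathcal{D}\mathbf{U}^{-1}\mathcal{J}
=\mathbf{U}\mathcal{D}\,\mathbb{J}^H\mathbf{U}^H\mathcal{J}\mathcal{J}
=-\mathbf{U}\mathcal{D}\,\mathbb{J}^H\mathbf{U}^H
=\mathbf{U}\mathcal{D}\,\mathbb{J}\,\mathbf{U}^H,
\end{align*}
where in the last step I used $\mathbb{J}^H=-\mathbb{J}$. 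Taking the conjugate transpose, $(\mathcal{H}\mathcal{J})^H=\mathbf{U}\,\mathbb{J}^H\mathcal{D}^H\mathbf{U}^H=-\mathbf{U}\,\mathbb{J}\mathcal{D}^H\mathbf{U}^H$. Therefore $\mathcal{H}\mathcal{J}$ is Hermitian precisely when $\mathcal{D}\mathbb{J}=-\mathbb{J}\mathcal{D}^H$, i.e. when $\mathcal{D}^H\mathbb{J}+\mathbb{J}\mathcal{D}=0$; and this is exactly the statement that $\mathcal{D}=\operatorname{diag}(\widehat{\mathcal{H}},0)$ is Hamiltonian with respect to $\mathbb{J}=\mathcal{J}_{\hat n}\oplus\mathcal{J}_\ell$, which holds block-diagonally because $\widehat{\mathcal{H}}$ is Hamiltonian (by \eqref{eq2.9} and Theorem~\ref{thm5.1}) and the $0$ block trivially satisfies $0^H\mathcal{J}_\ell+\mathcal{J}_\ell 0=0$. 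Hence $(\mathcal{H}\mathcal{J})^H=\mathcal{H}\mathcal{J}$ and $\mathcal{H}$ is Hamiltonian.

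I do not expect a genuine obstacle here — the result is essentially the observation that the class of Hamiltonian matrices is closed under symplectic similarity (here $\mathbf{U}$ plays the role of a symplectic change of basis, intertwining $\mathcal{J}$ with $\mathbb{J}=\mathcal{J}_{\hat n}\oplus\mathcal{J}_\ell$), combined with the block-diagonal fact that $\operatorname{diag}(\widehat{\mathcal{H}},0)$ is $\mathbb{J}$-Hamiltonian. The only point requiring a little care is bookkeeping with $\mathcal{J}^H=-\mathcal{J}$, $\mathbb{J}^H=-\mathbb{J}$, and the identity of Remark~\ref{rem2.1}; getting the signs right is what the displayed chain above is for. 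One could alternatively phrase the whole argument by noting that $\mathbf{U}$ maps $\mathbb{J}$-Hamiltonian matrices to $\mathcal{J}$-Hamiltonian matrices because $\mathbf{U}^H\mathcal{J}\mathbf{U}=\mathbb{J}$, but the direct verification above is self-contained and short enough to present in full.
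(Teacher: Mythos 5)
Your proof is correct and follows essentially the same route as the paper: a direct verification that $\mathcal{H}\mathcal{J}$ is Hermitian, reducing via $\mathbf{U}^H\mathcal{J}\mathbf{U}=\mathcal{J}_{\hat n}\oplus\mathcal{J}_\ell$ to the fact that $\operatorname{diag}(\widehat{\mathcal{H}},0)$ is Hamiltonian with respect to $\mathcal{J}_{\hat n}\oplus\mathcal{J}_\ell$. The only cosmetic difference is that you first rewrite $\mathcal{H}=\mathbf{U}\,\operatorname{diag}(\widehat{\mathcal{H}},0)\,\mathbf{U}^{-1}$ using Remark~\ref{rem2.1}, whereas the paper computes $\mathcal{H}\mathcal{J}$ directly from \eqref{eq2.10}; the sign bookkeeping and the key identity $\widehat{\mathcal{H}}\mathcal{J}_{\hat n}^H=\mathcal{J}_{\hat n}\widehat{\mathcal{H}}^H$ are the same.
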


\begin{proof}
Since $\widehat{\mathcal{H}}$ is Hamiltonian, we have
\begin{align*}
\mathcal{H}\mathcal{J}&=-\mathbf{U}\left[%
\begin{array}{cc}
 \widehat{\mathcal{H}}\mathcal{J}_{\hat{n}}^H&0 \\
 0&0\\
\end{array}%
\right]\mathbf{U}^H=-\mathbf{U}\left[%
\begin{array}{cc}
 \mathcal{J}_{\hat{n}}\widehat{\mathcal{H}}^H&0 \\
 0&0\\
\end{array}%
\right]\mathbf{U}^H\\
&=\mathcal{J}^H\mathcal{J}^H\mathbf{U}\left[%
\begin{array}{cc}
 \mathcal{J}_{\hat{n}}\widehat{\mathcal{H}}^H&0 \\
 0&0\\
\end{array}%
\right]\mathbf{U}^H=\mathcal{J}^H\mathcal{H}^H.
\end{align*}
Hence, $\mathcal{H}$ is Hamiltonian.
\end{proof}

\begin{Remark}\label{rem2.2}
Suppose that $(\mathcal{M},\mathcal{L})$ is a {\it real} regular symplectic pair. Then  $\mathbf{U}$ is real and  $\widehat{\mathcal{S}}$ is  real symplectic. In \cite{Dieci98}, under the assumptions
\begin{itemize}
\item[(i)] $\widehat{\mathcal{S}}$ has an even number of Jordan blocks of each size relative to every negative eigenvalue;
\item[(ii)] the size of two identical Jordan blocks corresponding to eigenvalue $-1$ is odd;
\end{itemize}
it is shown that there is a {\it real} Hamiltonian matrix $\widehat{\mathcal{H}}$ such that $e^{\widehat{\mathcal{H}}}=\widehat{\mathcal{S}}$. Hence, the Hamiltonian $\mathcal{H}$ defined in \eqref{eq2.10} is real.
\end{Remark}

Suppose that $\mathcal{L}$ is invertible. It follows from Theorem~\ref{thm2.3} that $\mathcal{M}$ is also invertible.  Therefore,  $U_0$ and $U_\infty$ in \eqref{eq2.6} are absent. On the other hand, the matrix $\mathcal{L}^{-1}\mathcal{M}$ is symplectic. From \eqref{eq2.6} and Theorem~\ref{thm2.4}, we have that $e^{\mathcal{H}}=\mathcal{L}^{-1}\mathcal{M}$ for some Hamiltonian matrix $\mathcal{H}$, that is, $\mathcal{M}=\mathcal{L}e^{\mathcal{H}}$. For the case that $\mathcal{L}$ is singular and $(\mathcal{M}, \mathcal{L})$ is a regular symplectic pair with ${\rm ind}_{\infty}(\mathcal{M}, \mathcal{L})\leqslant 1$, it is natural to ask whether there is a Hamiltonian matrix $\mathcal{H}$ such that $\mathcal{M}=\mathcal{L}e^{\mathcal{H}}$. To this end, we need the following lemma.

\begin{Lemma}\label{lem2.5}
Suppose that $(\mathcal{M}, \mathcal{L})$ is a regular symplectic pair. If $\mathcal{M}= \mathcal{L} W$ for some nonsingular $W$, then both $\mathcal{M}$ and $\mathcal{L}$ are invertible.
\end{Lemma}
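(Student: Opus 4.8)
The plan is to extract everything from the regularity hypothesis alone; the symplectic identity $\mathcal{M}\mathcal{J}\mathcal{M}^H=\mathcal{L}\mathcal{J}\mathcal{L}^H$ will not be needed, and the statement sits in the symplectic setting only because that is where it will be applied.

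First, since $(\mathcal{M},\mathcal{L})$ is regular, I would fix $\lambda_0\in\mathbb{C}$ with $\det(\mathcal{M}-\lambda_0\mathcal{L})\ne 0$. Substituting the hypothesis $\mathcal{M}=\mathcal{L}W$ yields the factorization
\begin{align*}
\mathcal{M}-\lambda_0\mathcal{L}=\mathcal{L}W-\lambda_0\mathcal{L}=\mathcal{L}\,(W-\lambda_0 I).
\end{align*}
The left-hand side is a product of two square matrices in $\mathbb{C}^{2n\times 2n}$ and is invertible, so each factor must be invertible; in particular $\mathcal{L}$ is nonsingular. Then $\mathcal{M}=\mathcal{L}W$ is a product of two invertible matrices ($\mathcal{L}$ just shown invertible, $W$ invertible by hypothesis), hence $\mathcal{M}$ is invertible as well.

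I do not expect a genuine obstacle here: the entire content is this factorization trick. For context, this lemma is exactly what is needed to answer the question posed just above it — if there existed a Hamiltonian $\mathcal{H}$ with $\mathcal{M}=\mathcal{L}e^{\mathcal{H}}$, then $e^{\mathcal{H}}$ being invertible would, by the lemma, force $\mathcal{L}$ (hence $\mathcal{M}$) to be nonsingular. An equivalent route is via ranks: $W$ nonsingular gives ${\rm rank}(\mathcal{M})={\rm rank}(\mathcal{L})$ and equal left null spaces, and a singular $\mathcal{L}$ would make $\mathcal{M}-\lambda\mathcal{L}=\mathcal{L}(W-\lambda I)$ singular for every $\lambda$, contradicting regularity; since this is the same computation in disguise, I would present the direct factorization argument above.
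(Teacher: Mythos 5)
Your proof is correct, and it takes a genuinely different and more elementary route than the paper. The paper invokes Theorem~\ref{thm1.1} to write $\mathcal{M}=C\left[\begin{smallmatrix}X_{12}&0\\X_{22}&I\end{smallmatrix}\right]\mathcal{S}_2$, $\mathcal{L}=C\left[\begin{smallmatrix}I&X_{11}\\0&X_{21}\end{smallmatrix}\right]\mathcal{S}_1$ with $C$ nonsingular, substitutes these into $\mathcal{M}=\mathcal{L}W$, and reads off from the resulting block identity that $X_{21}$ (hence $X_{12}=X_{21}^H$) must be invertible, which forces both block-triangular factors and thus $\mathcal{M}$, $\mathcal{L}$ to be invertible. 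Your argument instead uses only the factorization $\mathcal{M}-\lambda_0\mathcal{L}=\mathcal{L}(W-\lambda_0 I)$ at a regular point $\lambda_0$ and the multiplicativity of the determinant, so it never touches the symplectic structure or the canonical form; as you correctly observe, the lemma holds for any regular pair $(\mathcal{M},\mathcal{L})$ with $\mathcal{M}=\mathcal{L}W$ and $W$ nonsingular. What the paper's longer route buys is a slightly sharper conclusion inside the parametrization $\mathbb{S}_{\mathcal{S}_1,\mathcal{S}_2}$ used throughout (explicit invertibility of the blocks $X_{12}$ and $X_{21}$), which is consonant with how the pairs are manipulated elsewhere, but for the statement as given your one-line determinant argument is complete and arguably preferable.
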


\begin{proof}
From Theorem \ref{thm1.1}, there are two symplectic matrices $\mathcal{S}_1$ and $\mathcal{S}_2$, and a Hermitian matrix $X=[X_{ij}]_{1\le i, j\le2}$ such that
\begin{align*}
\mathcal{M}=C\left[%
\begin{array}{cc}
  X_{12} & 0 \\
 X_{22} & I \\
\end{array}%
\right]\mathcal{S}_2,\ \  \mathcal{L}=C\left[%
\begin{array}{cc}
  I & X_{11} \\
 0 & X_{21} \\
\end{array}%
\right]\mathcal{S}_1,
\end{align*}
where $C$ is nonsingular. Suppose that $\mathcal{M}= \mathcal{L} W$. Then we have
\begin{align*}
\left[%
\begin{array}{cc}
  X_{12} & 0 \\
 X_{22} & I \\
\end{array}%
\right]\mathcal{S}_2=\left[%
\begin{array}{cc}
  I & X_{11} \\
 0 & X_{21} \\
\end{array}%
\right]\mathcal{S}_1 W.
\end{align*}
Since $\mathcal{S}_1$, $\mathcal{S}_2$ and $W$ are nonsingular, it is easily seen that $X_{12}$ and $X_{21}$ are nonsingular. Thus,  $\mathcal{M}$ and $\mathcal{L}$ are invertible.
\end{proof}


\begin{Lemma}\label{lem2.6}
Suppose $(\mathcal{M},\mathcal{L})$ is a regular symplectic pair with $\mathcal{M},\mathcal{L}\in{\mathbb C}^{2n\times2n}$ and ${\rm ind}_{\infty}(\mathcal{M},\mathcal{L})\leqslant 1$. Let the matrices $\mathbf{U}$ and $\mathcal{H}$ be given as in Theorems~\ref{thm2.3} and \ref{thm2.4}, respectively. Let
\begin{align}\label{eq2.11}
\begin{array}{l}
\Pi_0=\mathbf{U}\left[%
\begin{array}{c|cc}
I_{2\hat{n}}&0 &0\\\hline
 0&I_{\ell}&0\\
  0&0&0\\
\end{array}%
\right](\mathcal{J}_{\hat{n}}\oplus\mathcal{J}_{\ell})^{H}\mathbf{U}^H\mathcal{J},\\
\Pi_\infty=\mathbf{U}\left[%
\begin{array}{c|cc}
I_{2\hat{n}}&0 &0\\\hline
 0&0&0\\
  0&0&I_{\ell}\\
\end{array}%
\right](\mathcal{J}_{\hat{n}}\oplus\mathcal{J}_{\ell})^{H}\mathbf{U}^H\mathcal{J},
\end{array}
\end{align}
Then we have
\begin{align}\label{eq2.12}
\mathcal{M}\Pi_0=\mathcal{L}\Pi_\infty e^{\mathcal{H}}.
\end{align}
\end{Lemma}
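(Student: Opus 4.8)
The plan is to conjugate every object in sight by the fixed matrix $\mathbf{U}$ of Theorem~\ref{thm2.3} and thereby reduce \eqref{eq2.12} to a transparent block-matrix identity. By Remark~\ref{rem2.1}, $\mathbf{U}^{-1}=(\mathcal{J}_{\hat n}\oplus\mathcal{J}_\ell)^{H}\mathbf{U}^H\mathcal{J}$, so each of $\Pi_0$, $\Pi_\infty$ and $\mathcal{H}$ has the form $\mathbf{U}(\cdot)\mathbf{U}^{-1}$ with the \emph{same} $\mathbf{U}$. Writing the block-diagonal selectors $D_0=\mathrm{diag}(I_{2\hat n},I_\ell,0_\ell)$ and $D_\infty=\mathrm{diag}(I_{2\hat n},0_\ell,I_\ell)$ conformally with the partition $\mathbf{U}=[U_1\,|\,U_0,\,U_\infty]$ (so that $2\hat n+\ell+\ell=2n$), equations \eqref{eq2.10}--\eqref{eq2.11} read $\Pi_0=\mathbf{U}D_0\mathbf{U}^{-1}$, $\Pi_\infty=\mathbf{U}D_\infty\mathbf{U}^{-1}$ and $\mathcal{H}=\mathbf{U}(\widehat{\mathcal{H}}\oplus 0_{2\ell})\mathbf{U}^{-1}$. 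Since conjugation by $\mathbf{U}$ commutes with the exponential and block-diagonal matrices exponentiate blockwise, \eqref{eq2.9} yields $e^{\mathcal{H}}=\mathbf{U}(\widehat{\mathcal{S}}\oplus I_{2\ell})\mathbf{U}^{-1}$.

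Substituting these into \eqref{eq2.12} and cancelling the common right factor $\mathbf{U}^{-1}$, the claim becomes the $2n\times2n$ identity $\mathcal{M}\mathbf{U}D_0=\mathcal{L}\mathbf{U}\,D_\infty(\widehat{\mathcal{S}}\oplus I_{2\ell})$, which I would check block-column by block-column using the invariances \eqref{eq2.6}. On the left, $\mathcal{M}\mathbf{U}=[\mathcal{M}U_1,\;\mathcal{M}U_0,\;\mathcal{M}U_\infty]=[\mathcal{L}U_1\widehat{\mathcal{S}},\;0,\;\mathcal{M}U_\infty]$, and right multiplication by $D_0$ annihilates the last $\ell$ columns, giving $[\mathcal{L}U_1\widehat{\mathcal{S}},\;0,\;0]$. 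On the right, $D_\infty(\widehat{\mathcal{S}}\oplus I_{2\ell})=\mathrm{diag}(\widehat{\mathcal{S}},0_\ell,I_\ell)$, so $\mathcal{L}\mathbf{U}\,D_\infty(\widehat{\mathcal{S}}\oplus I_{2\ell})=[\mathcal{L}U_1\widehat{\mathcal{S}},\;0,\;\mathcal{L}U_\infty]$, and $\mathcal{L}U_\infty=0$ by \eqref{eq2.6}. Hence the two block rows coincide, and re-attaching $\mathbf{U}^{-1}$ on the right gives \eqref{eq2.12}.

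I do not expect a genuine obstacle: the whole content is that Remark~\ref{rem2.1} simultaneously block-diagonalises $\Pi_0$, $\Pi_\infty$ and $e^{\mathcal{H}}$ through one similarity, after which \eqref{eq2.6} closes the computation. The only place to be careful is matching the three-block partition $2\hat n+\ell+\ell$ correctly and noting that $D_\infty$ deletes precisely the $U_0$-column block where $\mathcal{M}U_0=0$ makes the left-hand side vanish, while on the $U_\infty$-column block both sides are killed by $\mathcal{L}U_\infty=0$. Alternatively one can skip the selectors $D_0,D_\infty$ altogether and verify \eqref{eq2.12} by applying both sides to the columns of $\mathbf{U}$ one block at a time via \eqref{eq2.5}--\eqref{eq2.6}; this is the same argument in a different dress.
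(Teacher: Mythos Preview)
Your proof is correct and follows essentially the same approach as the paper's: both arguments use Remark~\ref{rem2.1} to recognise $(\mathcal{J}_{\hat n}\oplus\mathcal{J}_\ell)^{H}\mathbf{U}^H\mathcal{J}$ as $\mathbf{U}^{-1}$, compute $e^{\mathcal{H}}=\mathbf{U}(\widehat{\mathcal{S}}\oplus I_{2\ell})\mathbf{U}^{-1}$ from \eqref{eq2.9}--\eqref{eq2.10}, and then verify the identity block-column by block-column via the invariances \eqref{eq2.6}. Your introduction of the selectors $D_0,D_\infty$ and the explicit use of the symbol $\mathbf{U}^{-1}$ make the bookkeeping slightly cleaner, but the content is identical.
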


\begin{Remark}\label{rem2.1.5}
It follows from Remark~\ref{rem2.1} that both $\Pi_0$ and $\Pi_\infty$ are idempotent, i.e.,  $\Pi_0^2=\Pi_0$ and $\Pi_\infty^2=\Pi_\infty$. In addition, if ${\rm ind}_{\infty}(\mathcal{M},\mathcal{L})=0$, then both $\mathcal{M}$ and $\mathcal{L}$ are invertible, which implies $\hat{n}=n$.  In this case, $\Pi_0=\Pi_\infty=I$.  Therefore, $\mathcal{M}=\mathcal{L}e^{\mathcal{H}}$ with some appropriate Hamiltonian matrix $\mathcal{H}$.  This coincides with Lemma~\ref{lem2.5}.
\end{Remark}

\begin{proof}[Proof of Lemma~\ref{lem2.6}]
From \eqref{eq2.6}, \eqref{eq2.11} and Remark \ref{rem2.1}, we have
\begin{align}\label{eq2.13}
\mathcal{M}\Pi_0&=\left[\mathcal{L}U_1\widehat{\mathcal{S}}|0,0\right](\mathcal{J}_{\hat{n}}\oplus\mathcal{J}_{\ell})^{H}[U_1| U_0, U_{\infty}]^H\mathcal{J}\nonumber\\
&=\mathcal{L}[U_1| U_0, U_{\infty}]\left[%
\begin{array}{c|cc}
\widehat{\mathcal{S}}&0 &0\\\hline
 0&0&0\\
  0&0&I_{\ell}\\
\end{array}%
\right](\mathcal{J}_{\hat{n}}\oplus\mathcal{J}_{\ell})^{H}[U_1| U_0, U_{\infty}]^H\mathcal{J}\nonumber\\
&=\mathcal{L}\Pi_\infty[U_1| U_0, U_{\infty}]\left[%
\begin{array}{c|c}
\widehat{\mathcal{S}}&0 \\\hline
  0&I_{2\ell}\\
\end{array}%
\right](\mathcal{J}_{\hat{n}}\oplus\mathcal{J}_{\ell})^{H}[U_1| U_0, U_{\infty}]^H\mathcal{J}.
\end{align}
It follows from \eqref{eq2.9} and \eqref{eq2.10} that
\begin{align*}
e^{\mathcal{H}}=\mathbf{U}\left[%
\begin{array}{cc}
  e^{\widehat{\mathcal{H}}}&0 \\
 0&e^{0}\\
\end{array}%
\right](\mathcal{J}_{\hat{n}}\oplus\mathcal{J}_{\ell})^{H}\mathbf{U}^H\mathcal{J}
=\mathbf{U}\left[%
\begin{array}{cc}
 \widehat{\mathcal{S}}&0 \\
 0&I_{2\ell}\\
\end{array}%
\right](\mathcal{J}_{\hat{n}}\oplus\mathcal{J}_{\ell})^{H}\mathbf{U}^H\mathcal{J}.
\end{align*}
From \eqref{eq2.13}, Eq. \eqref{eq2.12} holds.
\end{proof}

To make the correspondence between the constructed matrices in the previous lemmas/theorems and the symplectic pairs $(\mathcal{M},\mathcal{L})$, we  use the following notations  throughout this paper.

\begin{Definition}\label{def2.2}
Suppose $(\mathcal{M},\mathcal{L})$ is a regular symplectic pair with $\mathcal{M},\mathcal{L}\in{\mathbb C}^{2n\times2n}$ and ${\rm ind}_{\infty}(\mathcal{M},\mathcal{L})\leqslant 1$. We define
\begin{align*}
& \hat{n}:= \hat{n}(\mathcal{M},\mathcal{L}),\mathbf{U}:=\mathbf{U}(\mathcal{M},\mathcal{L}),\text{ and }~\widehat{\mathcal{S}}:=\widehat{\mathcal{S}}(\mathcal{M},\mathcal{L})\text{ in Theorem~\ref{thm2.3}};\\
& \widehat{\mathcal{H}}:=\widehat{\mathcal{H}}(\mathcal{M},\mathcal{L})\text{ and }\mathcal{H}:=\mathcal{H}(\mathcal{M},\mathcal{L})\text{ in Theorem~\ref{thm2.4}};\\
& \Pi_0:= \Pi_0(\mathcal{M},\mathcal{L})\text{ and }\Pi_\infty:=\Pi_\infty(\mathcal{M},\mathcal{L})\text{ in Lemma~\ref{lem2.6}}.\\
\end{align*}
\end{Definition}

We now provide a perturbation theory for the symplectic pair $(\mathcal{M},\mathcal{L})$ that preserves the invariant subspaces spanned by $U_0$, $U_\infty$ and $U_1$, as well as all finite nonzero eigenvalues, but  perturbs the eigenvalues $0$'s and $\infty$'s to $O(\varepsilon)$ and $O(1/\varepsilon)$, respectively.

\begin{Theorem}\label{thm2.7}
Suppose $(\mathcal{M},\mathcal{L})$ is a regular symplectic pair with  $\mathcal{M},\mathcal{L}\in{\mathbb C}^{2n\times2n}$ and ${\rm ind}_{\infty}(\mathcal{M},\mathcal{L})= 1$. Let $\mathbf{U}=\mathbf{U}(\mathcal{M},\mathcal{L})$ and $\widehat{\mathcal{S}}=\widehat{\mathcal{S}}(\mathcal{M},\mathcal{L})$ be given as in Definition~\ref{def2.2} and let
$\Phi^\varepsilon\in{\mathbb C}^{\ell\times\ell}$ be a family of nonsingular matrices with $\|\Phi^\varepsilon\|\leqslant \varepsilon$ for each $\varepsilon>0$. If
\begin{align}\label{eq2.14}
\mathcal{M}^\varepsilon=\mathcal{M}+\Delta\mathcal{M}^\varepsilon,\ \ \mathcal{L}^\varepsilon=\mathcal{L}+\Delta\mathcal{L}^\varepsilon,
\end{align}
where
\begin{align}\label{eq2.15}
\Delta\mathcal{M}^\varepsilon=-\mathcal{L}U_0{\Phi^\varepsilon}^HU_{\infty}^H\mathcal{J},\  \Delta\mathcal{L}^\varepsilon=\mathcal{M}U_{\infty}\Phi^\varepsilon U_{0}^H\mathcal{J},
\end{align}
then
$(\mathcal{M}^\varepsilon, \mathcal{L}^\varepsilon)$ is a regular symplectic pair with  $\mathcal{L}^\varepsilon$ being invertible. Moreover, $\mathcal{M}^\varepsilon$ and $\mathcal{L}^\varepsilon$ satisfy
\begin{align}\label{eq2.16}
\begin{array}{l}
\mathcal{M}^\varepsilon U_0=\mathcal{L}^\varepsilon U_0{\Phi^\varepsilon}^H,\\
\mathcal{M}^\varepsilon U_{\infty}\Phi^\varepsilon=\mathcal{L}^\varepsilon U_{\infty},\\
\mathcal{M}^\varepsilon U_1= \mathcal{L}^\varepsilon U_1\widehat{\mathcal{S}},
\end{array}
\end{align}
and
\begin{align}\label{eq2.17}
(\mathcal{M}^{\varepsilon}, \mathcal{L}^{\varepsilon})\rightarrow (\mathcal{M}, \mathcal{L})\text{ as }\varepsilon\rightarrow 0.
\end{align}
\end{Theorem}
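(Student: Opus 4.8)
The plan is to reduce every assertion to the $\mathcal{J}$-orthogonality relations encoded in \eqref{eq2.5}. First I would expand $\mathbf{U}^H\mathcal{J}_n\mathbf{U}=\mathcal{J}_{\hat n}\oplus\mathcal{J}_{\ell}$ blockwise with $\mathbf{U}=[U_1\mid U_0,U_\infty]$, obtaining $U_0^H\mathcal{J}U_0=U_\infty^H\mathcal{J}U_\infty=0$, $U_0^H\mathcal{J}U_\infty=I_\ell$, $U_\infty^H\mathcal{J}U_0=-I_\ell$, and $U_1^H\mathcal{J}U_0=U_1^H\mathcal{J}U_\infty=0$ (together with their conjugate transposes). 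These identities, the invariances $\mathcal{M}U_0=0$, $\mathcal{L}U_\infty=0$, $\mathcal{M}U_1=\mathcal{L}U_1\widehat{\mathcal{S}}$ of \eqref{eq2.6}, and the elementary facts $\mathcal{J}^H=-\mathcal{J}$, $\mathcal{J}^2=-I_n$, are all the argument will use.

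To establish \eqref{eq2.16} I would substitute \eqref{eq2.14}--\eqref{eq2.15} and evaluate both sides of each of the three relations on $U_0$, on $U_\infty$, and on $U_1$. In each case the contribution of $\Delta\mathcal{M}^\varepsilon$ or $\Delta\mathcal{L}^\varepsilon$ either vanishes through one of the zero inner products $U_\infty^H\mathcal{J}U_\infty$, $U_0^H\mathcal{J}U_0$, $U_0^H\mathcal{J}U_1$, $U_\infty^H\mathcal{J}U_1$, or reproduces exactly the asserted right-hand side via $U_\infty^H\mathcal{J}U_0=-I_\ell$ or $U_0^H\mathcal{J}U_\infty=I_\ell$; this is a short direct check, and it does not even need $\Phi^\varepsilon$ nonsingular. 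The limit \eqref{eq2.17} is then immediate, since $\|\Delta\mathcal{M}^\varepsilon\|\le\|\mathcal{L}U_0\|\,\|U_\infty^H\mathcal{J}\|\,\varepsilon$ and likewise $\|\Delta\mathcal{L}^\varepsilon\|\le\|\mathcal{M}U_\infty\|\,\|U_0^H\mathcal{J}\|\,\varepsilon$.

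For the symplecticity of $(\mathcal{M}^\varepsilon,\mathcal{L}^\varepsilon)$ I would expand $\mathcal{M}^\varepsilon\mathcal{J}(\mathcal{M}^\varepsilon)^H-\mathcal{L}^\varepsilon\mathcal{J}(\mathcal{L}^\varepsilon)^H$ into its zeroth-, first- and second-order pieces in $\Delta$. The zeroth-order pieces cancel because $(\mathcal{M},\mathcal{L})$ is symplectic; each second-order piece vanishes on its own because, after applying $\mathcal{J}^H=-\mathcal{J}$, it carries the factor $U_\infty^H\mathcal{J}U_\infty=0$ (on the $\mathcal{M}$-side) or $U_0^H\mathcal{J}U_0=0$ (on the $\mathcal{L}$-side); and the first-order cross-terms collapse, using $U_\infty^H\mathcal{J}U_0=-I_\ell$, $U_0^H\mathcal{J}U_\infty=I_\ell$ and $\mathcal{J}^2=-I_n$, to $-\mathcal{M}U_\infty\Phi^\varepsilon U_0^H\mathcal{L}^H+\mathcal{L}U_0{\Phi^\varepsilon}^HU_\infty^H\mathcal{M}^H$ from both sides. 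This is the computation-heavy step, essentially bookkeeping of the signs supplied by $\mathcal{J}$.

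The conceptual crux is the invertibility of $\mathcal{L}^\varepsilon$; once that is in hand, regularity is automatic, since $\mathcal{M}^\varepsilon-\lambda\mathcal{L}^\varepsilon=\mathcal{L}^\varepsilon\bigl((\mathcal{L}^\varepsilon)^{-1}\mathcal{M}^\varepsilon-\lambda I\bigr)$ is nonsingular for all but finitely many $\lambda$, and $\mathcal{L}^\varepsilon$ being invertible is precisely the ``index drops to $0$'' part of the claim. Right-multiplying $\mathcal{L}^\varepsilon$ by the invertible matrix $\mathbf{U}$ and using the orthogonality relations gives $\mathcal{L}^\varepsilon\mathbf{U}=[\,\mathcal{L}U_1,\ \mathcal{L}U_0,\ \mathcal{M}U_\infty\,]\,(I_{2\hat n}\oplus I_\ell\oplus\Phi^\varepsilon)$, so it suffices to show the $2n\times2n$ matrix $[\,\mathcal{L}U_1,\ \mathcal{L}U_0,\ \mathcal{M}U_\infty\,]$ is nonsingular. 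I would prove this by showing it has full row rank: if $y^H$ annihilates all three blocks, then, together with $y^H\mathcal{M}U_0=0$, $y^H\mathcal{L}U_\infty=0$ and $y^H\mathcal{M}U_1=y^H\mathcal{L}U_1\widehat{\mathcal{S}}=0$, one gets $y^H\mathcal{M}\mathbf{U}=y^H\mathcal{L}\mathbf{U}=0$, hence $y^H\mathcal{M}=y^H\mathcal{L}=0$, and regularity of $(\mathcal{M},\mathcal{L})$ forces $y=0$. Since $\Phi^\varepsilon$ is nonsingular, $\mathcal{L}^\varepsilon\mathbf{U}$, and therefore $\mathcal{L}^\varepsilon$, is invertible. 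This is the only place where regularity of the unperturbed pair enters beyond what Theorem~\ref{thm2.3} already supplies; the geometric picture is that the rank-$\ell$ update rotates the kernel direction $U_\infty$ of $\mathcal{L}$ into a (large) finite-eigenvalue direction while leaving $\mathrm{col}(U_1)$ and $\mathrm{col}(U_0)$ fixed as deflating subspaces.
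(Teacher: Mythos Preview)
Your proposal is correct and follows essentially the same route as the paper: extract the $\mathcal{J}$-orthogonality relations from \eqref{eq2.5}, verify the symplectic identity by expanding $\mathcal{M}^\varepsilon\mathcal{J}(\mathcal{M}^\varepsilon)^H$ and $\mathcal{L}^\varepsilon\mathcal{J}(\mathcal{L}^\varepsilon)^H$ order by order, check \eqref{eq2.16} by direct substitution, and read off \eqref{eq2.17} from $\|\Phi^\varepsilon\|\le\varepsilon$.

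The one place where you differ from the paper is the argument for the invertibility of $\mathcal{L}^\varepsilon$. Both you and the paper reduce to the nonsingularity of $[\mathcal{L}U_1,\mathcal{L}U_0,\mathcal{M}U_\infty]$ via the factorization $\mathcal{L}^\varepsilon\mathbf{U}=[\mathcal{L}U_1,\mathcal{L}U_0,\mathcal{M}U_\infty](I_{2\hat n}\oplus I_\ell\oplus\Phi^\varepsilon)$. The paper then picks a $\lambda_0\neq0$ with $\mathcal{M}-\lambda_0\mathcal{L}$ invertible and writes this block matrix as $(\mathcal{M}-\lambda_0\mathcal{L})\mathbf{U}\bigl((\widehat{\mathcal{S}}-\lambda_0 I)^{-1}\oplus(-\lambda_0)^{-1}I_\ell\oplus I_\ell\bigr)$, which requires the intermediate observation that $\widehat{\mathcal{S}}-\lambda_0 I$ is invertible. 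Your left null-vector argument is a little cleaner: if $y^H$ kills all three blocks, the invariances \eqref{eq2.6} force $y^H\mathcal{M}\mathbf{U}=y^H\mathcal{L}\mathbf{U}=0$, hence $y^H\mathcal{M}=y^H\mathcal{L}=0$, and regularity gives $y=0$. Both approaches use regularity in the same essential way; yours just bypasses the auxiliary check on $\widehat{\mathcal{S}}-\lambda_0 I$.
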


\begin{proof}
From \eqref{eq2.5}, it holds that
\begin{align}\label{eq2.18}
U_0^H\mathcal{J}U_{\infty}=I,\ U_{\infty}^H\mathcal{J}U_{0}=-I,\ U_{\infty}^H\mathcal{J}U_{\infty}=U_{0}^H\mathcal{J}U_{0}=0.
\end{align}
For each $\varepsilon>0$, from \eqref{eq2.14}, \eqref{eq2.15}  and \eqref{eq2.18} it holds that
\begin{align*}
\mathcal{M}^{\varepsilon}\mathcal{J}\mathcal{M}^{\varepsilon H}&=(\mathcal{M}+\Delta\mathcal{M}^{\varepsilon})\mathcal{J}(\mathcal{M}+\Delta\mathcal{M}^{\varepsilon})^H\\
&=\mathcal{M}\mathcal{J}\mathcal{M}^{H}+\mathcal{M}\mathcal{J}\Delta\mathcal{M}^{\varepsilon H}+\Delta\mathcal{M}^{\varepsilon }\mathcal{J}\mathcal{M}^H+\Delta\mathcal{M}^{\varepsilon }\mathcal{J}\Delta\mathcal{M}^{\varepsilon H}\\
&=\mathcal{L}\mathcal{J}\mathcal{L}^{H}-\mathcal{M}U_{\infty}\Phi^\varepsilon U_0^H\mathcal{L}^H+\mathcal{L}U_0{\Phi^\varepsilon}^HU_{\infty}^H\mathcal{M}^H\\
&=\mathcal{L}\mathcal{J}\mathcal{L}^{H}+\Delta\mathcal{L}^{\varepsilon }\mathcal{J}\mathcal{L}^H+\mathcal{L}\mathcal{J}\Delta\mathcal{L}^{\varepsilon H}+\Delta\mathcal{L}^{\varepsilon }\mathcal{J}\Delta\mathcal{L}^{\varepsilon H}\\
&=(\mathcal{L}+\Delta\mathcal{L}^{\varepsilon})\mathcal{J}(\mathcal{L}+\Delta\mathcal{L}^{\varepsilon})^H=\mathcal{L}^{\varepsilon}\mathcal{J}\mathcal{L}^{\varepsilon H}.
\end{align*}
That is, $(\mathcal{M}^{\varepsilon}, \mathcal{L}^{\varepsilon})$ forms a symplectic pair. Now, we show that $\mathcal{L}^{\varepsilon}$ is invertible. Since $(\mathcal{M},\mathcal{L})$ is a regular symplectic pair, there exists a nonzero constant $\lambda_0$ such that $\mathcal{M}-\lambda_0\mathcal{L}$ is invertible. Using the fact that $\mathbf{U}=[U_1,U_0,U_{\infty}]$ is nonsingular, it follows from \eqref{eq2.6} that
\begin{align*}
(\mathcal{M}-\lambda_0\mathcal{L})\mathbf{U}=\left[\mathcal{M}U_1-\lambda_0\mathcal{L}U_1,-\lambda_0\mathcal{L}U_0,\mathcal{M}U_{\infty}\right]
=\left[\mathcal{L}U_1(\widehat{\mathcal{S}}-\lambda_0I),-\lambda_0\mathcal{L}U_0,\mathcal{M}U_{\infty}\right]
\end{align*}
is nonsingular, and hence, $\widehat{\mathcal{S}}-\lambda_0I$ is also invertible.  Since $\Phi^{\varepsilon}$ is nonsingular, from \eqref{eq2.14}, \eqref{eq2.15}  and \eqref{eq2.18} together with the fact that $U_0^H\mathcal{J}U_1=0$, we have
\begin{align*}
\mathcal{L}^{\varepsilon}\mathbf{U}&=\left[\mathcal{L}^{\varepsilon}U_1,\mathcal{L}^{\varepsilon}U_0, \mathcal{L}^{\varepsilon}U_{\infty}\right]=\left[\mathcal{L}U_1,\mathcal{L}U_0,\mathcal{M}U_{\infty}\Phi^{\varepsilon}\right]\\&=(\mathcal{M}-\lambda_0\mathcal{L})\mathbf{U}\left((\widehat{\mathcal{S}}-\lambda_0I)^{-1}\oplus (-\lambda_0)^{-1}I\oplus \Phi^{\varepsilon}\right)
\end{align*}
is invertible and then
 $\mathcal{L}^{\varepsilon}$ is invertible. Hence, $(\mathcal{M}^{\varepsilon}, \mathcal{L}^{\varepsilon})$ is a regular symplectic pair.

From \eqref{eq2.6} and \eqref{eq2.18}, we have
\begin{align*}
\mathcal{M}^{\varepsilon}U_0&=(\mathcal{M}+\Delta\mathcal{M}^{\varepsilon})U_0=-\mathcal{L}U_0{\Phi^{\varepsilon}}^HU_{\infty}^H\mathcal{J}U_0\\&=\mathcal{L}U_0{\Phi^{\varepsilon}}^H=\mathcal{L}^{\varepsilon}U_0{\Phi^{\varepsilon}}^H,\\
\mathcal{L}^{\varepsilon}U_{\infty}&=(\mathcal{L}+\Delta\mathcal{L}^{\varepsilon})U_{\infty}=\mathcal{M}U_{\infty}\Phi^{\varepsilon}U_{0}^H\mathcal{J}U_{\infty}\\
&=\mathcal{M}U_{\infty}\Phi^{\varepsilon}=\mathcal{M}^{\varepsilon}U_{\infty}\Phi^{\varepsilon},\\
\mathcal{M}^{\varepsilon}U_1&=(\mathcal{M}+\Delta\mathcal{M}^{\varepsilon})U_1=\mathcal{M}U_1=\mathcal{L}U_1\widehat{\mathcal{S}}\\&=(\mathcal{L}+\Delta\mathcal{L}^{\varepsilon})U_1\widehat{\mathcal{S}}=\mathcal{L}^{\varepsilon}U_1\widehat{\mathcal{S}}.
\end{align*}
Thus, equations of \eqref{eq2.16} hold. Since  $\|\Phi^{\varepsilon}\|\leqslant \varepsilon$, \eqref{eq2.17} also holds.
\end{proof}

\begin{Corollary}\label{cor2.8}
Suppose $(\mathcal{M}, \mathcal{L})\in \mathbb{S}_{\mathcal{S}_1,\mathcal{S}_2}$ is a regular symplectic pair with ${\rm ind}_{\infty}(\mathcal{M}, \mathcal{L})\leqslant 1$. Let $\Phi^{\varepsilon}$ be nonsingular with  $\|\Phi^{\varepsilon}\|\leqslant \varepsilon$ for each $0<\varepsilon\ll 1$, and $\mathcal{M}^{\varepsilon}$, $ \mathcal{L}^{\varepsilon}$ be given as in Theorem~\ref{thm2.7}. Then there exists $(\widetilde{\mathcal{M}}^{\varepsilon}, \widetilde{\mathcal{L}}^{\varepsilon})\in \mathbb{S}_{\mathcal{S}_1,\mathcal{S}_2}$ for $0\leqslant\varepsilon\ll 1$, such that
\begin{align*}
\mathcal{M}^{\varepsilon}-\lambda \mathcal{L}^{\varepsilon}\overset{\rm{l.e.}}{\sim}\widetilde{\mathcal{M}}^{\varepsilon}-\lambda \widetilde{\mathcal{L}}^{\varepsilon}.
\end{align*}
Moreover, for each $0<\varepsilon\ll 1$, $\widetilde{\mathcal{M}}^{\varepsilon}$ and $\widetilde{\mathcal{L}}^{\varepsilon}$ are invertible satisfying \eqref{eq2.16} and \eqref{eq2.17}.
\end{Corollary}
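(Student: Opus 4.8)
The plan is to realize $(\widetilde{\mathcal{M}}^{\varepsilon},\widetilde{\mathcal{L}}^{\varepsilon})$ as $(C^{\varepsilon}\mathcal{M}^{\varepsilon},C^{\varepsilon}\mathcal{L}^{\varepsilon})$ for an invertible $C^{\varepsilon}$ with $C^{0}=I$, chosen to normalize the two block columns that pin down membership in $\mathbb{S}_{\mathcal{S}_1,\mathcal{S}_2}$, and then to extract the Hermitian property of the resulting parameter matrix from the symplecticity of the pair alone. First I would dispose of the degenerate case ${\rm ind}_{\infty}(\mathcal{M},\mathcal{L})=0$: then $\ell=0$, the matrices $U_0,U_\infty,\Phi^{\varepsilon}$ are empty, the perturbation in \eqref{eq2.14}--\eqref{eq2.15} vanishes identically, so $(\mathcal{M}^{\varepsilon},\mathcal{L}^{\varepsilon})=(\mathcal{M},\mathcal{L})\in\mathbb{S}_{\mathcal{S}_1,\mathcal{S}_2}$ and there is nothing to prove. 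So assume ${\rm ind}_{\infty}(\mathcal{M},\mathcal{L})=1$ and invoke Theorem~\ref{thm2.7}: $(\mathcal{M}^{\varepsilon},\mathcal{L}^{\varepsilon})$ is a regular symplectic pair with $\mathcal{L}^{\varepsilon}$ invertible and satisfying \eqref{eq2.16}--\eqref{eq2.17}.

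Writing $(\mathcal{M},\mathcal{L})=\left(\left[\begin{smallmatrix}X_{12}&0\\X_{22}&I\end{smallmatrix}\right]\mathcal{S}_2,\left[\begin{smallmatrix}I&X_{11}\\0&X_{21}\end{smallmatrix}\right]\mathcal{S}_1\right)$ with $X\in\mathbb{H}(2n)$, I would let $\left[\begin{smallmatrix}A_2^{\varepsilon}\\B_2^{\varepsilon}\end{smallmatrix}\right]$ denote the last $n$ columns of $\mathcal{M}^{\varepsilon}\mathcal{S}_2^{-1}$ and $\left[\begin{smallmatrix}D_1^{\varepsilon}\\E_1^{\varepsilon}\end{smallmatrix}\right]$ the first $n$ columns of $\mathcal{L}^{\varepsilon}\mathcal{S}_1^{-1}$, and set $G^{\varepsilon}:=\left[\begin{smallmatrix}D_1^{\varepsilon}&A_2^{\varepsilon}\\E_1^{\varepsilon}&B_2^{\varepsilon}\end{smallmatrix}\right]$. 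At $\varepsilon=0$ one has $A_2^{0}=0$, $B_2^{0}=I$, $D_1^{0}=I$, $E_1^{0}=0$, hence $G^{0}=I_{2n}$; since $G^{\varepsilon}\to I_{2n}$ as $\varepsilon\to0$ by \eqref{eq2.17}, $G^{\varepsilon}$ is invertible and $(G^{\varepsilon})^{-1}\to I$ for $0\le\varepsilon\ll1$. Put $C^{\varepsilon}:=(G^{\varepsilon})^{-1}$, $\widetilde{\mathcal{M}}^{\varepsilon}:=C^{\varepsilon}\mathcal{M}^{\varepsilon}$, $\widetilde{\mathcal{L}}^{\varepsilon}:=C^{\varepsilon}\mathcal{L}^{\varepsilon}$; then $C^{0}=I$, $\mathcal{M}^{\varepsilon}-\lambda\mathcal{L}^{\varepsilon}\overset{\rm{l.e.}}{\sim}\widetilde{\mathcal{M}}^{\varepsilon}-\lambda\widetilde{\mathcal{L}}^{\varepsilon}$, and $(\widetilde{\mathcal{M}}^{0},\widetilde{\mathcal{L}}^{0})=(\mathcal{M},\mathcal{L})\in\mathbb{S}_{\mathcal{S}_1,\mathcal{S}_2}$. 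Because $C^{\varepsilon}G^{\varepsilon}=I$, the last $n$ columns of $\widetilde{\mathcal{M}}^{\varepsilon}\mathcal{S}_2^{-1}$ are $\left[\begin{smallmatrix}0\\I\end{smallmatrix}\right]$ and the first $n$ columns of $\widetilde{\mathcal{L}}^{\varepsilon}\mathcal{S}_1^{-1}$ are $\left[\begin{smallmatrix}I\\0\end{smallmatrix}\right]$, so $\widetilde{\mathcal{M}}^{\varepsilon}=\left[\begin{smallmatrix}Y_{12}^{\varepsilon}&0\\Y_{22}^{\varepsilon}&I\end{smallmatrix}\right]\mathcal{S}_2$ and $\widetilde{\mathcal{L}}^{\varepsilon}=\left[\begin{smallmatrix}I&Y_{11}^{\varepsilon}\\0&Y_{21}^{\varepsilon}\end{smallmatrix}\right]\mathcal{S}_1$ for suitable blocks $Y_{ij}^{\varepsilon}$.

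Next I would show $Y^{\varepsilon}:=[Y_{ij}^{\varepsilon}]_{1\le i,j\le 2}$ is Hermitian. Since left-equivalence preserves the defining identity $\mathcal{M}_s\mathcal{J}\mathcal{M}_s^{H}=\mathcal{L}_s\mathcal{J}\mathcal{L}_s^{H}$ of a symplectic pair, $(\widetilde{\mathcal{M}}^{\varepsilon},\widetilde{\mathcal{L}}^{\varepsilon})$ is symplectic; inserting the two displayed forms and using $\mathcal{S}_j\mathcal{J}\mathcal{S}_j^{H}=\mathcal{J}$ reduces symplecticity to $\left[\begin{smallmatrix}Y_{12}^{\varepsilon}&0\\Y_{22}^{\varepsilon}&I\end{smallmatrix}\right]\mathcal{J}\left[\begin{smallmatrix}Y_{12}^{\varepsilon}&0\\Y_{22}^{\varepsilon}&I\end{smallmatrix}\right]^{H}=\left[\begin{smallmatrix}I&Y_{11}^{\varepsilon}\\0&Y_{21}^{\varepsilon}\end{smallmatrix}\right]\mathcal{J}\left[\begin{smallmatrix}I&Y_{11}^{\varepsilon}\\0&Y_{21}^{\varepsilon}\end{smallmatrix}\right]^{H}$, and a direct $2\times2$ block expansion of both sides forces $Y_{11}^{\varepsilon}=(Y_{11}^{\varepsilon})^{H}$, $Y_{22}^{\varepsilon}=(Y_{22}^{\varepsilon})^{H}$ and $Y_{21}^{\varepsilon}=(Y_{12}^{\varepsilon})^{H}$, i.e.\ $Y^{\varepsilon}\in\mathbb{H}(2n)$. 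Hence $(\widetilde{\mathcal{M}}^{\varepsilon},\widetilde{\mathcal{L}}^{\varepsilon})=T_{\mathcal{S}_1,\mathcal{S}_2}(Y^{\varepsilon})\in\mathbb{S}_{\mathcal{S}_1,\mathcal{S}_2}$ for all $0\le\varepsilon\ll1$.

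For the ``moreover'' part ($0<\varepsilon\ll1$): $\widetilde{\mathcal{L}}^{\varepsilon}=C^{\varepsilon}\mathcal{L}^{\varepsilon}$ is a product of invertible matrices, hence invertible, whereupon symplecticity forces $\widetilde{\mathcal{M}}^{\varepsilon}\mathcal{J}(\widetilde{\mathcal{M}}^{\varepsilon})^{H}=\widetilde{\mathcal{L}}^{\varepsilon}\mathcal{J}(\widetilde{\mathcal{L}}^{\varepsilon})^{H}$ to be invertible, so $\widetilde{\mathcal{M}}^{\varepsilon}$ is invertible; multiplying each of the three relations in \eqref{eq2.16} on the left by $C^{\varepsilon}$ yields the same relations with $(\widetilde{\mathcal{M}}^{\varepsilon},\widetilde{\mathcal{L}}^{\varepsilon})$ in place of $(\mathcal{M}^{\varepsilon},\mathcal{L}^{\varepsilon})$; and $C^{\varepsilon}\to I$ together with \eqref{eq2.17} for $(\mathcal{M}^{\varepsilon},\mathcal{L}^{\varepsilon})$ gives $\widetilde{\mathcal{M}}^{\varepsilon}\to\mathcal{M}$, $\widetilde{\mathcal{L}}^{\varepsilon}\to\mathcal{L}$, i.e.\ \eqref{eq2.17}. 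Essentially everything here is linear-algebra bookkeeping together with the elementary limiting argument keeping $G^{\varepsilon}$ invertible; the one point needing genuine (if routine) verification is the $2\times2$ block identity showing that a pair of the normalized form is symplectic \emph{only if} its parameter matrix is Hermitian --- the converse of the observation recorded just after \eqref{eq1.7a} --- which I expect to be the main hurdle.
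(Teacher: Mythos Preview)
Your proposal is correct and follows essentially the same line as the paper's proof. The paper writes out the perturbation $\mathcal{M}^{\varepsilon}\mathcal{S}_2^{-1},\,\mathcal{L}^{\varepsilon}\mathcal{S}_1^{-1}$ with $O(\varepsilon)$ blocks and performs two elementary row operations to restore the normalized $\begin{bmatrix}*&0\\ *&I\end{bmatrix}$ and $\begin{bmatrix}I&*\\0&*\end{bmatrix}$ shapes; your single multiplier $C^{\varepsilon}=(G^{\varepsilon})^{-1}$ is exactly the composite of those row operations, and your limiting argument $G^{\varepsilon}\to I_{2n}$ is the same invertibility-for-small-$\varepsilon$ step. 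The one place you are actually more careful than the paper is the Hermitian verification: the paper simply asserts that $[\widetilde X_{ij}(\varepsilon)]$ is Hermitian, whereas you derive it from the symplecticity identity by a block expansion (which indeed gives $Y_{11}^{\varepsilon}=(Y_{11}^{\varepsilon})^{H}$, $Y_{22}^{\varepsilon}=(Y_{22}^{\varepsilon})^{H}$, $Y_{21}^{\varepsilon}=(Y_{12}^{\varepsilon})^{H}$), so your ``main hurdle'' is in fact the routine computation you anticipated.
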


\begin{proof}
Since $(\mathcal{M}, \mathcal{L})\in \mathbb{S}_{\mathcal{S}_1,\mathcal{S}_2}$, it holds that  $\mathcal{M}= \left[%
\begin{array}{cc}
  X_{12} & 0 \\
 X_{22} & I \\
\end{array}%
\right]\mathcal{S}_2$, $ \mathcal{L}=\left[%
\begin{array}{cc}
  I & X_{11} \\
 0 & X_{21} \\
\end{array}%
\right]\mathcal{S}_1$, where $\left[%
\begin{array}{cc}
  X_{11} & X_{12} \\
 X_{21} & X_{22} \\
\end{array}%
\right]$
is Hermitian. Since $\|\Phi^{\varepsilon}\|\leqslant \varepsilon$ for $0<\varepsilon\ll 1$, from \eqref{eq2.14} we have
\begin{align*}
\mathcal{M}^{\varepsilon}=\left[%
\begin{array}{cc}
  X_{12}+O(\varepsilon ) & O(\varepsilon )  \\
 X_{22}+O(\varepsilon )  & I+O(\varepsilon )  \\
\end{array}%
\right]\mathcal{S}_2,\
 \mathcal{L}^{\varepsilon}=\left[%
\begin{array}{cc}
  I+O(\varepsilon )  & X_{11}+O(\varepsilon )  \\
 O(\varepsilon )  & X_{21}+O(\varepsilon )  \\
\end{array}%
\right]\mathcal{S}_1,
\end{align*}
where $O(\varepsilon)$ is big O of $\varepsilon$. Applying row operations to $(\mathcal{M}^{\varepsilon},\mathcal{L}^{\varepsilon})$ yields
\begin{align*}
(\mathcal{M}^{\varepsilon},\mathcal{L}^{\varepsilon})&\overset{\rm{l.e.}}{\sim} \left(\left[%
\begin{array}{cc}
  X_{12}+O(\varepsilon ) & 0  \\
 X_{22}+O(\varepsilon )  & I  \\
\end{array}%
\right]\mathcal{S}_2,\
 \left[%
\begin{array}{cc}
  I+O(\varepsilon )  & X_{11}+O(\varepsilon )  \\
 O(\varepsilon )  & X_{21}+O(\varepsilon )  \\
\end{array}%
\right]\mathcal{S}_1\right)\\
&\overset{\rm{l.e.}}{\sim} \left(\left[%
\begin{array}{cc}
  \widetilde{X}_{12}(\varepsilon ) & 0  \\
 \widetilde{X}_{22}(\varepsilon )  & I  \\
\end{array}%
\right]\mathcal{S}_2,\
 \left[%
\begin{array}{cc}
  I  & \widetilde{X}_{11}(\varepsilon )  \\
 0  & \widetilde{X}_{21}(\varepsilon )  \\
\end{array}%
\right]\mathcal{S}_1\right)\equiv \left(\widetilde{\mathcal{M}}^{\varepsilon}, \widetilde{\mathcal{L}}^{\varepsilon}\right),
\end{align*}
where $ \widetilde{X}_{ij}(\varepsilon ) =X_{ij}+O(\varepsilon )$ for $1\leqslant i,j \leqslant2$. Hence,
 $(\widetilde{\mathcal{M}}^{\varepsilon},\widetilde{\mathcal{L}}^{\varepsilon})\rightarrow (\mathcal{M}, \mathcal{L})$ as $\varepsilon\rightarrow 0$. Using the fact that $(\mathcal{M}^{\varepsilon},\mathcal{L}^{\varepsilon})\overset{\rm{l.e.}}{\sim} (\widetilde{\mathcal{M}}^{\varepsilon}, \widetilde{\mathcal{L}}^{\varepsilon})$, it follows from Theorem \ref{thm2.7} that $\widetilde{\mathcal{M}}^{\varepsilon}$ and $\widetilde{\mathcal{L}}^{\varepsilon}$ are invertible, and satisfy the equalities of \eqref{eq2.16}.
Since $ (\widetilde{\mathcal{M}}^{\varepsilon}, \widetilde{\mathcal{L}}^{\varepsilon})$ is symplectic and  $\left[%
\begin{array}{cc}
  \widetilde{X}_{11}(\varepsilon )  & \widetilde{X}_{12}(\varepsilon )  \\
 \widetilde{X}_{21}(\varepsilon )  & \widetilde{X}_{22}(\varepsilon )  \\
\end{array}%
\right]$
is Hermitian, we have  $(\widetilde{\mathcal{M}}^{\varepsilon}, \widetilde{\mathcal{L}}^{\varepsilon})\in \mathbb{S}_{\mathcal{S}_1,\mathcal{S}_2}$ for $0\leqslant\varepsilon\ll 1$.
\end{proof}

\section{Structure-Preserving Flows}\label{sec3}

\subsection{Construction of Structure-Preserving Flows}\label{sec3.1}
Suppose that $(\mathcal{M}_1, \mathcal{L}_1)$ is a regular symplectic pair with ${\rm ind}_{\infty}(\mathcal{M}_1, \mathcal{L}_1)\leqslant 1$. From Theorem~\ref{thm1.1}, there exist two symplectic matrices $\mathcal{S}_1$ and $\mathcal{S}_2$ such that $(\mathcal{M}_1, \mathcal{L}_1)\in \mathbb{S}_{\mathcal{S}_1,\mathcal{S}_2}$. In this subsection we shall construct a differential equation with $(\mathcal{M}_1, \mathcal{L}_1)$ as an initial matrix pair such that the flow of this differential equation is invariant in $\mathbb{S}_{\mathcal{S}_1,\mathcal{S}_2}$.

We first consider the case that $\mathcal{L}_1$ is invertible.  We recall the class $\mathbb{S}_{\mathcal{S}_1,\mathcal{S}_2}$ of symplectic pairs and the transformation $T_{\mathcal{S}_1,\mathcal{S}_2}$ defined in \eqref{eq1.7a} and \eqref{eq1.7b}, respectively.

\begin{Theorem}\label{thm3.1}
Let $\mathcal{S}_1$, $\mathcal{S}_2\in Sp(n)$, $\mathcal{H}\in \mathbb{C}^{2n\times 2n}$ be Hamiltonian and $X_1=[X^1_{ij}]_{1\le i,j\le 2}\in{\mathbb H}(2n)$. Suppose $X(t)=[X_{ij}(t)]_{1\le i,j\le 2}$, for $t\in (t_0,t_1)$ and $t_0<1<t_1$, is the solution of the initial value problem (IVP):
\begin{align}\label{eq3.1}
\begin{array}{l}
\dot{X}(t)=\mathcal{M}(t)\mathcal{H}\mathcal{J}\mathcal{M}(t)^H,\\
X(1)=X_1,
\end{array}
\end{align}
where $(\mathcal{M}(t),\mathcal{L}(t))=T_{\mathcal{S}_1,\mathcal{S}_2}(X(t))$. If the initial pair $(\mathcal{M}_1, \mathcal{L}_1)\equiv(\mathcal{M}(1), \mathcal{L}(1))$ satisfies
\begin{align}\label{eq3.2}
\mathcal{M}_1= \mathcal{L}_1e^{\mathcal{H}_1}
\end{align}
for some Hamiltonian $\mathcal{H}_1\in \mathbb{C}^{2n\times 2n}$, then
\begin{align}\label{eq3.3}
\mathcal{M}(t)=\mathcal{L}(t)e^{\mathcal{H}_1}e^{\mathcal{H}(t-1)}
\end{align}
for all $t\in (t_0,t_1)$.
\end{Theorem}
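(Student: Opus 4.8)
The plan is to turn the matrix equality \eqref{eq3.3} into a \emph{linear} matrix ODE for $W(t):=\mathcal{L}(t)^{-1}\mathcal{M}(t)$, solve that ODE explicitly using the hypothesis \eqref{eq3.2}, and then argue that the resulting identity propagates to the whole interval $(t_0,t_1)$. As preliminary reductions, note first that since $\mathcal{H}$ is Hamiltonian we have $(\mathcal{H}\mathcal{J})^{H}=\mathcal{H}\mathcal{J}$, so the right-hand side of \eqref{eq3.1} is Hermitian for every $t$; together with $X(1)=X_1\in\mathbb{H}(2n)$ this forces $X(t)\in\mathbb{H}(2n)$ on $(t_0,t_1)$, hence $(\mathcal{M}(t),\mathcal{L}(t))=T_{\mathcal{S}_1,\mathcal{S}_2}(X(t))\in\mathbb{S}_{\mathcal{S}_1,\mathcal{S}_2}$ is a symplectic pair. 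Next, by \eqref{eq3.2} and Lemma~\ref{lem2.5}, $\mathcal{L}_1=\mathcal{L}(1)$ is invertible; and from the block form of $\mathcal{L}(t)$, it is invertible precisely when its $(2,2)$-block $X_{21}(t)$ is, equivalently when $X_{12}(t)=X_{21}(t)^{H}$ (hence $\mathcal{M}(t)$) is. Let $(a,b)\ni 1$ be the connected component of the open set $\{t\in(t_0,t_1):\mathcal{L}(t)\text{ invertible}\}$ containing $1$.

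The core of the argument is carried out on $(a,b)$. Writing $N_{-}=\left[\begin{smallmatrix}0&0\\I&0\end{smallmatrix}\right]$ and $N_{+}=\left[\begin{smallmatrix}0&I\\0&0\end{smallmatrix}\right]$, formula \eqref{eq1.7b} gives $\mathcal{M}(t)=\bigl(X(t)N_{-}+\mathrm{diag}(0,I)\bigr)\mathcal{S}_2$ and $\mathcal{L}(t)=\bigl(X(t)N_{+}+\mathrm{diag}(I,0)\bigr)\mathcal{S}_1$, so differentiating and inserting \eqref{eq3.1},
\[
\dot{\mathcal{M}}(t)=\dot X(t)N_{-}\mathcal{S}_2=\mathcal{M}(t)\mathcal{H}\,\mathcal{J}\mathcal{M}(t)^{H}N_{-}\mathcal{S}_2,\qquad
\dot{\mathcal{L}}(t)=\dot X(t)N_{+}\mathcal{S}_1=\mathcal{M}(t)\mathcal{H}\,\mathcal{J}\mathcal{M}(t)^{H}N_{+}\mathcal{S}_1 .
\]
The key step is then the block-matrix identity, valid wherever $\mathcal{L}(t)$ is invertible,
\[
\mathcal{J}\mathcal{M}(t)^{H}N_{-}\mathcal{S}_2-\mathcal{J}\mathcal{M}(t)^{H}N_{+}\mathcal{S}_1\,\mathcal{L}(t)^{-1}\mathcal{M}(t)=I ,
\]
which I would establish by substituting the explicit forms of $\mathcal{M}(t)$, $\mathcal{L}(t)$, $\mathcal{L}(t)^{-1}$, invoking $X(t)^{H}=X(t)$ and the symplectic relation $\mathcal{J}\mathcal{S}_i^{H}=\mathcal{S}_i^{-1}\mathcal{J}$ to pull $\mathcal{S}_2$ out front: the first summand collapses to $\mathcal{S}_2^{-1}\left[\begin{smallmatrix}I&0\\-X_{22}&0\end{smallmatrix}\right]\mathcal{S}_2$, the second to $\mathcal{S}_2^{-1}\left[\begin{smallmatrix}0&0\\-X_{22}&-I\end{smallmatrix}\right]\mathcal{S}_2$, and the difference is $I$. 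Granting this, factoring $\mathcal{M}(t)\mathcal{H}$ out of $\dot{\mathcal{M}}(t)-\dot{\mathcal{L}}(t)\mathcal{L}(t)^{-1}\mathcal{M}(t)$ yields $\dot{\mathcal{M}}(t)-\dot{\mathcal{L}}(t)\mathcal{L}(t)^{-1}\mathcal{M}(t)=\mathcal{M}(t)\mathcal{H}$, hence
\[
\dot W(t)=\mathcal{L}(t)^{-1}\bigl(\dot{\mathcal{M}}(t)-\dot{\mathcal{L}}(t)\mathcal{L}(t)^{-1}\mathcal{M}(t)\bigr)=W(t)\mathcal{H},\qquad W(1)=\mathcal{L}_1^{-1}\mathcal{M}_1=e^{\mathcal{H}_1},
\]
the last equality being \eqref{eq3.2}. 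By uniqueness for this linear ODE, $W(t)=e^{\mathcal{H}_1}e^{\mathcal{H}(t-1)}$ on $(a,b)$, i.e.\ \eqref{eq3.3} holds there.

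It remains to see $(a,b)=(t_0,t_1)$. If $b<t_1$, then letting $t\to b^{-}$ in \eqref{eq3.3} (all of $\mathcal{M}(t)$, $\mathcal{L}(t)$, $e^{\mathcal{H}(t-1)}$ being continuous) gives $\mathcal{M}(b)=\mathcal{L}(b)Z$ with $Z:=e^{\mathcal{H}_1}e^{\mathcal{H}(b-1)}$ invertible; comparing the $(2,2)$-blocks of $\left[\begin{smallmatrix}X_{12}(b)&0\\X_{22}(b)&I\end{smallmatrix}\right]=\left[\begin{smallmatrix}I&X_{11}(b)\\0&X_{21}(b)\end{smallmatrix}\right](\mathcal{S}_1Z\mathcal{S}_2^{-1})$ shows $X_{21}(b)$ has a right inverse, hence is invertible, so $\mathcal{L}(b)$ is invertible — contradicting that $b$ is an endpoint of a component of the open invertibility set. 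Thus $b=t_1$, and symmetrically $a=t_0$; in particular $\mathcal{L}(t)$ is invertible throughout $(t_0,t_1)$ and \eqref{eq3.3} holds there.

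I expect the only genuine computation to be the block-matrix identity in the middle step; everything else is routine — Hermiticity of $X(t)$, uniqueness for a linear matrix ODE, and a connectedness/continuation argument — and the conceptual ingredient is simply the observation that the structure of $T_{\mathcal{S}_1,\mathcal{S}_2}$ makes $\dot{\mathcal{M}}=\dot X N_{-}\mathcal{S}_2$ and $\dot{\mathcal{L}}=\dot X N_{+}\mathcal{S}_1$, so that $\mathcal{M}(t)\mathcal{H}$ factors out cleanly.
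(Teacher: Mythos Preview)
Your proposal is correct and follows essentially the same strategy as the paper: reduce to the linear ODE $\frac{d}{dt}(\mathcal{L}^{-1}\mathcal{M})=(\mathcal{L}^{-1}\mathcal{M})\mathcal{H}$ on the invertibility interval, solve it, then extend by a continuation argument. The only difference is in how the key identity $\dot{\mathcal{M}}-\dot{\mathcal{L}}\mathcal{L}^{-1}\mathcal{M}=\mathcal{M}\mathcal{H}$ is derived: the paper first writes $\dot X=[\dot{\mathcal{M}},\dot{\mathcal{L}}]\left[\begin{smallmatrix}\mathcal{J}\mathcal{M}^H\\-\mathcal{J}\mathcal{L}^H\end{smallmatrix}\right]$, right-multiplies by $\mathcal{M}^{-H}\mathcal{J}^H$, and invokes the symplectic-pair relation $\mathcal{L}^{-1}\mathcal{M}=-\mathcal{J}\mathcal{L}^H\mathcal{M}^{-H}\mathcal{J}$, whereas you compute $\dot{\mathcal{M}}=\dot X N_-\mathcal{S}_2$, $\dot{\mathcal{L}}=\dot X N_+\mathcal{S}_1$ directly and verify the block identity $\mathcal{J}\mathcal{M}^H N_-\mathcal{S}_2-\mathcal{J}\mathcal{M}^H N_+\mathcal{S}_1\mathcal{L}^{-1}\mathcal{M}=I$ by hand (which checks out, using $X_{12}^H=X_{21}$). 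For the continuation step the paper simply appeals to Lemma~\ref{lem2.5} at the endpoint, while you reprove its content inline by reading off the $(2,2)$-block; either way the conclusion is the same.
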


\begin{proof}
Note that $e^{\mathcal{H}_1}$ is invertible. From \eqref{eq3.2} and Lemma \ref{lem2.5}, we see that both $\mathcal{M}_1$ and $ \mathcal{L}_1$ are invertible. On the other hand, the solution $X(t)$ of IVP \eqref{eq3.1} is continuous. Therefore, there exists an interval $(\tilde{t}_0,\tilde{t}_1)\subseteq(t_0,t_1)$ such that $1\in(\tilde{t}_0,\tilde{t}_1)$ and that both $\mathcal{M}(t)$ and $\mathcal{L}(t)$ are invertible for $t\in (\tilde{t}_0,\tilde{t}_1)$. We first show that assertion \eqref{eq3.3} holds for $t\in (\tilde{t}_0,\tilde{t}_1)$.
By the fact that
\begin{align*}
\mathcal{M}(t)=\left[%
\begin{array}{cc}
X_{12}(t)&0 \\
X_{22}(t)&I  \\
\end{array}%
\right]\mathcal{S}_2,\ \
\mathcal{L}(t)=\left[%
\begin{array}{cc}
I&X_{11}(t) \\
0&X_{21}(t) \\
\end{array}%
\right]\mathcal{S}_1,
\end{align*}
we have
\begin{align}
\dot{X}&=\left[%
\begin{array}{cccc}
  \dot{X}_{12} &0&0& \dot{X}_{11} \\
 \dot{X}_{22} &0&0& \dot{X}_{21} \\
\end{array}%
\right]\left[%
\begin{array}{rr}
  0 & I_n \\
 -X_{12}^H & -X_{22}^H \\
 -X_{11}^H & -X_{21}^H \\
 I_n & 0 \\
\end{array}%
\right]=[\dot{\mathcal{M}}\mathcal{S}_2^{-1},\dot{\mathcal{L}}\mathcal{S}_1^{-1}]\left[%
\begin{array}{c}
 \mathcal{J}(\mathcal{M}\mathcal{S}_2^{-1})^H \\
 -\mathcal{J}(\mathcal{L}\mathcal{S}_1^{-1})^H  \\
\end{array}%
\right]\notag\\
&=[\dot{\mathcal{M}},\dot{\mathcal{L}}]\left[%
\begin{array}{c}
 \mathcal{J}\mathcal{M}^H \\
 -\mathcal{J}\mathcal{L}^H  \\
\end{array}%
\right].\label{eq3.4}
\end{align}
Plugging \eqref{eq3.4} into the first equation of \eqref{eq3.1} and multiplying $\mathcal{M}^{-H}\mathcal{J}^H$ from the right to the resulting equation, we have
 \begin{align}\label{eq3.5}
 [\dot{\mathcal{M}},\dot{\mathcal{L}}]\left[%
\begin{array}{c}
 I \\
 \mathcal{J}\mathcal{L}^H \mathcal{M}^{-H}\mathcal{J} \\
\end{array}%
\right]=\mathcal{M}\mathcal{H}, \ \ t\in (\tilde{t}_0,\tilde{t}_1).
 \end{align}
Since $(\mathcal{M},\mathcal{L})$ forms a symplectic pair, and both $\mathcal{M}$ and $\mathcal{L}$ are invertible, the equality $\mathcal{M}\mathcal{J}\mathcal{M}^{H}=\mathcal{L}\mathcal{J}\mathcal{L}^{H}$ implies that $\mathcal{L}^{-1}\mathcal{M}=-\mathcal{J}\mathcal{L}^{H}\mathcal{M}^{-H}\mathcal{J}$. Thus, \eqref{eq3.5} becomes
\begin{align}\label{eq3.6}
\dot{\mathcal{M}}-\dot{\mathcal{L}}(\mathcal{L}^{-1}\mathcal{M})=\mathcal{M}\mathcal{H}.
\end{align}
Multiplying $\mathcal{L}^{-1}$ from the left of \eqref{eq3.6}, we thus obtain $$\mathcal{L}^{-1}\dot{\mathcal{M}}-(\mathcal{L}^{-1}\dot{\mathcal{L}}\mathcal{L}^{-1})\mathcal{M}=\mathcal{L}^{-1}\mathcal{M}\mathcal{H}.$$
This coincides with
\begin{align}\label{eq3.7}
\frac{d}{dt}(\mathcal{L}^{-1}\mathcal{M})=(\mathcal{L}^{-1}\mathcal{M})\mathcal{H}.
\end{align}
Using \eqref{eq3.7} together with the initial condition in  \eqref{eq3.1} and \eqref{eq3.2}, it follows that $\mathcal{L}(t)^{-1}\mathcal{M}(t)=e^{\mathcal{H}_1}e^{\mathcal{H}(t-1)}$ for $t\in (\tilde{t}_0, \tilde{t}_1)$. Hence, assertion \eqref{eq3.3} holds.

Now we claim that $\tilde{t}_0=t_0$ and $\tilde{t}_1=t_1$. We only prove the case $\tilde{t}_1=t_1$. Suppose that $\tilde{t}_1<t_1$. This implies that $\mathcal{M}(\tilde{t}_1)$ and $\mathcal{L}(\tilde{t}_1)$ are singular. Using \eqref{eq3.3} and taking the limit $t\rightarrow \tilde{t}_1^{-}$, we have
$\mathcal{M}(\tilde{t}_1)=\mathcal{L}(\tilde{t}_1)e^{\mathcal{H}_1}e^{\mathcal{H}(\tilde{t}_1-1)}$.
Since $e^{\mathcal{H}_1}e^{\mathcal{H}(\tilde{t}_1-1)}$ is invertible, $\mathcal{M}(\tilde{t}_1)$ and $\mathcal{L}(\tilde{t}_1)$ are invertible by Lemma~\ref{lem2.5}. This is a contradiction. Hence, $\tilde{t}_0=t_0$ and $\tilde{t}_1=t_1$.
\end{proof}

\begin{Remark}\label{rem3.1}
$(i)$ In Theorem~\ref{thm3.1}, since $X_1$ and $\mathcal{H}\mathcal{J}$ are Hermitian, it is easily seen that the solution, $X(t)=[X_{ij}(t)]_{1\leqslant i,j\leqslant 2}$ for $t\in (t_0,t_1)$, of IVP \eqref{eq3.1} is also Hermitian. From the definition that $(\mathcal{M}(t),\mathcal{L}(t))=T_{\mathcal{S}_1,\mathcal{S}_2}(X(t))$, we have that the curve $\{(\mathcal{M}(t),\mathcal{L}(t))|t\in (t_0,t_1)\}\subset\mathbb{S}_{\mathcal{S}_1,\mathcal{S}_2}$.\\
$(ii)$  Suppose that $(\mathcal{M}_1, \mathcal{L}_1)$ is a {\it real} symplectic pair.  If the Hamiltonian matrix $\mathcal{H}$ in \eqref{eq3.1} is also real,  then the curve $\{(\mathcal{M}(t),\mathcal{L}(t))|t\in (t_0,t_1)\}\subset\mathbb{S}_{\mathcal{S}_1,\mathcal{S}_2}$ is real.
\end{Remark}

In Theorem~\ref{thm3.1}, the assumption \eqref{eq3.2} implies that both $\mathcal{M}_1$ and $\mathcal{L}_1$ are invertible. It turns out that ${\rm ind}_{\infty} (\mathcal{M}_1,\mathcal{L}_1)=0$. We now show the invariance property of the flow \eqref{eq3.1} with the general assumption ${\rm ind}_{\infty} (\mathcal{M}_1,\mathcal{L}_1)\le 1$.

\begin{Theorem}\label{thm3.2}
Let $\mathcal{S}_1$, $\mathcal{S}_2\in Sp(n)$ and $X_1\in{\mathbb H}(2n)$ be given such that the symplectic pair $(\mathcal{M}_1, \mathcal{L}_1)=T_{\mathcal{S}_1,\mathcal{S}_2}(X_1)$ is regular with ${\rm ind}_{\infty} (\mathcal{M}_1,\mathcal{L}_1)\leqslant 1$. Let the idempotent matrices $\Pi_0=\Pi_0(\mathcal{M}_1,\mathcal{L}_1)$, $\Pi_\infty=\Pi_\infty(\mathcal{M}_1,\mathcal{L}_1)$ and the Hamiltonian matrix $\mathcal{H}=\mathcal{H}(\mathcal{M}_1,\mathcal{L}_1)$ be defined in Definition~\ref{def2.2} such that (from Lemma~\ref{lem2.6})
\begin{align}\label{eq3.8}
\mathcal{M}_1\Pi_0=\mathcal{L}_1\Pi_\infty e^{\mathcal{H}}.
\end{align}
If $X(t)=[X_{ij}(t)]_{1\leqslant i,j\leqslant 2}$, for $t\in (t_0,t_1)$, $t_0<1<t_1$, is the solution of the IVP
\begin{align}\label{eq3.9}
\begin{array}{l}
\dot{X}(t)=\mathcal{M}(t)\mathcal{H}\mathcal{J}\mathcal{M}(t)^H,\\
X(1)=X_1,
\end{array}
\end{align}
where $(\mathcal{M}(t),\mathcal{L}(t))=T_{\mathcal{S}_1,\mathcal{S}_2}(X(t))$, then
\begin{align}\label{eq3.10}
\mathcal{M}(t)\Pi_0=\mathcal{L}(t)\Pi_\infty e^{\mathcal{H}t}
\end{align}
for all $t\in (t_0,t_1)$.
\end{Theorem}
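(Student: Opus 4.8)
The plan is to reduce the statement to Theorem~\ref{thm3.1} by an index-zero perturbation based on Corollary~\ref{cor2.8}. First I would rewrite \eqref{eq3.10} transparently. With $\mathbf{U}=[U_1\,|\,U_0,U_\infty]$ as in Theorem~\ref{thm2.3}, Remark~\ref{rem2.1} together with the identity $e^{\mathcal{H}}=\mathbf{U}\,\mathrm{diag}(\widehat{\mathcal{S}},I_{2\ell})\mathbf{U}^{-1}$ obtained in the proof of Lemma~\ref{lem2.6} gives $\mathcal{H}=\mathbf{U}\,\mathrm{diag}(\widehat{\mathcal{H}},0_{2\ell})\mathbf{U}^{-1}$, $\Pi_0=\mathbf{U}\,\mathrm{diag}(I_{2\hat n},I_\ell,0)\mathbf{U}^{-1}$ and $\Pi_\infty=\mathbf{U}\,\mathrm{diag}(I_{2\hat n},0,I_\ell)\mathbf{U}^{-1}$, so
\begin{align*}
\mathcal{M}(t)\Pi_0-\mathcal{L}(t)\Pi_\infty e^{\mathcal{H}t}=\big[\,\mathcal{M}(t)U_1-\mathcal{L}(t)U_1 e^{\widehat{\mathcal{H}}t},\ \mathcal{M}(t)U_0,\ -\mathcal{L}(t)U_\infty\,\big]\mathbf{U}^{-1}.
\end{align*}
As $\mathbf{U}$ is invertible, \eqref{eq3.10} is equivalent to the three relations $\mathcal{M}(t)U_1=\mathcal{L}(t)U_1 e^{\widehat{\mathcal{H}}t}$, $\mathcal{M}(t)U_0=0$ and $\mathcal{L}(t)U_\infty=0$ on $(t_0,t_1)$, i.e.\ to the Eigenvector-Preserving Property, which is what I would establish. (When ${\rm ind}_{\infty}(\mathcal{M}_1,\mathcal{L}_1)=0$, $U_0$ and $U_\infty$ are absent and \eqref{eq3.10} follows directly from Theorem~\ref{thm3.1}; so I may assume ${\rm ind}_{\infty}(\mathcal{M}_1,\mathcal{L}_1)=1$.)

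Next, fix $\Phi^\varepsilon=\varepsilon I_\ell$ and let $(\widetilde{\mathcal{M}}^\varepsilon,\widetilde{\mathcal{L}}^\varepsilon)\in\mathbb{S}_{\mathcal{S}_1,\mathcal{S}_2}$ be the index-zero pairs supplied by Corollary~\ref{cor2.8}: $\widetilde{\mathcal{L}}^\varepsilon$ is invertible, \eqref{eq2.16} holds, and the Hermitian matrix $\widetilde{X}^\varepsilon:=T_{\mathcal{S}_1,\mathcal{S}_2}^{-1}(\widetilde{\mathcal{M}}^\varepsilon,\widetilde{\mathcal{L}}^\varepsilon)$ converges to $X_1$ as $\varepsilon\to0$. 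By Remark~\ref{rem2.1.5}, $\widetilde{\mathcal{M}}^\varepsilon=\widetilde{\mathcal{L}}^\varepsilon e^{\mathcal{H}_1^\varepsilon}$ for some Hamiltonian $\mathcal{H}_1^\varepsilon$; thus $e^{\mathcal{H}_1^\varepsilon}=(\widetilde{\mathcal{L}}^\varepsilon)^{-1}\widetilde{\mathcal{M}}^\varepsilon$, and from \eqref{eq2.16} (with $\Phi^\varepsilon=\varepsilon I_\ell$) one reads off $e^{\mathcal{H}_1^\varepsilon}\mathbf{U}=\mathbf{U}\,\mathrm{diag}(\widehat{\mathcal{S}},\varepsilon I_\ell,\varepsilon^{-1}I_\ell)$. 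Let $X^\varepsilon(t)$ solve the IVP \eqref{eq3.9} with the same Hamiltonian $\mathcal{H}$ and initial value $X^\varepsilon(1)=\widetilde{X}^\varepsilon$, and set $(\mathcal{M}^\varepsilon(t),\mathcal{L}^\varepsilon(t))=T_{\mathcal{S}_1,\mathcal{S}_2}(X^\varepsilon(t))$. Applying Theorem~\ref{thm3.1} to this index-zero initial pair gives, on the maximal interval of existence,
\begin{align*}
\mathcal{M}^\varepsilon(t)=\mathcal{L}^\varepsilon(t)\,e^{\mathcal{H}_1^\varepsilon}e^{\mathcal{H}(t-1)}=\mathcal{L}^\varepsilon(t)\,\mathbf{U}\,\mathrm{diag}\!\big(e^{\widehat{\mathcal{H}}t},\ \varepsilon I_\ell,\ \varepsilon^{-1}I_\ell\big)\mathbf{U}^{-1},
\end{align*}
since $e^{\mathcal{H}(t-1)}=\mathbf{U}\,\mathrm{diag}(e^{\widehat{\mathcal{H}}(t-1)},I_\ell,I_\ell)\mathbf{U}^{-1}$ and $e^{\widehat{\mathcal{H}}}e^{\widehat{\mathcal{H}}(t-1)}=e^{\widehat{\mathcal{H}}t}$. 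Post-multiplying in turn by $U_1$, $U_0$ and $U_\infty$ yields
\begin{align*}
\mathcal{M}^\varepsilon(t)U_1=\mathcal{L}^\varepsilon(t)U_1 e^{\widehat{\mathcal{H}}t},\qquad \mathcal{M}^\varepsilon(t)U_0=\varepsilon\,\mathcal{L}^\varepsilon(t)U_0,\qquad \mathcal{L}^\varepsilon(t)U_\infty=\varepsilon\,\mathcal{M}^\varepsilon(t)U_\infty.
\end{align*}

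Finally I would let $\varepsilon\to0$. Since the right-hand side of \eqref{eq3.9} is a polynomial in $X$, hence locally Lipschitz, solutions of \eqref{eq3.9} depend continuously on their initial data; therefore, for every compact $[a,b]\subset(t_0,t_1)$ with $1\in[a,b]$ and all sufficiently small $\varepsilon$, $X^\varepsilon$ exists on $[a,b]$ and $X^\varepsilon\to X$ uniformly there, whence $(\mathcal{M}^\varepsilon(t),\mathcal{L}^\varepsilon(t))\to(\mathcal{M}(t),\mathcal{L}(t))$ for each $t\in(t_0,t_1)$. Letting $\varepsilon\to0$ in the three relations above gives $\mathcal{M}(t)U_1=\mathcal{L}(t)U_1 e^{\widehat{\mathcal{H}}t}$, $\mathcal{M}(t)U_0=0$ and $\mathcal{L}(t)U_\infty=0$, which by the first paragraph is \eqref{eq3.10}. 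The main obstacle is the singular nature of this limit: the factor $e^{\mathcal{H}_1^\varepsilon}e^{\mathcal{H}(t-1)}$ carries the block $\varepsilon^{-1}I_\ell$ coming from the eigenvalues at infinity of $(\mathcal{M}_1,\mathcal{L}_1)$ and diverges as $\varepsilon\to0$, so one cannot pass to the limit directly in $\mathcal{M}^\varepsilon(t)=\mathcal{L}^\varepsilon(t)e^{\mathcal{H}_1^\varepsilon}e^{\mathcal{H}(t-1)}$; one must first split off the invariant directions $U_1,U_0,U_\infty$ and, along $U_\infty$, move the factor $\varepsilon$ onto the bounded side. A secondary point is that the perturbed flows must be defined on a common subinterval of $(t_0,t_1)$ — exactly what continuous dependence on initial data guarantees.
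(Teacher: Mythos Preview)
Your proof is correct and follows essentially the same approach as the paper: perturb to an index-zero pair via Corollary~\ref{cor2.8} with $\Phi^\varepsilon=\varepsilon I_\ell$, apply Theorem~\ref{thm3.1} to the perturbed IVP, then pass to the limit using continuous dependence on initial data. The only cosmetic difference is that the paper keeps the computation at the level of $\Pi_0,\Pi_\infty$ (arriving at $\mathcal{M}^\varepsilon(t)e^{-\mathcal{H}t}\mathbf{U}(I_{2\hat n}\oplus I_\ell\oplus\varepsilon I_\ell)\mathbf{U}^{-1}=\mathcal{L}^\varepsilon(t)\mathbf{U}(I_{2\hat n}\oplus\varepsilon I_\ell\oplus I_\ell)\mathbf{U}^{-1}$ before letting $\varepsilon\to0$), whereas you unpack into the columns $U_1,U_0,U_\infty$ at the outset; both handle the divergent $\varepsilon^{-1}$ block in the $U_\infty$-direction by shifting the factor $\varepsilon$ to the bounded side.
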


\begin{Remark}\label{rem3.2.5}
Note that (i) Eq. \eqref{eq3.8} holds true due to Lemma~\ref{lem2.6}; (ii) if the pair $(\mathcal{M}_1, \mathcal{L}_1)$ is real symplectic  and its Jordan blocks of negative eigenvalues satisfy the specified conditions mentioned in Remark~\ref{rem2.2}, then there exists a real Hamiltonian matrix $\mathcal{H}$ such that \eqref{eq3.8} holds;  (iii) if  $\mathcal{M}_1$ and $\mathcal{L}_1$ in \eqref{eq3.8} are invertible, i.e., $\Pi_o=\Pi_\infty=I$, then the result of  Theorem~\ref{thm3.2} is consistent with Theorem~\ref{thm3.1} in which $\mathcal{H}_1$ is replaced by $\mathcal{H}$; and (iv) from definitions of $\mathcal{H}$, $\Pi_0$ and $\Pi_\infty$, Eq. \eqref{eq3.10} can be rewritten as
\begin{align*}
\mathcal{M}(t)U_0=0,\ \ \mathcal{L}(t)U_\infty=0\text{ and }\mathcal{M}(t)U_1=\mathcal{L}(t)U_1e^{\widehat{\mathcal{H}}t}.
\end{align*}
This  shows that the flow $(\mathcal{M}(t), \mathcal{L}(t))=T_{\mathcal{S}_1,\mathcal{S}_2}(X(t))$ satisfies \textbf{Eigenvector-Preserving Property}, where $X(t)$ is the solution of IVP \eqref{eq3.9}. Actually, this flow $(\mathcal{M}(t), \mathcal{L}(t))$ is the structure-preserving flow with the initial $(\mathcal{M}_1,\mathcal{L}_1)$.
\end{Remark}

\begin{proof}[Proof of Theorem~\ref{thm3.2}]
Applying Corollary~\ref{cor2.8} with $\Phi^{\varepsilon}=\varepsilon I$, we see that $(\mathcal{M}_1,\mathcal{L}_1)$ is left equivalent to the symplectic pair
\begin{align*}
(\mathcal{M}_1^{\varepsilon},\mathcal{L}_1^{\varepsilon})\equiv(\widetilde{\mathcal{M}}^{\varepsilon},\widetilde{\mathcal{L}}^{\varepsilon})=\left(\left[%
\begin{array}{cc}
  X_{12}^{1\varepsilon} & 0 \\
 X_{22}^{1\varepsilon} & I \\
\end{array}%
\right]\mathcal{S}_2,\left[%
\begin{array}{cc}
  I & X_{11}^{1\varepsilon} \\
 0 & X_{21}^{1\varepsilon} \\
\end{array}%
\right]\mathcal{S}_1\right)\in \mathbb{S}_{\mathcal{S}_1,\mathcal{S}_2}
\end{align*}
for each $0\leqslant\varepsilon\ll 1$. In addition, $\mathcal{M}_1^{\varepsilon}$ and $\mathcal{L}_1^{\varepsilon}$ are invertible for $\varepsilon>0$ and
\begin{align*}
(\mathcal{M}_1^{\varepsilon},\mathcal{L}_1^{\varepsilon})\rightarrow (\mathcal{M}_1,\mathcal{L}_1) \text{ as }\varepsilon\rightarrow 0.
\end{align*}
Let $X^{\varepsilon}(t)=\left[%
\begin{array}{cc}
  X^{\varepsilon}_{11}(t) & X^{\varepsilon}_{12}(t) \\
 X^{\varepsilon}_{21}(t) & X^{\varepsilon}_{22}(t) \\
\end{array}%
\right]$ be the solution of  the IVP
\begin{align*}
\begin{array}{l}
\dot{X}^{\varepsilon}(t)=\mathcal{M}^{\varepsilon}(t)\mathcal{H}\mathcal{J}\mathcal{M}^{\varepsilon}(t)^H,\\
X^{\varepsilon}(1)=X^{\varepsilon}_1,
\end{array}
\end{align*}
where $X^{\varepsilon}_1=\left[%
\begin{array}{cc}
  X_{11}^{1\varepsilon} & X_{12}^{1\varepsilon} \\
 X_{21}^{1\varepsilon} & X_{22}^{1\varepsilon} \\
\end{array}%
\right]$ and $(\mathcal{M}^\varepsilon(t),\mathcal{L}^\varepsilon(t))=T_{\mathcal{S}_1,\mathcal{S}_2}(X^\varepsilon(t))$. By the continuous dependence of
 the solution on the initial condition of the IVP (see e.g. Section~8.4 in \cite{Hirsch_Smale:1974}), we have
\begin{align*}
(\mathcal{M}^{\varepsilon}(t),\mathcal{L}^{\varepsilon}(t))\rightarrow (\mathcal{M}(t),\mathcal{L}(t)) \text{ as }\varepsilon\rightarrow 0.
\end{align*}
On the other hand, it follows from Theorem \ref{thm3.1} that $\mathcal{M}^{\varepsilon}(t)=\mathcal{L}^{\varepsilon}(t)({\mathcal{L}^{\varepsilon}_1}^{-1}\mathcal{M}^{\varepsilon}_1)e^{\mathcal{H}(t-1)}$.
Consequently,
\begin{align}\label{eq3.11}
\mathcal{M}^{\varepsilon}(t)e^{-\mathcal{H}t}e^{\mathcal{H}}=\mathcal{L}^{\varepsilon}(t)({\mathcal{L}^{\varepsilon}_1}^{-1}\mathcal{M}^{\varepsilon}_1).
\end{align}
Let $\mathbf{U}=\mathbf{U}(\mathcal{M}_1,\mathcal{L}_1)=[U_1|U_0,U_{\infty}]$ satisfy \eqref{eq2.5} and \eqref{eq2.6} in which $(\mathcal{M},\mathcal{L})$ is replaced by $(\mathcal{M}_1,\mathcal{L}_1)$. From \eqref{eq2.16}, we have
\begin{align}\label{eq3.12}
\mathcal{M}_1^{\varepsilon}[U_1,U_0,U_{\infty}](I_{2\hat{n}}\oplus I_{\ell}\oplus \varepsilon I_{\ell})=\mathcal{L}_1^{\varepsilon}[U_1,U_0,U_{\infty}](\widehat{\mathcal{S}} \oplus \varepsilon  I_{\ell}\oplus I_{\ell}).
\end{align}
From the definition of $\mathcal{H}$ in \eqref{eq2.10}, we have
\begin{align}\label{eq3.13}
e^{\mathcal{H}}=\mathbf{U}(\widehat{\mathcal{S}} \oplus   I_{\ell}\oplus I_{\ell})\mathbf{U}^{-1}.
\end{align}
Plugging  \eqref{eq3.12} and \eqref{eq3.13}  into  \eqref{eq3.11}, we have
\begin{align*}
\mathcal{M}^{\varepsilon}(t)e^{-\mathcal{H}t}\mathbf{U}(I_{2\hat{n}}\oplus I_{\ell}\oplus \varepsilon I_{\ell})\mathbf{U}^{-1}=\mathcal{L}^{\varepsilon}(t)\mathbf{U}(I_{2\hat{n}}\oplus \varepsilon I_{\ell}\oplus  I_{\ell})\mathbf{U}^{-1}.
\end{align*}
When $\varepsilon$ approaches $0$, it follows from \eqref{eq2.11} that
\begin{align*}
\mathcal{M}(t)e^{-\mathcal{H}t}\Pi_{0}=\mathcal{L}(t)\Pi_{\infty}.
\end{align*}
Since $e^{-\mathcal{H}t}$ commutes with $\Pi_0$, we obtain assertion \eqref{eq3.10}.
\end{proof}

\begin{Corollary}\label{cor3.3}
Theorem \ref{thm3.2} holds true if Eq.  \eqref{eq3.9}  is replaced by
\begin{align*}
\begin{array}{l}
\dot{X}(t)=\mathcal{L}(t)\mathcal{H}\mathcal{J}\mathcal{L}(t)^H,\\
X(1)=X_1.
\end{array}
\end{align*}
\end{Corollary}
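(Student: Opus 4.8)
The plan is to mirror, almost line for line, the two-step architecture of the proof of Theorem~\ref{thm3.2}: first dispose of the case in which $\mathcal{M}_1$ and $\mathcal{L}_1$ are both invertible (the $\mathcal{L}$-analogue of Theorem~\ref{thm3.1}), and then pass to the general situation ${\rm ind}_{\infty}(\mathcal{M}_1,\mathcal{L}_1)\leqslant 1$ through the perturbation device of Corollary~\ref{cor2.8}.

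In the invertible case, the opening identity \eqref{eq3.4} is purely structural and does not depend on which right-hand side drives the IVP, so it still holds. Substituting the new right-hand side $\dot X=\mathcal{L}(t)\mathcal{H}\mathcal{J}\mathcal{L}(t)^H$ into \eqref{eq3.4} and multiplying on the right by $\mathcal{L}^{-H}\mathcal{J}^H$ gives $\dot{\mathcal{M}}\bigl(\mathcal{J}\mathcal{M}^H\mathcal{L}^{-H}\mathcal{J}^H\bigr)-\dot{\mathcal{L}}=\mathcal{L}\mathcal{H}$. The symplectic identity $\mathcal{M}\mathcal{J}\mathcal{M}^H=\mathcal{L}\mathcal{J}\mathcal{L}^H$ yields, for invertible $\mathcal{M},\mathcal{L}$, the relation $\mathcal{J}\mathcal{M}^H\mathcal{L}^{-H}\mathcal{J}^H=\mathcal{M}^{-1}\mathcal{L}$, so the equation becomes $\dot{\mathcal{M}}\mathcal{M}^{-1}\mathcal{L}-\dot{\mathcal{L}}=\mathcal{L}\mathcal{H}$; left-multiplying by $\mathcal{M}^{-1}$ and recognizing the left side as $-\frac{d}{dt}(\mathcal{M}^{-1}\mathcal{L})$ produces the linear matrix ODE $\frac{d}{dt}\bigl(\mathcal{M}(t)^{-1}\mathcal{L}(t)\bigr)=-\bigl(\mathcal{M}(t)^{-1}\mathcal{L}(t)\bigr)\mathcal{H}$. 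Integrating from $t=1$ and inverting, one obtains $\mathcal{M}(t)=\mathcal{L}(t)e^{\mathcal{H}(t-1)}e^{\mathcal{H}_1}$ whenever $\mathcal{M}_1=\mathcal{L}_1e^{\mathcal{H}_1}$ with $\mathcal{H}_1$ Hamiltonian — and such an $\mathcal{H}_1$ exists by Theorem~\ref{thm5.1} once $\mathcal{L}_1$ is invertible. Since the factor $e^{\mathcal{H}(t-1)}e^{\mathcal{H}_1}$ is invertible, Lemma~\ref{lem2.5} prevents $\mathcal{M}(t)$ and $\mathcal{L}(t)$ from becoming singular on $(t_0,t_1)$, so this identity propagates over the whole interval exactly as in the final paragraph of the proof of Theorem~\ref{thm3.1}.

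For the general case I would reuse the perturbation step of the proof of Theorem~\ref{thm3.2} verbatim: Corollary~\ref{cor2.8} with $\Phi^\varepsilon=\varepsilon I$ produces, for $0<\varepsilon\ll 1$, pairs $(\mathcal{M}_1^\varepsilon,\mathcal{L}_1^\varepsilon)\in\mathbb{S}_{\mathcal{S}_1,\mathcal{S}_2}$ with $\mathcal{M}_1^\varepsilon,\mathcal{L}_1^\varepsilon$ invertible, $(\mathcal{M}_1^\varepsilon,\mathcal{L}_1^\varepsilon)\to(\mathcal{M}_1,\mathcal{L}_1)$, and satisfying \eqref{eq2.16}. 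Letting $X^\varepsilon(t)$ solve the perturbed IVP with right-hand side $\mathcal{L}^\varepsilon(t)\mathcal{H}\mathcal{J}\mathcal{L}^\varepsilon(t)^H$, continuous dependence on initial data gives $(\mathcal{M}^\varepsilon(t),\mathcal{L}^\varepsilon(t))\to(\mathcal{M}(t),\mathcal{L}(t))$, and the invertible case yields $\mathcal{M}^\varepsilon(t)=\mathcal{L}^\varepsilon(t)e^{\mathcal{H}(t-1)}\bigl((\mathcal{L}_1^\varepsilon)^{-1}\mathcal{M}_1^\varepsilon\bigr)$. Using \eqref{eq2.16} and the definition \eqref{eq2.10} of $\mathcal{H}$ one diagonalizes $(\mathcal{L}_1^\varepsilon)^{-1}\mathcal{M}_1^\varepsilon=\mathbf{U}\bigl(\widehat{\mathcal{S}}\oplus\varepsilon I_\ell\oplus\varepsilon^{-1}I_\ell\bigr)\mathbf{U}^{-1}$ and $e^{\mathcal{H}(t-1)}=\mathbf{U}\bigl(e^{\widehat{\mathcal{H}}(t-1)}\oplus I_\ell\oplus I_\ell\bigr)\mathbf{U}^{-1}$, so the identity becomes $\mathcal{M}^\varepsilon(t)=\mathcal{L}^\varepsilon(t)\mathbf{U}\bigl(e^{\widehat{\mathcal{H}}t}\oplus\varepsilon I_\ell\oplus\varepsilon^{-1}I_\ell\bigr)\mathbf{U}^{-1}$. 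Multiplying on the right by $\mathbf{U}(I_{2\hat{n}}\oplus I_\ell\oplus\varepsilon I_\ell)\mathbf{U}^{-1}$ to absorb the $\varepsilon^{-1}$ factor and then letting $\varepsilon\to 0$, the two sides converge, by \eqref{eq2.11}, precisely to $\mathcal{M}(t)\Pi_0$ and $\mathcal{L}(t)\Pi_\infty e^{\mathcal{H}t}$, which is \eqref{eq3.10}.

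The step I expect to require the most care is exactly this last limit: $(\mathcal{L}_1^\varepsilon)^{-1}\mathcal{M}_1^\varepsilon$ blows up like $\varepsilon^{-1}$ in the $U_\infty$ direction, so one must insert the correct idempotent regularization and track the order of the non-commuting factors $\mathcal{L}^\varepsilon(t)$, $e^{\mathcal{H}(t-1)}$, $(\mathcal{L}_1^\varepsilon)^{-1}\mathcal{M}_1^\varepsilon$ so that the limit lands on $\Pi_0,\Pi_\infty$ and not on spurious terms. A shortcut that sidesteps this altogether is available: along the perturbed structure-preserving flow of Theorem~\ref{thm3.2} one has $\mathcal{M}^\varepsilon(t)=\mathcal{L}^\varepsilon(t)\Psi^\varepsilon(t)$ with $\Psi^\varepsilon(t)=e^{\mathcal{H}(t-1)}(\mathcal{L}_1^\varepsilon)^{-1}\mathcal{M}_1^\varepsilon$ symplectic and commuting with $\mathcal{H}$ (both of its factors commute with $\mathcal{H}$, since $\widehat{\mathcal{S}}=e^{\widehat{\mathcal{H}}}$); hence $\mathcal{M}^\varepsilon\mathcal{H}\mathcal{J}(\mathcal{M}^\varepsilon)^H=\mathcal{L}^\varepsilon\bigl(\Psi^\varepsilon\mathcal{H}(\Psi^\varepsilon)^{-1}\bigr)\mathcal{J}(\mathcal{L}^\varepsilon)^H=\mathcal{L}^\varepsilon\mathcal{H}\mathcal{J}(\mathcal{L}^\varepsilon)^H$, so the $\mathcal{M}$-flow of Theorem~\ref{thm3.2} already solves the $\mathcal{L}$-driven IVP with the same initial data; by uniqueness of solutions of the IVP the two flows coincide — first for each perturbation, then in the limit $\varepsilon\to 0$ — and \eqref{eq3.10} carries over at once.
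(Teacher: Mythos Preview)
Both routes you sketch are correct. The main approach --- re-deriving the analogues of Theorems~\ref{thm3.1} and~\ref{thm3.2} from scratch for the $\mathcal{L}$-driven IVP --- works, but does far more than the paper. Your shortcut is close in spirit to the paper's proof but still detours unnecessarily through the perturbed family $(\mathcal{M}_1^\varepsilon,\mathcal{L}_1^\varepsilon)$.

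The paper's argument is a single five-line computation: along the flow of Theorem~\ref{thm3.2}, equation~\eqref{eq3.10} is already available, and combining it with the identities $\Pi_0\mathcal{H}\mathcal{J}\Pi_0^H=\Pi_\infty\mathcal{H}\mathcal{J}\Pi_\infty^H=\mathcal{H}\mathcal{J}$, the commutativity $e^{\mathcal{H}t}\mathcal{H}=\mathcal{H}e^{\mathcal{H}t}$, and the symplecticity $e^{\mathcal{H}t}\mathcal{J}(e^{\mathcal{H}t})^H=\mathcal{J}$ yields $\mathcal{M}(t)\mathcal{H}\mathcal{J}\mathcal{M}(t)^H=\mathcal{L}(t)\mathcal{H}\mathcal{J}\mathcal{L}(t)^H$ directly, with no $\varepsilon$. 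Hence the $\mathcal{M}$-flow of \eqref{eq3.9} already solves the $\mathcal{L}$-IVP, and by uniqueness the two flows coincide, so \eqref{eq3.10} carries over. Your shortcut establishes the same right-hand-side identity, but only after regularizing to the invertible case; the paper simply observes that \eqref{eq3.10} itself furnishes the required relation between $\mathcal{M}(t)$ and $\mathcal{L}(t)$ in the singular case, making the perturbation step redundant. What your longer approach buys is independence from Theorem~\ref{thm3.2}: it would stand on its own even if one had never proved the $\mathcal{M}$-version.
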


\begin{proof}
It suffices to show that $\mathcal{M}(t)\mathcal{H}\mathcal{J}\mathcal{M}(t)^H=\mathcal{L}(t)\mathcal{H}\mathcal{J}\mathcal{L}(t)^H$.
Using definitions of  $\Pi_0=\Pi_0(\mathcal{M}_1,\mathcal{L}_1)$ and $\Pi_{\infty}=\Pi_\infty(\mathcal{M}_1,\mathcal{L}_1)$ in  \eqref{eq2.11}, we have $\mathcal{M}(t)=\mathcal{M}(t)\Pi_0$, $\mathcal{L}(t)=\mathcal{L}(t)\Pi_\infty$. It follows from \eqref{eq3.10} and the symplecticity of $e^{\mathcal{H}t}$ that
\begin{align*}
\mathcal{M}(t)\mathcal{H}\mathcal{J}\mathcal{M}(t)^H&=\mathcal{M}(t)\Pi_0\mathcal{H}\mathcal{J}\Pi_0^H\mathcal{M}(t)^H\\&=\mathcal{L}(t)\Pi_\infty e^{\mathcal{H}t}\mathcal{H}\mathcal{J}(e^{\mathcal{H}t})^H\Pi_\infty^H\mathcal{L}(t)^H\\
&=\mathcal{L}(t)\Pi_\infty \mathcal{H}e^{\mathcal{H}t}\mathcal{J}(e^{\mathcal{H}t})^H\Pi_\infty^H\mathcal{L}(t)^H\\
&=\mathcal{L}(t)\Pi_\infty \mathcal{H}\mathcal{J}\Pi_\infty^H\mathcal{L}(t)^H=\mathcal{L}(t)\mathcal{H}\mathcal{J}\mathcal{L}(t)^H.
\end{align*}
\end{proof}

Now, we study the invariance property \eqref{eq3.10}. To this end, for given $\mathcal{S}_1$, $\mathcal{S}_2\in Sp(n)$, we let $(\mathcal{M}_1,\mathcal{L}_1)\in \mathbb{S}_{\mathcal{S}_1,\mathcal{S}_2}$ with ${\rm ind}_{\infty} (\mathcal{M}_1,\mathcal{L}_1)\leqslant 1$. Let the idempotent matrices $\Pi_0=\Pi_0(\mathcal{M}_1,\mathcal{L}_1)$, $\Pi_\infty=\Pi_\infty(\mathcal{M}_1,\mathcal{L}_1)$ and $\mathcal{H}=\mathcal{H}(\mathcal{M}_1,\mathcal{L}_1)$ be defined as in Definition~\ref{def2.2}. Consider the linear system
\begin{align}\label{eq3.14}
\begin{cases}
\mathcal{M}(t)\Pi_0=\mathcal{L}(t)\Pi_\infty e^{\mathcal{H}t},&\\
(\mathcal{M}(t),\mathcal{L}(t))\in \mathbb{S}_{\mathcal{S}_1,\mathcal{S}_2},&
\end{cases}
\end{align}
where $t\in \mathbb{R}$ and $(\mathcal{M}(t),\mathcal{L}(t))$ are unknowns. The first and second equations of \eqref{eq3.14} mean that the matrix pair $(\mathcal{M}(t),\mathcal{L}(t))$ has the \textbf{Eigenvector-Preserving Property} and the \textbf{Structure-Preserving Property}, respectively. It is clear from Theorem~\ref{thm3.2} that the solution $(\mathcal{M}(t),\mathcal{L}(t))$ of IVP \eqref{eq3.9} is invariant in the manifold described by \eqref{eq3.14}. In the following, we shall show that the consistency of Eq.  \eqref{eq3.14} implies the uniqueness of the solution $(\mathcal{M}(t),\mathcal{L}(t))$, for which the pair $(\mathcal{M}(t),\mathcal{L}(t))$ is  regular.
\begin{Lemma}\label{lem3.4}
Let $(A, B)$ be a regular pair with $A, B\in \mathbb{C}^{n\times n}$. Suppose that
\begin{align}\label{eq3.15}
[C,D]\left[%
\begin{array}{c}
 A \\
 B\\
\end{array}%
\right]=0,
\end{align}
and $[C,D]\in \mathbb{C}^{n\times 2n}$ is of full row rank. Then $(D,C)$ is  regular.
\end{Lemma}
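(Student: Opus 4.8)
The plan is to use the regularity of $(A,B)$ together with the full-row-rank hypothesis on $[C,D]$ to exhibit an explicit scalar $\lambda$ for which $D - \lambda C$ is invertible. First I would transform $(A,B)$ to its Kronecker canonical form: pick invertible $P,Q$ with $PAQ = J\oplus I$ and $PBQ = I\oplus N$, where $J$ collects the finite eigenvalues and $N$ is a nilpotent block for the infinite eigenvalues. Right-multiplying \eqref{eq3.15} by $Q$ and partitioning $[C,D]P^{-1} = [\widehat C_1,\widehat C_2,\widehat D_1,\widehat D_2]$ conformally with the $(J,I,I,N)$ block sizes, equation \eqref{eq3.15} becomes
\begin{align*}
\widehat C_1 J + \widehat D_1 = 0, \qquad \widehat C_2 + \widehat D_2 N = 0,
\end{align*}
so that $\widehat D_1 = -\widehat C_1 J$ and $\widehat C_2 = -\widehat D_2 N$. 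In this coordinate system the candidate pencil $D - \lambda C$ is left-equivalent (via $P$) and right-equivalent (via $Q$) to the pencil with blocks obtained from $[\,\widehat D_1,\widehat D_2\,] - \lambda[\,\widehat C_1,\widehat C_2\,] = [\,-\widehat C_1(J+\lambda I),\ \widehat D_2(I - \lambda N)\,]$, so $\det(D-\lambda C)$ vanishes identically in $\lambda$ iff $[\,-\widehat C_1(J+\lambda I),\ \widehat D_2(I-\lambda N)\,]$ is singular for all $\lambda$.

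Next I would argue that this cannot happen. Choose any $\lambda$ with $-\lambda \notin \sigma(J)$ (there are only finitely many forbidden values); then $J+\lambda I$ is invertible, and since $N$ is nilpotent, $I - \lambda N$ is invertible for every $\lambda$. Hence for such $\lambda$ the matrix $[\,-\widehat C_1(J+\lambda I),\ \widehat D_2(I-\lambda N)\,]$ has the same rank as $[\,\widehat C_1,\ \widehat D_2\,]$. So it suffices to show $[\,\widehat C_1,\widehat D_2\,]$ has full row rank $n$. Suppose a row vector $v$ annihilates it: $v\widehat C_1 = 0$ and $v\widehat D_2 = 0$. Then from the relations above, $v\widehat D_1 = -v\widehat C_1 J = 0$ and $v\widehat C_2 = -v\widehat D_2 N = 0$, so $v$ annihilates all four blocks, i.e. $v[C,D]P^{-1} = 0$, contradicting the full-row-rank assumption on $[C,D]$ (and invertibility of $P$). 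Therefore $[\,\widehat C_1,\widehat D_2\,]$ has full row rank, and consequently $D-\lambda C$ is nonsingular for every $\lambda$ with $-\lambda\notin\sigma(J)$; in particular $(D,C)$ is regular.

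The main obstacle is bookkeeping rather than conceptual: one must be careful that the block partition of $[C,D]P^{-1}$ is taken consistently with the $J$- and $N$-block sizes of the Kronecker form, and that the two separate equations extracted from \eqref{eq3.15} are paired with the correct blocks of $D - \lambda C$. The only genuinely substantive point is the observation that $I - \lambda N$ is invertible for \emph{all} $\lambda$ (not merely generic $\lambda$) because $N$ is nilpotent, which is what lets the infinite-eigenvalue part of the pencil never contribute a singularity; everything else is a rank argument driven directly by the hypothesis that $[C,D]$ has full row rank.
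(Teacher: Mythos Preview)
Your proof is correct and takes a genuinely different route from the paper's. You pass to the Kronecker canonical form of $(A,B)$ and reduce the question to a rank argument on the block $[\widehat C_1,\widehat D_2]$, which you handle cleanly via the full-row-rank hypothesis. The paper instead works directly: it picks any $\lambda_0$ with $A-\lambda_0 B$ invertible, right-multiplies \eqref{eq3.15} by $(A-\lambda_0 B)^{-1}$, and observes that the full-row-rank matrix $[C,\,D+\lambda_0 C]$ annihilates $\left[\begin{smallmatrix}I\\B(A-\lambda_0 B)^{-1}\end{smallmatrix}\right]$; since the left null space of the latter is exactly the row span of $[-B(A-\lambda_0 B)^{-1},\,I]$, one reads off immediately that $D+\lambda_0 C$ is invertible. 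The paper's argument is shorter and avoids canonical forms entirely; yours is more explicit about \emph{which} $\lambda$ work (all $\lambda$ with $-\lambda\notin\sigma(J)$, not merely one) and isolates the role of the nilpotent block transparently. Two minor slips to tidy: the sign in $\widehat D_2 - \lambda\widehat C_2 = \widehat D_2(I+\lambda N)$ should be $+$, not $-$ (harmless, since $I\pm\lambda N$ is invertible either way for nilpotent $N$), and the notation ``$[C,D]P^{-1}$'' should be read as $[CP^{-1},\,DP^{-1}]$, as $P$ is $n\times n$.
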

\begin{proof}
Since $(A,B)$ is  regular, there exists $\lambda_0\in \mathbb{C}$ such that $A-\lambda_0 B$ is invertible and $[A^{\top},B^{\top}]^{\top}$ is of full column rank. From \eqref{eq3.15}, we have
\begin{align}\label{eq3.16}
0&=[C,D]\left[%
\begin{array}{c}
 A \\
 B\\
\end{array}%
\right](A-\lambda_0 B)^{-1}=[C,D]\left[%
\begin{array}{cc}
 I &\lambda_0 I\\
 o&I\\
\end{array}%
\right]\left[%
\begin{array}{cc}
 I &-\lambda_0 I\\
 o&I\\
\end{array}%
\right]\left[%
\begin{array}{c}
 A \\
 B\\
\end{array}%
\right](A-\lambda_0 B)^{-1}\nonumber\\&=[C,D+\lambda_0 C]\left[%
\begin{array}{c}
 I \\
 B(A-\lambda_0 B)^{-1}\\
\end{array}%
\right].
\end{align}
It is easily seen that  rank$[C,D+\lambda_0 C]={\rm rank}[C,D]=n$. It follows from \eqref{eq3.16} that there is a nonsingular  matrix $W$ such that
\begin{align*}
[C,D+\lambda_0 C]=W[-B(A-\lambda_0 B)^{-1},I].
\end{align*}
Then $D+\lambda_0 C$ is invertible and hence $(D,C)$ is  regular.
\end{proof}

Let $\mathbf{U}\equiv[U_1,U_0,U_\infty]=\mathbf{U}(\mathcal{M}_1,\mathcal{L}_1)$ be defined in Definition~\ref{def2.2}.  From definitions of $\Pi_0$, $\Pi_\infty$ and $\mathcal{H}$ in \eqref{eq2.11} and \eqref{eq2.10}, respectively, the linear system \eqref{eq3.14} can be rewritten as
\begin{align}\label{eq3.17}
\left[%
\begin{array}{cc}
 X_{12} (t) & 0 \\
  X_{22}(t) &   I\\
\end{array}%
\right]\mathcal{S}_2\mathbf{U}( I_{2\hat{n}}\oplus  I_{\ell}\oplus 0)=\left[%
\begin{array}{cc}
 I & X_{11}(t)   \\
  0&   X_{21}(t)  \\
\end{array}%
\right]\mathcal{S}_1\mathbf{U}( e^{\widehat{\mathcal{H}}t} \oplus 0\oplus  I_{\ell}).
\end{align}
The following lemma can be obtained by direct calculations.

\begin{Lemma}\label{lem3.5}
Let
\begin{subequations}\label{eq3.18}
\begin{align}
&E_{11}=(I_{\ell}\oplus 0),\ \ E_{22}=(0\oplus I_{\ell}),\label{eq3.18a}\\
&\mathbf{V}_1\equiv\left[%
\begin{array}{c}
\mathbf{V}_1^1  \\
 \mathbf{V}_2^1 \\
\end{array}%
\right]=\mathcal{S}_1\mathbf{U},\ \ \  \mathbf{V}_2\equiv\left[%
\begin{array}{c}
\mathbf{V}_1^2  \\
 \mathbf{V}_2^2 \\
\end{array}%
\right]=\mathcal{S}_2\mathbf{U},\label{eq3.18b}
\end{align}
\end{subequations}
where $\mathbf{V}_i^j\in \mathbb{C}^{n\times 2n}$ for each $1\leqslant i,j \leqslant 2$. Then the linear system \eqref{eq3.14} is equivalent to the alternative form:
\begin{align}\label{eq3.19}
\left[%
\begin{array}{cc}
X_{11}(t)&X_{12}(t)  \\
X_{21}(t)&X_{22}(t) \\
\end{array}%
\right] \left[%
\begin{array}{r}
-\mathbf{V}_2^1
 (e^{\widehat{\mathcal{H}}t} \oplus
E_{22} )\\
 \mathbf{V}_1^2
(I_{2\hat{n}}\oplus E_{11} )\\
\end{array}%
\right]= \left[%
\begin{array}{r}
\mathbf{V}_1^1
 (e^{\widehat{\mathcal{H}}t} \oplus
E_{22} )\\
 -\mathbf{V}_2^2
(I_{2\hat{n}}\oplus E_{11} )\\
\end{array}%
\right].
\end{align}
\end{Lemma}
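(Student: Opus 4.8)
The plan is a routine substitution. I would start from the reformulation \eqref{eq3.17} of the linear system \eqref{eq3.14} that was just recorded, expand everything into $n\times n$ blocks, absorb the products $\mathcal{S}_1\mathbf{U}$ and $\mathcal{S}_2\mathbf{U}$ via the abbreviations in \eqref{eq3.18b}, and then collect terms. Concretely, I would first insert the block forms $\mathcal{M}(t)=\left[\begin{smallmatrix}X_{12}(t)&0\\X_{22}(t)&I\end{smallmatrix}\right]\mathcal{S}_2$ and $\mathcal{L}(t)=\left[\begin{smallmatrix}I&X_{11}(t)\\0&X_{21}(t)\end{smallmatrix}\right]\mathcal{S}_1$ coming from the definition of $T_{\mathcal{S}_1,\mathcal{S}_2}$ into \eqref{eq3.17}. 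With $\mathbf{V}_2=\mathcal{S}_2\mathbf{U}$ and $\mathbf{V}_1=\mathcal{S}_1\mathbf{U}$ split into their $n\times 2n$ block rows as in \eqref{eq3.18b}, multiplying out the $2\times 2$ block products shows that $\mathcal{M}(t)\mathbf{U}$ has block rows $X_{12}(t)\mathbf{V}_1^2$ and $X_{22}(t)\mathbf{V}_1^2+\mathbf{V}_2^2$, while $\mathcal{L}(t)\mathbf{U}$ has block rows $\mathbf{V}_1^1+X_{11}(t)\mathbf{V}_2^1$ and $X_{21}(t)\mathbf{V}_2^1$.

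Next I would observe that the two diagonal weights appearing in \eqref{eq3.17} are exactly $I_{2\hat{n}}\oplus I_{\ell}\oplus 0=I_{2\hat{n}}\oplus E_{11}$ on the left and $e^{\widehat{\mathcal{H}}t}\oplus 0\oplus I_{\ell}=e^{\widehat{\mathcal{H}}t}\oplus E_{22}$ on the right, with $E_{11},E_{22}$ as in \eqref{eq3.18a}. Equating the top $n$ rows of \eqref{eq3.17} and moving the $X_{ij}(t)$-terms to one side yields $X_{11}(t)\bigl(-\mathbf{V}_2^1(e^{\widehat{\mathcal{H}}t}\oplus E_{22})\bigr)+X_{12}(t)\bigl(\mathbf{V}_1^2(I_{2\hat{n}}\oplus E_{11})\bigr)=\mathbf{V}_1^1(e^{\widehat{\mathcal{H}}t}\oplus E_{22})$, while equating the bottom $n$ rows and doing the same yields $X_{21}(t)\bigl(-\mathbf{V}_2^1(e^{\widehat{\mathcal{H}}t}\oplus E_{22})\bigr)+X_{22}(t)\bigl(\mathbf{V}_1^2(I_{2\hat{n}}\oplus E_{11})\bigr)=-\mathbf{V}_2^2(I_{2\hat{n}}\oplus E_{11})$. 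Stacking these two identities as the block rows of a single matrix equation whose common $2\times 2$ block coefficient is $X(t)=[X_{ij}(t)]$ is precisely \eqref{eq3.19}. Since each manipulation is an invertible row/column operation, the converse implication is immediate, so \eqref{eq3.14} and \eqref{eq3.19} are equivalent (over Hermitian $X(t)$, which is the constraint encoded by the second line of \eqref{eq3.14}).

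I do not anticipate a genuine obstacle: the paper already flags this as obtainable by direct calculation, and there is no new idea here, only bookkeeping. The two points that require care are (a) keeping the three-way block ordering of $\mathbf{U}=[U_1\,|\,U_0,U_\infty]$ synchronized with the ordering built into $\Pi_0$, $\Pi_\infty$ and $e^{\mathcal{H}t}$, so that the weights genuinely collapse to $I_{2\hat{n}}\oplus E_{11}$ and $e^{\widehat{\mathcal{H}}t}\oplus E_{22}$ and not to some permuted version; and (b) tracking the sign changes incurred when the $X_{11}(t)$- and $X_{21}(t)$-terms are transferred across the equality, which is the source of the minus signs in the first block row of the coefficient matrix and in the second component of the right-hand side of \eqref{eq3.19}.
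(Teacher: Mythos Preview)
Your proposal is correct and is precisely the direct block computation the paper has in mind; the paper gives no proof beyond the remark that the lemma ``can be obtained by direct calculations,'' and your expansion of \eqref{eq3.17} via $\mathbf{V}_1=\mathcal{S}_1\mathbf{U}$, $\mathbf{V}_2=\mathcal{S}_2\mathbf{U}$ and the identifications $I_{2\hat n}\oplus I_\ell\oplus 0=I_{2\hat n}\oplus E_{11}$, $e^{\widehat{\mathcal H}t}\oplus 0\oplus I_\ell=e^{\widehat{\mathcal H}t}\oplus E_{22}$ reproduces \eqref{eq3.19} exactly. The two cautionary points you flag (block ordering of $\mathbf{U}$ and the sign bookkeeping) are the only places to be careful, and you have handled them correctly.
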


\begin{Theorem}\label{thm3.6}
Let $(\mathcal{M}_1,\mathcal{L}_1)\in \mathbb{S}_{\mathcal{S}_1,\mathcal{S}_2}$ be a regular symplectic pair with ${\rm ind}_{\infty} (\mathcal{M}_1,\mathcal{L}_1)\leqslant 1$ and $\mathbf{U}\equiv[U_1,U_0,U_\infty]=\mathbf{U}(\mathcal{M}_1,\mathcal{L}_1)$. Suppose $(\mathcal{M}(t),\mathcal{L}(t))$ is a solution of \eqref{eq3.14} at some $t\in \mathbb{R}$. Then
\begin{itemize}
\item[(i)] $(\mathcal{M}(t), \mathcal{L}(t))$ is regular;
\item[(ii)]   $(\mathcal{M}(t),\mathcal{L}(t))$ is the unique solution of \eqref{eq3.14};
\item[(iii)]  It holds that
\begin{align}\label{eq3.20}
\mathcal{M}(t)U_0=0,\ \  \mathcal{L}(t)U_\infty=0,\ \  \mathcal{M}(t)U_1=\mathcal{L}(t)U_1e^{\widehat{\mathcal{H}}t}.
\end{align}
Conversely, if $(\mathcal{M}(t),\mathcal{L}(t))\in \mathbb{S}_{\mathcal{S}_1,\mathcal{S}_2}$ satisfies  \eqref{eq3.20}, then $(\mathcal{M}(t),\mathcal{L}(t))$ is a solution of \eqref{eq3.14}.
\end{itemize}
\end{Theorem}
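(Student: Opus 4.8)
The plan is to deduce part (iii) first, since it is nothing but a reformulation of the two defining equations of \eqref{eq3.14}, and then to obtain (i) and (ii) from it together with one structural observation about $\mathbb{S}_{\mathcal{S}_1,\mathcal{S}_2}$. For (iii): by \eqref{eq2.10}, \eqref{eq2.11} and Remark~\ref{rem2.1} we have $\Pi_0=\mathbf{U}(I_{2\hat{n}}\oplus I_{\ell}\oplus 0)\mathbf{U}^{-1}$, $\Pi_\infty=\mathbf{U}(I_{2\hat{n}}\oplus 0\oplus I_{\ell})\mathbf{U}^{-1}$ and $e^{\mathcal{H}t}=\mathbf{U}(e^{\widehat{\mathcal{H}}t}\oplus I_{\ell}\oplus I_{\ell})\mathbf{U}^{-1}$; right-multiplying the first equation of \eqref{eq3.14} by the invertible matrix $\mathbf{U}=[U_1,U_0,U_\infty]$ turns it into
\[
[\,\mathcal{M}(t)U_1,\ \mathcal{M}(t)U_0,\ 0\,]=[\,\mathcal{L}(t)U_1e^{\widehat{\mathcal{H}}t},\ 0,\ \mathcal{L}(t)U_\infty\,],
\]
and matching the three block columns is precisely \eqref{eq3.20}. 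Since $\mathbf{U}$ is invertible this manipulation is reversible, so for $(\mathcal{M}(t),\mathcal{L}(t))\in\mathbb{S}_{\mathcal{S}_1,\mathcal{S}_2}$ the first equation of \eqref{eq3.14} is equivalent to \eqref{eq3.20}, which proves (iii) and its converse at once. With the notation
\[
N(t)=\left[\begin{array}{ccc} U_1 & U_0 & 0\\ -U_1e^{\widehat{\mathcal{H}}t} & 0 & U_\infty\end{array}\right],
\]
relation \eqref{eq3.20} is the single equation $[\mathcal{M}(t),\mathcal{L}(t)]N(t)=0$, and $N(t)$ has full column rank $2n$ because the columns of $\mathbf{U}$ are linearly independent.

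\textbf{Part (i).} I would first note that $[\mathcal{M},\mathcal{L}]$ has full row rank $2n$ for \emph{every} $(\mathcal{M},\mathcal{L})\in\mathbb{S}_{\mathcal{S}_1,\mathcal{S}_2}$: indeed $[\mathcal{M},\mathcal{L}]=\left[\begin{array}{cccc}X_{12}&0&I&X_{11}\\ X_{22}&I&0&X_{21}\end{array}\right](\mathcal{S}_2\oplus\mathcal{S}_1)$, and the column block $\left[\begin{array}{c}0\\ I\end{array}\right]$ supplied by $\mathcal{M}$ together with the column block $\left[\begin{array}{c}I\\ 0\end{array}\right]$ supplied by $\mathcal{L}$ already span $\mathbb{C}^{2n}$. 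Hence $\ker[\mathcal{M}(t),\mathcal{L}(t)]$ is $2n$-dimensional and so, by the previous paragraph, equals ${\rm col}(N(t))$. Writing $N(t)=\left[\begin{array}{c}A(t)\\ B(t)\end{array}\right]$ with $A(t)=[U_1,U_0,0]$ and $B(t)=[-U_1e^{\widehat{\mathcal{H}}t},0,U_\infty]$, one has $A(t)-\lambda B(t)=\mathbf{U}\bigl((I_{2\hat{n}}+\lambda e^{\widehat{\mathcal{H}}t})\oplus I_{\ell}\oplus(-\lambda I_{\ell})\bigr)$, whose determinant $(-1)^{\ell}\det(\mathbf{U})\,\lambda^{\ell}\det(I_{2\hat{n}}+\lambda e^{\widehat{\mathcal{H}}t})$ is a nonzero polynomial in $\lambda$, so $(A(t),B(t))$ is a regular pair. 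Now Lemma~\ref{lem3.4}, applied with $[C,D]=[\mathcal{M}(t),\mathcal{L}(t)]$ of full row rank annihilating $\left[\begin{array}{c}A(t)\\ B(t)\end{array}\right]$, gives that $(\mathcal{L}(t),\mathcal{M}(t))$, hence $(\mathcal{M}(t),\mathcal{L}(t))$, is regular.

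\textbf{Part (ii).} Let $(\mathcal{M}(t),\mathcal{L}(t))$ and $(\widetilde{\mathcal{M}}(t),\widetilde{\mathcal{L}}(t))$ both solve \eqref{eq3.14}. By the two previous paragraphs each of $[\mathcal{M}(t),\mathcal{L}(t)]$ and $[\widetilde{\mathcal{M}}(t),\widetilde{\mathcal{L}}(t)]$ has full row rank $2n$ and kernel ${\rm col}(N(t))$, hence they have the same row space, so $[\widetilde{\mathcal{M}}(t),\widetilde{\mathcal{L}}(t)]=P[\mathcal{M}(t),\mathcal{L}(t)]$ for some invertible $P=[P_{ij}]_{1\le i,j\le2}$. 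Since both pairs lie in $\mathbb{S}_{\mathcal{S}_1,\mathcal{S}_2}$ with the \emph{same} $\mathcal{S}_1,\mathcal{S}_2$, cancelling $\mathcal{S}_2$ resp.\ $\mathcal{S}_1$ yields $P\left[\begin{array}{cc}X_{12}&0\\ X_{22}&I\end{array}\right]=\left[\begin{array}{cc}\widetilde{X}_{12}&0\\ \widetilde{X}_{22}&I\end{array}\right]$ and $P\left[\begin{array}{cc}I&X_{11}\\ 0&X_{21}\end{array}\right]=\left[\begin{array}{cc}I&\widetilde{X}_{11}\\ 0&\widetilde{X}_{21}\end{array}\right]$; the $(1,2)$- and $(2,2)$-blocks of the first equation force $P_{12}=0$ and $P_{22}=I$, while the $(1,1)$- and $(2,1)$-blocks of the second force $P_{11}=I$ and $P_{21}=0$. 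Thus $P=I$, so the two solutions coincide.

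The bulk of the proof is routine block bookkeeping (with $\mathbf{U},\Pi_0,\Pi_\infty$ in the proof of (iii), and with $P$ in the proof of (ii)). The one step I would single out as the real content is the full-row-rank property of $[\mathcal{M},\mathcal{L}]$ on $\mathbb{S}_{\mathcal{S}_1,\mathcal{S}_2}$: it is exactly what upgrades the evident inclusion ${\rm col}(N(t))\subseteq\ker[\mathcal{M}(t),\mathcal{L}(t)]$ into an equality, and hence what makes both Lemma~\ref{lem3.4} (for regularity) and the left-equivalence argument (for uniqueness) applicable.
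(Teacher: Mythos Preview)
Your proof is correct. Parts (i) and (iii) follow essentially the paper's line: both you and the paper rewrite \eqref{eq3.14} as $[\mathcal{M}(t),\mathcal{L}(t)]$ annihilating a full-column-rank $4n\times 2n$ matrix (the paper writes it as $[-\mathcal{L}(t),\mathcal{M}(t)]$ times $\bigl[\mathbf{U}(e^{\widehat{\mathcal{H}}t}\oplus E_{22});\,\mathbf{U}(I_{2\hat n}\oplus E_{11})\bigr]$, which after the obvious block swap is your $N(t)$), and both invoke Lemma~\ref{lem3.4} after checking regularity of the underlying $2n\times 2n$ pair.

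For part (ii) you take a genuinely different route. The paper passes through Lemma~\ref{lem3.5} to the explicit linear system \eqref{eq3.19} and shows its $2n\times 2n$ coefficient matrix is invertible; the key step is a consistency argument (any right null vector of the coefficient matrix must also kill the right-hand side, and combining the two forces it to vanish). Your argument avoids \eqref{eq3.19} entirely: from the equality $\ker[\mathcal{M}(t),\mathcal{L}(t)]={\rm col}\,N(t)$ you deduce that two solutions must be left-equivalent, and then exploit the rigidity of the normal form in $\mathbb{S}_{\mathcal{S}_1,\mathcal{S}_2}$ (the fixed $\begin{bmatrix}0\\I\end{bmatrix}$ and $\begin{bmatrix}I\\0\end{bmatrix}$ columns) to pin down $P=I$. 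Your approach is more elementary and conceptually transparent; the paper's approach has the side benefit of producing the explicit invertible coefficient matrix, which it later uses in Remark~\ref{rem3.3.5} to conclude that $X(t)$ is $C^1$ and that $\mathcal{T}_X$ is open.
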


\begin{proof}
First, we write
\begin{align*}
(\mathcal{M}(t),\mathcal{L}(t))=\left(\left[%
\begin{array}{cc}
 X_{12}(t)  & 0 \\
  X_{22}(t)&   I\\
\end{array}%
\right]\mathcal{S}_2, \left[%
\begin{array}{cc}
 I & X_{11}(t)  \\
  0&   X_{21}(t) \\
\end{array}%
\right]\mathcal{S}_1\right)\in \mathbb{S}_{\mathcal{S}_1,\mathcal{S}_2}.
\end{align*}
Then $X_{ij}(t)$ for $1\leqslant i,j\leqslant 2$ satisfy \eqref{eq3.17}. Consequently,
\begin{align*}
[-\mathcal{L}(t),\mathcal{M}(t)]\left[%
\begin{array}{c}
\mathbf{U}(e^{\widehat{\mathcal{H}}t} \oplus E_{22} ) \\
 \mathbf{U}(I_{2\hat{n}}\oplus E_{11} ) \\
\end{array}%
\right]=0.
\end{align*}
Since the matrix $[-\mathcal{L}(t),\mathcal{M}(t)]\in \mathbb{C}^{2n\times 4n}$ is of full row rank and $ \left((e^{\widehat{\mathcal{H}}t} \oplus
E_{22} ),(I_{2\hat{n}}\oplus E_{11} )\right)$ is regular, it follows from Lemma \ref{lem3.4}  that $(\mathcal{M}(t),\mathcal{L}(t))$ is regular. Hence, assertion $(i)$ holds.

Next, we show that the linear system \eqref{eq3.14}  has a unique solution. From Lemma \ref{lem3.5}, it suffices to show that  the matrix $\left[%
\begin{array}{r}
-\mathbf{V}_2^1
 (e^{\widehat{\mathcal{H}}t} \oplus
E_{22} )\\
 \mathbf{V}_1^2
(I_{2\hat{n}}\oplus E_{11} )\\
\end{array}%
\right]$ in \eqref{eq3.19} is invertible. Suppose that $y\in \mathbb{C}^{2n}$ satisfying $\left[%
\begin{array}{r}
-\mathbf{V}_2^1
 (e^{\widehat{\mathcal{H}}t} \oplus
E_{22} )\\
 \mathbf{V}_1^2
(I_{2\hat{n}}\oplus E_{11} )\\
\end{array}%
\right]y=0$. Let
\begin{align}\label{eq3.21}
z_1=(e^{\widehat{\mathcal{H}}t} \oplus
E_{22} )y,\ \ z_2=(I_{2\hat{n}}\oplus E_{11} )y.
\end{align}
Then we have $\mathbf{V}_2^1 z_1=0$ and $\mathbf{V}_1^2z_2=0$. Since the linear system \eqref{eq3.19} is consistent, we obtain that
$\mathbf{V}_1^1 z_1=0$ and $\mathbf{V}_2^2z_2=0$. Hence, $\mathbf{V}_1z_1=0$ and $\mathbf{V}_2z_2=0$. It follows from \eqref{eq3.18b} that  $z_1=z_2=0$. From \eqref{eq3.21}, it is easily seen that $y=0$. Thus, $\left[%
\begin{array}{r}
-\mathbf{V}_2^1
 (e^{\widehat{\mathcal{H}}t} \oplus
E_{22} )\\
 \mathbf{V}_1^2
(I_{2\hat{n}}\oplus E_{11} )\\
\end{array}%
\right]$ is invertible. This proves assertion $(ii)$.

Assertion $(iii)$ can be obtained by \eqref{eq3.17} directly.
\end{proof}

\begin{Remark}\label{rem3.2}
Given two symplectic matrices $\mathcal{S}_1$ and $\mathcal{S}_2$, the linear system \eqref{eq3.14} may have no solution in $\mathbb{S}_{\mathcal{S}_1,\mathcal{S}_2}$. We consider a simple example. Let $\mathcal{S}_1=\mathcal{S}_2=I_2$,  $\mathcal{H}=\left[%
\begin{array}{cc}
0&\pi/2\\
-\pi/2&0\\
\end{array}%
\right]$ and $t=1$. Then $e^{\mathcal{H}t}=\left[%
\begin{array}{cc}
0&1\\
-1&0\\
\end{array}%
\right]$. It is easily seen that  \eqref{eq3.14} has no solution in $\mathbb{S}_{\mathcal{S}_1,\mathcal{S}_2}$.
\end{Remark}

Let $(\mathcal{M}_1, \mathcal{L}_1)\in \mathbb{S}_{\mathcal{S}_1,\mathcal{S}_2}$ be a regular symplectic pair with ${\rm ind}_{\infty} (\mathcal{M}_1,\mathcal{L}_1)\leqslant 1$.
From Lemma \ref{lem2.6} there are a Hamiltonian $\mathcal{H}\in \mathbb{C}^{2n\times 2n}$, two idempotent matrices $\Pi_{0}$ and $\Pi_{\infty}$ such that \eqref{eq3.8} holds.
Let
\begin{align}\label{eq3.22}
\mathcal{C}_{\mathcal{M}_1,\mathcal{L}_1}=\{(\mathcal{M}(t),\mathcal{L}(t))\  |\ (\mathcal{M}(t),\mathcal{L}(t)) \text{ is a solution of \eqref{eq3.14} at }t\in \mathbb{R}\}.
\end{align}
It follows from Theorem~\ref{thm3.6}(ii) that the set $\mathcal{C}_{\mathcal{M}_1,\mathcal{L}_1}$ can be parameterized by $t$ on the set
\begin{align}\label{eq3.23}
\mathcal{T}_X=\{t\in\mathbb{R}\ |\ \text{\eqref{eq3.14} has a solution at } t\}.
\end{align}

\begin{Remark}\label{rem3.3.5}
Let  $X(t)=[X_{ij}(t)]_{1\leqslant i,j \leqslant 2}=T_{\mathcal{S}_1,\mathcal{S}_2}^{-1}(\mathcal{M}(t),\mathcal{L}(t))$ for $(\mathcal{M}(t),\mathcal{L}(t))\in \mathcal{C}_{\mathcal{M}_1,\mathcal{L}_1}$ and $t\in\mathcal{T}_X$. We obtain that  $X(t)$ is continuously differentiable for each $t\in \mathcal{T}_X$. In this case, $\left[%
\begin{array}{r}
-\mathbf{V}_2^1
 (e^{\widehat{\mathcal{H}}t} \oplus
E_{22} )\\
 \mathbf{V}_1^2
(I_{2\hat{n}}\oplus E_{11} )\\
\end{array}%
\right]$ is invertible. Consequently, $\mathcal{T}_X$ is open.
\end{Remark}
Next, we show that $X(t)=T_{\mathcal{S}_1,\mathcal{S}_2}^{-1}(\mathcal{M}(t),\mathcal{L}(t))$ for $t\in (\tilde{t}_0,\tilde{t}_1)\subseteq\mathcal{T}_X$ is the solution of IVP \eqref{eq3.9}.
\begin{Theorem}\label{thm3.7}
Suppose that $(\mathcal{M}(t),\mathcal{L}(t))\in\mathcal{C}_{\mathcal{M}_1,\mathcal{L}_1}$ for $t\in(\tilde{t}_0,\tilde{t}_1)\subseteq\mathcal{T}_X$, where $\tilde{t}_0< 1< \tilde{t}_1$. Then $X(t)=T_{\mathcal{S}_1,\mathcal{S}_2}^{-1}(\mathcal{M}(t),\mathcal{L}(t))$ for $t\in (\tilde{t}_0,\tilde{t}_1)$ is the solution of IVP \eqref{eq3.9}.
\end{Theorem}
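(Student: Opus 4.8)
The plan is to combine the forward invariance of Theorem~\ref{thm3.2}, the uniqueness in Theorem~\ref{thm3.6}(ii), and the standard existence/extension theory for ordinary differential equations. Regard the right-hand side of \eqref{eq3.9} as the map $\mathcal{F}(X)=\mathcal{M}\mathcal{H}\mathcal{J}\mathcal{M}^{H}$ on $\mathbb{H}(2n)$, where $(\mathcal{M},\mathcal{L})=T_{\mathcal{S}_1,\mathcal{S}_2}(X)$. Since $\mathcal{M}$ is affine in the entries of $X$, the map $\mathcal{F}$ is polynomial, hence locally Lipschitz, so \eqref{eq3.9} has a unique solution $\widehat{X}(t)$ on a maximal open interval $(s_0,s_1)\ni 1$, and $\widehat{X}(t)$ is Hermitian by Remark~\ref{rem3.1}(i). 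Also $1\in\mathcal{T}_X$ because $(\mathcal{M}_1,\mathcal{L}_1)$ satisfies \eqref{eq3.8} and lies in $\mathbb{S}_{\mathcal{S}_1,\mathcal{S}_2}$, i.e.\ it solves \eqref{eq3.14} at $t=1$; together with $\tilde t_0<1<\tilde t_1$ this makes $X(1)$ meaningful.

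First I would identify $\widehat{X}$ with $X$ on the overlap of their domains. By Theorem~\ref{thm3.2}, the pair $(\widehat{\mathcal{M}}(t),\widehat{\mathcal{L}}(t)):=T_{\mathcal{S}_1,\mathcal{S}_2}(\widehat{X}(t))$ satisfies \eqref{eq3.10} on $(s_0,s_1)$, and by Remark~\ref{rem3.1}(i) it remains in $\mathbb{S}_{\mathcal{S}_1,\mathcal{S}_2}$; hence it solves the linear system \eqref{eq3.14} at every $t\in(s_0,s_1)$, so $(s_0,s_1)\subseteq\mathcal{T}_X$. By Theorem~\ref{thm3.6}(ii) the solution of \eqref{eq3.14} is unique for each $t\in\mathcal{T}_X$, so $(\widehat{\mathcal{M}}(t),\widehat{\mathcal{L}}(t))$ is precisely the point of $\mathcal{C}_{\mathcal{M}_1,\mathcal{L}_1}$ at parameter $t$. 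Therefore $(\widehat{\mathcal{M}}(t),\widehat{\mathcal{L}}(t))=(\mathcal{M}(t),\mathcal{L}(t))$ on $(s_0,s_1)\cap(\tilde t_0,\tilde t_1)$, whence $\widehat{X}(t)=X(t)$ there; in particular $X(1)=\widehat{X}(1)=X_1$.

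It then remains to show $(\tilde t_0,\tilde t_1)\subseteq(s_0,s_1)$, for then $X(t)=\widehat{X}(t)$ throughout $(\tilde t_0,\tilde t_1)$ and $X$ solves the ODE in \eqref{eq3.9}. Suppose, for contradiction, that $s_1<\tilde t_1$ (the case $s_0>\tilde t_0$ is symmetric). Then $s_1\in(\tilde t_0,\tilde t_1)\subseteq\mathcal{T}_X$, and by Remark~\ref{rem3.3.5} the set $\mathcal{T}_X$ is open and $X$ is continuously differentiable there, so $X(t)\to X(s_1)$, a finite Hermitian matrix, as $t\to s_1^{-}$. Since $\widehat{X}=X$ on $(1,s_1)$, the maximal solution $\widehat{X}$ stays bounded as $t\to s_1^{-}$, contradicting the blow-up alternative for an ODE with locally Lipschitz right-hand side (a maximal solution on a bounded interval must be unbounded at the finite endpoint). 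Hence $s_1\geqslant\tilde t_1$ and, symmetrically, $s_0\leqslant\tilde t_0$.

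I expect this last step to be the crux: one has to rule out a finite-time blow-up of \eqref{eq3.9} at an interior point of $(\tilde t_0,\tilde t_1)$, and the $C^1$-regularity of $X$ on all of $\mathcal{T}_X$ from Remark~\ref{rem3.3.5} --- which rests on the invertibility of the governing matrix in \eqref{eq3.19} throughout $\mathcal{T}_X$ --- is exactly what supplies it, confining any singular behaviour of the flow to the complement of $\mathcal{T}_X$. A more computational alternative would be to differentiate the explicit relation \eqref{eq3.19} for $X(t)$ directly and then match $\dot X(t)$ with $\mathcal{M}(t)\mathcal{H}\mathcal{J}\mathcal{M}(t)^{H}$ using identity \eqref{eq3.4} together with the symplecticity relation $\mathcal{M}(t)\mathcal{J}\mathcal{M}(t)^{H}=\mathcal{L}(t)\mathcal{J}\mathcal{L}(t)^{H}$; this avoids the extension argument but requires unwinding the $\mathbf{U}$- and $\mathbf{V}$-factorizations and is less transparent.
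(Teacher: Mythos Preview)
Your proposal is correct and follows essentially the same strategy as the paper: identify the IVP solution with the $\mathcal{C}_{\mathcal{M}_1,\mathcal{L}_1}$-curve via Theorem~\ref{thm3.2} and the uniqueness in Theorem~\ref{thm3.6}(ii), then argue by contradiction that the maximal interval of the IVP contains $(\tilde t_0,\tilde t_1)$. The only cosmetic difference is in the contradiction step: the paper shows explicitly that the ODE relation $\dot Y(t_1)=\mathcal{M}(t_1)\mathcal{H}\mathcal{J}\mathcal{M}(t_1)^H$ persists at the endpoint (so the solution extends), whereas you invoke the equivalent blow-up alternative using the boundedness supplied by Remark~\ref{rem3.3.5}.
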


\begin{proof}
It follows from Theorem \ref{thm3.6} $(ii)$ that the solution of  \eqref{eq3.14} for each $t\in (\tilde{t}_0,\tilde{t}_1)$ is unique.  Define  the curve
\begin{align*}
\mathcal{C}_{(\tilde{t}_0,\tilde{t}_1)}\equiv\{(\mathcal{M}(t),\mathcal{L}(t))\ |\ t\in (\tilde{t}_0,\tilde{t}_1)\}\subseteq\mathcal{C}_{\mathcal{M}_1,\mathcal{L}_1}.
\end{align*}
Let $Y(t)=T_{\mathcal{S}_1,\mathcal{S}_2}^{-1}(\mathcal{M}(t),\mathcal{L}(t))$ for $t\in (\tilde{t}_0,\tilde{t}_1)$. From Remark \ref{rem3.3.5}, $Y(t)$ is continuously differentiable. Suppose that $X(t)=[X_{ij}(t)]_{1\leqslant i,j \leqslant 2}$ for $t\in (t_0,t_1)$ is the solution of IVP \eqref{eq3.9}, where $(t_0,t_1)$ is the maximal interval. It follows from Theorem \ref{thm3.2} that $\{T_{\mathcal{S}_1,\mathcal{S}_2}(X(t)) \ | t\in (t_0,t_1)\}\subset \mathcal{C}_{\mathcal{M}_1,\mathcal{L}_1}$. If $(\tilde{t}_0,\tilde{t}_1)\subseteq(t_0,t_1)$, then the uniqueness of the solution of \eqref{eq3.14} implies that $Y(t)=X(t)$ for $t\in (\tilde{t}_0,\tilde{t}_1)$, and hence  $X(t)=T_{\mathcal{S}_1,\mathcal{S}_2}^{-1}(\mathcal{M}(t),\mathcal{L}(t))$, for $t\in (\tilde{t}_0,\tilde{t}_1)$, is the solution of IVP \eqref{eq3.9}. Now we claim that $(\tilde{t}_0,\tilde{t}_1)\subseteq(t_0,t_1)$. We prove the case $\tilde{t}_1\leqslant t_1$. On the contrary, suppose that $\tilde{t}_1> t_1$. Then $t_1\in (\tilde{t}_0,\tilde{t}_1)\subseteq \mathcal{T}_X$ and hence  $(\mathcal{M}(t_1),\mathcal{L}(t_1))\in \mathcal{C}_{\mathcal{M}_1,\mathcal{L}_1}$. By the uniqueness of solution of \eqref{eq3.14}, we have $X(t)=Y(t)$ for $t\in (t_0,t_1)$. We also note that $\dot{Y}(t)$ is continuous at $t_1\in (\tilde{t}_0,\tilde{t}_1)$. Therefore,
\begin{align*}
\dot{Y}(t_1)-\mathcal{M}(t_1)\mathcal{H}\mathcal{J}\mathcal{M}(t_1)^H=\lim_{t\rightarrow {t_1}^{-}}\dot{Y}(t)-\mathcal{M}(t)\mathcal{H}\mathcal{J}\mathcal{M}(t)^H=0.
\end{align*}
Hence, the solution $X(t)$ of IVP \eqref{eq3.9} can be extended to $t_1$.  This is a  contradiction because $(t_0,t_1)$ is the maximal interval of IVP \eqref{eq3.9}.
\end{proof}
\begin{Remark}\label{rem3.4}
Theorem  \ref{thm3.7} shows that the connected component of $\mathcal{T}_X$ cotaining $1$ coincides with the maximal interval of IVP \eqref{eq3.9}. The flow of IVP \eqref{eq3.9} can be extended to whole $\mathcal{T}_X$ by using the so-called Grassmann manifold which will be studied in Subsection \ref{sec3.3} for details.
\end{Remark}

\subsection{Structure-Preserving Flow vs. Riccati Equation}\label{sec3.2}

In this subsection, we investigate an explicit representation of IVP \eqref{eq3.9}. Since $\mathcal{S}_2$ is symplectic and  $\mathcal{H}$ is Hamiltonian,  $\mathcal{S}_2\mathcal{H}\mathcal{S}_2^{-1}$ is also Hamiltonian, say
\begin{align}\label{eq3.24}
\mathcal{S}_2\mathcal{H}\mathcal{S}_2^{-1}=\left[%
\begin{array}{cc}
A&S\\
D&-A^H\\
\end{array}%
\right],
\end{align}
where $A, S,D\in \mathbb{C}^{n\times n}$ with $S^H=S$ and $D^H=D$. Suppose that $X(t)=[X_{ij}(t)]_{1\leqslant i,j \leqslant 2}$, for $t\in (t_0,t_1)$ and $t_0<1<t_1$, is the solution of \eqref{eq3.9}. We then have
\begin{align}\label{eq3.25}
\left[%
\begin{array}{cc}
\dot{X}_{11}&\dot{X}_{12}\\
\dot{X}_{21}&\dot{X}_{22}\\
\end{array}%
\right]&=\left[%
\begin{array}{cc}
X_{12}&0\\
X_{22}&I\\
\end{array}%
\right]\mathcal{S}_2\mathcal{H}\mathcal{S}_2^{-1}\mathcal{J}\left[%
\begin{array}{cc}
X^H_{12}&X^H_{22}\\
0&I\\
\end{array}%
\right]\\&=\left[%
\begin{array}{cc}
-X_{12}SX_{12}^H&-X_{12}SX^H_{22}+X_{12}A\\
-X_{22}SX^H_{12}+A^HX_{12}^H&-X_{22}SX_{22}^H+X_{22}A+A^HX_{22}^H+D\\
\end{array}%
\right],\nonumber\\
X_{ij}(1)&=X_{ij}^1\text{ for }1\leqslant i,j \leqslant 2.\nonumber
\end{align}
That is, $X_{ij}(t)$ for $1\leqslant i,j \leqslant 2$ satisfy the coupled differential equations
\begin{subequations}\label{eq3.26}
\begin{align}
\dot{X}_{11}&=-X_{12}SX_{12}^H, \label{eq3.26a}\\
\dot{X}_{12}&=-X_{12}SX^H_{22}+X_{12}A, \label{eq3.26b}\\
\dot{X}_{21}&=-X_{22}SX^H_{12}+A^HX_{12}^H,\label{eq3.26c}\\
\dot{X}_{22}&=-X_{22}SX_{22}^H+X_{22}A+A^HX_{22}^H+D,\label{eq3.26d}
\end{align}
\end{subequations}
with $X_{ij}(1)=X_{ij}^1$, where $A,\ D$ and $S$ are given in \eqref{eq3.24}. Note that $S$, $D$ and the initial matrix $X_{22}^1$  are Hermitian. From \eqref{eq3.26d},  $X_{22}(t)$ is Hermitian for $t\in(t_0,t_1)$. Therefore, by taking a time shift,  $W(t)=X_{22}(t+1)$, $t\in (t_0-1,t_1-1)$, is the solution of the {\it Riccati differential equation (RDE):}
\begin{align}\label{eq3.27}
\begin{array}{l}
\dot{W}(t)=-W(t)SW(t)+W(t)A+A^HW(t)+D,\\
W(0)=W_0,
\end{array}
\end{align}
with $W_0=X_{22}^1$.

\begin{Remark}\label{rem3.5}
Suppose that  $W(t)$, for $t\in (t_0-1,t_1-1)$ and $t_0-1<0<t_1-1$, is a solution of the Riccati differential equation \eqref{eq3.27}. Using the fact $X_{22}(t)=W(t-1)$, $t\in (t_0,t_1)$, we can get $X_{12}(t)$  for $t\in (t_0,t_1)$ by solving the linear differential equation \eqref{eq3.26b} with $X_{12}(1)=X_{12}^1$. Since $X_{21}^1=X_{12}^{1H}$, it follows from \eqref{eq3.26b} and \eqref{eq3.26c} that $X_{21}(t)=X_{12}(t)^H$, for $t\in(t_0,t_1)$. Finally, $X_{11}(t)$ for $t\in (t_0,t_1)$ can be obtained directly from \eqref{eq3.26a}. So, solving IVP \eqref{eq3.9} is equivalent to solving the Riccati differential equation \eqref{eq3.27}.
\end{Remark}

Riccati differential equations arise frequently throughout applied mathematics, science and engineering.  In particular, they play an important role in optimal controls \cite{Davison_Maki:1973,Kenney_Leipnik:1985,Kwakernaak_Sivan:1972,Lainiotis:1976,Lainiotis:1976b,Laub:1982} and in two-point boundary value problems \cite{Ascher_Mattheij_Russell:1988,Babuska_Majer:1987,Dieci_Osborne_Russell:1988,Dieci_Osborne_Russell:1988b}. Theoretical analysis as well as the monotonicity property of RDEs have been widely investigated in \cite{Abou-Kandil03,Freiling:2002,Reid:1970}. A family of unconventional numerical methods for solving matrix Riccati differential equations is developed in \cite{Li:2012} that can produce meaningful numerical results even if there are poles in the solution. An important tool in the literature mentioned above is the use of a relationship between linear differential equations and Riccati differential equations. This relation has been known at least since the work of  Radon \cite{Radon27}.

\begin{Theorem}\label{thm3.8}\cite[Radon's Lemma]{Abou-Kandil03}
Let $A,\ S,\ D\in \mathbb{C}^{n\times n}$ with $S^H=S$ and  $D^H=D$, then the following statements hold.
\begin{itemize}
\item[(i)] Let $W(t)$ be a solution of RDE \eqref{eq3.27} in the interval $(t_0-1,t_1-1)$ containing zero. If $Q(t)$ is a solution of the IVP
\begin{align}\label{eq3.28}
\dot{Q}(t)=(SW(t)-A)Q(t),\ \ Q(0)=I_n
\end{align}
and $P(t):=W(t)Q(t)$, then $Y(t)\equiv [Q(t)^{\top},P(t)^{\top}]^{\top}$ is the solution of the linear IVP
\begin{subequations}\label{eq3.29}
\begin{align}\label{eq3.29a}
\dot{Y}(t)=\widetilde{\mathcal{H}}Y(t),\ \ Y(0)=\left[%
\begin{array}{c}
I\\
W_0\\
\end{array}%
\right]=\left[%
\begin{array}{c}
I\\
X_{22}^1\\
\end{array}%
\right],
\end{align}
where
\begin{align}\label{eq3.29b}
\widetilde{\mathcal{H}}=\left[%
\begin{array}{cc}
-A&S\\
D&A^H\\
\end{array}%
\right].
\end{align}
\end{subequations}

\item[(ii)] Let $Y(t)\equiv[Q(t)^{\top},P(t)^{\top}]^{\top}$ be the solution of \eqref{eq3.29}. If $Q(t)$ is invertible for $t\in(t_0-1,t_1-1)\subset \mathbb{R}$, then $W(t)\equiv P(t)Q(t)^{-1}$ is a solution of RDE \eqref{eq3.27}.
\end{itemize}
\end{Theorem}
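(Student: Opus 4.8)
The plan is to prove both parts by direct differentiation, since Radon's lemma is essentially a bookkeeping identity linking the quadratic RDE~\eqref{eq3.27} to the linear system~\eqref{eq3.29}. For part~(i), I would first note that, because $W(t)$ solves~\eqref{eq3.27} on $(t_0-1,t_1-1)$, the coefficient $SW(t)-A$ is continuous there, so the linear IVP~\eqref{eq3.28} has a unique solution $Q(t)$ on all of $(t_0-1,t_1-1)$; moreover $Q(t)$ is a fundamental matrix, hence invertible, which I would record for consistency with part~(ii). Then, with $P(t):=W(t)Q(t)$ and $Y(t)=[Q(t)^{\top},P(t)^{\top}]^{\top}$, I would verify the two block rows of $\dot Y=\widetilde{\mathcal H}Y$ separately.

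The first block row, $\dot Q=-AQ+SP$, is immediate: substituting $P=WQ$ gives $-AQ+SWQ=(SW-A)Q$, which is exactly~\eqref{eq3.28}. For the second block row, $\dot P=DQ+A^HP$, I differentiate $P=WQ$ by the product rule, $\dot P=\dot WQ+W\dot Q$, then substitute $\dot Q=(SW-A)Q$ and $\dot W=-WSW+WA+A^HW+D$ from the RDE. This yields $\dot P=\bigl(-WSW+WA+A^HW+D\bigr)Q+W(SW-A)Q$, in which the $WSW$ terms cancel and the $WA$ terms cancel, leaving $\dot P=(A^HW+D)Q=A^HP+DQ$, as required. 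The initial condition follows from $Q(0)=I$ and $P(0)=W(0)Q(0)=W_0$.

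For part~(ii), given that $Y=[Q^{\top},P^{\top}]^{\top}$ solves~\eqref{eq3.29} and $Q(t)$ is invertible on the subinterval, I set $W:=PQ^{-1}$ and differentiate using $\tfrac{d}{dt}(Q^{-1})=-Q^{-1}\dot QQ^{-1}$. Reading off $\dot Q=-AQ+SP$ and $\dot P=DQ+A^HP$ from the block form~\eqref{eq3.29b}, I obtain $\dot W=\dot PQ^{-1}-PQ^{-1}\dot QQ^{-1}=(DQ+A^HP)Q^{-1}-PQ^{-1}(-AQ+SP)Q^{-1}$; expanding and rewriting everything in terms of $W=PQ^{-1}$ produces $\dot W=D+A^HW+WA-WSW$, which is~\eqref{eq3.27}, together with $W(0)=P(0)Q(0)^{-1}=W_0$.

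I do not expect a genuine obstacle: the content is entirely the product rule, the derivative-of-inverse formula, and the cancellation of the cross terms. The only points that call for a sentence of care are the existence of $Q(t)$ on the full interval in part~(i) (which follows from the linearity of~\eqref{eq3.28} with continuous coefficients) and, to close the circle between the two parts, the remark that the $Q$ constructed in part~(i) is automatically invertible, so that $PQ^{-1}=W$ there; Hermiticity of the resulting $W$ plays no role in the argument and need not be invoked.
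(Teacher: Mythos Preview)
Your argument is correct and is exactly the standard direct-differentiation proof of Radon's lemma. The paper does not supply its own proof of Theorem~\ref{thm3.8}; it merely cites the result from \cite{Abou-Kandil03}, so there is nothing to compare against beyond noting that your computation matches the classical one.
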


\begin{Remark}\label{rem3.6}
Using the definition of $\widetilde{\mathcal{H}}$ in  \eqref{eq3.29b}, it  follows from \eqref{eq3.24} that
\begin{align}\label{eq3.30}
\widetilde{\mathcal{H}}=-\left[%
\begin{array}{cc}
I&0\\
0&-I\\
\end{array}%
\right]\mathcal{S}_2\mathcal{H}\mathcal{S}_2^{-1}\left[%
\begin{array}{cc}
I&0\\
0&-I\\
\end{array}%
\right].
\end{align}
Therefore, if $\mathcal{S}_2\mathcal{H}\mathcal{S}_2^{-1}\left[%
\begin{array}{c}
U_1\\
U_2\\
\end{array}%
\right]=\left[%
\begin{array}{c}
U_1\\
U_2\\
\end{array}%
\right]\Lambda$, then $\widetilde{\mathcal{H}}\left[%
\begin{array}{r}
U_1\\
-U_2\\
\end{array}%
\right]=\left[%
\begin{array}{r}
U_1\\
-U_2\\
\end{array}%
\right](-\Lambda)$.
\end{Remark}

\begin{Corollary}\label{cor3.9}
Let $Y(t)\equiv[Q(t)^{\top},P(t)^{\top}]^{\top}$ and $W(t)$ be the solution of \eqref{eq3.29} and \eqref{eq3.27}, respectively,  with $W_0=X_{22}^1$. If $Q(t)$ is invertible, for $t\in(t_0-1,t_1-1)$ and $t_0-1<0<t_1-1$, then the solutions of \eqref{eq3.26d} and \eqref{eq3.26b} are
\begin{align*}
X_{22}(t)&=W(t-1)=P(t-1)Q(t-1)^{-1},\\
X_{12}(t)&=X_{12}^1Q(t-1)^{-1},
\end{align*}
respectively, for $t\in(t_0,t_1)$. In addition, $X_{21}(t)=X_{12}(t)^H=Q(t-1)^{-H}X_{21}^1$.
\end{Corollary}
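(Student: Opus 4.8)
The plan is to combine Radon's Lemma (Theorem~\ref{thm3.8}) with uniqueness for the initial value problems \eqref{eq3.27}, \eqref{eq3.28} and \eqref{eq3.26b}; essentially everything is already set up, and the only point requiring attention is the bookkeeping of the shift $t\mapsto t-1$ that relates the RDE and the linear system (centred at $0$) to the flow \eqref{eq3.9} (centred at $1$), in particular matching the invertibility interval $(t_0-1,t_1-1)$ for $Q$ with the interval $(t_0,t_1)$ for $Q(\cdot-1)$.

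First I would handle $X_{22}$. Recall from the discussion preceding Remark~\ref{rem3.5} that $W(t):=X_{22}(t+1)$ is precisely the solution of the RDE~\eqref{eq3.27} with $W_0=X_{22}^1$, so $X_{22}(t)=W(t-1)$ on $(t_0,t_1)$. Since $Q(t)$ is invertible on $(t_0-1,t_1-1)$, Theorem~\ref{thm3.8}(ii) tells us that $\widetilde W(t):=P(t)Q(t)^{-1}$ is a solution of \eqref{eq3.27}; from $Y(0)=[I,\,W_0^{\top}]^{\top}$ we read off $Q(0)=I$ and $P(0)=W_0$, hence $\widetilde W(0)=W_0$. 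As \eqref{eq3.27} has a locally Lipschitz (indeed quadratic) right-hand side, its solution through $W_0$ is unique, so $\widetilde W\equiv W$, and therefore $X_{22}(t)=W(t-1)=P(t-1)Q(t-1)^{-1}$ for $t\in(t_0,t_1)$.

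Next I would verify the formula for $X_{12}$ by direct substitution into \eqref{eq3.26b}. Differentiating $Q^{-1}$ with the help of \eqref{eq3.28}, $\dot Q=(SW-A)Q$, gives $\frac{d}{dt}Q(t)^{-1}=-Q(t)^{-1}(SW(t)-A)$. Setting $Z(t):=X_{12}^1Q(t-1)^{-1}$ and using that $X_{22}(t)=W(t-1)$ is Hermitian (so $X_{22}(t)^{H}=W(t-1)$), we get
\begin{align*}
\dot Z(t)=-X_{12}^1Q(t-1)^{-1}\bigl(SW(t-1)-A\bigr)=-Z(t)SX_{22}(t)^{H}+Z(t)A,
\end{align*}
which is exactly the right-hand side of \eqref{eq3.26b} evaluated at $Z$. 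Since $Z(1)=X_{12}^1Q(0)^{-1}=X_{12}^1$, the functions $Z$ and $X_{12}$ solve the same linear IVP, so by uniqueness $X_{12}(t)=X_{12}^1Q(t-1)^{-1}$.

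Finally, the relation $X_{21}(t)=X_{12}(t)^{H}$ was already established in Remark~\ref{rem3.5} from \eqref{eq3.26b}--\eqref{eq3.26c} together with $X_{21}^1=(X_{12}^1)^{H}$; taking conjugate transposes in the formula just proved gives $X_{21}(t)=\bigl(X_{12}^1Q(t-1)^{-1}\bigr)^{H}=Q(t-1)^{-H}X_{21}^1$. I do not expect any real obstacle here: the statement is a routine corollary of Radon's Lemma and ODE uniqueness, the shift bookkeeping being the only mildly delicate step.
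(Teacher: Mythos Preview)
Your proof is correct and follows essentially the same route as the paper: both invoke Radon's Lemma (Theorem~\ref{thm3.8}) to identify $W(t)=P(t)Q(t)^{-1}$ and hence $X_{22}(t)=W(t-1)$, then differentiate $Q^{-1}$ via \eqref{eq3.28} to see that $X_{12}^1Q(t-1)^{-1}$ solves \eqref{eq3.26b} with the right initial value, and finally obtain $X_{21}$ from the Hermitian symmetry of $X(t)$. Your version is slightly more explicit about the uniqueness arguments and the $t\mapsto t-1$ bookkeeping, but there is no substantive difference.
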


\begin{proof}
From Radon's Lemma, we obtain that $W(t)=P(t)Q(t)^{-1}$ for $t\in (t_0-1,t_1-1)$  is the solution of RDE \eqref{eq3.27}. Hence, we have $X_{22}(t)=W(t-1)=P(t-1)Q(t-1)^{-1}$ for $t\in (t_0,t_1)$ by comparing \eqref{eq3.26d} and \eqref{eq3.27}. Note that $Q(t)$ satisfies \eqref{eq3.28} and $\frac{d}{dt}Q(t)^{-1}=-Q(t)^{-1}\dot{Q}(t)Q(t)^{-1}$.
Multiplying $Q(t)^{-1}$ from both sides of Eq. \eqref{eq3.28}, it is easily seen that $Q(t)^{-1}$ is the fundamental solution of the equation
\begin{align}\label{eq3.31}
\dot{R}(t)=R(t)(A-SW(t)),\ \ R(0)=I_n.
\end{align}
Comparing \eqref{eq3.31} and \eqref{eq3.26b}, we thus have $X_{12}(t)=X_{12}^1Q(t-1)^{-1}$. Assertion for $X_{21}(t)$ follows from the fact that $X(t)$ is Hermitian.
\end{proof}

Let $\mathcal{S}_1\mathcal{H}\mathcal{S}_1^{-1}=\left[%
\begin{array}{cc}
A_{\star}&S_{\star}\\
D_{\star}&-A_{\star}^H\\
\end{array}%
\right]$.
From Corollary \ref{cor3.3} and a similar calculation as \eqref{eq3.25}, we obtain that $X_{11}(t)$ and $X_{21}(t)$ satisfy
\begin{align}\label{eq3.32}
\begin{array}{l}
\dot{X}_{11}=X_{11}D_{\star}X_{11}^H+X_{11}A_{\star}^H+A_{\star}X_{11}^H-S_{\star},\\
\dot{X}_{21}=X_{21}D_{\star}X_{11}^H+X_{21}A_{\star}^H,
\end{array}
\end{align}
with $X_{11}(1)=X_{11}^1$ and $X_{21}(1)=X_{21}^1$.
Similarly, by using the fact that the solution $X_{11}(t)$ is Hermitian and
taking the time shift, $t\rightarrow t+1$, we see that $W_{\star}(t)=X_{11}(t+1)$ is the solution of the RDE
\begin{align}\label{eq3.33}
\begin{array}{l}
\dot{W}_{\star}(t)=W_{\star}(t)D_{\star}W_{\star}(t)+W_{\star}(t)A_{\star}^H+A_{\star}W_{\star}(t)-S_{\star},\\
W_{\star}(0)=X_{11}^1.
\end{array}
\end{align}
Let $Y_{\star}(t)=\left[%
\begin{array}{c}
Q_{\star}(t)\\
P_{\star}(t)\\
\end{array}%
\right]$ be  the solution of the  linear differential equation
\begin{subequations}\label{eq3.34}
\begin{align}\label{eq3.34a}
\dot{Y}_{\star}(t)=\widetilde{\mathcal{H}}_{\star}Y_{\star}(t),\ \
Y_{\star}(0)=\left[%
\begin{array}{c}
I\\
X_{11}^1\\
\end{array}%
\right],
\end{align}
where
\begin{align}\label{eq3.34b}
\widetilde{\mathcal{H}}_{\star}\equiv\left[%
\begin{array}{cc}
-A_{\star}^H&-D_{\star}\\
-S_{\star}&{A_{\star}}\\
\end{array}%
\right]=\mathcal{J}^{-1}\mathcal{S}_1\mathcal{H}\mathcal{S}_1^{-1}\mathcal{J}.
\end{align}
\end{subequations}
Suppose that $Q_{\star}(t)$ is invertible for $t\in (t_0^{\star}-1,t_1^{\star}-1)$ and $t_0^{\star}<1<t_1^{\star}$. By Radon's Lemma and Corollary \ref{cor3.9}, the solution $X_{11}(t)$, $X_{21}(t)$ of \eqref{eq3.32}  can be formulated by
\begin{align}\label{eq3.35}
\begin{array}{l}
X_{11}(t)=W_{\star}(t-1)=P_{\star}(t-1)Q_{\star}(t-1)^{-1},\\
X_{21}(t)=X_{21}^{1}Q_{\star}(t-1)^{-1},
\end{array}
\end{align}
respectively, for $t\in (t_0^{\star},t_1^{\star})$.
Comparing \eqref{eq3.30} and \eqref{eq3.34b} yields that $\widetilde{\mathcal{H}}_{\star}$ and $-\widetilde{\mathcal{H}}$ are similar.

The nonsingularity of $Q(t)$ and $Q_\star(t)$ plays an important role to determine whether $X_{22}(t)$ and $X_{11}(t)$ exist, respectively.  The following theorem claims that both $Q(t)$ and $Q_\star(t)$ are invertible simultaneously.
\begin{Theorem}\label{thm3.10}
Let $Q(t)$, $P(t)$, $Q_\star(t)$ and $P_\star(t)$ be the matrix functions given in \eqref{eq3.29} and \eqref{eq3.34}, respectively. Then we have
\begin{align}\label{eq3.36}
\{t\in \mathbb{R}| \ {\rm det}(Q(t))\ne 0\}=\{t\in \mathbb{R}| \ {\rm det}(Q_{\star}(t))\ne 0\}.
\end{align}
In addition, if $\hat{t}\in{\mathbb R}$ such that ${\rm det}(Q(\hat{t}))= 0$, then
\begin{align*}
\begin{array}{l}
\lim_{t\rightarrow \hat{t}}\|P(t)Q(t)^{-1}\|=\lim_{t\rightarrow \hat{t}}\|X_{12}^{1}Q(t)^{-1}\|=\infty,\\
\lim_{t\rightarrow \hat{t}}\|P_{\star}(t)Q_{\star}(t)^{-1}\|=\lim_{t\rightarrow \hat{t}}\|X_{21}^{1}Q_{\star}(t)^{-1}\|=\infty.
\end{array}
\end{align*}
\end{Theorem}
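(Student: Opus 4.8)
The plan is to tie everything to a single algebraic identity linking $Q(t)$ with $Q_\star(t)$, and then read off both assertions from it.

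First I would establish the \emph{bridge identity} $Q(t)^{H}X_{21}^{1}=X_{21}^{1}Q_\star(t)$ for all $t\in\mathbb{R}$. On the maximal existence interval of IVP~\eqref{eq3.9} (which contains $t=1$), the solution $X(\cdot)$ is finite and its blocks solve the two RDEs, so $Q(t-1)$ and $Q_\star(t-1)$ are nonsingular there (Radon's lemma); then Corollary~\ref{cor3.9} gives $X_{21}(t)=Q(t-1)^{-H}X_{21}^{1}$ and \eqref{eq3.35} gives $X_{21}(t)=X_{21}^{1}Q_\star(t-1)^{-1}$, and comparing these produces the identity on a nonempty open interval. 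Since the entries of $Q(\cdot)$ and $Q_\star(\cdot)$ are finite combinations of $t^{j}e^{\lambda t}$, they—and hence also $Q(\cdot)^{H}$—are real-analytic on $\mathbb{R}$; two real-analytic matrix functions agreeing on a nonempty open set agree everywhere, which promotes the identity to all of $\mathbb{R}$. Conjugate-transposing it yields the companion identity $Q_\star(t)^{H}X_{12}^{1}=X_{12}^{1}Q(t)$.

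Next I would show that $Q(t)$ and $Q_\star(t)$ act trivially on the ``degenerate'' directions. Write $\mathcal{S}_{2}U_{0}=\bigl[\,R_{0}^{\top},T_{0}^{\top}\,\bigr]^{\top}$ and $\mathcal{S}_{1}U_{\infty}=\bigl[\,R_{\infty}^{\top},T_{\infty}^{\top}\,\bigr]^{\top}$ for $U_{0},U_{\infty}$ as in Theorem~\ref{thm2.3} applied to $(\mathcal{M}_{1},\mathcal{L}_{1})$. From $\mathcal{M}_{1}U_{0}=0$, $\mathcal{L}_{1}U_{\infty}=0$ and the rank count of Theorem~\ref{thm2.3} one gets $X_{12}^{1}R_{0}=0$, $X_{22}^{1}R_{0}=-T_{0}$, $X_{21}^{1}T_{\infty}=0$, $X_{11}^{1}T_{\infty}=-R_{\infty}$, with $R_{0},T_{\infty}$ of full column rank, ${\rm Ran}\,R_{0}=\ker X_{12}^{1}$, ${\rm Ran}\,T_{\infty}=\ker X_{21}^{1}$, and ${\rm rank}\,X_{12}^{1}={\rm rank}\,X_{21}^{1}=\hat n$. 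Using relation~\eqref{eq3.30} together with $\mathcal{H}U_{0}=0$ (valid because, by \eqref{eq2.10} and Remark~\ref{rem2.1}, $\mathcal{H}$ is $\mathbf{U}$-conjugate to $\widehat{\mathcal{H}}\oplus 0$ with $U_{0},U_{\infty}$ in the null block, so $e^{-\mathcal{H}t}U_{0}=U_{0}$), a short computation gives $Q(t)R_{0}=[\,I,0\,]\mathcal{S}_{2}e^{-\mathcal{H}t}U_{0}=[\,I,0\,]\mathcal{S}_{2}U_{0}=R_{0}$ for every $t$; symmetrically, via \eqref{eq3.34b} and $\mathcal{H}U_{\infty}=0$, one finds $Q_\star(t)T_{\infty}=T_{\infty}$. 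Hence $Q(t)$ fixes $\ker X_{12}^{1}$ pointwise and $Q_\star(t)$ fixes $\ker X_{21}^{1}$ pointwise.

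With these facts, \eqref{eq3.36} and the blow-up estimates follow. Choose $R_{0}'$ with $[R_{0},R_{0}']$ invertible; since $Q(t)R_{0}=R_{0}$, in this basis $Q(t)$ is block upper triangular with a $\hat n\times\hat n$ ``essential'' block $D(t)$ and $\det Q(t)=\det D(t)$, while $R_{0}^{H}(Q(t)^{H}-I)=0$ forces ${\rm Ran}(Q(t)^{H}-I)\subseteq(\ker X_{12}^{1})^{\perp}={\rm Ran}\,X_{21}^{1}$, so $\overline{\det Q(t)}=\det\bigl(Q(t)^{H}|_{{\rm Ran}\,X_{21}^{1}}\bigr)$. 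By the bridge identity the isomorphism $X_{21}^{1}\colon\mathbb{C}^{n}/\ker X_{21}^{1}\to{\rm Ran}\,X_{21}^{1}$ intertwines $Q(t)^{H}|_{{\rm Ran}\,X_{21}^{1}}$ with the map induced by $Q_\star(t)$ on $\mathbb{C}^{n}/\ker X_{21}^{1}$, and since $Q_\star(t)$ fixes $\ker X_{21}^{1}$ that induced map has determinant $\det Q_\star(t)$; therefore $\overline{\det Q(t)}=\det Q_\star(t)$, which is \eqref{eq3.36}. Now fix $\hat t$ with $\det Q(\hat t)=0$, hence also $\det Q_\star(\hat t)=0$, with $\hat t$ isolated (as $Q(0)=Q_\star(0)=I$). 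Because $\bigl[\,Q(t)^{\top},P(t)^{\top}\,\bigr]^{\top}=e^{\widetilde{\mathcal{H}}t}\bigl[\,I,(X_{22}^{1})^{\top}\,\bigr]^{\top}$ has full column rank $n$ for all $t$, boundedness of $\|P(t_{k})Q(t_{k})^{-1}\|$ along some $t_{k}\to\hat t$ would give, along a subsequence, $P(\hat t)=Z^{\ast}Q(\hat t)$, whence $\bigl[\,Q(\hat t)^{\top},P(\hat t)^{\top}\,\bigr]^{\top}$ has rank $\le{\rm rank}\,Q(\hat t)<n$, a contradiction; thus $\|P(t)Q(t)^{-1}\|\to\infty$, and likewise $\|P_\star(t)Q_\star(t)^{-1}\|\to\infty$. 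Finally, the triangular form together with $X_{12}^{1}R_{0}=0$ and $X_{12}^{1}R_{0}'$ of full column rank $\hat n$ gives $X_{12}^{1}Q(t)^{-1}=[\,0,\ X_{12}^{1}R_{0}'D(t)^{-1}\,][R_{0},R_{0}']^{-1}$, whose norm is bounded below by a fixed positive multiple of $\|D(t)^{-1}\|$, which tends to $\infty$ as $t\to\hat t$ since $\sigma_{\min}(D(t))\to\sigma_{\min}(D(\hat t))=0$; symmetrically $\|X_{21}^{1}Q_\star(t)^{-1}\|\to\infty$.

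The main obstacle is forcing the bridge identity to hold \emph{globally}: it is only visible on the maximal existence interval of the structure-preserving flow, and pushing it to all of $\mathbb{R}$ requires the real-analyticity argument. A secondary difficulty is the index-one case ${\rm ind}_{\infty}(\mathcal{M}_{1},\mathcal{L}_{1})=1$, in which $Q(t)$ and $Q_\star(t)$ are genuinely singular and the determinant bookkeeping must be organized around the $\hat n\times\hat n$ compressions of the second and third steps; when ${\rm ind}_{\infty}(\mathcal{M}_{1},\mathcal{L}_{1})=0$ those compressions fill out, $X_{21}^{1}$ is invertible, and the bridge identity alone gives $Q_\star(t)=(X_{21}^{1})^{-1}Q(t)^{H}X_{21}^{1}$, making both assertions immediate.
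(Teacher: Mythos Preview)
Your argument is correct and shares the same two ingredients as the paper's proof---namely that $Q(t)$ fixes $\ker X_{12}^{1}$ pointwise (which the paper extracts inside its contradiction step ``$Q(t)x_0$ is independent of $t$''), and the intertwining $Q(t)^{H}X_{21}^{1}=X_{21}^{1}Q_\star(t)$ (which the paper uses only in the local form $X_{21}^{1}Q_\star(t)^{-1}=(X_{12}^{1}Q(t)^{-1})^{H}$). The routes, however, are organized quite differently. The paper first proves the blow-up $\|X_{12}^{1}Q(t)^{-1}\|\to\infty$ by contradiction, then \emph{transfers} it to $\|X_{21}^{1}Q_\star(t)^{-1}\|\to\infty$ via the local intertwining, and reads off one inclusion in~\eqref{eq3.36}; the reverse inclusion is a symmetric argument based on the formula~\eqref{eq3.38}. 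You instead promote the intertwining to a global identity by analytic continuation and combine it with the fixed subspaces to obtain the stronger statement $\overline{\det Q(t)}=\det Q_\star(t)$ for all $t$, from which~\eqref{eq3.36} is immediate; the blow-ups then follow from the block-triangular compression to the $\hat n\times\hat n$ essential block $D(t)$. What your approach buys is a clean quantitative relation between the two determinants (not just equality of zero sets) and an explicit mechanism for the $X_{12}^{1}Q(t)^{-1}$ blow-up through $\|D(t)^{-1}\|$; what the paper's approach buys is that it never needs to invoke analyticity or analytic continuation, working entirely with the explicit exponential formulas~\eqref{eq3.37}--\eqref{eq3.38}.
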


\begin{proof}
Let  $\Pi_0$, $\Pi_\infty$, $\mathbf{U}\equiv[U_1|U_0,U_\infty]$ and $\mathcal{H}$ be defined in Definition~\ref{def2.2} that satisfy \eqref{eq3.8}. Using the facts that $\mathcal{S}_1$, $\mathcal{S}_2$ and $e^{\mathcal{H}t}$ are symplectic and  applying \eqref{eq3.29}, \eqref{eq3.30} and \eqref{eq3.34}, we have
\begin{align}
Q(t)&=[I,0]\left[%
\begin{array}{c}
Q(t)\\
P(t)\\
\end{array}%
\right]=[I,0]\mathcal{S}_2e^{-\mathcal{H}t}\mathcal{S}_2^{-1}\left[%
\begin{array}{cc}
I&0\\
0&-I\\
\end{array}%
\right]\left[%
\begin{array}{c}
I\\
X_{22}^{1}\\
\end{array}%
\right]\nonumber\\&=[I,0]\mathcal{S}_2e^{-\mathcal{H}t}\mathcal{S}_2^{-1}\mathcal{J}\left[%
\begin{array}{c}
X_{22}^{1}\\
I\\
\end{array}%
\right]=[I,0]\mathcal{S}_2\mathcal{J}{e^{\mathcal{H}t}}^H\mathcal{S}_2^{H}\left[%
\begin{array}{c}
X_{22}^{1}\\
I\\
\end{array}%
\right],\label{eq3.37}\\
Q_{\star}(t)&=[I,0]\left[%
\begin{array}{c}
Q_{\star}(t)\\
P_{\star}(t)\\
\end{array}%
\right]=[0,-I]\mathcal{S}_1e^{\mathcal{H}t}\mathcal{S}_1^{-1}\mathcal{J}\left[%
\begin{array}{c}
I\\
X_{11}^{1}\\
\end{array}%
\right]\nonumber\\&=-[0,I]\mathcal{S}_1e^{\mathcal{H}t}\mathcal{J}\mathcal{S}_1^{H}\left[%
\begin{array}{c}
I\\
X_{11}^{1}\\
\end{array}%
\right].\label{eq3.38}
\end{align}

Suppose that $\hat{t}\in{\mathbb R}$ such that ${\rm det}(Q(\hat{t}))= 0$. We first claim that
\begin{align*}
\lim_{t\rightarrow \hat{t}}\|P(t)Q(t)^{-1}\|=\lim_{t\rightarrow \hat{t}}\|X_{12}^{1}Q(t)^{-1}\|=\infty.
\end{align*}
Since $[Q(t)^{\top},P(t)^{\top}]^{\top}=e^{\widetilde{\mathcal{H}}\hat{t}}[I, X_{22}^1]^{\top}$ is of full column rank and $Q(\hat{t})$ is singular, it is easily seen that  $\lim_{t\rightarrow \hat{t}}\|P(t)Q(t)^{-1}\|=\infty$. Now, we show that $\lim_{t\rightarrow \hat{t}}\|X_{12}^{1}Q(t)^{-1}\|=\infty$. Since $Q(t)$ is continuous and $Q(\hat{t})$ is singular, it suffices to show that $X_{12}^{1}x_0\neq 0$, where $Q(\hat{t})x_0=0$ with  $x_0\neq 0$. We prove it by contradiction. Suppose that $X_{12}^{1}x_0= 0$. Since $(\mathcal{M}_1, \mathcal{L}_1)\in\mathbb{S}_{\mathcal{S}_1,\mathcal{S}_2}$, Eq. \eqref{eq3.8} can be written in the form
\begin{align}\label{eq3.39}
\left[%
\begin{array}{cc}
X_{12}^1&0\\
X_{22}^1&I\\
\end{array}%
\right]\mathcal{S}_2\Pi_0=\left[%
\begin{array}{cc}
I&X_{11}^1\\
0&X_{21}^1\\
\end{array}%
\right]\mathcal{S}_1\Pi_{\infty}e^{\mathcal{H}}.
\end{align}
Using the facts that $X_{12}^1=X_{21}^{1^H}$ and $X_{12}^{1}x_0= 0$, it follows from the second row of \eqref{eq3.39} that
$x_0^H[X_{22}^1,I]\mathcal{S}_2\Pi_0=0$.
Since $\Pi_0[U_1,U_0]=[U_1,U_0]$, we have
$x_0^H[X_{22}^1,I]\mathcal{S}_2[U_1,U_0]=0$.
Using the definition of $\mathcal{H}$ in \eqref{eq2.10} yields
\begin{align*}
x_0^H[X_{22}^1,I]\mathcal{S}_2e^{\mathcal{H}t}&=x_0^H[X_{22}^1,I]\mathcal{S}_2[U_1|U_0,U_\infty]\left[%
\begin{array}{c|c}
e^{\widehat{\mathcal{H}}t}&0\\\hline
0&I_{2\ell}\\
\end{array}%
\right][U_1|U_0,U_\infty]^{-1}\\
&=\left[0,0,x_0^H[X_{22}^1,I]\mathcal{S}_2U_{\infty}\right][U_1|U_0,U_\infty]^{-1},
\end{align*}
which is  independent of the parameter $t$. Therefore, we may denote $z_0^H=x_0^H[X_{22}^1,I]\mathcal{S}_2e^{\mathcal{H}t}$.
Multiplying $x_0$ from the right of \eqref{eq3.37}, it follows that
\begin{align*}
Q(t)x_0&=[I,0]\mathcal{S}_2\mathcal{J}\left({e^{\mathcal{H}t}}^H\mathcal{S}_2^{H}\left[%
\begin{array}{c}
X_{22}^{1}\\
I\\
\end{array}%
\right]x_0\right)=[I,0]\mathcal{S}_2\mathcal{J}z_0
\end{align*}
which is  independent of the parameter $t$. Because $Q(0)=I$ and $x_0\neq 0$, we have $Q(\hat{t})x_0=Q(0)x_0\neq 0$. This contradicts that $Q(\hat{t})x_0=0$.

Now, we show that
\begin{align*}
\lim_{t\rightarrow \hat{t}}\|P_{\star}(t)Q_{\star}(t)^{-1}\|=\lim_{t\rightarrow \hat{t}}\|X_{21}^{1}Q_{\star}(t)^{-1}\|=\infty.
\end{align*}
Using the fact that $X_{21}^{1}Q_{\star}(t)^{-1}=X_{21}(t+1)=X_{12}(t+1)^H=(X_{12}^{1}Q(t)^{-1})^{H}$, we have $\lim_{t\rightarrow \hat{t}}\|X_{21}^{1}Q_{\star}(t)^{-1}\|=\infty$. Consequently, $Q_{\star}(\hat{t})$ is singular. Then $\lim_{t\rightarrow \hat{t}}\|P_{\star}(t)Q_{\star}(t)^{-1}\|=\infty$ can be proven by the similar argument for $\lim_{t\rightarrow \hat{t}}\|P(t)Q(t)^{-1}\|=\infty$. This  proves the inclusion
\begin{align}\label{eq3.40}
\{t\in \mathbb{R}| \ {\rm det}(Q(t))= 0\}\subseteq\{t\in \mathbb{R}| \ {\rm det}(Q_{\star}(t))= 0\}.
\end{align}
The conclusion for Eq. \eqref{eq3.40} can be shown accordingly by \eqref{eq3.38}. Hence, \eqref{eq3.36} holds true.
\end{proof}

Now, let
\begin{align}\label{eq3.41}
\mathcal{T}_W=\{t\in{\mathbb R}|~Q(t)\text{ is invertible}\}.
\end{align}
Theorem~\ref{thm3.10} enables us to write the set $\mathcal{T}_W$ in an alternative form $\mathcal{T}_W=\{t\in{\mathbb R}|~Q_\star(t)\text{ is invertible}\}$.
Since det$(Q(t))$ is analytic, the zeros of det$(Q(t))$ are isolated. It follows $\mathcal{T}_W$ is the set that $\mathbb{R}$ subtracts some isolated points, and hence, $\mathcal{T}_W$ is a union of open intervals, say
\begin{align}\label{eq3.42}
\mathcal{T}_W=\bigcup_{k\in \mathbb{Z}}(\hat{t}_k,\hat{t}_{k+1}).
\end{align}
Here det$Q(\hat{t}_k)=0$ for each $k$ and $\cdots<\hat{t}_{-1}<\hat{t}_0<\hat{t}_1<\cdots$. Since $Q(0)=I$, it implies that $0\in\mathcal{T}_W$.  For convenience, we may say $0\in(\hat{t}_0,\hat{t}_1)$. Therefore, from Radon's Lemma follows that $(\hat{t}_0,\hat{t}_1)$ is the maximal interval of the RDEs~\eqref{eq3.27} and \eqref{eq3.33}. Later in Subsection~\ref{sec3.3}, we shall extend the domain of $W(t)$ and $W_\star(t)$ to whole $\mathcal{T}_W$.

\subsection{The Extension of Structure-Preserving Flow: the Phase Portrait on Grassmann Manifolds}\label{sec3.3}

Let $G^n(\mathbb{C}^{2n})$ be the Grassmann manifold that consists of $n$-dimensional subspaces of a $2n$-dimensional space, equipped with an appropriate
topology (see e.g., \cite{Abou-Kandil03}). Intrinsically, $G^n(\mathbb{C}^{2n})$ can be written as
\[
G^n(\mathbb{C}^{2n})=\left\{\text{Im}\left(\left[\begin{array}{l}A\\B\end{array}\right]\right)|~A,B\in{\mathbb C}^{n\times n}\text{ and }\text{rank}\left(\left[\begin{array}{l}A\\B\end{array}\right]\right)=n\right\}.
\]
Here Im$\left([A^{\top},B^{\top}]^{\top}\right)$ denotes the column space spanned by $[A^{\top},B^{\top}]^{\top}$. It is easily seen that  $\mathbb{C}^{n\times n}$ can be embedded into $G^n(\mathbb{C}^{2n})$ by
\[
\psi(W)=\text{Im}\left(\left[\begin{array}{c}I\\W\end{array}\right]\right).
\]
Let
$G^n_0(\mathbb{C}^{2n})=\left\{\text{Im}\left([A^{\top},B^{\top}]^{\top}\right)\in G^n(\mathbb{C}^{2n})|~A \in{\mathbb C}^{n\times n}\text{ is invertible}\right\}$.
Then $G^n_0(\mathbb{C}^{2n})=\psi(\mathbb{C}^{n\times n})$ is the image of $\psi$. Note that the Grassmann manifold $G^n(\mathbb{C}^{2n})$ is a compact analytic manifold of dimension $n^2$ and that $G^n_0(\mathbb{C}^{2n})$ is an open dense subset of $G^n(\mathbb{C}^{2n})$ (see e.g., \cite{Abou-Kandil03}).

Radon's Lemma leads us to consider a natural extension of the flow defined by the RDE \eqref{eq3.27} in  $\mathbb{C}^{n\times n}$ to a flow on the Grassmann manifold  $G^n(\mathbb{C}^{2n})$, via the process by the embedding
\[
\psi(W(t))=\text{Im}\left(\left[\begin{array}{c}I\\W(t)\end{array}\right]\right)=\text{Im}\left(\left[\begin{array}{c}Q(t)\\P(t)\end{array}\right]\right).
\]
Hence, a flow of RDE \eqref{eq3.27} on $G^n(\mathbb{C}^{2n})$ is just the linear flow of \eqref{eq3.29}.
Note that the maximal interval of the linear flow of \eqref{eq3.29} is $\mathbb{R}$.
In addition, the representation of Theorem~\ref{thm3.8}(ii) holds not only for all $t\in(\hat{t}_0,\hat{t}_1)$ but also for $t\in \mathcal{T}_W$ defined in \eqref{eq3.41}. Hence, the \textit{extended solution} of RDE \eqref{eq3.27} is
\begin{align*}
W(t)=P(t)Q(t)^{-1},\ \ \text{ for }t\in \mathcal{T}_W
\end{align*}
where $[Q(t)^{\top},P(t)^{\top}]^{\top}$ is the solution of \eqref{eq3.29}. Here,  $\psi(W(t))\in G^n_0(\mathbb{C}^{2n})$ for $t\in \mathcal{T}_W$. In the case $t\not\in \mathcal{T}_W$, i.e., $t=\hat{t}_k$ for some $k\in{\mathbb Z}$, $W(t)$ does not exist but $\text{Im}\left([Q(t)^{\top},P(t)^{\top}]^{\top}\right)\in G^{n}(\mathbb{C}^{2n})\setminus G^{n}_0(\mathbb{C}^{2n})$. Since det$(Q(t))$ is an analytic function of $t$,  $W(t)$ is meromorphic. We note that the unboundedness of $\mathcal{T}_W$ implies that the limit, $\lim_{t\rightarrow \infty }W(t)$, is meaningful.
The asymptotic phenomena of the phase portrait of RDE \eqref{eq3.27} can be investigated by using the extended solution of RDE. This will be done in Section~\ref{sec4}.

Theorem \ref{thm3.10} shows that $Q(t)$ and $Q_{\star}(t)$ are simultaneously invertible, where $Y(t)=[Q(t)^{\top},P(t)^{\top}]^{\top}$ and $Y_{\star}(t)=[Q(t)_{\star}^{\top},P(t)_{\star}^{\top}]^{\top}$ are the solutions of \eqref{eq3.29} and \eqref{eq3.34}, respectively. From Corollary \ref{cor3.9} and \eqref{eq3.35}, the  {\it extended solution},
$X(t)=[X_{ij}(t)]_{1\leqslant i,j\leqslant 2}$,
of IVP \eqref{eq3.9} can be defined as
\begin{align}\label{eq3.43}
\begin{array}{l}
X_{11}(t)=P_{\star}(t-1)Q_{\star}(t-1)^{-1},\\
X_{21}(t)=X_{21}^1Q_{\star}(t-1)^{-1},\\
X_{12}(t)=X_{12}^1Q(t-1)^{-1},\\
X_{22}(t)=P(t-1)Q(t-1)^{-1},
\end{array}
\end{align}
for $t\in \mathcal{T}_W+1$, where $\mathcal{T}_W+1$ denotes the set
\begin{align}\label{eq3.44}
\mathcal{T}_W+1\equiv\left\{t+1|t\in \mathcal{T}_W\right\}=\{t\in \mathbb{R}|~Q(t-1)\text{ is invertible}\}.
\end{align}
In Remark \ref{rem3.4}, we demonstrate that the maximal interval of IVP \eqref{eq3.9}, i.e., the maximal interval of $\mathcal{T}_W+1$ containing 1, coincides with the connected component of $\mathcal{T}_X$ containing 1.  In the following theorem we will show that $\mathcal{T}_W+1=\mathcal{T}_X$ and  $(\mathcal{M}(t), \mathcal{L}(t))=T_{\mathcal{S}_1, \mathcal{S}_2}(X(t))$ satisfies \eqref{eq3.14} for $t\in \mathcal{T}_W+1$, where $X(t)$ is the extended solution of IVP \eqref{eq3.9}, and vice versa.

\begin{Theorem}\label{thm3.11}
Suppose the assumptions of Theorem \ref{thm3.2} hold.

\begin{itemize}
\item[(i)] If $X(t)$, for $t\in \mathcal{T}_W+1$, is the extended solution of IVP \eqref{eq3.9}, then $(\mathcal{M}(t), \mathcal{L}(t))=T_{\mathcal{S}_1, \mathcal{S}_2}(X(t))$ satisfies \eqref{eq3.14} for $t\in \mathcal{T}_W+1$;
\item[(ii)] $\mathcal{T}_W+1=\mathcal{T}_X$ where $\mathcal{T}_X$ is defined in \eqref{eq3.23};
\item[(iii)] if $(\mathcal{M}(t), \mathcal{L}(t))$ is the solution of \eqref{eq3.14} for $t\in\mathcal{T}_X$, then $X(t)=T^{-1}_{\mathcal{S}_1, \mathcal{S}_2}(\mathcal{M}(t), \mathcal{L}(t))$ is the extended solution of IVP \eqref{eq3.9}.
\end{itemize}
\end{Theorem}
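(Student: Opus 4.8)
The plan is to extract everything from the explicit representation \eqref{eq3.43}, Radon's Lemma (Theorem~\ref{thm3.8}), and the uniqueness statement Theorem~\ref{thm3.6}(ii): I would first push the identity \eqref{eq3.14} from the maximal interval of IVP~\eqref{eq3.9} onto all of $\mathcal{T}_W+1$ by analytic continuation, then use a blow-up estimate to show $\mathcal{T}_X$ cannot exceed $\mathcal{T}_W+1$, and finally deduce (iii) from uniqueness.

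For (i), I would write $\widehat{X}(t)$ for the matrix-valued function given by the right-hand side of \eqref{eq3.43} and set $(\widehat{\mathcal{M}}(t),\widehat{\mathcal{L}}(t))=T_{\mathcal{S}_1,\mathcal{S}_2}(\widehat{X}(t))$, defined exactly for $t\in\mathcal{T}_W+1$ by \eqref{eq3.44}. The key observation is that $Q(t),P(t)$ and $Q_{\star}(t),P_{\star}(t)$ solve the constant-coefficient linear systems \eqref{eq3.29} and \eqref{eq3.34}, hence are entire in $t$, while $\det Q(t-1)$ and $\det Q_{\star}(t-1)$ are entire and not identically zero (both equal $1$ at $t=1$, since $Q(0)=Q_{\star}(0)=I$); therefore every entry of $\widehat{X}(t)$, $\widehat{\mathcal{M}}(t)$, $\widehat{\mathcal{L}}(t)$ extends to a function meromorphic on $\mathbb{C}$ whose real poles are exactly the isolated points $\hat{t}_k+1$ of \eqref{eq3.42}, and in particular these functions are finite at every $t\in\mathcal{T}_W+1$. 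On the interval $(\hat{t}_0+1,\hat{t}_1+1)$ — which, using Corollary~\ref{cor3.9}, \eqref{eq3.35}, Theorem~\ref{thm3.10} and \eqref{eq3.42}, is the maximal interval of IVP~\eqref{eq3.9} — the function $\widehat{X}(t)$ coincides with the genuine solution $X(t)$ of \eqref{eq3.9}, which is Hermitian (since $X_1\in\mathbb{H}(2n)$ and $\mathcal{H}\mathcal{J}$ is Hermitian, so $\mathcal{M}(t)\mathcal{H}\mathcal{J}\mathcal{M}(t)^H$ and hence $\dot{X}(t)$ are Hermitian) and satisfies $\widehat{\mathcal{M}}(t)\Pi_0=\widehat{\mathcal{L}}(t)\Pi_\infty e^{\mathcal{H}t}$ by Theorem~\ref{thm3.2}. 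I would then argue that the entries of $\widehat{X}(t)-\widehat{X}(t)^H$ and of $\widehat{\mathcal{M}}(t)\Pi_0-\widehat{\mathcal{L}}(t)\Pi_\infty e^{\mathcal{H}t}$, being meromorphic on $\mathbb{C}$ and vanishing on the open interval $(\hat{t}_0+1,\hat{t}_1+1)$, vanish identically by the identity theorem; hence $\widehat{X}(t)\in\mathbb{H}(2n)$ — so $(\widehat{\mathcal{M}}(t),\widehat{\mathcal{L}}(t))\in\mathbb{S}_{\mathcal{S}_1,\mathcal{S}_2}$ — and the first relation of \eqref{eq3.14} holds, for every $t\in\mathcal{T}_W+1$. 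This gives (i) and, in passing, the inclusion $\mathcal{T}_W+1\subseteq\mathcal{T}_X$.

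For (ii), it remains to prove $\mathcal{T}_X\subseteq\mathcal{T}_W+1$, and I would argue by contradiction. If $t^{\ast}\in\mathcal{T}_X\setminus(\mathcal{T}_W+1)$, then by \eqref{eq3.42} one has $\mathcal{T}_W+1=\mathbb{R}\setminus\{\hat{t}_k+1\mid k\in\mathbb{Z}\}$, so $t^{\ast}=\hat{t}_k+1$ for some $k$ and $\det Q(t^{\ast}-1)=\det Q(\hat{t}_k)=0$. Since $\mathcal{T}_X$ is open (Remark~\ref{rem3.3.5}) and the points $\hat{t}_k+1$ are isolated, some punctured neighbourhood of $t^{\ast}$ lies in $\mathcal{T}_X\cap(\mathcal{T}_W+1)$; there the solution $(\mathcal{M}(t),\mathcal{L}(t))$ of \eqref{eq3.14} is unique (Theorem~\ref{thm3.6}(ii)) and equals $(\widehat{\mathcal{M}}(t),\widehat{\mathcal{L}}(t))$ by part (i), so $X(t)=T_{\mathcal{S}_1,\mathcal{S}_2}^{-1}(\mathcal{M}(t),\mathcal{L}(t))=\widehat{X}(t)$ and in particular $X_{22}(t)=P(t-1)Q(t-1)^{-1}$ there. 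Theorem~\ref{thm3.10} then gives $\|P(t-1)Q(t-1)^{-1}\|\to\infty$ as $t\to t^{\ast}$, contradicting the continuity — indeed $C^1$-smoothness — of $X(t)$ at $t^{\ast}\in\mathcal{T}_X$ guaranteed by Remark~\ref{rem3.3.5}. Hence $\mathcal{T}_X=\mathcal{T}_W+1$.

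Part (iii) would then be immediate: for $t\in\mathcal{T}_X=\mathcal{T}_W+1$ the solution of \eqref{eq3.14} is unique (Theorem~\ref{thm3.6}(ii)) and, by (i), equals $T_{\mathcal{S}_1,\mathcal{S}_2}(\widehat{X}(t))$, so applying $T_{\mathcal{S}_1,\mathcal{S}_2}^{-1}$ shows $T_{\mathcal{S}_1,\mathcal{S}_2}^{-1}(\mathcal{M}(t),\mathcal{L}(t))=\widehat{X}(t)$, the extended solution of IVP~\eqref{eq3.9}. The step I expect to be the main obstacle is (i): one must recognize that the two sides of \eqref{eq3.14}, a priori known to agree only on the maximal interval where the structure-preserving flow is classically defined, are restrictions of matrix-valued functions that extend meromorphically to all of $\mathbb{C}$ with singularities only at the isolated blow-up times $\hat{t}_k+1$ — and this is exactly where the representation \eqref{eq3.43} coming from Radon's Lemma is indispensable; once that is in place, agreement on an interval forces agreement on all of $\mathcal{T}_W+1$ by the identity theorem, and (ii) and (iii) drop out from uniqueness together with the blow-up estimate of Theorem~\ref{thm3.10}.
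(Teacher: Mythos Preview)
Your argument is correct and takes a genuinely different route from the paper's.

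For (i) the paper carries out a direct algebraic verification: it writes out $[Q(t-1)^\top,P(t-1)^\top]^\top$ via $e^{\widetilde{\mathcal{H}}(t-1)}$, massages the resulting expression using the symplecticity of $\mathcal{S}_2$, $e^{\mathcal{H}t}$ and the initial relation \eqref{eq3.8}, and checks the second row of the equivalent system \eqref{eq3.19} by hand (then does the analogous computation with $Q_\star,P_\star$ for the first row). Your analytic-continuation approach is considerably shorter and more conceptual: you use Theorem~\ref{thm3.2} once on the maximal interval and let the identity theorem propagate the conclusion across the gaps. What the paper's computation buys is a self-contained argument that does not appeal to analyticity; what yours buys is that the heavy lifting has already been done in Theorem~\ref{thm3.2}, and the extension to all of $\mathcal{T}_W+1$ is essentially for free.

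For (ii) the paper argues that each interval $(\hat{t}_k+1,\hat{t}_{k+1}+1)$ is a full connected component of $\mathcal{T}_X$ by re-running the Theorem~\ref{thm3.7}/Remark~\ref{rem3.4} argument from a point in that interval. Your contradiction via the blow-up estimate of Theorem~\ref{thm3.10} reaches the same conclusion with less machinery. Part (iii) is handled identically in both.

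One point to tighten: as written, the phrase ``the entries of $\widehat{X}(t)-\widehat{X}(t)^H$ \ldots\ being meromorphic on $\mathbb{C}$'' is not literally correct, since for complex $t$ the conjugate transpose is anti-holomorphic. The cleanest fix is to move the conjugate to the polynomial side: the functions $Q(t-1)^{H}X_{21}^{1}-X_{21}^{1}Q_{\star}(t-1)$ and $Q(t-1)^{H}P(t-1)-P(t-1)^{H}Q(t-1)$ (and their starred analogues) are real-analytic on all of $\mathbb{R}$ --- indeed entire, if one replaces $Q(t)^{H}$ for real $t$ by the entire function obtained by conjugating the Taylor coefficients of $Q$ --- and vanish on $(\hat{t}_0+1,\hat{t}_1+1)$, hence vanish identically; dividing by $Q$ and $Q_\star$ then gives $X_{21}(t)=X_{12}(t)^{H}$ and $X_{11}(t),X_{22}(t)\in\mathbb{H}(n)$ on all of $\mathcal{T}_W+1$. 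With that adjustment your argument goes through cleanly.
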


\begin{proof}
We first prove assertion $(i)$. Suppose that $X(t)=[X_{ij}(t)]_{1\leqslant i,j\leqslant 2}$  for $t\in\mathcal{T}_W+1$, defined in \eqref{eq3.43}, is the extended solution of IVP \eqref{eq3.9}. Since $X_{22}(t)$ is Hermitian and $X_{21}(t)=X_{12}(t)^H$, it holds that
\begin{align}\label{eq3.45}
[X_{21}(t),X_{22}(t)]=Q(t-1)^{-H}[X_{12}^{1^H}, P(t-1)^H],
\end{align}
where $[Q(t)^{\top},P(t)^{\top}]^{\top}$ is the solution of IVP \eqref{eq3.29}. Using the definitions of $\mathcal{H}$ and $\widetilde{\mathcal{H}}$ in \eqref{eq2.10} and  \eqref{eq3.30}, respectively, we have
\begin{align}\label{eq3.46}
\left[%
\begin{array}{c}
Q(t-1)\\
P(t-1)\\
\end{array}%
\right]&=e^{\widetilde{\mathcal{H}}(t-1)}\left[%
\begin{array}{c}
I\\
X_{22}^1\\
\end{array}%
\right]\nonumber\\&=\left[%
\begin{array}{cc}
I&0\\
0&-I\\
\end{array}%
\right]\mathcal{S}_2\mathbf{U}(e^{-\widehat{\mathcal{H}}(t-1)}\oplus I_{2\ell})\mathbf{U}^{-1}\mathcal{S}_2^{-1}\left[%
\begin{array}{c}
I\\
-X_{22}^1\\
\end{array}%
\right].
\end{align}
Since $\mathcal{S}_2$ and  $e^{-\mathcal{H}(t-1)}=\mathbf{U}(e^{-\widehat{\mathcal{H}}(t-1)}\oplus I_{2\ell})\mathbf{U}^{-1}$  are symplectic, we have
\begin{align*}
\mathcal{J}\mathcal{S}_2\mathbf{U}(e^{-\widehat{\mathcal{H}}(t-1)}\oplus I_{2\ell})\mathbf{U}^{-1}\mathcal{S}_2^{-1}=\mathcal{S}_2^{-H}\mathbf{U}^{-H}(e^{\widehat{\mathcal{H}}(t-1)}\oplus I_{2\ell})^H\mathbf{U}^{H}\mathcal{S}_2^{H}\mathcal{J}.
\end{align*}
Applying the last equation to \eqref{eq3.46} it follows that
\begin{align}\label{eq3.47}
\mathbf{U}^{H}\mathcal{S}_2^{H}\left[%
\begin{array}{c}
P(t-1)\\
Q(t-1)\\
\end{array}%
\right]&=-(e^{\widehat{\mathcal{H}}(t-1)}\oplus I_{2\ell})^H\mathbf{U}^{H}\mathcal{S}_2^{H}\mathcal{J}\left[%
\begin{array}{c}
I\\
-X_{22}^1\\
\end{array}%
\right]\nonumber\\
&=(e^{\widehat{\mathcal{H}}(t-1)}\oplus I_{2\ell})^H\mathbf{U}^{H}\mathcal{S}_2^{H}\left[%
\begin{array}{c}
X_{22}^1\\
I\\
\end{array}%
\right].
\end{align}
Using the fact that $(\mathcal{M}_1,\mathcal{L}_1)\in \mathbb{S}_{\mathcal{S}_1, \mathcal{S}_2}$ satisfying \eqref{eq3.8},  definitions of $\Pi_0$ and $\Pi_{\infty}$ in  \eqref{eq2.11}, we have
\begin{align}\label{eq3.48}
\left[%
\begin{array}{cc}
X_{12}^1&0\\
X_{22}^1&I\\
\end{array}%
\right]\mathcal{S}_2\mathbf{U}(I_{2\hat{n}}\oplus E_{11})=\left[%
\begin{array}{cc}
I&X_{11}^1\\
0&X_{21}^1\\
\end{array}%
\right]\mathcal{S}_1\mathbf{U}(e^{\widehat{\mathcal{H}}}\oplus E_{22}),
\end{align}
where $E_{11}$ and $E_{22}$ are defined in \eqref{eq3.18a} and $\hat{n}=n-\ell$. Since $X_{22}^{1^H}=X_{22}^1$ and $X_{21}^{1^H}=X_{12}^1$, it follows from \eqref{eq3.47} and \eqref{eq3.48} that
\begin{align*}
(I_{2\hat{n}}\oplus E_{11})&[{\mathbf{V}_{2}^2}^{H}, {\mathbf{V}_{1}^2}^H]\left[%
\begin{array}{c}
Q(t-1)\\
P(t-1)\\
\end{array}%
\right]=(I_{2\hat{n}}\oplus E_{11})\mathbf{U}^{H}\mathcal{S}_2^{H}\left[%
\begin{array}{c}
P(t-1)\\
Q(t-1)\\
\end{array}%
\right]\nonumber\\
&=(e^{\widehat{\mathcal{H}}(t-1)}\oplus  I_{2\ell})^H(I_{2\hat{n}}\oplus E_{11})^H\mathbf{U}^{H}\mathcal{S}_2^{H}\left[%
\begin{array}{c}
X_{22}^1\\
I\\
\end{array}%
\right]\nonumber\\
&=(e^{\widehat{\mathcal{H}}(t-1)}\oplus  I_{2\ell})^H(e^{\widehat{\mathcal{H}}}\oplus E_{22})^H\mathbf{U}^H\mathcal{S}_1^H\left[%
\begin{array}{c}
0\\
X_{21}^{1^H}\\
\end{array}%
\right]\nonumber\\
&=(e^{\widehat{\mathcal{H}}t}\oplus  E_{22})^H{\mathbf{V}_{2}^1}^HX_{12}^1,
\end{align*}
where $\mathbf{V}_{1}$ and $\mathbf{V}_{2}$ are defined in \eqref{eq3.18b}. We then have
\begin{align*}
[-(e^{\widehat{\mathcal{H}}t}\oplus  E_{22})^H{\mathbf{V}_{2}^1}^H,(I_{2\hat{n}}\oplus E_{11}) {\mathbf{V}_{1}^2}^H]\left[%
\begin{array}{c}
X_{12}^1\\
P(t-1)\\
\end{array}%
\right]=-(I_{2\hat{n}}\oplus E_{11}) {\mathbf{V}_{2}^2}^HQ(t-1).
\end{align*}
Combining the last equation and \eqref{eq3.45}, we obtain
\begin{align*}
[X_{21}(t),X_{22}(t)]\left[%
\begin{array}{r}
-\mathbf{V}_{2}^1(e^{\widehat{\mathcal{H}}t}\oplus  E_{22})\\
\mathbf{V}_{1}^2(I_{2\hat{n}}\oplus E_{11})\\
\end{array}%
\right]=-\mathbf{V}_{2}^2(I_{2\hat{n}}\oplus E_{11}).
\end{align*}
Therefore, the equality of the second row of \eqref{eq3.19} holds.  The equality of the first row can be accordingly obtained by using the formulas for $X_{11}(t)$ and $X_{12}(t)=X_{21}(t)^H$ in \eqref{eq3.43} and the solution $Y_{\star}(t)=[Q(t)_{\star}^{\top},P(t)_{\star}^{\top}]^{\top}$ of the linear differential equation \eqref{eq3.34}.  Since \eqref{eq3.19} is equivalent to \eqref{eq3.14} by Lemma~\ref{lem3.5}, this proves assertion $(i)$.

Now we prove assertion $(ii)$. From assertion $(i)$, we have $\mathcal{T}_W+1\subseteq \mathcal{T}_X$. From \eqref{eq3.42} and \eqref{eq3.44}, we obtain that  $\mathcal{T}_W+1=\bigcup_{k\in \mathbb{Z}}(\hat{t}_k+1,\hat{t}_{k+1}+1)\subseteq \mathcal{T}_X$. For each $k\in \mathbb{Z}$, we have $(\hat{t}_k+1,\hat{t}_{k+1}+1)\subseteq \mathcal{T}_X$. Choosing a point $t_{k+1/2}\in (\hat{t}_k+1,\hat{t}_{k+1}+1)$, it follows from assertion $(i)$ that  $(\mathcal{M}(t_{k+1/2}), \mathcal{L}(t_{k+1/2}))=T_{\mathcal{S}_1, \mathcal{S}_2}(X(t_{k+1/2}))$ is the solution of \eqref{eq3.14} at $t=t_{k+1/2}$.  A similar argument to Theorem \ref{thm3.7} and Remark \ref{rem3.4} shows that $(\hat{t}_k+1,\hat{t}_{k+1}+1)$ is the connected component of $\mathcal{T}_X$ containing $t_{k+1/2}$. Hence, $\mathcal{T}_W+1=\mathcal{T}_X$.

Now we prove assertion $(iii)$. From Theorem~\ref{thm3.6} it follows that the solution $(\mathcal{M}(t), \mathcal{L}(t))$ of \eqref{eq3.14} is unique for each $t\in\mathcal{T}_X$. Therefore, assertions $(i)$ and $(ii)$ lead to the fact that $X(t)=T^{-1}_{\mathcal{S}_1, \mathcal{S}_2}(\mathcal{M}(t), \mathcal{L}(t))$ is the extended solution. This completes the proof.
\end{proof}

\subsection{Structure-Preserving Flow vs. SDA}\label{sec3.4}
Suppose that $(\mathcal{M}_1, \mathcal{L}_1)\in \mathbb{S}_{\mathcal{S}_1,\mathcal{S}_2}$ is a regular symplectic pair with ${\rm ind}_{\infty}(\mathcal{M}_1, \mathcal{L}_1)\leqslant 1$. Then
the structure-preserving flow $(\mathcal{M}(t), \mathcal{L}(t))=T_{\mathcal{S}_1, \mathcal{S}_2}(X(t))\in \mathbb{S}_{\mathcal{S}_1, \mathcal{S}_2}$ with the initial $(\mathcal{M}(1), \mathcal{L}(1))=(\mathcal{M}_1, \mathcal{L}_1)$  has been constructed in Theorem \ref{thm3.2}, where $X(t)$ for $t\in \mathcal{T}_X$ is the extended solution of IVP \eqref{eq3.9}. This flow satisfies both the \textbf{Eigenvector-Preserving Property} and the \textbf{Structure-Preserving Property}. In addition, Theorem \ref{thm3.11} shows the phase portrait of this flow is actually the solution curve of \eqref{eq3.14}, i.e., the curve $\mathcal{C}_{\mathcal{M}_1, \mathcal{L}_1}$ in \eqref{eq3.22}.

The structure-preserving doubling algorithm (SDA) is a powerful tool for solving CAREs \eqref{eq1.1}, DAREs \eqref{eq1.2} and NMEs \eqref{eq1.3}. In \cite{lin06}, two special classes of symplectic pairs, $\mathbb{S}_{1}=\mathbb{S}_{I_{2n},I_{2n}}$ and $\mathbb{S}_{2}=\mathbb{S}_{-I_{2n},\mathcal{J}}$ as in \eqref{eq1.6}, are considered and SDAs (SDA-1 and SDA-2 shown in \eqref{eq1.4} and \eqref{eq1.5}, respectively) are developed for solving CAREs, DAREs and NMEs such that the iterates, $(\mathcal{M}_k, \mathcal{L}_k)$ for $k=1,2,\ldots$, generated by SDA-1 and SDA-2 are in $\mathbb{S}_{1}$ and in $\mathbb{S}_{2}$, respectively. In addition, it has been shown that the iterate $(\mathcal{M}_k, \mathcal{L}_k)$ satisfies \begin{align}\label{eq3.49}
\mathcal{M}_kU_0=0,\ \ \mathcal{L}_kU_\infty=0\ \text{ and }\mathcal{M}_kU_1=\mathcal{L}_kU_1e^{\widehat{\mathcal{H}}2^{k-1}}
\end{align}
for each $k\in\mathbb{N}$, where the initial  pair $(\mathcal{M}_1, \mathcal{L}_1)$ satisfies \eqref{eq2.6} with $\widehat{\mathcal{S}}=e^{\widehat{\mathcal{H}}}$. By applying Theorem~\ref{thm3.6} $(iii)$ to \eqref{eq3.49}, we have $(\mathcal{M}_k, \mathcal{L}_k)=(\mathcal{M}(2^{k-1}), \mathcal{L}(2^{k-1}))\in \mathcal{C}_{{\mathcal M}_1,{\mathcal L}_1}$ defined in \eqref{eq3.22}.
Applying Theorem~\ref{thm3.11}, we have the following consequence immediately.

\begin{Theorem}\label{thm3.12}
Let $(\mathcal{M}_1,\mathcal{L}_1)\in \mathbb{S}_1$ or $\mathbb{S}_2$ with ${\rm ind}_{\infty} (\mathcal{M}_1,\mathcal{L}_1)\leqslant 1$.  Suppose $(\mathcal{M}_k, \mathcal{L}_k)$, $k=1,2,\ldots$, is the sequence generated by the SDA.  Then, for each $k\in\mathbb{N}$, $(\mathcal{M}_k, \mathcal{L}_k)=(\mathcal{M}(2^{k-1}), \mathcal{L}(2^{k-1}))$, where $(\mathcal{M}(t), \mathcal{L}(t))=T_{\mathcal{S}_1, \mathcal{S}_2}(X(t))$ and $X(t)$ is the extended solution of IVP \eqref{eq3.9}. Here, $(\mathcal{S}_1, \mathcal{S}_2)=(I,I)$ or $(-I,\mathcal{J})$ if $(\mathcal{M}_1,\mathcal{L}_1)\in \mathbb{S}_1$ or $\mathbb{S}_2$, respectively.
\end{Theorem}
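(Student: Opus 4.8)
The plan is to assemble the statement from the two structural properties of the SDA together with the characterization of the curve $\mathcal{C}_{\mathcal{M}_1,\mathcal{L}_1}$ supplied by Theorems~\ref{thm3.6} and \ref{thm3.11}; most of the work has in fact already been done in the paragraph preceding the statement, and the proof is a matter of putting the pieces in the right order. First I would fix $(\mathcal{S}_1,\mathcal{S}_2)=(I,I)$ when $(\mathcal{M}_1,\mathcal{L}_1)\in\mathbb{S}_1=\mathbb{S}_{I_{2n},I_{2n}}$ and $(\mathcal{S}_1,\mathcal{S}_2)=(-I,\mathcal{J})$ when $(\mathcal{M}_1,\mathcal{L}_1)\in\mathbb{S}_2=\mathbb{S}_{-I_{2n},\mathcal{J}}$, so that in either case $(\mathcal{M}_1,\mathcal{L}_1)\in\mathbb{S}_{\mathcal{S}_1,\mathcal{S}_2}$ and the structure-preserving flow $(\mathcal{M}(t),\mathcal{L}(t))=T_{\mathcal{S}_1,\mathcal{S}_2}(X(t))$ with initial pair $(\mathcal{M}_1,\mathcal{L}_1)$ is well-defined, $X(t)$ being its extended solution on $\mathcal{T}_X$.

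Next I would invoke the two facts about the SDA established in \cite{lin06,HuangLin:2009,ccghlx09}: the \textbf{Structure-Preserving Property}, which gives $(\mathcal{M}_k,\mathcal{L}_k)\in\mathbb{S}_{\mathcal{S}_1,\mathcal{S}_2}$ for every $k$, and the \textbf{Eigenvector-Preserving Property}, which---combined with Theorem~\ref{thm2.3} applied to $(\mathcal{M}_1,\mathcal{L}_1)$ (so that, thanks to $\mathrm{ind}_\infty(\mathcal{M}_1,\mathcal{L}_1)\le 1$, the zero and infinite eigenvalues are semisimple and the data $U_0,U_\infty,U_1,\widehat{\mathcal{S}}$ of Definition~\ref{def2.2} are at our disposal) and with the identity $\widehat{\mathcal{S}}=e^{\widehat{\mathcal{H}}}$ from Theorem~\ref{thm5.1}---yields exactly \eqref{eq3.49}:
\[
\mathcal{M}_kU_0=0,\qquad \mathcal{L}_kU_\infty=0,\qquad \mathcal{M}_kU_1=\mathcal{L}_kU_1e^{\widehat{\mathcal{H}}2^{k-1}}.
\]
I would then observe that \eqref{eq3.49} is precisely \eqref{eq3.20} evaluated at the instant $t=2^{k-1}$, so that the converse half of Theorem~\ref{thm3.6}(iii) applies: since $(\mathcal{M}_k,\mathcal{L}_k)\in\mathbb{S}_{\mathcal{S}_1,\mathcal{S}_2}$ satisfies \eqref{eq3.20} at $t=2^{k-1}$, the pair $(\mathcal{M}_k,\mathcal{L}_k)$ solves the linear system \eqref{eq3.14} at $t=2^{k-1}$. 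In particular $2^{k-1}\in\mathcal{T}_X$ and $(\mathcal{M}_k,\mathcal{L}_k)\in\mathcal{C}_{\mathcal{M}_1,\mathcal{L}_1}$.

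To conclude, I would use that \eqref{eq3.14} has a unique solution at each $t\in\mathcal{T}_X$ (Theorem~\ref{thm3.6}(ii)) and that, by Theorem~\ref{thm3.11}(iii), applying $T^{-1}_{\mathcal{S}_1,\mathcal{S}_2}$ to this unique solution returns the extended solution $X(t)$ of IVP~\eqref{eq3.9}; uniqueness then forces $(\mathcal{M}_k,\mathcal{L}_k)=(\mathcal{M}(2^{k-1}),\mathcal{L}(2^{k-1}))=T_{\mathcal{S}_1,\mathcal{S}_2}(X(2^{k-1}))$, which is the claim. The only step that goes beyond bookkeeping is the verification of \eqref{eq3.49} with the \emph{same} subspaces $U_0,U_\infty,U_1$ and the \emph{same} symplectic matrix $\widehat{\mathcal{S}}$ that Theorem~\ref{thm2.3} and Definition~\ref{def2.2} attach to $(\mathcal{M}_1,\mathcal{L}_1)$, and in the exponential form $e^{\widehat{\mathcal{H}}2^{k-1}}=\widehat{\mathcal{S}}^{\,2^{k-1}}$; once \eqref{eq3.49} is in hand, the remainder is a direct citation of Theorems~\ref{thm3.6} and \ref{thm3.11}. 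One should also note that the hypothesis merely assumes the SDA sequence exists---no breakdown occurs---which is exactly what is needed for $(\mathcal{M}_k,\mathcal{L}_k)$ to be a well-defined pair in $\mathbb{S}_{\mathcal{S}_1,\mathcal{S}_2}$ obeying \eqref{eq3.49}.
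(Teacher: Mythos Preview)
Your proposal is correct and follows essentially the same route as the paper: the argument in the paragraph preceding Theorem~\ref{thm3.12} already establishes \eqref{eq3.49} from the known Structure- and Eigenvector-Preserving Properties of the SDA, then applies Theorem~\ref{thm3.6}(iii) to place $(\mathcal{M}_k,\mathcal{L}_k)$ in $\mathcal{C}_{\mathcal{M}_1,\mathcal{L}_1}$ at $t=2^{k-1}$, and finally invokes Theorem~\ref{thm3.11} to identify it with the extended flow. Your write-up is simply a more explicit articulation of the same steps, with the uniqueness from Theorem~\ref{thm3.6}(ii) and the specific part~(iii) of Theorem~\ref{thm3.11} spelled out.
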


\section{Asymptotic Analysis of Structure-Preserving Flows Using $e^{\mathscr{H}t}$}\label{sec4}

In this section, we consider  the solution of the IVP:
\begin{align}\label{eq4.1}
\dot{Y}(t)=\mathscr{H}Y(t),\ \ Y(0)= \left[%
\begin{array}{c}
I\\
W_0\\
\end{array}%
\right],
\end{align}
where $Y(t)\in \mathbb{C}^{2n\times n}$, $W_0=W_0^{H}$ and $\mathscr{H}\in \mathbb{C}^{2n\times 2n}$ is a Hamiltonian matrix. It is well-known that the solution of IVP \eqref{eq4.1} is
\begin{align}\label{eq4.2}
Y(t;\mathscr{H},W_0)\equiv\left[%
\begin{array}{c}
Q(t;\mathscr{H},W_0)\\
P(t;\mathscr{H},W_0)\\
\end{array}%
\right]= e^{\mathscr{H}t}\left[%
\begin{array}{c}
I\\
W_0\\
\end{array}%
\right].
\end{align}
Radon's Lemma shows that $P(t;\mathscr{H},W_0)Q(t;\mathscr{H},W_0)^{-1}$, $t\in\mathcal{T}_W$, is the extended solution of the RDE
\begin{align}\label{eq4.3}
\begin{array}{l}
\dot{W}(t)=[-W(t),I]\mathscr{H}\left[\begin{array}{c}I \\W(t) \end{array}\right],\\
W(0)=W_0.
\end{array}
\end{align}
Here $\mathcal{T}_W$ is defined in \eqref{eq3.41} depending on $\mathscr{H}$ and $W_0$.

We denote by $W(t;\mathscr{H},W_0)$ the solution of RDE \eqref{eq4.3} with $\mathscr{H}$ and the initial  $W_0$ being parameters of the system.  Then, the Hamiltonian matrix $\mathscr{H}$ plays the role that governs how $W(t)$ in \eqref{eq4.3} behaves. Fist, we consider the case $W_0=X_{22}^1$ and
$\mathscr{H}=\widetilde{\mathcal{H}}$,
where $\widetilde{\mathcal{H}}$ is given in \eqref{eq3.30}. Applying the relation between $\widetilde{\mathcal{H}}$ and $\mathcal{S}_2\mathcal{H}\mathcal{S}_2^{-1}$ in \eqref{eq3.30} together with  Corollary~\ref{cor3.9}, the extended solutions of \eqref{eq3.26d} and \eqref{eq3.26b}, respectively, are of the forms
\begin{subequations}\label{eq4.4}
\begin{align}
X_{22}(t)&=W(t-1;\widetilde{\mathcal{H}},X_{22}^1)=-W(t-1;-\mathcal{S}_2\mathcal{H}\mathcal{S}_2^{-1},-X_{22}^1)\notag\\
&=-P(t-1;-\mathcal{S}_2\mathcal{H}\mathcal{S}_2^{-1},-X_{22}^1)Q(t-1;-\mathcal{S}_2\mathcal{H}\mathcal{S}_2^{-1},-X_{22}^1)^{-1},\notag\\
&=-P(-t+1;\mathcal{S}_2\mathcal{H}\mathcal{S}_2^{-1},-X_{22}^1)Q(-t+1;\mathcal{S}_2\mathcal{H}\mathcal{S}_2^{-1},-X_{22}^1)^{-1},\label{eq4.4a}\\
X_{12}(t)&=X_{12}^1Q(-t+1;\mathcal{S}_2\mathcal{H}\mathcal{S}_2^{-1},-X_{22}^1)^{-1},\label{eq4.4b}
\end{align}
\end{subequations}
for $t\in \mathcal{T}_W+1$. On the other hand, if   $W_0=X_{11}^1$ and
$\mathscr{H}=\widetilde{\mathcal{H}}_{\star}\equiv \mathcal{J}^{-1}\mathcal{S}_1\mathcal{H}\mathcal{S}_1^{-1}\mathcal{J}$,
then from Theorem \ref{thm3.10} and \eqref{eq3.35},  the extended solutions of \eqref{eq3.26a} and \eqref{eq3.26c}, respectively, are of the forms
\begin{subequations}\label{eq4.5}
\begin{align}
X_{11}(t)&=W(t-1;\widetilde{\mathcal{H}}_{\star},X_{11}^1)=P(t-1;\widetilde{\mathcal{H}}_{\star},X_{11}^1)Q(t-1;\widetilde{\mathcal{H}}_{\star},X_{11}^1)^{-1},\label{eq4.5a}\\
X_{21}(t)&=X_{21}^1Q(t-1;\widetilde{\mathcal{H}}_{\star},X_{11}^1)^{-1}, \label{eq4.5b}
\end{align}
\end{subequations}
for $t\in \mathcal{T}_W+1$.  Connections between RDEs and SDAs can be made by the terminology of \eqref{eq4.4} and \eqref{eq4.5}.

\begin{Lemma}\label{lem4.1}
Let $(\mathcal{M}_1,\mathcal{L}_1)\in \mathbb{S}_1$ or $\mathbb{S}_2$ with ${\rm ind}_{\infty} (\mathcal{M}_1,\mathcal{L}_1)\leqslant 1$. Suppose $(\mathcal{M}_k, \mathcal{L}_k)$, $k=1,2,\ldots$, is the sequence generated by the SDA and denote $X_k=[X^k_{ij}]_{1\le i,j\le 2}\equiv T_{\mathcal{S}_1,\mathcal{S}_2}^{-1}(\mathcal{M}_k, \mathcal{L}_k)$. Here, $(\mathcal{S}_1, \mathcal{S}_2)=(I,I)$ or $(-I,\mathcal{J})$ if $(\mathcal{M}_1,\mathcal{L}_1)\in \mathbb{S}_1$ or $\mathbb{S}_2$, respectively. Then
\begin{align*}
X^k_{22}&=-W(-2^{k-1}+1;\mathcal{S}_2\mathcal{H}\mathcal{S}_2^{-1},-X_{22}^1)\\
&=-P(-2^{k-1}+1;\mathcal{S}_2\mathcal{H}\mathcal{S}_2^{-1},-X_{22}^1)Q(-2^{k-1}+1;\mathcal{S}_2\mathcal{H}\mathcal{S}_2^{-1},-X_{22}^1)^{-1},\\
X^k_{12}&=X_{12}^1Q(-2^{k-1}+1;\mathcal{S}_2\mathcal{H}\mathcal{S}_2^{-1},-X_{22}^1)^{-1},\\
X^k_{11}&=W(2^{k-1}-1;\widetilde{\mathcal{H}}_{\star},X_{11}^1)=P(2^{k-1}-1;\widetilde{\mathcal{H}}_{\star},X_{11}^1)Q(2^{k-1}-1;\widetilde{\mathcal{H}}_{\star},X_{11}^1)^{-1},\\
X^k_{21}&=X_{21}^1Q(2^{k-1}-1;\widetilde{\mathcal{H}}_{\star},X_{11}^1)^{-1},
\end{align*}
for all $k=1,2,\ldots$.
\end{Lemma}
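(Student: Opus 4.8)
The plan is to read the identity off directly, by combining the identification of SDA iterates with sampled values of the structure-preserving flow (Theorem~\ref{thm3.12}) with the explicit representations of the extended solution already obtained in \eqref{eq4.4} and \eqref{eq4.5}. First I would invoke Theorem~\ref{thm3.12}: under the stated hypotheses the SDA iterate satisfies $(\mathcal{M}_k,\mathcal{L}_k)=(\mathcal{M}(2^{k-1}),\mathcal{L}(2^{k-1}))$, where $(\mathcal{M}(t),\mathcal{L}(t))=T_{\mathcal{S}_1,\mathcal{S}_2}(X(t))$, $X(t)$ is the extended solution of IVP~\eqref{eq3.9}, and $(\mathcal{S}_1,\mathcal{S}_2)=(I,I)$ or $(-I,\mathcal{J})$ according to whether $(\mathcal{M}_1,\mathcal{L}_1)\in\mathbb{S}_1$ or $\mathbb{S}_2$. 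Since $T_{\mathcal{S}_1,\mathcal{S}_2}$ is a bijection between $\mathbb{H}(2n)$ and $\mathbb{S}_{\mathcal{S}_1,\mathcal{S}_2}$, applying $T_{\mathcal{S}_1,\mathcal{S}_2}^{-1}$ to both sides yields $X_k=X(2^{k-1})$, hence $X^k_{ij}=X_{ij}(2^{k-1})$ for $1\le i,j\le 2$.

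Next I would verify that $t=2^{k-1}$ lies in the domain where the closed-form expressions for the extended solution are valid. Since $(\mathcal{M}_k,\mathcal{L}_k)$ lies on the curve $\mathcal{C}_{\mathcal{M}_1,\mathcal{L}_1}$ of \eqref{eq3.22}, which by Theorem~\ref{thm3.6}(ii) is parameterized over $\mathcal{T}_X$, we have $2^{k-1}\in\mathcal{T}_X$; Theorem~\ref{thm3.11}(ii) then gives $\mathcal{T}_X=\mathcal{T}_W+1$, so $2^{k-1}-1\in\mathcal{T}_W$. By Theorem~\ref{thm3.10} this makes $Q(2^{k-1}-1)$ and $Q_\star(2^{k-1}-1)$ invertible; equivalently, $Q(-2^{k-1}+1;\mathcal{S}_2\mathcal{H}\mathcal{S}_2^{-1},-X_{22}^1)$ and $Q(2^{k-1}-1;\widetilde{\mathcal{H}}_\star,X_{11}^1)$ are invertible. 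This is exactly the nonsingularity condition that makes the representation formulas \eqref{eq4.4} and \eqref{eq4.5} of the extended solution applicable at $t=2^{k-1}$.

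Finally I would substitute $t=2^{k-1}$ into \eqref{eq4.4a}, \eqref{eq4.4b}, \eqref{eq4.5a}, \eqref{eq4.5b}. For the $(2,2)$ and $(1,2)$ blocks the argument $-t+1$ becomes $-2^{k-1}+1$, and using $W(s;\mathscr{H},W_0)=P(s;\mathscr{H},W_0)Q(s;\mathscr{H},W_0)^{-1}$ from Radon's Lemma (Theorem~\ref{thm3.8}) gives the stated formulas for $X^k_{22}$ and $X^k_{12}$; for the $(1,1)$ and $(2,1)$ blocks the argument $t-1$ becomes $2^{k-1}-1$, giving the stated formulas for $X^k_{11}$ and $X^k_{21}$. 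Running this over all $k\in\mathbb{N}$ (each $2^{k-1}\in\mathcal{T}_X$, including the base case $2^{0}=1$ where $Q(0)=I$) finishes the proof. The lemma is thus essentially bookkeeping built on Theorems~\ref{thm3.10}, \ref{thm3.11}, and \ref{thm3.12}; the one point genuinely requiring care is the domain verification in the middle paragraph, namely that the relevant $Q$-matrices are invertible at $t=2^{k-1}-1$, and this is precisely what Theorems~\ref{thm3.10} and \ref{thm3.11} deliver.
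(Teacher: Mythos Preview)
Your proposal is correct and follows essentially the same approach as the paper's proof: invoke Theorem~\ref{thm3.12} to identify $(\mathcal{M}_k,\mathcal{L}_k)$ with $(\mathcal{M}(2^{k-1}),\mathcal{L}(2^{k-1}))$, apply $T_{\mathcal{S}_1,\mathcal{S}_2}^{-1}$ to conclude $X_k=X(2^{k-1})$, and then read off the block formulas from \eqref{eq4.4} and \eqref{eq4.5}. Your middle paragraph, in which you explicitly verify via Theorems~\ref{thm3.10} and \ref{thm3.11} that $2^{k-1}-1\in\mathcal{T}_W$ so that the relevant $Q$-matrices are invertible, is a helpful addition that the paper leaves implicit.
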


\begin{proof}
Denote $X(t)$ the solution of \eqref{eq3.9} and
$
(\mathcal{M}(t),\mathcal{L}(t))=T_{\mathcal{S}_1,\mathcal{S}_2}(X(t)).
$
By Theorem~\ref{thm3.12}, we see that
$
(\mathcal{M}_k, \mathcal{L}_k)=(\mathcal{M}(2^{k-1}), \mathcal{L}(2^{k-1})).
$
The fact of $X_k\equiv T_{\mathcal{S}_1,\mathcal{S}_2}^{-1}(\mathcal{M}_k, \mathcal{L}_k)$ implies that $X_k=X(2^{k-1})$. Applying \eqref{eq4.4} and \eqref{eq4.5} to the resulting equation leads to the assertion.
\end{proof}
From \eqref{eq4.4} and \eqref{eq4.5}, we conclude that
\begin{itemize}
\item[$(i)$] the large time behaviors of $X_{22}(t)$, $X_{12}(t)$ as $t\rightarrow \infty$ are determined by $W(t; \mathcal{S}_2\mathcal{H}\mathcal{S}_2^{-1},-X_{22}^1)$ and $Q(t; \mathcal{S}_2\mathcal{H}\mathcal{S}_2^{-1},-X_{22}^1)^{-1}$ as $t\rightarrow -\infty$;
\item[$(ii)$] the large time behaviors of $X_{11}(t)$, $X_{21}(t)$ as $t\rightarrow \infty$ are determined by  $W(t;\widetilde{\mathcal{H}}_{\star},X_{11}^1)$ and $Q(t;\widetilde{\mathcal{H}}_{\star},X_{11}^1)^{-1}$ as $t\rightarrow \infty$.
\end{itemize}
Note that Hamiltonian matrices  $\mathcal{S}_2\mathcal{H}\mathcal{S}_2^{-1}$ and  $\widetilde{\mathcal{H}}_{\star}$ are symplectically similar.
By assertions $(i)$ and $(ii)$ above, we see that the asymptotic behaviors of $X_{22}(t)$, $X_{12}(t)$ and $X_{11}(t)$, $X_{21}(t)$ as $t\rightarrow \infty$ are governed by
\begin{subequations}\label{eq4.6}
\begin{align}\label{eq4.6a}
Y(t; \mathcal{S}_2\mathcal{H}\mathcal{S}_2^{-1},-X_{22}^1)=\mathcal{S}_2e^{\mathcal{H}t}\mathcal{S}_{2}^{-1}\left[%
\begin{array}{c}
I\\
-X_{22}^1\\
\end{array}%
\right] (\text{as }t\rightarrow -\infty),
\end{align}
and
\begin{align}\label{eq4.6b}
Y(t;\widetilde{\mathcal{H}}_{\star},X_{11}^1)=\mathcal{J}^{-1}\mathcal{S}_1e^{\mathcal{H}t}\mathcal{S}_1^{-1}\mathcal{J}\left[%
\begin{array}{c}
I\\
X_{11}^1\\
\end{array}%
\right] (\text{as }t\rightarrow \infty),
\end{align}
\end{subequations}
respectively. For both cases in \eqref{eq4.6a} and \eqref{eq4.6b}, $e^{\mathcal{H}t}$ is involved. Therefore,  for a given Hamiltonian matrix $\mathscr{H}$, we are interested in the study of the asymptotic behavior of the solution, $W(t)=P(t)Q(t)^{-1}$, of RDE \eqref{eq4.3} and $Q(t)^{-1}$ as $t\rightarrow \pm\infty$.

The convergence results of RDEs, including time variant/invariant as well as Hermitian/non-Hermitian types, have been studied and generalized in many research works \cite{Abou-Kandil03,Callier_Willems:1981,Callier:1995,CALLIER_WINKIN_WILLEMS:1994,DeNICOLAO:1992,Freiling_Hochhaus:2002,Freiling_Jank:1991,Freiling_Jank:1995,Park_Kailath:1997}. In \cite{Freiling_Jank:1991} an analogous (asymptotic) formula for $W(t)$ has been derived for RDEs with polynomial coefficients and in \cite{Freiling_Hochhaus:2002} the representation formula and the comparison theorem have been used to derive convergence results in an elegant way for Hermitian RDEs. The influence of the initial value $W_0$ and of the Jordan structure of $\mathscr{H}$ on the corresponding Riccati flow is studied in \cite{Freiling_Jank:1995} by using Cramer's rule for the explicit representation of $P(t)Q(t)^{-1}$.

Due to the dependence on the Hamiltonian matrix $\mathscr{H}$, rather than applying a Jordan canonical form to $\mathscr{H}$,  we shall adopt the Hamiltonian Jordan canonical form for studying the asymptotic behavior of RDEs. The asymptotic formula for $P(t)Q(t)^{-1}$ can thus be obtained by the column space of $Y(t)$ in \eqref{eq4.6}.  A canonical form of a Hamiltonian matrix under symplectic similarity transformations has been investigated in \cite{Lin1999469}. For the description of this canonical form, we introduce some notations. Denote $\mathbb{C}_{>}:=\{z\in \mathbb{C}|\ \Re(z)>0\}$, where $\Re(z)$ is the real part of the complex number $z$. Let
\begin{align}\label{eq4.7}
N_{k}=\left[\begin{array}{cccc}0 & 1 &  &  \\ & \ddots & \ddots &  \\ &  & \ddots & 1 \\ &  &  & 0\end{array}\right]\in \mathbb{R}^{k\times k}, \ \ N_{k}(\lambda)=\lambda I_{k}+N_{k}
\end{align}
be the $k\times k$ nilpotent matrix and the Jordan block of size $k$ with the eigenvalue $\lambda$, respectively, and $e_k$ be the $k$th unit vector.

\begin{Theorem}\label{thm4.2}\cite[Hamiltonian Jordan canonical form]{Lin1999469}
Given a complex Hamiltonian matrix $\mathscr{H}$, there exists a complex symplectic matrix $\mathcal{S}$ such that
\begin{align}\label{eq4.8}
\mathfrak{J}:=\mathcal{S}^{-1}\mathscr{H}\mathcal{S}=\left[\begin{array}{cccc|cccc}R_r &  &  &  & 0 &  &  &  \\ & R_e &  &  &  & D_e &  &  \\ &  & R_c &  &  &  & D_c &  \\ &  &  & R_d &  &  &  & D_d \\\hline0 &  &  &  & -R_r^H &  &  &  \\ & 0 &  &  &  & -R_e^H &  &  \\ &  & 0 &  &  &  & -R_c^H &  \\ &  &  & G_d &  &  &  & -R_d^H\end{array}\right],
\end{align}
where the different blocks have the following structures.
\begin{itemize}
\item[1.] The blocks with index $r$ have the form
\begin{align*}
R_r ={\rm diag}(R_1^r,\ldots, R_{\mu_r}^r),\ \  R_k^r={\rm diag}(N_{d_{k,1}}(\lambda_k),\ldots,N_{d_{k,p_k}}(\lambda_k)),\ \ k=1,\ldots,\mu_r,
\end{align*}
where $\lambda_k\in \mathbb{C}_{>}$ are distinct.
\item[2.] The blocks with index $e$  have the form
\begin{align*}
\begin{array}{ll}
R_e ={\rm diag}(R_1^e,\ldots, R_{\mu_e}^e),& R_k^e={\rm diag}(N_{l_{k,1}}(i \alpha_k),\ldots,N_{l_{k,q_k}}(i \alpha_k)),\\
D_e ={\rm diag}(D_1^e,\ldots, D_{\mu_e}^e),& D_k^e={\rm diag}(\beta_{k,1}^ee_{l_{k,1}}e_{l_{k,1}}^H,\ldots,\beta_{k,q_k}^ee_{l_{k,q_k}}e_{l_{k,q_k}}^H),
\end{array}
\end{align*}
where for $k=1,\ldots,\mu_e$ and $j=1,\ldots,q_k$ we have $ \alpha_k\in \mathbb{R}$ are distinct and $\beta^e_{k,j}\in \{-1,1\}$.
\item[3.] The blocks with index $c$  have the form
\begin{align*}
\begin{array}{ll}
R_c ={\rm diag}(R_1^c,\ldots, R_{\mu_c}^c),& R_k^c={\rm diag}(B_{k,1},\ldots,B_{k,r_k}),\\
D_c ={\rm diag}(D_1^c,\ldots, D_{\mu_c}^c),& D_k^c={\rm diag}(D_{k,1},\ldots,D_{k,r_k}),
\end{array}
\end{align*}
where for $k=1,\ldots,\mu_c$ and $j=1,\ldots,r_k$ we have
\begin{align*}
B_{k,j}&=\left[\begin{array}{ccc}N_{m_{k,j}} (i \eta_k)& 0 & -\frac{\sqrt{2}}{2} e_{m_{k,j}}\\0 & N_{n_{k,j}}(i \eta_k)  & -\frac{\sqrt{2}}{2} e_{n_{k,j}} \\0 & 0 & i \eta_k\end{array}\right],\\
D_{k,j}&=\frac{\sqrt{2}}{2}i\beta_{k,j}^c\left[\begin{array}{ccc}0& 0 &  e_{m_{k,j}}\\0 & 0  & - e_{n_{k,j}} \\-e_{m_{k,j}}^H & e_{n_{k,j}}^H & 0\end{array}\right],
\end{align*}
$ \eta_k\in \mathbb{R}$ are distinct and $\beta^c_{k,j}\in \{-1,1\}$.
\item[4.] The blocks with index $d$  have the form
\begin{align*}
R_d ={\rm diag}(R_1^d,\ldots, R_{\mu_d}^d),\ \  G_d ={\rm diag}(G_1^d,\ldots, G_{\mu_d}^d),\ \
D_d ={\rm diag}(D_1^d,\ldots, D_{\mu_d}^d),
\end{align*}
where for $k=1,\ldots,\mu_d$, we have
\begin{align*}
R_{k}^d&=\left[\begin{array}{ccc}N_{s_{k}} (i \gamma_k)& 0 & -\frac{\sqrt{2}}{2} e_{s_{k}}\\0 & N_{t_{k}}(i \delta_k)  & -\frac{\sqrt{2}}{2} e_{t_{k}} \\0 & 0 & \frac{i}{2}(\gamma_k+\delta_k)\end{array}\right],\ \ G_k^d=\beta_{k}^d\left[\begin{array}{ccc}0& 0 &  0\\0 & 0  & 0\\0 & 0 & -\frac{1}{2}(\gamma_k-\delta_k)\end{array}\right],\\
D_{k}^d&=\frac{\sqrt{2}}{2}i\beta_{k}^d\left[\begin{array}{ccc}0& 0 &  e_{s_{k}}\\0 & 0  & - e_{t_{k}} \\-e_{s_{k}}^H & e_{t_{k}}^H & -i\frac{\sqrt{2}}{2}(\gamma_k-\delta_k)\end{array}\right],
\end{align*}
$\gamma_k\neq \delta_k$ and $\beta_{k}^d\in \{-1,1\}$.
\end{itemize}
\end{Theorem}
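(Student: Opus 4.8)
Since the statement is the Hamiltonian Jordan canonical form established in \cite{Lin1999469}, one may of course invoke it directly; for completeness I sketch the structure of a self-contained argument. The first move is to reduce the problem to the classification of a pair of sesquilinear forms. Put $\mathcal{A}=\mathcal{J}\mathscr{H}$; the Hamiltonian condition $\mathscr{H}\mathcal{J}=(\mathscr{H}\mathcal{J})^H$ is equivalent to $\mathcal{A}=\mathcal{A}^H$, and since $\mathcal{J}^{-1}=-\mathcal{J}$ one has $\mathscr{H}=-\mathcal{J}\mathcal{A}$. For $\mathcal{S}\in Sp(n)$ the identity $\mathcal{S}\mathcal{J}\mathcal{S}^H=\mathcal{J}$ also gives $\mathcal{S}^H\mathcal{J}\mathcal{S}=\mathcal{J}$ and $\mathcal{J}\mathcal{S}^{-1}=\mathcal{S}^H\mathcal{J}$, so $\mathcal{J}(\mathcal{S}^{-1}\mathscr{H}\mathcal{S})=\mathcal{S}^H\mathcal{J}\mathscr{H}\mathcal{S}=\mathcal{S}^H\mathcal{A}\mathcal{S}$; that is, a symplectic similarity on $\mathscr{H}$ corresponds exactly to a $*$-congruence on $\mathcal{A}$ that fixes $\mathcal{J}$. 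Hence classifying Hamiltonian matrices under symplectic similarity is the same as classifying the Hermitian form $\mathcal{A}$ together with the fixed nondegenerate skew-Hermitian form $\mathcal{J}$ under $*$-congruence, or, equivalently (multiplying by $i$), classifying the $(i\mathcal{J})$-selfadjoint operator $i\mathscr{H}$ in the indefinite inner-product space $(\mathbb{C}^{2n},i\mathcal{J})$. Either formulation has well-understood invariants: the Jordan structure of $\mathscr{H}=-\mathcal{J}\mathcal{A}$ together with a sign characteristic carried by the eigenvalues on the imaginary axis.

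Next I would split off the part of the spectrum lying off the imaginary axis. Because $\mathscr{H}$ is Hamiltonian, $\sigma(\mathscr{H})$ is symmetric about the imaginary axis, and by the $\mathcal{J}$-orthogonality of the generalized eigenspaces $\mathcal{R}_\lambda(\mathscr{H}),\mathcal{R}_\mu(\mathscr{H})$ whenever $\lambda\neq-\bar\mu$ (the fact recalled after Definition~\ref{def2.1}, i.e.\ Theorem~\ref{thm2.2} with $\mathcal{L}=I$), one obtains a $\mathcal{J}$-orthogonal direct sum $\mathbb{C}^{2n}=V_+\oplus V_-\oplus V_0$, where $V_\pm$ collect the generalized eigenspaces with real part $>0$ respectively $<0$, and $V_0$ those with zero real part. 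The summands $V_+$ and $V_-$ are each $\mathcal{J}$-isotropic and are put in perfect duality by $\mathcal{J}$; choosing a Jordan basis of $\mathscr{H}|_{V_+}$ (this produces $R_r$, a direct sum of Jordan blocks $N_{d_{k,j}}(\lambda_k)$ with $\lambda_k\in\mathbb{C}_{>}$ distinct) together with the $\mathcal{J}$-dual basis on $V_-$ forces the restriction there to be $-R_r^H$ with a standard symplectic coupling, yielding the index-$r$ part of \eqref{eq4.8}. This step is routine linear algebra.

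The substance of the proof, and the main obstacle, is the purely imaginary block $V_0$. Here $\mathscr{H}|_{V_0}$ has purely imaginary spectrum while $\mathcal{J}|_{V_0}$ is still nondegenerate skew-Hermitian, so the pair $(\mathcal{J}|_{V_0},\mathscr{H}|_{V_0})$ cannot be uncoupled and must instead be broken into indecomposable structured pieces. I would proceed one eigenvalue $i\alpha$ at a time: restrict to $\mathcal{R}_{i\alpha}(\mathscr{H})$, study the filtration by the powers of $(\mathscr{H}-i\alpha I)$ and the Hermitian form that this operator induces on the successive quotients, and read off which Jordan blocks pair with a block of the \emph{same} length carrying a sign $\beta^e_{k,j}\in\{-1,1\}$ (the index-$e$ blocks $R^e_k,D^e_k$), which pair with a block of \emph{different} length (the index-$c$ blocks $B_{k,j},D_{k,j}$), and which pieces must be amalgamated across a pair of distinct imaginary eigenvalues $i\gamma_k\neq i\delta_k$ (the index-$d$ blocks $R^d_k,G^d_k,D^d_k$). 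Having produced, for each indecomposable piece, a basis realizing its model block, I would assemble the global change of basis as the block-diagonal sum of these bases and verify that it is genuinely symplectic and conjugates $\mathscr{H}$ to $\mathfrak{J}$. The delicate points are precisely the normalization of the sign characteristics $\beta^e,\beta^c,\beta^d$ and the bookkeeping that keeps the assembled basis symplectic (rather than merely similarity-transforming); these occupy the technical heart of \cite{Lin1999469} and are where the combinatorial subtlety of the $e$, $c$ and $d$ types really lives.
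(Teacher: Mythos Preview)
The paper does not prove Theorem~\ref{thm4.2}; it is quoted from \cite{Lin1999469} and used as a black box. Your overall strategy---recasting symplectic similarity of $\mathscr{H}$ as $*$-congruence of the Hermitian matrix $\mathcal{J}\mathscr{H}$ together with the fixed form $\mathcal{J}$, splitting off the part of the spectrum off the imaginary axis via the $\mathcal{J}$-orthogonality of generalized eigenspaces, and then handling the purely imaginary spectrum through the sign characteristic of an indefinite-inner-product selfadjoint operator---is the standard and correct route to such a canonical form.

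There is, however, a genuine error in your description of what distinguishes the $e$-, $c$- and $d$-type blocks. You characterize the $e$-type as ``Jordan blocks pairing with a block of the same length'' and the $c$-type as pairing ``with a block of different length.'' That is not the dichotomy. The similarities worked out later in the paper (Lemma~\ref{lem4.4}, Lemma~\ref{lem4.5}, Corollary~\ref{cor4.7}) show that the $e$-type elementary block is similar to a \emph{single} Jordan block $N_{2l}(i\alpha)$ of \emph{even} size; the $c$-type block is similar to $N_{2m+1}(i\eta)\oplus N_{2n+1}(i\eta)$, a pair of \emph{odd}-sized blocks at the \emph{same} eigenvalue (with $m,n$ arbitrary, possibly equal); and the $d$-type is similar to $N_{2s+1}(i\gamma)\oplus N_{2t+1}(i\delta)$, a pair of odd-sized blocks at \emph{distinct} imaginary eigenvalues. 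So the trichotomy is even-size versus odd-pair-same-eigenvalue versus odd-pair-different-eigenvalues, and which odd blocks end up grouped into $c$-type versus $d$-type is dictated by the sign characteristics, not by whether the block lengths agree. With this correction your sketch is a faithful outline of how such a theorem is proved, though the actual construction of the symplectic basis realizing each indecomposable piece---where the real work lies---remains entirely in \cite{Lin1999469}.
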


Suppose that the Hamiltonian matrix $\mathscr{H}$ in \eqref{eq4.1} has Hamiltonian Jordan canonical form
$\mathfrak{J}$
in \eqref{eq4.8}. Then the solution $Y(t)$ in \eqref{eq4.2} can be reformulated as
\begin{align}\label{eq4.9}
Y(t)= \mathcal{S}e^{\mathfrak{J}t}\mathcal{S}^{-1}\left[%
\begin{array}{c}
I\\
W_0\\
\end{array}%
\right]=\mathcal{S}e^{\mathfrak{J}t}\left[%
\begin{array}{c}
W_1\\
W_2\\
\end{array}%
\right],
\end{align}
where $[W_1^{\top},W_2^{\top}]^{\top}=\mathcal{S}^{-1}[I,W_0^{\top}]^{\top}$.

\subsection{The Structure of $e^{\mathfrak{J}t}$}\label{sec4.1}

In this subsection, we will describe the structure of  $e^{\mathfrak{J}t}$, where $\mathfrak{J}$ has form in \eqref{eq4.8}. Since $\mathfrak{J}$ is Hamiltonian, it is shown in Theorem~\ref{thm5.1} that $e^{\mathfrak{J}t}$ is symplectic for each $t\in \mathbb{R}$. Let
\begin{align}\label{eq4.10}
\begin{array}{l}
 P_k=\left[\begin{array}{cccc} 0&  &  &-1 \\ &  & (-1)^2 &  \\ &  &  &  \\ (-1)^{k}&  &  &0 \end{array}\right],\\
\Phi_k\equiv  \Phi_k(t)=e^{N_kt}=\left[\begin{array}{ccccc}1 & t &  \frac{t^2}{2!} & \cdots & \frac{t^{k-1}}{(k-1)! }\\ & 1 & t & \ddots& \vdots \\ &  & \ddots & \ddots &   \frac{t^2}{2!}\\  & &  & 1 & t\\& &  &  & 1\end{array}\right],\\ \phi_k\equiv \phi_k(t)=\left[\begin{array}{c} \frac{t^k}{k!}\\  \vdots\\  \frac{t^2}{2!}\\t\end{array}\right],\ \ \psi_k\equiv \psi_k(t)=\left[\begin{array}{c} t\\  \frac{t^2}{2!}\\  \vdots\\\frac{t^k}{k!}\end{array}\right],\\
\Gamma_{k_1}^{k_2}\equiv \Gamma_{k_1}^{k_2}(t)=\left[\begin{array}{cccc}\frac{t^{k_1}}{{k_1}!} & \frac{t^{(k_1+1)}}{{(k_1+1)}!} & \cdots & \frac{t^{k_2}}{{k_2}!} \\\frac{t^{(k_1-1)}}{{(k_1-1)}!} & \frac{t^{k_1}}{{k_1}!} & \ddots & \frac{t^{(k_2-1)}}{{(k_2-1)}!} \\\vdots & \ddots & \ddots & \vdots \\ \frac{t^{(2k_1-k_2)}}{{(2k_1-k_2)}!} & \cdots & \cdots & \frac{t^{k_1}}{{k_1}!}\end{array}\right],\\
\widehat{\Phi}_k\equiv \widehat{\Phi}_k(t)=P_k^{-1}\Phi_{k}P_k,\ \ \ \widehat{\Gamma}_k^{2k-1}\equiv \widehat{\Gamma}_k^{2k-1}(t)=\Gamma_k^{2k-1}P_k,
\end{array}
\end{align}
where $\Phi_k,P_k\in \mathbb{R}^{k\times k}$,
 $\Gamma_{k_1}^{k_2}\in \mathbb{R}^{(k_2-k_1+1)\times (k_2-k_1+1)}$ with $2k_1\geqslant k_2>k_1$ and $\phi_k,\psi_k\in \mathbb{R}^k$.

\begin{Lemma}\label{lem4.3}
Let $N_k$ and $\Phi_{k}$, $P_k$, $\widehat{\Phi}_k$ be as in \eqref{eq4.7} and  \eqref{eq4.10}, respectively.
Then
\begin{itemize}
\item[(i)] $P_k^{-1}=P_k^H=(-1)^{k-1}P_k$, $P_k^{-1}N_kP_k=-N_k^H$;
\item[(ii)] $\widehat{\Phi}_k\equiv P_k^{-1}\Phi_{k}P_k=e^{-N_k^Ht}=\Phi_k^{-H}$;
\item[(iii)] for each $\lambda\in \mathbb{C}$, we have $e^{\lambda t}\Phi_{k}=e^{N_k(\lambda)t}$ and $e^{-\bar{\lambda} t}\widehat{\Phi}_{k}=e^{-N_k(\lambda)^Ht}$.
\end{itemize}
\end{Lemma}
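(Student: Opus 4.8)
The plan is to reduce parts (ii) and (iii) to two elementary facts about the anti-diagonal sign matrix $P_k$ and the single nilpotent Jordan block $N_k$, and then to conclude by term-by-term manipulation of the (finite) exponential series.

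\emph{Part (i).} First I would record both matrices entrywise. From \eqref{eq4.10} we have $P_k=\sum_{i=1}^{k}(-1)^{i}e_{i}e_{k+1-i}^{H}$ (its only nonzero entries lie on the anti-diagonal and equal $\pm1$), and from \eqref{eq4.7}, $N_k=\sum_{i=1}^{k-1}e_{i}e_{i+1}^{H}$. Multiplying out $P_k^{2}$ and collapsing the double sum along $i'=k+1-i$ gives $P_k^{2}=(-1)^{k+1}I_k=(-1)^{k-1}I_k$; since the nonzero entries of $P_k$ are $\pm1$ this yields $P_k^{-1}=(-1)^{k-1}P_k$. Transposing the entrywise formula and reindexing $j=k+1-i$ (using $(-1)^{-j}=(-1)^{j}$) gives $P_k^{H}=P_k^{\top}=(-1)^{k-1}P_k$, the last equality because $P_k$ is real. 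For the conjugation identity I would compute $P_k^{-1}N_kP_k$ directly from the two formulas above: the reversal built into $P_k$ carries the superdiagonal of $N_k$ onto the subdiagonal, while the accompanying sign $(-1)^{i}(-1)^{k+1-i}=(-1)^{k+1}$, combined with the factor $(-1)^{k-1}$ coming from $P_k^{-1}$, leaves an overall $-1$ relative to $N_k^{H}=\sum_{i=1}^{k-1}e_{i+1}e_{i}^{H}$; hence $P_k^{-1}N_kP_k=-N_k^{H}$.

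\emph{Part (ii).} Since $N_k^{k}=0$, the matrix exponential $\Phi_k=e^{N_kt}=\sum_{m=0}^{k-1}\tfrac{t^{m}}{m!}N_k^{m}$ is a finite sum, so conjugation commutes with it term by term and, by part (i), $P_k^{-1}\Phi_kP_k=\sum_{m}\tfrac{t^{m}}{m!}\bigl(P_k^{-1}N_kP_k\bigr)^{m}=\sum_{m}\tfrac{t^{m}}{m!}(-N_k^{H})^{m}=e^{-N_k^{H}t}$. Because $N_k$ is real and $t\in\mathbb{R}$, we have $(e^{N_kt})^{H}=e^{N_k^{H}t}$, whence $e^{-N_k^{H}t}=\bigl(e^{-N_kt}\bigr)^{H}=\bigl(e^{N_kt}\bigr)^{-H}=\Phi_k^{-H}$. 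This is exactly the claimed chain of equalities and matches the definition $\widehat{\Phi}_k=P_k^{-1}\Phi_kP_k$ in \eqref{eq4.10}.

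\emph{Part (iii).} Since $\lambda I_k$ commutes with $N_k$, we get $e^{N_k(\lambda)t}=e^{\lambda I_kt}e^{N_kt}=e^{\lambda t}\Phi_k$; and since $-N_k(\lambda)^{H}=-\bar\lambda I_k-N_k^{H}$ with $\bar\lambda I_k$ commuting with $N_k^{H}$, part (ii) gives $e^{-N_k(\lambda)^{H}t}=e^{-\bar\lambda t}e^{-N_k^{H}t}=e^{-\bar\lambda t}\widehat{\Phi}_k$. I expect the only genuinely fiddly point to be the sign bookkeeping in the conjugation identity $P_k^{-1}N_kP_k=-N_k^{H}$ of part (i); once that is in hand, parts (ii) and (iii) are purely formal manipulations of finite exponential series.
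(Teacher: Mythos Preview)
Your proof is correct and follows exactly the route the paper intends: the paper's own proof is simply the one-line remark that ``the proof is straightforward by direct calculations,'' and your argument supplies precisely those direct calculations (the entrywise computation of $P_k^{2}$ and $P_k^{H}$, the conjugation $P_k^{-1}N_kP_k=-N_k^{H}$, and the term-by-term manipulation of the finite exponential series). There is nothing to add or correct.
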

\begin{proof}
The proof is straightforward by direct calculations.
\end{proof}


\begin{Lemma}\label{lem4.4}
Let $A$ denote the Hamiltonian matrix $\left[\begin{array}{c|c}N_{k}(i\alpha)&\beta e_ke_k^H\\\hline 0&-N_{k}(i\alpha)^H\end{array}\right]\in \mathbb{C}^{2k\times 2k}$, where $\beta\in\{-1,1\}$ and $\alpha\in \mathbb{R}$. Then for each $t\in \mathbb{R}$,  $e^{At}$ has the form
\begin{align*}
e^{At}=\left[\begin{array}{c|c}e^{i\alpha t}\Phi_{k}&-e^{i\alpha t}\beta \widehat{\Gamma}_{k}^{2k-1}\\\hline 0&(e^{i\alpha t}\Phi_{k})^{-H}\end{array}\right],
\end{align*}
where $\Phi_{k}$, $\Gamma_{k}^{2k-1}$, $\widehat{\Gamma}_{k}^{2k-1}$, $P_k$ are defined in \eqref{eq4.10}.
\end{Lemma}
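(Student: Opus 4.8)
The plan is to exploit the block upper triangular form of $A$ displayed in the statement. For any block upper triangular matrix one has
\[
\exp\!\left(t\left[\begin{array}{cc}X&Y\\0&Z\end{array}\right]\right)=\left[\begin{array}{cc}e^{Xt}&\int_0^t e^{X(t-s)}Ye^{Zs}\,ds\\0&e^{Zt}\end{array}\right],
\]
so it suffices to identify the three nonzero blocks with $X=N_k(i\alpha)$, $Y=\beta e_ke_k^H$, $Z=-N_k(i\alpha)^H$. First I would dispose of the diagonal blocks using Lemma~\ref{lem4.3}: part~(iii) gives $e^{N_k(i\alpha)t}=e^{i\alpha t}\Phi_k$, and since $\alpha$ is real we have $-N_k(i\alpha)^H=i\alpha I_k-N_k^H$, so part~(ii) gives $e^{-N_k(i\alpha)^Ht}=e^{i\alpha t}e^{-N_k^Ht}=e^{i\alpha t}\widehat{\Phi}_k=e^{i\alpha t}\Phi_k^{-H}=(e^{i\alpha t}\Phi_k)^{-H}$; these match the claimed $(1,1)$- and $(2,2)$-blocks, while the $(2,1)$-block is trivially zero.

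For the $(1,2)$-block, substituting $e^{N_k(i\alpha)(t-s)}=e^{i\alpha(t-s)}\Phi_k(t-s)$ and $e^{-N_k(i\alpha)^Hs}=e^{i\alpha s}\widehat{\Phi}_k(s)$ makes the scalar factors combine into $e^{i\alpha t}$, reducing the block to $\beta e^{i\alpha t}\int_0^t\Phi_k(t-s)\,e_ke_k^H\,\widehat{\Phi}_k(s)\,ds$. Here $\Phi_k(t-s)e_k$ is the last column of $\Phi_k(t-s)$ and $e_k^H\widehat{\Phi}_k(s)$ is the last row of $\widehat{\Phi}_k(s)=e^{-N_k^Hs}$, so the $(i,j)$-entry of the integrand is $\frac{(t-s)^{k-i}}{(k-i)!}\cdot\frac{(-s)^{k-j}}{(k-j)!}$. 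Integrating entrywise with the Euler identity $\int_0^t(t-s)^as^b\,ds=\frac{a!\,b!}{(a+b+1)!}\,t^{a+b+1}$ cancels the factorials and yields the $(i,j)$-entry $\beta e^{i\alpha t}(-1)^{k-j}\frac{t^{2k-i-j+1}}{(2k-i-j+1)!}$. It then remains to recognize this as the $(i,j)$-entry of $-e^{i\alpha t}\beta\,\widehat{\Gamma}_k^{2k-1}$: unwinding $\widehat{\Gamma}_k^{2k-1}=\Gamma_k^{2k-1}P_k$ from \eqref{eq4.10}, right multiplication by the anti-diagonal sign matrix $P_k$ sends column $j$ of $\Gamma_k^{2k-1}$ to column $k+1-j$ and scales it by $(-1)^{k+1-j}$, so that $[\widehat{\Gamma}_k^{2k-1}]_{ij}=(-1)^{k+1-j}\frac{t^{2k+1-i-j}}{(2k+1-i-j)!}$, and a direct comparison of exponents of $t$ and of signs finishes the argument.

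I expect this last matching step to be the only genuine obstacle: one must track carefully how the anti-diagonal sign pattern of $P_k$ permutes and signs the entries $\frac{t^{k-p+q}}{(k-p+q)!}$ of $\Gamma_k^{2k-1}$ so that the powers of $t$ and the signs line up exactly with the quadrature just evaluated. An alternative route that avoids the explicit integral is to verify directly that the proposed right-hand side $Z(t)$ solves the linear initial value problem $\dot Z(t)=AZ(t)$, $Z(0)=I_{2k}$: the initial condition is clear because $\Phi_k(0)=I_k$ and every entry of $\widehat{\Gamma}_k^{2k-1}(t)$ carries a strictly positive power of $t$; the $(1,1)$-, $(2,2)$- and $(2,1)$-block equations follow at once from Lemma~\ref{lem4.3}; and the $(1,2)$-block equation reduces to the single combinatorial identity $\dot{\widehat{\Gamma}}_k^{2k-1}=N_k\widehat{\Gamma}_k^{2k-1}-e_ke_k^H\widehat{\Phi}_k$, again a routine relation among the entries of $\Gamma_k^{2k-1}$. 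Uniqueness of solutions of linear ODEs then gives $e^{At}=Z(t)$, as claimed.
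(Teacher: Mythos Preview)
Your argument is correct, but it follows a different route from the paper's proof. The paper avoids the Duhamel integral entirely by observing that $A$ is \emph{similar} to the single $2k\times 2k$ Jordan block $N_{2k}(i\alpha)$: with $\Theta=I_k\oplus(-\beta P_k)$ one checks (using Lemma~\ref{lem4.3}(i)) that $\Theta^{-1}N_{2k}(i\alpha)\Theta=A$, so $e^{At}=\Theta^{-1}e^{N_{2k}(i\alpha)t}\Theta$; writing $e^{N_{2k}(i\alpha)t}=e^{i\alpha t}\Phi_{2k}$ in $k\times k$ blocks as $\left[\begin{smallmatrix}\Phi_k&\Gamma_k^{2k-1}\\0&\Phi_k\end{smallmatrix}\right]$ and conjugating by $\Theta$ produces the claimed $(1,2)$-block $-\beta e^{i\alpha t}\Gamma_k^{2k-1}P_k=-\beta e^{i\alpha t}\widehat{\Gamma}_k^{2k-1}$ in one line. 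Your approach instead computes the $(1,2)$-block via the variation-of-constants formula and the beta integral, then matches entries against $\widehat{\Gamma}_k^{2k-1}$. Both are valid: the similarity trick is shorter and more structural (and is reused, in the same spirit, in Lemma~\ref{lem4.5}), while your computation is self-contained and does not require spotting the conjugacy with a single Jordan block. Your alternative ODE-verification route is also fine and is arguably the most elementary of the three.
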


\begin{proof}
Let $\Theta=I_k\oplus (-\beta P_k)\in \mathbb{R}^{2k\times 2k}$. From Lemma \ref{lem4.3} it follows that
\begin{align*}
\Theta^{-1} N_{2k}(i\alpha)\Theta=\left[\begin{array}{c|c}N_{k}(i\alpha)&- \beta e_ke_1^HP_k\\\hline 0&P_k^{-1}N_{k}(i\alpha)P_k\end{array}\right]=\left[\begin{array}{c|c}N_{k}(i\alpha)&\beta e_ke_k^H\\\hline 0&-N_{k}(i\alpha)^H\end{array}\right]=A.
\end{align*}
Therefore,
\begin{align*}
e^{At}&=\Theta^{-1} e^{N_{2k}(i\alpha)t}\Theta=e^{i\alpha t}\Theta^{-1} \left[\begin{array}{c|c}\Phi_{k}&\Gamma_{k}^{2k-1}\\\hline 0&\Phi_{k}\end{array}\right]\Theta\\
&=e^{i\alpha t}\left[\begin{array}{c|c}\Phi_{k}&-\beta \Gamma_{k}^{2k-1}P_{k}\\\hline 0&P_{k}^{-1}\Phi_{k}P_{k}\end{array}\right]
=e^{i\alpha t}\left[\begin{array}{c|c}\Phi_{k}&-\beta \Gamma_{k}^{2k-1}P_{k}\\\hline 0&\widehat{\Phi}_{k}\end{array}\right]\\
&=\left[\begin{array}{c|c}e^{i\alpha t}\Phi_{k}&-e^{i\alpha t}\beta \widehat{\Gamma}_{k}^{2k-1}\\\hline 0&e^{i\alpha t}\Phi_{k}^{-H}\end{array}\right]=\left[\begin{array}{c|c}e^{i\alpha t}\Phi_{k}&-e^{i\alpha t}\beta \widehat{\Gamma}_{k}^{2k-1}\\\hline 0&(e^{i\alpha t}\Phi_{k})^{-H}\end{array}\right].
\end{align*}
\end{proof}

\begin{Lemma}\label{lem4.5}
Let $A$ denote the Hamiltonian matrix $\left[\begin{array}{c|c}B&D\\\hline G&-B^H\end{array}\right]$, where
\begin{align*}
&B=\left[\begin{array}{ccc}N_{m} (i \gamma)& 0 & -\frac{\sqrt{2}}{2} e_{m}\\0 & N_{n}(i \delta)  & -\frac{\sqrt{2}}{2} e_{n} \\0 & 0 & \frac{i}{2} (\gamma+\delta)\end{array}\right],\ \ G=\beta \left[\begin{array}{ccc}0& 0 &0  \\0 & 0  & 0 \\ 0&  0& -\frac{1}{2} (\gamma-\delta)\end{array}\right],\\ &D=\frac{\sqrt{2}}{2}i\beta \left[\begin{array}{ccc}0& 0 &  e_{m}\\0 & 0  & - e_{n} \\-e_{m}^H & e_{n}^H & -i\frac{\sqrt{2}}{2} (\gamma-\delta)\end{array}\right],
\end{align*}
$\beta\in\{-1,1\}$ and $\gamma,\ \delta\in \mathbb{R}$. Then for each $t\in \mathbb{R}$, $e^{At}$ has the form
\begin{align}\label{eq4.11}
e^{At}&=\left[\begin{array}{c|c}\mathbf{ B}&\mathbf{ D}\\\hline \mathbf{ G}&\mathbf{ E}\end{array}\right]\equiv\left[\begin{array}{c|c}\mathbf{ B}(t)&\mathbf{ D}(t)\\\hline \mathbf{ G}(t)&\mathbf{ E}(t)\end{array}\right]\nonumber\\&=\left[\begin{array}{l|l}
\left[\begin{array}{cc}\Phi_{m,n}& \phi_{m,n}^{1}  \\0&\omega_{11}\end{array}\right]&
\left[\begin{array}{cc}\widehat{\Gamma}_{m+1,n+1}^{2m,2n}& \phi_{m,n}^{2}  \\\widehat{\psi}_{m,n}^{1^H} &\omega_{12}\end{array}\right]\\ \hline
\left[\begin{array}{cc}0& 0 \\0 &\omega_{21}\end{array}\right]&\left[\begin{array}{cc}\widehat{\Phi}_{m,n}& 0 \\\widehat{\psi}_{m,n}^{2^H}&\omega_{22}\end{array}\right]
\end{array}\right],
\end{align}
where
\begin{align}\label{eq4.12}
\begin{array}{l}
\Phi_{m,n}\equiv \Phi_{m,n}(t)=e^{i\gamma t}\Phi_{m}(t)\oplus e^{i\delta t}\Phi_{n}(t),\\
\widehat{\Phi}_{m,n}\equiv\widehat{\Phi}_{m,n}(t) =e^{i\gamma t}\Phi_{m}^{-H}(t)\oplus e^{i\delta t}\Phi_{n}^{-H}(t)\\ \hspace{0.9cm} :=e^{i\gamma t}P_m^{-1}\Phi_{m}(t)P_m\oplus e^{i\delta t}P_n^{-1}\Phi_{n}(t)P_n,\\
\phi_{m,n}^1\equiv \phi_{m,n}^1(t)=-\frac{\sqrt{2}}{2}\left[\begin{array}{c}e^{i\gamma t}\phi_{m}(t)\\e^{i\delta t}\phi_{n}(t)\end{array}\right],\\ \phi_{m,n}^2\equiv \phi_{m,n}^2(t)=\frac{\sqrt{2}}{2}i\beta\left[\begin{array}{c}e^{i\gamma t}\phi_{m}(t)\\-e^{i\delta t}\phi_{n}(t)\end{array}\right],\\
\widehat{\psi}_{m,n}^{1^H}\equiv\widehat{\psi}_{m,n}^{1^H}(t)=\frac{\sqrt{2}}{2}i\beta\left[e^{i\gamma t}\widehat{\psi}_{m}^H(t), -e^{i\delta t}\widehat{\psi}_{n}^H(t)\right]\\\hspace{0.9cm}:=\frac{\sqrt{2}}{2}i\beta\left[e^{i\gamma t}\psi_{m}^H(t)P_m, -e^{i\delta t}\psi_{n}^H(t)P_n\right],\\
\widehat{\psi}_{m,n}^{2^H}\equiv\widehat{\psi}_{m,n}^{2^H}(t)=-\frac{\sqrt{2}}{2}\left[e^{i\gamma t}\widehat{\psi}_{m}^H(t), e^{i\delta t}\widehat{\psi}_{n}^H(t)\right]\\ \hspace{0.9cm}:=-\frac{\sqrt{2}}{2}\left[e^{i\gamma t}\psi_{m}^H(t)P_m, e^{i\delta t}\psi_{n}^H(t)P_n\right],\\
\widehat{\Gamma}_{m+1,n+1}^{2m,2n}\equiv \widehat{\Gamma}_{m+1,n+1}^{2m,2n}(t)=i\beta\left(-e^{i\gamma t}\widehat{\Gamma}_{m+1}^{2m}(t)\oplus e^{i\delta t}\widehat{\Gamma}_{n+1}^{2n}(t)\right)\\ \hspace{1.6cm}:=i\beta\left(-e^{i\gamma t}\Gamma_{m+1}^{2m}(t)P_m\oplus e^{i\delta t}\Gamma_{n+1}^{2n}(t)P_n\right),\\
\left[\begin{array}{cc}\omega_{11}& \omega_{12} \\\omega_{21} &\omega_{22}\end{array}\right]\equiv \left[\begin{array}{cc}\omega_{11}(t)& \omega_{12}(t) \\\omega_{21}(t) &\omega_{22}(t)\end{array}\right]=\frac{1}{2}\left[\begin{array}{cc}e^{i\gamma t}+e^{i\delta t}&- i\beta(e^{i\gamma t}-e^{i\delta t}) \\i\beta(e^{i\gamma t}-e^{i\delta t})&e^{i\gamma t}+e^{i\delta t}\end{array}\right].
\end{array}
\end{align}
\end{Lemma}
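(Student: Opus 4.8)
The plan is to mimic the proof of Lemma~\ref{lem4.4}: produce an explicit invertible $\Theta$ conjugating $A$ to a direct sum of two ordinary Jordan blocks, exponentiate that block-diagonal matrix via Lemma~\ref{lem4.3}(iii), and then read off the blocks of $e^{At}$ from the sparsity of $\Theta$.

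First I would observe that $A$ is Hamiltonian and, using $\gamma\neq\delta$, that it carries a single Jordan chain of length $2m+1$ at the eigenvalue $i\gamma$ and a single chain of length $2n+1$ at $i\delta$; equivalently, there is an invertible $\Theta\in\mathbb{C}^{2(m+n+1)\times2(m+n+1)}$ with $\Theta^{-1}A\Theta=J$, where $J:=N_{2m+1}(i\gamma)\oplus N_{2n+1}(i\delta)$. I would construct $\Theta$ explicitly. Order the $2(m+n+1)$ coordinates so that the $m$ coordinates carrying $N_m(i\gamma)$ in $B$, the $n$ carrying $N_n(i\delta)$ in $B$, and the central coordinate of $B$ come first, followed by the corresponding coordinates of the $-B^H$ part and the second central coordinate. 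Then $\Theta$ is a product of three elementary factors: (a) the reversal matrices $P_m,P_n$ of \eqref{eq4.10} applied to the two lower coordinate groups, which by Lemma~\ref{lem4.3}(i) replace $-N_m(i\gamma)^H,-N_n(i\delta)^H$ by $N_m(i\gamma),N_n(i\delta)$; (b) an invertible $2\times2$ transformation $S_0$ on the two central coordinates that splits them into the components belonging to the $i\gamma$- and $i\delta$-chains --- here the couplings $-\tfrac{\sqrt2}{2}e_m,-\tfrac{\sqrt2}{2}e_n$ in $B$, the $\tfrac{\sqrt2}{2}i\beta$-entries of $D$, and the entry $-\tfrac{\beta}{2}(\gamma-\delta)$ of $G$ are exactly what is needed to become the super-diagonal $1$'s closing up the two chains, and the normalization $\tfrac{\sqrt2}{2}$ is precisely what makes $S_0$ essentially unitary; and (c) a permutation interleaving everything into the two chains.

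With $e^{At}=\Theta\,(e^{Jt})\,\Theta^{-1}=\Theta\bigl(e^{i\gamma t}\Phi_{2m+1}\oplus e^{i\delta t}\Phi_{2n+1}\bigr)\Theta^{-1}$ (Lemma~\ref{lem4.3}(iii)), I would expand this triple product block-by-block according to the $(B,-B^H)$ partition of \eqref{eq4.11}, using $P_k^{-1}\Phi_kP_k=\Phi_k^{-H}=\widehat\Phi_k$ (Lemma~\ref{lem4.3}(ii)) and the notations $\widehat\Gamma_k^{2k-1}=\Gamma_k^{2k-1}P_k$, $\widehat\psi_k^H=\psi_k^HP_k$ of \eqref{eq4.10}. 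The $i\gamma$-chain then contributes, after splitting by the permutation, the pattern $e^{i\gamma t}\Phi_m$ in the upper block together with the trailing column $\phi_m$, the reversed copy $\widehat\Phi_m$ in the lower block, and the cross-terms $\widehat\Gamma_{m+1}^{2m}$, $\widehat\psi_m$; the $i\delta$-chain contributes the analogous $\delta$-pattern; assembling them produces the blocks $\Phi_{m,n}$, $\widehat\Phi_{m,n}$, $\phi^1_{m,n}$, $\phi^2_{m,n}$, $\widehat\psi^{1}_{m,n}$, $\widehat\psi^{2}_{m,n}$, $\widehat\Gamma^{2m,2n}_{m+1,n+1}$ of \eqref{eq4.12}. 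The surviving $2\times2$ corner equals $S_0\,\mathrm{diag}(e^{i\gamma t},e^{i\delta t})\,S_0^{-1}=e^{Ct}$, where $C$ is the central $2\times2$ sub-Hamiltonian of $A$ (both diagonal entries $\tfrac{i}{2}(\gamma+\delta)$, off-diagonal entries $\pm\tfrac{\beta}{2}(\gamma-\delta)$); a direct computation of $e^{Ct}$ gives the $\omega$-matrix in \eqref{eq4.12}.

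The hard part is not conceptual but combinatorial: pinning down $\Theta$ and $\Theta^{-1}$ with the permutation, the reversals $P_m,P_n$, and the $2\times2$ factor $S_0$ placed exactly right, and then carrying through the block multiplication every sign (the $\beta$, the powers of $i$, the minus signs) and every factor $\tfrac{\sqrt2}{2}$, so that the outcome is literally \eqref{eq4.11}--\eqref{eq4.12}. A less conceptual but equally sound alternative, bypassing $\Theta$, is to verify directly that the matrix $F(t)$ on the right of \eqref{eq4.11} satisfies $F(0)=I_{2(m+n+1)}$ and $\dot F(t)=AF(t)$ --- that is, the four block identities $\dot{\mathbf B}=B\mathbf B+D\mathbf G$, $\dot{\mathbf D}=B\mathbf D+D\mathbf E$, $\dot{\mathbf G}=G\mathbf B-B^H\mathbf G$, $\dot{\mathbf E}=G\mathbf D-B^H\mathbf E$ --- using $\dot\Phi_k=N_k\Phi_k$ and the analogous derivative rules for $\phi_k,\psi_k,\Gamma_{k_1}^{k_2}$, and then invoke uniqueness for the linear matrix initial value problem.
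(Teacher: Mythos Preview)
Your proposal is correct and matches the paper's proof almost exactly: the paper writes the similarity as $\Theta=\Theta_1\Theta_2$, with $\Theta_1=(I_{m+1}\oplus(-i\beta P_m))\oplus(I_{n+1}\oplus(i\beta P_n))$ supplying your factor~(a) (the reversals, together with the $\pm i\beta$ scalars you anticipate) and a unitary $\Theta_2$ combining the permutation and the $2\times2$ central mixing $S_0=\frac{\sqrt2}{2}\left[\begin{smallmatrix}-1&i\beta\\-1&-i\beta\end{smallmatrix}\right]$ of your factors~(b)--(c), checks $\Theta^{-1}\bigl(N_{2m+1}(i\gamma)\oplus N_{2n+1}(i\delta)\bigr)\Theta=A$, and then expands $\Theta^{-1}\bigl(e^{i\gamma t}\Phi_{2m+1}\oplus e^{i\delta t}\Phi_{2n+1}\bigr)\Theta$ block by block to obtain \eqref{eq4.11}--\eqref{eq4.12}. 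Your alternative direct-verification route via $\dot F=AF$, $F(0)=I$ is also sound but is not the argument the paper uses.
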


\begin{proof}
Let
\begin{align*}
\mathbf{N}_{2m+1}(i\gamma)=\left[\begin{array}{cc}N_{m+1}(i\gamma) & i\beta e_{m+1}e_m^H \\0 & -N_{m}(i\gamma)^H\end{array}\right],\
\mathbf{N}_{2n+1}(i\delta)=\left[\begin{array}{cc}N_{n+1}(i\delta) & -i\beta e_{n+1}e_n^H \\0 & -N_{n}(i\delta)^H\end{array}\right],
\end{align*}
and $\Theta=\Theta_1\Theta_2$, where
\begin{align*}
\begin{array}{l}
\Theta_1=\left[\begin{array}{c|c}I_{m+1}\oplus (-i\beta P_{m}) & 0 \\\hline0 & I_{n+1}\oplus i\beta P_{n} \end{array}\right],\\\Theta_2=\left[\begin{array}{ccc|ccc}I_m & 0 & 0 & 0 & 0 & 0 \\0 & 0 & -\frac{\sqrt{2}}{2} & 0 & 0 & \frac{\sqrt{2}}{2}i\beta \\0 & 0 & 0 & I_m & 0 & 0 \\\hline 0 & I_n & 0 & 0 & 0 & 0 \\0 & 0 & -\frac{\sqrt{2}}{2} & 0 & 0 & -\frac{\sqrt{2}}{2}i\beta  \\0 & 0 & 0 & 0 & I_n & 0\end{array}\right]
\end{array}
\end{align*}
are unitary matrices.
Then we have
\begin{align*}
\Theta^{-1}\left[\begin{array}{c|c}N_{2m+1}(i\gamma) & 0 \\\hline0 & N_{2n+1}(i\delta)\end{array}\right]\Theta&=\Theta_2^{-1}\left[\begin{array}{c|c}\mathbf{N}_{2m+1}(i\gamma)& 0\\\hline 0&\mathbf{N}_{2n+1}(i\delta)\end{array}\right]\Theta_2\\&=\left[\begin{array}{c|c}B&D\\\hline G&-B^H\end{array}\right]=A.
\end{align*}
Since
\begin{align*}
e^{N_{2m+1}(i\gamma)t}=e^{i\gamma t}\left[\begin{array}{ccc}\Phi_{m}  &\phi_m& \Gamma_{m+1}^{2m} \\0 &1& \psi_m^H\\0&0&\Phi_{m} \end{array}\right], \ \
e^{N_{2n+1}(i\delta)t}=e^{i\delta t}\left[\begin{array}{ccc}\Phi_{n}  &\phi_n& \Gamma_{n+1}^{2n} \\0 &1& \psi_n^H\\0&0&\Phi_{n} \end{array}\right],
\end{align*}
we have
\begin{align*}
\Psi_{2m+1}&:=[I_{m+1}\oplus (-i\beta P_{m})]^{-1}e^{N_{2m+1}(i\gamma)t}[I_{m+1}\oplus (-i\beta P_{m})]\\
 &=e^{i\gamma t}\left[\begin{array}{ccc}\Phi_{m}  &\phi_m& -i\beta \widehat{\Gamma}_{m+1}^{2m} \\0 &1& -i\beta\widehat{\psi}_m^H\\0&0&\Phi_{m}^{-H} \end{array}\right], \\
\Psi_{2n+1}&:=[I_{n+1}\oplus (i\beta P_{n})]^{-1}e^{N_{2n+1}(i\delta)t}[I_{n+1}\oplus (i\beta P_{n})] \\&=e^{i\delta t}\left[\begin{array}{ccc}\Phi_{n}  &\phi_n& i\beta \widehat{\Gamma}_{n+1}^{2n} \\0 &1& i\beta\widehat{\psi}_n^H\\0&0&\Phi_{n}^{-H} \end{array}\right],
\end{align*}
where $\widehat{\Gamma}_{j+1}^{2j}=\Gamma_{j+1}^{2j}P_j$ and $\widehat{\psi}_j^H=\psi_j^HP_j$ for $j=m,n$. Hence, we obtain
\begin{align*}
e^{At}&=\Theta^{-1}e^{N_{2m+1}(i\gamma)\oplus N_{2n+1}(i\delta)t}\Theta=\Theta_2^{-1}\left[\begin{array}{c|c}\Psi_{2m+1} & 0 \\\hline0 & \Psi_{2n+1}\end{array}\right]\Theta_2\\&=\left[\begin{array}{ccc|ccc}e^{i\gamma t}\Phi_m&0&-\frac{\sqrt{2}}{2}e^{i\gamma t}\phi_m&-i\beta e^{i\gamma t}\widehat{\Gamma}_{m+1}^{2m}&0& \frac{\sqrt{2}}{2}i \beta e^{i\gamma t}\phi_m\\ 0&e^{i\delta t}\Phi_n&-\frac{\sqrt{2}}{2}e^{i\delta t}\phi_n&0& i\beta e^{i\delta t}\widehat{\Gamma}_{n+1}^{2n}& -\frac{\sqrt{2}}{2}i \beta e^{i\delta t}\phi_n\\0&0&\frac{1}{2}(e^{i\gamma t}+e^{i\delta t})&\frac{\sqrt{2}}{2}i\beta e^{i\gamma t}\widehat{\psi}_m^H&-\frac{\sqrt{2}}{2}i\beta e^{i\delta t}\widehat{\psi}_n^H&-\frac{1}{2}i\beta(e^{i\gamma t}-e^{i\delta t}) \\\hline 0&0&0&e^{i\gamma t}\Phi_m^{-H}&0&0 \\ 0&0&0&0&e^{i\delta t}\Phi_n^{-H}&0 \\  0&0&\frac{1}{2}i\beta(e^{i\gamma t}-e^{i\delta t})&-\frac{\sqrt{2}}{2} e^{i\gamma t}\widehat{\psi}_m^H&-\frac{\sqrt{2}}{2} e^{i\delta t}\widehat{\psi}_n^H&\frac{1}{2}(e^{i\gamma t}+e^{i\delta t}) \end{array}\right]\\
&=\left[\begin{array}{c|c}\mathbf{ B}&\mathbf{ D}\\\hline \mathbf{ G}&\mathbf{ E}\end{array}\right],
\end{align*}
where $\mathbf{ B}$, $\mathbf{ D}$, $\mathbf{ G}$ and $\mathbf{ E}$ are given in \eqref{eq4.11}.
\end{proof}

\begin{Lemma}\label{lem4.6}
It holds that
$\phi_k^{H}\Phi_{k}^{-H}+\psi_k^{H}P_k=0$, where $\Phi_k$, $\phi_k$, $\psi_k$ and $P_k$ are defined in \eqref{eq4.10}.
\end{Lemma}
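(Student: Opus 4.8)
The plan is to verify the identity $\phi_k^H\Phi_k^{-H}+\psi_k^H P_k=0$ by a direct componentwise computation, exploiting the explicit triangular structure of $\Phi_k$, $\phi_k$, $\psi_k$, $P_k$ given in \eqref{eq4.10}. First I would recall from Lemma~\ref{lem4.3}(ii) that $\Phi_k^{-H}=\widehat\Phi_k=P_k^{-1}\Phi_k P_k$, so the claimed identity is equivalent to $\phi_k^H P_k^{-1}\Phi_k P_k + \psi_k^H P_k = 0$, i.e. (multiplying on the right by $P_k^{-1}$ and using $P_k^{-1}=P_k^H$ from Lemma~\ref{lem4.3}(i)) to $\phi_k^H P_k^{-1}\Phi_k + \psi_k^H = 0$. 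Now $P_k$ is (up to signs) the reversal permutation, and a short check shows $P_k^{-1}\phi_k = \pm\psi_k$ with the sign being exactly $(-1)^{k}$, since $\phi_k$ has entries $\tfrac{t^k}{k!},\dots,\tfrac{t^2}{2!},t$ read top-to-bottom while $\psi_k$ has $t,\tfrac{t^2}{2!},\dots,\tfrac{t^k}{k!}$, and $P_k$ reverses order with alternating signs. Thus $\phi_k^H P_k^{-1} = (P_k^{-H}\phi_k)^H = \pm \psi_k^H$ (note $\phi_k,\psi_k$ are real so conjugation is transpose only in $t$, but for real $t$ it is just transpose).

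The heart of the matter is then the identity $\psi_k^H\,\widehat\Phi_k^{\,*} = -\psi_k^H$ for the appropriate sign bookkeeping, or more transparently: I would instead verify directly that the row vector $\psi_k^H\Phi_k^{-1}$ equals $-\phi_k^H$ reversed, which after unwinding the permutations is precisely the claim. Concretely, write $v(t)=\Phi_k(t)^{-1}\psi_k(t)$; since $\Phi_k(t)=e^{N_k t}$ we have $\Phi_k(t)^{-1}=e^{-N_k t}$, and $\psi_k(t)$ is visibly the vector whose $j$th entry is $\tfrac{t^j}{j!}$, which equals $\big(e^{N_k t} - I\big)$ applied to $e_k$ read appropriately — more usefully, $\psi_k(t) = \int_0^t e^{N_k s}\,ds\, \cdot (\text{last column stuff})$. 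The cleanest route is: differentiate the scalar function $f(t)=\phi_k(t)^H\Phi_k(t)^{-H}+\psi_k(t)^H P_k$ in $t$, show $f'(t)\equiv 0$ using $\dot\Phi_k=N_k\Phi_k$, $\dot\phi_k = \phi_{k}$-shift relations and $\dot\psi_k$ analogously, and observe $f(0)=0$ because $\phi_k(0)=\psi_k(0)=0$ while $\Phi_k(0)^{-H}=I$ is bounded; hence $f\equiv 0$.

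So the concrete steps, in order, are: (1) reduce via Lemma~\ref{lem4.3} to an identity not involving inverses explicitly; (2) record the elementary relations $\dot\Phi_k = N_k\Phi_k = \Phi_k N_k$, $\dot\psi_k = e_1 + N_k^H\psi_k$ (equivalently $\dot\psi_k$ is $\psi_k$ shifted), and the analogous $\dot\phi_k = e_k + N_k\phi_k$, together with $\phi_k = \Phi_k e_k - e_k$ read off from the last column of $\Phi_k$ after reversal, and the companion $\psi_k^H = e_1^H(\Phi_k - I)$; (3) substitute into $f'(t)$ and cancel, using $N_k^H P_k = -P_k N_k$ from Lemma~\ref{lem4.3}(i); (4) evaluate $f(0)=0$ and conclude. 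I expect the main obstacle to be purely notational: getting the signs and the index-reversal through $P_k$ exactly right, since $P_k$ carries the alternating signs $(-1)^j$ and it is easy to be off by an overall $(-1)^{k-1}$; the safest hedge is to also run the direct entrywise check, comparing the $j$th entry of $\phi_k^H\Phi_k^{-H}$ (a sum $\sum_{i\ge j}(-1)^{?}\tfrac{t^{\,k-i}}{(k-i)!}\tfrac{t^{\,i-j}}{(i-j)!}$, which collapses by the binomial identity $\sum \tfrac{1}{(k-i)!(i-j)!}=\tfrac{2^{k-j}}{(k-j)!}$ — wait, one must be careful, the alternating signs actually force the collapsing sum $\sum_{i}(-1)^{i}\binom{k-j}{i-j}=0$ for $k>j$) against the $j$th entry of $-\psi_k^H P_k$. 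That vanishing alternating-binomial sum is exactly what makes the off-diagonal terms die and the identity hold; that is the one computational fact I would state and invoke rather than re-derive.
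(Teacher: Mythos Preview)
Your differentiation approach is correct: setting $f(t)=\phi_k^{H}\Phi_k^{-H}+\psi_k^{H}P_k$, one has $f(0)=0$ trivially, and a careful computation of $f'(t)$ using $\dot\Phi_k=N_k\Phi_k$, $\dot\phi_k=N_k\phi_k+e_k$, $\dot\psi_k=N_k^{\top}\psi_k+e_1$ indeed gives $f'(t)=e_k^{\top}\Phi_k^{-\top}+\psi_k^{\top}N_kP_k+e_1^{\top}P_k$, and each entry of this row vector vanishes identically. Your alternative entrywise check via alternating binomial sums would also go through. So the argument is sound, though the write-up would benefit from committing to one route and executing it cleanly rather than hedging across three.

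The paper takes a shorter and more structural path that avoids differentiation and any componentwise bookkeeping. It embeds $\Phi_k$, $\phi_k$, $\psi_k$ into the next-size matrix $\Phi_{k+1}$ via the two block decompositions
\[
\Phi_{k+1}=\begin{bmatrix}\Phi_k & \phi_k\\ 0 & 1\end{bmatrix}=\begin{bmatrix}1 & \psi_k^{H}\\ 0 & \Phi_k\end{bmatrix},
\]
then computes $\widehat\Phi_{k+1}=P_{k+1}^{-1}\Phi_{k+1}P_{k+1}$ in block form (using the second decomposition and the anti-diagonal structure of $P_{k+1}$) to get
\[
\widehat\Phi_{k+1}=\begin{bmatrix}\widehat\Phi_k & 0\\ \psi_k^{H}P_k & 1\end{bmatrix}.
\]
The identity $\Phi_{k+1}^{H}\widehat\Phi_{k+1}=I_{k+1}$ from Lemma~\ref{lem4.3}(ii), read off in the $(2,1)$ block using the first decomposition of $\Phi_{k+1}$, yields $\phi_k^{H}\widehat\Phi_k+\psi_k^{H}P_k=0$ directly. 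What this buys is a sign-free, index-free argument: all the alternating signs you worry about are absorbed into the single relation $\widehat\Phi_{k+1}=\Phi_{k+1}^{-H}$, and there is no need to track $(-1)^{k}$ factors or verify a derivative vanishes entry by entry. Your approach, by contrast, is more elementary in that it uses nothing beyond calculus and the explicit formulas, and would generalize more readily to settings where the embedding into $\Phi_{k+1}$ is not available.
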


\begin{proof}
Using definitions of $\Phi_k$, $\phi_k$, $\psi_k$, $P_k$  and $\widehat{\Phi}_{k}$ in \eqref{eq4.10}  yield
\begin{align*}
\Phi_{k+1}&=\left[\begin{array}{cc}\Phi_{k}& \phi_k  \\ 0&1\end{array}\right]=\left[\begin{array}{cc}1& \psi_k^H  \\ 0&\Phi_{k}\end{array}\right],\\
\widehat{\Phi}_{k+1}&=P_{k+1}^{-1}\Phi_{k+1}P_{k+1}=\left[\begin{array}{c|c}0& -P_k^{-1}  \\\hline -1&0\end{array}\right]\left[\begin{array}{c|c}1& \psi_k^H  \\\hline 0&\Phi_{k}\end{array}\right]\left[\begin{array}{c|c}0& -1  \\\hline -P_k&0\end{array}\right]\\
&=\left[\begin{array}{cc}\widehat{\Phi}_{k}&0  \\ \psi_k^HP_k&1\end{array}\right].
\end{align*}
From Lemma \ref{lem4.3}, we have $\Phi_{k+1}^H\widehat{\Phi}_{k+1}=I_{k+1}$ and $\widehat{\Phi}_k=\Phi_k^{-H}$. Hence, it holds that $\phi_k^{H}\Phi_{k}^{-H}+\psi_k^{H}P_k=0$.
\end{proof}

In Lemma \ref{lem4.5}, if $\gamma=\delta=: \eta\in \mathbb{R}$, we have the corollary.

\begin{Corollary}\label{cor4.7}
Let $A=\left[\begin{array}{c|c}B&D\\\hline 0&-B^H\end{array}\right]$, where
\begin{align*}
B=\left[\begin{array}{ccc}N_{m} (i \eta)& 0 & -\frac{\sqrt{2}}{2} e_{m}\\0 & N_{n}(i \eta)  & -\frac{\sqrt{2}}{2} e_{n} \\0 & 0 & i\eta\end{array}\right],\ \ D=\frac{\sqrt{2}}{2}i\beta \left[\begin{array}{ccc}0& 0 &  e_{m}\\0 & 0  & - e_{n} \\-e_{m}^H & e_{n}^H & 0\end{array}\right],
\end{align*}
$\beta\in\{-1,1\}$ and $\eta\in \mathbb{R}$. Then for each $t\in \mathbb{R}$, $e^{At}$ has the form
\begin{align*}
e^{At}=\left[\begin{array}{c|c}\mathbf{ B}&\mathbf{ D}\\\hline 0&\mathbf{ B}^{-H}\end{array}\right],
\end{align*}
where
\begin{align*}
\begin{array}{l}
\mathbf{ B}\equiv\mathbf{ B}(t)=\left[\begin{array}{cc}\Phi_{m,n}& \phi_{m,n}^{1}  \\0&e^{i\eta t}\end{array}\right],\ \
\mathbf{ D}\equiv\mathbf{ D}(t)=\left[\begin{array}{cc}\widehat{\Gamma}_{m+1,n+1}^{2m,2n}& \phi_{m,n}^{2}  \\\widehat{\psi}_{m,n}^{1^H} &0\end{array}\right],
\end{array}
\end{align*}
and $\Phi_{m,n}$, $\phi_{m,n}^{1}$, $\phi_{m,n}^{2}$, $\widehat{\Gamma}_{m+1,n+1}^{2m,2n}$ and $\widehat{\psi}_{m,n}^{1^H}$ are defined in \eqref{eq4.12} in which $\gamma=\delta$ is  replaced  by  $\eta$.
\end{Corollary}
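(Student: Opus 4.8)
The plan is to deduce Corollary~\ref{cor4.7} by specializing Lemma~\ref{lem4.5} to the degenerate case $\gamma=\delta=:\eta$. First I would observe that, on setting $\gamma=\delta=\eta$, the matrices $B$, $G$, $D$ of Lemma~\ref{lem4.5} become exactly the matrices $B$, $G=0$, $D$ of the Corollary: the $(3,3)$-entry $\tfrac{i}{2}(\gamma+\delta)$ of $B$ collapses to $i\eta$, the entry $-\tfrac{1}{2}(\gamma-\delta)$ of $G$ vanishes, and the $(3,3)$-entry $-i\tfrac{\sqrt{2}}{2}(\gamma-\delta)$ of $D$ vanishes, while every other entry is literally unchanged. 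Hence the Hamiltonian matrix $A$ of the Corollary is the $\gamma=\delta$ instance of the Hamiltonian matrix $A$ of Lemma~\ref{lem4.5}, and $e^{At}$ is obtained by putting $\gamma=\delta=\eta$ into the formulas \eqref{eq4.11}--\eqref{eq4.12}.

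Next I would specialize the scalar $2\times2$ block of \eqref{eq4.12}. With $\gamma=\delta=\eta$ one gets
\[
\begin{bmatrix}\omega_{11}&\omega_{12}\\\omega_{21}&\omega_{22}\end{bmatrix}
=\frac{1}{2}\begin{bmatrix}2e^{i\eta t}&0\\0&2e^{i\eta t}\end{bmatrix}=e^{i\eta t}I_2,
\]
so that $\omega_{12}=\omega_{21}=0$ and $\omega_{11}=\omega_{22}=e^{i\eta t}$. Plugging these values into the block description \eqref{eq4.11} kills the block $\mathbf{G}$ entirely and produces precisely the stated forms of $\mathbf{B}$ and $\mathbf{D}$, in which $\Phi_{m,n}$, $\phi_{m,n}^{1}$, $\phi_{m,n}^{2}$, $\widehat{\Gamma}_{m+1,n+1}^{2m,2n}$ and $\widehat{\psi}_{m,n}^{1^H}$ are now understood with $\gamma=\delta$ replaced by $\eta$, which is exactly the reading prescribed by the last sentence of the Corollary.

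It then remains to identify the $(2,2)$-block with $\mathbf{B}^{-H}$ and to see that the $(2,1)$-block is $0$. The cleanest way is to use that the bottom-left block of $A$ is the zero matrix, so $e^{At}$ is block upper triangular with diagonal blocks $e^{Bt}$ and $e^{-B^{H}t}$; thus $\mathbf{B}=e^{Bt}$, the $(2,1)$-block is $0$, and the $(2,2)$-block equals $e^{-B^{H}t}=(e^{Bt})^{-H}=\mathbf{B}^{-H}$. Alternatively one can verify $\mathbf{E}=\mathbf{B}^{-H}$ directly from \eqref{eq4.12}: the equality $\widehat{\Phi}_{m,n}=\Phi_{m,n}^{-H}$ is immediate from Lemma~\ref{lem4.3}(ii), and Lemma~\ref{lem4.6} gives $\phi_k^{H}\Phi_k^{-H}=-\psi_k^{H}P_k=-\widehat{\psi}_k^{H}$, from which a one-line computation shows $\widehat{\psi}_{m,n}^{2^H}=-e^{i\eta t}\,\phi_{m,n}^{1^H}\Phi_{m,n}^{-H}$, i.e.\ the off-diagonal block of $\mathbf{B}^{-H}$ coincides with that of $\mathbf{E}$.

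The argument is pure bookkeeping and I foresee no genuine obstacle; the only points needing a moment's care are (a) checking that $\omega_{12}$ and $\omega_{21}$ really vanish when $\gamma=\delta$ --- this is what removes $\mathbf{G}$ and shrinks the extra row/column to the forms claimed --- and (b) confirming that the $(2,2)$-block is exactly $\mathbf{B}^{-H}$, which I would get for free from the block-triangular structure of $A$.
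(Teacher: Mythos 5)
Your proposal is correct and follows essentially the same route as the paper: specialize Lemma~\ref{lem4.5} to $\gamma=\delta=\eta$, observe that $\omega_{12}=\omega_{21}=0$ and $\omega_{11}=\omega_{22}=e^{i\eta t}$ so that $\mathbf{G}$ vanishes, and then identify $\mathbf{E}=\mathbf{B}^{-H}$ via Lemma~\ref{lem4.3}(ii) and Lemma~\ref{lem4.6}. Your primary argument for the last step (block upper triangularity of $A$ forces the $(2,2)$-block of $e^{At}$ to be $e^{-B^Ht}=(e^{Bt})^{-H}$) is a clean shortcut the paper does not use, but your fallback computation is exactly the paper's.
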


\begin{proof}
From \eqref{eq4.12}, if  $\eta:=\gamma=\delta \in \mathbb{R}$, then $\omega_{11}=\omega_{22}=e^{i\eta t}$, $\omega_{12}=\omega_{21}=0$. Then the matrix $\mathbf{ G}$ in \eqref{eq4.11} is a zero matrix. Now, we show that $\mathbf{ B}^{-H}=\mathbf{ E}$, where $\mathbf{ B}$ and $\mathbf{ E}$ are defined in \eqref{eq4.11}. Using definitions of $\Phi_{m,n}$ and $\widehat{\Phi}_{m,n}$ in \eqref{eq4.12} and Lemma \ref{lem4.3} $(ii)$ yield $\widehat{\Phi}_{m,n}=\Phi_{m,n}^{-H}$. From \eqref{eq4.12} and Lemma \ref{lem4.6}, it is easily seen that $\phi_{m,n}^{1^H}\widehat{\Phi}_{m,n}+e^{-i\eta t}\widehat{\psi}_{m,n}^{2^H}=0$. Hence, we have
\begin{align*}
\mathbf{ B}^H\mathbf{ E}=\left[\begin{array}{cc}\Phi_{m,n}^H& 0  \\\phi_{m,n}^{1^H}&e^{-i\eta t}\end{array}\right]\left[\begin{array}{cc}\widehat{\Phi}_{m,n}& 0  \\\widehat{\psi}_{m,n}^{2^H}&e^{i\eta t}\end{array}\right]=I,
\end{align*}
i.e.,  $\mathbf{ B}^{-H}=\mathbf{ E}$. From Lemma \ref{lem4.5}, we complete the proof.
\end{proof}

Combining the previous lemmas and corollary, in Theorem \ref{thm4.8}, we can arrive at the structure of $e^{\mathfrak{J}t}$ as in the form of Theorem~\ref{thm4.2}, where $\mathfrak{J}$ in \eqref{eq4.8} is a Hamiltonian Jordan canonical form.

\begin{Theorem}\label{thm4.8}
Given a Hamiltonian Jordan canonical form  $\mathfrak{J}$ as in \eqref{eq4.8}, then
\begin{align}\label{eq4.13}
e^{\mathfrak{J}t}=\left[\begin{array}{cccc|cccc}\mathcal{R}_r &  &  &  & 0 &  &  &  \\ & \mathcal{ R}_e &  &  &  & \mathcal{D}_e &  &  \\ &  & \mathcal{R}_c &  &  &  & \mathcal{D}_c &  \\ &  &  & \mathcal{R}_d &  &  &  & \mathcal{D}_d \\\hline0 &  &  &  & \mathcal{R}_r^{-H} &  &  &  \\ & 0 &  &  &  & \mathcal{R}_e^{-H} &  &  \\ &  & 0 &  &  &  & \mathcal{R}_c^{-H} &  \\ &  &  & \mathcal{G}_d &  &  &  & \mathcal{E}_d\end{array}\right],
\end{align}
where the different blocks, $\mathcal{R}_r\equiv\mathcal{R}_r(t)$, $\mathcal{R}_x\equiv \mathcal{R}_x(t)$, $\mathcal{D}_x\equiv \mathcal{D}_x(t)$ for $x=e,c,d$, $\mathcal{G}_d\equiv\mathcal{G}_d(t)$ and $\mathcal{E}_d\equiv\mathcal{E}_d(t)$ are dependent of $t$ and have the following structures.
\begin{itemize}
\item[1.] The blocks with index $r$ have the form
\begin{align*}
\mathcal{R}_r ={\rm diag}(\mathcal{R}_1^r,\ldots, \mathcal{R}_{\mu_r}^r),\ \  \mathcal{R}_k^r=e^{\lambda_kt}{\rm diag}(\Phi_{d_{k,1}},\ldots,\Phi_{d_{k,p_k}}),\ \ k=1,\ldots,\mu_r,
\end{align*}
where $\lambda_k\in \mathbb{C}_{>}$ are distinct and $\Phi_{d_{k,j}}$, $j=1,\ldots,p_k$, are defined in \eqref{eq4.10}.
\item[2.] The blocks with index $e$  have the form (see Lemma \ref{lem4.4})
\begin{align*}
\begin{array}{ll}
\mathcal{R}_e ={\rm diag}(\mathcal{R}_1^e,\ldots, \mathcal{R}_{\mu_e}^e),& \mathcal{R}_k^e=e^{i \alpha_kt}{\rm diag}(\Phi_{l_{k,1}},\ldots,\Phi_{l_{k,q_k}}),\\
\mathcal{D}_e ={\rm diag}(\mathcal{D}_1^e,\ldots, \mathcal{D}_{\mu_e}^e),& \mathcal{D}_k^e=-e^{i \alpha_kt}{\rm diag}(\beta_{k,1}^e\widehat{\Gamma}_{l_{k,1}}^{2l_{k,1}-1},\ldots,\beta_{k,q_k}^e\widehat{\Gamma}_{l_{k,q_k}}^{2l_{k,q_k}-1}),
\end{array}
\end{align*}
where for $k=1,\ldots,\mu_e$,  $j=1,\ldots,q_k$, $ \alpha_k\in \mathbb{R}$ are distinct,   $\Phi_{l_{k,j}}$, $\Gamma_q^{2{l_{k,j}}-1}$, $\widehat{\Gamma}_{l_{k,j}}^{2{l_{k,j}}-1}$, $P_{l_{k,j}}$ are defined in \eqref{eq4.10} and $\beta^e_{k,j}\in \{-1,1\}$.
\item[3.] The blocks with index $c$  have the form  (see Corollary \ref{cor4.7})
\begin{align*}
\begin{array}{ll}
\mathcal{R}_c ={\rm diag}(\mathcal{R}_1^c,\ldots, \mathcal{R}_{\mu_c}^c),& \mathcal{R}_k^c={\rm diag}(\mathbf{ B}_{k,1},\ldots,\mathbf{ B}_{k,r_k}),\\
\mathcal{D}_c ={\rm diag}(\mathcal{D}_1^c,\ldots, \mathcal{D}_{\mu_c}^c),& \mathcal{D}_k^c={\rm diag}(\mathbf{ D}_{k,1},\ldots,\mathbf{ D}_{k,r_k}),
\end{array}
\end{align*}
where for $k=1,\ldots,\mu_c$, $j=1,\ldots,r_k$,
\begin{align*}
\begin{array}{l}
\mathbf{ B}_{k,j}=\left[\begin{array}{cc}\Phi_{m_{k,j},n_{k,j}}& \phi_{m_{k,j},n_{k,j}}^{1}  \\0&e^{i\eta_k t}\end{array}\right],\ \
\mathbf{ D}_{k,j}=\left[\begin{array}{cc}\widehat{\Gamma}_{m_{k,j}+1,n_{k,j}+1}^{2m_{k,j},2n_{k,j}}& \phi_{m_{k,j},n_{k,j}}^{2}  \\\widehat{\psi}_{m_{k,j},n_{k,j}}^{1^H} &0\end{array}\right],
\end{array}
\end{align*}
with $ \eta_k\in \mathbb{R}$  distinct,  $\Phi_{m_{k,j},n_{k,j}}$, $ \phi_{m_{k,j},n_{k,j}}^{1}$,  $\phi_{m_{k,j},n_{k,j}}^{2}$, $\widehat{\Gamma}_{m_{k,j}+1,n_{k,j}+1}^{2m_{k,j},2n_{k,j}}$ and $\widehat{\psi}_{m_{k,j},n_{k,j}}^{1^H}$ being defined in \eqref{eq4.12}, in which $\gamma$ and $\delta$ are  replaced by $\eta_k$, and $\beta$ is replaced by $\beta^c_{k,j}\in \{-1,1\}$.
\item[4.] The blocks with index $d$  have the form  (see Lemma \ref{lem4.5})
\begin{align*}
\begin{array}{ll}
\mathcal{R}_d ={\rm diag}(\mathbf{B}_1^d,\ldots, \mathbf{B}_{\mu_d}^d),&\mathcal{D}_d ={\rm diag}(\mathbf{ D}_1^d,\ldots, \mathbf{ D}_{\mu_d}^d),\\  \mathcal{G}_d ={\rm diag}(\mathbf{ G}_1^d,\ldots, \mathbf{ G}_{\mu_d}^d),&\mathcal{E}_d ={\rm diag}(\mathbf{ E}_1^d,\ldots, \mathbf{ E}_{\mu_d}^d),
\end{array}
\end{align*}
where for $k=1,\ldots,\mu_d$,
\begin{align*}
\begin{array}{ll}
\mathbf{ B}_k^d=\left[\begin{array}{cc}\Phi_{s_k,t_k}& \phi_{s_k,t_k}^{1}  \\0&\omega_{11}\end{array}\right],&
\mathbf{ D}_k^d=\left[\begin{array}{cc}\widehat{\Gamma}_{s_k+1,t_k+1}^{2s_k,2t_k}& \phi_{s_k,t_k}^{2}  \\\widehat{\psi}_{s_k,t_k}^{1^H} &\omega_{12}\end{array}\right],\\
\mathbf{ G}_k^d=\left[\begin{array}{cc}0& 0 \\0 &\omega_{21}\end{array}\right],&\mathbf{ E}_k^d=\left[\begin{array}{cc}\widehat{\Phi}_{s_k,t_k}& 0 \\\widehat{\psi}_{s_k,t_k}^{2^H}&\omega_{22}\end{array}\right],
\end{array}
\end{align*}
with $\Phi_{s_k,t_k}$, $\widehat{\Gamma}_{s_k+1,t_k+1}^{2s_k,2t_k}$, $\phi_{s_k,t_k}^{1}$, $\phi_{s_k,t_k}^{2}$, $\widehat{\psi}_{s_k,t_k}^{1^H}$, $\widehat{\psi}_{s_k,t_k}^{2^H}$, $\omega_{11}$, $\omega_{12}$, $\omega_{21}$ and $\omega_{22}$ being defined in \eqref{eq4.12}, in which  $\gamma$ and $\delta$ are  replaced by $\gamma_k$ and $\delta_k$, respectively, and $\beta$ is replaced by $\beta^d_{k}\in \{-1,1\}$. Note that in this case, $\gamma_k\neq \delta_k$.
\end{itemize}
\end{Theorem}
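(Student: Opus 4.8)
The plan is to exploit the block structure of $\mathfrak{J}$ and reduce $e^{\mathfrak{J}t}$ to exponentials of the four types of elementary Hamiltonian blocks occurring in Theorem~\ref{thm4.2}, each of which has already been computed in Lemma~\ref{lem4.3}(iii), Lemma~\ref{lem4.4}, Lemma~\ref{lem4.5} and Corollary~\ref{cor4.7}. First I would record that, although $\mathfrak{J}$ in \eqref{eq4.8} is displayed in the $2\times 2$ ``Hamiltonian'' layout $\left[\begin{array}{cc} R & D \\ G & -R^{H}\end{array}\right]$, a single permutation similarity $\mathfrak{J}=\mathcal{P}^{-1}\widetilde{\mathfrak{J}}\mathcal{P}$ — the one gathering together the coordinates carrying the same index $r,e,c,d$, and within each index the same block index — turns $\mathfrak{J}$ into a genuine direct sum
\begin{align*}
\widetilde{\mathfrak{J}}=\mathfrak{J}_{r}\oplus\mathfrak{J}_{e}\oplus\mathfrak{J}_{c}\oplus\mathfrak{J}_{d}.
\end{align*}
Here $\mathfrak{J}_{r}={\rm diag}(R_{r},-R_{r}^{H})$, and each $\mathfrak{J}_{x}$ ($x=r,e,c,d$) splits further into a direct sum of the elementary blocks listed in Theorem~\ref{thm4.2}: the uncoupled blocks $N_{d_{k,j}}(\lambda_{k})\oplus(-N_{d_{k,j}}(\lambda_{k})^{H})$ for index $r$; the blocks $\left[\begin{array}{cc}N_{l_{k,j}}(i\alpha_{k})&\beta^{e}_{k,j}e_{l_{k,j}}e_{l_{k,j}}^{H}\\0&-N_{l_{k,j}}(i\alpha_{k})^{H}\end{array}\right]$ for index $e$; the blocks $\left[\begin{array}{cc}B_{k,j}&D_{k,j}\\0&-B_{k,j}^{H}\end{array}\right]$ for index $c$; and the blocks $\left[\begin{array}{cc}R_{k}^{d}&D_{k}^{d}\\G_{k}^{d}&-R_{k}^{dH}\end{array}\right]$ for index $d$. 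Since permutation similarity and direct sums both commute with the matrix exponential, i.e.\ $e^{\mathfrak{J}t}=\mathcal{P}^{-1}e^{\widetilde{\mathfrak{J}}t}\mathcal{P}$ and $e^{(A_{1}\oplus A_{2})t}=e^{A_{1}t}\oplus e^{A_{2}t}$, it suffices to exponentiate the elementary blocks and reassemble.

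The second step is to read off each elementary exponential from the preceding results. For index $r$, Lemma~\ref{lem4.3}(iii) gives $e^{N_{k}(\lambda)t}=e^{\lambda t}\Phi_{k}$, so $e^{R_{r}t}=\mathcal{R}_{r}$ as in item~1, while the paired block satisfies $e^{-R_{r}^{H}t}=(e^{R_{r}t})^{-H}=\mathcal{R}_{r}^{-H}$ (alternatively, this is forced by $e^{\mathfrak{J}t}$ being symplectic, Theorem~\ref{thm5.1}). For index $e$, Lemma~\ref{lem4.4} with $k=l_{k,j}$, $\alpha=\alpha_{k}$, $\beta=\beta^{e}_{k,j}$ yields exactly $\mathcal{R}_{e}$, $\mathcal{D}_{e}$ and the $(2,2)$ block $\mathcal{R}_{e}^{-H}$ of item~2. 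For index $c$, Corollary~\ref{cor4.7} (the confluent case $\gamma=\delta=\eta_{k}$) with $m=m_{k,j}$, $n=n_{k,j}$, $\beta=\beta^{c}_{k,j}$ produces $\mathbf{B}_{k,j}$, $\mathbf{D}_{k,j}$ and the $(2,2)$ block $\mathbf{B}_{k,j}^{-H}$ of item~3, and in particular shows the $(2,1)$ entry vanishes there. For index $d$, Lemma~\ref{lem4.5} with $\gamma=\gamma_{k}$, $\delta=\delta_{k}$ (recall $\gamma_{k}\neq\delta_{k}$), $\beta=\beta^{d}_{k}$ produces the full $2\times 2$ block $\left[\begin{array}{cc}\mathbf{B}_{k}^{d}&\mathbf{D}_{k}^{d}\\\mathbf{G}_{k}^{d}&\mathbf{E}_{k}^{d}\end{array}\right]$ of item~4; this is the only index producing a nonzero $(2,1)$ entry, matching the fact that $G_{d}$ is the only nonzero sub-diagonal block in \eqref{eq4.8}.

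The final step is purely assembly: conjugating $e^{\widetilde{\mathfrak{J}}t}$ back by $\mathcal{P}$ redistributes these blocks into the same layout as \eqref{eq4.8}, which is exactly the right-hand side of \eqref{eq4.13}, with $\mathcal{R}_{r}$, $\mathcal{R}_{e}$, $\mathcal{D}_{e}$, $\mathcal{R}_{c}$, $\mathcal{D}_{c}$, $\mathcal{R}_{d}$, $\mathcal{D}_{d}$, $\mathcal{G}_{d}$, $\mathcal{E}_{d}$ collated as in items~1--4. I do not anticipate any real obstacle here: the substantive computations are all contained in the cited lemmas, and what remains is the bookkeeping of the permutation — in particular verifying that $\mathcal{P}$ acts the same way on the ``$R$-part'' coordinates and on the ``$-R^{H}$-part'' coordinates (this holds because \eqref{eq4.8} lists them in the same order $r,e,c,d$ and then by block index), so that the $2\times 2$ Hamiltonian layout is preserved, and that the $(2,2)$ pieces collate to $\mathcal{R}_{x}^{-H}$ for $x\in\{r,e,c\}$ and to $\mathcal{E}_{d}$ for $x=d$.
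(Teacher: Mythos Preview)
Your proposal is correct and takes essentially the same approach as the paper: the paper does not write out a separate proof of Theorem~\ref{thm4.8}, simply noting before the statement that ``Combining the previous lemmas and corollary, \ldots\ we can arrive at the structure of $e^{\mathfrak{J}t}$,'' which is exactly your strategy of reducing to a direct sum of elementary blocks and invoking Lemma~\ref{lem4.3}(iii), Lemma~\ref{lem4.4}, Corollary~\ref{cor4.7}, and Lemma~\ref{lem4.5}. Your explicit mention of the permutation similarity $\mathcal{P}$ makes the bookkeeping more transparent than the paper's one-line justification.
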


\subsection{Asymptotic Analysis of RDE with Elementary Hamiltonian Jordan Blocks}\label{sec4.2}

It follows from the Radon's Lemma that the extended solution $W(t)$ of a RDE \eqref{eq4.3} can be obtained by taking $W(t)=P(t)Q(t)^{-1}$ for $t\in \mathcal{T}_W$, where $[Q(t)^{\top},  P(t)^{\top}]^{\top}$ is the solution of IVP \eqref{eq4.1} and $\mathcal{T}_W$ is defined in \eqref{eq3.41}. Suppose that the Hamiltonian matrix $\mathscr{H}$  in IVP \eqref{eq4.1} is symplectically similar to a Hamiltonian Jordan canonical form $\mathfrak{J}$ as in \eqref{eq4.8}. Therefore, the solution $Y(t)$ described in \eqref{eq4.9} involves the matrix $e^{\mathfrak{J}t}$. However the structure of $e^{\mathfrak{J}t}$ in \eqref{eq4.13} is complicated, in this subsection, we first consider four elementary cases. The general cases will be discussed in Subsection~\ref{sec4.3}. We assume that the Hamiltonian Jordan canonical form $\mathfrak{J}$ in \eqref{eq4.8}  is one of  the following four elementary cases
\begin{align}\label{eq4.14}
\mathfrak{J}_x\equiv \mathfrak{J}=\left[\begin{array}{c|c}R_x & D_x \\\hline G_x & -R_x^H\end{array}\right]\in \mathbb{C}^{2n\times 2n},\ \ x=r,e,c,d,
\end{align}
where
\begin{itemize}
\item[1.] if $x=r$, then $R_r=N_{n}(\lambda)$, $D_r=G_r=0$ and $\lambda\in \mathbb{C}_{>}$.
\item[2.] if $x=e$, then $R_e=N_{n}(i\alpha)$, $D_e=\beta e_{n}e_{n}^H$, $G_e=0$ and $\alpha\in \mathbb{R}$, $\beta\in\{-1,1\}$.
\item[3.] if $x=c$, then $n=n_1+n_2+1$, $ \eta\in \mathbb{R}$, $\beta\in \{-1,1\}$, $G_c=0$ and
\begin{align*}
R_c=\left[\begin{array}{ccc}N_{n_1} (i \eta)& 0 & -\frac{\sqrt{2}}{2} e_{n_1}\\0 & N_{n_2}(i \eta)  & -\frac{\sqrt{2}}{2} e_{n_2} \\0 & 0 & i \eta\end{array}\right],\ \
D_c=\frac{\sqrt{2}}{2}i\beta\left[\begin{array}{ccc}0& 0 &  e_{n_1}\\0 & 0  & - e_{n_2} \\-e_{n_1}^H & e_{n_2}^H & 0\end{array}\right].
\end{align*}
\item[4.] if $x=d$, then $n=n_1+n_2+1$,  $\gamma,\delta\in \mathbb{R}$ with $\gamma\neq\delta$, $\beta\in \{-1,1\}$ and
\begin{align*}
\begin{array}{l}
R_d=\left[\begin{array}{ccc}N_{n_1} (i \gamma)& 0 & -\frac{\sqrt{2}}{2} e_{n_1}\\0 & N_{n_2}(i \delta)  & -\frac{\sqrt{2}}{2} e_{n_2} \\0 & 0 & \frac{i}{2} (\gamma+\delta)\end{array}\right],\ \
G_d=\beta\left[\begin{array}{ccc}0& 0 & 0\\0 & 0  & 0 \\0 & 0 & - \frac{1}{2} (\gamma-\delta)\end{array}\right]\\
D_d=\frac{\sqrt{2}}{2}i\beta\left[\begin{array}{ccc}0& 0 &  e_{n_1}\\0 & 0  & - e_{n_2} \\-e_{n_1}^H & e_{n_2}^H & -i \frac{\sqrt{2}}{2} (\gamma-\delta)\end{array}\right].
\end{array}
\end{align*}
\end{itemize}

The asymptotic analysis of $W(t)$ and $Q(t)^{-1}$ is given as follows whenever $\mathscr{H}$ is one of these four cases.

\begin{Theorem}\label{thm4.9}
Suppose that $\mathscr{H}$ in \eqref{eq4.1} has one of Hamiltonian Jordan canonical forms $\mathfrak{J}_x$ as in \eqref{eq4.14}. Let $Y(t)=[Q(t)^{\top},  P(t)^{\top}]^{\top}$ and $W(t)=P(t)Q(t)^{-1}$ for $t\in \mathcal{T}_W$ be the solution of IVP \eqref{eq4.1} and the extended solution of RDE \eqref{eq4.3}, respectively.
Note that $[W_1^{\top},W_2^{\top}]^{\top}=\mathcal{S}^{-1}[I,W_0]^{\top}$ by \eqref{eq4.9}.
\begin{itemize}
\item[(i)] Suppose that the symplectic matrix $\mathcal{S}$ in \eqref{eq4.8} is partitioned as
\begin{align}\label{eq4.15}
\mathcal{S}=\left[\begin{array}{c|c}U_1 &V_1\\\hline U_2 & V_2\end{array}\right],
\end{align}
 where $U_1,U_2,V_1,V_1\in \mathbb{C}^{n\times n}$.
\begin{itemize}
\item[1.] If $x=r$, $\Re(\lambda)>0$ and $U_1$, $W_1$ are invertible, then
\begin{align*}
W(t)=U_2U_1^{-1}+O(e^{-2\Re(\lambda) t}t^{2(n-1)})\ \text{ and }\   Q(t)^{-1}=O(e^{-\Re(\lambda) t}t^{n-1}),
\end{align*}
as $t\to\infty$. On the other hand, if  $V_1$ and $W_2$ are invertible, then
\begin{align*}
W(t)=V_2V_1^{-1}+O(e^{-2\Re(\lambda) |t|}|t|^{2(n-1)})\ \text{ and }\ Q(t)^{-1}=O(e^{-\Re(\lambda) |t|}|t|^{n-1}),
\end{align*}
as $t\to-\infty$.
\item[2.] If  $x=e$ and $U_1$, $W_2$ are invertible, then
\begin{align*}
W(t)= U_2U_1^{-1}+O(t^{-1})\ \text{ and }\  Q(t)^{-1}=O(t^{-1}),
\end{align*}
as $t\rightarrow \pm \infty$.
\end{itemize}
\item[(ii)] Suppose that the symplectic matrix $\mathcal{S}$  in \eqref{eq4.8} is further partitioned as
\begin{align}\label{eq4.16}
\mathcal{S}=\left[\begin{array}{cc|cc}U_1 &u_1 &V_1&v_1\\\hline U_2&u_2 & V_2&v_2\end{array}\right]\in \mathbb{C}^{2n\times 2n},
\end{align}
where $U_1,U_2$, $V_1,V_2\in \mathbb{C}^{n\times (n_1+n_2)}$, $u_1,u_2,v_1,v_2\in \mathbb{C}^{n}$ and $n=n_1+n_2+1$.
\begin{itemize}
\item[3.]  If $x=c$ and $W_2$ is invertible, then there exist constants  $\tilde{f}_u$, $\tilde{f}_v\in{\mathbb C}$ with $\mathbf{U}_{1,0}=\left[U_1,\tilde{f}_uu_1+\tilde{f}_vv_1\right]$, $\mathbf{U}_{2,0}=\left[U_2,\tilde{f}_uu_2+\tilde{f}_vv_2\right]\in \mathbb{C}^{n\times n}$ and   a rank-one matrix $K_Q=W_2^{-1}e_ne_n^{H}\mathbf{U}_{1,0}^{-1}$ such that
\begin{align*}
W(t)=\mathbf{U}_{2,0}\mathbf{U}_{1,0}^{-1}+O(t^{-1})\ \text{ and }\
Q(t)^{-1}=e^{-i\eta t}K_Q+O(t^{-1}),
\end{align*}
as $t\rightarrow \pm\infty$, provided that $\mathbf{U}_{1,0}$ is invertible.
\item[4.] If $x=d$ and $W_2$ is invertible, then there exist constants $f_u$, $f_v$ with  $\mathbf{U}_1=[U_1, f_uu_1+f_vv_1]$, $\mathbf{U}_2=[U_2, f_uu_2+f_vv_2]\in \mathbb{C}^{n\times n}$, two rank-one matrices $K_W,\ K_Q\in{\mathbb C}^{n\times n}$ and $c\in{\mathbb C}$ with $|c|=1$ such that
\begin{align*}
&W(t)=\mathbf{U}_2\mathbf{U}_1^{-1}+\frac{e^{i\theta t}}{1+e^{i\theta t}c+O(t^{-1})}\left[K_W+O(t^{-1})\right]+O(t^{-1}),\\
&Q(t)^{-1}=\frac{e^{-i\gamma t}}{1+e^{i\theta t}c+O(t^{-1})}\left[K_Q+O(t^{-1})\right]+O(t^{-1}),
\end{align*}
as $t\rightarrow \pm\infty$, provided that $\mathbf{U}_1$ is invertible. Here $\theta=\delta-\gamma$.
\end{itemize}
\end{itemize}
\end{Theorem}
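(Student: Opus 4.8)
The plan is to substitute the explicit form of $e^{\mathfrak{J}t}$ given by Theorem~\ref{thm4.8} into the representation $Y(t)=\mathcal{S}e^{\mathfrak{J}t}[W_1^{\top},W_2^{\top}]^{\top}$ of \eqref{eq4.9}, read off $Q(t)$ and $P(t)$ as the upper and lower $n\times n$ block rows, and single out in $Q(t)$ the block of $e^{\mathfrak{J}t}$ whose growth as $t\to\pm\infty$ dominates. Writing $Q(t)=(\text{leading factor})(I+E(t))$, where $E(t)$ gathers the slower contributions, one inverts the bracket by a Neumann series and substitutes into $W(t)=P(t)Q(t)^{-1}$; the surviving constant matrix is the slope $(\text{lower block})(\text{upper block})^{-1}$ of the dominant $\mathscr{H}$-invariant subspace encoded by the columns of $\mathcal{S}$, and the order of $E(t)$ — the size of the subdominant part relative to the dominant one — yields the stated errors and, when a genuine frequency survives, the quasi-periodic correction. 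The four canonical forms \eqref{eq4.14} are treated in turn; throughout, the invertibility hypotheses on $W_1$ or $W_2$ keep the dominant block of full (or maximal available) rank after multiplication by the data, and those on $U_1$, $\mathbf{U}_{1,0}$, $\mathbf{U}_1$ let the leading factor of $Q(t)$ be inverted.

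For $x=r$, Lemma~\ref{lem4.3} gives $e^{\mathfrak{J}_r t}=e^{\lambda t}\Phi_n\oplus e^{-\bar\lambda t}\widehat{\Phi}_n$, so as $t\to+\infty$ (with $\Re(\lambda)>0$) the term $U_1e^{\lambda t}\Phi_nW_1$ beats $V_1e^{-\bar\lambda t}\widehat{\Phi}_nW_2$ by a factor of order $e^{-2\Re(\lambda)t}t^{2(n-1)}$ since $\|\Phi_n^{-1}\|,\|\widehat{\Phi}_n\|=O(t^{n-1})$; factoring and expanding gives $W(t)=U_2U_1^{-1}+O(e^{-2\Re(\lambda)t}t^{2(n-1)})$ and $Q(t)^{-1}=O(e^{-\Re(\lambda)t}t^{n-1})$, and the $t\to-\infty$ statement is the mirror image with the $V$-columns playing the role of the $U$-columns. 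For $x=e$ the whole spectrum sits on the imaginary axis — in fact $\mathfrak{J}_e$ is one Jordan block of size $2n$ at $i\alpha$ — so the dominance is purely polynomial: by Lemma~\ref{lem4.4} the block $-\beta e^{i\alpha t}\widehat{\Gamma}_n^{2n-1}$ applied to $W_2$ grows like $t^{2n-1}$, whereas $\Phi_nW_1$ and $\widehat{\Phi}_nW_2$ grow only like $t^{n-1}$. Two facts are needed: $\widehat{\Gamma}_n^{2n-1}=\Gamma_n^{2n-1}P_n$ is invertible for large $t$ (the Toeplitz matrix with $(i,j)$-entry $1/(n+j-i)!$ is nonsingular), and — this is what pins the error at $O(t^{-1})$ rather than something larger — the leading powers in $\Phi_n(\Gamma_n^{2n-1})^{-1}$ cancel, leaving $\Phi_n(\Gamma_n^{2n-1})^{-1}=O(t^{-1})$, as one reads off from the Toeplitz form together with $\Phi_n=e^{N_nt}$. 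With these, the factor-and-expand step yields $W(t)=U_2U_1^{-1}+O(t^{-1})$ and $Q(t)^{-1}=O(t^{-1})$ as $t\to\pm\infty$.

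The cases $x=c,d$ are where the work concentrates. Here $e^{\mathfrak{J}t}$ combines unit-modulus exponentials ($e^{i\eta t}$, resp. $e^{i\gamma t}$ and $e^{i\delta t}$) with the polynomial blocks of Lemma~\ref{lem4.5} and Corollary~\ref{cor4.7}, and — decisively — the dominant part of $Q(t)$, which comes from the $\widehat{\Gamma}$-type block and has order $t^{2\max(n_1,n_2)}$, has column space only $\mathrm{Im}(U_1)$, of dimension $n_1+n_2$, one short of $n$. Using the partition \eqref{eq4.16}, the missing direction is supplied by the lower-order, corner-carried pieces (the $\omega$-entries, $\phi^1$, $\widehat{\psi}^1$, and in case $d$ the block $\mathbf{G}$); tracking which combination of $u_1$ and $v_1$ they deliver produces the constants $\tilde f_u,\tilde f_v$ (resp. $f_u,f_v$) and the effective leading factor $\mathbf{U}_{1,0}=[U_1,\tilde f_uu_1+\tilde f_vv_1]$ (resp. $\mathbf{U}_1=[U_1,f_uu_1+f_vv_1]$), while the matching combination of the second-block columns gives $\mathbf{U}_{2,0}$ (resp. $\mathbf{U}_2$), which is what $P(t)$ contributes. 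One then block-inverts $Q(t)$ in the basis adapted to the splitting $\mathbb{C}^n=\mathrm{Im}(U_1)\oplus\mathrm{span}(\text{fill-in})$: the $\mathrm{Im}(U_1)$-part of $Q(t)^{-1}$ is $O(t^{-1})$ (the big block decays under inversion), and the corner row — which is $\approx e^{i\eta t}e_n^HW_2$ in case $c$ — leaves a non-decaying rank-one part. For $x=c$ everything carries the single factor $e^{i\eta t}$, so the corner weight is a nonzero constant and one gets $W(t)=\mathbf{U}_{2,0}\mathbf{U}_{1,0}^{-1}+O(t^{-1})$ with $Q(t)^{-1}=e^{-i\eta t}K_Q+O(t^{-1})$, $K_Q=W_2^{-1}e_ne_n^H\mathbf{U}_{1,0}^{-1}$. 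For $x=d$ the two frequencies survive, the corner is weighted by $\tfrac12(e^{i\gamma t}+e^{i\delta t})=\tfrac12e^{i\gamma t}(1+e^{i\theta t})$ with $\theta=\delta-\gamma$, and its beating against the $e^{i\gamma t}$- and $e^{i\delta t}$-chains turns the Schur complement in the block inversion into a scalar factor $1+e^{i\theta t}c+O(t^{-1})$; this produces the quasi-periodic formulas claimed, with $|c|=1$ forced by the symplecticity of $\mathcal{S}$ (i.e. $\mathcal{S}\mathcal{J}\mathcal{S}^H=\mathcal{J}$, which ties $U_i,V_i,u_i,v_i$ to each other) together with the Hermiticity $W_0=W_0^H$ constraining $[W_1^{\top},W_2^{\top}]^{\top}=\mathcal{S}^{-1}[I,W_0^{\top}]^{\top}$; the zeros of $1+e^{i\theta t}c$ (up to the $O(t^{-1})$ perturbation) are exactly the times excluded from $\mathcal{T}_W$ in \eqref{eq3.41}.

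The main obstacle is the bookkeeping in cases $c$ and $d$: one must simultaneously control polynomials of several degrees (from the $\Phi$-, $\Gamma$-, $\phi$- and $\psi$-blocks) all decorated with unit-modulus exponentials, recognize and organize everything around the one-dimensional rank deficiency of the dominant term, pin down the corner constants $\tilde f_u,\tilde f_v$ (resp. $f_u,f_v$) and the identity $|c|=1$ from the joint symplectic-and-Hermitian structure rather than by brute-force computation, and carry out the block inversion uniformly in $t$, including near the times where its Schur complement nearly vanishes. A minor nuisance is the degenerate small blocks ($n=1$ in case $e$, or $n_1=0$ or $n_2=0$ in cases $c,d$), where some $\Gamma$-, $\phi$- or $\psi$-blocks are empty; with the convention that an empty block contributes nothing, the stated orders are unchanged.
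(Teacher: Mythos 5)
Your proposal is correct and follows essentially the same route as the paper: case $r$ and case $e$ are handled exactly as in Theorems~\ref{thm4.10} and \ref{thm4.11} (your two key facts for case $e$ are the paper's Theorem~\ref{thm5.2} and Table~\ref{tab1}), and for cases $c$ and $d$ your ``one-dimensional rank deficiency plus corner fill-in plus Schur complement'' scheme is precisely the paper's Lemma~\ref{lem4.12} followed by the Sherman--Morrison--Woodbury inversion in Theorems~\ref{thm4.13} and \ref{thm4.15}, with $|c|=1$ obtained, as you say, from symplecticity of $\mathcal{S}$ together with $W_0=W_0^H$. The only step your plan glosses over is that the corner constants $f_u,f_v,g_u,g_v$ require the explicit evaluation $1-\kappa_{n_j}=(-1)^{n_j}$ of Theorem~\ref{thm5.3}, not just the structural symmetries, but this is a computation internal to the framework you describe.
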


The proof of assertions $1$, $2$, $3$ and $4$ of Theorem~\ref{thm4.9} are given below in Theorems~\ref{thm4.10}, \ref{thm4.11}, \ref{thm4.15}, and \ref{thm4.13}, respectively. In assertion $4$, those two rank-one matrices $K_W$ and $K_Q$ will be explicitly expressed  in Theorem \ref{thm4.13}.
\begin{Remark}\label{rem4.1.1}
In assertion 1, we see that the extended solution $W(t)$ forms a hetroclinic orbit starting from the equilibrium $V_2V_1^{-1}$ to the equilibrium $U_2U_1^{-1}$.  In assertion 2, the equilibrium $V_2V_1^{-1}$ collapses and $W(t)$ becomes a homoclinic orbit that links $U_2U_1^{-1}$ itself. In assertion 3, $W(t)$ is also a homoclinic orbit but, $Q(t)^{-1}$ tends to a limit circle.  In assertion $4$, the extended solution $W(t)$ of the RDE  converges with the rate $O(t^{-1})$ to a periodic orbit, say $W_\infty(t)$, with period $2\pi/\theta$ whenever $\theta\neq 0$. Here
$
W_{\infty}(t)=\mathbf{U}_2\mathbf{U}_1^{-1}+\frac{e^{i\theta t}}{1+e^{i\theta t}c}K_W.
$
We shall prove in Theorem~\ref{thm4.14} that $W_\infty(t)$ blows up periodically.
\end{Remark}

\begin{Theorem}\label{thm4.10}
Suppose assumptions in Theorem~\ref{thm4.9} hold. Let $\mathfrak{J}_x=\mathfrak{J}_r$ and the symplectic matrix $\mathcal{S}$ have the form in \eqref{eq4.15}. If $U_1$ and $W_1$ are invertible, then $W(t)=U_2U_1^{-1}+O(e^{-2\Re(\lambda) t}t^{2(n-1)})$  and $Q(t)^{-1}=O(e^{-\Re(\lambda) t}t^{n-1})$ as $t\to\infty$, where $\Re(\lambda)>0$ and $U_2U_1^{-1}$ is Hermitian. On the other hand, if  $V_1$ and $W_2$ are invertible, then  $W(t)=V_2V_1^{-1}+O(e^{-2\Re(\lambda) |t|}|t|^{2(n-1)})$ and $Q(t)^{-1}=O(e^{-\Re(\lambda) |t|}|t|^{n-1})$ as $t\to-\infty$, where $V_2V_1^{-1}$  is Hermitian.
\end{Theorem}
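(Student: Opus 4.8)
The plan is to read $e^{\mathfrak{J}_r t}$ off Theorem~\ref{thm4.8} and then carry out a dominant‑balance analysis of the explicit representation \eqref{eq4.9} of $Y(t)$. For $x=r$ with a single Jordan block $R_r=N_n(\lambda)$, Theorem~\ref{thm4.8} together with Lemma~\ref{lem4.3} gives the block‑diagonal form
\begin{align*}
e^{\mathfrak{J}_r t}=\left[\begin{array}{c|c}\mathcal{R}_r(t)&0\\\hline 0&\mathcal{R}_r(t)^{-H}\end{array}\right],\qquad \mathcal{R}_r(t)=e^{\lambda t}\Phi_n(t),
\end{align*}
where $\Phi_n(t)=e^{N_nt}$ and $\Phi_n(t)^{-1}=e^{-N_nt}$ have polynomial‑in‑$t$ entries of degree at most $n-1$; consequently $\|\mathcal{R}_r(t)\|=O\!\left(e^{\Re(\lambda)t}|t|^{n-1}\right)$ while $\|\mathcal{R}_r(t)^{-1}\|=\|\mathcal{R}_r(t)^{-H}\|=O\!\left(e^{-\Re(\lambda)t}|t|^{n-1}\right)$. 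Substituting into \eqref{eq4.9}, with $[W_1^{\top},W_2^{\top}]^{\top}=\mathcal{S}^{-1}[I,W_0^{\top}]^{\top}$ and the partition \eqref{eq4.15}, I obtain
\begin{align*}
Q(t)=U_1\mathcal{R}_r(t)W_1+V_1\mathcal{R}_r(t)^{-H}W_2,\qquad P(t)=U_2\mathcal{R}_r(t)W_1+V_2\mathcal{R}_r(t)^{-H}W_2,
\end{align*}
which is the common starting point for both asymptotic regimes.

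For $t\to+\infty$, since $\Re(\lambda)>0$ the $\mathcal{R}_r(t)$‑part dominates the $\mathcal{R}_r(t)^{-H}$‑part. Assuming $U_1$ and $W_1$ invertible, I factor $Q(t)=U_1\mathcal{R}_r(t)W_1\bigl(I+F(t)\bigr)$ with $F(t)=W_1^{-1}\mathcal{R}_r(t)^{-1}U_1^{-1}V_1\mathcal{R}_r(t)^{-H}W_2$, so $\|F(t)\|=O\!\left(e^{-2\Re(\lambda)t}t^{2(n-1)}\right)$; in particular $Q(t)$ is invertible for all large $t$, which puts such $t$ in $\mathcal{T}_W$ and makes the extended solution genuinely defined there, and a Neumann series gives $\bigl(I+F(t)\bigr)^{-1}=I+O\!\left(e^{-2\Re(\lambda)t}t^{2(n-1)}\right)$. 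Then $Q(t)^{-1}=\bigl(I+F(t)\bigr)^{-1}W_1^{-1}\mathcal{R}_r(t)^{-1}U_1^{-1}=O\!\left(e^{-\Re(\lambda)t}t^{n-1}\right)$, and expanding $W(t)=P(t)Q(t)^{-1}$ the term $U_2\mathcal{R}_r(t)W_1\cdot W_1^{-1}\mathcal{R}_r(t)^{-1}U_1^{-1}=U_2U_1^{-1}$ survives, while the $V_2\mathcal{R}_r(t)^{-H}W_2$ contribution and the $F$‑correction are both $O\!\left(e^{-2\Re(\lambda)t}t^{2(n-1)}\right)$. The regime $t\to-\infty$ is completely symmetric: now $\mathcal{R}_r(t)^{-H}$ dominates, and under the hypothesis that $V_1$ and $W_2$ are invertible I factor out $V_1\mathcal{R}_r(t)^{-H}W_2$ instead, producing $W(t)=V_2V_1^{-1}+O\!\left(e^{-2\Re(\lambda)|t|}|t|^{2(n-1)}\right)$ and $Q(t)^{-1}=O\!\left(e^{-\Re(\lambda)|t|}|t|^{n-1}\right)$ by the same manipulations.

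It remains to see that the limits are Hermitian, and this is independent of the dynamics: from $\mathcal{S}^H\mathcal{J}\mathcal{S}=\mathcal{J}$ (equivalent to $\mathcal{S}$ being symplectic for this $\mathcal{J}$) the $(1,1)$‑ and $(2,2)$‑blocks read $U_1^HU_2=U_2^HU_1$ and $V_1^HV_2=V_2^HV_1$, whence $U_2U_1^{-1}=U_1^{-H}(U_1^HU_2)U_1^{-1}$ and $V_2V_1^{-1}=V_1^{-H}(V_1^HV_2)V_1^{-1}$ are congruences of Hermitian matrices. The only genuinely routine work is the big‑$O$ bookkeeping — in particular tracking that two factors of polynomial degree $n-1$ combine to degree $2(n-1)$ and that the Neumann inversion of $I+F(t)$ does not raise the order — and I expect the one point that needs real care is the eventual invertibility of $Q(t)$, since it is exactly this that makes $\lim_{t\to\pm\infty}W(t)$ meaningful; as indicated, it falls out of the factorization once $\|F(t)\|<1$. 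Incidentally, the same computation also supplies the proof of assertion~1 of Theorem~\ref{thm4.9}.
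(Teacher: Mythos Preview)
Your proof is correct and follows essentially the same approach as the paper: both compute $e^{\mathfrak{J}_r t}=(e^{\lambda t}\Phi_n)\oplus(e^{-\bar\lambda t}\Phi_n^{-H})$, plug into \eqref{eq4.9}, and extract the dominant balance as $t\to\pm\infty$, the only cosmetic difference being that the paper writes $W(t)=(U_2+\text{small})(U_1+\text{small})^{-1}$ while you factor $Q(t)=U_1\mathcal{R}_r(t)W_1(I+F(t))$ and invert via a Neumann series. Your explicit derivation of the Hermiticity of $U_2U_1^{-1}$ and $V_2V_1^{-1}$ from $\mathcal{S}^H\mathcal{J}\mathcal{S}=\mathcal{J}$ spells out what the paper simply asserts (``because $\mathcal{S}$ is symplectic'').
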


\begin{proof}
Since $\mathfrak{J}_r=N_n(\lambda)\oplus(-N_n(\lambda)^H)$, we have
$e^{\mathfrak{J}_rt}=(e^{\lambda t}\Phi_n)\oplus (e^{-\bar{\lambda} t}\Phi_n^{-H})$, where $\lambda\in \mathbb{C}_{>}$ and $\Phi_n$ is defined in \eqref{eq4.10}.  From \eqref{eq4.9}, we have
\begin{align}\label{eq4.17}
Y(t)&\equiv \left[\begin{array}{c}Q(t) \\P(t)\end{array}\right]=\left[\begin{array}{c|c}U_1 &V_1\\\hline U_2 & V_2\end{array}\right]\left[\begin{array}{c|c}e^{\lambda t}\Phi_n &0\\\hline 0& e^{-\bar{\lambda} t}\Phi_n^{-H}\end{array}\right]\left[\begin{array}{c}W_1 \\W_2\end{array}\right]\nonumber\\&=\left[\begin{array}{c|c}U_1 &V_1\\\hline U_2 & V_2\end{array}\right]\left[\begin{array}{c}e^{\lambda t}\Phi_nW_1 \\e^{-\bar{\lambda} t}\Phi_n^{-H}W_2\end{array}\right].
\end{align}
From Lemma \ref{lem4.3} $(ii)$, we have $\|\Phi_n^{-H}\|=\|\Phi_n^{-1}\|=O(t^{n-1})$. Since $\Re(\lambda)>0$ and  $Q(t)=e^{\lambda t}(U_1\Phi_nW_1+e^{-2\Re(\lambda) t}V_1\Phi_n^{-H}W_2)$,  if $U_1$ and $W_1$ are invertible, then we have $Q(t)^{-1}=O(e^{-\Re(\lambda) t}t^{n-1})$ as  $t\to\infty$. Using the fact that $\Phi_n=e^{N_nt}$ is invertible, we obtain that for $t\in \mathcal{T}_W$,
\begin{align*}
W(t)&=P(t)Q(t)^{-1}\\&=(U_2+e^{-2\Re(\lambda) t}V_2\Phi_n^{-H}W_2W_1^{-1}\Phi_n^{-1})(U_1+e^{-2\Re(\lambda) t}V_1\Phi_n^{-H}W_2W_1^{-1}\Phi_n^{-1})^{-1}.
\end{align*}
Therefore, $W(t)=U_2U_1^{-1}+O(e^{-2\Re(\lambda) t}t^{2(n-1)})$ as  $t\to\infty$.
The matrix $U_2U_1^{-1}$ is Hermitian because $\mathcal{S}$ is symplectic.

Similarly, if  $V_1$ and $W_2$ are invertible,  it follows from \eqref{eq4.17} again that
\begin{align*}
Q(t)&=e^{-\bar{\lambda} t}(V_1\Phi_n^{-H}W_2+e^{2\Re(\lambda) t}U_1\Phi_nW_1),\\
W(t)&=(V_2+e^{2\Re(\lambda) t}U_2\Phi_nW_1W_2^{-1}\Phi_n^{H})(V_1+e^{2\Re(\lambda) t}U_2\Phi_nW_1W_2^{-1}\Phi_n^{H})^{-1}.
\end{align*}
Since $\Re(\lambda)>0$, we have $Q(t)^{-1}=O(e^{-\Re(\lambda) |t|}|t|^{n-1})$ and $W(t)=V_2V_1^{-1}+O(e^{-2\Re(\lambda) |t|}|t|^{2(n-1)})$ as $t\to-\infty$.
\end{proof}

For given integers $k$, $\ell$, $k_1$ and $k_2$ satisfying $0\leqslant k$, $0\leqslant \ell$, $k\neq \ell$ and $0<k_1<k_2\leqslant 2k_1$, we denote
\begin{align}\label{eq4.18}
\begin{array}{l}
\Xi_{k,\ell}\equiv \Xi_{k,\ell}(t)=\left\{\begin{array}{cc}{\rm diag}(t^k,t^{k+1},\cdots,t^\ell)&\text{ if }k<\ell,\\
{\rm diag}(t^k,t^{k-1},\cdots,t^\ell)&\text{ if } k>\ell,
\end{array}\right.\\
\digamma_{k_1}^{k_2}=\left[\begin{array}{cccc}\frac{1}{{k_1}!} & \frac{1}{{(k_1+1)}!} & \cdots & \frac{1}{{k_2}!} \\\frac{1}{{(k_1-1)}!} & \frac{1}{{k_1}!} & \ddots & \frac{1}{{(k_2-1)}!} \\\vdots & \ddots & \ddots & \vdots \\ \frac{1}{{(2k_1-k_2)}!} & \cdots & \cdots & \frac{1}{{k_1}!}\end{array}\right].
\end{array}
\end{align}
The matrix $\digamma_{k_1}^{k_2}$ is invertible (the detailed proof is shown in Theorem \ref{thm5.2}).
The matrix $\Gamma_{k_1}^{k_2}$ defined in \eqref{eq4.10} can be rewritten in terms of $\Xi_{k,\ell}$ and $\digamma_{k_1}^{k_2}$ as
\begin{align}\label{eq4.19}
\begin{array}{l}
\Gamma_{k_1}^{k_2}=t^{2k_1-k_2}\Xi_{k_2-k_1,0}\digamma_{k_1}^{k_2}\Xi_{0,k_2-k_1},\\
(\Gamma_{k_1}^{k_2})^{-1}=t^{-2k_1+k_2}(\Xi_{0,k_2-k_1})^{-1}(\digamma_{k_1}^{k_2})^{-1}(\Xi_{k_2-k_1,0})^{-1},
\end{array}
\end{align}
for each $t\neq 0$.
In order to investigate the asymptotic behaviors of  $W(t)$ and $Q(t)^{-1}$ when $\mathscr{H}$ is symplectically similar to one of  $\mathfrak{J}_x$ in \eqref{eq4.14} for $x=e, c$ and $d$, we need the useful Tables \ref{tab1} and \ref{tab2} (the detailed proofs of each item in Tables \ref{tab1} and \ref{tab2} are given in Lemmas \ref{lem5.4} and \ref{lem5.5}, respectively).   Note that  $\Gamma_{n}^{2n-1}$, $\Phi_n$, $P_n$, $\widehat{\Gamma}_{n}^{2n-1}$, $\widehat{\Phi}_n$ $\widehat{\Gamma}_{n_1+1,n_2+1}^{2n_1,2n_2}$, $\Phi_{n_1,n_2}$, $\widehat{\Phi}_{n_1,n_2}$, $\phi_{n_1,n_2}^{j}$ and $\widehat{\psi}_{n_1,n_2}^{j^H}$ for  $j=1,2$ are defined in \eqref{eq4.10} and  \eqref{eq4.12}.

\begin{table}[htdp]
\begin{center}\begin{tabular}{| l || l |}\hline $(\widehat{\Gamma}_{n}^{2n-1})^{-1}=O(t^{-1})$ & $(\widehat{\Gamma}_{n}^{2n-1})^{-1}\Phi_n=O(t^{-1})$ \\\hline $\widehat{\Phi}_n(\widehat{\Gamma}_{n}^{2n-1})^{-1}=O(t^{-1})$ & $\widehat{\Phi}_n(\widehat{\Gamma}_{n}^{2n-1})^{-1}\Phi_n=O(t^{-1})$ \\\hline\hline
\multicolumn{2}{|c|}{$\widehat{\Phi}_n(\Phi_nW\pm \widehat{\Gamma}_{n}^{2n-1})^{-1}=O(t^{-1})$}\\\hline
\end{tabular}
\end{center}
\caption{The asymptotic behaviors as $t\rightarrow \pm \infty$.}\label{tab1}
\end{table}
\begin{table}[htdp]
\begin{center}\begin{tabular}{| l || l |}\hline  $(\Upsilon+\widehat{\Gamma}_{n_1+1,n_2+1}^{2n_1,2n_2})^{-1}=O(t^{-2})$  & $(\Upsilon+\widehat{\Gamma}_{n_1+1,n_2+1}^{2n_1,2n_2})^{-1}\phi_{n_1,n_2}^{j}=O(t^{-1})$ \\\hline $(\Upsilon+\widehat{\Gamma}_{n_1+1,n_2+1}^{2n_1,2n_2})^{-1}\Phi_{n_1,n_2}=O(t^{-2})$& $\widehat{\psi}_{n_1,n_2}^{j^H}(\Upsilon+\widehat{\Gamma}_{n_1+1,n_2+1}^{2n_1,2n_2})^{-1}=O(t^{-1})$\\\hline $\widehat{\Phi}_{n_1,n_2}(\Upsilon+\widehat{\Gamma}_{n_1+1,n_2+1}^{2n_1,2n_2})^{-1}=O(t^{-2})$ & $\widehat{\psi}_{n_1,n_2}^{j^H}(\Upsilon+\widehat{\Gamma}_{n_1+1,n_2+1}^{2n_1,2n_2})^{-1}\Phi_{n_1,n_2}=O(t^{-1})$\\\hline $\widehat{\Phi}_{n_1,n_2}(\Upsilon+\widehat{\Gamma}_{n_1+1,n_2+1}^{2n_1,2n_2})^{-1}\Phi_{n_1,n_2}=O(t^{-2})$ & $\widehat{\Phi}_{n_1,n_2}(\Upsilon+\widehat{\Gamma}_{n_1+1,n_2+1}^{2n_1,2n_2})^{-1}\phi_{n_1,n_2}^{j}=O(t^{-1})$\\\hline\hline
\multicolumn{2}{|c|}{$\widehat{\psi}_{n_1,n_2}^{j^H}(\Upsilon+\widehat{\Gamma}_{n_1+1,n_2+1}^{2n_1,2n_2})^{-1}\phi_{n_1,n_2}^{k}=\widehat{\psi}_{n_1,n_2}^{j^H}(\widehat{\Gamma}_{n_1+1,n_2+1}^{2n_1,2n_2})^{-1}\phi_{n_1,n_2}^{k}+O(t^{-1})$}\\\hline
\end{tabular}
\end{center}
\caption{The asymptotic behaviors as $t\rightarrow \pm \infty$, where $j,k\in \{1, 2\}$, $\Upsilon=\Phi_{n_1,n_2}W+\phi^1_{n_1,n_2}w^H$ and $W$ and $w$ are arbitrary constant matrix and vector, respectively.}\label{tab2}
\end{table}

In Theorem \ref{thm4.11}, we analyze the asymptotic behaviors of  $W(t)$ and $Q(t)^{-1}$ when  $\mathscr{H}$ is symplectically similar to $\mathfrak{J}_e$.  This proves assertion 2 in Theorem~\ref{thm4.9}.

\begin{Theorem}\label{thm4.11}
Suppose assumptions in Theorem~\ref{thm4.9} hold. Let $\mathfrak{J}_x=\mathfrak{J}_e$ and the symplectic matrix $\mathcal{S}$ have the form in \eqref{eq4.15}. If $U_1$ and $W_2$ are invertible, then $W(t)= U_2U_1^{-1}+O(t^{-1})$ and $Q(t)^{-1}=O(t^{-1})$ as $t\rightarrow \pm \infty$. Here $U_2U_1^{-1}$ is Hermitian.
\end{Theorem}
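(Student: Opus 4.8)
The plan is to reduce the statement to an explicit computation with $e^{\mathfrak{J}_e t}$ and then divide leading terms, exactly in the spirit of the proof of Theorem~\ref{thm4.10}, the new input being the polynomial estimates of Table~\ref{tab1}. First I would apply Lemma~\ref{lem4.4} with $k=n$ to the elementary block $\mathfrak{J}_e$ (with $R_e=N_n(i\alpha)$, $D_e=\beta e_ne_n^H$, $G_e=0$), which gives
\[
e^{\mathfrak{J}_e t}=\left[\begin{array}{c|c}e^{i\alpha t}\Phi_{n}&-e^{i\alpha t}\beta\,\widehat{\Gamma}_{n}^{2n-1}\\\hline 0&e^{i\alpha t}\Phi_{n}^{-H}\end{array}\right],
\]
since $\alpha\in\mathbb{R}$ makes $(e^{i\alpha t})^{-H}=e^{i\alpha t}$. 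Substituting this into \eqref{eq4.9} with the partition \eqref{eq4.15} of $\mathcal{S}$ and $[W_1^\top,W_2^\top]^\top=\mathcal{S}^{-1}[I,W_0^\top]^\top$, and writing $\widehat{\Phi}_n=\Phi_n^{-H}$ (Lemma~\ref{lem4.3}(ii)) together with
\[
A(t):=\Phi_n W_1 W_2^{-1}-\beta\,\widehat{\Gamma}_{n}^{2n-1},
\]
one obtains $Q(t)=e^{i\alpha t}\bigl(U_1 A(t)+V_1\widehat{\Phi}_n\bigr)W_2$ and $P(t)=e^{i\alpha t}\bigl(U_2 A(t)+V_2\widehat{\Phi}_n\bigr)W_2$; since $W_2$ is invertible this yields $W(t)=\bigl(U_2 A(t)+V_2\widehat{\Phi}_n\bigr)\bigl(U_1 A(t)+V_1\widehat{\Phi}_n\bigr)^{-1}$ and $Q(t)^{-1}=e^{-i\alpha t}W_2^{-1}\bigl(U_1 A(t)+V_1\widehat{\Phi}_n\bigr)^{-1}$.

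The matrix $A(t)$ carries the degree-$(2n-1)$ entries of $\widehat{\Gamma}_n^{2n-1}$ while $\Phi_n$ and $\widehat{\Phi}_n$ have degree $n-1$, so $A(t)$ is the dominant factor, and it has exactly the shape $\Phi_n W\pm\widehat{\Gamma}_n^{2n-1}$ covered by Table~\ref{tab1}. From $(\widehat{\Gamma}_n^{2n-1})^{-1}\Phi_n=O(t^{-1})$, the factorization $A(t)=-\beta\widehat{\Gamma}_n^{2n-1}\bigl(I-\beta(\widehat{\Gamma}_n^{2n-1})^{-1}\Phi_n W_1 W_2^{-1}\bigr)$ shows that $A(t)$ is invertible for $|t|$ large and, because $(\widehat{\Gamma}_n^{2n-1})^{-1}=O(t^{-1})$, that $A(t)^{-1}=O(t^{-1})$; moreover $\widehat{\Phi}_n A(t)^{-1}=O(t^{-1})$ by the last estimate in Table~\ref{tab1}. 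Hence $U_1 A(t)+V_1\widehat{\Phi}_n=\bigl(U_1+V_1\widehat{\Phi}_n A(t)^{-1}\bigr)A(t)$ is invertible for $|t|$ large (as $U_1$ is invertible), with $\bigl(U_1+V_1\widehat{\Phi}_n A(t)^{-1}\bigr)^{-1}=U_1^{-1}+O(t^{-1})$; consequently $Q(t)$ is eventually invertible, $Q(t)^{-1}=e^{-i\alpha t}W_2^{-1}A(t)^{-1}\bigl(U_1^{-1}+O(t^{-1})\bigr)=O(t^{-1})$, and
\[
W(t)=\bigl(U_2+V_2\widehat{\Phi}_n A(t)^{-1}\bigr)\bigl(U_1^{-1}+O(t^{-1})\bigr)=U_2U_1^{-1}+O(t^{-1}),
\]
again using $\widehat{\Phi}_n A(t)^{-1}=O(t^{-1})$. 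Since every estimate in Table~\ref{tab1} holds as $t\to\pm\infty$, the same computation applies to both tails, and — in contrast with the block $\mathfrak{J}_r$ of Theorem~\ref{thm4.10} — the limit is the same $U_2U_1^{-1}$ for $t\to+\infty$ and $t\to-\infty$, because $\alpha\in\mathbb{R}$ so there is no exponential dichotomy and the (two-sided) polynomial growth of $\widehat{\Gamma}_n^{2n-1}$ controls both. Finally $U_2U_1^{-1}$ is Hermitian because $\mathcal{S}$ is symplectic, its first block column spanning a Lagrangian subspace so that $U_1^HU_2=U_2^HU_1$, exactly as noted in the proof of Theorem~\ref{thm4.10}.

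The main obstacle is not conceptual but the bookkeeping in the second paragraph: $\widehat{\Phi}_n$ and $(\widehat{\Gamma}_n^{2n-1})^{-1}$ are individually of size $O(t^{n-1})$ and $O(t^{-1})$, and their naive product is only $O(t^{n-2})$, so the decay $O(t^{-1})$ is genuinely structural and must be extracted from the precise combinations recorded in Table~\ref{tab1} (proved in Lemma~\ref{lem5.4}) by routing every product so that $\widehat{\Phi}_n$ sits on the left and $\Phi_n$ on the right of inverse powers of $\widehat{\Gamma}_n^{2n-1}$. In particular it is essential to factor out $A(t)=\Phi_n W_1 W_2^{-1}-\beta\widehat{\Gamma}_n^{2n-1}$ — rather than $\widehat{\Gamma}_n^{2n-1}$ alone — from $U_1 A(t)+V_1\widehat{\Phi}_n$, so that the residual $V_1\widehat{\Phi}_n$ is absorbed through $\widehat{\Phi}_n A(t)^{-1}=O(t^{-1})$; factoring only $\widehat{\Gamma}_n^{2n-1}$ would leave the term $(\widehat{\Gamma}_n^{2n-1})^{-1}\widehat{\Phi}_n$, which is not controlled by Table~\ref{tab1}.
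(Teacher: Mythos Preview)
Your proof is correct and follows essentially the same route as the paper: apply Lemma~\ref{lem4.4} to write $e^{\mathfrak{J}_e t}$ explicitly, factor out the dominant matrix $A(t)=\Phi_n W_1 W_2^{-1}-\beta\widehat{\Gamma}_n^{2n-1}$ (the paper does this by right-multiplying $Y(t)$ by $W_2^{-1}A(t)^{-1}$), and then invoke the Table~\ref{tab1} estimate $\widehat{\Phi}_n(\Phi_n W\pm\widehat{\Gamma}_n^{2n-1})^{-1}=O(t^{-1})$ to reduce the expressions to $U_j+O(t^{-1})$. Your closing commentary on why the factorization must use $A(t)$ rather than $\widehat{\Gamma}_n^{2n-1}$ alone is a nice clarification that the paper leaves implicit.
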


\begin{proof}
Using the structure of $\mathfrak{J}_e$ in \eqref{eq4.14} and Lemma \ref{lem4.4}, it follows from \eqref{eq4.9} that
\begin{align*}
Y(t)&\equiv \left[\begin{array}{c}Q(t) \\P(t)\end{array}\right]=\left[\begin{array}{c|c}U_1 &V_1\\\hline U_2 & V_2\end{array}\right]
\left[\begin{array}{c|c}e^{i\alpha t}\Phi_{n}&-e^{i\alpha t}\beta \widehat{\Gamma}_{n}^{2n-1}\\\hline 0&(e^{i\alpha t}\Phi_{n})^{-H}\end{array}\right]\left[\begin{array}{c}W_1 \\W_2\end{array}\right]\\
&=e^{i\alpha t}\left[\begin{array}{c|c}U_1 &V_1\\\hline U_2 & V_2\end{array}\right]\left[\begin{array}{c}\Phi_nW_1-\beta \widehat{\Gamma}_{n}^{2n-1}W_2 \\\Phi^{-H}_nW_2\end{array}\right],\ \ \beta\in\{-1,1\}.
\end{align*}
Since $W_2$ is invertible and $\Phi^{-H}_n=\widehat{\Phi}_n$ (see Lemma \ref{lem4.3}), using Table \ref{tab1} we see that
\begin{align*}
\left[\begin{array}{c}Q(t) \\P(t)\end{array}\right]W_2^{-1}&(\Phi_nW_1W_2^{-1}-\beta \widehat{\Gamma}_{n}^{2n-1})^{-1}
=e^{i\alpha t}\left[\begin{array}{c|c}U_1 &V_1\\\hline U_2 & V_2\end{array}\right]\left[\begin{array}{c}I \\O(t^{-1})\end{array}\right]\\&=e^{i\alpha t}\left[\begin{array}{c}U_1+O(t^{-1}) \\U_2+O(t^{-1})\end{array}\right], \ \ \text{ as } t\rightarrow \pm \infty.
\end{align*}
From Table \ref{tab1}, we have $$W_2^{-1}(\Phi_nW_1W_2^{-1}-\beta \widehat{\Gamma}_{n}^{2n-1})^{-1}=W_2^{-1}(O(t^{-1})-\beta I)^{-1}(\widehat{\Gamma}_{n}^{2n-1})^{-1}=O(t^{-1}).$$
Then it holds that
\begin{align*}
Q(t)^{-1}=e^{-i\alpha t}W_2^{-1}(\Phi_nW_1W_2^{-1}-\beta \widehat{\Gamma}_{n}^{2n-1})^{-1}(U_1+O(t^{-1}))^{-1}=O(t^{-1}),
\end{align*}
as $t\rightarrow \pm \infty$. Consequently, we obtain that
\begin{align*}
W(t)=(U_2+O(t^{-1}))(U_1+O(t^{-1}))^{-1}=U_2U_1^{-1}+O(t^{-1}), \ \ \text{ as } t\rightarrow \pm \infty.
\end{align*}
Since $\mathcal{S}$ in \eqref{eq4.15} is symplectic, this implies that $U_2U_1^{-1}$ is Hermitian.
\end{proof}

Now, we consider the case that the Hamiltonian matrix $\mathscr{H}$ has a Hamiltonian Jordan canonical form $\mathfrak{J}_d$ or $\mathfrak{J}_c$ in \eqref{eq4.14}.
We first prove assertion 4 in Theorem~\ref{thm4.9}, i.e., the case $\mathfrak{J}=\mathfrak{J}_d$.  Accordingly, assertion 3 in Theorem~\ref{thm4.9}, i.e., the case $\mathfrak{J}=\mathfrak{J}_c$, is a quick consequence of assertion 4.  To this end, we need the following estimates.

\begin{Lemma}\label{lem4.12}
Suppose that $\mathscr{H}$ in \eqref{eq4.1} has a Hamiltonian Jordan canonical form $\mathfrak{J}_d$  in \eqref{eq4.14} and that the symplectic matrix $\mathcal{S}$ in \eqref{eq4.8} is of the form in \eqref{eq4.16}.
Let $Y(t)=[Q(t)^{\top},  P(t)^{\top}]^{\top}$ be the solution of IVP \eqref{eq4.1} and $[W_1^{\top}, W_2^{\top}]^{\top}=\mathcal{S}^{-1}[I,W_0]^{\top}$. Suppose that $W_2\in \mathbb{C}^{n\times n}$ is invertible and
\begin{align}\label{eq4.20}
\mathbf{W}:=W_1W_2^{-1}=\left[\begin{array}{c|c}\mathbf{W}_{1,1}&\mathbf{w}_{1,2}\\\hline \mathbf{w}_{2,1}&\mathbf{w}_{2,2}\end{array}\right],
\end{align}
where $\mathbf{W}_{1,1}\in \mathbb{C}^{(n_1+n_2)\times (n_1+n_2)}$, $\mathbf{w}_{1,2}, \mathbf{w}_{2,1}^H\in \mathbb{C}^{n_1+n_2}$ and $\mathbf{w}_{2,2}\in \mathbb{C}$. Let $i\gamma, i\delta$ be eigenvalues of $\mathscr{H}$,
\begin{subequations}\label{eq4.21}
\begin{align}\label{eq4.21a}
\left[\begin{array}{ll}f_u&g_u\\f_v&g_v\end{array}\right]=\frac{1}{2}\left[\begin{array}{ll}(-1)^{n_1}(\mathbf{w}_{2,2}-i\beta)&(-1)^{n_2}(\mathbf{w}_{2,2}+i\beta)\\(-1)^{n_1}(i\beta\mathbf{w}_{2,2}+1)&(-1)^{n_2}(-i\beta\mathbf{w}_{2,2}+1)\end{array}\right],
\end{align}
and let
\begin{align}\label{eq4.21b}
\mathbf{U}(t)=\left[\begin{array}{c|cc}U_1 &u_1&v_1\\\hline U_2& u_2&v_2\end{array}\right]\left[\begin{array}{c|c}I&0\\\hline 0&f_ue^{i\gamma t}+g_ue^{i\delta t}\\ 0&f_ve^{i\gamma t}+g_ve^{i\delta t} \end{array}\right]\in \mathbb{C}^{2n\times n}
\end{align}
be a quasiperiodic matrix.
\end{subequations}
Then there exists a nonsingular matrix $\Omega(t)$ of the form
\begin{align}\label{eq4.22}
\Omega(t)=\left[\begin{array}{c|c}O(t^{-2}) & O(t^{-1}) \\\hline 0 & 1\end{array}\right]
\end{align}
satisfying
\begin{align}\label{eq4.23}
Y(t)W_2^{-1}\Omega(t)=\mathbf{U}(t)+O(t^{-1}),
\end{align}
as $t\rightarrow\pm \infty$. Furthermore, if $\mathbf{w}_{2,2}$ is real then $\mathbf{U}(t)$ is $\mathcal{J}$-orthogonal for each $t$.
\end{Lemma}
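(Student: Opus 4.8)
The plan is to make $e^{\mathfrak{J}_d t}$ fully explicit via Lemma~\ref{lem4.5}, read off a column-wise normalizing factor, and reduce every estimate to Tables~\ref{tab1} and \ref{tab2}. Since $\mathcal{S}^{-1}[I,W_0^{\top}]^{\top}=[W_1^{\top},W_2^{\top}]^{\top}$ and $W_2$ is invertible, \eqref{eq4.9} gives $Y(t)W_2^{-1}=\mathcal{S}\,e^{\mathfrak{J}_d t}[\mathbf{W}^{\top},I]^{\top}$ with $\mathbf{W}=W_1W_2^{-1}$ partitioned as in \eqref{eq4.20}. Plugging in the block form of $e^{\mathfrak{J}_d t}$ from Lemma~\ref{lem4.5} (block sizes $n_1,n_2$, parameters $\gamma,\delta,\beta$) and multiplying out, the matrix $M(t):=e^{\mathfrak{J}_d t}[\mathbf{W}^{\top},I]^{\top}$, partitioned into row blocks of sizes $n_1+n_2,1,n_1+n_2,1$ and column blocks of sizes $n_1+n_2,1$, has top-left block $\Upsilon+\widehat{\Gamma}_{n_1+1,n_2+1}^{2n_1,2n_2}$ with $\Upsilon=\Phi_{n_1,n_2}\mathbf{W}_{1,1}+\phi^1_{n_1,n_2}\mathbf{w}_{2,1}$ (the notation of Table~\ref{tab2}), top-right block $\Phi_{n_1,n_2}\mathbf{w}_{1,2}+\phi^1_{n_1,n_2}\mathbf{w}_{2,2}+\phi^2_{n_1,n_2}$, third-row block $\widehat{\Phi}_{n_1,n_2}$ (and $0$ in the last column), and scalar entries built from $\widehat{\psi}^{j^H}_{n_1,n_2}$ and the bounded $\omega_{ij}$.

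I would then set
\begin{align*}
\Omega(t)=\left[\begin{array}{c|c}(\Upsilon+\widehat{\Gamma}_{n_1+1,n_2+1}^{2n_1,2n_2})^{-1} & -(\Upsilon+\widehat{\Gamma}_{n_1+1,n_2+1}^{2n_1,2n_2})^{-1}(\Phi_{n_1,n_2}\mathbf{w}_{1,2}+\phi^1_{n_1,n_2}\mathbf{w}_{2,2}+\phi^2_{n_1,n_2})\\\hline 0 & 1\end{array}\right].
\end{align*}
By the first entry of Table~\ref{tab2}, $\Upsilon+\widehat{\Gamma}_{n_1+1,n_2+1}^{2n_1,2n_2}$ is invertible for all large $|t|$ with inverse of order $O(t^{-2})$; combining this with $(\Upsilon+\widehat{\Gamma}_{n_1+1,n_2+1}^{2n_1,2n_2})^{-1}\Phi_{n_1,n_2}=O(t^{-2})$ and $(\Upsilon+\widehat{\Gamma}_{n_1+1,n_2+1}^{2n_1,2n_2})^{-1}\phi^j_{n_1,n_2}=O(t^{-1})$ shows the off-diagonal block is $O(t^{-1})$, so $\Omega(t)$ has the form \eqref{eq4.22} and is nonsingular (block triangular with invertible diagonal blocks). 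Write $Y(t)W_2^{-1}\Omega(t)=\mathcal{S}\big[\,M_{11}\Omega_{11},\;M_{11}\Omega_{12}+M_{12}\,\big]$. For the first $n_1+n_2$ columns, $\Omega_{11}=(\Upsilon+\widehat{\Gamma}_{n_1+1,n_2+1}^{2n_1,2n_2})^{-1}$ normalizes the top block to $I$, and the remaining contributions $\widehat{\Phi}_{n_1,n_2}(\Upsilon+\widehat{\Gamma})^{-1}$, $\widehat{\psi}^{j^H}_{n_1,n_2}(\Upsilon+\widehat{\Gamma})^{-1}$ and the $\mathbf{w}_{2,1}$-terms are all $O(t^{-1})$ by Tables~\ref{tab1}–\ref{tab2}; hence these columns tend to $[U_1^{\top},U_2^{\top}]^{\top}$, the first $n_1+n_2$ columns of $\mathbf{U}(t)$. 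For the last column, the choice of $\Omega_{12}$ annihilates the top $(n_1+n_2)$-block exactly, the middle block $\widehat{\Phi}_{n_1,n_2}\Omega_{12}$ is $O(t^{-1})$ (so the $V_1,V_2$ contributions drop out), and — using the last row of Table~\ref{tab2} to replace $(\Upsilon+\widehat{\Gamma})^{-1}$ by $(\widehat{\Gamma}_{n_1+1,n_2+1}^{2n_1,2n_2})^{-1}$ modulo $O(t^{-1})$ — the two scalar blocks reduce to $\omega_{11}\mathbf{w}_{2,2}+\omega_{12}=\tfrac12 e^{i\gamma t}(\mathbf{w}_{2,2}-i\beta)+\tfrac12 e^{i\delta t}(\mathbf{w}_{2,2}+i\beta)$ and $\omega_{21}\mathbf{w}_{2,2}+\omega_{22}=\tfrac12 e^{i\gamma t}(i\beta\mathbf{w}_{2,2}+1)+\tfrac12 e^{i\delta t}(-i\beta\mathbf{w}_{2,2}+1)$, shifted by the constants $\widehat{\psi}^{j^H}_{n_1,n_2}(\widehat{\Gamma}_{n_1+1,n_2+1}^{2n_1,2n_2})^{-1}\phi^k_{n_1,n_2}$. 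A direct evaluation of these constants — the powers of $t$ cancel, leaving quasiperiodic scalars whose $e^{i\gamma t}$- and $e^{i\delta t}$-coefficients carry factors $(-1)^{n_1}$ and $(-1)^{n_2}$ — then shows that the $u$- and $v$-coordinates of the limiting last column are $f_ue^{i\gamma t}+g_ue^{i\delta t}$ and $f_ve^{i\gamma t}+g_ve^{i\delta t}$ with $f_u,g_u,f_v,g_v$ precisely as in \eqref{eq4.21a}; together with the first-column estimate this yields \eqref{eq4.23}.

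For the $\mathcal{J}$-orthogonality, write $\mathbf{U}(t)=\mathcal{S}\widetilde{D}(t)$, where $\widetilde{D}(t)\in\mathbb{C}^{2n\times n}$ carries $[I_{n_1+n_2};0]$ in the $U$-coordinates, $f_ue^{i\gamma t}+g_ue^{i\delta t}$ resp. $f_ve^{i\gamma t}+g_ve^{i\delta t}$ in the $u$- resp. $v$-coordinate, and $0$ in the $V$-coordinates. Since $\mathcal{S}$ is symplectic, $\mathcal{S}^H\mathcal{J}\mathcal{S}=\mathcal{J}$, so $\mathbf{U}(t)^H\mathcal{J}\mathbf{U}(t)=\widetilde{D}(t)^H\mathcal{J}\widetilde{D}(t)$; computing this with $\mathcal{J}$ written in the $(U,u,V,v)$-ordering collapses it to the single scalar $\overline{f_u^{(t)}}f_v^{(t)}-\overline{f_v^{(t)}}f_u^{(t)}$ with $f_u^{(t)}=f_ue^{i\gamma t}+g_ue^{i\delta t}$, $f_v^{(t)}=f_ve^{i\gamma t}+g_ve^{i\delta t}$, all other entries vanishing identically. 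When $\mathbf{w}_{2,2}$ is real, substituting \eqref{eq4.21a} and using $\beta^2=1$ gives $\overline{f_u}f_v=\tfrac14 i\beta(\mathbf{w}_{2,2}^2+1)=-\overline{g_u}g_v$ and $\overline{f_u}g_v=g_u\overline{f_v}=\tfrac14(-1)^{n_1+n_2}\big(2\mathbf{w}_{2,2}-i\beta(\mathbf{w}_{2,2}^2-1)\big)$, so $\overline{f_u^{(t)}}f_v^{(t)}=\overline{f_u}f_v+\overline{g_u}g_v+\overline{f_u}g_v\,e^{i(\delta-\gamma)t}+g_u\overline{f_v}\,e^{-i(\delta-\gamma)t}$ is real for every $t$, whence $\mathbf{U}(t)^H\mathcal{J}\mathbf{U}(t)=0$.

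The main obstacle is the second step: one must track the precise asymptotic orders of roughly a dozen products of the $\Phi$-, $\Gamma$-, $\psi$-, $\omega$-blocks against $(\Upsilon+\widehat{\Gamma}_{n_1+1,n_2+1}^{2n_1,2n_2})^{-1}$ — which is exactly the content of Tables~\ref{tab1}–\ref{tab2} (established in the Appendix through Lemmas~\ref{lem5.4}–\ref{lem5.5}) — and, more delicately, carry out the explicit evaluation of $\widehat{\psi}^{j^H}_{n_1,n_2}(\widehat{\Gamma}_{n_1+1,n_2+1}^{2n_1,2n_2})^{-1}\phi^k_{n_1,n_2}$ carefully enough that the $(-1)^{n_1},(-1)^{n_2}$ signs in \eqref{eq4.21a} emerge and the limiting last column matches $\mathbf{U}(t)$ on the nose.
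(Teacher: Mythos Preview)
Your proposal follows essentially the same route as the paper: the same choice of $\Omega(t)$ (the paper's equation between \eqref{eq4.25} and \eqref{eq4.26}), the same appeal to Table~\ref{tab2} to get \eqref{eq4.22} and to reduce all off-diagonal contributions to $O(t^{-1})$, and the same identification of the surviving scalar blocks $\omega_{j1}\mathbf{w}_{2,2}+\omega_{j2}-\xi_j$ with $\xi_j=\widehat{\psi}_{n_1,n_2}^{j^H}(\widehat{\Gamma}_{n_1+1,n_2+1}^{2n_1,2n_2})^{-1}(\phi^1_{n_1,n_2}\mathbf{w}_{2,2}+\phi^2_{n_1,n_2})$. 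The ``direct evaluation'' you flag as the main obstacle is exactly the content of Theorem~\ref{thm5.3} in the Appendix, which shows $\kappa_{n}:=\widehat{\psi}_n^H(\widehat{\Gamma}_{n+1}^{2n})^{-1}\phi_n$ satisfies $1-\kappa_n=(-1)^n$; the paper organizes this step via the unitary $\Theta=\tfrac{\sqrt{2}}{2}\begin{smallmatrix}[-1&i\beta\\-1&-i\beta]\end{smallmatrix}$ (see \eqref{eq4.27}--\eqref{eq4.29}), which diagonalizes both the $\omega_{jk}$ and the $\xi_j$ simultaneously and makes the emergence of $(-1)^{n_1},(-1)^{n_2}$ transparent.

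For the $\mathcal{J}$-orthogonality you do a direct coefficient check, whereas the paper again routes through $\Theta$; both arguments are valid, but note a small slip in your expansion: the $e^{-i(\delta-\gamma)t}$-term in $\overline{f_u^{(t)}}f_v^{(t)}$ is $\overline{g_u}f_v$, not $g_u\overline{f_v}$. What you actually need (and what holds) is that $\overline{g_u}f_v=\overline{\,\overline{f_u}g_v\,}$, so the two cross terms are complex conjugates and their sum is real; combined with $\overline{f_u}f_v+\overline{g_u}g_v=0$ this gives the conclusion.
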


\begin{proof}
From Lemma \ref{lem4.5} and \eqref{eq4.9}, we have
\begin{align}\label{eq4.24}
Y(t)&\equiv \left[\begin{array}{c}Q(t) \\P(t)\end{array}\right]=\mathcal{S}
\left[\begin{array}{c|c}\mathbf{ B}&\mathbf{ D}\\\hline \mathbf{ G}&\mathbf{ E}\end{array}\right]\left[\begin{array}{c}W_1 \\W_2\end{array}\right],
\end{align}
where
\begin{align*}
\begin{array}{ll}
\mathbf{ B}=\left[\begin{array}{cc}\Phi_{n_1,n_2}& \phi_{n_1,n_2}^{1}  \\0&\omega_{11}\end{array}\right],&
\mathbf{ D}=\left[\begin{array}{cc}\widehat{\Gamma}_{n_1+1,n_2+1}^{2n_1,2n_2}& \phi_{n_1,n_2}^{2}  \\\widehat{\psi}_{n_1,n_2}^{1^H} &\omega_{12}\end{array}\right],\\
\mathbf{ G}=\left[\begin{array}{cc}0& 0 \\0 &\omega_{21}\end{array}\right],&\mathbf{ E}=\left[\begin{array}{cc}\widehat{\Phi}_{n_1,n_2}& 0 \\\widehat{\psi}_{n_1,n_2}^{2^H}&\omega_{22}\end{array}\right],
\end{array}
\end{align*}
and $\Phi_{n_1,n_2}$, $\widehat{\Phi}_{n_1,n_2}$, $\widehat{\Gamma}_{n_1+1,n_2+1}^{2n_1,2n_2}$, $\phi_{n_1,n_2}^{j} $, $\widehat{\psi}_{n_1,n_2}^{j^H}$ and $\omega_{ij}$ for $i,j\in \{1,2\}$ are given in \eqref{eq4.12}.  Denote
\begin{align}\label{eq4.25}
\begin{array}{ll}
\Upsilon_{n_1,n_2}&=\Phi_{n_1,n_2}\mathbf{W}_{1,1}+\phi^1_{n_1,n_2}\mathbf{w}_{2,1},\\
p_{n_1,n_2}&=\Phi_{n_1,n_2}\mathbf{w}_{1,2}+\phi^1_{n_1,n_2}\mathbf{w}_{2,2}+\phi_{n_1,n_2}^2,
\end{array}
\end{align}
where $\mathbf{W}_{1,1}$, $\mathbf{w}_{2,1}$, $\mathbf{w}_{1,2}$ and $\mathbf{w}_{2,2}$ are defined in \eqref{eq4.20}.
From \eqref{eq4.16}, \eqref{eq4.24} and \eqref{eq4.25}, we have
\begin{align*}
\left[\begin{array}{c}Q(t) \\P(t)\end{array}\right]W_2^{-1}=&\left[\begin{array}{c|c}U_1 &V_1\\\hline U_2& V_2\end{array}\right]
\left[\begin{array}{c|c}\Upsilon_{n_1,n_2}+\widehat{\Gamma}_{n_1+1,n_2+1}^{2n_1,2n_2}& p_{n_1,n_2}\\\hline \widehat{\Phi}_{n_1,n_2}&0\end{array}\right]\nonumber\\
&+\left[\begin{array}{c|c}u_1 &v_1\\\hline u_2& v_2\end{array}\right]
\left[\begin{array}{c|c}\widehat{\psi}_{n_1,n_2}^{1^H}+\omega_{11}\mathbf{w}_{2,1}& \omega_{11}\mathbf{w}_{2,2}+\omega_{12}\\\hline \widehat{\psi}_{n_1,n_2}^{2^H}+\omega_{21}\mathbf{w}_{2,1}&\omega_{21}\mathbf{w}_{2,2}+\omega_{22}\end{array}\right].
\end{align*}
Let
\begin{align*}
\Omega(t)=\left[\begin{array}{c|c}(\Upsilon_{n_1,n_2}+\widehat{\Gamma}_{n_1+1,n_2+1}^{2n_1,2n_2})^{-1}& -(\Upsilon_{n_1,n_2}+\widehat{\Gamma}_{n_1+1,n_2+1}^{2n_1,2n_2})^{-1}p_{n_1,n_2}\\\hline 0&1\end{array}\right].
\end{align*}
By a direct computation from Table \ref{tab2} and \eqref{eq4.25}, we obtain that
$\Omega(t)=\left[\begin{array}{c|c}O(t^{-2}) & O(t^{-1}) \\\hline 0 & 1\end{array}\right]$
and
\begin{align}\label{eq4.26}
\left[\begin{array}{c}Q(t) \\P(t)\end{array}\right]W_2^{-1}\Omega(t)=&\left[\begin{array}{c|c}U_1 &V_1\\\hline U_2& V_2\end{array}\right]
\left[\begin{array}{c|c}I&0\\\hline O(t^{-2})&O(t^{-1})\end{array}\right]\nonumber\\
&+\left[\begin{array}{c|c}u_1 &v_1\\\hline u_2& v_2\end{array}\right]
\left[\begin{array}{c|c}O(t^{-1})& \omega_{11}\mathbf{w}_{2,2}+\omega_{12}-\xi_1+O(t^{-1})\\\hline O(t^{-1})&\omega_{21}\mathbf{w}_{2,2}+\omega_{22}-\xi_2+O(t^{-1})\end{array}\right]\nonumber\\
=&\left[\begin{array}{c|cc}U_1 &u_1&v_1\\\hline U_2& u_2&v_2\end{array}\right]\left[\begin{array}{c|c}I&0\\\hline 0&\omega_{11}\mathbf{w}_{2,2}+\omega_{12}-\xi_1\\ 0&\omega_{21}\mathbf{w}_{2,2}+\omega_{22}-\xi_2 \end{array}\right]+O(t^{-1}),
\end{align}
as $t\rightarrow\pm \infty$, where $\xi_j=\widehat{\psi}_{n_1,n_2}^{j^H}(\widehat{\Gamma}_{n_1+1,n_2+1}^{2n_1,2n_2})^{-1}(\phi^1_{n_1,n_2}\mathbf{w}_{2,2}+\phi_{n_1,n_2}^2)$ for $j=1,2$. Let
\begin{align}\label{eq4.27}
\Theta=\frac{\sqrt{2}}{2}\left[\begin{array}{cc}-1& i\beta\\-1&-i\beta\end{array}\right],
\end{align}
where $\beta\in\{-1,1\}$. Then $\Theta$ is unitary. From \eqref{eq4.12} we have
\begin{align*}
\left[\phi^1_{n_1,n_2}\ |\ \phi^2_{n_1,n_2}\right]=\left[\begin{array}{c|c}e^{i\gamma t}\phi_{n_1}&0\\\hline0&e^{i\delta t}\phi_{n_2}\end{array}\right]\Theta,\
\left[\begin{array}{c}\widehat{\psi}_{n_1,n_2}^{1^H}\\\hline\widehat{\psi}_{n_1,n_2}^{2^H}\end{array}\right]=\Theta^H\left[\begin{array}{c|c}-i\beta e^{i\gamma t}\widehat{\psi}_{n_1}^{H}&0\\\hline0&i\beta e^{i\delta t}\widehat{\psi}_{n_2}^{H}\end{array}\right].
\end{align*}
Then
\begin{align}\label{eq4.28}
&\left[\begin{array}{c}\xi_1 \\\xi_2\end{array}\right]=\Theta^H\left[\begin{array}{c|c}\widehat{\psi}_{n_1}^{H}&0\\\hline0&\widehat{\psi}_{n_2}^{H}\end{array}\right]\left[\begin{array}{c|c}\widehat{\Gamma}_{n_1+1}^{2n_1}&0\\\hline0&\widehat{\Gamma}_{n_2+1}^{2n_2}\end{array}\right]^{-1}
\left[\begin{array}{c|c}e^{i\gamma t}\phi_{n_1}&0\\\hline0&e^{i\delta t}\phi_{n_2}\end{array}\right]\Theta\left[\begin{array}{c}\mathbf{w}_{2,2} \\1\end{array}\right]\nonumber\\
&\ \ \ \ \ \ \ \ \ =\Theta^H\left[\begin{array}{c|c}\kappa_{n_1}&0\\\hline0&\kappa_{n_2}\end{array}\right]
\left[\begin{array}{c|c}e^{i\gamma t}&0\\\hline0&e^{i\delta t}\end{array}\right]\Theta\left[\begin{array}{c}\mathbf{w}_{2,2} \\1\end{array}\right],\\
&\left[\begin{array}{c}\omega_{11}\mathbf{w}_{2,2}+\omega_{12} \\\omega_{21}\mathbf{w}_{2,2}+\omega_{22}\end{array}\right]=\Theta^H\left[\begin{array}{c|c}e^{i\gamma t}&0\\\hline0&e^{i\delta t}\end{array}\right]\Theta\left[\begin{array}{c}\mathbf{w}_{2,2} \\1\end{array}\right],\nonumber
\end{align}
where
$\kappa_{n_j}=\widehat{\psi}_{n_j}^{H}(\widehat{\Gamma}_{n_j+1}^{2n_j})^{-1}\phi_{n_j}$ for $j=1,2$. From Theorem~\ref{thm5.3}, we have $1-\kappa_{n_j}=(-1)^{n_j}$ for $j=1,2$.
It follows from \eqref{eq4.27} and \eqref{eq4.28} that
\begin{align}\label{eq4.29}
\left[\begin{array}{c}\omega_{11}\mathbf{w}_{2,2}+\omega_{12}-\xi_1 \\\omega_{21}\mathbf{w}_{2,2}+\omega_{22}-\xi_2\end{array}\right]&=\Theta^H\left[\begin{array}{c|c}(-1)^{n_1}e^{i\gamma t}&0\\\hline0&(-1)^{n_2}e^{i\delta t}\end{array}\right]\Theta\left[\begin{array}{c}\mathbf{w}_{2,2} \\1\end{array}\right]\nonumber\\
&=\frac{1}{2}\left[\begin{array}{cc}-1& -1\\-i\beta&i\beta\end{array}\right] \left[\begin{array}{c|c}(-1)^{n_1}e^{i\gamma t}&0\\\hline0&(-1)^{n_2}e^{i\delta t}\end{array}\right]\left[\begin{array}{cc}-1& i\beta\\-1&-i\beta\end{array}\right]\left[\begin{array}{c}\mathbf{w}_{2,2} \\1\end{array}\right]\nonumber\\
&=\frac{1}{2}\left[\begin{array}{l}(-1)^{n_1}(\mathbf{w}_{2,2}-i\beta)e^{i\gamma t}+(-1)^{n_2}(\mathbf{w}_{2,2}+i\beta)e^{i\delta t}\\(-1)^{n_1}(i\beta\mathbf{w}_{2,2}+1)e^{i\gamma t}+(-1)^{n_2}(-i\beta\mathbf{w}_{2,2}+1)e^{i\delta t}\end{array}\right]\nonumber\\
&\equiv \left[\begin{array}{ll}f_u&g_u\\f_v&g_v\end{array}\right]\left[\begin{array}{c} e^{i\gamma t}\\e^{i\delta t}\end{array}\right].
\end{align}
Assertion \eqref{eq4.23} follows from \eqref{eq4.26} and \eqref{eq4.29}.

Suppose that $\mathbf{w}_{2,2}$ is real, we show that $\mathbf{U}(t)$ is $\mathcal{J}$-orthogonal. Let
\begin{align*}
z(t)=\left[\begin{array}{c}(f_ue^{i\gamma t}+g_ue^{i\delta t})u_1+(f_ve^{i\gamma t}+g_ve^{i\delta t})v_1 \\(f_ue^{i\gamma t}+g_ue^{i\delta t})u_2+(f_ve^{i\gamma t}+g_ve^{i\delta t})v_2\end{array}\right].
\end{align*}
Since the matrix $\mathcal{S}$ given in \eqref{eq4.16} is symplectic, it suffices to show that  $z(t)^H\mathcal{J}z(t)=0$. From  \eqref{eq4.29}, we have
\begin{align*}
z(t)^H&\mathcal{J}z(t)=\left[\begin{array}{c}f_ue^{i\gamma t}+g_ue^{i\delta t} \\f_ve^{i\gamma t}+g_ve^{i\delta t}\end{array}\right]^H \left[\begin{array}{cc}u_1 & v_1 \\u_2 & v_2\end{array}\right]^H \mathcal{J}\left[\begin{array}{cc}u_1 & v_1 \\u_2 & v_2\end{array}\right]\left[\begin{array}{c}f_ue^{i\gamma t}+g_ue^{i\delta t} \\f_ve^{i\gamma t}+g_ve^{i\delta t}\end{array}\right]\\
&=[\mathbf{w}_{2,2},1] \Theta^H\left[\begin{array}{ll}(-1)^{n_1}e^{-i\gamma t}&0\\0&(-1)^{n_2}e^{-i\delta t}\end{array}\right]\Theta \left[\begin{array}{cc}0&1\\-1&0\end{array}\right]\\ &\ \ \ \ \ \Theta^H\left[\begin{array}{ll}(-1)^{n_1}e^{i\gamma t}&0\\0&(-1)^{n_2}e^{i\delta t}\end{array}\right]\Theta\left[\begin{array}{c}\mathbf{w}_{2,2} \\1\end{array}\right]\\
&=i \beta  [\mathbf{w}_{2,2},1] \Theta^H\left[\begin{array}{cc}1&0\\0&-1\end{array}\right]\Theta\left[\begin{array}{c}\mathbf{w}_{2,2} \\1\end{array}\right]=i \beta[\mathbf{w}_{2,2},1] \left[\begin{array}{cc}0&-i\beta\\i\beta&0\end{array}\right]\left[\begin{array}{c}\mathbf{w}_{2,2} \\1\end{array}\right]\\&=[\mathbf{w}_{2,2},1]\left[\begin{array}{cc}0&1\\-1&0\end{array}\right]\left[\begin{array}{c}\mathbf{w}_{2,2} \\1\end{array}\right]=0,
\end{align*}
for each $t$. Hence, the quasiperiodic matrix $\mathbf{U}(t)$ is $\mathcal{J}$-orthogonal for each $t$.
\end{proof}

\begin{Remark}\label{rem4.1}
Since the initial matrix $W_0$ of the RDE is Hermitian,
\begin{align*}
\left[\begin{array}{c}W_1 \\W_2\end{array}\right]^H\mathcal{J}\left[\begin{array}{c}W_1 \\W_2\end{array}\right]=\left[I, W_0\right]\mathcal{S}^{-H}\mathcal{J}\mathcal{S}^{-1}\left[\begin{array}{c}I \\W_0\end{array}\right]=\left[I, W_0\right]\mathcal{J}\left[\begin{array}{c}I \\W_0\end{array}\right]=0,
\end{align*}
that is, the column space of $[W_1^{\top}, W_2^{\top}]^{\top}$  is a Lagrangian subspace. Hence, $\mathbf{W}$ defined in \eqref{eq4.20} is Hermitian and $\mathbf{w}_{2,2}$ is real. Notice that it follows from \eqref{eq4.21a}  that $f_u$, $f_v$, $g_u$ and $g_v$ are constants and $|f_u|=|f_v|=|g_u|=|g_v|$.
\end{Remark}

Partition $\mathbf{U}(t)$ in \eqref{eq4.21b} as $\mathbf{U}(t)=[\mathbf{U}_1(t)^{\top}, \mathbf{U}_2(t)^{\top}]^{\top}$, where
\begin{align}\label{eq4.30}
\begin{array}{l}
\mathbf{U}_1(t)=\left[U_1|(f_ue^{i\gamma t}+g_ue^{i\delta t})u_1+(f_ve^{i\gamma t}+g_ve^{i\delta t})v_1\right],\\
\mathbf{U}_2(t)=\left[U_2|(f_ue^{i\gamma t}+g_ue^{i\delta t})u_2+(f_ve^{i\gamma t}+g_ve^{i\delta t})v_2\right].
\end{array}
\end{align}
Now we are ready to prove assertion 4 in Theorem~\ref{thm4.9}.

\begin{Theorem}\label{thm4.13}
Suppose assumptions in Theorem \ref{thm4.9} hold and $W_2$ is invertible. Let $\mathfrak{J}_x=\mathfrak{J}_d$ and partion the symplectic matrix $\mathcal{S}$ as the form in \eqref{eq4.16}.
Denote
\begin{align}\label{eq4.31}
\begin{array}{ll}
\mathbf{U}_1=[U_1, f_uu_1+f_vv_1],& \mathbf{U}_2=[U_2, f_uu_2+f_vv_2]\in \mathbb{C}^{n\times n},\\
\zeta_1=g_uu_1+g_vv_1,&\zeta_2=g_uu_2+g_vv_2\in \mathbb{C}^{n},
\end{array}
\end{align}
where $U_j$, $u_j$, $v_j$, for $j=1,2$, are given in \eqref{eq4.16} and $f_u$, $f_v$, $g_u$, $g_v$ are defined in  \eqref{eq4.21a}. Then $\mathbf{U}_j$ and $\zeta_j$ for $j=1,2$  are independent of $t$. If $\mathbf{U}_1$ is invertible, then
\begin{align*}
&W(t)=\mathbf{U}_2\mathbf{U}_1^{-1}+\frac{e^{i\theta t}}{1+e^{i\theta t}e_n^H\mathbf{U}_1^{-1}\zeta_1+O(t^{-1})}\left[\left(\zeta_2-\mathbf{U}_2\mathbf{U}_1^{-1}\zeta_1\right)e_n^H\mathbf{U}_1^{-1}+O(t^{-1})\right]+O(t^{-1}),\\
&Q(t)^{-1}=\frac{e^{-i\gamma t}}{1+e^{i\theta t}e_n^H\mathbf{U}_1^{-1}\zeta_1+O(t^{-1})}\left[W_2^{-1}e_ne_n^{H}\mathbf{U}_1^{-1}+O(t^{-1})\right]+O(t^{-1}),
\end{align*}
as $t\rightarrow \pm \infty$, where $\theta=\delta-\gamma$.
\end{Theorem}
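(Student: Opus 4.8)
The proof rests entirely on Lemma~\ref{lem4.12}: by \eqref{eq4.23} we have $Y(t)W_2^{-1}\Omega(t)=\mathbf{U}(t)+O(t^{-1})$ as $t\to\pm\infty$, where $\Omega(t)$ has the block form \eqref{eq4.22} and $\mathbf{U}(t)=[\mathbf{U}_1(t)^{\top},\mathbf{U}_2(t)^{\top}]^{\top}$ is given by \eqref{eq4.30}. Throughout, $t$ ranges over $\mathcal{T}_W$, on which $Q(t)$ is invertible. The plan is to strip off the scalar exponential from $\mathbf{U}_j(t)$, reduce the inversions to a rank-one (Sherman--Morrison) update of a constant matrix, and then read off $Q(t)^{-1}$ and $W(t)=P(t)Q(t)^{-1}$.

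First I would separate the factor $e^{i\gamma t}$ from the quasiperiodic matrix $\mathbf{U}_j(t)$. Writing $\theta=\delta-\gamma$ and using the constants in \eqref{eq4.31}, a direct inspection of \eqref{eq4.30} gives the factorisation
\[
\mathbf{U}_j(t)=\big(\mathbf{U}_j+e^{i\theta t}\zeta_j e_n^{H}\big)\,\mathrm{diag}(I_{n-1},e^{i\gamma t}),\qquad j=1,2,
\]
exhibiting $\mathbf{U}_j(t)$ as a fixed rank-one perturbation $e^{i\theta t}\zeta_j e_n^{H}$ of the constant matrix $\mathbf{U}_j$, up to a unitary diagonal right factor. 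Since that factor and its inverse are bounded, reading off the two block rows of \eqref{eq4.23} and multiplying on the right by $\mathrm{diag}(I_{n-1},e^{-i\gamma t})$ yields
\[
Q(t)W_2^{-1}\Omega(t)\,\mathrm{diag}(I_{n-1},e^{-i\gamma t})=\mathbf{U}_1+e^{i\theta t}\zeta_1e_n^{H}+O(t^{-1}),
\]
together with the analogous identity with $(Q,\mathbf{U}_1,\zeta_1)$ replaced by $(P,\mathbf{U}_2,\zeta_2)$. Because $Q(t)$, $W_2$, $\Omega(t)$ are invertible on $\mathcal{T}_W$, this also shows that $B_1(t):=\mathbf{U}_1+e^{i\theta t}\zeta_1e_n^{H}+O(t^{-1})$ is invertible for $|t|$ large on $\mathcal{T}_W$; that $\mathbf{U}_j$ and $\zeta_j$ do not depend on $t$ is clear from \eqref{eq4.31}, since $U_j,u_j,v_j$ are blocks of the fixed symplectic $\mathcal{S}$ and $f_u,f_v,g_u,g_v$ in \eqref{eq4.21a} depend only on $\beta$, $n_1$, $n_2$ and the number $\mathbf{w}_{2,2}$ (real, by Remark~\ref{rem4.1}).

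Next I would apply Sherman--Morrison to $B_1(t)=\widetilde{A}(t)+e^{i\theta t}\zeta_1e_n^{H}$ with $\widetilde{A}(t)=\mathbf{U}_1+O(t^{-1})$, invertible for $|t|$ large since $\mathbf{U}_1$ is invertible by hypothesis. Put $d(t):=1+e^{i\theta t}e_n^{H}\widetilde{A}(t)^{-1}\zeta_1=1+e^{i\theta t}e_n^{H}\mathbf{U}_1^{-1}\zeta_1+O(t^{-1})$, which is nonzero precisely when $B_1(t)$, hence $Q(t)$, is invertible. Then $B_1(t)^{-1}=\widetilde{A}(t)^{-1}-d(t)^{-1}e^{i\theta t}\widetilde{A}(t)^{-1}\zeta_1e_n^{H}\widetilde{A}(t)^{-1}$, and multiplying through by $d(t)$ and cancelling the two $e^{i\theta t}e_n^{H}\mathbf{U}_1^{-1}\zeta_1e_n^{H}\mathbf{U}_1^{-1}$ contributions produces the key cancellation $e_n^{H}B_1(t)^{-1}=d(t)^{-1}\big(e_n^{H}\mathbf{U}_1^{-1}+O(t^{-1})\big)$.

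Finally I would assemble the two formulas. For $Q(t)^{-1}=W_2^{-1}\Omega(t)\,\mathrm{diag}(I_{n-1},e^{-i\gamma t})B_1(t)^{-1}$, the block form \eqref{eq4.22} gives $\Omega(t)\,\mathrm{diag}(I_{n-1},e^{-i\gamma t})=e^{-i\gamma t}e_ne_n^{H}+R(t)$ with $R(t)=O(t^{-1})$ and zero last row; combined with the cancellation for $e_n^{H}B_1(t)^{-1}$ this produces the stated $Q(t)^{-1}$ with $K_Q=W_2^{-1}e_ne_n^{H}\mathbf{U}_1^{-1}$ (manifestly rank one), the residual $W_2^{-1}R(t)B_1(t)^{-1}$ being of the form $O(t^{-1})+e^{i\theta t}d(t)^{-1}O(t^{-1})$, which by $|e^{i\delta t}|=1$ splits into the displayed $O(t^{-1})$ term and a piece absorbed into $\tfrac{e^{-i\gamma t}}{d(t)}O(t^{-1})$. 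For $W(t)=\big(\mathbf{U}_2+e^{i\theta t}\zeta_2e_n^{H}+O(t^{-1})\big)B_1(t)^{-1}$, expanding with $B_1(t)^{-1}=\mathbf{U}_1^{-1}+O(t^{-1})-d(t)^{-1}e^{i\theta t}\big(\mathbf{U}_1^{-1}\zeta_1e_n^{H}\mathbf{U}_1^{-1}+O(t^{-1})\big)$ and the identity for $e_n^{H}B_1(t)^{-1}$: the constant part is $\mathbf{U}_2\mathbf{U}_1^{-1}$; the $e^{i\theta t}d(t)^{-1}$ contributions from $\mathbf{U}_2B_1(t)^{-1}$ and from $e^{i\theta t}\zeta_2e_n^{H}B_1(t)^{-1}$ combine into $\tfrac{e^{i\theta t}}{d(t)}K_W$ with the rank-one matrix $K_W=\big(\zeta_2-\mathbf{U}_2\mathbf{U}_1^{-1}\zeta_1\big)e_n^{H}\mathbf{U}_1^{-1}$; and every remaining term is either genuinely $O(t^{-1})$ or, again by $|e^{i\theta t}|=1$, of the form $\tfrac{e^{i\theta t}}{d(t)}O(t^{-1})$, giving exactly the claimed three-term expression. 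The main obstacle is precisely this bookkeeping in the last step: one must keep careful track of which error terms stay bounded $O(t^{-1})$ and which acquire the possibly unbounded factor $d(t)^{-1}$, and repeatedly use the unimodularity of $e^{i\gamma t}$, $e^{i\delta t}$, $e^{i\theta t}$ to recast stray $O(t^{-1})d(t)^{-1}$ contributions as $\tfrac{e^{i\theta t}}{d(t)}O(t^{-1})$ (resp.\ $\tfrac{e^{-i\gamma t}}{d(t)}O(t^{-1})$), so that the estimates are uniform over all of $\mathcal{T}_W$, including near the poles of $W(t)$.
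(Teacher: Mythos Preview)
Your proposal is correct and follows essentially the same route as the paper's own proof: invoke Lemma~\ref{lem4.12}, factor $\mathbf{U}_j(t)=(\mathbf{U}_j+e^{i\theta t}\zeta_je_n^{H})(I_{n-1}\oplus e^{i\gamma t})$, apply the Sherman--Morrison formula to the rank-one perturbation of $\mathbf{U}_1+O(t^{-1})$, and then read off $W(t)$ and $Q(t)^{-1}$ from the resulting expansion. Your explicit bookkeeping of which $O(t^{-1})$ terms acquire the factor $d(t)^{-1}$ and which do not is slightly more detailed than the paper's presentation, but the underlying argument is identical.
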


\begin{proof}
Let $\theta=\delta-\gamma$. From \eqref{eq4.30} and \eqref{eq4.31}, we have
\begin{align*}
\mathbf{U}_1(t)&=\left[U_1|(f_u+g_ue^{i\theta t})u_1+(f_v+g_ve^{i\theta t})v_1\right](I\oplus e^{i\gamma t})\\&=(\mathbf{U}_1+e^{i\theta t}\zeta_1e_n^{H})(I\oplus e^{i\gamma t}),\\
\mathbf{U}_2(t)&=\left[U_2|(f_u+g_ue^{i\theta t})u_2+(f_v+g_ve^{i\theta t})v_2\right](I\oplus e^{i\gamma t})
\\&=(\mathbf{U}_2+e^{i\theta t}\zeta_2e_n^{H})(I\oplus e^{i\gamma t}).
\end{align*}
From \eqref{eq4.23} it follows that there exist matrix functions $M_1^{\varepsilon}(t)$ and $M_2^{\varepsilon}(t)$ such that
\begin{align}\label{eq4.32}
\begin{array}{l}
Q(t)W_2^{-1}\Omega(t)(I\oplus e^{-i\gamma t})=(\mathbf{U}_1+e^{i\theta t}\zeta_1e_n^{H})+M_1^{\varepsilon}(t),\\
P(t)W_2^{-1}\Omega(t)(I\oplus e^{-i\gamma t})=(\mathbf{U}_2+e^{i\theta t}\zeta_2e_n^{H})+M_2^{\varepsilon}(t),
\end{array}
\end{align}
where $M_1^{\varepsilon}(t)=O(t^{-1})$ and $M_2^{\varepsilon}(t)=O(t^{-1})$ as $t\rightarrow \pm \infty$. Then
\begin{align}
W(t)=P(t)Q(t)^{-1}&=\left(\mathbf{U}_2+e^{i\theta t}\zeta_2e_n^{H}\right)\left[(\mathbf{U}_1+M_1^{\varepsilon}(t))+e^{i\theta t}\zeta_1e_n^{H}\right]^{-1}\notag\\
&\ \ \ \ +M_2^{\varepsilon}(t)\left[(\mathbf{U}_1+M_1^{\varepsilon}(t))+e^{i\theta t}\zeta_1e_n^{H}\right]^{-1}\label{eq4.33}.
\end{align}
Let
\begin{align}\label{eq4.34}
\mathbf{U}^{\varepsilon}_1(t)\equiv \mathbf{U}_1+M_1^{\varepsilon}(t).
\end{align}
Since $\mathbf{U}_1$ is invertible and $M_1^{\varepsilon}(t)=O(t^{-1})$ as $t\rightarrow \pm \infty$, $\mathbf{U}^{\varepsilon}_1(t)$ is invertible for sufficiently large $|t|$ and $\mathbf{U}^{\varepsilon}_1(t)^{-1}=\mathbf{U}_1^{-1}+O(t^{-1})$.
Applying  the Sherman-Morrison-Woodbury formula, we have
\begin{align}\label{eq4.35}
\left(\mathbf{U}^{\varepsilon}_1(t)+e^{i\theta t}\zeta_1e_n^{H}\right)^{-1}=\mathbf{U}^{\varepsilon}_1(t)^{-1}-\frac{e^{i\theta t}}{1+e^{i\theta t}e_n^H\mathbf{U}^{\varepsilon}_1(t)^{-1}\zeta_1}\mathbf{U}^{\varepsilon}_1(t)^{-1}\zeta_1e_n^H\mathbf{U}^{\varepsilon}_1(t)^{-1}.
\end{align}
Since  $M_2^{\varepsilon}(t)=O(t^{-1})$ as $t\rightarrow \pm \infty$,
\begin{align}\label{eq4.36}
M_2^{\varepsilon}(t)\left[(\mathbf{U}_1+M_1^{\varepsilon}(t))+e^{i\theta t}\zeta_1e_n^{H}\right]^{-1}=O\left(\frac{t^{-1}e^{i\theta t}}{1+e^{i\theta t}e_n^H\mathbf{U}^{\varepsilon}_1(t)^{-1}\zeta_1}\right).
\end{align}
Plugging \eqref{eq4.35} and \eqref{eq4.36} into \eqref{eq4.33}, it turns out
\begin{align*}
W(t)&=\mathbf{U}_2\mathbf{U}^{\varepsilon}_1(t)^{-1}+e^{i\theta t}(\zeta_2e_n^{H}\mathbf{U}^{\varepsilon}_1(t)^{-1})\\&\ \ \ -\frac{e^{i\theta t}}{1+e^{i\theta t}e_n^H\mathbf{U}^{\varepsilon}_1(t)^{-1}\zeta_1}e^{i\theta t}\zeta_2e_n^{H}\left(\mathbf{U}^{\varepsilon}_1(t)^{-1}\zeta_1e_n^H\mathbf{U}^{\varepsilon}_1(t)^{-1}\right)\\&\ \ \ -\frac{e^{i\theta t}}{1+e^{i\theta t}e_n^H\mathbf{U}^{\varepsilon}_1(t)^{-1}\zeta_1}\mathbf{U}_2\left(\mathbf{U}^{\varepsilon}_1(t)^{-1}\zeta_1e_n^H\mathbf{U}^{\varepsilon}_1(t)^{-1}\right)+O\left(\frac{t^{-1}e^{i\theta t}}{1+e^{i\theta t}e_n^H\mathbf{U}^{\varepsilon}_1(t)^{-1}\zeta_1}\right)\\
&=\mathbf{U}_2\mathbf{U}^{\varepsilon}_1(t)^{-1}+\frac{e^{i\theta t}}{1+e^{i\theta t}e_n^H\mathbf{U}^{\varepsilon}_1(t)^{-1}\zeta_1}\left[\left(\zeta_2-\mathbf{U}_2\mathbf{U}^{\varepsilon}_1(t)^{-1}\zeta_1\right)e_n^{H}\mathbf{U}^{\varepsilon}_1(t)^{-1}+O(t^{-1})\right]\\
&=\mathbf{U}_2\mathbf{U}_1^{-1}+\frac{e^{i\theta t}}{1+e^{i\theta t}e_n^H\mathbf{U}_1^{-1}\zeta_1+O(t^{-1})}\left[\left(\zeta_2-\mathbf{U}_2\mathbf{U}_1^{-1}\zeta_1\right)e_n^H\mathbf{U}_1^{-1}+O(t^{-1})\right] +O(t^{-1}),
\end{align*}
as $t\rightarrow \pm \infty$.

From \eqref{eq4.32}, we have $Q(t)^{-1}=W_2^{-1}\Omega(t)(I\oplus e^{-i\gamma t})(\mathbf{U}^{\varepsilon}_1(t)+e^{i\theta t}\zeta_1e_n^{H})^{-1}$, where $\mathbf{U}^{\varepsilon}_1(t)$ is defined in \eqref{eq4.34}. From \eqref{eq4.22}, we obtain that  $\Omega(t)(I\oplus e^{-i\gamma t})=e^{-i\gamma t}e_ne_n^H+O(t^{-1})$ as $t\rightarrow \pm \infty$. Therefore, by  \eqref{eq4.35} and \eqref{eq4.36}, we have
\begin{align*}
Q(t)^{-1}&=e^{-i\gamma t}W_2^{-1}\left(e_ne_n^{H}+O(t^{-1})\right)\left(\mathbf{U}^{\varepsilon}_1(t)+e^{i\theta t}\zeta_1e_n^{H}\right)^{-1}\\
=&e^{-i\gamma t}W_2^{-1}e_ne_n^{H}\left[\mathbf{U}^{\varepsilon}_1(t)^{-1}-\frac{e^{i\theta t}}{1+e^{i\theta t}e_n^H\mathbf{U}^{\varepsilon}_1(t)^{-1}\zeta_1}\left(\mathbf{U}^{\varepsilon}_1(t)^{-1}\zeta_1e_n^H\mathbf{U}^{\varepsilon}_1(t)^{-1}\right)\right]\\&+O\left(\frac{t^{-1}e^{i\theta t}}{1+e^{i\theta t}e_n^H\mathbf{U}^{\varepsilon}_1(t)^{-1}\zeta_1}\right)\\
=&e^{-i\gamma t}W_2^{-1}e_ne_n^{H}\left[\mathbf{U}_1^{-1}-\frac{e^{i\theta t}}{1+e^{i\theta t}e_n^H\mathbf{U}_1^{-1}\zeta_1+O(t^{-1})}\left(\mathbf{U}_1^{-1}\zeta_1e_n^H\mathbf{U}_1^{-1}+O(t^{-1})\right)+O(t^{-1})\right]\\
=&e^{-i\gamma t}W_2^{-1}e_ne_n^{H}\mathbf{U}_1^{-1}-\frac{e^{-i\gamma t}e^{i\theta t}e_n^{H}\mathbf{U}_1^{-1}\zeta_1}{1+e^{i\theta t}e_n^H\mathbf{U}_1^{-1}\zeta_1+O(t^{-1})}\left[W_2^{-1}e_ne_n^{H}\mathbf{U}_1^{-1}+O(t^{-1})\right]+O(t^{-1})\\
=&\frac{e^{-i\gamma t}}{1+e^{i\theta t}e_n^H\mathbf{U}_1^{-1}\zeta_1+O(t^{-1})}\left[W_2^{-1}e_ne_n^{H}\mathbf{U}_1^{-1}+O(t^{-1})\right]+O(t^{-1}),
\end{align*}
as $t\rightarrow \pm\infty$.
\end{proof}

Roughly speaking, Theorem \ref{thm4.13} shows that  if $\theta\neq 0$, then $W(t)$ and $Q(t)^{-1}$ will converge in the rate $O(t^{-1})$, as $t\rightarrow \pm\infty$, to a periodic orbit $W_{\infty}(t)$ with period $2\pi/\theta$ and to a quasiperiodic orbit $Q_{\infty}^{-1}(t)$,
\begin{align}\label{eq4.37}
W_{\infty}(t)&=\mathbf{U}_2(t)\mathbf{U}_1(t)^{-1}\nonumber\\&=\mathbf{U}_2\mathbf{U}_1^{-1}+\frac{e^{i\theta t}}{1+e^{i\theta t}e_n^H\mathbf{U}_1^{-1}\zeta_1}\left[\left(\zeta_2-\mathbf{U}_2\mathbf{U}_1^{-1}\zeta_1\right)e_n^H\mathbf{U}_1^{-1}\right],\\
Q_{\infty}^{-1}(t)&=\frac{e^{-i\gamma t}}{1+e^{i\theta t}e_n^H\mathbf{U}_1^{-1}\zeta_1}W_2^{-1}e_ne_n^{H}\mathbf{U}_1^{-1},\nonumber
\end{align}
respectively,  where $\mathbf{U}_1$, $\mathbf{U}_2$, $\zeta_1$ and $\zeta_2$ are defined in \eqref{eq4.31} and $t\in \{t\in \mathbb{R}| 1+e^{i\theta t}e_n^H\mathbf{U}_1^{-1}\zeta_1\neq 0\}$. More precisely, for each $0<\rho\ll1$, this convergence in the rate $O(t^{-1})$ is taking $t\rightarrow \pm\infty$ along the unbounded set $\{t\in{\mathbb R}|~|1+e^{i\theta t}e_n^H\mathbf{U}_1^{-1}\zeta_1|>\rho\}$. Note that  the matrices $\left(\zeta_2-\mathbf{U}_2\mathbf{U}_1^{-1}\zeta_1\right)e_n^H\mathbf{U}_1^{-1}$ and $W_2^{-1}e_ne_n^{H}\mathbf{U}_1^{-1}$ are constant (independent of $t$) and are of rank one. In addition, the periodic obit $W_{\infty}(t)$ is Hermitian because Lemma \ref{lem4.12} shows that $\mathbf{U}(t)=[\mathbf{U}_1(t)^{\top}, \mathbf{U}_2(t)^{\top}]^{\top}$ is $\mathcal{J}$-orthogonal. The orbit $W_{\infty}(t)$ blows up  when $\mathbf{U}_1(t)$ in \eqref{eq4.30} is singular (or $1+e^{i\theta t}e_n^H\mathbf{U}_1^{-1}\zeta_1= 0$). In the following theorem, we will show that if $\theta\neq 0$ then $W_{\infty}(t)$ will periodically blow-up.

\begin{Theorem}\label{thm4.14}
With the same notations of Theorem \ref{thm4.13}, suppose that $[U_1|u_1]$ is invertible, where $U_1$ and $u_1$ are given in \eqref{eq4.16}. If $\theta=\delta-\gamma\neq 0$, then the periodic matrix $\mathbf{U}_1(t)$ in \eqref{eq4.30} is singular with period $2\pi/|\theta|$.
\end{Theorem}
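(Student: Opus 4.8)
The plan is to reduce the singularity of $\mathbf{U}_1(t)$ to the vanishing of a single scalar exponential polynomial in $t$. First I would rewrite the last column of $\mathbf{U}_1(t)$ in \eqref{eq4.30}: from \eqref{eq4.21a} and $\beta^2=1$ one gets $f_v=i\beta f_u$ and $g_v=-i\beta g_u$, so that column equals $e^{i\gamma t}f_u(u_1+i\beta v_1)+e^{i\delta t}g_u(u_1-i\beta v_1)$. Expanding the determinant by multilinearity in the last column then gives
\begin{align*}
\det\mathbf{U}_1(t)=e^{i\gamma t}\bigl[c_1+c_2e^{i\theta t}\bigr],\qquad c_1=f_u(p+i\beta q),\quad c_2=g_u(p-i\beta q),
\end{align*}
where $p=\det[U_1\,|\,u_1]$, $q=\det[U_1\,|\,v_1]$ and $\theta=\delta-\gamma$; here $p\neq 0$ since $[U_1\,|\,u_1]$ is invertible by hypothesis.

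The main point will be to show $|c_1|=|c_2|\neq 0$. For the scalar prefactors, Remark~\ref{rem4.1} gives that $\mathbf{w}_{2,2}$ is real, so $|f_u|=\frac12|\mathbf{w}_{2,2}-i\beta|=\frac12\sqrt{\mathbf{w}_{2,2}^2+1}=|g_u|\neq 0$. For the determinantal factors it is enough to prove $|p+i\beta q|=|p-i\beta q|$, equivalently that $\bar pq$ is real. Writing $A=[U_1\,|\,u_1]$ and $C=[V_1\,|\,v_1]$ for the upper halves of the first $n$ and the last $n$ columns of $\mathcal{S}$ in \eqref{eq4.16}, I would note $[U_1\,|\,v_1]=A+(v_1-u_1)e_n^{H}$ together with $A^{-1}u_1=e_n$, so the matrix determinant lemma (the rank-one case of the Sherman--Morrison--Woodbury formula used in the proof of Theorem~\ref{thm4.13}) gives $\det[U_1\,|\,v_1]=\det(A)\,e_n^{H}A^{-1}v_1=\det(A)\,(A^{-1}C)_{nn}$ (using $v_1=Ce_n$), whence $\bar pq=|\det A|^2(A^{-1}C)_{nn}$. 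Because $\mathcal{S}$ is symplectic, $\mathcal{S}\mathcal{J}_n\mathcal{S}^{H}=\mathcal{J}_n$ forces $AC^{H}=CA^{H}$; multiplying this on the left by $A^{-1}$ and on the right by $A^{-H}$ shows $A^{-1}C=C^{H}A^{-H}$ is Hermitian, so $(A^{-1}C)_{nn}\in\mathbb{R}$ and hence $\bar pq\in\mathbb{R}$. Thus $|p+i\beta q|=|p-i\beta q|$, and since their sum $2p\neq 0$ both are nonzero; therefore $|c_1|=|c_2|\neq 0$.

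To finish, $\mathbf{U}_1(t)$ is singular exactly when $c_1+c_2e^{i\theta t}=0$, i.e.\ when $e^{i\theta t}=-c_1/c_2$, a number of modulus one by the previous step. Writing $-c_1/c_2=e^{i\phi_0}$ with $\phi_0\in\mathbb{R}$, the solution set is $\{\phi_0/\theta+2\pi k/\theta:k\in\mathbb{Z}\}$, which (using $\theta\neq 0$ essentially) is nonempty and invariant under $t\mapsto t+2\pi/|\theta|$; so $\mathbf{U}_1(t)$ is singular precisely on a set of period $2\pi/|\theta|$, as claimed.

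The only nonroutine step is the reality of $\bar pq=\overline{\det[U_1\,|\,u_1]}\,\det[U_1\,|\,v_1]$; I expect this to be the main obstacle, and the way around it is the matrix determinant lemma combined with the symplectic relation $AC^{H}=CA^{H}$, which makes $A^{-1}C$ Hermitian. Everything else is direct computation.
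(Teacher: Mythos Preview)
Your proof is correct and rests on the same core idea as the paper's: reduce the singularity of $\mathbf{U}_1(t)$ to a scalar equation $c_1+c_2e^{i\theta t}=0$ and use the symplecticity of $\mathcal{S}$---specifically, that $[U_1\,|\,u_1]^{-1}[V_1\,|\,v_1]$ is Hermitian---to show $|c_1|=|c_2|$. The packaging differs: the paper left-multiplies $\mathbf{U}_1(t)$ by $[U_1\,|\,u_1]^{-1}$, obtains a block-triangular matrix whose $(n,n)$ entry is $(f_u+z_{12}f_v)+(g_u+z_{12}g_v)e^{i\theta t}$, and uses $\overline{f_u}=g_u$, $\overline{f_v}=g_v$ together with $z_{12}\in\mathbb{R}$ to see the two coefficients are complex conjugates; you instead expand $\det\mathbf{U}_1(t)$ by multilinearity, use $f_v=i\beta f_u$, $g_v=-i\beta g_u$, and recover the same reality condition via the matrix determinant lemma (your $q/p$ is exactly the paper's $z_{12}$). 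One small advantage of your write-up is that you explicitly rule out $c_1=c_2=0$ from $(p+i\beta q)+(p-i\beta q)=2p\neq 0$ together with $|c_1|=|c_2|$, a point the paper leaves implicit.
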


\begin{proof}
From \eqref{eq4.16}, $\mathcal{S}$ is symplectic and $[U_1,u_1|V_1,v_1]\mathcal{J}[U_1,u_1|V_1,v_1]^{H}=0$ . Since $[U_1|u_1]$ is invertible, we have
\begin{align*}
[Z_1|z_1]=[U_1|u_1]^{-1}[V_1|v_1]\in \mathbb{C}^{n\times n}
\end{align*}
is Hermitian. Here, $z_1\equiv [z_{11}^{\top},z_{12}^{\top}]^{\top}=[U_1|u_1]^{-1}v_1$, where $z_{11}\in \mathbb{C}^{n-1}$ and $z_{12}\in \mathbb{R}$. It follows from \eqref{eq4.30} that
\begin{align*}
[U_1|u_1]^{-1}\mathbf{U}_1(t)(I\oplus e^{-i\gamma t})=\left[\begin{array}{cc}I & (f_v+g_ve^{i\theta t})z_{11} \\ 0& (f_u+g_ue^{i\theta t})+z_{12}(f_v+g_ve^{i\theta t})\end{array}\right].
\end{align*}
Since $W_0$ is Hermitian,  we have $\mathbf{w}_{2,2}$ is real by Remark \ref{rem4.1}.  From \eqref{eq4.21a}, we obtain that $f_u=\bar{g}_u$ and $f_v=\bar{g}_v$. Since $z_{12}$ is real and $\theta\neq 0$, $f_u+z_{12}f_v=\overline{g_u+z_{12}g_v}$ and there exists $t_*\in [0,2\pi)$ such that $f_u+z_{12}f_v=(g_u+z_{12}g_v)e^{i\theta t_*}$. Hence, $\mathbf{U}_1(t_*+\frac{2\pi}{\theta}k)$ is singular for each  $k\in \mathbb{Z}$.
\end{proof}

We now consider the case that $\theta=0$, i.e., $\eta:=\gamma=\delta$ and $\mathscr{H}$ has Hamiltonian Jordan canonical form $\mathfrak{J}_c$ in \eqref{eq4.14}. In this case, we show that  $W(t)$ and $Q(t)^{-1}$ will converge in the rate $O(t^{-1})$ to a constant matrix and a periodic orbit, respectively. This proves assertion 3 in Theorem~\ref{thm4.9}.

\begin{Theorem}\label{thm4.15}
Suppose assumptions in Theorem \ref{thm4.9} hold. Let $\mathfrak{J}_x=\mathfrak{J}_c$ and the symplectic matrix $\mathcal{S}$ have the form in \eqref{eq4.16}.  Suppose that $W_2\in \mathbb{C}^{n\times n}$ is invertible and $\mathbf{W}:=W_1W_2^{-1}$ has the form in \eqref{eq4.20}. Let $\mathbf{U}_{1,0}=\mathbf{U}_{1}(0)$ and $\mathbf{U}_{2,0}=\mathbf{U}_{2}(0)$,
where $\mathbf{U}_{1}(t)$ and $\mathbf{U}_{2}(t)$ are defined in \eqref{eq4.30}.
If $\mathbf{U}_{1,0}$ is invertible, then
\begin{align*}
&W(t)=\mathbf{U}_{2,0}\mathbf{U}_{1,0}^{-1}+O(t^{-1}),\\
&Q(t)^{-1}=e^{-i\eta t}W_2^{-1}e_ne_n^{H}\mathbf{U}_{1,0}^{-1}+O(t^{-1}),
\end{align*}
as $t\rightarrow \pm\infty.$ Here,  $\mathbf{U}_{2,0}\mathbf{U}_{1,0}^{-1}$ is Hermitian.
\end{Theorem}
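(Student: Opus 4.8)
The plan is to obtain Theorem~\ref{thm4.15} as the degenerate limit $\theta=\delta-\gamma=0$ of the analysis behind assertion 4 of Theorem~\ref{thm4.9}. When $\mathfrak{J}_x=\mathfrak{J}_c$ the structure of $e^{\mathfrak{J}_ct}$ is supplied by Corollary~\ref{cor4.7}, which is precisely the specialization $\gamma=\delta=:\eta$ of Lemma~\ref{lem4.5} in which $\mathbf{G}$ vanishes and $\omega_{11}=\omega_{22}=e^{i\eta t}$, $\omega_{12}=\omega_{21}=0$. Reading off \eqref{eq4.9} with this $e^{\mathfrak{J}_ct}$, the computation in the proof of Lemma~\ref{lem4.12} goes through unchanged with $\gamma$ and $\delta$ both replaced by $\eta$ (using Corollary~\ref{cor4.7} in place of Lemma~\ref{lem4.5} and the asymptotic estimates of Table~\ref{tab2} and Theorem~\ref{thm5.3} exactly as stated, none of which uses $\gamma\ne\delta$); this produces a nonsingular $\Omega(t)$ of the form~\eqref{eq4.22} with $Y(t)W_2^{-1}\Omega(t)=\mathbf{U}(t)+O(t^{-1})$ as $t\to\pm\infty$, where $\mathbf{U}(t)$ is given by \eqref{eq4.21b}. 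Because $\gamma=\delta=\eta$, the quasiperiodic factor in \eqref{eq4.30} collapses, $f_ue^{i\gamma t}+g_ue^{i\delta t}=(f_u+g_u)e^{i\eta t}$, so $\mathbf{U}_1(t)=\mathbf{U}_{1,0}(I\oplus e^{i\eta t})$ and $\mathbf{U}_2(t)=\mathbf{U}_{2,0}(I\oplus e^{i\eta t})$ with the constant matrices $\mathbf{U}_{1,0}=\mathbf{U}_1(0)$, $\mathbf{U}_{2,0}=\mathbf{U}_2(0)$ (equivalently $\mathbf{U}_{1,0}=\mathbf{U}_1+\zeta_1e_n^H$ and $\mathbf{U}_{2,0}=\mathbf{U}_2+\zeta_2e_n^H$ in the notation of \eqref{eq4.31}).

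Next I would repeat the last part of the proof of Theorem~\ref{thm4.13} with $\theta=0$. Multiplying the identity $Y(t)W_2^{-1}\Omega(t)=\mathbf{U}(t)+O(t^{-1})$ on the right by $I\oplus e^{-i\eta t}$ and using that $\Omega(t)(I\oplus e^{-i\eta t})=e^{-i\eta t}e_ne_n^H+O(t^{-1})$ by the form~\eqref{eq4.22}, the $Q$- and $P$-blocks give
\begin{align*}
Q(t)W_2^{-1}\Omega(t)(I\oplus e^{-i\eta t})&=\mathbf{U}_{1,0}+O(t^{-1}),\\
P(t)W_2^{-1}\Omega(t)(I\oplus e^{-i\eta t})&=\mathbf{U}_{2,0}+O(t^{-1}),
\end{align*}
as $t\to\pm\infty$. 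Since $\mathbf{U}_{1,0}$ is assumed invertible, $(\mathbf{U}_{1,0}+O(t^{-1}))^{-1}=\mathbf{U}_{1,0}^{-1}+O(t^{-1})$ for $|t|$ large; cancelling the common factor $W_2^{-1}\Omega(t)(I\oplus e^{-i\eta t})$ yields $W(t)=P(t)Q(t)^{-1}=\mathbf{U}_{2,0}\mathbf{U}_{1,0}^{-1}+O(t^{-1})$, and inverting the first relation gives $Q(t)^{-1}=W_2^{-1}\Omega(t)(I\oplus e^{-i\eta t})(\mathbf{U}_{1,0}+O(t^{-1}))^{-1}=e^{-i\eta t}W_2^{-1}e_ne_n^H\mathbf{U}_{1,0}^{-1}+O(t^{-1})$. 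The Sherman--Morrison correction that produced the periodic orbit in Theorem~\ref{thm4.13} disappears here because the rank-one term $e^{i\theta t}\zeta_1e_n^H$ with $\theta=0$ is simply absorbed into the constant $\mathbf{U}_{1,0}$.

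Finally, for the Hermitian claim: by Remark~\ref{rem4.1}, $W_0=W_0^H$ forces the column space of $[W_1^\top,W_2^\top]^\top$ to be Lagrangian, so $\mathbf{w}_{2,2}$ is real and the $\mathcal{J}$-orthogonality conclusion of Lemma~\ref{lem4.12} applies; evaluating $\mathbf{U}(t)^H\mathcal{J}\mathbf{U}(t)=0$ at $t=0$ gives $\mathbf{U}_{1,0}^H\mathbf{U}_{2,0}=\mathbf{U}_{2,0}^H\mathbf{U}_{1,0}$, whence $\mathbf{U}_{2,0}\mathbf{U}_{1,0}^{-1}=\mathbf{U}_{1,0}^{-H}(\mathbf{U}_{1,0}^H\mathbf{U}_{2,0})\mathbf{U}_{1,0}^{-1}$ is Hermitian. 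I expect the only real work to be bookkeeping: verifying line by line that nothing in the proofs of Lemma~\ref{lem4.12} and Theorem~\ref{thm4.13} secretly relied on $\gamma\ne\delta$. Since the sole place distinctness was used is the periodic blow-up of Theorem~\ref{thm4.14}, which plays no role in assertion 3, this check is routine rather than an essential obstacle.
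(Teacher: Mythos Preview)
Your proposal is correct and follows essentially the same route as the paper's proof: specialize Lemma~\ref{lem4.12} (via Corollary~\ref{cor4.7}) to $\gamma=\delta=\eta$, so that $\mathbf{U}_1(t)=\mathbf{U}_{1,0}(I\oplus e^{i\eta t})$, $\mathbf{U}_2(t)=\mathbf{U}_{2,0}(I\oplus e^{i\eta t})$, then read off $W(t)$ and $Q(t)^{-1}$ from $Y(t)W_2^{-1}\Omega(t)(I\oplus e^{-i\eta t})=[\mathbf{U}_{1,0}^\top,\mathbf{U}_{2,0}^\top]^\top+O(t^{-1})$ and $\Omega(t)=e_ne_n^H+O(t^{-1})$, and conclude the Hermitian property from the $\mathcal{J}$-orthogonality of $\mathbf{U}(0)$ via Remark~\ref{rem4.1}. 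Your additional remarks about why nothing in Lemma~\ref{lem4.12} requires $\gamma\ne\delta$, and about the Sherman--Morrison term being absorbed into $\mathbf{U}_{1,0}$, are correct elaborations but not needed beyond what the paper already invokes.
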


\begin{proof}
From \eqref{eq4.30}, \eqref{eq4.22} and \eqref{eq4.23} with $\eta:=\gamma=\delta$, we have
\begin{align*}
Y(t)W_2^{-1}\Omega(t)(I\oplus e^{-i\eta t})=\left[\begin{array}{c}\mathbf{U}_{1,0}\\\mathbf{U}_{2,0}\end{array}\right]+O(t^{-1}) \text{ and }\Omega(t)=e_ne_n^{H}+O(t^{-1}),
\end{align*}
as $t\rightarrow \pm\infty$,  where $Y(t)=[Q(t)^{\top},  P(t)^{\top}]^{\top}$. Since $\mathbf{U}_{1,0}$ is invertible,  we have
\begin{align*}
W(t)&=P(t)Q(t)^{-1}=(\mathbf{U}_{2,0}+O(t^{-1}))(\mathbf{U}_{1,0}+O(t^{-1}))^{-1}=\mathbf{U}_{2,0}\mathbf{U}_{1,0}^{-1}+O(t^{-1}),\\
Q(t)^{-1}&=W_2^{-1}\Omega(t)(I\oplus e^{-i\eta t})(\mathbf{U}_{1,0}+O(t^{-1}))^{-1}=e^{-i\eta t}W_2^{-1}e_ne_n^{H}\mathbf{U}_{1,0}^{-1}+O(t^{-1}),
\end{align*}
as $t\rightarrow \pm\infty.$ Using the fact that  $W_0$ is Hermitian, it follows from Remark \ref{rem4.1} and Lemma \ref{lem4.12} that $\mathbf{U}(0)=[\mathbf{U}_{1,0}^{\top},\mathbf{U}_{2,0}^{\top}]^{\top}$ is $\mathcal{J}$-orthogonal. Hence, $\mathbf{U}_{2,0}\mathbf{U}_{1,0}^{-1}$ is Hermitian.
\end{proof}

\begin{Example}\label{ex4.1}
In this example, we show some numerical experiments to demonstrate above theorems.
Consider the Hamiltonian matrix $\mathscr{H}$ has a Jordan canonical form $\mathfrak{J}_x=\left[\begin{array}{c|c}R_x & D_x\\\hline G_x& -R_x^{-H}\end{array}\right]$. Assume $\mathscr{H}=\mathcal{S}\mathfrak{J}_x\mathcal{S}^{-1}$, where the symplectic matrix $\mathcal{S}$ is randomly generated and
\begin{align*}
R_x&=\left[\begin{array}{cccc}
                 i\gamma &  1          &        0        &          0          \\
            0           &       i\gamma   &     0          &  -\frac{\sqrt{2}}{2}          \\
             0           &       0             &     i\delta  & -\frac{\sqrt{2}}{2}             \\
             0           &       0              &    0               &    \frac{i}{2}(\gamma+\delta)\\
\end{array}\right],\ \
D_x=\frac{\sqrt{2}i}{2}  \left[\begin{array}{cccc}
               0          &         0           &        0          &         0        \\
               0         &          0           &        0          &          1\\
             0          &         0             &      0           &         -1\\
              0          &       -1      &   1   &- \frac{\sqrt{2}i}{2} (\gamma-\delta)   \\
\end{array}\right],\\ G_x&=-\frac{1}{2}(\gamma-\delta) e_4e_4^{\top}.
\end{align*}
We also randomly generate a complex Hermitian matrix $W_0$ as the initial matrix of RDE \eqref{eq4.3}. Then the solution $Y(t)= [Q(t)^{\top},  P(t)^{\top}]^{\top}$ of IVP \eqref{eq4.1} can be computed by the formula $Y(t)=\mathcal{S}e^{\mathfrak{J}_xt}\mathcal{S}^{-1}[I,W_0]^{\top}$. The extended solution of the RDE can be obtained by the formula $W(t)=P(t)Q(t)^{-1}$ for $t\in \mathcal{T}_W$, where $\mathcal{T}_W$ is defined in \eqref{eq3.41}.
\begin{itemize}
\item[1.] We consider the case $x=d$ with $\gamma= 7.8340$ and $\delta=7.2888$. Then $|\theta|=|\delta-\gamma|=0.5452$.  We note from \eqref{eq4.37} that $W_{\infty}(t)=\mathbf{U}_2(t)\mathbf{U}_1(t)^{-1}$, where $\mathbf{U}_1(t)$ and $\mathbf{U}_2(t)$ are given in \eqref{eq4.30}. In Figure \ref{fig1},  we show  the smallest singular value of $\mathbf{U}_1(t)$ and $\|W_{\infty}(t)\|_F$, $\|W(t)\|_F$, $\|Q_{\infty}^{-1}(t)\|_F$ and $\|Q^{-1}(t)\|_F$  plotted by the log scale for $900\leqslant t\leqslant 1000$. This figure shows that the  periodic matrix $\mathbf{U}_1(t)$  is singular with period $2\pi/|\theta|=11.5248$ which coincides with assertions of Theorem \ref{thm4.14}. Therefore, $\|W_{\infty}(t)\|_F$ blows up at each $t$ at which $\mathbf{U}_1(t)$ is singular.  The asymptotic behaviors of $\|W(t)\|_F$ and $\|Q(t)^{-1}\|_F$ are similar to that of  $\|W_{\infty}(t)\|_F$ and $\|Q_{\infty}^{-1}(t)\|_F$, respectively. The differences,  $\|W(t)-W_{\infty}(t)\|_F$ and $\|Q(t)^{-1}-Q_{\infty}^{-1}(t)\|_F$,  for  $-1000\leqslant t\leqslant 0$  and for $0\leqslant t\leqslant 1000$, are shown in Figure \ref{fig2}. We can see that for each $0<\rho\ll1$, as $t\rightarrow \pm\infty$ along the set $\{t\in{\mathbb R}|~|1+e^{i\theta t}e_n^H\mathbf{U}_1^{-1}\zeta_1|>\rho\}$, $W(t)$ and $Q(t)^{-1}$ converges to $W_\infty(t)$ and $Q_\infty^{-1}(t)$, respectively, with the rate $O(t^{-1})$. It turns out that the curves $\|W(t)-W_\infty(t)\|_F$ and $\|Q(t)^{-1}-Q_\infty^{-1}(t)\|_F$ match the curve $y=C/t$ on this set.  However, as $t\rightarrow \pm\infty$ along the set $\{t\in{\mathbb R}|~1+e^{i\theta t}e_n^H\mathbf{U}_1^{-1}\zeta_1=0\}$, that is the poles of $W_\infty(t)$ and $Q_\infty^{-1}(t)$, $W(t)$ and $Q(t)^{-1}$ tend to infinity. This leads to the peaks appear periodically in Figure~\ref{fig2}.  Therefore, the curves $\|W(t)-W_\infty(t)\|_F$ and $\|Q(t)^{-1}-Q_\infty^{-1}(t)\|_F$  blow-up on this set.
\begin{figure}[htb]
\centering \resizebox{5in}{!}{\includegraphics{./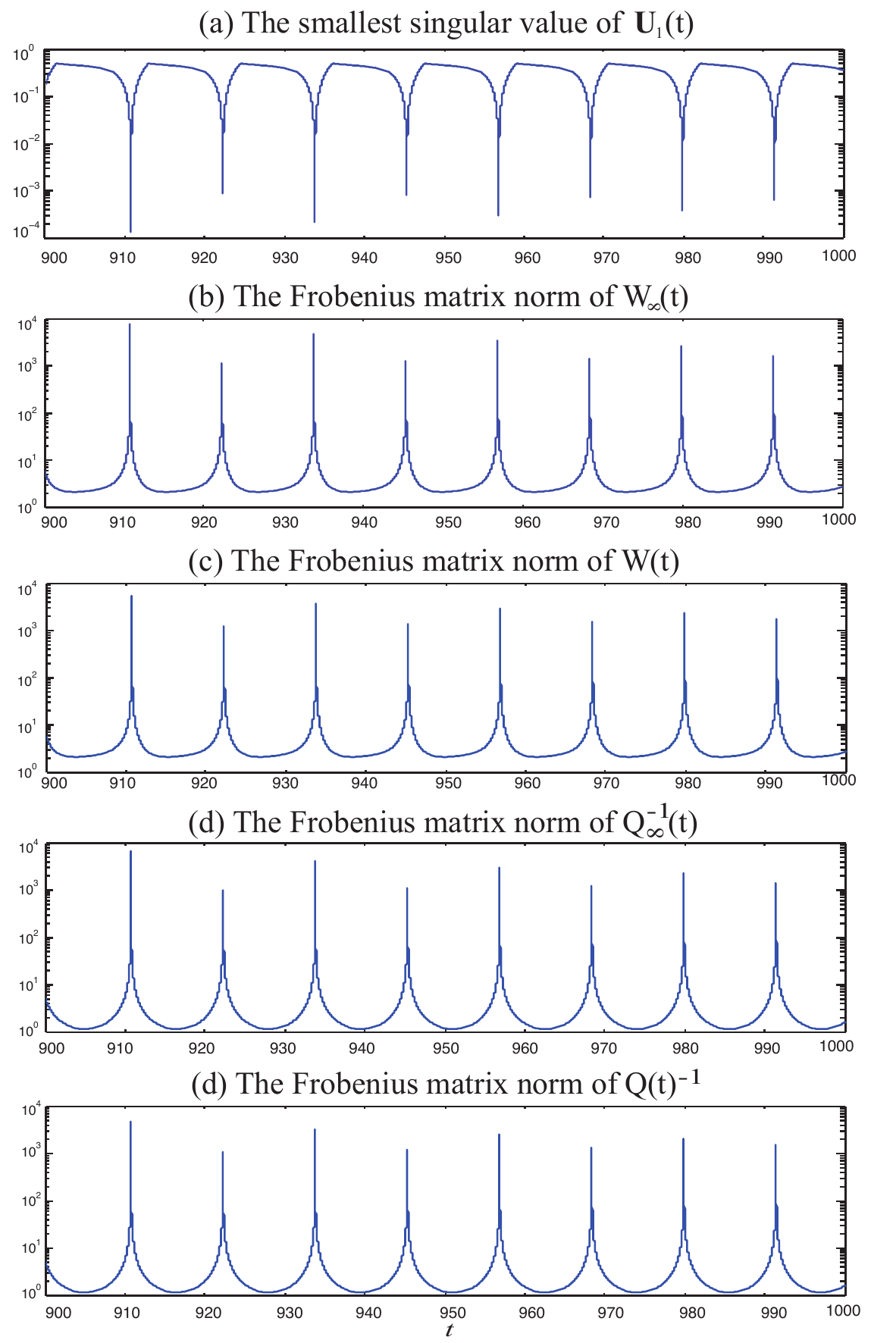}}
\caption{The smallest singular value of $\mathbf{U}_1(t)$, $\|W_{\infty}(t)\|_F$, $\|W(t)\|_F$, $\|Q_{\infty}^{-1}(t)\|_F$ and $\|Q^{-1}(t)\|_F$  plotted by the log scale.}\label{fig1}
\end{figure}
\begin{figure}[htb]
\centering \resizebox{6in}{!}{\includegraphics{./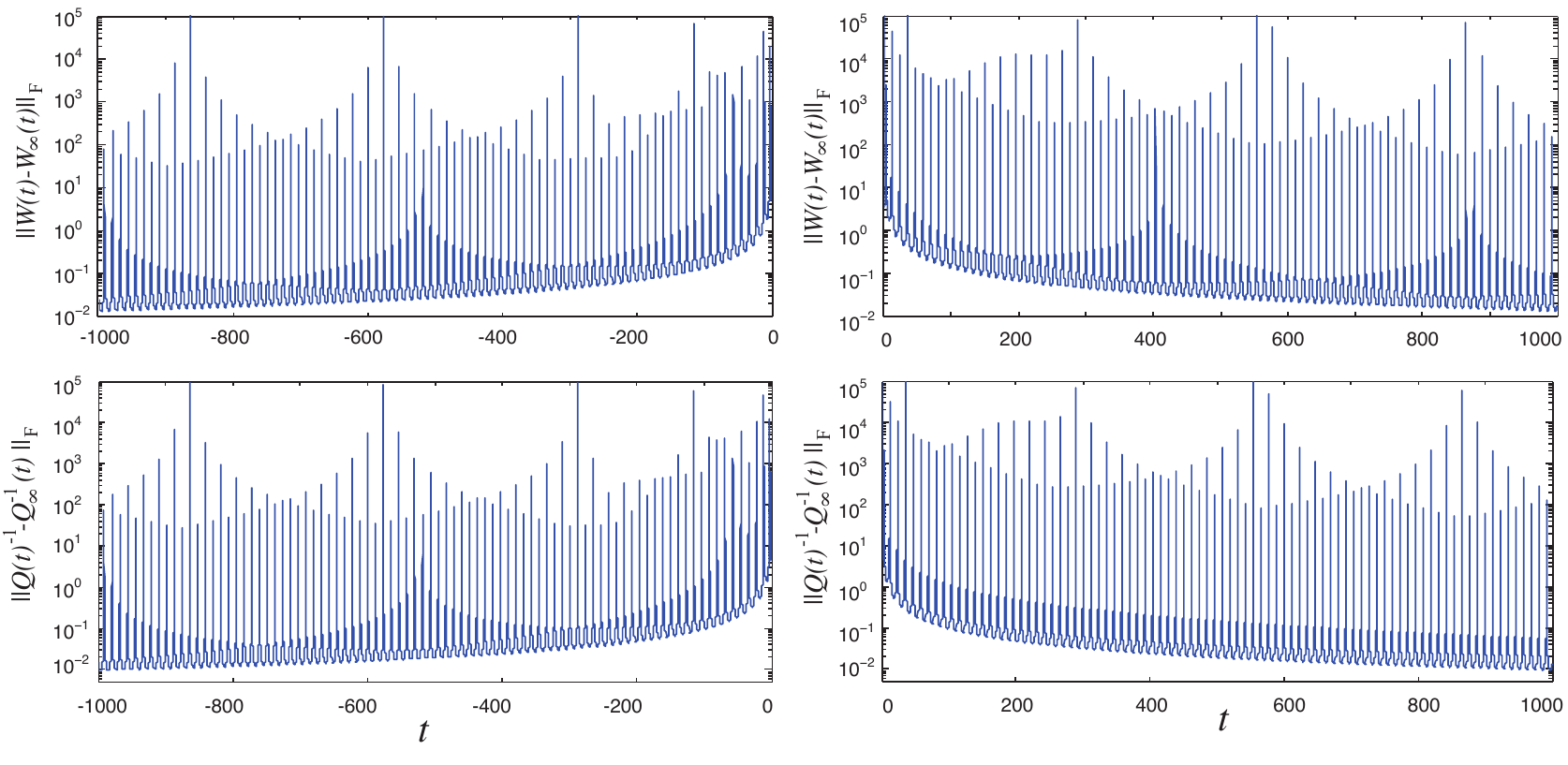}}
\caption{$\|W(t)-W_{\infty}(t)\|_F$ and $\|Q(t)^{-1}-Q_{\infty}^{-1}(t)\|_F$ plotted by the log scale for  $-1000\leqslant t\leqslant 0$ and for $0\leqslant t\leqslant 1000$.}\label{fig2}
\end{figure}
\item[2.] Let $\eta:=\gamma=\delta= 7.8340$, i.e., we consider the case $x=c$. In this case we have shown in Theorem \ref{thm4.15} that
\begin{itemize}
\item[(i)]  $W(t)$ converges at the rate $O(t^{-1})$ to a constant matrix $\mathbf{U}_{2,0}\mathbf{U}_{1,0}^{-1}$;
\item[(ii)] $Q(t)^{-1}$ converges at the rate $O(t^{-1})$ to a periodic orbit, $e^{-i\eta t}W_2^{-1}e_ne_n^H\mathbf{U}_{1,0}^{-1}$, with period $2\pi/|\eta|$.
\end{itemize}
In Figure \ref{fig3}, we  show the difference between $W(t)$ and the constant matrix $\mathbf{U}_{2,0}\mathbf{U}_{1,0}^{-1}$  for   $-1000\leqslant t\leqslant 0$  and for $0\leqslant t\leqslant 1000$. In Figure \ref{fig4},  we show the Frobenius norm of $Q(t)^{-1}$ and the difference between $Q(t)^{-1}$ and $Q_{\infty}^{-1}(t)=e^{-i\eta t}W_2^{-1}e_ne_n^H\mathbf{U}_{1,0}^{-1}$ for  $-1000\leqslant t\leqslant 0$  and for $0\leqslant t\leqslant 1000$. We can also see in Figures~\ref{fig3} and \ref{fig4} that $W(t)$ and $Q(t)^{-1}$ have a blow-up at $t\approx -10$,  even though we have shown in Theorem~\ref{thm4.15} that $W(t)$ and $Q(t)^{-1}$ have convergence with the rate $O(t^{-1})$.
\begin{figure}[htb]
\centering \resizebox{6in}{!}{\includegraphics{./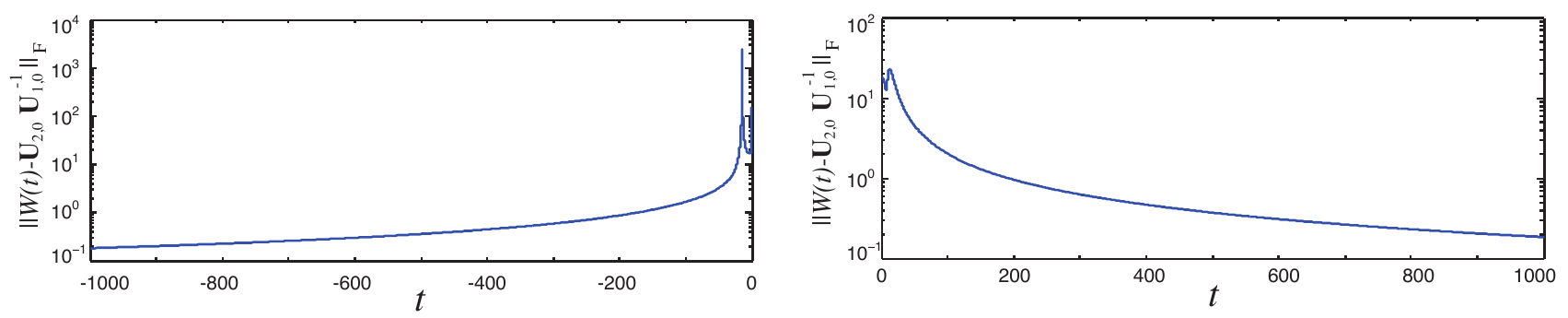}}
\caption{$\|W(t)-\mathbf{U}_{2,0}\mathbf{U}_{1,0}^{-1}\|_F$   for  $-1000\leqslant t\leqslant 0$ and for $0\leqslant t\leqslant 1000$.}\label{fig3}
\end{figure}
\begin{figure}[htb]
\centering \resizebox{6in}{!}{\includegraphics{./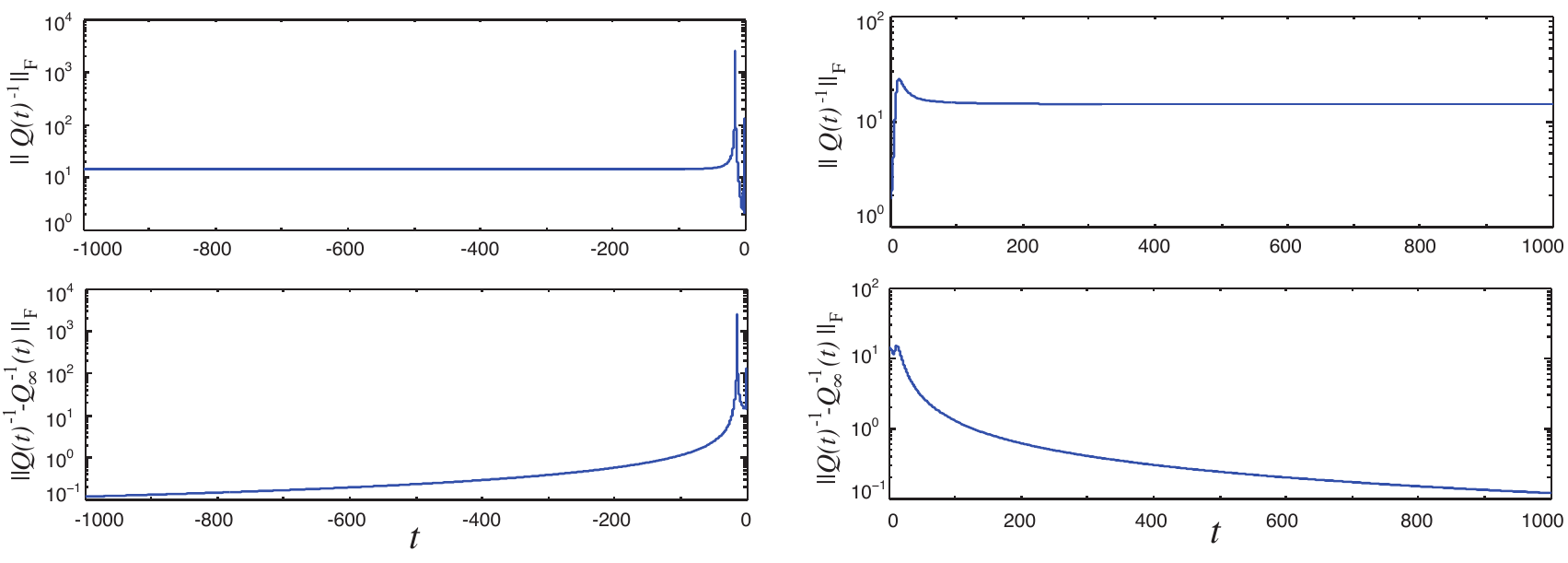}}
\caption{$\|Q(t)^{-1}\|_F$ and $\|Q(t)^{-1}-Q_{\infty}^{-1}(t)\|_F$  for  $-1000\leqslant t\leqslant 0$ and for $0\leqslant t\leqslant 1000$.}\label{fig4}
\end{figure}
\end{itemize}
\end{Example}

Note that the matrix $\mathfrak{J}$ in Example \ref{ex4.1} is given, and hence, the solution $W(t)$ can be computed with a good accuracy. For the general $\mathscr{H}$, a number of algorithms have been proposed for solving RDEs \eqref{eq4.1} numerically. These include conventional Runge-Kutta methods and linear multi-step methods \cite{Choi_Laub:1990,Dieci:1992} if blow-ups are not in the solution. If the solutions have blow-ups, an efficient numerical method developed by \cite{Li:2012} can be used for solving the RDEs.

\subsection{Asymptotic Analysis of RDE}\label{sec4.3}
In this subsection, we will investigate the asymptotic behaviors of $W(t)=P(t)Q(t)^{-1}$ and $Q(t)^{-1}$ as $t\rightarrow \pm \infty$,  where $Y(t)=[Q(t)^{\top},  P(t)^{\top}]^{\top}$ is the solution of IVP \eqref{eq4.1}.  Suppose that  $\mathcal{S}$ is symplectic such that $\mathcal{S}^{-1}\mathscr{H}\mathcal{S}=\mathfrak{J}$ is of the form in \eqref{eq4.8}, where  $\mathscr{H}\in \mathbb{C}^{2n\times 2n}$ is the Hamiltonian matrix in \eqref{eq4.1}. Since $\mathfrak{J}$ is the combination of the four elementary cases in Subsection~\ref{sec4.2}, detailed calculations for the asymptotic analysis are much tedious. However, the procedure is similar to what we have done in Subsection~\ref{sec4.2}.  Therefore, we only state the asymptotic analysis for the general cases and leave the proofs in Appendix.

Denote $n_r$, $n_e$, $ n_c$ and $ n_d$ be the sizes of $R_r$, $R_e$, $ R_c$ and $ R_d$ in \eqref{eq4.8}, respectively. It holds that $n_r+n_e+n_c+n_d=n$. Let  $n_{cd}=n_c+n_d$ and $n_{ecd}=n_e+n_c+n_d$.
Partitioning $W_j$ for $j=1,2$ in \eqref{eq4.9} as
\begin{align*}
\begin{array}{cc}W_j=\left[\begin{array}{cc}W^j_{1,1} & W^j_{1,2}   \\W^j_{2,1} & W^j_{2,2} \end{array}\right]&\!\!\!\!\!\!\!\!\begin{array}{l}\}n_r\\\} n_{ecd}\end{array}\vspace{-0.3cm}\\
\begin{array}{cc}\ \ \ \ \ \ \ \    \underbrace{}_{n_r}  &\underbrace{}_{n_{ecd}} \end{array}&\end{array}.
\end{align*}

We first make two assumptions:
 \begin{description}
 \item[{\bf Assumption} $\mathscr{A}_{+}$.] Assume that $\mathbf{ Z}_{1,+}^{-1}=\left[\begin{array}{cc}W^1_{1,1} & W^1_{1,2}   \\W^2_{2,1} & W^2_{2,2}  \end{array}\right]$ is invertible.
 \item[{\bf Assumption} $\mathscr{A}_{-}$.] Assume that $\mathbf{ Z}_{1,-}^{-1}=W_2$ is invertible.
 \end{description}
 \begin{Remark}\label{rem4.2}
When we consider four elementary cases mentioned in Subsection~\ref{sec4.2}, it follows from Theorem \ref{thm4.9} that  {\bf Assumptions} $\mathscr{A}_{+}$ and $\mathscr{A}_{-}$ are one of the necessary conditions for the asymptotic analysis as $t\rightarrow \infty$ and $t\rightarrow -\infty$, respectively.
 \end{Remark}

Under the {\bf Assumptions} $\mathscr{A}_{+}$ and $\mathscr{A}_{-}$, the matrices $\mathbf{ Z}_{1,+}$ and $\mathbf{ Z}_{1,-}$ exist. We partition $\mathbf{ Z}_{1,+}$ and $\mathbf{ Z}_{1,-}$ as the block forms
\begin{align}\label{eq4.38}
\begin{array}{c}
\mathbf{ Z}_{1,+}=\left[\begin{array}{cc}\mathbf{ Z}^1_{r,+} & \mathbf{ Z}^1_{ecd,+}  \end{array}\right]\}n,\vspace{-0.3cm}\\
\begin{array}{cc}\ \  \ \  \underbrace{}_{n_r}  &\  \underbrace{}_{n_{ecd}} \end{array}
\end{array}\ \
\begin{array}{c}
\mathbf{ Z}_{1,-}=\left[\begin{array}{cc}\mathbf{ Z}^1_{r,-} & \mathbf{ Z}^1_{ecd,-}  \end{array}\right]\}n.\vspace{-0.3cm}\\
\begin{array}{cc}\ \ \ \  \underbrace{}_{n_r}  &\  \underbrace{}_{n_{ecd}} \end{array}
\end{array}
\end{align}
From \eqref{eq4.9}, we have
\begin{align}\label{eq4.39}
Y(t)\mathbf{ Z}_{1,+}=\mathcal{S}e^{\mathfrak{J}t}\left[%
\begin{array}{c}
\mathbf{W}_1^{+}\\
\mathbf{W}_2^{+}\\
\end{array}%
\right],\ \ \ \ \ Y(t)\mathbf{ Z}_{1,-}=\mathcal{S}e^{\mathfrak{J}t}\left[%
\begin{array}{c}
\mathbf{W}_1^{-}\\
\mathbf{W}_2^{-}\\
\end{array}%
\right],
\end{align}
where $\mathbf{W}_j^{+}=W_j\mathbf{ Z}_{1,+}$ and $\mathbf{W}_j^{-}=W_j\mathbf{ Z}_{1,-}$, for $j=1,2$, are of the forms
\begin{subequations}\label{eq4.40}
\begin{align}
\mathbf{W}_1^+&=\left[\begin{array}{cc}I_{n_r} & 0   \\\mathbf{W}^+_{2,1} & \mathbf{W}^+_{ecd}  \end{array}\right],\ \ \ \ \mathbf{W}_2^{+}=\left[\begin{array}{cc}\mathbf{W}^{+}_{1,1} & \mathbf{W}^+_{1,2}  \\ 0 & I_{n_{ecd}}   \end{array}\right],\label{eq4.40a}\\
\mathbf{W}_1^-&=\left[\begin{array}{cc}\mathbf{W}^-_{1,1} & \mathbf{W}^-_{1,2}   \\\mathbf{W}^-_{2,1} & \mathbf{W}^-_{ecd}  \end{array}\right],\ \ \ \ \mathbf{W}_2^{-}=\left[\begin{array}{cc}I_{n_r}  & 0 \\ 0 & I_{n_{ecd}}  \end{array}\right].\label{eq4.40b}
\end{align}
\end{subequations}
In order to investigate the asymptotic behavior of $Y(t)$, we partition the symplectic matrix $\mathcal{S}$ in \eqref{eq4.8} and $e^{\mathfrak{J}t}$ in \eqref{eq4.13}, respectively, as
\begin{align}\label{eq4.41}
\mathcal{S}=\left[\begin{array}{cc|cc}U_r&U_{ecd}&V_r&V_{ecd}\end{array}\right]=\left[\begin{array}{cc|cc}U^r_1&U^{ecd}_1&V^r_1&V^{ecd}_1\\\hline U^r_2&U^{ecd}_2&V^r_2&V^{ecd}_2\end{array}\right]
\end{align}
and
\begin{align}\label{eq4.42}
e^{\mathfrak{J}t}=\left[\begin{array}{cc|cc}\mathcal{R}_r&0&0&0\\ 0&\mathcal{R}_{ecd}&0&\mathcal{D}_{ecd}\\\hline 0&0&\mathcal{R}_r^{-H}&0\\0&\mathcal{G}_{ecd}&0&\mathcal{E}_{ecd}\end{array}\right]\equiv \left[\begin{array}{cc|cc}\mathcal{R}_r(t)&0&0&0\\ 0&\mathcal{R}_{ecd}(t)&0&\mathcal{D}_{ecd}(t)\\\hline 0&0&\mathcal{R}_r(t)^{-H}&0\\0&\mathcal{G}_{ecd}(t)&0&\mathcal{E}_{ecd}(t)\end{array}\right],
\end{align}
where  $\mathcal{R}_r=e^{R_rt}$ and $R_r$ is defined in \eqref{eq4.8}.

Let $\mathfrak{r}=\min\{\Re({\rm diag}(R_r))\}>0$, where $\Re(z)$ is the real part of  $z\in \mathbb{C}$. Then
\begin{align}\label{eq4.43}
\begin{array}{l}
\mathcal{R}_r^{-1}=o(e^{-\mathfrak{r}t}t^{n_r}),\ \  \ \ \text{ as }t\rightarrow \infty,\\
\mathcal{R}_r^{H}=o(e^{-\mathfrak{r}|t|}|t|^{n_r}),\ \ \text{ as }t\rightarrow -\infty,
\end{array}
\end{align}
i.e., $\lim_{t\rightarrow \infty} t^{-n_r}e^{\mathfrak{r}t}\mathcal{R}_r^{-1}=\lim_{t\rightarrow -\infty} |t|^{-n_r}e^{\mathfrak{r}|t|}\mathcal{R}_r^{H}=0$.
\begin{Theorem}\label{thm4.16}
Assume that  $\mathscr{H}$ in \eqref{eq4.1} has Hamiltonian Jordan canonical form $\mathfrak{J}$ in \eqref{eq4.8} and the symplectic matrix $\mathcal{S}$ in \eqref{eq4.8} is of the form in \eqref{eq4.41}. Then
\begin{itemize}
\item[(i)] if {\bf Assumption} $\mathscr{A}_{+}$  holds, then there is a nonsingular matrix
\begin{subequations}\label{eq4.44}
\begin{align}\label{eq4.44a}
\mathbf{Z}_{2,+}(t)= \mathbf{Z}_{1,+}(\mathcal{R}_r^{-1}\oplus I_{n_{ecd}}),
\end{align}
such that
\begin{align}\label{eq4.44b}
Y(t)\mathbf{Z}_{2,+}(t)=\left[U_r,[U_{ecd}|V_{ecd}]\left[\begin{array}{c|c} \mathcal{R}_{ecd}&\mathcal{D}_{ecd} \\\hline
\mathcal{G}_{ecd}&\mathcal{E}_{ecd}\end{array}\right] \left[\begin{array}{c} \mathbf{W}^+_{ecd}\\\hline I_{n_{ecd}}\end{array}\right] \right]+o(e^{-\mathfrak{r}t}t^{n}),
\end{align}
\end{subequations}
as $t\rightarrow \infty$. In particular, if $\mathfrak{J}=\left[\begin{array}{c|c}R_r & 0   \\\hline 0  & -R_r^H \end{array}\right]$, then
\begin{align}\label{eq4.45}
\mathbf{Z}_{2,+}(t)=o(e^{-\mathfrak{r}t}t^{n}),\ \ Y(t)\mathbf{Z}_{2,+}(t)=U_r+o(e^{-2\mathfrak{r}t}t^{2n}),\ \text{ as }t\rightarrow \infty.
\end{align}
\item[(ii)] if {\bf Assumption} $\mathscr{A}_{-}$  holds,  then there exists an invertible matrix
\begin{subequations}\label{eq4.46}
\begin{align}\label{eq4.46a}
\mathbf{Z}_{2,-}(t)= \mathbf{Z}_{1,-}(\mathcal{R}_r^{H}\oplus I_{n_{ecd}}),
\end{align}
such that
\begin{align}\label{eq4.46b}
Y(t)\mathbf{Z}_{2,-}(t)=\left[V_r,[U_{ecd}|V_{ecd}]\left[\begin{array}{c|c} \mathcal{R}_{ecd}&\mathcal{D}_{ecd} \\\hline
\mathcal{G}_{ecd}&\mathcal{E}_{ecd}\end{array}\right] \left[\begin{array}{c} \mathbf{W}^-_{ecd}\\\hline I_{n_{ecd}}\end{array}\right] \right]+o(e^{-\mathfrak{r}|t|}|t|^{n}),
\end{align}
\end{subequations}
as $t\rightarrow -\infty$. In particular, if $\mathfrak{J}=\left[\begin{array}{c|c}R_r & 0   \\\hline 0  & -R_r^H \end{array}\right]$, then
\begin{align}\label{eq4.47}
\mathbf{Z}_{2,-}(t)=o(e^{-\mathfrak{r}|t|}|t|^{n}),\ \ Y(t)\mathbf{Z}_{2,-}(t)=V_r+o(e^{-2\mathfrak{r}|t|}|t|^{2n}),\ \text{ as }t\rightarrow -\infty.
\end{align}
\end{itemize}
\end{Theorem}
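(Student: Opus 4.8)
The plan is to prove both parts by the same block computation; I would present part~(i) in full and then indicate the (minor) changes for part~(ii). First I would expand \eqref{eq4.39}, i.e.\ $Y(t)\mathbf{Z}_{1,+}=\mathcal{S}e^{\mathfrak{J}t}[\mathbf{W}_1^{+\top},\mathbf{W}_2^{+\top}]^{\top}$, by inserting the block partitions \eqref{eq4.40a}, \eqref{eq4.41}, \eqref{eq4.42}. Carrying out the $4\times 4$ block multiplication of $e^{\mathfrak{J}t}$ against $[\mathbf{W}_1^{+\top},\mathbf{W}_2^{+\top}]^{\top}$ and then left multiplying by $\mathcal{S}=[U_r,U_{ecd},V_r,V_{ecd}]$, the special structure of $\mathbf{W}_1^+$ and $\mathbf{W}_2^+$ in \eqref{eq4.40a} makes the first $n_r$ columns of $Y(t)\mathbf{Z}_{1,+}$ equal to $U_r\mathcal{R}_r+(U_{ecd}\mathcal{R}_{ecd}+V_{ecd}\mathcal{G}_{ecd})\mathbf{W}^+_{2,1}+V_r\mathcal{R}_r^{-H}\mathbf{W}^+_{1,1}$ and the last $n_{ecd}$ columns equal to $V_r\mathcal{R}_r^{-H}\mathbf{W}^+_{1,2}$ plus exactly the $ecd$-block combination that appears on the right-hand side of \eqref{eq4.44b}. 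Right multiplying by $\mathcal{R}_r^{-1}\oplus I_{n_{ecd}}$, i.e.\ replacing $\mathbf{Z}_{1,+}$ by $\mathbf{Z}_{2,+}(t)$ of \eqref{eq4.44a}, converts $U_r\mathcal{R}_r$ into $U_r$ and leaves every other term of the first column block carrying at least one factor $\mathcal{R}_r^{-1}$ on the right.

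Next I would estimate the leftover cross-terms using the explicit form of $e^{\mathfrak{J}t}$ from Theorem~\ref{thm4.8}: $\mathcal{R}_{ecd}(t)$ has entries that are polynomials in $t$ of degree at most $n_{ecd}-1$ times bounded oscillatory factors $e^{i(\cdot)t}$, while $\mathcal{G}_{ecd}(t)$ is bounded, its only nonzero entries being the $\omega_{21}$ of \eqref{eq4.12}. Combining with \eqref{eq4.43}, namely $\mathcal{R}_r^{-1}=o(e^{-\mathfrak{r}t}t^{n_r})$ and $\mathcal{R}_r^{-H}=o(e^{-\mathfrak{r}t}t^{n_r})$ as $t\to\infty$, and with $n_r+n_{ecd}=n$, the terms $(U_{ecd}\mathcal{R}_{ecd}+V_{ecd}\mathcal{G}_{ecd})\mathbf{W}^+_{2,1}\mathcal{R}_r^{-1}$ and $V_r\mathcal{R}_r^{-H}\mathbf{W}^+_{1,2}$ are $o(e^{-\mathfrak{r}t}t^{n})$, while $V_r\mathcal{R}_r^{-H}\mathbf{W}^+_{1,1}\mathcal{R}_r^{-1}$, carrying two decaying factors, is $o(e^{-2\mathfrak{r}t}t^{2n})$; hence $Y(t)\mathbf{Z}_{2,+}(t)$ equals the right-hand side of \eqref{eq4.44b} up to the stated error. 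Nonsingularity of $\mathbf{Z}_{2,+}(t)=\mathbf{Z}_{1,+}(\mathcal{R}_r^{-1}\oplus I_{n_{ecd}})$ is immediate since $\mathbf{Z}_{1,+}$ is invertible under Assumption $\mathscr{A}_{+}$ and $\mathcal{R}_r$ is invertible. The degenerate case $\mathfrak{J}={\rm diag}(R_r,-R_r^H)$ is the same computation with $n_{ecd}=0$, $n_r=n$: then $\mathbf{W}_1^+=I$, $\mathbf{W}_2^+=\mathbf{W}^+_{1,1}$, so $Y(t)\mathbf{Z}_{1,+}=U_r\mathcal{R}_r+V_r\mathcal{R}_r^{-H}\mathbf{W}^+_{1,1}$, $\mathbf{Z}_{2,+}(t)=\mathbf{Z}_{1,+}\mathcal{R}_r^{-1}=o(e^{-\mathfrak{r}t}t^n)$, and $Y(t)\mathbf{Z}_{2,+}(t)=U_r+V_r\mathcal{R}_r^{-H}\mathbf{W}^+_{1,1}\mathcal{R}_r^{-1}=U_r+o(e^{-2\mathfrak{r}t}t^{2n})$, which is \eqref{eq4.45}.

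For part~(ii) I would run the identical computation with $\mathbf{Z}_{1,-}=W_2^{-1}$ (Assumption $\mathscr{A}_{-}$), using the block forms \eqref{eq4.40b} and a right multiplication by $\mathcal{R}_r^{H}\oplus I_{n_{ecd}}$; since \eqref{eq4.43} also gives that $\mathcal{R}_r(t)$ and $\mathcal{R}_r^{H}(t)$ are $o(e^{-\mathfrak{r}|t|}|t|^{n_r})$ as $t\to-\infty$, the same term-by-term collapse produces \eqref{eq4.46b}, and the degenerate case produces \eqref{eq4.47}. The main obstacle I anticipate is the bookkeeping of the second paragraph: among the many cross-terms generated by the $4\times 4$ block product, one must keep track of which carry one and which carry two exponentially small factors $\mathcal{R}_r^{\pm 1}$ (respectively $\mathcal{R}_r^{\pm H}$), and check uniformly from the block sizes in Theorem~\ref{thm4.8} that the polynomial degree contributed by $\mathcal{R}_{ecd}$, together with the boundedness of $\mathcal{G}_{ecd}$, never pushes a product outside $o(e^{-\mathfrak{r}|t|}|t|^{n})$ — tedious but routine.
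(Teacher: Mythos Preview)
Your proposal is correct and follows essentially the same route as the paper: the paper also plugs \eqref{eq4.40a}, \eqref{eq4.41}, \eqref{eq4.42} into \eqref{eq4.39}, right-multiplies by $\mathcal{R}_r^{-1}\oplus I_{n_{ecd}}$, and reads off the block asymptotics from \eqref{eq4.43}, displaying exactly the intermediate block matrix you describe (your term $V_r\mathcal{R}_r^{-H}\mathbf{W}^+_{1,1}\mathcal{R}_r^{-1}$ with two decaying factors is their $o(e^{-2\mathfrak{r}t}t^{2n_r})$ entry). The only cosmetic difference is that the paper writes the product $e^{\mathfrak{J}t}[\mathbf{W}_1^+,\mathbf{W}_2^+]^\top(\mathcal{R}_r^{-1}\oplus I)$ in one step rather than first forming $Y(t)\mathbf{Z}_{1,+}$ and then scaling.
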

\begin{proof}
Suppose that {\bf Assumption} $\mathscr{A}_{+}$  holds. Since $\mathcal{R}_r=e^{R_rt}$ is invertible, the matrix $\mathbf{Z}_{2,+}(t)$ defined in \eqref{eq4.44a} is invertible. Plugging \eqref{eq4.40a}, \eqref{eq4.41}, \eqref{eq4.42} and \eqref{eq4.44a} into \eqref{eq4.39}, it follows from  \eqref{eq4.43} that
\begin{align}\label{eq4.48}
Y(t)\mathbf{Z}_{2,+}(t)&=\mathcal{S}\left[\begin{array}{cc|cc}\mathcal{R}_r&0&0&0\\ 0&\mathcal{R}_{ecd}&0&\mathcal{D}_{ecd}\\\hline 0&0&\mathcal{R}_r^{-H}&0\\0&\mathcal{G}_{ecd}&0&\mathcal{E}_{ecd}\end{array}\right]\left[\begin{array}{cc}\mathcal{R}_r^{-1} & 0   \\\mathbf{W}^+_{2,1}\mathcal{R}_r^{-1}  & \mathbf{W}^+_{ecd}\\\hline \mathbf{W}^{+}_{1,1}\mathcal{R}_r^{-1}  & \mathbf{W}^+_{1,2}  \\ 0 & I_{n_{ecd}}   \end{array}\right]\nonumber\\
&=\mathcal{S}\left[\begin{array}{cc}I & 0   \\o(e^{-\mathfrak{r}t}t^{n}) & \mathcal{R}_{ecd}\mathbf{W}^+_{ecd}+\mathcal{D}_{ecd}\\\hline o(e^{-2\mathfrak{r}t}t^{2n_r})  & o(e^{-\mathfrak{r}t}t^{n_r})  \\ o(e^{-\mathfrak{r}t}t^{n_r})  & \mathcal{G}_{ecd}\mathbf{W}^+_{ecd}+\mathcal{E}_{ecd}   \end{array}\right]\\
&=\left[U_r,[U_{ecd}|V_{ecd}]\left[\begin{array}{c|c} \mathcal{R}_{ecd}&\mathcal{D}_{ecd} \\\hline
\mathcal{G}_{ecd}&\mathcal{E}_{ecd}\end{array}\right] \left[\begin{array}{c} \mathbf{W}^+_{ecd}\\\hline I_{n_{ecd}}\end{array}\right] \right]+o(e^{-\mathfrak{r}t}t^{n}),\nonumber
\end{align}
as $t\rightarrow \infty$. Hence we obtain \eqref{eq4.44b}. In  particular, if $\mathfrak{J}=R_r\oplus( -R_r^H)$, then \eqref{eq4.45} can be obtained from \eqref{eq4.48} directly.

Suppose that {\bf Assumption} $\mathscr{A}_{-}$  holds. Since  $\mathcal{R}_r$ is invertible, the matrix $\mathbf{Z}_{2,-}(t)$ defined in \eqref{eq4.46a} is invertible. Plugging \eqref{eq4.40b}, \eqref{eq4.41}, \eqref{eq4.42} and \eqref{eq4.46a} into \eqref{eq4.39}, it follows from  \eqref{eq4.43} that
\begin{align}\label{eq4.49}
Y(t)\mathbf{Z}_{2,-}(t)&=\mathcal{S}\left[\begin{array}{cc|cc}\mathcal{R}_r&0&0&0\\ 0&\mathcal{R}_{ecd}&0&\mathcal{D}_{ecd}\\\hline 0&0&\mathcal{R}_r^{-H}&0\\0&\mathcal{G}_{ecd}&0&\mathcal{E}_{ecd}\end{array}\right]\left[\begin{array}{cc}\mathbf{W}^{-}_{1,1}\mathcal{R}_r^{H} & \mathbf{W}^{-}_{1,2}   \\\mathbf{W}^-_{2,1}\mathcal{R}_r^{H}  & \mathbf{W}^-_{ecd}\\\hline \mathcal{R}_r^{H}  & 0 \\ 0 & I_{n_{ecd}}   \end{array}\right]\nonumber\\
&=\mathcal{S}\left[\begin{array}{cc}o(e^{-2\mathfrak{r}|t|}|t|^{2n_r}) & o(e^{-\mathfrak{r}|t|}|t|^{n_r})   \\o(e^{-\mathfrak{r}|t|}|t|^{n}) & \mathcal{R}_{ecd}\mathbf{W}^-_{ecd}+\mathcal{D}_{ecd}\\\hline I  & 0 \\ o(e^{-\mathfrak{r}|t|}|t|^{n_r})  & \mathcal{G}_{ecd}\mathbf{W}^-_{ecd}+\mathcal{E}_{ecd}   \end{array}\right]\\
&=\left[V_r,[U_{ecd}|V_{ecd}]\left[\begin{array}{c|c} \mathcal{R}_{ecd}&\mathcal{D}_{ecd} \\\hline
\mathcal{G}_{ecd}&\mathcal{E}_{ecd}\end{array}\right] \left[\begin{array}{c} \mathbf{W}^-_{ecd}\\\hline I_{n_{ecd}}\end{array}\right] \right]+o(e^{-\mathfrak{r}|t|}|t|^{n}),\nonumber
\end{align}
as $t\rightarrow -\infty$. Hence we obtain \eqref{eq4.46b}. In  particular, if $\mathfrak{J}=R_r\oplus( -R_r^H)$, then \eqref{eq4.47} can be obtained from \eqref{eq4.49} directly.
\end{proof}

By \eqref{eq4.45} and \eqref{eq4.47}, we have the following consequence.
\begin{Corollary}\label{cor4.17}
With the same notations of Theorem \ref{thm4.16}, suppose that  {\bf Assumptions} $\mathscr{A}_{+}$ and  $\mathscr{A}_{-}$ hold and $\mathfrak{J}=\left[\begin{array}{c|c}R_r & 0   \\\hline 0  & -R_r^H \end{array}\right]$.
Let $\mathbf{U}_j=U_j^r$, $\mathbf{V}_j=V_j^r$ for $j=1,2$ and $W(t)= P(t)Q(t)^{-1}$, where $Y(t)=[Q(t)^{\top},  P(t)^{\top}]^{\top}$ is the solution of IVP \eqref{eq4.1}.
If $\mathbf{U}_1$ and $\mathbf{V}_1$ are invertible, then
\begin{align*}
\begin{array}{lll}
W(t)=\mathbf{U}_2\mathbf{U}_1^{-1}+O(e^{-2\mathfrak{r}t}t^{2n}),& Q(t)^{-1}=O(e^{-\mathfrak{r}t}t^{n}),& \text{ as }t\rightarrow \infty,\\
W(t)=\mathbf{V}_2\mathbf{V}_1^{-1}+O(e^{-2\mathfrak{r}|t|}|t|^{2n}),& Q(t)^{-1}=O(e^{-\mathfrak{r}|t|}|t|^{n}),& \text{ as }t\rightarrow -\infty,
\end{array}
\end{align*}
where $\mathfrak{r}=\min\{\Re({\rm diag}(R_r))\}>0$. Here, $\mathbf{U}_2\mathbf{U}_1^{-1}$ and $\mathbf{V}_2\mathbf{V}_1^{-1}$ are Hermitian.
\end{Corollary}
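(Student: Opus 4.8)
The statement is essentially a direct corollary of the ``in particular'' parts of Theorem~\ref{thm4.16}, so the plan is to specialize that theorem to $\mathfrak{J}=R_r\oplus(-R_r^H)$ and then translate the estimates for $Y(t)\mathbf{Z}_{2,\pm}(t)$ into estimates for $W(t)=P(t)Q(t)^{-1}$ and for $Q(t)^{-1}$, invoking the invertibility of $\mathbf{U}_1$ and $\mathbf{V}_1$. In this case $n_{ecd}=0$ and $n_r=n$, so $U_r=[\mathbf{U}_1^{\top},\mathbf{U}_2^{\top}]^{\top}$ and $V_r=[\mathbf{V}_1^{\top},\mathbf{V}_2^{\top}]^{\top}$ with all blocks in $\mathbb{C}^{n\times n}$, and the matrices $\mathbf{Z}_{2,\pm}(t)$ of Theorem~\ref{thm4.16} are invertible for every $t$, being products of $\mathbf{Z}_{1,\pm}$ (which exist by Assumptions $\mathscr{A}_{+}$, $\mathscr{A}_{-}$) with the invertible matrices $\mathcal{R}_r^{-1}$ resp.\ $\mathcal{R}_r^{H}$, where $\mathcal{R}_r=e^{R_rt}$.

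For $t\to\infty$, Theorem~\ref{thm4.16} together with \eqref{eq4.45} gives
\begin{align*}
Q(t)\mathbf{Z}_{2,+}(t)&=\mathbf{U}_1+o(e^{-2\mathfrak{r}t}t^{2n}),\\
P(t)\mathbf{Z}_{2,+}(t)&=\mathbf{U}_2+o(e^{-2\mathfrak{r}t}t^{2n}).
\end{align*}
Since $\mathbf{U}_1$ is invertible, $Q(t)\mathbf{Z}_{2,+}(t)$, and hence $Q(t)$ itself, is invertible for all sufficiently large $t$; such $t$ therefore lie in $\mathcal{T}_W$ and $W(t)$ is the extended solution of the RDE there, so the limit $t\to\infty$ is meaningful. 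Writing $W(t)=\bigl(P(t)\mathbf{Z}_{2,+}(t)\bigr)\bigl(Q(t)\mathbf{Z}_{2,+}(t)\bigr)^{-1}$ and using $(\mathbf{U}_1+E)^{-1}=\mathbf{U}_1^{-1}+O(\|E\|)$ for small $\|E\|$ yields $W(t)=\mathbf{U}_2\mathbf{U}_1^{-1}+O(e^{-2\mathfrak{r}t}t^{2n})$. Likewise $Q(t)^{-1}=\mathbf{Z}_{2,+}(t)\bigl(Q(t)\mathbf{Z}_{2,+}(t)\bigr)^{-1}=\mathbf{Z}_{2,+}(t)\bigl(\mathbf{U}_1^{-1}+O(e^{-2\mathfrak{r}t}t^{2n})\bigr)$, and since $\mathbf{Z}_{2,+}(t)=o(e^{-\mathfrak{r}t}t^{n})$ by \eqref{eq4.45}, we conclude $Q(t)^{-1}=O(e^{-\mathfrak{r}t}t^{n})$. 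The case $t\to-\infty$ is handled identically, replacing Assumption $\mathscr{A}_{+}$, \eqref{eq4.45}, $\mathbf{Z}_{2,+}$, $U_r$ by Assumption $\mathscr{A}_{-}$, \eqref{eq4.47}, $\mathbf{Z}_{2,-}$, $V_r$.

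For the Hermitian assertion I would use symplecticity of $\mathcal{S}$: from $\mathcal{S}^H\mathcal{J}\mathcal{S}=\mathcal{J}$ (equivalent to $\mathcal{S}\mathcal{J}\mathcal{S}^H=\mathcal{J}$), reading off the leading $n\times n$ block in the column partition $\mathcal{S}=[U_r,V_r]$ gives $U_r^H\mathcal{J}U_r=0$, i.e.\ $\mathbf{U}_1^H\mathbf{U}_2=\mathbf{U}_2^H\mathbf{U}_1$, whence $(\mathbf{U}_2\mathbf{U}_1^{-1})^H=\mathbf{U}_1^{-H}\mathbf{U}_2^H=\mathbf{U}_2\mathbf{U}_1^{-1}$; the same computation gives $\mathbf{V}_2\mathbf{V}_1^{-1}$ Hermitian. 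The only steps requiring any care are the $o(\cdot)/O(\cdot)$ bookkeeping and the observation that $Q(t)$, and not merely $Q(t)\mathbf{Z}_{2,+}(t)$, is invertible for large $|t|$; there is no substantial obstacle, since all the analytic content is already packaged in Theorem~\ref{thm4.16}.
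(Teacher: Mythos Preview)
Your proof is correct and follows exactly the route the paper intends: the paper simply states that the corollary is a consequence of \eqref{eq4.45} and \eqref{eq4.47}, and you have supplied the straightforward details of passing from $Y(t)\mathbf{Z}_{2,\pm}(t)$ to $W(t)$ and $Q(t)^{-1}$, together with the standard symplectic argument for the Hermitian property.
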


Let
\begin{align}\label{eq4.50}
Y_{ecd,\pm}(t)=[U_{ecd}|V_{ecd}]\left[\begin{array}{c|c} \mathcal{R}_{ecd}&\mathcal{D}_{ecd} \\\hline
\mathcal{G}_{ecd}&\mathcal{E}_{ecd}\end{array}\right] \left[\begin{array}{c} \mathbf{W}^{\pm}_{ecd}\\\hline I_{n_{ecd}}\end{array}\right].
\end{align}
From \eqref{eq4.44b} and \eqref{eq4.46b}, we need to simplify $Y_{ecd,\pm}(t)$ for checking the linear independence of its column space, as  $t\rightarrow \pm\infty$.  Plugging \eqref{eq4.38} into \eqref{eq4.44a} and  \eqref{eq4.46a}, it follows from \eqref{eq4.43} that
\begin{align}\label{eq4.51}
\begin{array}{ll}
\mathbf{Z}_{2,+}(t)=\left[o(e^{-\mathfrak{r}t}t^{n_r}),\mathbf{Z}_{ecd,+}^1\right], &\text{ as }t\rightarrow \infty,\\
\mathbf{Z}_{2,-}(t)=\left[o(e^{-\mathfrak{r}|t|}|t|^{n_r}),\mathbf{Z}_{ecd,-}^1\right], &\text{ as }t\rightarrow -\infty.
\end{array}
\end{align}
Partition $\mathbf{Z}^1_{ecd,\pm}$ in \eqref{eq4.38}, $\mathbf{W}_{ecd}^{\pm}$ in \eqref{eq4.40} and $U_{ecd}$, $V_{ecd}$ in \eqref{eq4.41}, respectively, as
\begin{align}\label{eq4.52}
\begin{array}{l}
\mathbf{ Z}^1_{ecd,\pm}=\left[\begin{array}{cc}\mathbf{ Z}^1_{e,\pm} & \mathbf{ Z}^1_{cd,\pm}  \end{array}\right]\}n,\vspace{-0.3cm}\\
\begin{array}{cc}\hspace{1.8cm}  \underbrace{}_{n_e}  &\  \underbrace{}_{n_{cd}} \end{array}\\
\mathbf{W}_{ecd}^{\pm}=\left[\begin{array}{cc}\mathbf{W}_{2,2}^{\pm} & \mathbf{W}_{2,3}^{\pm} \\\mathbf{W}_{3,2}^{\pm}&\mathbf{W}_{cd}^{\pm}\end{array}\right]\hspace{-0.2cm}\begin{array}{l}\}n_{e}\\ \}n_{cd}\end{array}\vspace{-0.3cm}\\
\begin{array}{cc}\hspace{1.8cm}  \underbrace{}_{n_e}  &\  \underbrace{}_{n_{cd}} \end{array}
\end{array}
\end{align}
and
\begin{align}\label{eq4.53}
U_{ecd}=\left[U_{e},U_{cd}\right]=\left[\begin{array}{cc}U^e_1 & U^{cd}_1 \\\hline U^e_2 & U^{cd}_2\end{array}\right],\ \
V_{ecd}=\left[V_{e},V_{cd}\right]=\left[\begin{array}{cc}V^e_1 & V^{cd}_1 \\\hline V^e_2 & V^{cd}_2\end{array}\right].
\end{align}
Let
\begin{align}\label{eq4.54}
\left[\begin{array}{c|c}\mathcal{R}_{cd} & \mathcal{D}_{cd} \\\hline \mathcal{G}_{cd} & \mathcal{E}_{cd}\end{array}\right]\equiv \left[\begin{array}{c|c}\mathcal{R}_{cd}(t) & \mathcal{D}_{cd}(t)  \\\hline \mathcal{G}_{cd}(t)  & \mathcal{E}_{cd}(t) \end{array}\right]=\left[\begin{array}{c|c}\mathcal{R}_c\oplus\mathcal{R}_d&\mathcal{D}_c\oplus\mathcal{D}_d\\\hline 0\oplus\mathcal{G}_d&\mathcal{R}_c^{-H}\oplus\mathcal{E}_d\end{array}\right],
\end{align}
where $\mathcal{R}_c$, $\mathcal{R}_d$, $\mathcal{D}_c$, $\mathcal{D}_d$, $\mathcal{G}_d$ and $\mathcal{E}_d$ are shown in Theorem \ref{thm4.8}. Then
\begin{align*}
\left[\begin{array}{c|c}\mathcal{R}_{ecd} & \mathcal{D}_{ecd} \\\hline \mathcal{G}_{ecd} & \mathcal{E}_{ecd}\end{array}\right]=\left[\begin{array}{c|c}\mathcal{R}_e\oplus\mathcal{R}_{cd}&\mathcal{D}_e\oplus\mathcal{D}_{cd}\\\hline 0\oplus\mathcal{G}_{cd}&\mathcal{R}_e^{-H}\oplus\mathcal{E}_{cd}\end{array}\right],
\end{align*}
where $\mathcal{R}_e\equiv \mathcal{R}_e(t)$ and $\mathcal{D}_e\equiv \mathcal{D}_e(t)$ are shown in \eqref{eq4.13}. Denote
\begin{align}\label{eq4.55}
 \mathcal{T}_{e,\pm}\equiv \mathcal{T}_{e,\pm}(t)=\mathcal{R}_e\mathbf{W}^{\pm}_{2,2}+\mathcal{D}_e,
\end{align}
where $\mathbf{W}^{\pm}_{2,2}$ is given in \eqref{eq4.52}. The proof of the following lemma is left in Appendix.

\begin{Lemma}\label{lem4.18}
Let $\mathcal{R}_{cd}$, $\mathcal{D}_{cd}$, $\mathcal{G}_{cd}$,  $\mathcal{E}_{cd}$ and $ \mathcal{T}_{e,\pm}$ be of the  forms in \eqref{eq4.54} and \eqref{eq4.55}, respectively.  Let
\begin{align}\label{eq4.56}
\mathcal{I}_{\pm}=\{t\in \mathbb{R}| \mathcal{T}_{e,\pm}  \text{ and } \mathcal{R}_{cd}(\mathbf{W}^{\pm}_{cd}-\mathbf{W}^{\pm}_{3,2} \mathcal{T}_{e,\pm}^{-1}\mathcal{R}_{e}\mathbf{W}^{\pm}_{2,3})+\mathcal{D}_{cd} \text{ are invertible}\},
\end{align}
where  $\mathbf{W}^{\pm}_{cd}$, $\mathbf{W}^{\pm}_{3,2}$ and $\mathbf{W}^{\pm}_{2,3}$ are given in \eqref{eq4.52}. Then there are nonsingular matrices, $Z_{ecd,+}(t)$ for $t\in \mathcal{I}_{+}$ and $Z_{ecd,-}(t)$ for $t\in \mathcal{I}_{-}$, of the forms
\begin{subequations}\label{eq4.57}
\begin{align}\label{eq4.57a}
\begin{array}{l}
Z_{ecd,\pm}(t)=\left[\begin{array}{cc}O(t^{-1})&O(t^{-1})\\O(t^{-1})&I_{n_{cd}}  \end{array}\right],\ \ \ \text{ as }t\rightarrow \pm\infty,\vspace{-0.3cm}\\
\begin{array}{cc} \hspace{2.4cm}      \underbrace{}_{n_e}& \ \ \ \  \underbrace{}_{n_{cd}} \end{array}
\end{array}
\end{align}
such that
\begin{align}\label{eq4.57b}
Y_{ecd,\pm}(t)Z_{ecd,\pm}(t)=\left[U_e,[U_{cd}|V_{cd}]\left[\begin{array}{c|c} \mathcal{R}_{cd}&\mathcal{D}_{cd} \\\hline
\mathcal{G}_{cd}&\mathcal{E}_{cd}\end{array}\right] \left[\begin{array}{c} \mathbf{W}^{\pm}_{cd}+O(t^{-1})\\\hline I\end{array}\right] \right]+O(t^{-1}),
\end{align}
\end{subequations}
as $t\rightarrow \pm\infty$, respectively, where $U_e$, $U_{cd}$ and $V_{cd}$ are given in \eqref{eq4.53} and $Y_{ecd,\pm}(t)$ and   $\mathbf{W}^{\pm}_{cd}$ are given in \eqref{eq4.50} and \eqref{eq4.52}, respectively.
\end{Lemma}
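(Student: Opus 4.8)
The plan is to strip the $e$-block out of $e^{\mathfrak{J}t}$ inside $Y_{ecd,\pm}(t)$ in exactly the spirit in which Theorem~\ref{thm4.16} stripped off the $r$-block, except that, since the $e$-block carries no exponential decay, the exponential scaling $\mathcal{R}_r^{-1}$ of that proof must be replaced by explicit block column operations. First I would substitute the block partitions \eqref{eq4.52}--\eqref{eq4.54} and the structure of $e^{\mathfrak{J}t}$ from Theorem~\ref{thm4.8} (in particular $\mathcal{R}_{ecd}=\mathcal{R}_e\oplus\mathcal{R}_{cd}$, $\mathcal{D}_{ecd}=\mathcal{D}_e\oplus\mathcal{D}_{cd}$, $\mathcal{G}_{ecd}=0\oplus\mathcal{G}_{cd}$, $\mathcal{E}_{ecd}=\mathcal{R}_e^{-H}\oplus\mathcal{E}_{cd}$) into \eqref{eq4.50}, and split the columns of $Y_{ecd,\pm}(t)$ into its first $n_e$ columns $C_{e,\pm}(t)$ and last $n_{cd}$ columns $C_{cd,\pm}(t)$. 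A direct multiplication gives $C_{e,\pm}=U_e\mathcal{T}_{e,\pm}+V_e\mathcal{R}_e^{-H}+\bigl(U_{cd}\mathcal{R}_{cd}+V_{cd}\mathcal{G}_{cd}\bigr)\mathbf{W}^{\pm}_{3,2}$ and $C_{cd,\pm}=U_e\mathcal{R}_e\mathbf{W}^{\pm}_{2,3}+U_{cd}\bigl(\mathcal{R}_{cd}\mathbf{W}^{\pm}_{cd}+\mathcal{D}_{cd}\bigr)+V_{cd}\bigl(\mathcal{G}_{cd}\mathbf{W}^{\pm}_{cd}+\mathcal{E}_{cd}\bigr)$, with $\mathcal{T}_{e,\pm}$ as in \eqref{eq4.55}. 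Thus the leading $U_e$-coefficient of $C_{e,\pm}$ is precisely $\mathcal{T}_{e,\pm}$, while $C_{cd,\pm}$ is contaminated by the spurious term $U_e\mathcal{R}_e\mathbf{W}^{\pm}_{2,3}$.

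Next I would construct $Z_{ecd,\pm}(t)$ as a product of three block-elementary matrices: (I) subtract from the $cd$-columns the multiple $C_{e,\pm}\mathcal{T}_{e,\pm}^{-1}\mathcal{R}_e\mathbf{W}^{\pm}_{2,3}$ of the $e$-columns, which annihilates the $U_e\mathcal{R}_e\mathbf{W}^{\pm}_{2,3}$ contamination and replaces the inner coefficient $\mathbf{W}^{\pm}_{cd}$ by the Schur complement $\mathbf{W}^{\pm}_{cd}-\mathbf{W}^{\pm}_{3,2}\mathcal{T}_{e,\pm}^{-1}\mathcal{R}_e\mathbf{W}^{\pm}_{2,3}=\mathbf{W}^{\pm}_{cd}+O(t^{-1})$ that appears in \eqref{eq4.56}; (II) scale the $e$-columns on the right by $\mathcal{T}_{e,\pm}^{-1}$ — here, factoring $\mathcal{T}_{e,\pm}=\mathcal{R}_e(\mathbf{W}^{\pm}_{2,2}+\mathcal{R}_e^{-1}\mathcal{D}_e)$ exactly as in the proof of Theorem~\ref{thm4.11} and invoking Table~\ref{tab1} (with $\mathcal{R}_e^{-H}=e^{i\alpha t}\widehat{\Phi}$ and $\mathcal{R}_e^{-1}\mathcal{D}_e$ proportional to $\Phi^{-1}\widehat{\Gamma}$) shows $\mathcal{T}_{e,\pm}^{-1}=O(t^{-1})$, turns the $U_e$-coefficient into $I_{n_e}$, and pushes $V_e\mathcal{R}_e^{-H}\mathcal{T}_{e,\pm}^{-1}$ into $O(t^{-1})$; (III) subtract from the rescaled $e$-columns the appropriate multiple of the already-modified $cd$-columns to remove the residual $U_{cd}$- and $V_{cd}$-content of $C_{e,\pm}\mathcal{T}_{e,\pm}^{-1}$, which is where the invertibility of $\mathcal{R}_{cd}\bigl(\mathbf{W}^{\pm}_{cd}-\mathbf{W}^{\pm}_{3,2}\mathcal{T}_{e,\pm}^{-1}\mathcal{R}_e\mathbf{W}^{\pm}_{2,3}\bigr)+\mathcal{D}_{cd}$, the second condition defining $\mathcal{I}_{\pm}$, is used. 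Multiplying the three factors and tallying block sizes yields $Z_{ecd,\pm}(t)$ of the stated form (diagonal block $I_{n_{cd}}$ in the $cd$-slot, all other blocks $O(t^{-1})$), and collecting the surviving terms gives \eqref{eq4.57b}.

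The real work, and the step I expect to be the main obstacle, is the bookkeeping of the $O(t^{-1})$ remainders. Because the $e$-block grows polynomially at rate $t^{n_e-1}$ and there is no exponential decay to absorb cross-terms (in contrast with Theorem~\ref{thm4.16}), one must check individually that every product that shows up — $\mathbf{W}^{\pm}_{3,2}\mathcal{T}_{e,\pm}^{-1}\mathcal{R}_e\mathbf{W}^{\pm}_{2,3}$, $\mathcal{R}_e^{-H}\mathcal{T}_{e,\pm}^{-1}$, $\mathcal{G}_{cd}\mathbf{W}^{\pm}_{3,2}\mathcal{T}_{e,\pm}^{-1}$, and the analogues arising in step (III) — is genuinely $O(t^{-1})$. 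This rests on the sharp asymptotics collected in Tables~\ref{tab1} and~\ref{tab2}, on the boundedness of $\mathcal{G}_{cd}$ and of the scalar blocks $\omega_{ij}$ recorded in Theorem~\ref{thm4.8}, on Lemmas~\ref{lem4.3} and~\ref{lem4.6}, and on Neumann-series and Sherman--Morrison--Woodbury manipulations of the kind already used in Theorems~\ref{thm4.11} and~\ref{thm4.13}; one must also be careful that the chain of invertibility hypotheses invoked in steps (I)--(III) coincides exactly with the two conditions defining $\mathcal{I}_{\pm}$ in \eqref{eq4.56}, so that neither the domain of validity nor the claimed orders are overstated. Since these computations are long but entirely routine, I would state the result here and relegate the full verification to the Appendix, as is done for the companion asymptotic lemmas of this subsection.
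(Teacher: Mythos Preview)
Your approach is correct and coincides with the paper's: your steps (I)+(II) are exactly the paper's single upper-triangular factor $Z_{1,\pm}(t)=\left[\begin{smallmatrix}\mathcal{T}_{e,\pm}^{-1}&-\mathcal{T}_{e,\pm}^{-1}\mathcal{R}_e\mathbf{W}^{\pm}_{2,3}\\0&I\end{smallmatrix}\right]$, your step (III) is its lower-triangular $Z_{2,\pm}(t)$, and the $O(t^{-1})$ bookkeeping you flag is precisely what the paper isolates as Lemmas~\ref{lem5.6}--\ref{lem5.7} in the Appendix. One small slip: to obtain $\mathcal{T}_{e,\pm}^{-1}=O(t^{-1})$ you must factor out $\mathcal{D}_e$ rather than $\mathcal{R}_e$, since Table~\ref{tab1} gives $\mathcal{D}_e^{-1}\mathcal{R}_e\sim(\widehat{\Gamma}_n^{2n-1})^{-1}\Phi_n=O(t^{-1})$ while $\mathcal{R}_e^{-1}\mathcal{D}_e\sim\Phi_n^{-1}\widehat{\Gamma}_n^{2n-1}$ grows polynomially.
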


Denote
\begin{align*}
\mathbf{ Z}_{3,\pm}(t)=\mathbf{ Z}_{2,\pm}(t)(I_{n_r}\oplus Z_{ecd,\pm}(t)), \text{ for }t\in \mathcal{I}_{\pm}.
\end{align*}
From Theorem \ref{thm4.16}, Lemma \ref{lem4.18} and \eqref{eq4.51}, we then have the theorem.

\begin{Theorem}\label{thm4.19}
With the same notations of Theorem \ref{thm4.16}, where $U_{ecd}$ and $V_{ecd}$ are of the forms in \eqref{eq4.53}. Let $\mathfrak{r}=\min\{\Re({\rm diag}(R_r))\}>0$.
Then
\begin{itemize}
\item[(i)] if {\bf Assumption} $\mathscr{A}_{+}$  holds, then there is a nonsingular matrix $\mathbf{Z}_{3,+}(t)$ with
\begin{subequations}\label{eq4.58}
\begin{align}\label{eq4.58a}
\begin{array}{l}
\mathbf{ Z}_{3,+}(t)=\left[\begin{array}{ccc}o(e^{-\mathfrak{r}t}t^{n_r})&O(t^{-1})&\mathbf{ Z}^1_{cd,+} +O(t^{-1})   \end{array}\right]\vspace{-0.3cm}\\
\begin{array}{ccc}\hspace{2cm}    \underbrace{\ \ \ \ \ \ \ \ \ }_{n_r}  & \ \ \ \  \underbrace{}_{n_e}& \ \ \underbrace{\ \ \ \ \ \ \ \ \ \ \ \ \ \ \ }_{n_{cd}} \end{array}\end{array}
\end{align}
for $t\in \mathcal{I}_{+}$  such that
\begin{align}\label{eq4.58b}
Y(t)\mathbf{Z}_{3,+}(t)=\left[U_r+o(e^{-\mathfrak{r}t}t^{n}),\left[U_e,[U_{cd}|V_{cd}]\left[\begin{array}{c|c} \mathcal{R}_{cd}&\mathcal{D}_{cd} \\\hline
\mathcal{G}_{cd}&\mathcal{E}_{cd}\end{array}\right] \left[\begin{array}{c} \mathbf{W}^{+}_{cd}+O(t^{-1})\\\hline I\end{array}\right] \right]+O(t^{-1})\right],
\end{align}
\end{subequations}
as $t\rightarrow \infty$, where $\mathbf{ Z}^1_{cd,+}$ and $\mathbf{W}^{+}_{cd}$ are given in \eqref{eq4.52};
\item[(ii)] if {\bf Assumption} $\mathscr{A}_{-}$  holds,  then there is a nonsingular matrix $\mathbf{Z}_{3,-}(t)$ with
\begin{subequations}\label{eq4.59}
\begin{align}\label{eq4.59a}
\begin{array}{l}
\mathbf{ Z}_{3,-}(t)=\left[\begin{array}{ccc}o(e^{-\mathfrak{r}|t|}|t|^{n_r})&O(t^{-1})&\mathbf{ Z}^1_{cd,-} +O(t^{-1})   \end{array}\right]\vspace{-0.3cm}\\
\begin{array}{ccc}\hspace{2cm}    \underbrace{\ \ \ \ \ \ \ \  \ \ \ }_{n_r}  & \ \ \ \  \underbrace{}_{n_e}& \  \ \ \underbrace{\ \ \ \ \ \ \ \ \ \ \ \ \ \ \ }_{n_{cd}} \end{array}\end{array}
\end{align}
for $t\in \mathcal{I}_{-}$ such that
\begin{align}\label{eq4.59b}
Y(t)\mathbf{Z}_{3,-}(t)=\left[V_r+o(e^{-\mathfrak{r}|t|}|t|^{n}),\left[U_e,[U_{cd}|V_{cd}]\left[\begin{array}{c|c} \mathcal{R}_{cd}&\mathcal{D}_{cd} \\\hline
\mathcal{G}_{cd}&\mathcal{E}_{cd}\end{array}\right] \left[\begin{array}{c} \mathbf{W}^{-}_{cd}+O(t^{-1})\\\hline I\end{array}\right] \right]+O(t^{-1})\right],
\end{align}
\end{subequations}
as $t\rightarrow -\infty$, where $\mathbf{ Z}^1_{cd,-}$ and $\mathbf{W}^{-}_{cd}$ are given in \eqref{eq4.52}.
\end{itemize}
\end{Theorem}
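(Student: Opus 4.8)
The plan is to obtain Theorem~\ref{thm4.19} by simply composing the two reductions already in place: the ``$r$-block'' reduction of Theorem~\ref{thm4.16} and the ``$ecd$-block'' reduction of Lemma~\ref{lem4.18}. First I would take the identity \eqref{eq4.44b} (resp.\ \eqref{eq4.46b}), which under {\bf Assumption} $\mathscr{A}_{+}$ (resp.\ $\mathscr{A}_{-}$) writes $Y(t)\mathbf{Z}_{2,+}(t)$ (resp.\ $Y(t)\mathbf{Z}_{2,-}(t)$) as $[U_r,\,Y_{ecd,+}(t)]$ (resp.\ $[V_r,\,Y_{ecd,-}(t)]$) plus an $o(e^{-\mathfrak{r}|t|}|t|^{n})$ remainder, with $Y_{ecd,\pm}(t)$ as in \eqref{eq4.50}. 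Then I would right-multiply by $I_{n_r}\oplus Z_{ecd,\pm}(t)$, where $Z_{ecd,\pm}(t)$ is the nonsingular matrix supplied by Lemma~\ref{lem4.18} for $t\in\mathcal{I}_\pm$; since $\mathbf{Z}_{3,\pm}(t)=\mathbf{Z}_{2,\pm}(t)(I_{n_r}\oplus Z_{ecd,\pm}(t))$ is a product of invertible matrices, its invertibility is immediate.

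Second, I would substitute the $ecd$-column identity \eqref{eq4.57b}, namely
$Y_{ecd,\pm}(t)Z_{ecd,\pm}(t)=\left[U_e,\,[U_{cd}|V_{cd}]\left[\begin{smallmatrix}\mathcal{R}_{cd}&\mathcal{D}_{cd}\\\mathcal{G}_{cd}&\mathcal{E}_{cd}\end{smallmatrix}\right]\left[\begin{smallmatrix}\mathbf{W}^{\pm}_{cd}+O(t^{-1})\\ I\end{smallmatrix}\right]\right]+O(t^{-1})$,
into the expression obtained in the first step. Because $Z_{ecd,\pm}(t)$ has the bounded block form \eqref{eq4.57a}, multiplying the $o(e^{-\mathfrak{r}|t|}|t|^{n})$ remainder of Theorem~\ref{thm4.16} on the right by $I_{n_r}\oplus Z_{ecd,\pm}(t)$ leaves a remainder of the same order, so the first column of $Y(t)\mathbf{Z}_{3,\pm}(t)$ stays $U_r+o(e^{-\mathfrak{r}|t|}|t|^{n})$ (resp.\ $V_r+o(e^{-\mathfrak{r}|t|}|t|^{n})$) while the remaining columns are exactly the right-hand side of \eqref{eq4.57b}. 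This yields \eqref{eq4.58b} (resp.\ \eqref{eq4.59b}) directly.

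Third, I would establish the block form \eqref{eq4.58a} (resp.\ \eqref{eq4.59a}) of $\mathbf{Z}_{3,\pm}(t)$. For this I would insert the asymptotic form \eqref{eq4.51} of $\mathbf{Z}_{2,\pm}(t)$, i.e.\ $[o(e^{-\mathfrak{r}|t|}|t|^{n_r}),\,\mathbf{Z}^1_{ecd,\pm}]$, partition $\mathbf{Z}^1_{ecd,\pm}=[\mathbf{Z}^1_{e,\pm},\,\mathbf{Z}^1_{cd,\pm}]$ as in \eqref{eq4.52}, and multiply on the right by $I_{n_r}\oplus Z_{ecd,\pm}(t)$ using the $2\times2$ block structure \eqref{eq4.57a}. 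The first block column remains $o(e^{-\mathfrak{r}|t|}|t|^{n_r})$; the $e$-block column becomes $\mathbf{Z}^1_{e,\pm}O(t^{-1})+\mathbf{Z}^1_{cd,\pm}O(t^{-1})=O(t^{-1})$; and the $cd$-block column becomes $\mathbf{Z}^1_{e,\pm}O(t^{-1})+\mathbf{Z}^1_{cd,\pm}=\mathbf{Z}^1_{cd,\pm}+O(t^{-1})$, which is precisely \eqref{eq4.58a} (resp.\ \eqref{eq4.59a}).

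The routine but delicate part is the error-term bookkeeping: one must verify that every product above — an $o(e^{-\mathfrak{r}|t|}|t|^{n})$ factor times a bounded matrix, an $O(t^{-1})$ factor times an $O(t^{-1})$ or bounded matrix, and the mixed products coming from the off-diagonal $O(t^{-1})$ entries of $Z_{ecd,\pm}(t)$ — is absorbed into the claimed remainders, and that the exponential factor $\mathcal{R}_r^{-1}$ (resp.\ $\mathcal{R}_r^{H}$) carried by $\mathbf{Z}_{2,\pm}(t)$ still dominates the polynomial growth, so the stated orders are sharp. Since $\mathbf{Z}_{2,\pm}(t)$ and $Z_{ecd,\pm}(t)$ are already known to be invertible, no further rank argument is needed, and the proof reduces to a careful matrix-multiplication verification; for that reason I would place it in the Appendix.
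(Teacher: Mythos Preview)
Your proposal is correct and matches the paper's approach exactly: the paper defines $\mathbf{Z}_{3,\pm}(t)=\mathbf{Z}_{2,\pm}(t)(I_{n_r}\oplus Z_{ecd,\pm}(t))$ and states that the theorem follows from Theorem~\ref{thm4.16}, Lemma~\ref{lem4.18} and \eqref{eq4.51}, which is precisely the composition you describe. Your more detailed error-term bookkeeping simply spells out what the paper leaves implicit.
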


In the case that $\mathcal{R}_{cd}$, $\mathcal{D}_{cd}$, $\mathcal{G}_{cd}$ and $\mathcal{E}_{cd}$  are absent, we have an immediate consequence.
\begin{Corollary}\label{cor4.20}
With the same notations of Theorem \ref{thm4.19},  suppose that  {\bf Assumptions} $\mathscr{A}_{+}$ and  $\mathscr{A}_{-}$ hold and
\begin{align}\label{eq4.60}
\mathfrak{J}=\mathfrak{J}_{re}\equiv\left[\begin{array}{cc|cc}R_r & 0 & 0 & 0 \\ 0 & R_e & 0 & D_e \\\hline 0 & 0 & -R_r^H & 0 \\ 0 & 0 & 0 & -R_e^H\end{array}\right].
\end{align}
Let $\mathbf{U}_{j,+}=[U_j^r,U_j^e]$, $\mathbf{U}_{j,-}=[V_j^r,U_j^e]$ for $j=1,2$ and $W(t)= P(t)Q(t)^{-1}$, where  $Y(t)=[Q(t)^{\top} ,P(t)^{\top} ]^{\top}$ is the solution of IVP \eqref{eq4.1}. If $\mathbf{U}_{1,+}$ and $\mathbf{U}_{1,-}$ are  invertible, then
\begin{align*}
\begin{array}{lll}
W(t)=\mathbf{U}_{2,+}\mathbf{U}^{-1}_{1,+}+O(t^{-1}), & Q(t)^{-1}=O(t^{-1}),&\text{ as }t\rightarrow \infty,\\
W(t)=\mathbf{U}_{2,-}\mathbf{U}^{-1}_{1,-}+O(t^{-1}), & Q(t)^{-1}=O(t^{-1}),&\text{ as }t\rightarrow -\infty.
\end{array}
\end{align*}
Here, $\mathbf{U}_{2,\pm}\mathbf{U}^{-1}_{1,\pm}$ is Hermitian.
\end{Corollary}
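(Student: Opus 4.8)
The plan is to derive Corollary~\ref{cor4.20} by specializing Theorem~\ref{thm4.19} to the degenerate case $n_c=n_d=0$ (equivalently $n_{cd}=0$), in which the blocks $\mathcal{R}_{cd},\mathcal{D}_{cd},\mathcal{G}_{cd},\mathcal{E}_{cd}$ and the column groups $U_{cd},V_{cd},\mathbf{Z}^1_{cd,\pm}$ all disappear, and then to transport the resulting asymptotic factorization of $Y(t)$ through the identity $W(t)=P(t)Q(t)^{-1}$.

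First I would record the trivial but essential bookkeeping fact: since $\mathfrak{r}=\min\{\Re({\rm diag}(R_r))\}>0$, any term of the form $o(e^{-\mathfrak{r}t}t^{n})$ as $t\to\infty$, or $o(e^{-\mathfrak{r}|t|}|t|^{n})$ as $t\to-\infty$, is in particular $O(t^{-1})$; in the same way the leading block-column $o(e^{-\mathfrak{r}t}t^{n_r})$ of $\mathbf{Z}_{3,+}(t)$ in \eqref{eq4.58a} is $O(t^{-1})$, so with $n_{cd}=0$ the whole matrix $\mathbf{Z}_{3,+}(t)$ is $O(t^{-1})$ (and likewise $\mathbf{Z}_{3,-}(t)$). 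Plugging $n_{cd}=0$ into \eqref{eq4.58b} collapses the inner bracket to $U_e$, so Theorem~\ref{thm4.19}(i) reads $Y(t)\mathbf{Z}_{3,+}(t)=[U_r,U_e]+O(t^{-1})$ with $\mathbf{Z}_{3,+}(t)$ nonsingular, valid for $t\in\mathcal{I}_{+}$ as $t\to\infty$. Using $Y(t)=[Q(t)^\top,P(t)^\top]^\top$ and the partition of $\mathcal S$ in \eqref{eq4.41}, this says $Q(t)\mathbf{Z}_{3,+}(t)=\mathbf{U}_{1,+}+O(t^{-1})$ and $P(t)\mathbf{Z}_{3,+}(t)=\mathbf{U}_{2,+}+O(t^{-1})$, where $\mathbf{U}_{j,+}=[U_j^r,U_j^e]$.

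Next, assuming $\mathbf{U}_{1,+}$ invertible, $Q(t)\mathbf{Z}_{3,+}(t)$ is invertible for $|t|$ large with $(Q(t)\mathbf{Z}_{3,+}(t))^{-1}=\mathbf{U}_{1,+}^{-1}+O(t^{-1})$, hence
\[
W(t)=P(t)Q(t)^{-1}=\bigl(P(t)\mathbf{Z}_{3,+}(t)\bigr)\bigl(Q(t)\mathbf{Z}_{3,+}(t)\bigr)^{-1}=\mathbf{U}_{2,+}\mathbf{U}_{1,+}^{-1}+O(t^{-1}),
\]
and $Q(t)^{-1}=\mathbf{Z}_{3,+}(t)\bigl(Q(t)\mathbf{Z}_{3,+}(t)\bigr)^{-1}=O(t^{-1})$, as $t\to\infty$. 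The case $t\to-\infty$ is word-for-word the same using Theorem~\ref{thm4.19}(ii) and \eqref{eq4.59b}, with $V_r$ in place of $U_r$, giving $\mathbf{U}_{j,-}=[V_j^r,U_j^e]$. For the Hermiticity claim I would invoke symplecticity of $\mathcal S$: each index block of a symplectic matrix is isotropic and distinct index blocks are $\mathcal J$-orthogonal, so the $2n\times n$ matrices $[U_r,U_e]$ and $[V_r,U_e]$ are Lagrangian bases; consequently $\mathbf{U}_{1,\pm}^{H}\mathbf{U}_{2,\pm}$ is Hermitian and $\mathbf{U}_{2,\pm}\mathbf{U}_{1,\pm}^{-1}=\mathbf{U}_{1,\pm}^{-H}(\mathbf{U}_{1,\pm}^{H}\mathbf{U}_{2,\pm})\mathbf{U}_{1,\pm}^{-1}$ is Hermitian.

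The only point that is not pure bookkeeping is verifying that the degenerate Theorem~\ref{thm4.19} actually applies on a neighbourhood of $\pm\infty$, i.e.\ that $\mathcal{I}_{\pm}$ is co-bounded; with $n_{cd}=0$ this reduces to the invertibility of $\mathcal{T}_{e,\pm}=\mathcal{R}_e\mathbf{W}^{\pm}_{2,2}+\mathcal{D}_e$ for large $|t|$, which is established exactly as in the proof of Theorem~\ref{thm4.11} via the $\mathfrak{J}_e$-estimates of Table~\ref{tab1}. Everything else is absorbing exponentially small terms into $O(t^{-1})$ and inverting a matrix that tends to an invertible limit.
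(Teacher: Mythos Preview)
Your argument is correct and is exactly the intended route: the paper does not give a separate proof of Corollary~\ref{cor4.20} but states it as an ``immediate consequence'' of Theorem~\ref{thm4.19} when the $cd$-blocks are absent, and you have supplied precisely the missing bookkeeping (absorbing the exponentially small column into $O(t^{-1})$, inverting $Q(t)\mathbf{Z}_{3,\pm}(t)\to\mathbf{U}_{1,\pm}$, and reading off Hermiticity from the Lagrangian property of $[U_r,U_e]$ and $[V_r,U_e]$). Your remark that $\mathcal{I}_\pm$ is co-bounded because with $n_{cd}=0$ only the invertibility of $\mathcal{T}_{e,\pm}$ is at stake---established in Lemma~\ref{lem5.6}---is the right justification and is consistent with the paper's appendix.
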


Suppose that  those submatrices $R_{d}$, $D_{d}$ and $G_{d}$ of $\mathfrak{J}$ in \eqref{eq4.8}  are absent and that $\mathfrak{J}_c=\left[\begin{array}{cc}R_{c}&D_{c} \\0&-R_{c}^H\end{array}\right]$ is of the elementary case with the form in \eqref{eq4.14}, where $R_{c}$ and $D_{c}$ are submatrices of $\mathfrak{J}$. Then $U_{cd}=U_c$ and $V_{cd}=V_c$. Partition
\begin{align}\label{eq4.61}
U_c\equiv\left[\begin{array}{c}U^c_1 \\\hline U^c_2\end{array}\right]=\left[\begin{array}{cc}U^c_{1,1}&u^c_{1,2} \\\hline U^c_{2,1}&u^c_{2,2}\end{array}\right],\ \ V_c\equiv\left[\begin{array}{c}V^c_1 \\\hline V^c_2\end{array}\right]=\left[\begin{array}{cc}V^c_{1,1}&v^c_{1,2} \\\hline V^c_{2,1}&v^c_{2,2}\end{array}\right].
\end{align}
We state the corollary but omit its proof, as it is an easy combination of Theorems  \ref{thm4.15} and \ref{thm4.19}.
\begin{Corollary}\label{cor4.21}
With the same notations of Theorem \ref{thm4.19},  suppose that  {\bf Assumptions} $\mathscr{A}_{+}$ and  $\mathscr{A}_{-}$ hold, and
\begin{subequations}\label{eq4.62}
\begin{align}\label{eq4.62a}
\mathfrak{J}=\left[\begin{array}{ccc|ccc}R_r & 0 & 0 & 0&0 & 0 \\ 0 & R_e & 0 & 0& D_e& 0\\ 0& 0 & R_c  & 0& 0& D_c \\\hline 0 &0 & 0 & -R_r^H & 0 & 0\\ 0 &0 & 0 & 0 & -R_e^H &0\\0 &0 & 0 & 0 &0& -R_c^H \end{array}\right],
\end{align}
where
\begin{align}\label{eq4.62b}
 \left[\begin{array}{cc}R_{c}&D_{c} \\0&-R_{c}^H\end{array}\right] \text{ is of the elementary case with }\sigma(R_{c})=\{i\eta\}.
\end{align}
\end{subequations}
Let $f_u^{\pm}$, $g_u^{\pm}$, $f_v^{\pm}$ and  $g_v^{\pm}$ be the constants defined in \eqref{eq4.21a} with $\mathbf{W}$ being replaced by $\mathbf{W}_{cd}^\pm$, where $\mathbf{W}_{cd}^\pm$ is given in \eqref{eq4.52}. Denote
\begin{align*}
\begin{array}{l}
\mathbf{U}_{1,0}^{c,\pm}=[U_{1,1}^c,(f_u^{\pm}+g_u^{\pm})u_{1,2}^c+(f_v^{\pm}+g_v^{\pm})v_{1,2}^c],\\ \mathbf{U}_{2,0}^{c,\pm}=[U_{2,1}^c,(f_u^{\pm}+g_u^{\pm})u_{2,2}^c+(f_v^{\pm}+g_v^{\pm})v_{2,2}^c],
\end{array}
\end{align*}
where $U_{1,1}^c$, $U_{2,1}^c$, $u_{1,2}^c$, $u_{2,2}^c$, $v_{1,2}^c$ and  $v_{2,2}^c$ are defined in \eqref{eq4.61}. Let
\begin{align}\label{eq4.63}
\mathbf{U}_{j,+}=[U_j^r,U_j^e,\mathbf{U}_{j,0}^{c,+}],\ \ \mathbf{U}_{j,-}=[V_j^r,U_j^e,\mathbf{U}_{j,0}^{c,-}],
\end{align}
for $j=1,2$.   If $\mathbf{U}_{1,+}$ and $\mathbf{U}_{1,-}$ are  invertible, then
\begin{align*}
\begin{array}{lll}
W(t)=\mathbf{U}_{2,+}\mathbf{U}^{-1}_{1,+}+O(t^{-1}), & Q(t)^{-1}=e^{-i\eta t}\mathbf{Z}_{cd,+}^1e_{n_{cd}}e_n^{H}\mathbf{U}^{-1}_{1,+}+O(t^{-1}),&\text{ as }t\rightarrow \infty,\\
W(t)=\mathbf{U}_{2,-}\mathbf{U}^{-1}_{1,-}+O(t^{-1}), & Q(t)^{-1}=e^{-i\eta t}\mathbf{Z}_{cd,-}^1e_{n_{cd}}e_n^{H}\mathbf{U}^{-1}_{1,-}+O(t^{-1}),&\text{ as }t\rightarrow -\infty,
\end{array}
\end{align*}
where $\mathbf{Z}_{cd,\pm}^1$ is defined in \eqref{eq4.52}. Here $\mathbf{U}_{2,\pm}\mathbf{U}^{-1}_{1,\pm}$ is Hermitian.
\end{Corollary}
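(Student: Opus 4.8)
The plan is to derive Corollary~\ref{cor4.21} as a direct combination of Theorem~\ref{thm4.19} and the elementary analysis in Theorem~\ref{thm4.15}. The key observation is that the Hamiltonian Jordan canonical form $\mathfrak{J}$ in \eqref{eq4.62a} has no $d$-blocks (so $R_d$, $D_d$, $G_d$ are absent), which means $n_d=0$, $n_{cd}=n_c$, $U_{cd}=U_c$, $V_{cd}=V_c$, and the block $\left[\begin{smallmatrix}\mathcal{R}_{cd}&\mathcal{D}_{cd}\\\mathcal{G}_{cd}&\mathcal{E}_{cd}\end{smallmatrix}\right]$ reduces to $\left[\begin{smallmatrix}\mathcal{R}_c&\mathcal{D}_c\\0&\mathcal{R}_c^{-H}\end{smallmatrix}\right]=e^{\mathfrak{J}_ct}$, which is exactly the $e^{At}$ computed in Corollary~\ref{cor4.7} (the $c$-block case of Theorem~\ref{thm4.8}). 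Under {\bf Assumptions} $\mathscr{A}_{+}$ and $\mathscr{A}_{-}$ we may invoke Theorem~\ref{thm4.19}(i) and (ii) to obtain nonsingular matrices $\mathbf{Z}_{3,\pm}(t)$ with the asymptotic expansions \eqref{eq4.58b} and \eqref{eq4.59b}, where now $[U_{cd}|V_{cd}]\left[\begin{smallmatrix}\mathcal{R}_{cd}&\mathcal{D}_{cd}\\\mathcal{G}_{cd}&\mathcal{E}_{cd}\end{smallmatrix}\right]\left[\begin{smallmatrix}\mathbf{W}^{\pm}_{cd}+O(t^{-1})\\I\end{smallmatrix}\right]$ is the $Y(t)$-type matrix governed by the $c$-block alone with perturbed initial data $\mathbf{W}^{\pm}_{cd}+O(t^{-1})$.

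First I would make explicit that the right-hand column block of \eqref{eq4.58b}, namely $[U_c|V_c]e^{\mathfrak{J}_ct}\left[\begin{smallmatrix}\mathbf{W}^{+}_{cd}+O(t^{-1})\\I\end{smallmatrix}\right]$, has exactly the structure analyzed in Theorem~\ref{thm4.15} (and its proof via Lemma~\ref{lem4.12} with $\gamma=\delta=\eta$): one applies the change of variables $W_2^{-1}\Omega(t)(I\oplus e^{-i\eta t})$ there, but here the role of the fixed data $\mathbf{W}=W_1W_2^{-1}$ is played by $\mathbf{W}^{+}_{cd}$ plus an $O(t^{-1})$ perturbation. Since all the estimates in Tables~\ref{tab1} and \ref{tab2} and in Lemma~\ref{lem4.12} are stable under $O(t^{-1})$ perturbations of the constant matrix/vector $W$ and $w$, the conclusion of Theorem~\ref{thm4.15} carries over verbatim: the $c$-block contributes a column space asymptotic to $\left[\begin{smallmatrix}\mathbf{U}_{1,0}^{c,+}\\\mathbf{U}_{2,0}^{c,+}\end{smallmatrix}\right]+O(t^{-1})$, together with an explicit scaling matrix $e^{-i\eta t}e_{n_c}e_{n}^{H}\mathbf{U}_{1,0}^{c,+\,-1}+O(t^{-1})$ playing the role of the analogue of $\Omega(t)(I\oplus e^{-i\eta t})(\mathbf{U}_{1,0}+O(t^{-1}))^{-1}$ in that proof. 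One should also absorb the $o(e^{-\mathfrak{r}t}t^{n_r})$ term from the $r$-block, which is dominated by $O(t^{-1})$ since $\mathfrak{r}>0$, so the $r$-part column converges to $U^r$ up to $O(t^{-1})$.

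Next I would assemble the three column blocks. Define $\mathbf{U}_{j,+}=[U_j^r,U_j^e,\mathbf{U}_{j,0}^{c,+}]$ as in \eqref{eq4.63} and $\mathbf{Z}_{4,+}(t)$ to be $\mathbf{Z}_{3,+}(t)$ post-composed with the block-diagonal scaling $(I_{n_r}\oplus I_{n_e}\oplus(W_2^{-1}\Omega_c(t)(I\oplus e^{-i\eta t})))$ arising from the $c$-block reduction; then $Y(t)\mathbf{Z}_{4,+}(t)=\left[\begin{smallmatrix}\mathbf{U}_{1,+}\\\mathbf{U}_{2,+}\end{smallmatrix}\right]+O(t^{-1})$ as $t\to\infty$. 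Invertibility of $\mathbf{U}_{1,+}$ then gives $Q(t)=(\mathbf{U}_{1,+}+O(t^{-1}))\mathbf{Z}_{4,+}(t)^{-1}$, so $W(t)=P(t)Q(t)^{-1}=(\mathbf{U}_{2,+}+O(t^{-1}))(\mathbf{U}_{1,+}+O(t^{-1}))^{-1}=\mathbf{U}_{2,+}\mathbf{U}_{1,+}^{-1}+O(t^{-1})$, and $Q(t)^{-1}=\mathbf{Z}_{4,+}(t)(\mathbf{U}_{1,+}+O(t^{-1}))^{-1}$; reading off the leading term of $\mathbf{Z}_{4,+}(t)$ from \eqref{eq4.58a} (whose $c$-block column is $\mathbf{Z}^1_{cd,+}+O(t^{-1})$) together with the $e^{-i\eta t}e_{n_{cd}}e_n^H$ factor yields $Q(t)^{-1}=e^{-i\eta t}\mathbf{Z}^1_{cd,+}e_{n_{cd}}e_n^{H}\mathbf{U}_{1,+}^{-1}+O(t^{-1})$. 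The case $t\to-\infty$ is identical with $U^r$ replaced by $V^r$ and the $+$ data replaced by the $-$ data, using Theorem~\ref{thm4.19}(ii). Finally, Hermiticity of $\mathbf{U}_{2,\pm}\mathbf{U}_{1,\pm}^{-1}$ follows as in Theorems~\ref{thm4.15} and \ref{thm4.13}: since $W_0$ is Hermitian, Remark~\ref{rem4.1} shows the relevant $\mathbf{w}_{2,2}$ entries are real, so Lemma~\ref{lem4.12} gives that $[\mathbf{U}_{1,\pm}^{\top},\mathbf{U}_{2,\pm}^{\top}]^{\top}$ is $\mathcal{J}$-orthogonal, hence $\mathbf{U}_{2,\pm}\mathbf{U}_{1,\pm}^{-1}$ is Hermitian.

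I expect the main obstacle to be purely bookkeeping: correctly tracking how the $O(t^{-1})$-perturbed initial data $\mathbf{W}^{\pm}_{cd}+O(t^{-1})$ propagates through the $c$-block reduction of Lemma~\ref{lem4.12} and Theorem~\ref{thm4.15}, and verifying that the nonsingular transformation matrices compose to the claimed block form in \eqref{eq4.58a}/\eqref{eq4.59a} without the perturbations destroying invertibility for large $|t|$. Since the statement explicitly says the proof is ``an easy combination of Theorems~\ref{thm4.15} and \ref{thm4.19}'' and is ``omitted,'' the substantive point to record is just that no $d$-blocks are present (so $\theta=0$ throughout, no periodic blow-up, and the $c$-block limit of $W(t)$ is a genuine constant), and that all asymptotic estimates used are robust to $O(t^{-1})$ perturbations of the constant data — which is already the content of Tables~\ref{tab1}–\ref{tab2} and the proofs in Subsection~\ref{sec4.2}.
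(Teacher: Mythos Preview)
Your proposal is correct and matches the paper's approach exactly: the paper explicitly omits the proof, stating that it ``is an easy combination of Theorems~\ref{thm4.15} and \ref{thm4.19},'' and your outline carries out precisely that combination---using Theorem~\ref{thm4.19} to reduce to the $r$, $e$, and $cd$ column blocks, then observing that with no $d$-blocks the $cd$-part is handled by the elementary $c$-case analysis of Theorem~\ref{thm4.15} (with $O(t^{-1})$-perturbed initial data absorbed by the estimates in Tables~\ref{tab1}--\ref{tab2}). Your bookkeeping of the $\mathbf{Z}$-factors and the Hermiticity argument via Lemma~\ref{lem4.12} and Remark~\ref{rem4.1} are also consistent with how the paper handles the elementary cases.
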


From \eqref{eq4.58b} and \eqref{eq4.59b}, we need to simplify the linear independence of the column space of
\begin{align}\label{eq4.64}
Y_{cd,\pm}(t)=\left[U_{cd}|V_{cd}\right]\left[\begin{array}{c|c} \mathcal{R}_{cd}&\mathcal{D}_{cd} \\\hline
\mathcal{G}_{cd}&\mathcal{E}_{cd}\end{array}\right] \left[\begin{array}{c} \mathbf{W}^{\pm}_{cd}+O(t^{-1})\\\hline I\end{array}\right],
\end{align}
as $t\rightarrow \pm \infty$, respectively. Let
\begin{align}\label{eq4.65}
\begin{array}{ll}
\mathcal{R}_{cd}\equiv \mathcal{R}_{cd}(t)={\rm diag}(\mathbf{B}_{1},\ldots,\mathbf{B}_{\mu}),&\mathcal{D}_{cd}\equiv \mathcal{D}_{cd}(t)={\rm diag}(\mathbf{D}_{1},\ldots,\mathbf{D}_{\mu}),\\
\mathcal{G}_{cd}\equiv \mathcal{G}_{cd}(t)={\rm diag}(\mathbf{G}_{1},\ldots,\mathbf{G}_{\mu}),&\mathcal{E}_{cd}\equiv \mathcal{E}_{cd}(t)={\rm diag}(\mathbf{E}_{1},\ldots,\mathbf{E}_{\mu}),
\end{array}
\end{align}
where $\mu=\mu_c+\mu_d$,
\begin{align}\label{eq4.66}
\begin{array}{ll}
\mathbf{ B}_\ell\equiv \mathbf{ B}_\ell(t)=\left[\begin{array}{cc}\Phi_{m_{\ell},n_{\ell}}& \phi_{m_{\ell},n_{\ell}}^{1}  \\0&\omega_{11}^\ell\end{array}\right],&
\mathbf{ D}_\ell\equiv \mathbf{ D}_\ell(t)=\left[\begin{array}{cc}\widehat{\Gamma}_{m_{\ell}+1,n_{\ell}+1}^{2m_{\ell},2n_{\ell}}& \phi_{m_{\ell},n_{\ell}}^{2}  \\\widehat{\psi}_{m_{\ell},n_{\ell}}^{1^H} &\omega_{12}^\ell\end{array}\right],\\
\mathbf{ G}_\ell\equiv \mathbf{ G}_\ell(t)=\left[\begin{array}{cc}0& 0 \\0 &\omega_{21}^\ell\end{array}\right],&\mathbf{ E}_\ell\equiv \mathbf{ E}_\ell(t)=\left[\begin{array}{cc}\widehat{\Phi}_{m_{\ell},n_{\ell}}& 0 \\\widehat{\psi}_{m_{\ell},n_{\ell}}^{2^H}&\omega_{22}^\ell\end{array}\right],
\end{array}
\end{align}
and $\Phi_{m_{\ell},n_{\ell}}$, $\widehat{\Gamma}_{m_{\ell}+1,n_{\ell}+1}^{2m_{\ell},2n_{\ell}}$, $\phi_{m_{\ell},n_{\ell}}^{1}$, $\phi_{m_{\ell},n_{\ell}}^{2}$, $\widehat{\psi}_{m_{\ell},n_{\ell}}^{1^H}$, $\widehat{\psi}_{m_{\ell},n_{\ell}}^{2^H}$, $\omega^\ell_{11}$, $\omega_{12}^\ell$, $\omega_{21}^\ell$ and $\omega_{22}^\ell$ are defined in \eqref{eq4.12} in which  $\gamma$ and $\eta$ are  replaced by $\gamma_\ell$ and $\delta_\ell$, respectively,  and $\beta$ is replaced by $\beta^{cd}_{\ell}\in \{-1,1\}$ for $\ell=1,\ldots,\mu$. Note that  $\gamma_\ell=\delta_\ell$, $\beta^{cd}_{\ell}=\beta^{c}_{\ell}$ when $\ell\leqslant \mu_c$ and $\gamma_\ell\neq \delta_\ell$, $\beta^{cd}_{\ell}=\beta^{d}_{\ell}$ when $\mu_c< \ell\leqslant \mu$ . Let $\varkappa_\ell=m_\ell+n_\ell+1$.
Denote
\begin{align}\label{eq4.67}
\begin{array}{l}
\hat{\omega}^{\ell}_{11}\equiv \hat{\omega}^{\ell}_{11}(t)=\frac{1}{2}[(-1)^{m_{\ell}}e^{i\gamma_\ell t}+(-1)^{n_{\ell}}e^{i\delta_\ell t}],\\
\hat{\omega}^{\ell}_{12}\equiv \hat{\omega}^{\ell}_{12}(t)=\frac{1}{2}[-i\beta^{cd}_\ell ((-1)^{m_{\ell}}e^{i\gamma_\ell t}-(-1)^{n_{\ell}}e^{i\delta_\ell t})],\\
\hat{\omega}^{\ell}_{21}\equiv \hat{\omega}^{\ell}_{21}(t)=\frac{1}{2}[i\beta^{cd}_\ell ((-1)^{m_{\ell}}e^{i\gamma_\ell t}-(-1)^{n_{\ell}}e^{i\delta_\ell t})],\\
\hat{\omega}^{\ell}_{22}\equiv \hat{\omega}^{\ell}_{22}(t)=\frac{1}{2}[(-1)^{m_{\ell}}e^{i\gamma_\ell t}+(-1)^{n_{\ell}}e^{i\delta_\ell t}],\\
\end{array}
\end{align}
where $\ell\in\{1,2,\ldots,\mu\}$.

In the following lemma, we consider the special case with $\mu=2$, i.e.,  $\mathcal{R}_{cd}$, $\mathcal{D}_{cd}$, $\mathcal{G}_{cd}$ and $\mathcal{E}_{cd}$ in \eqref{eq4.65} have 2 diagonal blocks. The proof is left in Appendix. For the general case, a similar result can be obtained by using the same procedure of proof.  

\begin{Lemma}\label{lem4.22}
Suppose that
\begin{align}\label{eq4.68}
\mathcal{R}_{cd}=\mathbf{B}_{1}\oplus\mathbf{B}_{2},\ \  \mathcal{D}_{cd}=\mathbf{D}_{1}\oplus\mathbf{D}_{2},\ \  \mathcal{G}_{cd}=\mathbf{G}_{1}\oplus\mathbf{G}_{2},\ \ \mathcal{E}_{cd}=\mathbf{E}_{1}\oplus\mathbf{E}_{2},
\end{align}
where $\mathbf{ B}_j,\mathbf{ D}_j,\mathbf{ G}_j,\mathbf{ E}_j\in \mathbb{C}^{\varkappa_j\times \varkappa_j}$ are defined in \eqref{eq4.66}.
Let
\begin{align}\label{eq4.69}
\begin{array}{cr}
\mathbf{ W}=\left[\begin{array}{c|c}\mathbf{ W}_{11} & \mathbf{ W}_{12}  \\\hline \mathbf{ W}_{21}  & \mathbf{ W}_{22} \end{array}\right]&\!\!\!\!\!\!\!\!\begin{array}{c}\}\varkappa_1\\
\}\varkappa_2\end{array}\vspace{-0.3cm}\\
\ \ \ \ \ \ \  \underbrace{}_{\varkappa_1}\ \ \underbrace{}_{\varkappa_2}&
\end{array}
\end{align}
be a constant matrix.  Then there is a nonsingular matrix $\Omega(t)$ with $$\Omega(t)=\left[\begin{array}{c|c}e_{\varkappa_1}e_{\varkappa_1}^{H}&0\\\hline0&e_{\varkappa_2}e_{\varkappa_2}^{H}\end{array}\right]+O(t^{-1})$$  such that
\begin{align}\label{eq4.70}
\left[\begin{array}{c|c}\mathcal{R}_{cd} &\mathcal{D}_{cd} \\\hline \mathcal{G}_{cd} & \mathcal{E}_{cd}\end{array}\right]\left[\begin{array}{c}\mathbf{ W}\\\hline  I \end{array}\right]\Omega(t)=\left[\begin{array}{cc|cc}I&0&0&0 \\0&\hat{\omega}^{1}_{11} \mathbf{w}_{11}+\hat{\omega}^{1}_{12}&0&\hat{\omega}^{1}_{11} \mathbf{ w}_{12}\\ 0&0&I&0\\0&\hat{\omega}^2_{11}\mathbf{w}_{21}&0&\hat{\omega}^{2}_{11} \mathbf{w}_{22}+\hat{\omega}^{2}_{12}\\\hline
0&0&0&0 \\0&\hat{\omega}^{1}_{21} \mathbf{w}_{11}+\hat{\omega}^{1}_{22}&0&\hat{\omega}^{1}_{21} \mathbf{ w}_{12}\\ 0&0&0&0\\0&\hat{\omega}^{2}_{21} \mathbf{w}_{21}&0&\hat{\omega}^{2}_{21} \mathbf{w}_{22}+\hat{\omega}^{2}_{22} \end{array}\right]+O(t^{-1}),
\end{align}
as $t\rightarrow \pm\infty$, where $\mathbf{w}_{jk}=\mathbf{W}_{jk}(\varkappa_j,\varkappa_k)\in \mathbb{C}$ and $\hat{\omega}^{\ell}_{jk}$ are given in \eqref{eq4.67} for $\ell,j,k=1,2$.
\end{Lemma}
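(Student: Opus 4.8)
The plan is to reduce Lemma~\ref{lem4.22} to two applications of the elementary case $\mathfrak{J}_c$ treated in Lemma~\ref{lem4.12} (with $\gamma=\delta$) and Theorem~\ref{thm4.15}, glued together by block-diagonal bookkeeping. First I would write out
$\left[\begin{smallmatrix}\mathcal{R}_{cd}&\mathcal{D}_{cd}\\\mathcal{G}_{cd}&\mathcal{E}_{cd}\end{smallmatrix}\right]$
in the permuted ordering that groups the two $\varkappa_1$-blocks together and the two $\varkappa_2$-blocks together; after this permutation the matrix becomes $A_1\oplus A_2$ where each $A_j=\left[\begin{smallmatrix}\mathbf{B}_j&\mathbf{D}_j\\\mathbf{G}_j&\mathbf{E}_j\end{smallmatrix}\right]$ is exactly the $e^{\mathfrak{J}_c t}$ (or $e^{\mathfrak{J}_d t}$) building block of Lemma~\ref{lem4.5}/Corollary~\ref{cor4.7}. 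Correspondingly partition $[\mathbf{W}^\top, I]^\top$ into the pieces that pair with $A_1$ and $A_2$; the cross-blocks $\mathbf{W}_{12}$, $\mathbf{W}_{21}$ are the only coupling. The key observation is that in each single block the dominant asymptotics of $A_j[\,\cdot\,]$ is governed by the last column/row of $\mathbf{B}_j$ and $\mathbf{E}_j$, i.e.\ by the scalar functions $\omega^j_{11},\dots,\omega^j_{22}$ of \eqref{eq4.12} (here with $\gamma=\delta=\eta$ for the $c$-blocks, $\gamma\neq\delta$ for the $d$-blocks), and that every genuinely nilpotent part $\Phi,\widehat\Phi,\widehat\Gamma,\phi,\widehat\psi$ carries a uniform $O(t^{-1})$ factor relative to the normalization column-space once we multiply on the right by the correct inverse block. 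This is precisely what Tables~\ref{tab1} and \ref{tab2} encode.

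The second step is the construction of $\Omega(t)$. I would build it blockwise: $\Omega(t)=\Omega_1(t)\oplus\Omega_2(t)$ in the permuted ordering, where each $\Omega_j(t)$ is the nonsingular matrix produced by the single-block computation — of the shape $\left[\begin{smallmatrix}O(t^{-2})&O(t^{-1})\\0&1\end{smallmatrix}\right]$ as in \eqref{eq4.22}, followed by the right multiplication by $I\oplus e^{-i\eta_j t}$ (as in the passage from \eqref{eq4.32} to Theorem~\ref{thm4.15}) that pulls the surviving column onto $e_{\varkappa_j}$. After permuting back to the original ordering, $\Omega(t)$ takes the claimed form $\mathrm{diag}(e_{\varkappa_1}e_{\varkappa_1}^H, e_{\varkappa_2}e_{\varkappa_2}^H)+O(t^{-1})$ because each block contributes a rank-one limit $e_{\varkappa_j}e_{\varkappa_j}^H$ and the off-diagonal coupling through $\mathbf{W}_{12},\mathbf{W}_{21}$ only enters at order $O(t^{-1})$ — the cross terms $\mathbf{B}_j$-times-$\mathbf{W}_{jk}$ for $j\neq k$ are dominated once the normalizing inverse of the diagonal block has been applied, since the dominant scalar parts of distinct diagonal blocks are linearly independent exponentials (or, for the $c$-case, the single surviving column only sees $\omega^j_{11}\mathbf{w}_{jj}+\omega^j_{12}$). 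The upshot of the right-multiplication identity is that $A_j[\mathbf{W}_{jj}^\top, I]^\top \Omega_j(t)$ equals, up to $O(t^{-1})$, a $2\varkappa_j\times\varkappa_j$ matrix whose only nonzero columns are $e_{\varkappa_j}$ (top) scaled by $\hat\omega^j_{11}\mathbf{w}_{jj}+\hat\omega^j_{12}$ together with the identity block and, in the bottom half, $\hat\omega^j_{21}\mathbf{w}_{jj}+\hat\omega^j_{22}$; this is exactly one diagonal $2\times2$ cell of the target matrix in \eqref{eq4.70}, with the $\hat\omega$'s being the $(-1)^{m}/(-1)^{n}$-corrected versions of $\omega$ coming from the $1-\kappa_{n_j}=(-1)^{n_j}$ identity of Theorem~\ref{thm5.3}, precisely as in \eqref{eq4.28}–\eqref{eq4.29}. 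The cross columns $\hat\omega^1_{11}\mathbf{w}_{12}$ and $\hat\omega^2_{11}\mathbf{w}_{21}$ on the right-hand side arise from feeding the off-diagonal $\mathbf{W}_{12},\mathbf{W}_{21}$ through the surviving ($e_{\varkappa}$-directed) column of the \emph{other} block's normalization, which is the only place an $O(1)$ cross-contribution survives.

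The main obstacle I anticipate is the careful tracking of which cross-terms survive at $O(1)$ and which decay. Unlike the single-block Lemma~\ref{lem4.12}, here one must check that $\mathbf{W}_{12}$ paired with the \emph{nilpotent} part of $\mathbf{B}_1,\mathbf{E}_1$ (the $\Phi,\widehat\Phi,\widehat\Gamma$ pieces) is killed after normalization — this needs the ``mixed'' estimates of Table~\ref{tab2} applied with $W$ now a rectangular constant built from $\mathbf{W}_{11}$ and $\mathbf{w}_{21}$ rather than the square $\mathbf{W}$ of Lemma~\ref{lem4.12}, and one must verify that $\Upsilon$ (the relevant $\Phi_{m,n}W+\phi^1 w^H$ combination) is still the right invertible normalizer, i.e.\ that $\Upsilon_{m_1,n_1}+\widehat\Gamma^{2m_1,2n_1}_{m_1+1,n_1+1}$ built from $\mathbf{W}_{11}$ remains invertible for large $|t|$ — which is where the hypothesis that $\mathbf{W}$ (hence its leading principal block, generically) is well-conditioned enough, together with $W_2$ invertible, gets used. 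Modulo this, the argument is a permutation, a block-diagonal splitting, two invocations of the already-proven single-block normalization, and assembling the pieces; I would state the general-$\mu$ version as a remark since the same permute-split-normalize recipe applies verbatim with $\mu$ blocks.
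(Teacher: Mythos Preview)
Your block-diagonal ansatz $\Omega=\Omega_1\oplus\Omega_2$ does not control the cross terms. After right-multiplying by $\Omega_1\oplus\Omega_2$, the $(1,2)$ block of the $Q$-part becomes $\mathbf{B}_1\mathbf{W}_{12}\,\Omega_2$, and its top $(m_1+n_1)\times(m_2+n_2)$ subblock contains products of the shape
\[
\Phi_{m_1,n_1}\cdot(\text{const})\cdot\bigl(\Upsilon_2+\widehat{\Gamma}_{m_2+1,n_2+1}^{\,2m_2,2n_2}\bigr)^{-1}
\ +\ \phi^1_{m_1,n_1}\cdot(\text{const})\cdot\bigl(\Upsilon_2+\widehat{\Gamma}_{m_2+1,n_2+1}^{\,2m_2,2n_2}\bigr)^{-1}.
\]
These are \emph{mixed-index} products: the nilpotent growth comes from block~1 while the normalizer comes from block~2. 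Table~\ref{tab2} only covers the matched case (same $m,n$ on both factors), and the mixed estimate is simply false in general. For instance with $m_1=3$, $m_2=1$ one computes $\Phi_{3}\,c\,(\widehat{\Gamma}_2^2)^{-1}$ for a constant vector $c=(c_1,c_2,c_3)^\top$ and obtains a top entry $-c_3+O(t^{-1})$, which is $O(1)$, not $O(t^{-1})$. So the upper-left corner of the cross block does not reach the target form in \eqref{eq4.70}. Your appeal to ``linearly independent exponentials'' cannot help here: all the unimodular factors $e^{i\gamma_j t}$ are irrelevant to decay; what matters is matching polynomial growth against polynomial decay of the same degree, and that matching breaks once the block sizes differ.

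The paper fixes this by making $\Omega$ \emph{sequential} rather than block-diagonal: $\Omega(t)=\Omega_1(t)\Omega_2(t)$ with $\Omega_1$ block upper-triangular carrying an explicit $(1,2)$ correction $\bigl[-(\Upsilon_1+\widehat{\Gamma}_{m_1+1,n_1+1}^{\,2m_1,2n_1})^{-1}\mathbf{B}_1^1\mathbf{W}_{12};\,0\bigr]$ built from the \emph{block-1} normalizer, and $\Omega_2$ block lower-triangular carrying the analogous $(2,1)$ correction built from the \emph{block-2} normalizer. This way the cross-cancellation $(\Upsilon_1+\widehat{\Gamma}_1)^{-1}\mathbf{B}_1^1$ and $(\Upsilon_2+\widehat{\Gamma}_2)^{-1}\mathbf{B}_2^1$ always pairs matching indices, so Table~\ref{tab2} applies at every step; the surviving $O(1)$ cross-entries are exactly the scalars $\hat{\omega}^1_{11}\mathbf{w}_{12}$, $\hat{\omega}^2_{11}\mathbf{w}_{21}$, etc., coming from the $\omega-\zeta$ combination and the identity $1-\kappa_n=(-1)^n$ of Theorem~\ref{thm5.3}. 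Note also that the second normalizer is built from the perturbed $\widetilde{\mathbf{W}}_{22}=\mathbf{W}_{22}+O(t^{-1})$, not from $\mathbf{W}_{22}$ itself. Your outline becomes correct once you replace the direct sum by this triangular product; the rest of your roadmap (use of Lemma~\ref{lem4.12}, the $\kappa$-identity, and the $e_{\varkappa_j}e_{\varkappa_j}^H$ limit of each diagonal piece) is on target.
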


Partition $\mathbf{ W}_{cd}^{\pm}$, $\mathbf{Z}_{cd,\pm}^1$ in \eqref{eq4.52}, $U_{cd}$, $V_{cd}$ in \eqref{eq4.53} and identity matrix $I_{n_{cd}}$, respectively, as
\begin{align*}
\begin{array}{cr}
\mathbf{ W}_{cd}^\pm=\left[\begin{array}{cccc}\mathbf{ W}_{cd,11}^{\pm} &\mathbf{ W}_{cd,12}^{\pm}   &\cdots  & \mathbf{ W}_{cd,1\mu}^{\pm} \\
\mathbf{ W}_{cd,21}^{\pm} &\mathbf{ W}_{cd,22}^{\pm}   &\cdots  & \mathbf{ W}_{cd,2\mu}^{\pm}\\\vdots&\vdots&&\vdots\\
\mathbf{ W}_{cd,\mu1}^{\pm} &\mathbf{ W}_{cd,\mu2}^{\pm}  &\cdots  & \mathbf{ W}_{cd,\mu\mu}^{\pm}\end{array}\right]&\!\!\!\!\!\!\!\!\begin{array}{c}\}\varkappa_1\\
\}\varkappa_2\\\vdots
\\\}\varkappa_{\mu}\end{array}\vspace{-0.3cm}\\
\ \ \ \ \ \ \ \ \ \   \underbrace{\ \ \ \ \ \ \  }_{\varkappa_1}\ \ \  \ \underbrace{\ \ \ \ \ \ \  }_{\varkappa_2}\ \hspace{1.3cm}  \underbrace{\ \ \ \ \ \ \ }_{\varkappa_\mu}&
\end{array}
\end{align*}
\begin{align*}
\begin{array}{l}
\mathbf{Z}_{cd,\pm}^1=\left[\begin{array}{cc|cc|c|cc}\mathbf{Z}_{cd,1}^{\pm} &\mathbf{z}_{cd,1}^{\pm} &\mathbf{Z}_{cd,2}^{\pm}&\mathbf{z}_{cd,2}^{\pm}     &\cdots  & \mathbf{Z}_{cd,\mu}^{\pm} &\mathbf{z}_{cd,\mu}^{\pm} \end{array}\right]\}n,
\vspace{-0.2cm}\\
\hspace{1.8cm} \underbrace{}_{\varkappa_1-1}\  \ \underbrace{}_{1}\ \ \ \underbrace{}_{\varkappa_2-1}\ \ \underbrace{}_{1}\ \hspace{1.1cm}  \underbrace{}_{\varkappa_\mu-1}\ \ \underbrace{}_{1} \\
U_{cd}\equiv \left[\begin{array}{c}U^{cd}_{1}\\\hline U^{cd}_2\end{array}\right]=\left[\begin{array}{cc|cc|c|cc}U^{cd}_{1,1} &u^{cd}_{1,1} &U^{cd}_{2,1}&u^{cd}_{2,1}     &\cdots  & U^{cd}_{\mu,1} &u^{cd}_{\mu,1} \\\hline U^{cd}_{1,2}&u^{cd}_{1,2} &U^{cd}_{2,2} &u^{cd}_{2,2}   &\cdots  & U^{cd}_{\mu,2}&u^{cd}_{\mu,2} \end{array}\right],
\vspace{-0.2cm}\\
\hspace{3.5cm}   \underbrace{}_{\varkappa_1-1}\ \underbrace{}_{1}\ \  \underbrace{}_{\varkappa_2-1}\ \underbrace{}_{1}\ \hspace{1.1cm}  \underbrace{}_{\varkappa_\mu-1}\ \underbrace{}_{1} \\
V_{cd}\equiv \left[\begin{array}{c}V^{cd}_{1}\\\hline V^{cd}_2\end{array}\right]=\left[\begin{array}{cc|cc|c|cc}V^{cd}_{1,1} &v^{cd}_{1,1} &V^{cd}_{2,1}&v^{cd}_{2,1}     &\cdots  & V^{cd}_{\mu,1} &v^{cd}_{\mu,1} \\\hline V^{cd}_{1,2}&v^{cd}_{1,2} &V^{cd}_{2,2} &v^{cd}_{2,2}   &\cdots  & V^{cd}_{\mu,2}&v^{cd}_{\mu,2} \end{array}\right],
\vspace{-0.2cm}\\
\hspace{3.5cm}     \underbrace{}_{\varkappa_1-1}\ \underbrace{}_{1}\   \underbrace{}_{\varkappa_2-1}\ \underbrace{}_{1}\ \hspace{0.9cm}  \underbrace{}_{\varkappa_\mu-1}\ \underbrace{}_{1}\\
I_{n_{cd}}=\left[\begin{array}{cc|cc|c|cc}\mathbf{I}^{cd}_{1} &e_{\varkappa_1} &\mathbf{I}^{cd}_{2}&e_{\varkappa_1+\varkappa_2}     &\cdots  & \mathbf{I}^{cd}_{\mu}&e_{n_{cd}} \end{array}\right]\}n_{cd}.
\vspace{-0.2cm}\\
\hspace{1.3cm} \underbrace{}_{\varkappa_1-1} \ \underbrace{}_{1} \ \underbrace{}_{\varkappa_2-1}\ \ \underbrace{}_{1}\ \hspace{1.1cm}  \underbrace{}_{\varkappa_\mu-1}\ \underbrace{}_{1}
\end{array}
\end{align*}
Then we denote some constant matrices
\begin{align}\label{eq4.71}
\begin{array}{l}
\widehat{U}_{cd}\equiv \left[\begin{array}{c}\widehat{U}^{cd}_1\\\hline\widehat{U}^{cd}_2 \end{array}\right]=\left[\begin{array}{c|c|c|c}U^{cd}_{1,1}  &U^{cd}_{2,1}   &\cdots  & U^{cd}_{\mu,1}  \\\hline U^{cd}_{1,2}&U^{cd}_{2,2}    &\cdots  & U^{cd}_{\mu,2}\end{array}\right],\\
\mathfrak{U}_{cd}\equiv \left[\begin{array}{c|c}\mathfrak{U}^{cd}_{u,1}& \mathfrak{U}^{cd}_{v,1}\\\hline \mathfrak{U}^{cd}_{u,2}& \mathfrak{U}^{cd}_{v,2}\end{array}\right]=\left[\begin{array}{cccc|cccc}u^{cd}_{1,1} &u^{cd}_{2,1}    &\cdots  & u^{cd}_{\mu,1} &v^{cd}_{1,1} &v^{cd}_{2,1}    &\cdots  & v^{cd}_{\mu,1} \\\hline u^{cd}_{1,2} &u^{cd}_{2,2}    &\cdots  & u^{cd}_{\mu,2} &v^{cd}_{1,2} &v^{cd}_{2,2}    &\cdots  & v^{cd}_{\mu,2} \end{array}\right],\\
\mathfrak{U}^{cd}_u=\left[\begin{array}{c}\mathfrak{U}^{cd}_{u,1} \\\mathfrak{U}^{cd}_{u,2}\end{array}\right],\ \ \ \ \mathfrak{U}^{cd}_v=\left[\begin{array}{c}\mathfrak{U}^{cd}_{v,1} \\\mathfrak{U}^{cd}_{v,2}\end{array}\right],\\
\mathsf{E}_{\mu}\equiv \left[\begin{array}{c}0 \\I_{\mu}\end{array}\right]\in \mathbb{C}^{n\times \mu},\\
\mathfrak{Z}_{cd}^{\pm}\equiv \left[\begin{array}{cccc}\mathbf{z}_{cd,1}^{\pm}&\mathbf{z}_{cd,2}^{\pm}&\cdots&\mathbf{z}_{cd,\mu}^{\pm}\end{array}\right]\in \mathbb{C}^{n\times \mu},\\
\mathfrak{W}_{cd}^{\pm}\equiv\left[\begin{array}{cccc}\mathbf{w}_{cd,11}^{\pm}
&\mathbf{w}_{cd,12}^{\pm}&\cdots&\mathbf{w}_{cd,1\mu}^{\pm}\\
\mathbf{w}_{cd,21}^{\pm}
&\mathbf{w}_{cd,22}^{\pm}&\cdots&\mathbf{w}_{cd,2\mu}^{\pm}\\
\vdots
&&\ddots&\vdots\\
\mathbf{w}_{cd,\mu1}^{\pm}
&\mathbf{w}_{cd,\mu2}^{\pm}&\cdots&\mathbf{w}_{cd,\mu\mu}^{\pm}\end{array}\right],\\
P_{cd}\equiv \left[\begin{array}{c|c|c|c|cccc}\mathbf{I}^{cd}_{1}&\mathbf{I}^{cd}_{2}&\cdots&\mathbf{I}^{cd}_{\mu}&e_{\varkappa_1} &e_{\varkappa_1+\varkappa_2} &\cdots&e_{n_{cd}} \end{array}\right]\in \mathbb{C}^{n_{cd}\times n_{cd}},
\end{array}
\end{align}
where $\mathbf{w}_{cd,j\ell}^{\pm}=\mathbf{W}_{cd,j\ell}^{\pm}(\varkappa_j,\varkappa_\ell)$ for  $j,\ell\in \{1,2,\ldots, \mu\}$.

Note that equations \eqref{eq4.58b} and \eqref{eq4.59b} have the over-estimate form
\begin{align*}
Y(t)\mathbf{Z}_{3,\pm}(t)=\left[\mathbf{U}_{re,\pm},Y_{cd,\pm}(t)\right]+O(t^{-1}),
\end{align*}
as $t\rightarrow \pm\infty$, where $Y_{cd,\pm}(t)$ is defined in \eqref{eq4.64} and
\begin{align}\label{eq4.72}
\mathbf{U}_{re,+}\equiv \left[\begin{array}{c} \mathbf{U}^{re,+}_1\\\hline \mathbf{U}^{re,+}_2\end{array}\right] = \left[\begin{array}{cc} U^{r}_1&U^e_{1}\\\hline U^{r}_2&U^{e}_2\end{array}\right], \ \ \ \mathbf{U}_{re,-}\equiv \left[\begin{array}{c} \mathbf{U}^{re,-}_1\\\hline \mathbf{U}^{re,-}_2\end{array}\right] = \left[\begin{array}{cc} V^{r}_1&U^e_{1}\\\hline V^{r}_2&U^{e}_2\end{array}\right] .
\end{align}
By  applying the same procedure of proof of Lemma \ref{lem4.22} to $Y_{cd,\pm}(t)$, there is a nonsingular  matrix $\Omega_{\pm}(t)=I_{n_r+n_e}\oplus \left({\rm diag}(e_{\varkappa_1}e_{\varkappa_1}^{H}, \cdots,e_{\varkappa_\mu}e_{\varkappa_\mu}^{H})+O(t^{-1})\right)$ as $t\rightarrow \pm\infty$ and a  permutation matrix $\mathbf{P}_{cd}=I_{n_r+n_e}\oplus P_{cd}$, where $P_{cd}$ is given in \eqref{eq4.71} such that
\begin{align*}
Y(t)\mathbf{Z}_{3,\pm}(t)\Omega_{\pm}(t)\mathbf{P}_{cd}=\left[\mathbf{U}_{re,\pm},\widehat{U}_{cd},\mathfrak{U}_{cd} \Delta_{\pm}(t) \right]+O(t^{-1}),
\end{align*}
where $\mathbf{U}_{re,\pm}$ is defined in \eqref{eq4.72}, $\widehat{U}_{cd}$, $\mathfrak{U}_{cd}$ and $\mathfrak{W}_{cd}^{\pm}$ are given in \eqref{eq4.71},
\begin{align}\label{eq4.73}
\Delta_{\pm}(t) \equiv\left[\begin{array}{c}\Delta_u^{\pm}(t)\\\Delta_v^{\pm}(t)  \end{array}\right]=\left[
\begin{array}{l}
{\rm diag}(\hat{\omega}_{11}^1,\hat{\omega}_{11}^2,\cdots,\hat{\omega}_{11}^\mu)\mathfrak{W}_{cd}^{\pm}+{\rm diag}(\hat{\omega}_{12}^1,\hat{\omega}_{12}^2,\cdots,\hat{\omega}_{12}^\mu)\\
{\rm diag}(\hat{\omega}_{21}^1,\hat{\omega}_{21}^2,\cdots,\hat{\omega}_{21}^\mu)\mathfrak{W}_{cd}^{\pm}+{\rm diag}(\hat{\omega}_{22}^1,\hat{\omega}_{22}^2,\cdots,\hat{\omega}_{22}^\mu)
\end{array}\right]
\end{align}
and $\hat{\omega}_{jk}^\ell$, for $j,k\in\{1,2\}$,  $\ell\in \{1,2\ldots,\mu\}$, are defined in \eqref{eq4.67}. Let
$\mathbf{Z}_{\pm}(t)=\mathbf{Z}_{3,\pm}(t)\Omega_{\pm}(t)\mathbf{P}_{cd}$. By using the asymptotic behaviors of $\mathbf{Z}_{3,+}(t)$ in \eqref{eq4.58a} and $\mathbf{Z}_{3,-}(t)$ in \eqref{eq4.59a}, we have
\begin{align}\label{eq4.74}
\begin{array}{ll}
\mathbf{Z}_{+}(t)=\mathfrak{Z}_{cd}^+\mathsf{E}_{\mu}^H+O(t^{-1}),&\text{as }t\rightarrow \infty,\\
\mathbf{Z}_{-}(t)=\mathfrak{Z}_{cd}^-\mathsf{E}_{\mu}^H+O(t^{-1}),&\text{as }t\rightarrow -\infty,
\end{array}
\end{align}
where $\mathfrak{Z}^{\pm}_{cd},\ \mathsf{E}_{\mu}\in \mathbb{C}^{n\times \mu}$ are defined in \eqref{eq4.71}. Hence we have the following theorem.

\begin{Theorem}\label{thm4.23}
With the same notations of Theorem \ref{thm4.19}. Then
\begin{itemize}
\item[(i)]  if {\bf Assumption} $\mathscr{A}_{+}$  holds, then there is a nonsingular matrix $\mathbf{Z}_{+}(t)$ of the form in \eqref{eq4.74} such that
\begin{align}\label{eq4.75}
Y(t)\mathbf{Z}_{+}(t)=\left[\mathbf{U}_{re,+},\widehat{U}_{cd},\mathfrak{U}_{cd} \Delta_{+}(t) \right]+O(t^{-1}),
\end{align}
as $t\rightarrow \infty$, where $\Delta_+(t)$ and $\mathbf{U}_{re,+}$ are defined  in \eqref{eq4.73} and \eqref{eq4.72}, respectively, and
$\widehat{U}_{cd}$, $\mathfrak{U}_{cd}$ are defined in \eqref{eq4.71};
\item[(ii)]  if {\bf Assumption} $\mathscr{A}_{-}$  holds, then there is a nonsingular matrix $\mathbf{Z}_{-}(t)$ of the form in \eqref{eq4.74} such that
\begin{align}\label{eq4.76}
Y(t)\mathbf{Z}_{-}(t)=\left[\mathbf{U}_{re,-},\widehat{U}_{cd},\mathfrak{U}_{cd} \Delta_{-}(t) \right]+O(t^{-1}),
\end{align}
as $t\rightarrow \infty$, where $\Delta_-(t)$ and $\mathbf{U}_{re,-}$ are defined  in \eqref{eq4.73} and \eqref{eq4.72}, respectively, and
$\widehat{U}_{cd}$, $\mathfrak{U}_{cd}$ are defined in \eqref{eq4.71}.
\end{itemize}
\end{Theorem}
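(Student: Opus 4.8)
The plan is to assemble the statement directly from Theorem~\ref{thm4.19} and the column-reduction device used in Lemma~\ref{lem4.22}; almost all the work is bookkeeping of block structures and $O(t^{-1})$ remainders, and no genuinely new estimate is needed.

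First I would start from \eqref{eq4.58b} (resp. \eqref{eq4.59b}) and regroup its right-hand side as $[\mathbf{U}_{re,\pm},Y_{cd,\pm}(t)]+O(t^{-1})$, where $\mathbf{U}_{re,\pm}$ is the constant matrix \eqref{eq4.72} collecting the $R_r$- (resp. $-R_r^H$-) and $R_e$-columns of $\mathcal{S}$, and $Y_{cd,\pm}(t)$ is the block \eqref{eq4.64} carrying the $c$- and $d$-type data. This uses only the partitions \eqref{eq4.52}--\eqref{eq4.53} of $U_{ecd}$, $V_{ecd}$, $\mathbf{W}_{ecd}^{\pm}$ and the block-diagonal form \eqref{eq4.54} of $(\mathcal{R}_{cd},\mathcal{D}_{cd},\mathcal{G}_{cd},\mathcal{E}_{cd})$; the $o(e^{-\mathfrak{r}t}t^{n})$ term, the $\mathcal{D}_e$-contribution and the $U_e$-column beyond its constant part are all absorbed into the overall $O(t^{-1})$.

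Next I would run the computation of Lemma~\ref{lem4.22} on $Y_{cd,\pm}(t)$. Lemma~\ref{lem4.22} is phrased for $\mu=2$ diagonal blocks, but its proof uses only the block-diagonal structure \eqref{eq4.65}--\eqref{eq4.66} together with the scalar asymptotics behind Tables~\ref{tab1}--\ref{tab2}; I would observe that the identical argument applies to the $\mu=\mu_c+\mu_d$ blocks here. This produces a nonsingular $\Omega_{\pm}(t)=I_{n_r+n_e}\oplus\bigl({\rm diag}(e_{\varkappa_1}e_{\varkappa_1}^H,\dots,e_{\varkappa_\mu}e_{\varkappa_\mu}^H)+O(t^{-1})\bigr)$ and the fixed permutation $\mathbf{P}_{cd}=I_{n_r+n_e}\oplus P_{cd}$ of \eqref{eq4.71} for which, after permutation, $Y(t)\mathbf{Z}_{3,\pm}(t)\Omega_{\pm}(t)\mathbf{P}_{cd}=[\mathbf{U}_{re,\pm},\widehat{U}_{cd},\mathfrak{U}_{cd}\Delta_{\pm}(t)]+O(t^{-1})$ with $\Delta_{\pm}(t)$ as in \eqref{eq4.73}; the scalars $\hat{\omega}^{\ell}_{jk}$ of \eqref{eq4.67} arise exactly from \eqref{eq4.29} applied block by block, using $1-\kappa_{n_j}=(-1)^{n_j}$ from Theorem~\ref{thm5.3}. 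Setting $\mathbf{Z}_{\pm}(t)=\mathbf{Z}_{3,\pm}(t)\,\Omega_{\pm}(t)\,\mathbf{P}_{cd}$ yields \eqref{eq4.75}--\eqref{eq4.76}, since $\Omega_{\pm}(t)$ and $\mathbf{P}_{cd}$ are bounded (with $O(t^{-1})$ correction), so the product does not degrade the $O(t^{-1})$ remainder.

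Finally I would verify the asymptotic form \eqref{eq4.74} of $\mathbf{Z}_{\pm}(t)$ and its nonsingularity. Nonsingularity is immediate: $\mathbf{Z}_{3,\pm}(t)$ is invertible by Theorem~\ref{thm4.19}, $\Omega_{\pm}(t)$ is invertible as above, and $\mathbf{P}_{cd}$ is a permutation, for all $t\in\mathcal{I}_{\pm}$ with $|t|$ large. For the leading term I would substitute the block form \eqref{eq4.58a} of $\mathbf{Z}_{3,+}(t)$ (resp. \eqref{eq4.59a} of $\mathbf{Z}_{3,-}(t)$) into the product: the $n_r$ columns give $o(e^{-\mathfrak{r}|t|}|t|^{n_r})$, the $n_e$ columns give $O(t^{-1})$, and the $n_{cd}$ columns $\mathbf{Z}^1_{cd,\pm}+O(t^{-1})$ are acted on by the projectors $e_{\varkappa_j}e_{\varkappa_j}^H$ and then by $P_{cd}$, which picks out the last column $\mathbf{z}^{\pm}_{cd,j}$ of each block and moves it to the column indexed through $\mathsf{E}_{\mu}^H$, giving $\mathbf{Z}_{\pm}(t)=\mathfrak{Z}^{\pm}_{cd}\mathsf{E}_{\mu}^H+O(t^{-1})$. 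The main obstacle is precisely this index matching in the last step --- checking that $\mathbf{P}_{cd}$ and the block projectors in $\Omega_{\pm}(t)$ line up so that the surviving columns on the left are exactly the $\mathbf{z}^{\pm}_{cd,j}$'s assembled into $\mathfrak{Z}^{\pm}_{cd}$, while the constant columns assemble into $\widehat{U}_{cd}$ and $\mathfrak{U}_{cd}$ on the right --- together with confirming that the remaining columns of $e^{\mathfrak{J}t}$ indeed contribute only $O(t^{-1})$ after the reduction; everything else is routine propagation of the $o(e^{-\mathfrak{r}|t|}|t|^{n})$ and $O(t^{-1})$ bounds.
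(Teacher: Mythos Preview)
Your proposal is correct and follows essentially the same approach as the paper: the paper also starts from the over-estimate $Y(t)\mathbf{Z}_{3,\pm}(t)=[\mathbf{U}_{re,\pm},Y_{cd,\pm}(t)]+O(t^{-1})$, applies the procedure of Lemma~\ref{lem4.22} blockwise to $Y_{cd,\pm}(t)$ to obtain $\Omega_{\pm}(t)$ and $\mathbf{P}_{cd}$ with exactly the form you describe, sets $\mathbf{Z}_{\pm}(t)=\mathbf{Z}_{3,\pm}(t)\Omega_{\pm}(t)\mathbf{P}_{cd}$, and reads off \eqref{eq4.74} from \eqref{eq4.58a}--\eqref{eq4.59a}. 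Your additional care about the $O(t^{-1})$ perturbation of $\mathbf{W}_{cd}^{\pm}$ inside $Y_{cd,\pm}(t)$ and about the index matching under $\mathbf{P}_{cd}$ fills in details the paper leaves implicit.
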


Now, we are ready to analyze asymptotic behaviors of $W(t)$ and $Q(t)^{-1}$. Let
\begin{align}\label{eq4.77}
\begin{array}{l}
\Sigma_{\hat{\omega}_{11}}\equiv \Sigma_{\hat{\omega}_{11}}(t)={\rm diag}(\hat{\omega}_{11}^1,\hat{\omega}_{11}^2,\cdots,\hat{\omega}_{11}^\mu),\\
\Sigma_{\hat{\omega}_{12}}\equiv \Sigma_{\hat{\omega}_{12}}(t)={\rm diag}(\hat{\omega}_{12}^1,\hat{\omega}_{12}^2,\cdots,\hat{\omega}_{12}^\mu),\\
\Sigma_{\hat{\omega}_{21}}\equiv \Sigma_{\hat{\omega}_{21}}(t)={\rm diag}(\hat{\omega}_{21}^1,\hat{\omega}_{21}^2,\cdots,\hat{\omega}_{21}^\mu),\\
\Sigma_{\hat{\omega}_{22}}\equiv \Sigma_{\hat{\omega}_{22}}(t)={\rm diag}(\hat{\omega}_{22}^1,\hat{\omega}_{22}^2,\cdots,\hat{\omega}_{22}^\mu),\\
\Sigma_{\gamma}\equiv \Sigma_{\gamma}(t)={\rm diag}((-1)^{m_1}e^{i\gamma_1t},\cdots,(-1)^{m_\mu}e^{i\gamma_\mu t}),\\
\Sigma_{\delta}\equiv\Sigma_{\delta}(t)={\rm diag}((-1)^{n_1}e^{i\delta_1t},\cdots,(-1)^{n_\mu}e^{i\delta_\mu t}),\\
\Sigma_{\beta^{cd}}={\rm diag}(\beta^{cd}_1,\beta^{cd}_2,\ldots,\beta^{cd}_\mu),
\end{array}
\end{align}
where $\beta^{cd}_j=\beta^{c}_j\in\{-1,1\}$ if $j\leqslant \mu_c$ and  $\beta^{cd}_j=\beta^{d}_j\in\{-1,1\}$ if $\mu_c<j\leqslant \mu$ and $\hat{\omega}_{jk}^\ell$, for $j,k\in\{1,2\}$,  $\ell\in \{1,2\ldots,\mu\}$, are defined in \eqref{eq4.67}. Then we have the theorem and leave the proof in Appendix.

\begin{Theorem}\label{thm4.24}
With the same notations of Theorem \ref{thm4.19}, suppose that {\bf Assumptions} $\mathscr{A}_{+}$ and $\mathscr{A}_{-}$ hold. Let
\begin{align}\label{eq4.78}
\begin{array}{l}
\mathbf{U}_{1,\pm}=\left[\mathbf{U}^{re,\pm}_{1},\widehat{U}^{cd}_1,\frac{1}{2}\left(\mathfrak{U}^{cd}_{v,1}-i\mathfrak{U}^{cd}_{u,1}\Sigma_{\beta^{cd}}\right)\right],\\
\mathbf{U}_{2,\pm}=\left[\mathbf{U}^{re,\pm}_2,\widehat{U}^{cd}_2,\frac{1}{2}\left(\mathfrak{U}^{cd}_{v,2}-i\mathfrak{U}^{cd}_{u,2}\Sigma_{\beta^{cd}}\right)\right],\\
\Delta\mathbf{U}^{cd}_{1,\pm}(t)=[\mathfrak{U}^{cd}_{u,1}\Sigma_{\hat{\omega}_{11}}+\mathfrak{U}^{cd}_{v,1}\Sigma_{\hat{\omega}_{21}}]\mathfrak{W}_{cd}^{\pm}\Sigma_{\gamma}^{-1}+\frac{1}{2}[\mathfrak{U}^{cd}_{v,1}+i\mathfrak{U}^{cd}_{u,1}\Sigma_{\beta^{cd}}]\Sigma_{\delta}\Sigma_{\gamma}^{-1},\\
\Delta\mathbf{U}^{cd}_{2,\pm}(t)=[\mathfrak{U}^{cd}_{u,2}\Sigma_{\hat{\omega}_{11}}+\mathfrak{U}^{cd}_{v,2}\Sigma_{\hat{\omega}_{21}}]\mathfrak{W}_{cd}^{\pm}\Sigma_{\gamma}^{-1}+\frac{1}{2}[\mathfrak{U}^{cd}_{v,2}+i\mathfrak{U}^{cd}_{u,2}\Sigma_{\beta^{cd}}]\Sigma_{\delta}\Sigma_{\gamma}^{-1},
\end{array}
\end{align}
where for each $j,k\in \{1,2\}$, $\mathbf{U}^{re,\pm}_{j}$ is defined in \eqref{eq4.72}, $\widehat{U}^{cd}_j$, $\mathfrak{U}^{cd}_{u,j}$, $\mathfrak{U}^{cd}_{v,j}$, $\mathfrak{W}_{cd}^{\pm}$ are defined in \eqref{eq4.71} and  $\Sigma_{\hat{\omega}_{jk}}$, $\Sigma_{\gamma}$, $\Sigma_{\delta}$ are defined in \eqref{eq4.77}. Let $W(t)= P(t)Q(t)^{-1}$, where  $Y(t)=[Q(t)^{\top},  P(t)^{\top}]^{\top}$ is the solution of IVP \eqref{eq4.1}. If $\mathbf{U}_{1,+}$ and $\mathbf{U}_{1,-}$ are invertible, then
\begin{align*}
W(t)&=\mathbf{U}_{2,\pm}\mathbf{U}_{1,\pm}^{-1}+[\Delta\mathbf{U}^{cd}_{2,\pm}(t)-\mathbf{U}_{2,\pm} \mathbf{U}_{1,\pm}^{-1}\Delta\mathbf{U}^{cd}_{1,\pm}(t)+O(t^{-1})]\\&\ \ \ \ \ [I_{\mu}+\mathsf{E}_{\mu}^H\mathbf{U}_{1,\pm}^{-1}\Delta\mathbf{U}^{cd}_{1,\pm}(t)+O(t^{-1})]^{-1}\mathsf{E}_{\mu}^H[\mathbf{U}_{1,\pm}^{-1}+O(t^{-1})]+O(t^{-1}),\\
Q(t)^{-1}&=[\mathfrak{Z}_{cd}^{\pm}\Sigma_{\gamma}^{-1}+O(t^{-1})]\left[I_{\mu}+\mathsf{E}_{\mu}^H\mathbf{U}_{1,\pm}^{-1}\Delta\mathbf{U}^{cd}_{1,\pm}(t)+O(t^{-1})\right]^{-1}\\&\ \ \ \ \ \mathsf{E}_{\mu}^H[\mathbf{U}_{1,\pm}^{-1}+O(t^{-1})]+O(t^{-1}),
\end{align*}
as $t\rightarrow \pm\infty$, where $\mathfrak{Z}_{cd}^{\pm},\ \mathsf{E}_{\mu}\in \mathbb{C}^{n\times \mu}$ are defined in \eqref{eq4.71}.
\end{Theorem}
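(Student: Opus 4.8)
The plan is to combine Theorem~\ref{thm4.23} with the Sherman--Morrison--Woodbury formula, exactly mirroring the elementary argument of Theorem~\ref{thm4.13}. First I would start from the asymptotic factorization in Theorem~\ref{thm4.23}(i), namely $Y(t)\mathbf{Z}_{+}(t)=\left[\mathbf{U}_{re,+},\widehat{U}_{cd},\mathfrak{U}_{cd}\Delta_{+}(t)\right]+O(t^{-1})$ as $t\rightarrow\infty$, and rewrite the block $\mathfrak{U}_{cd}\Delta_{+}(t)$ using the definitions of $\Delta_{+}(t)$ in \eqref{eq4.73} and the $\Sigma$-matrices in \eqref{eq4.77}. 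The key algebraic observation is that each scalar $\hat{\omega}^{\ell}_{11},\hat{\omega}^{\ell}_{12}$ splits as a $\Sigma_\gamma$-part plus a $\Sigma_\delta$-part (this is just \eqref{eq4.67}), so that $\mathfrak{U}_{cd}\Delta_{+}(t)$ factors as a $t$-independent matrix times $(I_\mu\ \text{or}\ \Sigma_\gamma)$ plus a quasiperiodic rank-$\mu$ correction; concretely, pulling a factor $\Sigma_\gamma$ out on the right turns the $(\mathbf{U}_{2,+},\mathbf{U}_{1,+})$ blocks of \eqref{eq4.78} into the constant leading terms, and leaves behind precisely the $\Delta\mathbf{U}^{cd}_{j,+}(t)$ pieces. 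Thus I can write, for sufficiently large $|t|$,
\begin{align*}
Q(t)\,\mathbf{Z}_{+}(t)\,(I\oplus \Sigma_\gamma^{-1})&=\mathbf{U}_{1,+}+\Delta\mathbf{U}^{cd}_{1,+}(t)\mathsf{E}_{\mu}^H+O(t^{-1}),\\
P(t)\,\mathbf{Z}_{+}(t)\,(I\oplus \Sigma_\gamma^{-1})&=\mathbf{U}_{2,+}+\Delta\mathbf{U}^{cd}_{2,+}(t)\mathsf{E}_{\mu}^H+O(t^{-1}),
\end{align*}
using \eqref{eq4.74} to identify the left block of $\mathbf{Z}_{+}(t)(I\oplus\Sigma_\gamma^{-1})$ with $\mathfrak{Z}_{cd}^{+}\mathsf{E}_{\mu}^H\Sigma_\gamma^{-1}+O(t^{-1})$.

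Next I would set $\mathbf{U}^{\varepsilon}_{1,+}(t)=\mathbf{U}_{1,+}+O(t^{-1})$ (the $O(t^{-1})$ coming from the error term above, absorbed separately from the rank-$\mu$ correction), so that $\mathbf{U}^{\varepsilon}_{1,+}(t)$ is invertible for large $|t|$ with $\mathbf{U}^{\varepsilon}_{1,+}(t)^{-1}=\mathbf{U}_{1,+}^{-1}+O(t^{-1})$ by the invertibility hypothesis on $\mathbf{U}_{1,+}$. Then $W(t)=P(t)Q(t)^{-1}$ is computed by inverting $\mathbf{U}^{\varepsilon}_{1,+}(t)+\Delta\mathbf{U}^{cd}_{1,+}(t)\mathsf{E}_{\mu}^H$ via Sherman--Morrison--Woodbury, with the ``low-rank'' update being $\Delta\mathbf{U}^{cd}_{1,+}(t)\mathsf{E}_{\mu}^H$ of rank at most $\mu$: this gives
\begin{align*}
\bigl(\mathbf{U}^{\varepsilon}_{1,+}(t)+\Delta\mathbf{U}^{cd}_{1,+}(t)\mathsf{E}_{\mu}^H\bigr)^{-1}=\mathbf{U}^{\varepsilon}_{1,+}(t)^{-1}-\mathbf{U}^{\varepsilon}_{1,+}(t)^{-1}\Delta\mathbf{U}^{cd}_{1,+}(t)\bigl(I_\mu+\mathsf{E}_{\mu}^H\mathbf{U}^{\varepsilon}_{1,+}(t)^{-1}\Delta\mathbf{U}^{cd}_{1,+}(t)\bigr)^{-1}\mathsf{E}_{\mu}^H\mathbf{U}^{\varepsilon}_{1,+}(t)^{-1}.
\end{align*}
Multiplying on the left by $P(t)\mathbf{Z}_{+}(t)(I\oplus\Sigma_\gamma^{-1})=\mathbf{U}_{2,+}+\Delta\mathbf{U}^{cd}_{2,+}(t)\mathsf{E}_{\mu}^H+O(t^{-1})$ and collecting the $\mathsf{E}_{\mu}^H(\cdots)$ terms yields the stated formula for $W(t)$; multiplying instead by the left block $\mathfrak{Z}_{cd}^{+}\mathsf{E}_{\mu}^H\Sigma_\gamma^{-1}+O(t^{-1})$ of $Q(t)^{-1}$'s pre-factor (i.e. $Q(t)^{-1}=\mathbf{Z}_{+}(t)(I\oplus\Sigma_\gamma^{-1})\bigl(\mathbf{U}^{\varepsilon}_{1,+}(t)+\Delta\mathbf{U}^{cd}_{1,+}(t)\mathsf{E}_{\mu}^H\bigr)^{-1}$) and using $\mathsf{E}_{\mu}^H\mathbf{U}^{\varepsilon}_{1,+}(t)^{-1}=\mathsf{E}_{\mu}^H\mathbf{U}_{1,+}^{-1}+O(t^{-1})$ yields the $Q(t)^{-1}$ formula. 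The $t\rightarrow-\infty$ case is identical, using Theorem~\ref{thm4.23}(ii) and Assumption $\mathscr{A}_{-}$ in place of Assumption $\mathscr{A}_{+}$.

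The main obstacle I anticipate is purely bookkeeping: correctly tracking how the diagonal exponential factors $\Sigma_\gamma,\Sigma_\delta$ are pulled through $\mathbf{Z}_{+}(t)$, $\Delta_{+}(t)$, and $\mathfrak{U}_{cd}$ so that the constant matrices $\mathbf{U}_{1,\pm},\mathbf{U}_{2,\pm}$ and the quasiperiodic remainders $\Delta\mathbf{U}^{cd}_{j,\pm}(t)$ in \eqref{eq4.78} come out in exactly the asserted form --- in particular, verifying that the scalar combination $\tfrac12(\mathfrak{U}^{cd}_{v}-i\mathfrak{U}^{cd}_{u}\Sigma_{\beta^{cd}})$ is precisely the $\Sigma_\gamma$-coefficient of $\mathfrak{U}_{cd}\Delta_{\pm}(t)$ after the $\Sigma_\gamma^{-1}$ twist, which requires expanding $\hat{\omega}^\ell_{11}+\hat{\omega}^\ell_{21}\mathbf{w}$-type terms against the $\Theta$-unitary structure from Lemma~\ref{lem4.12}. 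Once that identification is made, the Sherman--Morrison--Woodbury step is routine and the $O(t^{-1})$ error propagation is standard (the update matrix $I_\mu+\mathsf{E}_{\mu}^H\mathbf{U}^{\varepsilon}_{1,\pm}(t)^{-1}\Delta\mathbf{U}^{cd}_{1,\pm}(t)$ is invertible away from the poles, so its inverse absorbs an $O(t^{-1})$ perturbation cleanly). No new idea beyond Theorem~\ref{thm4.23} is needed; the statement is essentially the ``assembled'' version of assertions 3 and 4 of Theorem~\ref{thm4.9} over the full Hamiltonian Jordan form.
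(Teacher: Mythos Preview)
Your proposal is correct and follows essentially the same approach as the paper's proof: start from Theorem~\ref{thm4.23}, pull out a factor $\Sigma_\gamma^{-1}$ on the right of $\mathfrak{U}_{cd}\Delta_{\pm}(t)$ to split it into the constant block $\tfrac12(\mathfrak{U}^{cd}_v-i\mathfrak{U}^{cd}_u\Sigma_{\beta^{cd}})$ plus the quasiperiodic remainder $\Delta\mathbf{U}^{cd}_{j,\pm}(t)$, then apply Sherman--Morrison--Woodbury to $\mathbf{U}^{\varepsilon}_{1,\pm}(t)+\Delta\mathbf{U}^{cd}_{1,\pm}(t)\mathsf{E}_\mu^H$ and propagate the $O(t^{-1})$ errors. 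The only minor difference is that the paper carries out the $\Sigma_\gamma$-coefficient identification by direct algebra with $\Sigma_{\beta^{cd}}^2=I$ and the explicit formulas \eqref{eq4.67}, rather than invoking the $\Theta$-unitary structure of Lemma~\ref{lem4.12}; either route works.
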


Note that the quasi-periodicity of $W(t)$ is driven by the terms $\Delta\mathbf{U}^{cd}_{1,\pm}(t)$ and $\Delta\mathbf{U}^{cd}_{2,\pm}(t)$ defined in \eqref{eq4.78}, in which $e^{i\gamma_jt}$ and $e^{i\delta_jt}$, $j=1,\ldots,\mu$, are involved;  and the matrices $\mathbf{U}_{1,\pm}$ and $\mathbf{U}_{2,\pm}$ in \eqref{eq4.78} are constant. Let
\begin{subequations}\label{eq4.79}
\begin{align}\label{eq4.79a}
\begin{array}{rr}
\mathbf{U}_{1,\pm}(t)=\mathbf{U}_{1,\pm}+\Delta\mathbf{U}_{1,\pm}(t), & \mathbf{U}_2(t)=\mathbf{U}_{2,\pm}+\Delta\mathbf{U}_{2,\pm}(t),
\end{array}
\end{align}
where
\begin{align}\label{eq4.79b}
\begin{array}{rr}
\Delta\mathbf{U}_{1,\pm}(t)=\left[\ 0, \ \Delta\mathbf{U}^{cd}_{1\pm}(t)\right]\}n, & \Delta\mathbf{U}_{2,\pm}(t)=\left[\ 0,\ \Delta\mathbf{U}^{cd}_{2,\pm}(t)\right]\}n,\vspace{-0.3cm}\\
\underbrace{}_{n-\mu}  \underbrace{\ \ \ \ \ \ \ \ \ \ }_{\mu} \hspace{0.8cm}&\underbrace{}_{n-\mu}  \underbrace{\ \ \ \ \ \ \ \ \ \ }_{\mu} \hspace{0.8cm}
\end{array}
\end{align}
\end{subequations}
Denote
\begin{align}\label{eq4.80}
W_{\infty,\pm}(t)=&\mathbf{U}_{2,\pm}(t)\mathbf{U}_{1,\pm}(t)^{-1}\nonumber\\
=&\mathbf{U}_{2,\pm}\mathbf{U}_{1,\pm}^{-1}+[\Delta\mathbf{U}^{cd}_{2,\pm}(t)-\mathbf{U}_{2,\pm} \mathbf{U}_{1,\pm}^{-1}\Delta\mathbf{U}^{cd}_{1,\pm}(t)]\\ &\ \ \ [I_{\mu}+\mathsf{E}_{\mu}^H\mathbf{U}_{1,\pm}^{-1}\Delta\mathbf{U}^{cd}_{1,\pm}(t)]^{-1}\mathsf{E}_{\mu}^H\mathbf{U}_{1,\pm}^{-1},\nonumber\\
Q^{-1}_{\infty,\pm}(t)
=&\mathfrak{Z}_{cd}^{\pm}\Sigma_{\gamma}^{-1}\left[I_{\mu}+\mathsf{E}_{\mu}^H\mathbf{U}_{1,\pm}^{-1}\Delta\mathbf{U}^{cd}_{1,\pm}(t)\right]^{-1} \mathsf{E}_{\mu}^H\mathbf{U}_{1,\pm}^{-1},\nonumber
\end{align}
for $t\in \{t\in \mathbb{R}\ |\ \mathbf{U}_{1,\pm}(t)\text{ is invertible}\}$,
where $\mathfrak{Z}_{cd}^{\pm}$, $\mathsf{E}_{\mu}$ and $\Sigma_{\gamma}$ are defined in \eqref{eq4.71} and \eqref{eq4.77}, respectively. Roughly speaking, Theorem \ref{thm4.24} shows that  $W(t)$ and $Q(t)^{-1}$ converge, respectively, to $W_{\infty,\pm}(t)$ and $Q_{\infty,\pm}^{-1}(t)$ with the rate  $O(t^{-1})$ as $t\rightarrow \pm\infty$. More precisely, for each $0<\rho\ll1$, this convergence with the rate $O(t^{-1})$ is taking $t\to\pm\infty$ along the unbounded set $\{t\in{\mathbb R}|~ \sigma_{\min}(\mathbf{U}_{1,\pm}(t))>\rho\}$, where $\sigma_{\min}(\mathbf{U}_{1,\pm}(t))$ means the smallest singular value of $\mathbf{U}_{1,\pm}(t)$.  For the elementary case $\mathfrak{J}=\mathfrak{J}_d$ as mentioned in Theorem~\ref{thm4.13} and comparing \eqref{eq4.80} to \eqref{eq4.37},
$\mathbf{U}_{2,\pm}\mathbf{U}_{1,\pm}^{-1}$, $[\Delta\mathbf{U}^{cd}_{2,\pm}(t)-\mathbf{U}_{2,\pm} \mathbf{U}_{1,\pm}^{-1}\Delta\mathbf{U}^{cd}_{1,\pm}(t)]$, $[I_{\mu}+\mathsf{E}_{\mu}^H\mathbf{U}_{1,\pm}^{-1}\Delta\mathbf{U}^{cd}_{1,\pm}(t)]^{-1}$ and $\mathsf{E}_{\mu}^H\mathbf{U}_{1,\pm}^{-1}$ play the roles of $\mathbf{U}_2\mathbf{U}_1^{-1}$, $e^{i\theta t}\left(\zeta_2-\mathbf{U}_2\mathbf{U}_1^{-1}\zeta_1\right)$, $(1+e^{i\theta t}e_n^H\mathbf{U}_1^{-1}\zeta_1)^{-1}$ and $e_n^{H}\mathbf{U}_1^{-1}$, respectively.

\begin{Remark}\label{rem4.4}
Suppose that $\mathscr{H}$ in \eqref{eq4.1} has Hamiltonian Jordan canonical form $\mathfrak{J}$ in \eqref{eq4.8} and all eigenvalues of $\mathscr{H}$ are pure imaginary, that is, the submatrix $R_r$ of $\mathfrak{J}$ is absent. Then {\bf Assumptions} $\mathscr{A}_{+}$ is equivalent to {\bf Assumptions} $\mathscr{A}_{-}$, and hence $\mathbf{U}_{j,+}=\mathbf{U}_{j,-}$, $\mathfrak{Z}_{cd}^{+}=\mathfrak{Z}_{cd}^{-}$, $\mathfrak{W}_{cd}^{+}=\mathfrak{W}_{cd}^{-}$ and $\Delta\mathbf{U}^{cd}_{j,+}(t)=\Delta\mathbf{U}^{cd}_{j,-}(t)$ for $j=1,2$. It follows from  \eqref{eq4.79} and \eqref{eq4.80} that $W_{\infty,+}(t)=W_{\infty,-}(t)$ and $Q^{-1}_{\infty,+}(t)=Q^{-1}_{\infty,-}(t)$.
\end{Remark}


\begin{Example}\label{ex4.2}
In this example, we show some numerical experiments to demonstrate above theorems.
Consider the Hamiltonian matrix $\mathscr{H}$ has a Jordan canonical form $\mathfrak{J}_{cd}=\left[\begin{array}{c|c}R_{cd} & D_{cd}\\\hline G_{cd}& -R_{cd}^{-H}\end{array}\right]$. Assume $\mathscr{H}=\mathcal{S}\mathfrak{J}_{cd}\mathcal{S}^{-1}$, where the symplectic matrix $\mathcal{S}$ is randomly generated and
\begin{align*}
R_{cd}&=\left[\begin{array}{cccc}
                 i\gamma_1 &  1          &        0        &          0          \\
            0           &       i\gamma_1   &     0          &  -\frac{\sqrt{2}}{2}          \\
             0           &       0             &     i\delta_1  & -\frac{\sqrt{2}}{2}             \\
             0           &       0              &    0               &    \frac{i}{2}(\gamma_1+\delta_1)\\
\end{array}\right]\oplus\left[\begin{array}{ccc}
       i\gamma_2   &     0          &  -\frac{\sqrt{2}}{2}          \\
     0             &     i\delta_2  & -\frac{\sqrt{2}}{2}             \\
      0              &    0               &    \frac{i}{2}(\gamma_2+\delta_2)\\
\end{array}\right]\oplus\left[\begin{array}{cc}
       i\gamma_3             &  -\frac{\sqrt{2}}{2}          \\
      0                          &    \frac{i}{2}(\gamma_3+\delta_3)\\
\end{array}\right],\\
D_{cd}&=\frac{\sqrt{2}i}{2}  \left(\left[\begin{array}{cccc}
               0          &         0           &        0          &         0        \\
               0         &          0           &        0          &          1\\
             0          &         0             &      0           &         -1\\
              0          &       -1      &   1   &- \frac{\sqrt{2}i(\gamma_1-\delta_1)}{2}    \\
\end{array}\right]\oplus\left[\begin{array}{ccc}
          0           &        0          &          1\\
          0             &      0           &         -1\\
        -1      &   1   &- \frac{\sqrt{2}i (\gamma_2-\delta_2) }{2}  \\
\end{array}\right]\oplus\left[\begin{array}{cc}
          0                 &          1\\
        -1      &- \frac{\sqrt{2}i(\gamma_3-\delta_3) }{2}   \\
\end{array}\right] \right),\\ G_x&=-\frac{1}{2}\left((\gamma_1-\delta_1) e_4e_4^{\top}\oplus(\gamma_2-\delta_2) e_3e_3^{\top}\oplus(\gamma_3-\delta_3) e_2e_2^{\top}\right).
\end{align*}
We also randomly generate a complex Hermitian matrix $W_0\in \mathbb{C}^{9\times 9}$ as the initial matrix of RDE \eqref{eq4.3}. Then the solution $Y(t)= [Q(t)^{\top},  P(t)^{\top}]^{\top}$ of IVP \eqref{eq4.1} can be computed by the formula $Y(t)=\mathcal{S}e^{\mathfrak{J}_{cd}t}\mathcal{S}^{-1}[I,W_0]^{\top}$. The extended solution of RDE can be obtained by the formula $W(t)=P(t)Q(t)^{-1}$ for $t\in \mathcal{T}_W$, where $\mathcal{T}_W$ is defined in \eqref{eq3.41}.

Let $\gamma_1= 5.8868$, $\gamma_2=4.8968$, $\gamma_3= 2.2337$, $\delta_1=9.2031$, $\delta_2=0.7449$ and $\delta_3=9.7818$. Since all eigenvalues of $\mathscr{H}$ are pure imaginary, from Remark \ref{rem4.4}, we have $\mathbf{U}_{1}(t)\equiv\mathbf{U}_{1,+}(t)=\mathbf{U}_{1,-}(t)$, $W_{\infty}(t)\equiv W_{\infty,+}(t)=W_{\infty,-}(t)$ and $Q_{\infty}^{-1}(t)\equiv Q_{\infty,+}^{-1}(t)=Q_{\infty,-}^{-1}(t)$, where $\mathbf{U}_{1,\pm}(t)$ is defined in \eqref{eq4.79}.  We note from \eqref{eq4.80} that the pole of $W_{\infty}(t)$ and $Q_{\infty}^{-1}(t)$ is the number $t$ such that  $\mathbf{U}_{1}(t)$ is singular. In Figure \ref{fig5},  we show  the smallest singular value of $\mathbf{U}_{1}(t)$  and $\|W_{\infty}(t)\|_F$, $\|W(t)\|_F$, $\|Q_{\infty}^{-1}(t)\|_F$ and $\|Q(t)^{-1}\|_F$  plotted by the log scale for $990\leqslant t\leqslant 1000$. This figure shows that $\|W_{\infty}(t)\|_F$ and $\|Q_{\infty}^{-1}(t)\|_F$ blow-up at each $t$, where $\mathbf{U}_{1}(t)$ is singular and that the behaviors of $\|W(t)\|_F$ and  $\|Q(t)^{-1}\|_F$ are similar to the behaviors of  $\|W_{\infty}(t)\|_F$ and $\|Q_{\infty}^{-1}(t)\|_F$, respectively. The differences, $\|W(t)-W_{\infty}(t)\|_F$ and $\|Q(t)^{-1}-Q_{\infty}^{-1}(t)\|_F$, for $-1000\leqslant t\leqslant 0$ and for $0\leqslant t\leqslant 1000$ are shown in Figure \ref{fig6}.  We see that for each $0<\rho\ll1$,  as $t\to\pm\infty$ along the set $\{t\in{\mathbb R}|~ \sigma_{\min}(\mathbf{U}_{1}(t))>\rho\}$, $W(t)$ and $Q(t)^{-1}$ converges to $W_\infty(t)$ and $Q_\infty^{-1}(t)$, respectively, with the rate $O(t^{-1})$. It turns out that the curves $\|W(t)-W_\infty(t)\|_F$ and $\|Q(t)^{-1}-Q_\infty^{-1}(t)\|_F$ match the curve $y=C/t$ on this set.  However, as $t\to\pm\infty$ along the set $\{t\in{\mathbb R}|~ \mathbf{U}_{1}(t)\text{ is singular}\}$, i.e., the poles of $W_\infty(t)$ and $Q_\infty^{-1}(t)$, $W(t)$ and $Q(t)^{-1}$ tend to infinity. This leads to the peaks appearing  in Figure~\ref{fig6}.  Therefore, the curves $\|W(t)-W_\infty(t)\|_F$ and $\|Q(t)^{-1}-Q_\infty^{-1}(t)\|_F$  blow up on this set.
\begin{figure}[htb]
\centering \resizebox{5in}{!}{\includegraphics{./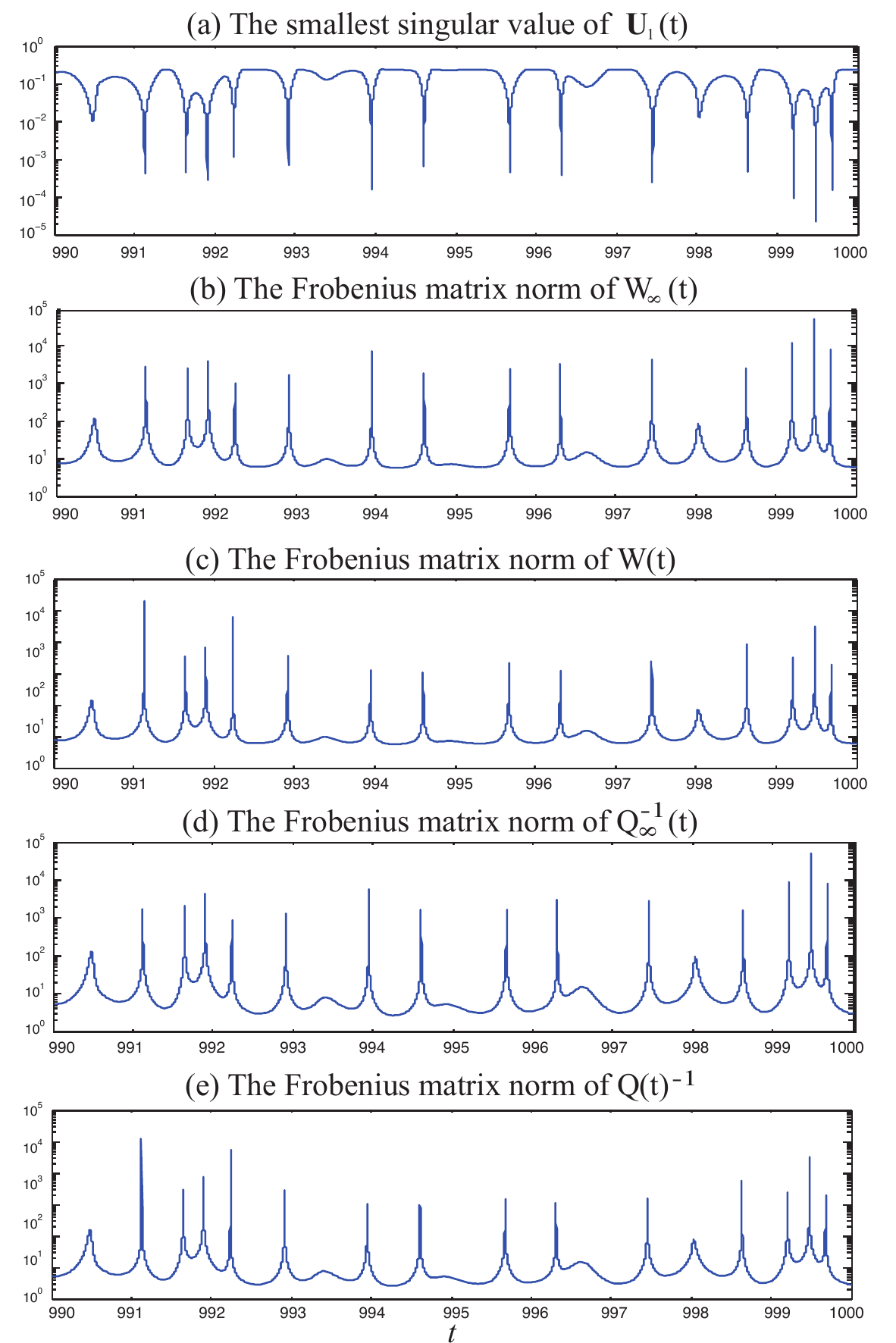}}
\caption{The smallest singular value of $\mathbf{U}_{1}(t)$, $\|W_{\infty}(t)\|_F$, $\|W(t)\|_F$, $\|Q_{\infty}^{-1}(t)\|_F$ and $\|Q(t)^{-1}\|_F$  plotted by the log scale.}\label{fig5}
\end{figure}
\begin{figure}[htb]
\centering \resizebox{6in}{!}{\includegraphics{./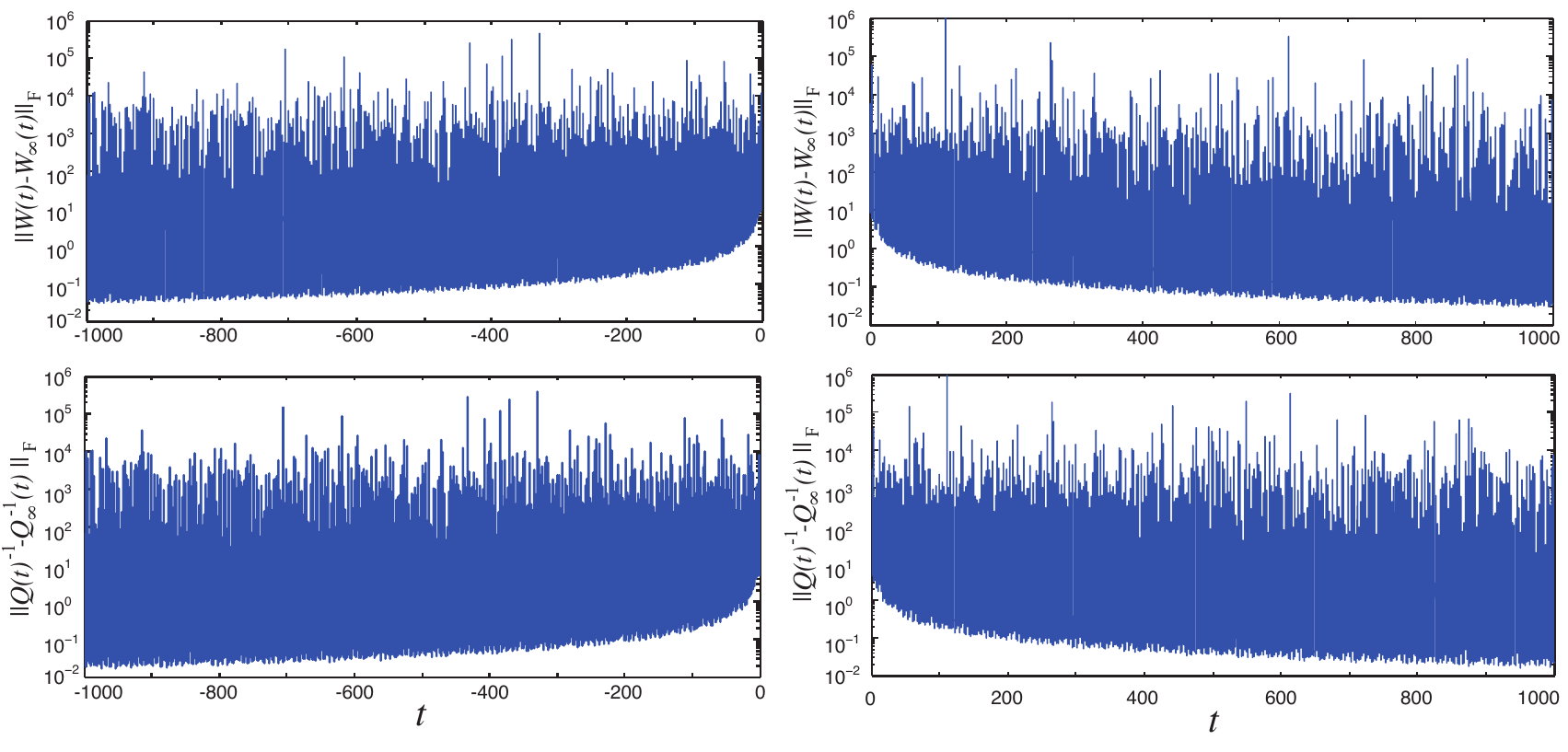}}
\caption{$\|W(t)-W_{\infty}(t)\|_F$  and $\|Q(t)^{-1}-Q_{\infty}^{-1}(t)\|_F$ plotted by the log scale  for $-1000\leqslant t\leqslant 0$ and for $0\leqslant t\leqslant 1000$.}\label{fig6}
\end{figure}
\end{Example}

\subsection{Application to the Convergence Analysis of SDA}\label{sec4.4}
In this subsection, we shall apply the asymptotic analysis of RDE \eqref{eq4.3} studied in previous subsections to the asymptotic behavior of SDA.  Throughout this subsection, we fix $(\mathcal{S}_1,\mathcal{S}_2)=(I,I)$ (the $\mathbb{S}_1$ class) or $(-I,\mathcal{J})$ (the $\mathbb{S}_2$ class) and let $X_1=[X_{ij}^{1}]_{1\leqslant i,j\leqslant 2}\in{\mathbb H}(2n)$ be given such that the pair $(\mathcal{M}_1, \mathcal{L}_1)=T_{\mathcal{S}_1,\mathcal{S}_2}(X_1)\in \mathbb{S}_1$ or $\mathbb{S}_2$ is regular with ${\rm ind}_{\infty} (\mathcal{M}_1,\mathcal{L}_1)\leqslant 1$. Let the idempotent matrices $\Pi_0=\Pi_0(\mathcal{M}_1,\mathcal{L}_1)$, $\Pi_\infty=\Pi_\infty(\mathcal{M}_1,\mathcal{L}_1)$ and the Hamiltonian matrix $\mathcal{H}=\mathcal{H}(\mathcal{M}_1,\mathcal{L}_1)$ be defined in Definition~\ref{def2.2}.  From Lemma~\ref{lem2.6} it follows that
\[
\mathcal{M}_1\Pi_0=\mathcal{L}_1\Pi_\infty e^{\mathcal{H}}.
\]
Suppose $(\mathcal{M}_k, \mathcal{L}_k)$, $k=1,2,\ldots$, is the sequence generated by the SDA and denote $X_k=[X^k_{ij}]_{1\le i,j\le 2}\equiv T_{\mathcal{S}_1,\mathcal{S}_2}^{-1}(\mathcal{M}_k, \mathcal{L}_k)$. It is shown in Theorem \ref{thm3.12} that $X_k=X(2^{k-1})$, where $X(t)$ is the extended solution of the IVP \eqref{eq3.9}.  Therefore, the asymptotic behaviors of the sequence $X_k$, as well as the sequence $(\mathcal{M}_k, \mathcal{L}_k)$, can be analyzed by using Lemma~\ref{lem4.1} as a connection to what we have studied on the RDE in previous subsections.

Suppose that $\mathcal{S}$ is a symplectic matrix such that  $\mathfrak{J}=\mathcal{S}^{-1}\mathcal{H}\mathcal{S}$ has the form in \eqref{eq4.8}. Partition $\mathcal{S}$ compatibly with $\mathfrak{J}$ being of the form
\begin{align}\label{eq4.81}
\mathcal{S}=\left[\begin{array}{ccc|ccc}U_r&U_e&U_{cd}&V_r&V_e&V_{cd}\end{array}\right]=\left[\begin{array}{ccc|ccc}U^r_1&U^e_1&U^{cd}_1&V^r_1&V^e_1&V^{cd}_1\\\hline U^r_2&U^e_2&U^{cd}_2&V^r_2&V^e_2&V^{cd}_2\end{array}\right].
\end{align}
Let
\begin{align}\label{eq4.82}
\begin{array}{ll}
\mathcal{S}_{-}=\mathcal{S}_2\mathcal{S},&\mathcal{S}_{+}=\mathcal{J}^{-1}\mathcal{S}_1\mathcal{S}
\end{array}
\end{align}
and
\begin{align*}
\begin{array}{rr}
\left[\begin{array}{c}
W_1^-\\
W_2^-\\
\end{array}%
\right]=\mathcal{S}^{-1}\mathcal{S}_2^{-1}\left[\begin{array}{c}
I\\
-X_{22}^{1}\\
\end{array}%
\right],& \left[\begin{array}{c}
W_1^+\\
W_2^+\\
\end{array}%
\right]=\mathcal{S}^{-1}\mathcal{S}_1^{-1}\mathcal{J}\left[\begin{array}{c}
I\\
X_{11}^{1}\\
\end{array}%
\right]
\end{array}\in{\mathbb C}^{2n\times n}.
\end{align*}
Partition   $W_j^\pm$ for $j=1,2$ as
\begin{align*}
\begin{array}{cc}W_j^{\pm}=\left[\begin{array}{cc}W^{j,\pm}_{1,1} & W^{j,\pm}_{1,2}   \\W^{j,\pm}_{2,1} & W^{j,\pm}_{2,2} \end{array}\right]&\!\!\!\!\!\!\!\!\begin{array}{l}\}n_r\\\} n_{ecd}\end{array}\vspace{-0.3cm}\\
\begin{array}{cc}\ \ \ \ \ \ \  \ \    \underbrace{}_{n_r}  &\ \ \underbrace{}_{n_{ecd}} \end{array}&\end{array}.
\end{align*}
Here $n_r$, $n_e$, $ n_c$ and $ n_d$   the sizes of $R_r$, $R_e$, $ R_c$ and $ R_d$ in \eqref{eq4.8}, respectively. We assume that
\begin{description}
 \item[{\bf Assumption}] SDA:  $\left[\begin{array}{cc}W^{1,+}_{1,1} & W^{1,+}_{1,2}   \\W^{2,+}_{2,1} & W^{2,+}_{2,2}  \end{array}\right]$ and $W_2^{-}$ are invertible.
 \end{description}
From Lemma \ref{lem4.1}, we see that the flows in \eqref{eq4.6} govern the sequence generated by SDA.
Under the {\bf Assumption} SDA, there exist invertible matrices $\mathbf{Z}_{1,\pm}$ in \eqref{eq4.38} such that
\begin{subequations}\label{eq4.83}
\begin{align}
&\left[\begin{array}{c}
Q(t;\mathcal{S}_2\mathcal{H}\mathcal{S}_2^{-1},-X_{22}^1)\\
P(t;\mathcal{S}_2\mathcal{H}\mathcal{S}_2^{-1},-X_{22}^1)\\
\end{array}%
\right]\mathbf{Z}_{1,-}=\mathcal{S}_-e^{\mathfrak{J}t}\left[\begin{array}{c}
\mathbf{ W}_1^-\\
\mathbf{ W}_2^-\\
\end{array}%
\right],\label{eq4.83a}\\
&\left[\begin{array}{c}
Q(t;\widetilde{\mathcal{H}}_{\star},X_{11}^1)\\
P(t;\widetilde{\mathcal{H}}_{\star},X_{11}^1)\\
\end{array}%
\right]\mathbf{Z}_{1,+}=\mathcal{S}_+e^{\mathfrak{J}t}\left[\begin{array}{c}
\mathbf{ W}_1^+\\
\mathbf{ W}_2^+\\
\end{array}%
\right],\label{eq4.83b}
\end{align}
\end{subequations}
where $\widetilde{\mathcal{H}}_{\star}=\mathcal{J}^{-1}\mathcal{S}_1\mathcal{H}\mathcal{S}_1^{-1}\mathcal{J}$ and $\mathbf{ W}_1^\pm$, $\mathbf{ W}_2^\pm$ have the form as in \eqref{eq4.40} and $\mathcal{S}_{-}$, $\mathcal{S}_{+}$ are defined in \eqref{eq4.82}. The asymptotic behaviors of
\begin{align*}
\begin{array}{lll}
W(t;\mathcal{S}_2\mathcal{H}\mathcal{S}_2^{-1},-X_{22}^1),&Q(t;\mathcal{S}_2\mathcal{H}\mathcal{S}_2^{-1},-X_{22}^1)^{-1},&\text{ as }t\rightarrow -\infty\\
W(t;\widetilde{\mathcal{H}}_{\star},X_{11}^1),&Q(t;\widetilde{\mathcal{H}}_{\star},X_{11}^1)^{-1},&\text{ as }t\rightarrow \infty
\end{array}
\end{align*}
have been studied in Subsection \ref{sec4.3}, and hence, can be used as a fundamental tool for the convergence analysis of SDA.

As a consequence of Lemma~\ref{lem4.1} and Corollary~\ref{cor4.17}, we see that the SDA exhibits a quadratic convergence whenever none of nonzero eigenvalues of $\mathcal{H}$ are pure imaginary. A similar convergence analysis has  been  carried out in \cite{ccghlx09,GuoLin:2010}.

\begin{Theorem}\label{thm4.25}
Suppose that $\mathcal{H}$ has no nonzero pure imaginary eigenvalue, that is, $U_{ecd}$ and $V_{ecd}$ are absent in \eqref{eq4.81} and $\mathfrak{J}=\left[\begin{array}{c|c}R_r & 0   \\\hline 0  & -R_r^H \end{array}\right]$.
Let  $\mathfrak{r}=\min\{\Re({\rm diag}(R_r))\}>0$ and
\begin{align*}
\left[\begin{array}{c}
\mathscr{U}_{1,-}\\
\mathscr{U}_{2,-}\\
\end{array}%
\right]=\mathcal{S}_2\left[\begin{array}{c}
V^r_1\\
V^r_2\\
\end{array}%
\right],\ \ \ \ \left[\begin{array}{c}
\mathscr{U}_{1,+}\\
\mathscr{U}_{2,+}\\
\end{array}%
\right]=\mathcal{J}^{-1}\mathcal{S}_1\left[\begin{array}{c}
U^r_1\\
U^r_2\\
\end{array}%
\right].
\end{align*}
If $\mathscr{U}_{1,-}$, $\mathscr{U}_{1,+}$ are invertible and {\bf Assumption} {\rm SDA} holds, then
\begin{align*}
\begin{array}{ll}
X^k_{22}=-\mathscr{U}_{2,-}\mathscr{U}_{1,-}^{-1}+O(e^{-\mathfrak{r}2^k}2^{2nk}),& X^k_{12}=O(e^{-\mathfrak{r}2^{k-1}}2^{nk}),\\
X^k_{11}=\mathscr{U}_{2,+}\mathscr{U}_{1,+}^{-1}+O(e^{-\mathfrak{r}2^k}2^{2nk}),& X^k_{21}=O(e^{-\mathfrak{r}2^{k-1}}2^{nk}),
\end{array}
\end{align*}
as $k\rightarrow \infty$. Here, $\mathscr{U}_{2,-}\mathscr{U}_{1,-}^{-1}$ and $\mathscr{U}_{2,+}\mathscr{U}_{1,+}^{-1}$ are Hermitian.
\end{Theorem}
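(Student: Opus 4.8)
The plan is to reduce the statement entirely to the asymptotic results already assembled for the RDE and then translate the time parameter $t$ into the doubling index via Theorem~\ref{thm3.12} and Lemma~\ref{lem4.1}. First I would invoke Lemma~\ref{lem4.1}, which expresses the SDA iterates $X_{ij}^k$ in terms of the quantities $W(\pm2^{k-1}\mp1;\cdot,\cdot)$ and $Q(\pm2^{k-1}\mp1;\cdot,\cdot)^{-1}$ for the two Hamiltonian matrices $\mathcal{S}_2\mathcal{H}\mathcal{S}_2^{-1}$ and $\widetilde{\mathcal{H}}_\star=\mathcal{J}^{-1}\mathcal{S}_1\mathcal{H}\mathcal{S}_1^{-1}\mathcal{J}$. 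Since these two matrices are symplectically similar to the \emph{same} Hamiltonian Jordan canonical form $\mathfrak{J}$ (by Theorem~\ref{thm4.2} and the construction preceding \eqref{eq4.6}), and since by hypothesis $\mathfrak{J}=R_r\oplus(-R_r^H)$ has no pure-imaginary part, I am exactly in the situation covered by Corollary~\ref{cor4.17}: with the symplectic conjugators $\mathcal{S}_-=\mathcal{S}_2\mathcal{S}$ and $\mathcal{S}_+=\mathcal{J}^{-1}\mathcal{S}_1\mathcal{S}$ as in \eqref{eq4.82}, {\bf Assumption} SDA becomes precisely {\bf Assumptions} $\mathscr{A}_+$ and $\mathscr{A}_-$ for the two respective flows.

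Next I would match the partitioned data. Corollary~\ref{cor4.17} gives, with $Y(t)=[Q(t)^\top,P(t)^\top]^\top$ the solution of \eqref{eq4.1} for the relevant Hamiltonian matrix, that $W(t)=\mathbf{U}_2\mathbf{U}_1^{-1}+O(e^{-2\mathfrak{r}t}t^{2n})$ and $Q(t)^{-1}=O(e^{-\mathfrak{r}t}t^{n})$ as $t\to\infty$, and the analogous statement with $\mathbf{V}_2\mathbf{V}_1^{-1}$ as $t\to-\infty$, where $\mathbf{U}_j,\mathbf{V}_j$ are the top/bottom $R_r$-blocks of the symplectic conjugator. For the flow governing $X_{11}^k$, $X_{21}^k$ (namely $\widetilde{\mathcal{H}}_\star$, evaluated as $t=2^{k-1}-1\to+\infty$), the conjugator is $\mathcal{S}_+=\mathcal{J}^{-1}\mathcal{S}_1\mathcal{S}$, so the relevant $R_r$-blocks are $[\mathscr{U}_{1,+}^\top,\mathscr{U}_{2,+}^\top]^\top=\mathcal{J}^{-1}\mathcal{S}_1[{U_1^r}^\top,{U_2^r}^\top]^\top$; for the flow governing $X_{22}^k$, $X_{12}^k$ (namely $\mathcal{S}_2\mathcal{H}\mathcal{S}_2^{-1}$, evaluated as $t=-2^{k-1}+1\to-\infty$), the conjugator is $\mathcal{S}_-=\mathcal{S}_2\mathcal{S}$ and the relevant blocks are the $V_r$-blocks $[\mathscr{U}_{1,-}^\top,\mathscr{U}_{2,-}^\top]^\top=\mathcal{S}_2[{V_1^r}^\top,{V_2^r}^\top]^\top$, with the extra minus sign coming from the $-X_{22}^1$ initial data recorded in \eqref{eq4.4a}. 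The invertibility hypotheses on $\mathscr{U}_{1,\pm}$ are exactly the invertibility hypotheses ($\mathbf{U}_1$, resp. $\mathbf{V}_1$, invertible) required by Corollary~\ref{cor4.17}; the Hermitian property of $\mathbf{U}_2\mathbf{U}_1^{-1}$ and $\mathbf{V}_2\mathbf{V}_1^{-1}$ there gives the Hermiticity of $\mathscr{U}_{2,\pm}\mathscr{U}_{1,\pm}^{-1}$.

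The last step is purely bookkeeping of the error order under the substitution $t=\pm(2^{k-1}-1)$: an $O(e^{-\mathfrak{r}|t|}|t|^{n})$ term at $|t|=2^{k-1}-1$ becomes $O(e^{-\mathfrak{r}(2^{k-1}-1)}(2^{k-1}-1)^{n})=O(e^{-\mathfrak{r}2^{k-1}}2^{nk})$ and an $O(e^{-2\mathfrak{r}|t|}|t|^{2n})$ term becomes $O(e^{-\mathfrak{r}2^{k}}2^{2nk})$, absorbing the fixed factor $e^{\mathfrak{r}}$ and replacing $(2^{k-1})^n$ by $2^{nk}$ up to constants; these are exactly the bounds stated. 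I expect no serious obstacle here — the real content has already been proved in Corollary~\ref{cor4.17}, and the only care needed is the orientation of time (the $X_{11}$-side uses $t\to+\infty$ while the $X_{22}$-side uses $t\to-\infty$, a consequence of \eqref{eq4.4a}--\eqref{eq4.5b}), the placement of the sign in front of $\mathscr{U}_{2,-}\mathscr{U}_{1,-}^{-1}$, and checking that the two conjugators $\mathcal{S}_\pm$ are genuinely symplectic so that Corollary~\ref{cor4.17} applies verbatim. If there is any subtlety, it will be in confirming that {\bf Assumption} SDA, phrased in terms of $W_2^{-}$ and the mixed block of $W^{+}$, coincides with $\mathscr{A}_-$ and $\mathscr{A}_+$ for the two flows after the reshuffling of rows induced by $\mathcal{J}^{-1}\mathcal{S}_1$ versus $\mathcal{S}_2$; I would verify this by directly comparing \eqref{eq4.40a}--\eqref{eq4.40b} with the definitions of $W_1^\pm,W_2^\pm$ in this subsection.
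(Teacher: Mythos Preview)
Your proposal is correct and follows essentially the same route as the paper: invoke Lemma~\ref{lem4.1} to express $X_{ij}^k$ via the RDE data at $t=\pm(2^{k-1}-1)$, apply Corollary~\ref{cor4.17} with the symplectic conjugators $\mathcal{S}_-=\mathcal{S}_2\mathcal{S}$ and $\mathcal{S}_+=\mathcal{J}^{-1}\mathcal{S}_1\mathcal{S}$ (noting that {\bf Assumption}~SDA supplies the requisite $\mathscr{A}_\pm$), and then read off the limits and error orders. Your bookkeeping of the sign, the time orientation, and the error terms is accurate and in fact more explicit than the paper's own short proof.
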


\begin{proof}
We first prove assertions for $X_{22}^k$ and $X_{12}^k$. Note that  \eqref{eq4.83a} holds due to  {\bf Assumption} SDA.
Replacing the matrix $\mathcal{S}$ by $\mathcal{S}_{-}$ in Corollary~\ref{cor4.17}, it follows
\begin{align*}
\begin{array}{l}
W(t;\mathcal{S}_2\mathcal{H}\mathcal{S}_2^{-1},-X_{22}^1)=\mathscr{U}_{2,-}\mathscr{U}_{1,-}^{-1}+O(e^{-2\mathfrak{r}|t|}|t|^{2n}),\\
Q(t;\mathcal{S}_2\mathcal{H}\mathcal{S}_2^{-1},-X_{22}^1)^{-1}=O(e^{-\mathfrak{r}|t|}|t|^{n}),
\end{array}
\end{align*}
as $t\rightarrow -\infty$. Therefore we conclude from Lemma~\ref{lem4.1} that
\begin{align*}
X^k_{22}=-\mathscr{U}_{2,-}\mathscr{U}_{1,-}^{-1}+O(e^{-\mathfrak{r}2^k}2^{2nk})\text{ and } X^k_{12}=O(e^{-\mathfrak{r}2^{k-1}}2^{nk}),
\end{align*}
as $k\rightarrow \infty$.  Assertions for $X_{11}^k$ and $X_{21}^k$ can be accordingly obtained by using the matrix $\mathcal{S}_+$, \eqref{eq4.83b},  Corollary~\ref{cor4.17} and Lemma \ref{lem4.1}. The matrices $\mathscr{U}_{2,-}\mathscr{U}_{1,-}^{-1}$ and $\mathscr{U}_{2,+}\mathscr{U}_{1,+}^{-1}$ are Hermitian because $\mathcal{S}_-$ and $\mathcal{S}_+$ are symplectic, respectively.
\end{proof}

In a similar manner as the proof of Theorem \ref{thm4.25}, the following theorem can be obtained by applying Lemma~\ref{lem4.1} and Corollary~\ref{cor4.20}. We  see that the SDA exhibits a linear convergence whenever the sizes of Jordan blocks corresponding to nonzero  pure imaginary eigenvalues of $\mathcal{H}$ are even. A similar convergence analysis has been proven in \cite{HuangLin:2009}.

\begin{Theorem}\label{thm4.26}
Suppose that the sizes of Jordan blocks corresponding to nonzero  pure imaginary eigenvalues of $\mathcal{H}$ are even, that is, $U_{cd}$ and $V_{cd}$ are absent in \eqref{eq4.81} and $\mathfrak{J}$ has the form in \eqref{eq4.60}.
Let
\begin{align*}
\left[\begin{array}{c}
\mathscr{U}_{1,-}\\
\mathscr{U}_{2,-}\\
\end{array}%
\right]=\mathcal{S}_2\left[\begin{array}{cc}
V^r_1&U_1^e\\
V^r_2&U_2^e\\
\end{array}%
\right],\ \ \ \ \left[\begin{array}{c}
\mathscr{U}_{1,+}\\
\mathscr{U}_{2,+}\\
\end{array}%
\right]=\mathcal{J}^{-1}\mathcal{S}_1\left[\begin{array}{cc}
U^r_1&U_1^e\\
U^r_2&U_2^e\\
\end{array}%
\right].
\end{align*}
If $\mathscr{U}_{1,-}$, $\mathscr{U}_{1,+}$ are invertible and {\bf Assumption} {\rm SDA} holds, then
\begin{align*}
\begin{array}{ll}
X^k_{22}=-\mathscr{U}_{2,-}\mathscr{U}_{1,-}^{-1}+O(2^{-k}),& X^k_{12}=O(2^{-k}),\\
X^k_{11}=\mathscr{U}_{2,+}\mathscr{U}_{1,+}^{-1}+O(2^{-k}),& X^k_{21}=O(2^{-k}),
\end{array}
\end{align*}
as $k\rightarrow \infty$. Here, $\mathscr{U}_{2,-}\mathscr{U}_{1,-}^{-1}$ and $\mathscr{U}_{2,+}\mathscr{U}_{1,+}^{-1}$ are Hermitian.
\end{Theorem}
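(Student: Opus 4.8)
The plan is to mirror the proof of Theorem~\ref{thm4.25}, replacing the appeal to Corollary~\ref{cor4.17} by an appeal to Corollary~\ref{cor4.20}, and to read off the claimed linear rate from the error term $O(t^{-1})$ in that corollary. First I would record that, since $(\mathcal{S}_1,\mathcal{S}_2)=(I,I)$ or $(-I,\mathcal{J})$ and $(\mathcal{M}_1,\mathcal{L}_1)=T_{\mathcal{S}_1,\mathcal{S}_2}(X_1)$ is regular with ${\rm ind}_\infty\le 1$, Lemma~\ref{lem4.1} applies and expresses the SDA iterates as
\begin{align*}
X^k_{22}&=-P(-2^{k-1}+1;\mathcal{S}_2\mathcal{H}\mathcal{S}_2^{-1},-X_{22}^1)Q(-2^{k-1}+1;\mathcal{S}_2\mathcal{H}\mathcal{S}_2^{-1},-X_{22}^1)^{-1},\\
X^k_{12}&=X_{12}^1 Q(-2^{k-1}+1;\mathcal{S}_2\mathcal{H}\mathcal{S}_2^{-1},-X_{22}^1)^{-1},
\end{align*}
with analogous formulas for $X^k_{11}$, $X^k_{21}$ in terms of $Q(2^{k-1}-1;\widetilde{\mathcal{H}}_\star,X_{11}^1)$ and $P(2^{k-1}-1;\widetilde{\mathcal{H}}_\star,X_{11}^1)$. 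Thus the question reduces to the behavior of $W(t)=P(t)Q(t)^{-1}$ and $Q(t)^{-1}$ for the two RDEs as $t\to-\infty$ and $t\to\infty$ respectively.

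Next I would invoke the hypothesis that all Jordan blocks of $\mathcal{H}$ attached to nonzero pure imaginary eigenvalues are even-sized. By the Hamiltonian Jordan canonical form (Theorem~\ref{thm4.2}) and the pairing structure in \eqref{eq4.8}, a pure imaginary eigenvalue with only even-sized blocks produces exactly the ``index $e$'' part $R_e,D_e$ and leaves the ``index $c$'' and ``index $d$'' parts $R_c,D_c,R_d,G_d,D_d$ absent; hence $U_{cd}$ and $V_{cd}$ are absent in \eqref{eq4.81} and $\mathfrak{J}=\mathfrak{J}_{re}$ as in \eqref{eq4.60}. Since $\mathcal{S}_2\mathcal{H}\mathcal{S}_2^{-1}$ and $\widetilde{\mathcal{H}}_\star=\mathcal{J}^{-1}\mathcal{S}_1\mathcal{H}\mathcal{S}_1^{-1}\mathcal{J}$ are both symplectically similar to $\mathcal{H}$ (hence to $\mathfrak{J}_{re}$) via the symplectic matrices $\mathcal{S}_-=\mathcal{S}_2\mathcal{S}$ and $\mathcal{S}_+=\mathcal{J}^{-1}\mathcal{S}_1\mathcal{S}$ of \eqref{eq4.82}, Corollary~\ref{cor4.20} is directly applicable, provided {\bf Assumptions} $\mathscr{A}_+$, $\mathscr{A}_-$ hold; but those are exactly {\bf Assumption} SDA transcribed through $\mathcal{S}_\pm$ into the form needed in \eqref{eq4.83}, using $[W_1^{-\top},W_2^{-\top}]^\top=\mathcal{S}^{-1}\mathcal{S}_2^{-1}[I,-X_{22}^{1\top}]^\top$ and $[W_1^{+\top},W_2^{+\top}]^\top=\mathcal{S}^{-1}\mathcal{S}_1^{-1}\mathcal{J}[I,X_{11}^{1\top}]^\top$.

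Then, writing $[\mathscr{U}_{1,-}^\top,\mathscr{U}_{2,-}^\top]^\top=\mathcal{S}_2[V_1^{r\top}\ U_1^{e\top}; V_2^{r\top}\ U_2^{e\top}]^\top$ and $[\mathscr{U}_{1,+}^\top,\mathscr{U}_{2,+}^\top]^\top=\mathcal{J}^{-1}\mathcal{S}_1[U_1^{r\top}\ U_1^{e\top}; U_2^{r\top}\ U_2^{e\top}]^\top$, Corollary~\ref{cor4.20} applied to $\mathcal{S}_-$ gives, as $t\to-\infty$,
\begin{align*}
W(t;\mathcal{S}_2\mathcal{H}\mathcal{S}_2^{-1},-X_{22}^1)=\mathscr{U}_{2,-}\mathscr{U}_{1,-}^{-1}+O(t^{-1}),\qquad Q(t;\mathcal{S}_2\mathcal{H}\mathcal{S}_2^{-1},-X_{22}^1)^{-1}=O(t^{-1}),
\end{align*}
provided $\mathscr{U}_{1,-}$ is invertible, with $\mathscr{U}_{2,-}\mathscr{U}_{1,-}^{-1}$ Hermitian since $\mathcal{S}_-$ is symplectic; symmetrically for $\mathcal{S}_+$ as $t\to\infty$ under invertibility of $\mathscr{U}_{1,+}$. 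Substituting $t=-2^{k-1}+1$ and $t=2^{k-1}-1$ into Lemma~\ref{lem4.1} turns $O(t^{-1})$ into $O(2^{-k})$ (absorbing the additive $1$ and constant factors), yielding $X^k_{22}=-\mathscr{U}_{2,-}\mathscr{U}_{1,-}^{-1}+O(2^{-k})$, $X^k_{12}=O(2^{-k})$, $X^k_{11}=\mathscr{U}_{2,+}\mathscr{U}_{1,+}^{-1}+O(2^{-k})$, $X^k_{21}=O(2^{-k})$, with Hermitian limits. The main obstacle is bookkeeping rather than mathematics: one must carefully check that {\bf Assumption} SDA is precisely $\mathscr{A}_+\wedge\mathscr{A}_-$ under the changes of basis $\mathcal{S}_\pm$, and that the partitioning of $\mathcal{S}$ in \eqref{eq4.81} matches the partitions $U_r,U_e$ used when $\mathfrak{J}=\mathfrak{J}_{re}$, so that the matrices $U^r_j,U^e_j,V^r_j$ in the statement really are the blocks that Corollary~\ref{cor4.20} names $\mathbf{U}_{j,\pm}$.
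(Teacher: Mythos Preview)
Your proposal is correct and is exactly the approach the paper takes: the paper does not give a separate proof of this theorem but states that it ``can be obtained by applying Lemma~\ref{lem4.1} and Corollary~\ref{cor4.20}'' in the same manner as Theorem~\ref{thm4.25}, which is precisely your plan of replacing Corollary~\ref{cor4.17} by Corollary~\ref{cor4.20} and reading off the $O(2^{-k})$ rate from the $O(t^{-1})$ error. Your caveat about the bookkeeping (matching {\bf Assumption} SDA to $\mathscr{A}_\pm$ via $\mathcal{S}_\pm$ and identifying the blocks of $\mathcal{S}$) is well placed but is indeed routine, as the relevant identifications are already set up in \eqref{eq4.82}--\eqref{eq4.83}.
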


For the case that the Hamiltonian Jordan canonical form $\mathfrak{J}$ of $\mathcal{H}$ has the form in \eqref{eq4.62},
the following theorem can be obtained by applying Lemma~\ref{lem4.1} and Corollary~\ref{cor4.21}. We see that the sequences, $X_{22}^k$ and $X_{11}^k$, converge linearly to constant Hermitian matrices and that the sequences, $X_{12}^k$ and $X_{21}^k$, tend linearly to closed obits that consist of rank-one matrices.

\begin{Theorem}\label{thm4.27}
Suppose that {\bf Assumption} {\rm SDA} holds and  the Hamiltonian Jordan canonical form $\mathfrak{J}$ of $\mathcal{H}$ has the form in \eqref{eq4.62}, that is, $U_{cd}=U_{c}$ and $V_{cd}=V_{c}$ in \eqref{eq4.81}.
Let
\begin{align*}
\left[\begin{array}{c}
\mathscr{U}_{1,-}\\
\mathscr{U}_{2,-}\\
\end{array}%
\right]=\mathcal{S}_2\left[\begin{array}{c}
\mathbf{U}_{1,-}\\
\mathbf{U}_{2,-}\\
\end{array}%
\right],\ \ \ \ \left[\begin{array}{c}
\mathscr{U}_{1,+}\\
\mathscr{U}_{2,+}\\
\end{array}%
\right]=\mathcal{J}^{-1}\mathcal{S}_1\left[\begin{array}{c}
\mathbf{U}_{1,+}\\
\mathbf{U}_{2,+}\\
\end{array}%
\right],
\end{align*}
where $\mathbf{U}_{j,-}$ and $\mathbf{U}_{j,+}$ for $j=1,2$ are defined in \eqref{eq4.63}.
If $\mathscr{U}_{1,-}$, $\mathscr{U}_{1,+}$  are invertible, then as $k\rightarrow \infty$
\begin{align*}
\begin{array}{ll}
X^k_{22}=-\mathscr{U}_{2,-}\mathscr{U}_{1,-}^{-1}+O(2^{-k}),& X^k_{12}=e^{i\eta (2^{k-1}-1)}X^1_{12}\mathbf{Z}_{cd,-}^{1}e_{n_{cd}}e_n^{H}\mathscr{U}_{1,-}^{-1}+O(2^{-k}),\\
X^k_{11}=\mathscr{U}_{2,+}\mathscr{U}_{1,+}^{-1}+O(2^{-k}),& X^k_{21}=e^{-i\eta (2^{k-1}-1)}X^1_{21}\mathbf{Z}_{cd,+}^{1}e_{n_{cd}}e_n^{H}\mathscr{U}_{1,+}^{-1}+O(2^{-k}),
\end{array}
\end{align*}
where $\mathbf{Z}_{cd,\pm}^{1}$ is defined in \eqref{eq4.52}. Here, $\mathscr{U}_{2,-}\mathscr{U}_{1,-}^{-1}$  and $\mathscr{U}_{2,+}\mathscr{U}_{1,+}^{-1}$ are Hermitian.
\end{Theorem}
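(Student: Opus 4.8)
The plan is to reduce the statement, through Lemma~\ref{lem4.1}, to the asymptotic analysis of the Riccati differential equation already carried out in Corollary~\ref{cor4.21}, exactly as Theorems~\ref{thm4.25} and~\ref{thm4.26} reduce to Corollaries~\ref{cor4.17} and~\ref{cor4.20}. By Theorem~\ref{thm3.12} and Lemma~\ref{lem4.1} the SDA iterates satisfy $X^k_{22}=-W(-2^{k-1}+1;\mathcal{S}_2\mathcal{H}\mathcal{S}_2^{-1},-X_{22}^1)$, $X^k_{12}=X_{12}^1\,Q(-2^{k-1}+1;\mathcal{S}_2\mathcal{H}\mathcal{S}_2^{-1},-X_{22}^1)^{-1}$, $X^k_{11}=W(2^{k-1}-1;\widetilde{\mathcal{H}}_{\star},X_{11}^1)$ and $X^k_{21}=X_{21}^1\,Q(2^{k-1}-1;\widetilde{\mathcal{H}}_{\star},X_{11}^1)^{-1}$, where $\widetilde{\mathcal{H}}_{\star}=\mathcal{J}^{-1}\mathcal{S}_1\mathcal{H}\mathcal{S}_1^{-1}\mathcal{J}$. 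As $k\to\infty$ the first time argument tends to $-\infty$ and the second to $+\infty$, so it suffices to describe $W(t;\mathcal{S}_2\mathcal{H}\mathcal{S}_2^{-1},-X_{22}^1)$ and $Q(t;\mathcal{S}_2\mathcal{H}\mathcal{S}_2^{-1},-X_{22}^1)^{-1}$ as $t\to-\infty$, and $W(t;\widetilde{\mathcal{H}}_{\star},X_{11}^1)$, $Q(t;\widetilde{\mathcal{H}}_{\star},X_{11}^1)^{-1}$ as $t\to+\infty$.

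Next I would verify the hypotheses of Corollary~\ref{cor4.21} for these two flows. The matrices $\mathcal{S}_-=\mathcal{S}_2\mathcal{S}$ and $\mathcal{S}_+=\mathcal{J}^{-1}\mathcal{S}_1\mathcal{S}$ of~\eqref{eq4.82} are symplectic (products of symplectic matrices, since $\mathcal{J}^{-1}=-\mathcal{J}$ is symplectic) and satisfy $\mathcal{S}_-^{-1}(\mathcal{S}_2\mathcal{H}\mathcal{S}_2^{-1})\mathcal{S}_-=\mathcal{S}_+^{-1}\widetilde{\mathcal{H}}_{\star}\mathcal{S}_+=\mathcal{S}^{-1}\mathcal{H}\mathcal{S}=\mathfrak{J}$, which by hypothesis has the form~\eqref{eq4.62}. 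Moreover $\mathcal{S}_-^{-1}\bigl[\begin{smallmatrix}I\\-X_{22}^1\end{smallmatrix}\bigr]$ and $\mathcal{S}_+^{-1}\bigl[\begin{smallmatrix}I\\X_{11}^1\end{smallmatrix}\bigr]$ coincide with the stacked matrices $[(W_1^-)^{\top},(W_2^-)^{\top}]^{\top}$ and $[(W_1^+)^{\top},(W_2^+)^{\top}]^{\top}$ of Subsection~\ref{sec4.4}, so \textbf{Assumption} SDA is precisely \textbf{Assumption} $\mathscr{A}_{-}$ for the flow generated by $\mathcal{S}_2\mathcal{H}\mathcal{S}_2^{-1}$ with initial value $-X_{22}^1$, together with \textbf{Assumption} $\mathscr{A}_{+}$ for the flow generated by $\widetilde{\mathcal{H}}_{\star}$ with initial value $X_{11}^1$. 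The one genuinely bookkeeping-heavy step is to check that when Corollary~\ref{cor4.21} is applied with the symplectic matrix $\mathcal{S}_-$ (resp.\ $\mathcal{S}_+$), its output matrices $\mathbf{U}_{j,-}$, $\mathbf{Z}^1_{cd,-}$ (resp.\ $\mathbf{U}_{j,+}$, $\mathbf{Z}^1_{cd,+}$) become, after the uniform left multiplication by $\mathcal{S}_2$ (resp.\ $\mathcal{J}^{-1}\mathcal{S}_1$) relating the columns of $\mathcal{S}_\pm$ to those of $\mathcal{S}$, exactly the matrices $\mathscr{U}_{j,\pm}$ of the statement and the same $\mathbf{Z}^1_{cd,\pm}$ of~\eqref{eq4.52}; the auxiliary constants $f_u^{\pm},g_u^{\pm},f_v^{\pm},g_v^{\pm}$ entering~\eqref{eq4.63} are built from $\mathbf{W}_{cd}^{\pm}$ and are therefore unaffected.

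With this dictionary in place I would invoke the $t\to-\infty$ conclusion of Corollary~\ref{cor4.21} for the first flow (which needs only \textbf{Assumption} $\mathscr{A}_{-}$ and the invertibility of $\mathscr{U}_{1,-}$, just as the proof of Theorem~\ref{thm4.25} uses only one tail of Corollary~\ref{cor4.17}), obtaining $W(t;\mathcal{S}_2\mathcal{H}\mathcal{S}_2^{-1},-X_{22}^1)=\mathscr{U}_{2,-}\mathscr{U}_{1,-}^{-1}+O(t^{-1})$ and $Q(t;\mathcal{S}_2\mathcal{H}\mathcal{S}_2^{-1},-X_{22}^1)^{-1}=e^{-i\eta t}\mathbf{Z}^1_{cd,-}e_{n_{cd}}e_n^{H}\mathscr{U}_{1,-}^{-1}+O(t^{-1})$; symmetrically the $t\to+\infty$ conclusion for the second flow yields $W(t;\widetilde{\mathcal{H}}_{\star},X_{11}^1)=\mathscr{U}_{2,+}\mathscr{U}_{1,+}^{-1}+O(t^{-1})$ and $Q(t;\widetilde{\mathcal{H}}_{\star},X_{11}^1)^{-1}=e^{-i\eta t}\mathbf{Z}^1_{cd,+}e_{n_{cd}}e_n^{H}\mathscr{U}_{1,+}^{-1}+O(t^{-1})$. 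Substituting $t=-2^{k-1}+1$ into the first pair and $t=2^{k-1}-1$ into the second, using $O\bigl((2^{k-1}-1)^{-1}\bigr)=O(2^{-k})$ and $e^{-i\eta(-2^{k-1}+1)}=e^{i\eta(2^{k-1}-1)}$, and feeding the outcomes into the four identities of Lemma~\ref{lem4.1}, produces precisely the claimed expansions for $X^k_{22},X^k_{12},X^k_{11},X^k_{21}$. Hermiticity of $\mathscr{U}_{2,\pm}\mathscr{U}_{1,\pm}^{-1}$ is part of the conclusion of Corollary~\ref{cor4.21} (it reflects that $\mathcal{S}_\pm$ are symplectic) and is consistent with $X^k$ being Hermitian by Remark~\ref{rem3.1}.

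The main obstacle is not analytic but notational: Corollary~\ref{cor4.21} parametrizes its limits through the symplectic transformation that diagonalizes the Hamiltonian matrix of the RDE, whereas the present statement parametrizes them through $\mathcal{S}$ (which diagonalizes $\mathcal{H}$ itself) together with the extra factors $\mathcal{S}_2$ and $\mathcal{J}^{-1}\mathcal{S}_1$. Carrying the identification through requires tracking how the block partitions in~\eqref{eq4.52}, \eqref{eq4.61} and~\eqref{eq4.63} behave under left multiplication by a symplectic matrix; once this is settled, the remainder is a routine specialization of Corollary~\ref{cor4.21} at the discrete times $t=\pm(2^{k-1}-1)$, entirely parallel to Theorems~\ref{thm4.25} and~\ref{thm4.26}.
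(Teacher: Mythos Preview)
Your proposal is correct and follows exactly the approach the paper indicates: the paper states (immediately before Theorem~\ref{thm4.27}) that the result ``can be obtained by applying Lemma~\ref{lem4.1} and Corollary~\ref{cor4.21},'' and gives no further details. Your reduction via $\mathcal{S}_\pm=\mathcal{S}_2\mathcal{S}$ and $\mathcal{J}^{-1}\mathcal{S}_1\mathcal{S}$, the verification that \textbf{Assumption} SDA supplies the relevant halves of \textbf{Assumptions}~$\mathscr{A}_\pm$, and the substitution $t=\pm(2^{k-1}-1)$ are precisely the steps that flesh out this remark, in complete parallel with the proof of Theorem~\ref{thm4.25}.
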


The following theorem can be obtained by applying Lemma~\ref{lem4.1} and Theorem~\ref{thm4.24}.
\begin{Theorem}\label{thm4.28}
Suppose that {\bf Assumption} {\rm SDA} holds and  the Hamiltonian Jordan canonical form $\mathfrak{J}$ of $\mathcal{H}$ is of the form in \eqref{eq4.8}.
Let
\begin{align*}
\begin{array}{ll}
\left[\begin{array}{c}
\mathscr{U}_{1,-}\\
\mathscr{U}_{2,-}\\
\end{array}%
\right]=\mathcal{S}_2\left[\begin{array}{c}
\mathbf{U}_{1,-}\\
\mathbf{U}_{2,-}\\
\end{array}%
\right],&\left[\begin{array}{c}
\Delta\mathscr{U}_{1,-}^{cd}(t)\\
\Delta\mathscr{U}_{2,-}^{cd}(t)\\
\end{array}%
\right]=\mathcal{S}_2\left[\begin{array}{c}
\Delta\mathbf{U}_{1,-}^{cd}(t)\\
\Delta\mathbf{U}_{2,-}^{cd}(t)\\
\end{array}%
\right], \\
\left[\begin{array}{c}
\mathscr{U}_{1,+}\\
\mathscr{U}_{2,+}\\
\end{array}%
\right]=\mathcal{J}^{-1}\mathcal{S}_1\left[\begin{array}{c}
\mathbf{U}_{1,+}\\
\mathbf{U}_{2,+}\\
\end{array}%
\right],&\left[\begin{array}{c}
\Delta\mathscr{U}_{1,+}^{cd}(t)\\
\Delta\mathscr{U}_{2,+}^{cd}(t)\\
\end{array}%
\right]=\mathcal{J}^{-1}\mathcal{S}_1\left[\begin{array}{c}
\Delta\mathbf{U}_{1,+}^{cd}(t)\\
\Delta\mathbf{U}_{2,+}^{cd}(t)\\
\end{array}%
\right],
\end{array}
\end{align*}
where $\mathbf{U}_{1,\pm}$, $\mathbf{U}_{2,\pm}$, $\Delta\mathbf{U}_{1,\pm}^{cd}(t)$ and $\Delta\mathbf{U}_{2,\pm}^{cd}(t)$  are defined in \eqref{eq4.78}.
If $\mathscr{U}_{1,-}$, $\mathscr{U}_{1,+}$  are invertible, then there exist four matrices
\begin{align*}
\mathscr{K}_{W_{\pm}}(k)=&\left[\Delta\mathscr{U}^{cd}_{2,\pm}(\pm2^{k-1}\mp 1)-\mathscr{U}_{2,\pm} \mathscr{U}_{1,\pm}^{-1}\Delta\mathscr{U}^{cd}_{1,\pm}(\pm2^{k-1}\mp 1)+O(2^{-k})\right]\\&[I_{\mu}+\mathsf{E}_{\mu}^H\mathscr{U}_{1,\pm}^{-1}\Delta\mathscr{U}^{cd}_{1,\pm}(\pm2^{k-1}\mp 1)+O(2^{-k})]^{-1}\mathsf{E}_{\mu}^H[\mathscr{U}_{1,\pm}^{-1}+O(2^{-k})],\\
\mathscr{K}_{Q_{\pm}}(k)=&[\mathfrak{Z}_{cd}^{\pm}\Sigma_{\gamma}(\pm2^{k-1}\mp 1)^{-1}+O(2^{-k})]\\&\left[I_{\mu}+\mathsf{E}_{\mu}^H\mathscr{U}_{1,\pm}^{-1}\Delta\mathscr{U}^{cd}_{1,\pm}(\pm2^{k-1}\mp 1)+O(2^{-k})\right]^{-1}\mathsf{E}_{\mu}^H[\mathscr{U}_{1,\pm}^{-1}+O(2^{-k})],
\end{align*}
such that as $k\rightarrow \infty$
\begin{align*}
\begin{array}{l}
X^k_{22}=-\mathscr{U}_{2,-}\mathscr{U}_{1,-}^{-1}-\mathscr{K}_{W_{-}}(k)+O(2^{-k}),\\
 X^k_{12}=X^1_{12}\mathscr{K}_{Q_{-}}(k)+O(2^{-k}),\\
X^k_{11}=\mathscr{U}_{2,+}\mathscr{U}_{1,+}^{-1}+\mathscr{K}_{W_{+}}(k)+O(2^{-k}),\\
X^k_{21}=X^1_{21}\mathscr{K}_{Q_{+}}(k)+O(2^{-k}),
\end{array}
\end{align*}
where $\mathfrak{Z}_{cd}^{\pm},\ \mathsf{E}_{\mu}\in \mathbb{C}^{n\times \mu}$ are defined in \eqref{eq4.71}. Here,  the ranks of $\mathscr{K}_{W_{\pm}}(k)$ and of $\mathscr{K}_{Q_{\pm}}(k)$ are at most $\mu$, where $\mu$ is the number of Jordan blocks in $R_{cd}$.
\end{Theorem}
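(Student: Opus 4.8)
The plan is to combine the exact-flow identities of Lemma~\ref{lem4.1} with the general asymptotic expansion of the Riccati flow obtained in Theorem~\ref{thm4.24}, just as Theorems~\ref{thm4.25}--\ref{thm4.27} were derived from the specialized corollaries. First I would recall that, by Lemma~\ref{lem4.1}, the SDA iterates satisfy
\begin{align*}
X_{22}^k&=-W(-2^{k-1}+1;\mathcal{S}_2\mathcal{H}\mathcal{S}_2^{-1},-X_{22}^1),\\
X_{12}^k&=X_{12}^1\,Q(-2^{k-1}+1;\mathcal{S}_2\mathcal{H}\mathcal{S}_2^{-1},-X_{22}^1)^{-1},\\
X_{11}^k&=W(2^{k-1}-1;\widetilde{\mathcal{H}}_{\star},X_{11}^1),\qquad X_{21}^k=X_{21}^1\,Q(2^{k-1}-1;\widetilde{\mathcal{H}}_{\star},X_{11}^1)^{-1},
\end{align*}
so that the entire asymptotic behavior of the SDA is encoded in the behavior of $W(t;\cdot,\cdot)$ and $Q(t;\cdot,\cdot)^{-1}$ as $t\to -\infty$ (for the ``$22$''/``$12$'' blocks, with Hamiltonian $\mathcal{S}_2\mathcal{H}\mathcal{S}_2^{-1}$) and as $t\to +\infty$ (for the ``$11$''/``$21$'' blocks, with Hamiltonian $\widetilde{\mathcal{H}}_{\star}$). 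Since $\mathfrak{J}=\mathcal{S}^{-1}\mathcal{H}\mathcal{S}$ is a Hamiltonian Jordan canonical form and both $\mathcal{S}_2\mathcal{H}\mathcal{S}_2^{-1}$ and $\widetilde{\mathcal{H}}_{\star}=\mathcal{J}^{-1}\mathcal{S}_1\mathcal{H}\mathcal{S}_1^{-1}\mathcal{J}$ are symplectically similar to $\mathcal{H}$, the matrices $\mathcal{S}_-=\mathcal{S}_2\mathcal{S}$ and $\mathcal{S}_+=\mathcal{J}^{-1}\mathcal{S}_1\mathcal{S}$ in \eqref{eq4.82} are exactly the symplectic factors that conjugate these two Hamiltonians to $\mathfrak{J}$. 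Hence \eqref{eq4.83a} and \eqref{eq4.83b} put the two relevant linear flows $Y(t)\mathbf{Z}_{1,\mp}$ in precisely the form \eqref{eq4.39} required by Theorem~\ref{thm4.24}, with $\mathcal{S}$ replaced by $\mathcal{S}_-$ (resp.\ $\mathcal{S}_+$), with the initial data $[W_1^-{}^\top,W_2^-{}^\top]^\top$ (resp.\ $[W_1^+{}^\top,W_2^+{}^\top]^\top$), and with {\bf Assumption} SDA being exactly {\bf Assumption} $\mathscr{A}_-$ for the first flow and {\bf Assumption} $\mathscr{A}_+$ for the second.

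The second step is to invoke Theorem~\ref{thm4.24} twice. Applying it to the flow \eqref{eq4.83a} with the substitution $\mathcal{S}\leftarrow\mathcal{S}_-$, $t\to-\infty$, gives
\begin{align*}
W(t;\mathcal{S}_2\mathcal{H}\mathcal{S}_2^{-1},-X_{22}^1)&=\mathbf{U}_{2,-}^{\flat}\,(\mathbf{U}_{1,-}^{\flat})^{-1}+[\Delta\mathbf{U}^{cd}_{2,-}(t)-\mathbf{U}_{2,-}^{\flat}(\mathbf{U}_{1,-}^{\flat})^{-1}\Delta\mathbf{U}^{cd}_{1,-}(t)+O(t^{-1})]\\
&\qquad\times[I_{\mu}+\mathsf{E}_{\mu}^H(\mathbf{U}_{1,-}^{\flat})^{-1}\Delta\mathbf{U}^{cd}_{1,-}(t)+O(t^{-1})]^{-1}\mathsf{E}_{\mu}^H[(\mathbf{U}_{1,-}^{\flat})^{-1}+O(t^{-1})]+O(t^{-1}),
\end{align*}
and the companion formula for $Q(t;\mathcal{S}_2\mathcal{H}\mathcal{S}_2^{-1},-X_{22}^1)^{-1}$, where $\mathbf{U}_{j,-}^{\flat}$, $\Delta\mathbf{U}^{cd}_{j,-}(t)$ are built out of $\mathcal{S}_-$ exactly as in \eqref{eq4.78}--\eqref{eq4.79}; pre-multiplying the block decomposition of $\mathcal{S}_-=\mathcal{S}_2\mathcal{S}$ by $\mathcal{S}_2^{-1}$ recovers the matrices $\mathscr{U}_{1,-}=\mathcal{S}_2[\mathbf{U}_{1,-}^\top\,\mathbf{U}_{2,-}^\top]^\top$ etc.\ named in the statement. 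Likewise Theorem~\ref{thm4.24} applied to \eqref{eq4.83b} with $\mathcal{S}\leftarrow\mathcal{S}_+$, $t\to+\infty$, yields the analogous expansion for $W(t;\widetilde{\mathcal{H}}_{\star},X_{11}^1)$ and $Q(t;\widetilde{\mathcal{H}}_{\star},X_{11}^1)^{-1}$. Now I would substitute $t=-2^{k-1}+1$ in the first pair and $t=2^{k-1}-1$ in the second, using that $O(t^{-1})=O(2^{-k})$ along these discrete samples, and read off the invertibility of $\mathbf{U}_{1,\pm}$ from the invertibility hypotheses on $\mathscr{U}_{1,\pm}$ (which differ only by the fixed symplectic factors $\mathcal{S}_2$ and $\mathcal{J}^{-1}\mathcal{S}_1$). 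Feeding these into the Lemma~\ref{lem4.1} identities and collecting the constant Hermitian parts $\mp\mathscr{U}_{2,\pm}\mathscr{U}_{1,\pm}^{-1}$, the rank-$\le\mu$ correction terms, and the $X_{12}^1$, $X_{21}^1$ prefactors, produces exactly the four displayed formulas; the matrices $\mathscr{K}_{W_\pm}(k)$ and $\mathscr{K}_{Q_\pm}(k)$ are, by construction, the middle (oscillatory) factors of those expansions evaluated at $\pm2^{k-1}\mp1$, and their rank is bounded by $\mu$ because $\mathsf{E}_{\mu}\in\mathbb{C}^{n\times\mu}$ and $\mathfrak{Z}_{cd}^{\pm}\in\mathbb{C}^{n\times\mu}$. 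Hermiticity of $\mathscr{U}_{2,\pm}\mathscr{U}_{1,\pm}^{-1}$ follows since $\mathcal{S}_\pm$ are symplectic, exactly as in the proofs of Theorems~\ref{thm4.25}--\ref{thm4.27}.

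The routine obstacle is purely bookkeeping: one must check that the partition of $\mathcal{S}_\pm=\mathcal{S}_2\mathcal{S}$, $\mathcal{J}^{-1}\mathcal{S}_1\mathcal{S}$ compatible with $\mathfrak{J}$ is consistent with \eqref{eq4.81}, that the initial vectors $[W_j^\pm]$ appearing in \eqref{eq4.83} really are the $\mathbf{W}_j^\pm$ of \eqref{eq4.40} once $\mathbf{Z}_{1,\pm}$ are chosen as in \eqref{eq4.38}, and that the $O(t^{-1})$ remainders transfer cleanly under the substitution $t\mapsto\pm2^{k-1}\mp1$. The one genuine subtlety I expect is verifying that {\bf Assumption} SDA is the precise specialization of {\bf Assumptions} $\mathscr{A}_+$ and $\mathscr{A}_-$ to the two flows in \eqref{eq4.83}: namely that the block $\big[\begin{smallmatrix}W^{1,+}_{1,1}&W^{1,+}_{1,2}\\ W^{2,+}_{2,1}&W^{2,+}_{2,2}\end{smallmatrix}\big]$ invertible is exactly {\bf Assumption} $\mathscr{A}_+$ for the flow $Y(t;\widetilde{\mathcal{H}}_\star,X_{11}^1)$ after conjugation by $\mathcal{S}_+$, and $W_2^-$ invertible is {\bf Assumption} $\mathscr{A}_-$ for $Y(t;\mathcal{S}_2\mathcal{H}\mathcal{S}_2^{-1},-X_{22}^1)$ after conjugation by $\mathcal{S}_-$. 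Granting that identification, the theorem is an immediate corollary of Lemma~\ref{lem4.1} and Theorem~\ref{thm4.24}, with no new estimate required beyond replacing continuous $t\to\pm\infty$ limits by the geometric sequence $t=\pm2^{k-1}\mp1$.
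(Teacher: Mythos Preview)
Your proposal is correct and follows exactly the approach the paper indicates: the paper states just before Theorem~\ref{thm4.28} that ``The following theorem can be obtained by applying Lemma~\ref{lem4.1} and Theorem~\ref{thm4.24}'' and gives no further proof, so your plan of substituting $\mathcal{S}\leftarrow\mathcal{S}_\pm$ in Theorem~\ref{thm4.24}, invoking \eqref{eq4.83}, and then sampling at $t=\pm 2^{k-1}\mp1$ via Lemma~\ref{lem4.1} is precisely what is intended. The bookkeeping concerns you flag (that {\bf Assumption}~SDA is the specialization of $\mathscr{A}_\pm$ to the two flows, and that the $\mathscr{U}$-quantities arise from the $\mathbf{U}$-quantities via the fixed factors $\mathcal{S}_2$ and $\mathcal{J}^{-1}\mathcal{S}_1$) are exactly the identifications set up in \eqref{eq4.82}--\eqref{eq4.83} and the surrounding text, so no new ingredient is missing.
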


\renewcommand{\thesection}{A}
\section{Appendix}\label{secA}

\subsection{Complementary of Section~\ref{sec2}}\label{secA.1}
Let $R>1$, $\theta\in [0,2\pi)$ and $$D_{R,\theta}=\{z\in \mathbb{C}| 1/R\leqslant |z|\leqslant R\}/\{z=re^{i\theta}|\ 1/R\leqslant r\leqslant R\}.$$ Let $\Gamma$ denote the boundary of $D_{R,\theta}$.
Suppose that $A\in \mathbb{C}^{n\times n}$ is an invertible matrix and $\sigma(A)\subseteq D_{R,\theta}$.
 We define Log$(A)$ by
\begin{align}\label{eq5.1}
{\rm Log}(A)=\frac{1}{2\pi i}\oint_{\Gamma}(zI-A)^{-1}(\log z)dz.
\end{align}
It has been shown that $e^{{\rm Log}(A)}=A$ for each invertible matrix $A\in \mathbb{C}^{n\times n}$ in \cite{Horn91}. Now, we show that if $\mathcal{S}\in Sp(n)$ then ${\rm Log}(\mathcal{S})$ is Hamiltonian and vice versa.

\begin{Theorem}\label{thm5.1}
Suppose that $\mathcal{S}\in Sp(n)$ is symplectic. Then ${\rm Log}(\mathcal{S})$ is Hamiltonian. Conversely, if $\mathcal{H}$ is Hamiltonian, then $e^{\mathcal{H}}$ is symplectic.
\end{Theorem}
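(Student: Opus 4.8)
The plan is to exploit the integral representation \eqref{eq5.1} together with the defining identities for symplectic and Hamiltonian matrices. Recall that $\mathcal{S}$ is symplectic iff $\mathcal{S}\mathcal{J}\mathcal{S}^H=\mathcal{J}$, equivalently $\mathcal{S}^{-1}=\mathcal{J}^{-1}\mathcal{S}^H\mathcal{J}=-\mathcal{J}\mathcal{S}^H\mathcal{J}$ (using $\mathcal{J}^{-1}=-\mathcal{J}=\mathcal{J}^H$); and $\mathcal{H}$ is Hamiltonian iff $\mathcal{H}\mathcal{J}=(\mathcal{H}\mathcal{J})^H$, equivalently $\mathcal{J}^{-1}\mathcal{H}^H\mathcal{J}=-\mathcal{H}$. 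So the two statements are really the same computation run in opposite directions, and the core is a similarity/adjoint bookkeeping argument applied to the resolvent $(zI-\mathcal{S})^{-1}$.

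First I would prove the converse direction, which is the easy half: if $\mathcal{H}$ is Hamiltonian then $\mathcal{J}^{-1}\mathcal{H}\mathcal{J}=-\mathcal{H}^H$, hence $\mathcal{J}^{-1}e^{\mathcal{H}}\mathcal{J}=e^{\mathcal{J}^{-1}\mathcal{H}\mathcal{J}}=e^{-\mathcal{H}^H}=(e^{\mathcal{H}})^{-H}$; rearranging $\mathcal{J}^{-1}e^{\mathcal{H}}\mathcal{J}=(e^{\mathcal{H}})^{-H}$ gives $e^{\mathcal{H}}\mathcal{J}(e^{\mathcal{H}})^H=\mathcal{J}$, i.e. $e^{\mathcal{H}}$ is symplectic. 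For the forward direction, let $\mathcal{S}\in Sp(n)$. Since $\sigma(\mathcal{S})$ is invariant under $\lambda\mapsto 1/\bar\lambda$ (a standard consequence of symplecticity) and $\mathcal{S}$ is invertible, one can choose $R>1$ and an angle $\theta$ so that $\sigma(\mathcal{S})\subseteq D_{R,\theta}$ and, crucially, the branch cut ray $\{re^{i\theta}\}$ and the contour $\Gamma$ are themselves invariant (as sets) under $z\mapsto 1/\bar z$ — for instance taking $\theta$ so that the cut ray is the negative real axis, for which $1/\bar z$ maps the ray to itself and maps $D_{R,\theta}$ to $D_{R,\theta}$. (If $-1\in\sigma(\mathcal{S})$ this requires a touch more care in choosing $\theta$; since $\mathcal{S}$ has only finitely many eigenvalues one can always rotate $\theta$ slightly off every eigenvalue while keeping the symmetry $\overline{\Gamma}^{-1}=\Gamma$ after a reflection, a point I will need to nail down.)

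With such a contour fixed, I would compute $\mathcal{J}^{-1}(\mathrm{Log}\,\mathcal{S})^H\mathcal{J}$ directly from \eqref{eq5.1}. Taking conjugate transpose inside the integral and using $(zI-\mathcal{S})^{-H}=(\bar z I-\mathcal{S}^H)^{-1}$ together with $\mathcal{J}^{-1}\mathcal{S}^H\mathcal{J}=\mathcal{S}^{-1}$ yields
\begin{align*}
\mathcal{J}^{-1}(\mathrm{Log}\,\mathcal{S})^H\mathcal{J}
&=\frac{-1}{2\pi i}\oint_{\Gamma}(\bar z I-\mathcal{S}^H)^{-1}\overline{(\log z)}\,d\bar z\Bigg|_{\text{conj. of orientation}}\\
&=\frac{-1}{2\pi i}\oint_{\Gamma}\mathcal{J}^{-1}(\bar z I-\mathcal{S}^H)^{-1}\mathcal{J}\,\overline{\log z}\,d\bar z,
\end{align*}
and then $\mathcal{J}^{-1}(\bar z I-\mathcal{S}^H)^{-1}\mathcal{J}=(\bar z I-\mathcal{S}^{-1})^{-1}=-\bar z^{-1}\mathcal{S}(\bar z^{-1}I-\mathcal{S})^{-1}$. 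Substituting $w=1/\bar z$, which by the symmetric choice of $\Gamma$ traces $\Gamma$ again (with a definite orientation that I will track) and satisfies $\overline{\log z}=-\log w$ on the chosen branch, the integral collapses after the change of variables $dw = -\bar z^{-2}\,d\bar z$ to exactly $-\frac{1}{2\pi i}\oint_\Gamma (wI-\mathcal{S})^{-1}(\log w)\,dw=-\mathrm{Log}\,\mathcal{S}$. Hence $\mathcal{J}^{-1}(\mathrm{Log}\,\mathcal{S})^H\mathcal{J}=-\mathrm{Log}\,\mathcal{S}$, which is precisely the condition that $\mathrm{Log}\,\mathcal{S}$ be Hamiltonian.

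The main obstacle is the contour/branch bookkeeping in the forward direction: verifying that the map $z\mapsto 1/\bar z$ carries the chosen keyhole domain $D_{R,\theta}$ and its boundary $\Gamma$ to themselves (as an oriented cycle, up to a sign I must get right), that the logarithm branch transforms as $\overline{\log z}=-\log(1/\bar z)$ everywhere on $\Gamma$, and that this can be arranged simultaneously with $\sigma(\mathcal{S})\subseteq D_{R,\theta}$ even when $-1$ or eigenvalues near the cut are present. An alternative, cleaner route that avoids some of this is to argue structurally: decompose $\mathbb{C}^{2n}$ into the $\mathcal{J}$-orthogonal sum of generalized eigenspaces of $\mathcal{S}$ paired by $\lambda\leftrightarrow 1/\bar\lambda$ (the pairing used repeatedly in Section~\ref{sec2}), observe $\mathrm{Log}$ respects this decomposition, and check the Hamiltonian identity block by block using $\log(1/\bar\lambda)=-\overline{\log\lambda}$; this reduces everything to the $1\times1$ and nilpotent-block level where the computation is immediate. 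I would present the integral argument as the main proof and mention the block decomposition as the conceptual reason it works.
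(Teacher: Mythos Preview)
Your proposal is correct and follows essentially the same route as the paper: the converse direction is identical, and for the forward direction both arguments use the integral formula \eqref{eq5.1}, the symplectic identity $\mathcal{J}^{-1}\mathcal{S}^H\mathcal{J}=\mathcal{S}^{-1}$, and the change of variable $z\mapsto 1/\bar z$ to show $\mathrm{Log}(\mathcal{S})=\mathcal{J}\,\mathrm{Log}(\mathcal{S})^H\mathcal{J}$.

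The one technical difference is exactly the point you flag as the main obstacle. Rather than working with the keyhole boundary $\Gamma$ and verifying its invariance and the branch identity $\overline{\log z}=-\log(1/\bar z)$ globally, the paper first deforms the integral to a sum over small nonintersecting circles $\Gamma_1,\dots,\Gamma_k$ centered at the distinct eigenvalues and chosen \emph{symmetric with respect to the unit circle}, so that $z\mapsto 1/\bar z$ permutes the $\Gamma_j$ (with a sign on orientation). This localizes the substitution, makes the orientation tracking trivial, and produces an extra scalar term $\oint_{\Gamma_j}\xi^{-1}\log\xi\,d\xi$ which vanishes because none of the $\Gamma_j$ enclose the origin. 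Adopting this contour choice removes the need for your careful cut-placement argument and the special handling near $-1$; it is the cleanest way to finish your computation as written.
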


\begin{proof}
Since $\mathcal{S}$ is symplectic, then so is $\mathcal{S}^H$. Let $\mathcal{S}$ have distinct eigenvalues $\lambda_1,\ldots,\lambda_k$, i.e., $\sigma(\mathcal{S})=\{\lambda_1,\ldots,\lambda_k\}$. Let $R>0$ and $\theta\in [0,2\pi)$ such that $\sigma(\mathcal{S})\cup \sigma(\mathcal{S}^H)\subseteq D_{R,\theta}$. Since $\mathcal{S}$ is symplectic, we know that for each $\lambda_j\in \sigma(\mathcal{S})$, $1/\bar{\lambda}_j\in \sigma(\mathcal{S})$. Let $\Gamma_1,\ldots,\Gamma_k\subseteq D_{R,\theta}$ be small nonintersecting circles with positive orientation in the complex plane centered at $\lambda_1,\ldots,\lambda_k$, respectively, which are symmetric with respect to the unit circle. 
Thus the transformation, $z\rightarrow 1/\bar{z}$, maps the set of circles $\Gamma_1,\ldots,\Gamma_k$ into the set of circles $-\Gamma_1,\ldots,-\Gamma_k$.

From \eqref{eq5.1}, we have
\begin{align}\label{eq5.2}
{\rm Log}(\mathcal{S})=\frac{1}{2\pi i}\sum_{j=1}^k\oint_{\Gamma_j}(zI-\mathcal{S})^{-1}(\log z)dz.
\end{align}
Make the change of variable $z=1/\bar{\xi}$ in the integrals in \eqref{eq5.2} and suppose that $\Gamma_s\rightarrow -\Gamma_j$.  Recall that $\mathcal{S}^{-1}=-\mathcal{J}\mathcal{S}^H\mathcal{J}$. Then we have
\begin{align*}
\frac{1}{2\pi i}\oint_{\Gamma_s}(zI-\mathcal{S})^{-1}(\log z)dz&=\frac{1}{2\pi i}\oint_{-\Gamma_j}[(1/\bar{\xi})I-\mathcal{S}]^{-1}(-\log \bar{\xi})(-d\bar{\xi}/\bar{\xi}^2)\\
&=\frac{-1}{2\pi i}\oint_{\Gamma_j}(I-\bar{\xi}\mathcal{S})^{-1}\bar{\xi}^{-1}\log \bar{\xi}d\bar{\xi}\\
&=\overline{\frac{1}{2\pi i}\oint_{{\Gamma}_j}(I-{\xi}\bar{\mathcal{S}})^{-1}{\xi}^{-1}\log {\xi}d{\xi}}\\
&=\overline{\frac{1}{2\pi i}\oint_{{\Gamma}_j}[\bar{\mathcal{S}}(I-{\xi}\bar{\mathcal{S}})^{-1}+{\xi}^{-1}I]\log {\xi}d{\xi}}\\
&=\overline{\frac{1}{2\pi i}\oint_{{\Gamma}_j}[(\bar{\mathcal{S}}^{-1}-{\xi}I)^{-1}+{\xi}^{-1}I]\log {\xi}d{\xi}}\\
&=\overline{\frac{1}{2\pi i}\oint_{{\Gamma}_j}[(-\mathcal{J}\mathcal{S}^{\top}\mathcal{J}+{\xi}\mathcal{J}\mathcal{J})^{-1}+{\xi}^{-1}I]\log {\xi}d{\xi}}\\
&=\mathcal{J}\overline{\left(\frac{1}{2\pi i}\oint_{{\Gamma}_j}({\xi}I-\mathcal{S}^{\top})^{-1}\log {\xi}d{\xi}\right)}\mathcal{J}+\overline{\frac{1}{2\pi i}\oint_{{\Gamma}_j}{\xi}^{-1}\log {\xi}d{\xi}}
\end{align*}
The circle $\Gamma_j$ does not enclose the origin, thus $\oint_{\Gamma_j}{\xi}^{-1}\log {\xi}d{\xi}=0$. From \eqref{eq5.2} and using the fact that ${\rm Log}(\mathcal{S}^{\top})={\rm Log}(\mathcal{S})^{\top}$, we have
${\rm Log}(\mathcal{S})=\mathcal{J}\overline{{\rm Log}(\mathcal{S})^{\top}}\mathcal{J}=\mathcal{J}{\rm Log}(\mathcal{S})^{H}\mathcal{J}$, and then $\mathcal{J}{\rm Log}(\mathcal{S})=-{\rm Log}(\mathcal{S})^{H}\mathcal{J}$. Therefore, ${\rm Log}(\mathcal{S})$ is Hamiltonian.

For the converse statement, suppose that $\mathcal{H}$ is Hamiltonian, then $-\mathcal{H}^H=\mathcal{J}^{-1}\mathcal{H}\mathcal{J}$.
By taking the matrix exponential at each sides of the resulting equation, it leads to $e^{\mathcal{H}}\mathcal{J}e^{\mathcal{H}^H}=\mathcal{J}$, and hence, $e^{\mathcal{H}}$ is symplectic.
\end{proof}

\subsection{Complementary of Subsection~\ref{sec4.2}}\label{secA.2}
In the following theorem, we show that $\digamma_{k_1}^{k_2}$ in \eqref{eq4.18} is invertible, where $k_1$, $k_2$ are positive integers with $0<k_1<k_2\leqslant 2k_1$. In order to prove this, we need a useful formula (Pascal's law):
\begin{align*}
P^{n}_r-P^{n-1}_r=rP^{n-1}_{r-1}\ \ \text{ for }n,r\in \mathbb{N}\text{ and }n\geqslant r,
\end{align*}
where $P^n_r=n(n-1)\cdots (n-r-1)=\frac{n!}{(n-r)!}$.
\begin{Theorem}\label{thm5.2}
Let $k_1$, $k_2$ be given positive integers satisfying $0<k_1<k_2\leqslant 2k_1$ and $\delta=k_2-k_1$. Then
\begin{align*}
{\rm det}(\digamma_{k_1}^{k_2})=\frac{\delta!(\delta-1)!\cdots 1!}{k_2!(k_2-1)!\cdots (k_1)!},
\end{align*}
where $\digamma_{k_1}^{k_2}$ is defined in \eqref{eq4.18}. Hence,  $\digamma_{k_1}^{k_2}$ is invertible.
\end{Theorem}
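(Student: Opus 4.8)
The plan is to compute the determinant of $\digamma_{k_1}^{k_2}$ by induction on $\delta = k_2 - k_1$, the size of the matrix minus one, using elementary row operations driven by Pascal's law. Write $\digamma_{k_1}^{k_2} = \left[ \frac{1}{(k_1 + j - i)!}\right]_{1 \le i, j \le \delta+1}$, so that the $(i,j)$ entry is $1/(k_1 + j - i)!$ (with the convention that an entry is $0$ when the index would be negative, which does not occur here since $0 < k_1 < k_2 \le 2k_1$ guarantees $k_1 + j - i \ge 2k_1 - k_2 \ge 0$). First I would record the reciprocal form of Pascal's law: from $P^n_r - P^{n-1}_r = r P^{n-1}_{r-1}$ one gets, after dividing through appropriately, a relation of the shape
\begin{align*}
\frac{1}{(m-1)!} - \frac{1}{m}\cdot\frac{1}{(m-1)!}\cdot(\text{something}) = \cdots,
\end{align*}
but more usefully the identity $\frac{1}{(m-1)!} = \frac{m}{m!}$, which lets one relate consecutive anti-diagonals. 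The cleaner route is the finite-difference identity: subtracting row $i+1$ from row $i$ (scaled) produces entries that are first differences of the factorial-reciprocals, and these telescope.

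The key steps, in order: (1) Set up the $(\delta+1)\times(\delta+1)$ matrix and fix the convention and index bounds. (2) Perform the row operations $R_i \leftarrow R_i - R_{i+1}$ for $i = 1, \dots, \delta$ (from top to bottom), which leaves the last row unchanged and replaces each other entry $1/(k_1+j-i)!$ by $1/(k_1+j-i)! - 1/(k_1+j-i-1)!$. Using $\frac{1}{a!} - \frac{1}{(a-1)!} = \frac{1 - a}{a!} = -\frac{a-1}{a!} = -\frac{1}{(a-1)!}\cdot\frac{a-1}{a} $ — better, use $\frac1{a!}-\frac1{(a-1)!} = \frac{1-(a)}{a!}$... the point is that each new entry factors as a constant times $1/(k_1 + j - i - 1)!$ where the constant depends only on the column index $j$ through $k_1 + j - i$; I would check the exact scalar and pull a common factor $1/(k_2)!$ or similar out of the resulting rows/columns. (3) After extracting these scalar factors, the reduced matrix is (up to the last row and a border) a copy of $\digamma_{k_1}^{k_2 - 1}$, i.e.\ the same pattern with $\delta$ decreased by one; expand along the last column or row to set up the recursion $\det(\digamma_{k_1}^{k_2}) = (\text{explicit ratio}) \cdot \det(\digamma_{k_1}^{k_2-1})$. (4) Solve the recursion with base case $\delta = 1$ (a single entry, $\det = 1/k_1!$ — wait, base case $\delta=0$ gives the $1\times 1$ matrix $[1/k_1!]$, matching $\frac{0!}{k_1!}$ if one reads the empty products as $1$; I would state the base case carefully). (5) Telescope to obtain $\det(\digamma_{k_1}^{k_2}) = \dfrac{\delta!\,(\delta-1)!\cdots 1!}{k_2!\,(k_2-1)!\cdots k_1!}$, and observe this is nonzero, hence $\digamma_{k_1}^{k_2}$ is invertible.

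The main obstacle I anticipate is bookkeeping the scalar factors correctly in step (2)–(3): the row operations on reciprocal-factorial entries do not telescope as transparently as they do for factorials or binomial coefficients, so I would need to be careful about which of row-scaling versus column-scaling is being used and to track exactly which factorials are pulled out at each stage so that the product in step (5) comes out to precisely $\prod_{j=1}^{\delta} j! \big/ \prod_{m=k_1}^{k_2} m!$ rather than off by a shift. An alternative that sidesteps the row-reduction entirely is to recognize $\digamma_{k_1}^{k_2}$ as a specialization of a Hankel-type matrix whose determinant is classically evaluated (e.g.\ a Cauchy-like or Hilbert-like determinant), or to write $1/m! = \frac{1}{2\pi i}\oint e^z z^{-m-1}\,dz$ and use multilinearity to reduce to a Vandermonde determinant; I would keep that as a fallback if the inductive scalar-tracking proves too error-prone. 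Either way the statement that $\digamma_{k_1}^{k_2}$ is invertible follows immediately once the determinant is shown to be a ratio of products of positive factorials.
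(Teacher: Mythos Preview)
Your overall plan---row operations followed by induction on $\delta$---matches the paper's strategy, but you are missing the one move that makes the bookkeeping clean, and you have in fact put your finger on exactly the place where the direct approach breaks down. When you subtract row $i+1$ from row $i$ in $\digamma_{k_1}^{k_2}$ itself, the new $(i,j)$ entry is
\[
\frac{1}{(k_1+j-i)!}-\frac{1}{(k_1+j-i-1)!}=\frac{1-(k_1+j-i)}{(k_1+j-i)!},
\]
and the scalar $1-(k_1+j-i)$ depends on \emph{both} $i$ and $j$; it is not a column-only factor as you hoped, so you cannot simply pull a diagonal out and recognize $\digamma_{k_1}^{k_2-1}$ inside.

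The paper's remedy is to clear denominators first: set $D=\mathrm{diag}(k_2!,(k_2-1)!,\ldots,k_1!)$ and work with $\widetilde{\digamma}_{k_1}^{k_2}=D\,\digamma_{k_1}^{k_2}$, whose $(i,j)$ entry is the permutation number $P^{\,k_2-i+1}_{\,\delta-j+1}$. Now the row subtraction $R_i\leftarrow R_i-R_{i+1}$ invokes Pascal's law $P^n_r-P^{n-1}_r=rP^{n-1}_{r-1}$ directly, producing $(\delta-j+1)\,P^{\,k_2-i}_{\,\delta-j}$ in position $(i,j)$. The prefactor $(\delta-j+1)$ depends only on the column, so one extracts $\mathrm{diag}(\delta,\delta-1,\ldots,1,1)$ on the right and is left with exactly $\widetilde{\digamma}_{k_1}^{k_2-1}$ bordered by the untouched last row. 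This gives the clean recursion $\det(\widetilde{\digamma}_{k_1}^{k_2})=\delta!\det(\widetilde{\digamma}_{k_1}^{k_2-1})$, hence $\det(\widetilde{\digamma}_{k_1}^{k_2})=\delta!(\delta-1)!\cdots 1!$, and dividing by $\det D=k_2!(k_2-1)!\cdots k_1!$ yields the stated formula. So the single missing idea is: multiply by $D$ \emph{before} doing the row reduction; everything else in your outline then goes through without the scalar-tracking headaches you anticipated.
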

\begin{proof}
Let $D={\rm diag}(k_2!,(k_2-1)!,\ldots, (k_1)!)$. Denote
\begin{align*}
\widetilde{\digamma}_{k_1}^{k_2}\equiv D \digamma_{k_1}^{k_2}=\left[\begin{array}{cccc}P^{k_2}_\delta & P^{k_2}_{\delta-1} &\cdots & P^{k_2}_0 \\P^{k_2-1}_\delta & P^{k_2-1}_{\delta-1} &\cdots & P^{k_2-1}_0  \\\vdots & \vdots &  & \vdots \\P^{k_1}_\delta & P^{k_1}_{\delta-1} &\cdots & P^{k_1}_0 \end{array}\right]\in \mathbb{R}^{(\delta+1)\times (\delta+1)}.
\end{align*}
Let $e_j$ be the $j$th column vector of the identity matrix $I_{\delta+1}$ and $E_{i,j}=I_{\delta+1}-e_ie_j^H$. Using Pascal's law, we have
\begin{align*}
E_{\delta,\delta+1}\cdots E_{2,3}E_{1,2}\widetilde{\digamma}_{k_1}^{k_2}&=\left[\begin{array}{ccc|c}\delta P^{k_2-1}_{\delta-1} & \cdots&1P^{k_2-1}_0 & 0 \\\vdots & \vdots &  & \vdots \\\delta P^{k_1}_{\delta-1} & \cdots &1P^{k_1}_0 & 0\\\hline P^{k_1}_\delta & \cdots &P^{k_1}_{1} & 1 \end{array}\right]\\
&=\left[\begin{array}{c|c}\widetilde{\digamma}_{k_1}^{k_2-1} & 0 \\\hline * & 1\end{array}\right]{\rm diag}(\delta,(\delta-1),\cdots,1,1).
\end{align*}
It is easily seen that ${\rm det}(\widetilde{\digamma}_{k_1}^{k_2})=\delta!\cdot{\rm det}(\widetilde{\digamma}_{k_1}^{k_2-1})$. We then have
\begin{align*}
{\rm det}(\widetilde{\digamma}_{k_1}^{k_2})&=\delta!(\delta-1)!\cdots 1! \cdot{\rm det}(\widetilde{\digamma}_{k_1}^{k_1})\\&=\delta!(\delta-1)!\cdots 1! .
\end{align*}
Hence, we obtain $${\rm det}(\digamma_{k_1}^{k_2})=\frac{{\rm det}(\widetilde{\digamma}_{k_1}^{k_2})}{{\rm det}(D)}=\frac{\delta!(\delta-1)!\cdots 1!}{k_2!(k_2-1)!\cdots (k_1)!}.$$
\end{proof}
\begin{Theorem}\label{thm5.3}
Given $n\in \mathbb{N}$. Let $\kappa_n=\widehat{\psi}_{n}^{H}(\widehat{\Gamma}_{n+1}^{2n})^{-1}\phi_{n}$, where $\widehat{\psi}_{n}^{H}=\psi_{n}^HP_n$, $\phi_{n}$ and $\widehat{\Gamma}_{n+1}^{2n}$ are defined in \eqref{eq4.10}. Then
\begin{align}\label{eq5.3}
\kappa_n=\left\{\begin{array}{ll}0&\text{ if $n$ is even,}\\2&\text{ if $n$ is odd.}\end{array}\right.
\end{align}
\end{Theorem}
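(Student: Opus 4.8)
The plan is to remove the two sign-flip matrices $P_n$, reduce the claim to a scalar linear system, and solve that system by reading it as a rational-function interpolation problem. First I would observe that, since $\widehat{\psi}_n^{H}=\psi_n^{H}P_n$ and $\widehat{\Gamma}_{n+1}^{2n}=\Gamma_{n+1}^{2n}P_n$ (the convention of \eqref{eq4.10} and the proof of Lemma~\ref{lem4.5}), with $P_n$ invertible (Lemma~\ref{lem4.3}), the two copies of $P_n$ cancel, so that, using that $\psi_n,\phi_n$ are real,
\[
\kappa_n=\psi_n^{H}P_n\,\bigl(\Gamma_{n+1}^{2n}P_n\bigr)^{-1}\phi_n=\psi_n^{\top}\bigl(\Gamma_{n+1}^{2n}\bigr)^{-1}\phi_n .
\]
From the factorization \eqref{eq4.19} of $\Gamma_{n+1}^{2n}$ through $\digamma_{n+1}^{2n}$ (take $k_1=n+1$, $k_2=2n$, so $2k_1-k_2=2$ and $k_2-k_1=n-1$) one already sees that $\kappa_n$ is $t$-independent; more concretely, I would look for the solution $x(t)$ of $\Gamma_{n+1}^{2n}(t)x(t)=\phi_n(t)$ in the form $x_j(t)=c_jt^{-j}$ with $c_j$ constant.

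Next I would substitute this ansatz. Writing out row $i$ of $\Gamma_{n+1}^{2n}(t)x(t)=\phi_n(t)$ and cancelling the common factor $t^{(n+1)-i}$ converts it, with $m=(n+1)-i$, into the $t$-free system
\[
\sum_{j=1}^n\frac{c_j}{(m+j)!}=\frac{1}{m!},\qquad m=1,\dots,n,
\]
whose coefficient matrix equals $\digamma_{n+1}^{2n}$ up to a reversal of the rows and hence is invertible by Theorem~\ref{thm5.2}; so $c=(c_1,\dots,c_n)$ exists and is unique, and with this $x(t)$ one gets $\kappa_n=\psi_n(t)^{\top}x(t)=\sum_j (t^j/j!)(c_j t^{-j})=\sum_{j=1}^n c_j/j!$. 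Everything is thereby reduced to evaluating the constant $\sum_j c_j/j!$.

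The heart of the argument is then a generating-function step. Multiplying the $m$-th equation by $m!$ turns it into $f(m)=1$ for $m=1,\dots,n$, where $f(m):=\sum_{j=1}^n c_j\big/\bigl[(m+1)(m+2)\cdots(m+j)\bigr]$; over the common denominator $D(m):=\prod_{l=1}^n(m+l)$ its numerator is the polynomial $g(m):=\sum_{j=1}^n c_j\prod_{l=j+1}^n(m+l)$, of degree $\le n-1$. Hence $g(m)-D(m)$ has degree exactly $n$, leading coefficient $-1$, and vanishes at $m=1,\dots,n$, so $g(m)-D(m)=-\prod_{l=1}^n(m-l)$, i.e.
\[
g(m)=\prod_{l=1}^n(m+l)-\prod_{l=1}^n(m-l).
\]
Since $j\ge1$, $\prod_{l=j+1}^n(m+l)\big|_{m=0}=n!/j!$, so $g(0)=\sum_j c_j\,n!/j!=n!\,\kappa_n$; evaluating the displayed identity at $m=0$ gives $g(0)=n!-(-1)^n n!$, whence $\kappa_n=1-(-1)^n$, which is $0$ for even $n$ and $2$ for odd $n$.

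The step needing the most care is the degree/root bookkeeping for $g(m)-D(m)$: one must check that $g$ genuinely has degree $\le n-1$ (the $m^n$ terms in the two products of $g$'s definition do not arise, while $D$ contributes $-m^n$ to the difference), so that $g-D$ is a degree-$n$ polynomial whose $n$ prescribed roots and leading coefficient pin it down, and that the final ``set $m=0$'' is a polynomial identity and hence legitimate even though the original equations only hold for $m=1,\dots,n$. A quick sanity check with $n=1$ and $n=2$, computing $\kappa_1=2$ and $\kappa_2=0$ directly from \eqref{eq4.10}, would confirm the normalization.
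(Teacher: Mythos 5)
Your proof is correct, but it proceeds by a genuinely different route than the paper's. After the common reduction (cancelling the two copies of $P_n$ and stripping the powers of $t$ via \eqref{eq4.19}, which brings everything down to $\kappa_n=\mathbf{x}_n^{H}(\digamma_{n+1}^{2n})^{-1}\mathbf{y}_n$ with $\mathbf{x}_n=[1,1/2!,\dots,1/n!]^{H}$, $\mathbf{y}_n=[1/n!,\dots,1]^{H}$), the paper does \emph{not} solve the linear system: it borders $\digamma_{n+1}^{2n}$ into $\digamma_{n}^{2n}=\left[\begin{smallmatrix}\mathbf{y}_n&\digamma_{n+1}^{2n}\\ 1&\mathbf{x}_n^{H}\end{smallmatrix}\right]$, recognizes $1-\kappa_n$ as the Schur complement of that bordered matrix, and evaluates it as $(-1)^{n}\det(\digamma_{n}^{2n})/\det(\digamma_{n+1}^{2n})=(-1)^n$ using the explicit determinant formula of Theorem~\ref{thm5.2}. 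You instead solve $\Gamma_{n+1}^{2n}x=\phi_n$ exactly with the ansatz $x_j=c_jt^{-j}$, convert the resulting $t$-free system into the rational interpolation condition $g(m)=D(m)$ for $m=1,\dots,n$, pin down $g-D$ by its degree, leading coefficient and roots to get $g(m)=\prod_{l=1}^n(m+l)-\prod_{l=1}^n(m-l)$, and evaluate at $m=0$. I checked the bookkeeping: the coefficient matrix $[1/(m+j)!]$ is indeed the row-reversal of $\digamma_{n+1}^{2n}$, $\deg g\le n-1$ holds because every product $\prod_{l=j+1}^{n}(m+l)$ with $j\ge 1$ has degree $\le n-1$, and $g(0)=n!\sum_j c_j/j!=n!\,\kappa_n$ matches $n!-(-1)^n n!$. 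What each approach buys: the paper's is shorter once Theorem~\ref{thm5.2}'s determinant value is in hand, since the ratio of consecutive determinants is exactly $1$; yours needs only the \emph{invertibility} part of Theorem~\ref{thm5.2} (indeed the interpolation argument determines $c$ uniquely on its own), is more self-contained, and yields strictly more information, namely a closed form for the whole solution vector $c$ rather than just the single scalar $\kappa_n$.
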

\begin{proof}
Using the definitions of  $\widehat{\psi}_{n}^{H}=\psi_{n}^HP_n$, $\phi_{n}$ and $\widehat{\Gamma}_{n+1}^{2n}$ in \eqref{eq4.10}, it follows from \eqref{eq4.19} that
$\kappa_n=\mathbf{x}^{H}_n(\digamma_{n+1}^{2n})^{-1}\mathbf{y}_n$,
where $\mathbf{x}_n=[1,\frac{1}{2!},\ldots,\frac{1}{n!}]^H$, $\mathbf{y}_n=[\frac{1}{n!},\frac{1}{(n-1)!},\ldots,1]^H$ and  $\digamma_{n+1}^{2n}$ is defined in \eqref{eq4.18}. It is easily seen that
\begin{align*}
\digamma_{n}^{2n}=\left[\begin{array}{cc}\mathbf{y}_n &  \digamma_{n+1}^{2n}\\1 & \mathbf{x}_n^{H}\end{array}\right].
\end{align*}
It follows from Theorem \ref{thm5.2} that $ \digamma_{n+1}^{2n}$ is invertible. Denote
\begin{align*}
E=\left[\begin{array}{c|c}1 &  0\\\hline-( \digamma_{n+1}^{2n})^{-1}\mathbf{y}_n & I\end{array}\right].
\end{align*}
Then we have
\begin{align*}
\digamma_{n}^{2n}E=\left[\begin{array}{c|c}0 &  \digamma_{n+1}^{2n}\\\hline1-\mathbf{x}_{n}^{H}( \digamma_{n+1}^{2n})^{-1}\mathbf{y}_n & \mathbf{x}_{n}^{H}\end{array}\right].
\end{align*}
From Theorem \ref{thm5.2}, we obtain that
\begin{align*}
\frac{n!(n-1)!\cdots 1!}{(2n)!(2n-1)!\cdots n!}&={\rm det}(\digamma_{n}^{2n})={\rm det}(\digamma_{n}^{2n}E)\\
&=(-1)^{n+2}(1-\kappa_n){\rm det}(\digamma_{n+1}^{2n})\\
&=(-1)^{n}(1-\kappa_n)\frac{(n-1)!(n-2)!\cdots 1!}{(2n)!(2n-1)!\cdots (n+1)!}.
\end{align*}
Hence, $(-1)^{n}(1-\kappa_n)=1$, that is, $\kappa_n$ satisfies \eqref{eq5.3}.
\end{proof}

\begin{Lemma}\label{lem5.4}
Let $n\in \mathbb{N}$. Then
\begin{itemize}
\item[(i)] $(\widehat{\Gamma}_{n}^{2n-1})^{-1}\Phi_n=O(t^{-1})$, $\widehat{\Phi}_n(\widehat{\Gamma}_{n}^{2n-1})^{-1}=O(t^{-1})$ and $\widehat{\Phi}_n(\widehat{\Gamma}_{n}^{2n-1})^{-1}\Phi_n=O(t^{-1})$ ;
\item[(ii)] $\widehat{\Phi}_n(\Phi_nW\pm \widehat{\Gamma}_{n}^{2n-1})^{-1}=O(t^{-1})$;
\end{itemize}
as $t\rightarrow \pm \infty$, where $W\in \mathbb{C}^{n\times n}$ is a constant matrix and $\Phi_n$,  $\widehat{\Gamma}_{n}^{2n-1}$ and $\widehat{\Phi}_n$ are given in \eqref{eq4.10}.
\end{Lemma}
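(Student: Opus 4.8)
\textbf{Proof proposal for Lemma~\ref{lem5.4}.}
The plan is to reduce everything to the explicit power-of-$t$ expressions in \eqref{eq4.10} and \eqref{eq4.18}--\eqref{eq4.19}, so that the claimed $O(t^{-1})$ estimates become elementary bookkeeping on degrees of polynomials in $t$. First I would rewrite $\widehat{\Gamma}_n^{2n-1}=\Gamma_n^{2n-1}P_n$ and invoke \eqref{eq4.19} with $k_1=n$, $k_2=2n-1$, which gives $\Gamma_n^{2n-1}=t\,\Xi_{n-1,0}\,\digamma_n^{2n-1}\,\Xi_{0,n-1}$ and hence
\begin{align*}
(\widehat{\Gamma}_n^{2n-1})^{-1}=P_n^{-1}\,t^{-1}\,\Xi_{0,n-1}^{-1}(\digamma_n^{2n-1})^{-1}\Xi_{n-1,0}^{-1},
\end{align*}
where $\digamma_n^{2n-1}$ is invertible by Theorem~\ref{thm5.2}. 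The key point is that $\Xi_{0,n-1}^{-1}=\operatorname{diag}(1,t^{-1},\dots,t^{-(n-1)})$ and $\Xi_{n-1,0}^{-1}=\operatorname{diag}(t^{-(n-1)},\dots,t^{-1},1)$ both have entries bounded as $t\to\pm\infty$, so $(\widehat{\Gamma}_n^{2n-1})^{-1}=O(t^{-1})$ (this is the first, unnumbered item of Table~\ref{tab1}, which I will also need); moreover the largest entry of $(\widehat{\Gamma}_n^{2n-1})^{-1}$ sits in the top-left corner and is $O(t^{-1})$ while the rest decay faster.

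For part (i), I would multiply this factored form by $\Phi_n$ (resp.\ by $\widehat{\Phi}_n=\Phi_n^{-H}$ on the left, using Lemma~\ref{lem4.3}(ii)). Since $\Phi_n=e^{N_nt}$ has polynomial entries of degree at most $n-1$, arranged so that the $(i,j)$ entry has degree $j-i$ for $j\ge i$, the product $\Xi_{n-1,0}^{-1}\Phi_n$ (and symmetrically $\Phi_n\,\Xi$-type factors) telescopes: each $t^{+k}$ coming from $\Phi_n$ is matched against a $t^{-k}$ or stronger coming from the diagonal scaling matrices, so no positive power of $t$ survives and in fact one factor of $t^{-1}$ is left over. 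The cleanest way to make this rigorous is to track, for each matrix entry, the exponent of $t$ and check it is $\le -1$; I would state this as a short sublemma ("entrywise degree count") and verify it on the three products $(\widehat{\Gamma}_n^{2n-1})^{-1}\Phi_n$, $\widehat{\Phi}_n(\widehat{\Gamma}_n^{2n-1})^{-1}$, $\widehat{\Phi}_n(\widehat{\Gamma}_n^{2n-1})^{-1}\Phi_n$. The symmetry $P_n^{-1}N_nP_n=-N_n^H$ from Lemma~\ref{lem4.3}(i) makes the left-multiplication computation a mirror image of the right one, so only one genuine calculation is needed.

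For part (ii), the idea is to factor $\Phi_nW\pm\widehat{\Gamma}_n^{2n-1}=\widehat{\Gamma}_n^{2n-1}\bigl((\widehat{\Gamma}_n^{2n-1})^{-1}\Phi_nW\pm I\bigr)$ and use part (i): since $(\widehat{\Gamma}_n^{2n-1})^{-1}\Phi_n=O(t^{-1})$, the bracket equals $\pm I+O(t^{-1})$, which is invertible for all sufficiently large $|t|$ with inverse $\pm I+O(t^{-1})$. Hence
\begin{align*}
\widehat{\Phi}_n(\Phi_nW\pm\widehat{\Gamma}_n^{2n-1})^{-1}=\bigl(\pm I+O(t^{-1})\bigr)\,\widehat{\Phi}_n(\widehat{\Gamma}_n^{2n-1})^{-1}=O(t^{-1})
\end{align*}
by the second estimate of part (i). I expect the main obstacle to be purely organizational rather than conceptual: getting the entrywise $t$-degree bookkeeping for the triple products in part (i) stated precisely enough that the $O(t^{-1})$ (and not merely $O(1)$) conclusion is transparent, especially keeping straight how the permutation $P_n$ reverses the index order and how that interacts with the diagonal scalings $\Xi_{n-1,0}$ versus $\Xi_{0,n-1}$. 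Once that degree count is pinned down, parts (ii) and the remaining items of Table~\ref{tab1} follow formally, and the analogous but longer computations behind Table~\ref{tab2} (Lemma~\ref{lem5.5}) will run along the same lines with $\digamma_{n_1+1}^{2n_1}$, $\digamma_{n_2+1}^{2n_2}$ in place of $\digamma_n^{2n-1}$.
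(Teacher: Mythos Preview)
Your part (i) is essentially the paper's argument: factor $(\widehat{\Gamma}_n^{2n-1})^{-1}$ via \eqref{eq4.19}, then observe that the ``degree count'' amounts to the two simple facts $(\Xi_{n-1,0})^{-1}\Phi_n=O(1)$ and $\Phi_n(\Xi_{0,n-1})^{-1}=O(1)$, whence the surviving $t^{-1}$ gives all three estimates. The paper states exactly these two $O(1)$ bounds and is done in one line per estimate; your ``sublemma'' is the same content.

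Part (ii), however, has a genuine gap. After your factorization you have
\[
\widehat{\Phi}_n(\Phi_nW\pm\widehat{\Gamma}_n^{2n-1})^{-1}
=\widehat{\Phi}_n\bigl((\widehat{\Gamma}_n^{2n-1})^{-1}\Phi_nW\pm I\bigr)^{-1}(\widehat{\Gamma}_n^{2n-1})^{-1},
\]
and you then commute the middle factor $\pm I+O(t^{-1})$ to the left of $\widehat{\Phi}_n$. That step is invalid: $\widehat{\Phi}_n$ has entries of size $O(t^{n-1})$, so $\widehat{\Phi}_n\cdot O(t^{-1})\cdot(\widehat{\Gamma}_n^{2n-1})^{-1}$ is a priori only $O(t^{n-3})$, not $O(t^{-1})$. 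Knowing merely that the bracket is $\pm I+O(t^{-1})$ is not enough; the $O(t^{-1})$ perturbation has a specific structure, namely $(\widehat{\Gamma}_n^{2n-1})^{-1}\Phi_nW$, and you need to exploit it.

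The paper fixes this by expanding the Neumann series
\[
\widehat{\Phi}_n(\Phi_nW-\widehat{\Gamma}_n^{2n-1})^{-1}
=-\widehat{\Phi}_n(\widehat{\Gamma}_n^{2n-1})^{-1}
-\sum_{k\ge 1}\widehat{\Phi}_n\bigl[(\widehat{\Gamma}_n^{2n-1})^{-1}\Phi_nW\bigr]^k(\widehat{\Gamma}_n^{2n-1})^{-1},
\]
and in each term for $k\ge 1$ grouping the leading factors as $\bigl[\widehat{\Phi}_n(\widehat{\Gamma}_n^{2n-1})^{-1}\Phi_n\bigr]W\cdots$, which is $O(t^{-1})$ by the \emph{third} estimate of part (i). This is precisely why that third estimate is listed: it absorbs the unbounded $\widehat{\Phi}_n$ against the first copy of $(\widehat{\Gamma}_n^{2n-1})^{-1}\Phi_n$. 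Once you insert this grouping, the remaining factors $[(\widehat{\Gamma}_n^{2n-1})^{-1}\Phi_nW]^{k-1}(\widehat{\Gamma}_n^{2n-1})^{-1}$ are $O(t^{-k})$ and the series sums to $O(t^{-1})$. Your argument for (ii) is easily repaired along these lines, but as written the displayed equality $\bigl(\pm I+O(t^{-1})\bigr)\widehat{\Phi}_n(\widehat{\Gamma}_n^{2n-1})^{-1}$ is false.
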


\begin{proof}
It follows from \eqref{eq4.10} and \eqref{eq4.18} that $(\Xi_{n-1,0})^{-1}=O(1)$, $(\Xi_{0,n-1})^{-1}=O(1)$ and $(\Xi_{n-1,0})^{-1}\Phi_n=O(1)$, $\Phi_n(\Xi_{0,n-1})^{-1}=O(1)$ as $t\rightarrow \pm \infty$. By using \eqref{eq4.19}, we  have
\begin{align*}
&(\widehat{\Gamma}_{n}^{2n-1})^{-1}\Phi_n=t^{-1}P_n^{-1}(\Xi_{0,n-1})^{-1}(\digamma_{n}^{2n-1})^{-1}\left[(\Xi_{n-1,0})^{-1}\Phi_n\right]=O(t^{-1}),\\
&\widehat{\Phi}_n(\widehat{\Gamma}_{n}^{2n-1})^{-1}=t^{-1}\left[P_n^{-1}\Phi_n(\Xi_{0,n-1})^{-1}\right](\digamma_{n}^{2n-1})^{-1}(\Xi_{n-1,0})^{-1}=O(t^{-1}),\\
&\widehat{\Phi}_n(\widehat{\Gamma}_{n}^{2n-1})^{-1}\Phi_n=t^{-1}\left[P_n^{-1}\Phi_n(\Xi_{0,n-1})^{-1}\right](\digamma_{n}^{2n-1})^{-1}\left[(\Xi_{n-1,0})^{-1}\Phi_n\right]=O(t^{-1}),
\end{align*}
as $t\rightarrow \pm \infty$. This proves assertion (i). Now, we prove assertion (ii).  Note that
\begin{align*}
\widehat{\Phi}_n(\Phi_nW-\widehat{\Gamma}_{n}^{2n-1})^{-1}&=\widehat{\Phi}_n\left[(\widehat{\Gamma}_{n}^{2n-1})^{-1}\Phi_nW- I\right]^{-1}(\widehat{\Gamma}_{n}^{2n-1})^{-1}.
\end{align*}
Using the facts in assertion (i), we have
\begin{align*}
\widehat{\Phi}_n(\Phi_nW-\widehat{\Gamma}_{n}^{2n-1})^{-1}
&=-\widehat{\Phi}_n(\widehat{\Gamma}_{n}^{2n-1})^{-1}-\sum_{k=1}^{\infty}\widehat{\Phi}_n\left[(\widehat{\Gamma}_{n}^{2n-1})^{-1}\Phi_nW\right]^{k}(\widehat{\Gamma}_{n}^{2n-1})^{-1}\\
&=O(t^{-1}),
\end{align*}
as $t\rightarrow \pm \infty$. Similarly, the rest case $\widehat{\Phi}_n(\Phi_nW+\widehat{\Gamma}_{n}^{2n-1})^{-1}=O(t^{-1})$ can be proven.
\end{proof}

\begin{Lemma}\label{lem5.5}
Let $n_1,n_2\in \mathbb{N}$. Then,  we have
\begin{align*}
\begin{array}{ll}
(\Upsilon+\widehat{\Gamma}_{n_1+1,n_2+1}^{2n_1,2n_2})^{-1}=O(t^{-2}),&(\Upsilon+\widehat{\Gamma}_{n_1+1,n_2+1}^{2n_1,2n_2})^{-1}\phi_{n_1,n_2}^{j}=O(t^{-1}),\\
(\Upsilon+\widehat{\Gamma}_{n_1+1,n_2+1}^{2n_1,2n_2})^{-1}\Phi_{n_1,n_2}=O(t^{-2}),&\widehat{\psi}_{n_1,n_2}^{j^H}(\Upsilon+\widehat{\Gamma}_{n_1+1,n_2+1}^{2n_1,2n_2})^{-1}=O(t^{-1}),\\
\widehat{\Phi}_{n_1,n_2}(\Upsilon+\widehat{\Gamma}_{n_1+1,n_2+1}^{2n_1,2n_2})^{-1}=O(t^{-2}),&\widehat{\psi}_{n_1,n_2}^{j^H}(\Upsilon+\widehat{\Gamma}_{n_1+1,n_2+1}^{2n_1,2n_2})^{-1}\Phi_{n_1,n_2}=O(t^{-1}),\\
\widehat{\Phi}_{n_1,n_2}(\Upsilon+\widehat{\Gamma}_{n_1+1,n_2+1}^{2n_1,2n_2})^{-1}\Phi_{n_1,n_2}=O(t^{-2}),&\widehat{\Phi}_{n_1,n_2}(\Upsilon+\widehat{\Gamma}_{n_1+1,n_2+1}^{2n_1,2n_2})^{-1}\phi_{n_1,n_2}^{j}=O(t^{-1}),
\end{array}
\end{align*}
as $t\rightarrow \pm \infty$, where $j=1,2$, $\widehat{\Gamma}_{n_1+1,n_2+1}^{2n_1,2n_2}$, $\Phi_{n_1,n_2}$, $\widehat{\Phi}_{n_1,n_2}$, $\phi_{n_1,n_2}^{j}$ and $\widehat{\psi}_{n_1,n_2}^{j^H}$ are defined in \eqref{eq4.12} and $\Upsilon=\Phi_{n_1,n_2}W+\phi^1_{n_1,n_2}w^H$, $W\in \mathbb{C}^{(n_1+n_2)\times (n_1+n_2)}$ and $w\in\mathbb{C}^{n_1+n_2}$. Moreover, we also have
\begin{align*}
\widehat{\psi}_{n_1,n_2}^{j^H}(\Upsilon+\widehat{\Gamma}_{n_1+1,n_2+1}^{2n_1,2n_2})^{-1}\phi_{n_1,n_2}^{k}=\widehat{\psi}_{n_1,n_2}^{j^H}(\widehat{\Gamma}_{n_1+1,n_2+1}^{2n_1,2n_2})^{-1}\phi_{n_1,n_2}^{k}+O(t^{-1}),
\end{align*}
as $t\rightarrow \pm \infty$, where $j,k\in\{1,2\}$.
\end{Lemma}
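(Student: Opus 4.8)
The plan is to reduce everything to the scalar structure of the matrices $\widehat{\Gamma}_{n_1+1,n_2+1}^{2n_1,2n_2}$, $\Phi_{n_1,n_2}$, $\widehat{\Phi}_{n_1,n_2}$, $\phi^j_{n_1,n_2}$ and $\widehat{\psi}^{j^H}_{n_1,n_2}$ given in \eqref{eq4.12}. Since all these are block-diagonal (or block-row/column) with $\gamma$- and $\delta$-blocks carrying only unimodular scalars $e^{i\gamma t}$, $e^{i\delta t}$, the moduli of the entries are governed by the polynomial factors $\Phi_m$, $\Phi_m^{-H}$, $\Gamma_{m+1}^{2m}$, $\phi_m$, $\psi_m$, which are exactly the objects analyzed in Lemma~\ref{lem5.4} (together with the scaling identities \eqref{eq4.19} via $\Xi_{k,\ell}$ and the invertible matrices $\digamma_{k_1}^{k_2}$ from Theorem~\ref{thm5.2}). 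So the first step is to record that $\widehat{\Gamma}_{n_1+1,n_2+1}^{2n_1,2n_2}$ decomposes, up to the unitary mixing matrix $\Theta$ of \eqref{eq4.27} and the unimodular phases, into blocks $\Gamma_{n_j+1}^{2n_j}P_{n_j}$; by \eqref{eq4.19} each such block equals $t^{2}\Xi_{n_j-1,0}\,\widehat\digamma\,\Xi_{0,n_j-1}$ with $\widehat\digamma$ invertible, hence $(\widehat{\Gamma}_{n_1+1,n_2+1}^{2n_1,2n_2})^{-1}=O(t^{-2})$ and, because $\Phi_{n_1,n_2}$ and $\widehat\Phi_{n_1,n_2}$ have largest entries $O(t^{\max(n_j-1)})$ while the $\Xi$-factors absorb them, the one-sided products $(\widehat\Gamma\cdots)^{-1}\Phi_{n_1,n_2}$, $\widehat\Phi_{n_1,n_2}(\widehat\Gamma\cdots)^{-1}$, and the two-sided product remain $O(t^{-2})$, and the products with the vectors $\phi^j_{n_1,n_2}$, $\widehat\psi^{j^H}_{n_1,n_2}$ are $O(t^{-1})$. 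This is the $W=w=0$ base case, proved exactly as in Lemma~\ref{lem5.4}(i).

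Next I would handle the perturbation $\Upsilon=\Phi_{n_1,n_2}W+\phi^1_{n_1,n_2}w^H$. The key identity is the Neumann-type expansion used in the proof of Lemma~\ref{lem5.4}(ii):
\begin{align*}
(\Upsilon+\widehat{\Gamma}_{n_1+1,n_2+1}^{2n_1,2n_2})^{-1}
=(\widehat{\Gamma}_{n_1+1,n_2+1}^{2n_1,2n_2})^{-1}\Bigl(I+\Upsilon(\widehat{\Gamma}_{n_1+1,n_2+1}^{2n_1,2n_2})^{-1}\Bigr)^{-1},
\end{align*}
valid for $|t|$ large once one checks $\Upsilon(\widehat{\Gamma}_{n_1+1,n_2+1}^{2n_1,2n_2})^{-1}\to 0$. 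For that one needs $\Phi_{n_1,n_2}(\widehat{\Gamma}_{n_1+1,n_2+1}^{2n_1,2n_2})^{-1}=O(t^{-1})$ and $\phi^1_{n_1,n_2}(\widehat{\Gamma}_{n_1+1,n_2+1}^{2n_1,2n_2})^{-1}=O(t^{-1})$, both of which follow from the $\Xi$-scaling: the polynomial growth of $\Phi$ (degree $n_j-1$) or of $\phi^1$ (degree $n_j$) is strictly dominated by the two factors of $t$ inside $(\widehat\Gamma\cdots)^{-1}$. Expanding the geometric series and multiplying on the appropriate side by $\widehat\Phi_{n_1,n_2}$, $\Phi_{n_1,n_2}$, $\phi^j_{n_1,n_2}$ or $\widehat\psi^{j^H}_{n_1,n_2}$ and using the base-case estimates term by term then yields each of the eight displayed bounds; the geometric tail contributes only lower-order corrections because each extra factor $\Upsilon(\widehat\Gamma\cdots)^{-1}$ is $O(t^{-1})$.

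Finally, the last identity $\widehat{\psi}_{n_1,n_2}^{j^H}(\Upsilon+\widehat{\Gamma}_{n_1+1,n_2+1}^{2n_1,2n_2})^{-1}\phi_{n_1,n_2}^{k}=\widehat{\psi}_{n_1,n_2}^{j^H}(\widehat{\Gamma}_{n_1+1,n_2+1}^{2n_1,2n_2})^{-1}\phi_{n_1,n_2}^{k}+O(t^{-1})$ comes out of the same expansion: the leading term is the $W=w=0$ contribution, and every correction term contains at least one factor $\Upsilon(\widehat\Gamma\cdots)^{-1}=O(t^{-1})$ sandwiched between quantities that are $O(1)$ after the $\Xi$-scaling, so the whole remainder is $O(t^{-1})$. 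Here one uses that $\widehat\psi^{j^H}_{n_1,n_2}(\widehat\Gamma\cdots)^{-1}\phi^k_{n_1,n_2}$ is actually $O(1)$ (not $O(t^{-1})$), which is why it must be kept explicitly — this is the $\kappa_{n_j}$ quantity of Theorem~\ref{thm5.3}. I expect the main obstacle to be purely bookkeeping: keeping track, in the $\Theta$-conjugated block form, of which unimodular phase and which $P_{n_j}$ sits where, and verifying that the degree counts in $\Xi_{k,\ell}$ always leave a net negative power of $t$; there is no conceptual difficulty beyond what Lemmas~\ref{lem4.3}, \ref{lem4.6} and Theorem~\ref{thm5.2} already supply, but the number of cases ($j,k\in\{1,2\}$, left/right/both multiplications, $\gamma$- vs.\ $\delta$-block) makes a careful uniform argument necessary rather than a case-by-case check.
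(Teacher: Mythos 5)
Your overall plan (establish the $W=w=0$ base estimates from the $\Xi$-scaling of \eqref{eq4.19}, then absorb $\Upsilon$ by a Neumann expansion) is the same as the paper's, but the pivotal step is set up on the wrong side and would fail. You factor $(\Upsilon+\widehat{\Gamma}_{n_1+1,n_2+1}^{2n_1,2n_2})^{-1}=(\widehat{\Gamma}_{n_1+1,n_2+1}^{2n_1,2n_2})^{-1}\bigl(I+\Upsilon(\widehat{\Gamma}_{n_1+1,n_2+1}^{2n_1,2n_2})^{-1}\bigr)^{-1}$ and claim $\Phi_{n_1,n_2}(\widehat{\Gamma}_{n_1+1,n_2+1}^{2n_1,2n_2})^{-1}=O(t^{-1})$. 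That estimate is false in general: the $\Xi$-factors inside $(\widehat{\Gamma})^{-1}=t^{-2}P^{-1}\Xi_{0,n-1}^{-1}\digamma^{-1}\Xi_{n-1,0}^{-1}$ (blockwise) only cancel the polynomial growth of $\Phi$ when $\Phi$ sits on the side where it meets $\Xi_{n-1,0}^{-1}$ directly, i.e.\ in $(\widehat{\Gamma})^{-1}\Phi$; on the other side $\Phi$ meets $P^{-1}\Xi_{0,n-1}^{-1}$ and the degree count gives entries of order $t^{\,j-i-2}$, which reach $t^{\,n_\ell-3}$. Already for a block of size $n_\ell=3$ one gets an $O(1)$ entry with nonzero limit (the coefficient is a corner entry of $\digamma^{-1}$, nonzero by Theorem~\ref{thm5.2}-type minors), and for $n_\ell\ge 4$ the product blows up; the same happens to $\phi^1w^H(\widehat{\Gamma})^{-1}$. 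So $\Upsilon(\widehat{\Gamma})^{-1}\not\to 0$ and your geometric series does not converge. Note this asymmetry is exactly why Tables~\ref{tab1}--\ref{tab2} and Lemma~\ref{lem5.4} list only $\widehat{\Phi}$, $\widehat{\psi}^{H}$ on the left and $\Phi$, $\phi^{j}$ on the right, and never $\Phi(\widehat\Gamma)^{-1}$.

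The repair is small but essential: use the left factorization
\begin{align*}
(\Upsilon+\widehat{\Gamma}_{n_1+1,n_2+1}^{2n_1,2n_2})^{-1}
=(\widehat{\Gamma}_{n_1+1,n_2+1}^{2n_1,2n_2})^{-1}
+\sum_{l\ge 1}(-1)^l\bigl[(\widehat{\Gamma}_{n_1+1,n_2+1}^{2n_1,2n_2})^{-1}\Upsilon\bigr]^{l}(\widehat{\Gamma}_{n_1+1,n_2+1}^{2n_1,2n_2})^{-1},
\end{align*}
which converges because $(\widehat{\Gamma})^{-1}\Upsilon=(\widehat{\Gamma})^{-1}\Phi\,W+(\widehat{\Gamma})^{-1}\phi^1w^{H}=O(t^{-2})+O(t^{-1})=O(t^{-1})$ by the correctly ordered base estimates (this is also what the proof of Lemma~\ref{lem5.4}(ii) actually does, contrary to your citation of it). With this expansion, every term carries a trailing $(\widehat{\Gamma})^{-1}$ on the right and a leading $(\widehat{\Gamma})^{-1}$ on the left of each $\Upsilon$, so multiplying by $\widehat{\Phi}$, $\widehat{\psi}^{j^H}$ on the left and $\Phi$, $\phi^{k}$ on the right reduces each term to the base-case products in Table~\ref{tab1}/\eqref{eq5.4}; the rest of your argument, including the final identity where the $O(1)$ leading term $\widehat{\psi}^{j^H}(\widehat{\Gamma})^{-1}\phi^{k}$ (the $\kappa$-quantity of Theorem~\ref{thm5.3}) is kept explicitly, then goes through.
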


\begin{proof}
Using the definition of $\widehat{\Gamma}_{n_1+1,n_2+1}^{2n_1,2n_2}$ in \eqref{eq4.12}, it follows from \eqref{eq4.19} that
\begin{align*}
(\widehat{\Gamma}_{n_1+1,n_2+1}^{2n_1,2n_2})^{-1}&=-i\beta\left[-e^{-i\gamma t}P_{n_1}^{-1}\left(\Gamma_{n_1+1}^{2n_1}\right)^{-1}\oplus e^{-i\delta t}P_{n_2}^{-1}\left(\Gamma_{n_2+1}^{2n_2}\right)^{-1}\right]\\
&=-i\beta t^{-2}\left[-e^{-i\gamma t}P_{n_1}^{-1}\left(\Xi_{0,n_1-1}\right)^{-1}\left(\digamma_{n_1+1}^{2n_1}\right)^{-1}\left(\Xi_{n_1-1,0}\right)^{-1}\right.\\
 &\ \ \ \ \ \ \ \ \ \ \ \ \ \ \ \left.\oplus e^{-i\delta t}P_{n_2}^{-1}\left(\Xi_{0,n_2-1}\right)^{-1}\left(\digamma_{n_2+1}^{2n_2}\right)^{-1}\left(\Xi_{n_2-1,0}\right)^{-1}\right].
\end{align*}
From \eqref{eq4.10}, \eqref{eq4.12} and \eqref{eq4.18}, we have
\begin{align}\label{eq5.4}
\begin{array}{ll}
(\widehat{\Gamma}_{n_1+1,n_2+1}^{2n_1,2n_2})^{-1}=O(t^{-2}),&(\widehat{\Gamma}_{n_1+1,n_2+1}^{2n_1,2n_2})^{-1}\phi_{n_1,n_2}^{j}=O(t^{-1}),\\
(\widehat{\Gamma}_{n_1+1,n_2+1}^{2n_1,2n_2})^{-1}\Phi_{n_1,n_2}=O(t^{-2}),&\widehat{\psi}_{n_1,n_2}^{j^H}(\widehat{\Gamma}_{n_1+1,n_2+1}^{2n_1,2n_2})^{-1}=O(t^{-1}),\\
\widehat{\Phi}_{n_1,n_2}(\widehat{\Gamma}_{n_1+1,n_2+1}^{2n_1,2n_2})^{-1}=O(t^{-2}),&\widehat{\psi}_{n_1,n_2}^{j^H}(\widehat{\Gamma}_{n_1+1,n_2+1}^{2n_1,2n_2})^{-1}\Phi_{n_1,n_2}=O(t^{-1}),\\
\widehat{\Phi}_{n_1,n_2}(\widehat{\Gamma}_{n_1+1,n_2+1}^{2n_1,2n_2})^{-1}\Phi_{n_1,n_2}=O(t^{-2}),&\widehat{\Phi}_{n_1,n_2}(\widehat{\Gamma}_{n_1+1,n_2+1}^{2n_1,2n_2})^{-1}\phi_{n_1,n_2}^{j}=O(t^{-1}),
\end{array}
\end{align}
as $t\rightarrow \pm \infty$. From \eqref{eq5.4}, we have $(\widehat{\Gamma}_{n_1+1,n_2+1}^{2n_1,2n_2})^{-1}\Upsilon=O(t^{-1})$  as $t\rightarrow \pm \infty$ and then
\begin{align*}
\left( \Upsilon+\widehat{\Gamma}_{n_1+1,n_2+1}^{2n_1,2n_2}\right)^{-1}= \left(\widehat{\Gamma}_{n_1+1,n_2+1}^{2n_1,2n_2}\right)^{-1}+\sum_{l=1}^{\infty}(-1)^l\left[\left(\widehat{\Gamma}_{n_1+1,n_2+1}^{2n_1,2n_2}\right)^{-1} \Upsilon \right]^{l} \left(\widehat{\Gamma}_{n_1+1,n_2+1}^{2n_1,2n_2}\right)^{-1}.
\end{align*}
Hence, the results of this lemma can be obtained accordingly from \eqref{eq5.4}.
\end{proof}

\subsection{Complementary of Subsection~\ref{sec4.3}}\label{secA.3}
\begin{Lemma}\label{lem5.6}
When $|t|$ is sufficiently large, the matrix $ \mathcal{T}_{e,\pm} $ in \eqref{eq4.55} is invertible and
\begin{align}\label{eq5.5}
\mathcal{T}_{e,\pm}^{-1}\mathcal{R}_e=O(t^{-1}),\ \ \  \mathcal{R}_e^{-H}\mathcal{T}_{e,\pm}^{-1}=O(t^{-1}),\ \ \
\mathcal{R}_e^{-H}\mathcal{T}_{e,\pm}^{-1}\mathcal{R}_e=O(t^{-1}),
\end{align}
as $t\rightarrow\pm \infty$.
\end{Lemma}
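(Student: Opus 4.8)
The plan is to peel off the oscillatory unimodular factors in $\mathcal{R}_e$ and $\mathcal{D}_e$ and thereby reduce everything to the elementary estimates already recorded in Lemma~\ref{lem5.4} (equivalently Table~\ref{tab1}). By Theorem~\ref{thm4.8} we may write $\mathcal{R}_e=\Lambda\Phi$ and $\mathcal{D}_e=-\Lambda\widehat{\mathcal{G}}$, where $\Lambda=\Lambda(t)$ is the block-diagonal matrix whose diagonal blocks are $e^{i\alpha_k t}I$ (so $\Lambda$ is unitary, $\Lambda^{-1}=\Lambda^{H}=\Lambda^{-H\,H}$, and in fact $\Lambda^{-H}=\Lambda$), $\Phi=\mathrm{diag}(\Phi_{l_{k,j}})$, and $\widehat{\mathcal{G}}=\mathrm{diag}\big(\beta^e_{k,j}\widehat{\Gamma}^{2l_{k,j}-1}_{l_{k,j}}\big)$. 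Hence $\mathcal{T}_{e,\pm}=\Lambda\big(\Phi\mathbf{W}_{2,2}^{\pm}-\widehat{\mathcal{G}}\big)$, and since $\Lambda$ is unitary, both the invertibility of $\mathcal{T}_{e,\pm}$ and all three norm estimates reduce to the corresponding statements with $\mathcal{T}_{e,\pm}$ replaced by $\Phi\mathbf{W}_{2,2}^{\pm}-\widehat{\mathcal{G}}$, with $\mathcal{R}_e$ replaced by $\Phi$, and with $\mathcal{R}_e^{-H}$ replaced by $\widehat{\Phi}:=\Phi^{-H}=\mathrm{diag}(\widehat{\Phi}_{l_{k,j}})$ (using Lemma~\ref{lem4.3}(ii)). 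This reduction is the main conceptual step and also explains why the conclusion does not depend on the sign of $t$.

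Next I would factor $\Phi\mathbf{W}_{2,2}^{\pm}-\widehat{\mathcal{G}}=-\widehat{\mathcal{G}}(I-M)$ with $M:=\widehat{\mathcal{G}}^{-1}\Phi\,\mathbf{W}_{2,2}^{\pm}$. From \eqref{eq4.19} together with the nonsingularity of $\digamma^{2l-1}_{l}$ (Theorem~\ref{thm5.2}) one gets $\widehat{\mathcal{G}}^{-1}=O(t^{-1})$ blockwise, and Lemma~\ref{lem5.4}(i) gives $\widehat{\mathcal{G}}^{-1}\Phi=O(t^{-1})$ (again blockwise, both factors being block diagonal); hence $M=O(t^{-1})\to0$. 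Therefore, for $|t|$ large, $I-M$ is invertible with $(I-M)^{-1}=I+O(t^{-1})$, so $\Phi\mathbf{W}_{2,2}^{\pm}-\widehat{\mathcal{G}}$—and thus $\mathcal{T}_{e,\pm}$—is invertible, and $(\Phi\mathbf{W}_{2,2}^{\pm}-\widehat{\mathcal{G}})^{-1}=-(I-M)^{-1}\widehat{\mathcal{G}}^{-1}$.

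With this Neumann expansion in hand, each of the three quantities is written out and grouped so that the polynomially growing factors $\Phi$ and $\widehat{\Phi}$ are always paired against a $\widehat{\mathcal{G}}^{-1}$ in exactly the combinations $(\widehat{\Gamma}^{2l-1}_{l})^{-1}\Phi_l$, $\widehat{\Phi}_l(\widehat{\Gamma}^{2l-1}_{l})^{-1}$, $\widehat{\Phi}_l(\widehat{\Gamma}^{2l-1}_{l})^{-1}\Phi_l$ handled by Lemma~\ref{lem5.4}(i), all $O(t^{-1})$. Explicitly, $\mathcal{T}_{e,\pm}^{-1}\mathcal{R}_e=(\Phi\mathbf{W}_{2,2}^{\pm}-\widehat{\mathcal{G}})^{-1}\Phi=-(I+O(t^{-1}))\,\widehat{\mathcal{G}}^{-1}\Phi=O(t^{-1})$; for $\mathcal{R}_e^{-H}\mathcal{T}_{e,\pm}^{-1}=-\Lambda\,\widehat{\Phi}(I-M)^{-1}\widehat{\mathcal{G}}^{-1}\Lambda^{-1}$ I would use $\widehat{\Phi}(I-M)^{-1}=\widehat{\Phi}+\widehat{\Phi}M(I-M)^{-1}$ with $\widehat{\Phi}M=\big(\widehat{\Phi}\widehat{\mathcal{G}}^{-1}\Phi\big)\mathbf{W}_{2,2}^{\pm}=O(t^{-1})$, so $\widehat{\Phi}(I-M)^{-1}\widehat{\mathcal{G}}^{-1}=\widehat{\Phi}\widehat{\mathcal{G}}^{-1}+O(t^{-2})=O(t^{-1})$; and $\mathcal{R}_e^{-H}\mathcal{T}_{e,\pm}^{-1}\mathcal{R}_e=-\Lambda\,\widehat{\Phi}(I-M)^{-1}\widehat{\mathcal{G}}^{-1}\Phi\,\Lambda^{-1}$ is treated identically, with $\widehat{\Phi}\widehat{\mathcal{G}}^{-1}\Phi=O(t^{-1})$ for the leading term. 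The one point requiring care—and the place I expect the only real friction—is that $\mathbf{W}_{2,2}^{\pm}$ is a \emph{full} constant matrix, not block diagonal, so a naive block-by-block argument fails; the remedy is that $\widehat{\mathcal{G}}^{-1}\Phi$, $\widehat{\Phi}\widehat{\mathcal{G}}^{-1}$, and $\widehat{\Phi}\widehat{\mathcal{G}}^{-1}\Phi$ are block diagonal and \emph{already} $O(t^{-1})$ before $\mathbf{W}_{2,2}^{\pm}$ (a bounded constant) is inserted, so the interblock coupling is harmless. Everything else is bookkeeping, invoking Lemma~\ref{lem5.4} and Theorem~\ref{thm5.2}.
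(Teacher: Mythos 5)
Your argument is correct and is essentially the paper's own proof: both factor $\mathcal{T}_{e,\pm}=\mathcal{R}_e\mathbf{W}_{2,2}^{\pm}+\mathcal{D}_e$ through the invertible $\mathcal{D}_e$ (your $-\Lambda\widehat{\mathcal{G}}$), expand $\mathcal{T}_{e,\pm}^{-1}$ by the same Neumann series, and reduce each of the three estimates to the Table~\ref{tab1}/Lemma~\ref{lem5.4} combinations $(\widehat{\Gamma}_{l}^{2l-1})^{-1}\Phi_l$, $\widehat{\Phi}_l(\widehat{\Gamma}_{l}^{2l-1})^{-1}$, $\widehat{\Phi}_l(\widehat{\Gamma}_{l}^{2l-1})^{-1}\Phi_l$. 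Your explicit peeling of the unitary factor $\Lambda$ and the remark about the full constant block $\mathbf{W}_{2,2}^{\pm}$ are just slightly more detailed bookkeeping of the same steps.
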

\begin{proof}
From Theorem \ref{thm4.8}, we have $\mathcal{R}_e=\oplus_{j=1}^{k}e^{i\alpha_j t}\Phi_{l_j}$ and $\mathcal{D}_e=-\oplus_{j=1}^{k}e^{i\alpha_j t}\beta_j^e\widehat{\Gamma}_{l_{j}}^{2l_{j}-1}$, where $\Phi_{l_j}$ and $\widehat{\Gamma}_{l_{j}}^{2l_{j}-1}$ are defined in \eqref{eq4.10}, $\alpha_j\in \mathbb{R}$ and $\beta_j^e\in \{-1,1\}$ for $j=1,\ldots,k$.
Since  each $\widehat{\Gamma}_{l_{j}}^{2l_{j}-1}$ is invertible, we obtain that $\mathcal{D}_e$ is invertible.
From  Table \ref{tab1}, we have $\mathcal{D}_e^{-1} \mathcal{R}_e=O(t^{-1})$ as $t\rightarrow \pm\infty$. Therefore,  $ \mathcal{T}_{e,\pm}=\mathcal{R}_e\mathbf{W}^{\pm}_{2,2}+\mathcal{D}_e $ is invertible for all sufficiently large values of $|t|$.

It follows from Lemma \ref{lem4.3} that $\mathcal{R}_e^{-H}=\oplus_{j=1}^{k}e^{i\alpha_j t}\widehat{\Phi}_{l_j}$, where $\widehat{\Phi}_{l_j}$ is defined in \eqref{eq4.10}. Then using the fact that $ \mathcal{T}_{e,\pm}^{-1}=\mathcal{D}_e^{-1}+\sum_{k=1}^{\infty}(-1)^k[\mathcal{D}_e^{-1}\mathcal{R}_e\mathbf{W}_{2,2}^{\pm}]^k\mathcal{D}_e^{-1}$, we obtain \eqref{eq5.5} directly by Table \ref{tab1}.
\end{proof}

\begin{Lemma}\label{lem5.7}
Let $ \mathcal{T}_{e,\pm}$ be the matrix defined in  \eqref{eq4.55}. Then
\begin{align}\label{eq5.6}
\begin{array}{l}
(\mathcal{R}_{cd}\mathbf{W}^{\pm}_{cd}+\mathcal{D}_{cd})^{-1}\mathcal{R}_{cd} \mathbf{W}^{\pm}_{3,2}\mathcal{T}_{e,\pm}^{-1} =O(t^{-1}),\\
 (\mathcal{G}_{cd}\mathbf{W}^{\pm}_{cd}+\mathcal{E}_{cd})(\mathcal{R}_{cd}\mathbf{W}^{\pm}_{cd}+\mathcal{D}_{cd})^{-1}\mathcal{R}_{cd} \mathbf{W}^{\pm}_{3,2}\mathcal{T}_{e,\pm}^{-1} =O(t^{-1}),
\end{array}
\end{align}
as $t\rightarrow\pm \infty$ and $\mathcal{R}_{cd}\mathbf{W}^{\pm}_{cd}+\mathcal{D}_{cd}$ is invertible.
\end{Lemma}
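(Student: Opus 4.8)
The plan is to reduce the statement to the block structure of $\mathfrak{J}$ on the $(c,d)$–indices and then to invoke the asymptotic estimates already collected in Table~\ref{tab2} (with Lemma~\ref{lem5.6} and Table~\ref{tab1} taking care of the $e$–part). First I would record that, by Theorem~\ref{thm4.8} together with \eqref{eq4.54}, the matrices $\mathcal{R}_{cd},\mathcal{D}_{cd},\mathcal{G}_{cd},\mathcal{E}_{cd}$ are block-diagonal over the individual $c$- and $d$-blocks, each diagonal block being given explicitly by Corollary~\ref{cor4.7} (a $c$-block) or Lemma~\ref{lem4.5} (a $d$-block). Writing $N(t):=\mathcal{R}_{cd}\mathbf{W}^{\pm}_{cd}+\mathcal{D}_{cd}$, the decisive point is that the restriction of $N(t)$ to each diagonal block is, up to the unimodular factor $e^{i\gamma t}$ (resp.\ $e^{i\delta t}$), of the form $\Upsilon+\widehat{\Gamma}^{2n_1,2n_2}_{n_1+1,n_2+1}$ with $\Upsilon=\Phi_{n_1,n_2}W+\phi^1_{n_1,n_2}w^{H}$ — precisely the matrix whose sandwiched inverses are controlled in Table~\ref{tab2} — while the coupling between distinct blocks produced by the full (non-block-diagonal) matrix $\mathbf{W}^{\pm}_{cd}$ is of strictly lower order in $|t|$.

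Next I would handle the invertibility of $N(t)$ and, more to the point, the size of $N(t)^{-1}$ when it is flanked by growing factors. Following the device used in the proof of Lemma~\ref{lem4.22}, there is a nonsingular $\Omega(t)={\rm diag}(e_{\varkappa_j}e_{\varkappa_j}^{H})+O(t^{-1})$ for which $N(t)\Omega(t)$ equals the explicit near-constant matrix read off from the top block-rows of \eqref{eq4.70}; invertibility of $N(t)$ is then equivalent to invertibility of the resulting $\mu\times\mu$ compression (one entry per $c$- or $d$-block), and on the set where $N(t)$ is invertible one has $N(t)^{-1}=\Omega(t)\bigl(N(t)\Omega(t)\bigr)^{-1}$ with both factors uniformly bounded on subsets staying a fixed distance from the singular locus. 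Combining this factorization with the sandwiched estimates of Table~\ref{tab2} — notably $(\Upsilon+\widehat{\Gamma})^{-1}\Phi_{n_1,n_2}=O(t^{-2})$, $\widehat{\Phi}_{n_1,n_2}(\Upsilon+\widehat{\Gamma})^{-1}\Phi_{n_1,n_2}=O(t^{-2})$, $\widehat{\psi}_{n_1,n_2}^{j^{H}}(\Upsilon+\widehat{\Gamma})^{-1}=O(t^{-1})$ and their companions — yields $N(t)^{-1}\mathcal{R}_{cd}=O(1)$ and $(\mathcal{G}_{cd}\mathbf{W}^{\pm}_{cd}+\mathcal{E}_{cd})N(t)^{-1}\mathcal{R}_{cd}=O(1)$ as $t\to\pm\infty$; the key is to estimate these three- and four-fold products as a whole, never splitting off $N(t)^{-1}$ alone, so that the $\widehat{\Gamma}^{-1}$ inside it can absorb the polynomial growth coming from both sides.

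Finally I would dispose of the $\mathcal{T}_{e,\pm}$ factor. From the Neumann representation $\mathcal{T}_{e,\pm}^{-1}=\mathcal{D}_e^{-1}+\sum_{k\geqslant1}(-1)^{k}[\mathcal{D}_e^{-1}\mathcal{R}_e\mathbf{W}^{\pm}_{2,2}]^{k}\mathcal{D}_e^{-1}$ used in the proof of Lemma~\ref{lem5.6}, together with $\mathcal{D}_e^{-1}=O(t^{-1})$ (which follows from $(\widehat{\Gamma}_{l}^{2l-1})^{-1}=O(t^{-1})$, Table~\ref{tab1}) and $\mathcal{D}_e^{-1}\mathcal{R}_e=O(t^{-1})$, one gets $\mathcal{T}_{e,\pm}^{-1}=O(t^{-1})$; since $\mathbf{W}^{\pm}_{3,2}$ is constant this gives $\mathbf{W}^{\pm}_{3,2}\mathcal{T}_{e,\pm}^{-1}=O(t^{-1})$. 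Multiplying the two $O(1)$ estimates of the previous step on the right by this $O(t^{-1})$ factor yields $N(t)^{-1}\mathcal{R}_{cd}\mathbf{W}^{\pm}_{3,2}\mathcal{T}_{e,\pm}^{-1}=O(t^{-1})$ and $(\mathcal{G}_{cd}\mathbf{W}^{\pm}_{cd}+\mathcal{E}_{cd})N(t)^{-1}\mathcal{R}_{cd}\mathbf{W}^{\pm}_{3,2}\mathcal{T}_{e,\pm}^{-1}=O(t^{-1})$, which are the two asserted estimates, invertibility of $N(t)$ being part of the statement.

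The hard part will be the bookkeeping in the middle step: $\mathbf{W}^{\pm}_{cd}$ is a full matrix, so $N(t)$ is genuinely not block-diagonal, and one must verify that the inter-block terms it produces are subdominant and that the scaling matrix $\Omega(t)$ can be chosen simultaneously for all $c$- and $d$-blocks — exactly the content of the (here only cited) proof of Lemma~\ref{lem4.22}. Moreover $\mathcal{D}_{cd}$ is itself singular (zero lower-right corners on the $c$-blocks), so $N(t)^{-1}$ cannot be expanded naively about $\mathcal{D}_{cd}$ and the factorization through $\Omega(t)$ really is needed. Everything else is a routine reassembly of the entries of Tables~\ref{tab1} and \ref{tab2}, which I would carry out compactly, block by block, and then pass to the direct sum.
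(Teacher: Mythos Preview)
Your proposal is correct in outline and reaches the same target estimate by the same final step (multiplying an $O(1)$ bound for the $cd$-part against $\mathcal{T}_{e,\pm}^{-1}=O(t^{-1})$ from Lemma~\ref{lem5.6}), but the route you take to the $O(1)$ bound is genuinely different from, and more elaborate than, the paper's.

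The paper does not use the $\Omega(t)$ machinery of Lemma~\ref{lem4.22} at all. Instead it introduces the purely diagonal scalings $\mathcal{X}_{cd}=\bigoplus_\ell(\Xi_{m_\ell,1}\oplus\Xi_{n_\ell,1}\oplus 1)$ and $\widehat{\mathcal{X}}_{cd}=\bigoplus_\ell(\Xi_{1,m_\ell}P_{m_\ell}\oplus\Xi_{1,n_\ell}P_{n_\ell}\oplus 1)$, factors $\mathcal{D}_{cd}=\mathcal{X}_{cd}\bigl(\bigoplus_\ell\mathfrak{D}_\ell\bigr)\widehat{\mathcal{X}}_{cd}$ with each $\mathfrak{D}_\ell=O(1)$, and checks directly from the entries of $\mathbf{B}_\ell,\mathbf{E}_\ell$ that $\mathcal{X}_{cd}^{-1}\mathcal{R}_{cd}=O(1)$, $\mathcal{E}_{cd}\widehat{\mathcal{X}}_{cd}^{-1}=O(1)$, $\widehat{\mathcal{X}}_{cd}^{-1}=O(1)$. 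One then writes
\[
(\mathcal{G}_{cd}\mathbf{W}^{\pm}_{cd}+\mathcal{E}_{cd})N(t)^{-1}\mathcal{R}_{cd}\mathbf{W}^{\pm}_{3,2}
=[\mathcal{G}_{cd}\mathbf{W}^{\pm}_{cd}\widehat{\mathcal{X}}_{cd}^{-1}+\mathcal{E}_{cd}\widehat{\mathcal{X}}_{cd}^{-1}]\bigl[(\mathcal{X}_{cd}^{-1}\mathcal{R}_{cd})\mathbf{W}^{\pm}_{cd}\widehat{\mathcal{X}}_{cd}^{-1}+\textstyle\bigoplus_\ell\mathfrak{D}_\ell\bigr]^{-1}(\mathcal{X}_{cd}^{-1}\mathcal{R}_{cd})\mathbf{W}^{\pm}_{3,2},
\]
and every factor on the right is $O(1)$. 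The point is that because $\mathcal{X}_{cd},\widehat{\mathcal{X}}_{cd}$ are diagonal and $\mathcal{R}_{cd},\mathcal{D}_{cd},\mathcal{E}_{cd}$ are block-diagonal, the full (non-block-diagonal) constant matrix $\mathbf{W}^{\pm}_{cd}$ simply sits between two $O(1)$ factors and causes no trouble at all --- the ``hard part'' you identify evaporates. Your approach via $\Omega(t)$ and Table~\ref{tab2} also works, but it forces you to track the inter-block coupling through the Gaussian-elimination construction of $\Omega(t)$, which is exactly the bookkeeping you (correctly) flag as delicate; the paper's diagonal scaling handles it in one line.
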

\begin{proof}
From Theorem \ref{thm4.8} and \eqref{eq4.54}, we assume
\begin{align}\label{eq5.7}
\mathcal{R}_{cd}=\oplus_{\ell=1}^{\mu}  \mathbf{B}_{\ell},\ \ \mathcal{D}_{cd}=\oplus_{\ell=1}^{\mu} \mathbf{D}_{\ell}, \ \ \mathcal{G}_{cd}=\oplus_{\ell=1}^{\mu} \mathbf{G}_{\ell} \text{ and }\mathcal{E}_{cd}=\oplus_{\ell=1}^{\mu} \mathbf{E}_{\ell},
\end{align}
where $\mathbf{B}_{\ell}$, $\mathbf{D}_{\ell}$, $\mathbf{G}_{\ell}$ and $\mathbf{E}_{\ell}$ have the forms in \eqref{eq4.66}. It follows from \eqref{eq4.12}, \eqref{eq4.18} and \eqref{eq4.19} that
\begin{align*}
\mathbf{D}^d_{\ell}&=\left[\begin{array}{cc}\Xi_{m_{\ell},1}\oplus \Xi_{n_{\ell},1}& 0  \\0 &1\end{array}\right]\mathfrak{D}_\ell\left[\begin{array}{cc}\Xi_{1,m_{\ell}}P_{m_{\ell}}\oplus \Xi_{1,n_{\ell}}P_{n_{\ell}}& 0  \\0 &1\end{array}\right],\\
&=:(\Xi_{m_\ell,1}^{n_\ell,1}\oplus 1)\mathfrak{D}_\ell(\widehat{\Xi}_{1,m_\ell}^{1,n_\ell}\oplus 1)
\end{align*}
where $\mathfrak{D}_\ell \equiv \mathfrak{D}_\ell(t)=O(1)$ and $\Xi_{j,1}$, $\Xi_{1,j}$ for $j=m_\ell, n_\ell$ are defined in \eqref{eq4.18}.
Using the definitions of $\mathbf{B}_{\ell}$, $\mathbf{E}_{\ell}$ in \eqref{eq4.66} and equations \eqref{eq4.10}, \eqref{eq4.12}, yields that
\begin{align}\label{eq5.8}
(\Xi_{m_\ell,1}^{n_\ell,1}\oplus 1)^{-1}\mathbf{B}_{\ell}=O(1),\ \ \
\mathbf{E}_{\ell}(\widehat{\Xi}_{1,m_\ell}^{1,n_\ell}\oplus 1)^{-1}=O(1).
\end{align}
Let $\mathcal{X}_{cd}=\oplus_{\ell=1}^{\mu}(\Xi_{m_\ell,1}^{n_\ell,1}\oplus 1)$ and $\widehat{\mathcal{X}}_{cd}=\oplus_{\ell=1}^{\mu}(\widehat{\Xi}_{1,m_\ell}^{1,n_\ell}\oplus 1)$. Then $\mathcal{X}_{cd}^{-1}\mathcal{R}_{cd}=O(1)$, $\mathcal{E}_{cd}\widehat{\mathcal{X}}_{cd}^{-1}=O(1)$ and $\widehat{\mathcal{X}}_{cd}^{-1}=O(1)$. Since $\mathcal{G}_{cd}=O(1)$, it follows from  \eqref{eq5.7} and \eqref{eq5.8} that
\begin{align*}
&(\mathcal{G}_{cd}\mathbf{W}^{\pm}_{cd}+\mathcal{E}_{cd})(\mathcal{R}_{cd}\mathbf{W}^{\pm}_{cd}+\mathcal{D}_{cd})^{-1}\mathcal{R}_{cd} \mathbf{W}^{\pm}_{3,2}\\&=[\mathcal{G}_{cd}\mathbf{W}^{\pm}_{cd}\widehat{\mathcal{X}}_{cd}^{-1}+\mathcal{E}_{cd}\widehat{\mathcal{X}}_{cd}^{-1}] [(\mathcal{X}_{cd}^{-1}\mathcal{R}_{cd})\mathbf{W}^{\pm}_{cd}\widehat{\mathcal{X}}_{cd}^{-1} +\oplus_{\ell=1}^{\mu} \mathfrak{D}_{\ell}]^{-1}(\mathcal{X}_{cd}^{-1}\mathcal{R}_{cd}) \mathbf{W}^{\pm}_{3,2}=O(1).
\end{align*}
From Lemma \ref{lem5.6}, we have $\mathcal{T}_{e,\pm}^{-1} =O(t^{-1})$ as $t\rightarrow \pm\infty$. Hence, \eqref{eq5.6} holds.
\end{proof}

\begin{proof}[\textbf{Proof of Lemma \ref{lem4.18}}]
Let  $Z_{1,\pm}(t)=\left[\begin{array}{cc} \mathcal{T}_{e,\pm}^{-1}&-\mathcal{T}_{e,\pm}^{-1}\mathcal{R}_e\mathbf{W}^{\pm}_{2,3}\\0&I_{n_{cd}}\end{array}\right]$ for $t\in \mathcal{I}_{\pm}$, where $ \mathcal{T}_{e,\pm}$ and $\mathcal{I}_{\pm}$ are defined in \eqref{eq4.55} and \eqref{eq4.56}, respectively. Using the fact that $\mathcal{G}_{cd}=O(1)$, it follows form  Lemma \ref{lem5.6} that $Z_{1,\pm}(t)=\left[\begin{array}{cc}O(t^{-1})&O(t^{-1})\\0&I_{n_{cd}}\end{array}\right]$ and
\begin{align}\label{eq5.9}
\left[\begin{array}{cc} \mathcal{R}_e\mathbf{W}^{\pm}_{2,2}+\mathcal{D}_e & \mathcal{R}_e\mathbf{W}^{\pm}_{2,3}\\\mathcal{R}_{cd} \mathbf{W}^{\pm}_{3,2} & \mathcal{R}_{cd}\mathbf{W}^{\pm}_{cd}+\mathcal{D}_{cd} \\\hline
\mathcal{R}_e^{-H} & 0  \\  \mathcal{G}_{cd}  \mathbf{W}^{\pm}_{3,2} & \mathcal{G}_{cd} \mathbf{W}^{\pm}_{cd}+\mathcal{E}_{cd}\end{array}\right]Z_{1,\pm}(t)=
\left[\begin{array}{cc} I_{n_e} & 0\\\mathcal{R}_{cd} \mathbf{W}^{\pm}_{3,2}\mathcal{T}_{e,\pm}^{-1} & \mathcal{R}_{cd}(\mathbf{W}^{\pm}_{cd}+O(t^{-1}))+\mathcal{D}_{cd} \\\hline
O(t^{-1}) & O(t^{-1})   \\  O(t^{-1}) & \mathcal{G}_{cd}( \mathbf{W}^{\pm}_{cd}+O(t^{-1}))+\mathcal{E}_{cd}\end{array}\right],
\end{align}
as $t\rightarrow \pm\infty$. We know that  $ \mathcal{R}_{cd}(\mathbf{W}^{\pm}_{cd}+O(t^{-1}))+\mathcal{D}_{cd}$ is invertible for $t\in \mathcal{I}_{\pm}$.  Let
\begin{align*}
Z_{2,\pm}(t)=\left[\begin{array}{cc} I_{n_e}&0\\-(\mathcal{R}_{cd}(\mathbf{W}^{\pm}_{cd}+O(t^{-1}))+\mathcal{D}_{cd})^{-1}\mathcal{R}_{cd} \mathbf{W}^{\pm}_{3,2}\mathcal{T}_{e,\pm}^{-1}&I_{n_{cd}}\end{array}\right],
\end{align*}
for $t\in \mathcal{I}_{\pm}$. Since $ \mathbf{W}^{\pm}_{cd}+O(t^{-1})=O(1)$, the consequences of Lemma \ref{lem5.7} also hold true whenever the matrix $ \mathbf{W}_{cd}^{\pm}$ in the statement is replaced by any $O(1)$ matrix. Hence, from Lemma \ref{lem5.7}, we have $Z_{2,\pm}(t)=\left[\begin{array}{cc}I_{n_{e}}&0\\O(t^{-1})&I_{n_{cd}}\end{array}\right]$, as $t\rightarrow \pm\infty$. Denote $ Z_{ecd,\pm}(t)=Z_{1,\pm}(t)Z_{2,\pm}(t)$ for $t\in \mathcal{I}_{\pm}$. It is easily seen that  the asymptotic behavior of $Z_{ecd,\pm}(t)$ has the form \eqref{eq4.57a}. Substituting \eqref{eq4.53} into \eqref{eq4.50}, it follows from \eqref{eq5.9} that we obtain \eqref{eq4.57b} as $t\rightarrow \pm\infty$.
\end{proof}

\begin{proof}[\textbf{Proof of Lemma \ref{lem4.22}}] Let $Y(t)\equiv\left[\begin{array}{c}Q(t)  \\\hline P(t)\end{array}\right]=\left[\begin{array}{c|c}\mathcal{R}_{cd} &\mathcal{D}_{cd} \\\hline \mathcal{G}_{cd} & \mathcal{E}_{cd}\end{array}\right]\left[\begin{array}{c}\mathbf{ W}\\\hline  I \end{array}\right]$.
From \eqref{eq4.68} and \eqref{eq4.69}, we have
\begin{align}\label{eq5.10}
\left[\begin{array}{c}Q(t)  \\\hline P(t)\end{array}\right]=\left[\begin{array}{cc}\mathbf{B}_{1}\mathbf{W}_{11}+\mathbf{D}_{1}&\mathbf{B}_{1}\mathbf{W}_{12}\\ \mathbf{B}_{2}\mathbf{W}_{21}&\mathbf{B}_{2}\mathbf{W}_{22}+\mathbf{D}_2\\\hline  \mathbf{G}_{1}\mathbf{W}_{11}+\mathbf{E}_{1}&\mathbf{G}_{1}\mathbf{W}_{12}\\ \mathbf{G}_{2}\mathbf{W}_{21}&\mathbf{G}_{2}\mathbf{W}_{22}+\mathbf{E}_2 \end{array}\right].
\end{align}
Partition $ \mathbf{ W}_{jk}$ as $\mathbf{ W}_{jk}=\left[\begin{array}{cc} \mathbf{ W}^{jk}_{11} &  \mathbf{ w}^{jk}_{12} \\\mathbf{ w}^{jk}_{21}  & \mathbf{ w}^{jk}_{22}\end{array}\right]$,
where $\mathbf{ w}^{jk}_{22}=\mathbf{ w}_{jk}:=\mathbf{W}_{jk}(\varkappa_j,\varkappa_k)\in \mathbb{C}$, for $j,k\in\{1,2\}$ and denote
$\mathbf{B}_j\equiv\left[\begin{array}{c}\mathbf{B}^j_{1} \\\hline \mathbf{b}^j_{2}\end{array}\right]:=\left[\begin{array}{cc}\Phi_{m_{j},n_{j}}& \phi_{m_{j},n_{j}}^{1}  \\\hline0&\omega_{11}^j\end{array}\right]$,
for $j=1,2$. Let
\begin{align}\label{eq5.11}
\begin{array}{l}
\Upsilon_1=\Phi_{m_{1},n_{1}} \mathbf{ W}^{11}_{11} +  \phi_{m_{1},n_{1}}^{1} \mathbf{ w}^{11}_{21},\\  p_{1}=\Phi_{m_{1},n_{1}} \mathbf{ w}^{11}_{12} +  \phi_{m_{1},n_{1}}^{1} \mathbf{ w}^{11}_{22}+ \phi_{m_{1},n_{1}}^{2},\\
\Omega_{11}(t)=\left[\begin{array}{c|c}\left( \Upsilon_1+\widehat{\Gamma}_{m_{1}+1,n_{1}+1}^{2m_{1},2n_{1}}\right)^{-1} &-\left( \Upsilon_1+\widehat{\Gamma}_{m_{1}+1,n_{1}+1}^{2m_{1},2n_{1}}\right)^{-1} p_1\\\hline0& 1\end{array}\right],\\
\Omega_1(t)=\left[\begin{array}{c|c}\Omega_{11}(t)&\left[\begin{array}{c}-\left( \Upsilon_1+\widehat{\Gamma}_{m_{1}+1,n_{1}+1}^{2m_{1},2n_{1}}\right)^{-1}\mathbf{B}^1_{1}\mathbf{W}_{12}\\0\end{array}\right]\\\hline0&I_{\varkappa_2}\end{array}\right],\\
\zeta^1_{jk}=\widehat{\psi}_{m_{1},n_{1}}^{j^H}\left( \widehat{\Gamma}_{m_{1}+1,n_{1}+1}^{2m_{1},2n_{1}}\right)^{-1} \phi_{m_{1},n_{1}}^{k},\ \ \text{for }j,k\in \{1,2\}.
\end{array}
\end{align}
From Table \ref{tab2}, we have
\begin{align}\label{eq5.12}
\begin{array}{l}
\Omega_{11}(t)=\left[\begin{array}{c|c}O(t^{-2})&O(t^{-1})\\\hline 0&1\end{array}\right],\ \ \Omega_{1}(t)=\left[\begin{array}{c|c}O(1)&O(t^{-1})\\\hline 0&I_{\varkappa_2}\end{array}\right],\\
\widehat{\Phi}_{m_{1},n_{1}}\left( \Upsilon_1+\widehat{\Gamma}_{m_{1}+1,n_{1}+1}^{2m_{1},2n_{1}}\right)^{-1}\mathbf{B}^1_{1}\mathbf{W}_{12}=O(t^{-1}), \\
 \widehat{\psi}_{m_{1},n_{1}}^{1^H}\left( \Upsilon_1+\widehat{\Gamma}_{m_{1}+1,n_{1}+1}^{2m_{1},2n_{1}}\right)^{-1}\mathbf{B}^1_{1}\mathbf{W}_{12}=\zeta^1_{11}[\mathbf{w}^{12}_{21},\mathbf{w}^{12}_{22}]+O(t^{-1}),\\
 \widehat{\psi}_{m_{1},n_{1}}^{2^H}\left( \Upsilon_1+\widehat{\Gamma}_{m_{1}+1,n_{1}+1}^{2m_{1},2n_{1}}\right)^{-1}\mathbf{B}^1_{1}\mathbf{W}_{12}=\zeta^1_{21}[\mathbf{w}^{12}_{21},\mathbf{w}^{12}_{22}]+O(t^{-1}),
\end{array}
\end{align}
as $t\rightarrow\pm \infty$, where $\zeta^1_{11}$ and $\zeta^1_{21}$ are defined in \eqref{eq5.11}. Post
multiplying $\Omega_1(t)$ to \eqref{eq5.10}, it follows from \eqref{eq5.11} and \eqref{eq5.12} that, as $t\rightarrow \pm\infty$,
\begin{subequations}\label{eq5.13}
\begin{align}
Q(t)\Omega_1(t)&=\left[\begin{array}{c|c}\mathbf{B}_{1}\mathbf{W}_{11}+\mathbf{D}_{1}&\mathbf{B}_{1}\mathbf{W}_{12}\\\hline \mathbf{B}_{2}\mathbf{W}_{21}&\mathbf{B}_{2}\mathbf{W}_{22}+\mathbf{D}_2\end{array}\right]\Omega_1(t)\nonumber\\&=\left[\begin{array}{c|c}\begin{array}{cc}I&0 \\O(t^{-1})&\varpi^u_{11}\end{array}&\begin{array}{c}0\\(\omega_{11}^1-\zeta^1_{11})[\mathbf{ w}^{12}_{21} , \mathbf{ w}^{12}_{22}]+O(t^{-1})\end{array}\\\hline \mathbf{B}_{2}\mathbf{W}_{21}\Omega_{11}(t)&\mathbf{B}_{2}(\mathbf{W}_{22}+\Delta)+\mathbf{D}_2\end{array}\right],\label{eq5.13a}\\
P(t)\Omega_1(t)&=\left[\begin{array}{c|c}\mathbf{G}_{1}\mathbf{W}_{11}+\mathbf{E}_{1}&\mathbf{G}_{1}\mathbf{W}_{12}\\\hline \mathbf{G}_{2}\mathbf{W}_{21}&\mathbf{G}_{2}\mathbf{W}_{22}+\mathbf{E}_2\end{array}\right]\Omega_1(t)\nonumber\\&=\left[\begin{array}{c|c}\begin{array}{cc}0&0 \\0&\varpi^d_{11}\end{array}&\begin{array}{c}0\\(\omega_{21}^1-\zeta^1_{21})[\mathbf{ w}^{12}_{21} , \mathbf{ w}^{12}_{22}]\end{array}\\\hline \mathbf{G}_{2}\mathbf{W}_{21}\Omega_{11}(t)&\mathbf{G}_{2}(\mathbf{W}_{22}+\Delta)+\mathbf{E}_2\end{array}\right]+\left[\begin{array}{c}O(t^{-1})\\\hline 0\end{array}\right],\label{eq5.13b}
\end{align}
\end{subequations}
where $\omega_{11}^1-\zeta^1_{11}=O(1)$, $\omega_{21}^1-\zeta^1_{21}=O(1)$, $\Delta=\mathbf{W}_{21}\left[\begin{array}{c}-\left( \Upsilon_1+\widehat{\Gamma}_{m_{1}+1,n_{1}+1}^{2m_{1},2n_{1}}\right)^{-1}\mathbf{B}^1_{1}\mathbf{W}_{12}\\0\end{array}\right]$ and
\begin{align*}
\left[\begin{array}{c}\varpi^u_{11}\\\hline\varpi^d_{11}\end{array}\right]&=\left[\begin{array}{cc}\omega^1_{11}&\omega^1_{12}\\\hline\omega^1_{21}&\omega^1_{22}\end{array}\right]\left[\begin{array}{c}\mathbf{w}^{11}_{22}\\1\end{array}\right]-\left[\begin{array}{c}\zeta^1_{11}\mathbf{w}^{11}_{22}+\zeta^1_{12}\\\hline \zeta^1_{21}\mathbf{w}^{11}_{22}+\zeta^1_{22}\end{array}\right]\nonumber\\
&=\left[\begin{array}{cc}\omega^1_{11}-\zeta^1_{11}&\omega^1_{12}-\zeta^1_{12}\\\hline\omega^1_{21}-\zeta^1_{21}&\omega^1_{22}-\zeta^1_{22}\end{array}\right]\left[\begin{array}{c}\mathbf{w}^{11}_{22}\\1\end{array}\right].
\end{align*}
From \eqref{eq5.12}, we have $\Delta=O(t^{-1})$ as $t\rightarrow \pm\infty$. Let $\widetilde{\mathbf{ W}}_{22}=\mathbf{W}_{22}+\Delta$. Then
\begin{align*}
\widetilde{\mathbf{ W}}_{22}=\left[\begin{array}{cc} \widetilde{\mathbf{ W}}^{22}_{11} &  \widetilde{\mathbf{ w}}^{22}_{12} \\\widetilde{\mathbf{ w}}^{22}_{21}  & \widetilde{\mathbf{ w}}^{22}_{22} \end{array}\right]
=\left[\begin{array}{cc} \mathbf{ W}^{22}_{11}&  \mathbf{ w}^{22}_{12} \\\mathbf{ w}^{22}_{21}  & \mathbf{ w}^{22}_{22} \end{array}\right]+O(t^{-1}).
\end{align*}
Similarly, let $\Upsilon_2=\Phi_{m_{2},n_{2}} \widetilde{\mathbf{ W}}^{22}_{11} +  \phi_{m_{2},n_{2}}^{1} \widetilde{\mathbf{ w}}^{22}_{21}$, $ p_{2}=\Phi_{m_{2},n_{2}} \widetilde{\mathbf{ w}}^{22}_{12} +  \phi_{m_{2},n_{2}}^{1}\widetilde{ \mathbf{ w}}^{22}_{22}+ \phi_{m_{2},n_{2}}^{2},$
\begin{align}\label{eq5.14}
\begin{array}{l}
\Omega_{22}(t)=\left[\begin{array}{c|c}\left( \Upsilon_2+\widehat{\Gamma}_{m_{2}+1,n_{2}+1}^{2m_{2},2n_{2}}\right)^{-1} &-\left( \Upsilon_2+\widehat{\Gamma}_{m_{2}+1,n_{2}+1}^{2m_{2},2n_{2}}\right)^{-1} p_2\\\hline0& 1\end{array}\right],\\
\Omega_2(t)=\left[\begin{array}{c|c}I_{\varkappa_1}&0\\\hline\left[\begin{array}{c}-\left( \Upsilon_2+\widehat{\Gamma}_{m_{2}+1,n_{2}+1}^{2m_{2},2n_{2}}\right)^{-1}\mathbf{B}^2_{1}\mathbf{W}_{21}\Omega_{11}(t)\\0\end{array}\right]&\Omega_{22}(t)\end{array}\right],\\
\zeta^2_{jk}=\widehat{\psi}_{m_{2},n_{2}}^{j^H}\left( \widehat{\Gamma}_{m_{2}+1,n_{2}+1}^{2m_{2},2n_{2}}\right)^{-1} \phi_{m_{2},n_{2}}^{k},\ \ \text{for }j,k\in \{1,2\}.
\end{array}
\end{align}
Then $\Omega_{22}(t)=\left[\begin{array}{c|c}O(t^{-2})&O(t^{-1})\\\hline 0&1\end{array}\right]$ and $\Omega_{2}(t)=\left[\begin{array}{c|c}I_{\varkappa_1}&0\\\hline O(t^{-1})&O(1)\end{array}\right]$, as $t\rightarrow \pm \infty$.
Denote $\Omega(t)=\Omega_1(t)\Omega_2(t)$. From \eqref{eq5.12} and \eqref{eq5.14}, we have
\begin{align*}
\Omega(t)=\left[\begin{array}{c|c}e_{\varkappa_1}e_{\varkappa_1}^{H}&0\\\hline0&e_{\varkappa_2}e_{\varkappa_2}^{H}\end{array}\right]+O(t^{-1}), \text{ as }t\rightarrow \pm\infty.
\end{align*}
Using the fact that $\omega_{11}^1-\zeta^1_{11}=O(1)$, $\omega_{21}^1-\zeta^1_{21}=O(1)$ and from \eqref{eq5.13} and Table \ref{tab2}, we have
\begin{align}\label{eq5.15}
\begin{array}{l}
Q(t)\Omega(t)=\left[\begin{array}{cc|cc}I&0&0&0 \\0&\varpi^u_{11}&0&(\omega_{11}^1-\zeta^1_{11}) \mathbf{ w}^{12}_{22}\\\hline 0&0&I&0\\0&(\omega_{11}^2-\zeta^2_{11})\mathbf{w}^{21}_{22}&0&\varpi^u_{22}\end{array}\right]+O(t^{-1}),\\
P(t)\Omega(t)=\left[\begin{array}{cc|cc}0&0&0&0 \\0&\varpi^d_{11}&0&(\omega_{21}^1-\zeta^1_{21}) \mathbf{ w}^{12}_{22}\\\hline 0&0&0&0\\0&(\omega_{21}^2-\zeta^2_{21})\mathbf{w}^{21}_{22}&0&\varpi^d_{22}\end{array}\right]+O(t^{-1}),\\
\left[\begin{array}{c}\varpi^u_{22}\\\hline\varpi^d_{22}\end{array}\right]=\left[\begin{array}{cc}\omega^2_{11}-\zeta^2_{11}&\omega^2_{12}-\zeta^2_{12}\\\hline\omega^2_{21}-\zeta^2_{21}&\omega^2_{22}-\zeta^2_{22}\end{array}\right]\left[\begin{array}{c}\mathbf{w}^{22}_{22}\\1\end{array}\right]+O(t^{-1}),
\end{array}
\end{align}
as $t\rightarrow \pm\infty$. Note that $\mathbf{w}^{jk}_{22}=\mathbf{w}_{jk}:=\mathbf{W}_{jk}(\varkappa_j,\varkappa_k)$ for each $j,k\in\{1,2\}$.

From the definitions of $\zeta^\ell_{jk}$, for $\ell,j,k\in\{1,2\}$, in \eqref{eq5.11} and \eqref{eq5.14}, we have
\begin{align*}
\left[\begin{array}{cc}\zeta^\ell_{11}&\zeta^\ell_{12}\\\zeta^\ell_{21}&\zeta^\ell_{22}\end{array}\right]=\Theta^H\left[\begin{array}{c|c}\widehat{\psi}_{m_\ell}^{H}&0\\\hline0&\widehat{\psi}_{n_\ell}^{H}\end{array}\right]\left[\begin{array}{c|c}\widehat{\Gamma}_{m_\ell+1}^{2m_\ell}&0\\\hline0&\widehat{\Gamma}_{n_\ell+1}^{2n_\ell}\end{array}\right]^{-1}
\left[\begin{array}{c|c}e^{i\gamma_\ell t}\phi_{m_\ell}&0\\\hline0&e^{i\delta_\ell t}\phi_{n_\ell}\end{array}\right]\Theta,
\end{align*}
where $\Theta$ in \eqref{eq4.27} is unitary. From Theorem \ref{thm5.3}, we obtain that
\begin{align*}
\left[\begin{array}{cc}\zeta^\ell_{11}&\zeta^\ell_{12}\\\zeta^\ell_{21}&\zeta^\ell_{22}\end{array}\right]&=\Theta^H\left[\begin{array}{cc}\kappa_{m_\ell}&0\\0&\kappa_{n_\ell}\end{array}\right]\left[\begin{array}{cc}e^{i\gamma_\ell t}&0\\0&e^{i\delta_\ell t}\end{array}\right]\Theta\\&=\frac{1}{2}\left[\begin{array}{cc}\kappa_{m_{\ell}}e^{i\gamma_\ell t}+\kappa_{n_{\ell}}e^{i\delta_\ell t}&-i\beta^d_\ell (\kappa_{m_{\ell}}e^{i\gamma_\ell t}-\kappa_{n_{\ell}}e^{i\delta_\ell t})\\i\beta^d_\ell (\kappa_{m_{\ell}}e^{i\gamma_\ell t}-\kappa_{n_{\ell}}e^{i\delta_\ell t})&\kappa_{m_{\ell}}e^{i\gamma_\ell t}+\kappa_{n_{\ell}}e^{i\delta_\ell t}\end{array}\right],
\end{align*}
where $\kappa_{m_\ell}$ and $\kappa_{n_\ell}$ satisfy  \eqref{eq5.3}. Using the definitions $\omega^{\ell}_{jk}$, for $\ell,j,k\in \{1,2\}$, in \eqref{eq4.12} yields that
\begin{align*}
\left[\begin{array}{cc}\hat{\omega}^{\ell}_{11}&\hat{\omega}^{\ell}_{12}\\\hat{\omega}^{\ell}_{21}&\hat{\omega}^{\ell}_{22}\end{array}\right]&\equiv \left[\begin{array}{cc}\omega^{\ell}_{11}-\zeta^\ell_{11}&\omega^{\ell}_{12}-\zeta^\ell_{12}\\\omega^{\ell}_{21}-\zeta^\ell_{21}&\omega^{\ell}_{22}-\zeta^\ell_{22}\end{array}\right]\nonumber\\=&\frac{1}{2}\left[\begin{array}{cc}(-1)^{m_{\ell}}e^{i\gamma_\ell t}+(-1)^{n_{\ell}}e^{i\delta_\ell t}&-i\beta^d_\ell ((-1)^{m_{\ell}}e^{i\gamma_\ell t}-(-1)^{n_{\ell}}e^{i\delta_\ell t})\\i\beta^d_\ell ((-1)^{m_{\ell}}e^{i\gamma_\ell t}-(-1)^{n_{\ell}}e^{i\delta_\ell t})&(-1)^{m_{\ell}}e^{i\gamma_\ell t}+(-1)^{n_{\ell}}e^{i\delta_\ell t}\end{array}\right].
\end{align*}From \eqref{eq5.10} and \eqref{eq5.15}, there exists an invertible matrix $\Omega(t)$ such that  \eqref{eq4.70} holds.
\end{proof}

\begin{proof}[\textbf{Proof of Theorem \ref{thm4.24}}]
Form \eqref{eq4.73} and \eqref{eq4.77}, we have $\Delta_u^{\pm}(t)=\Sigma_{\hat{\omega}_{11}}\mathfrak{W}_{cd}^{\pm}+\Sigma_{\hat{\omega}_{12}}$, $\Delta_v^{\pm}(t)=\Sigma_{\hat{\omega}_{21}}\mathfrak{W}_{cd}^{\pm}+\Sigma_{\hat{\omega}_{22}}$ and $\Sigma_{\beta^{cd}}\Sigma_{\beta^{cd}}=I$.
From \eqref{eq4.67}, \eqref{eq4.71} and \eqref{eq4.73}, we have
\begin{align*}
\mathfrak{U}_{cd} \Delta_{\pm}(t)&=\mathfrak{U}^{cd}_u(\Sigma_{\hat{\omega}_{11}}\mathfrak{W}_{cd}^{\pm}+\Sigma_{\hat{\omega}_{12}})+\mathfrak{U}^{cd}_v(\Sigma_{\hat{\omega}_{21}}\mathfrak{W}_{cd}^{\pm}+\Sigma_{\hat{\omega}_{22}})\\
&=\mathfrak{U}^{cd}_u\Sigma_{\hat{\omega}_{11}}\mathfrak{W}_{cd}^{\pm}+\mathfrak{U}^{cd}_u(i\Sigma_{\beta^{cd}})(-i\Sigma_{\beta^{cd}})\Sigma_{\hat{\omega}_{12}}+\mathfrak{U}^{cd}_v(\Sigma_{\hat{\omega}_{21}}\mathfrak{W}_{cd}^{\pm}+\Sigma_{\hat{\omega}_{22}})\\
&=[\mathfrak{U}^{cd}_u\Sigma_{\hat{\omega}_{11}}+\mathfrak{U}^{cd}_v\Sigma_{\hat{\omega}_{21}}]\mathfrak{W}_{cd}^{\pm}+\frac{i}{2}\mathfrak{U}^{cd}_u\Sigma_{\beta^{cd}}(-\Sigma_{\gamma}+\Sigma_{\delta})+\frac{1}{2}\mathfrak{U}^{cd}_v(\Sigma_{\gamma}+\Sigma_{\delta})\\
&=[\mathfrak{U}^{cd}_u\Sigma_{\hat{\omega}_{11}}+\mathfrak{U}^{cd}_v\Sigma_{\hat{\omega}_{21}}]\mathfrak{W}_{cd}^{\pm}+\frac{1}{2}[\mathfrak{U}^{cd}_v-i\mathfrak{U}^{cd}_u\Sigma_{\beta^{cd}}]\Sigma_{\gamma}+\frac{1}{2}[\mathfrak{U}^{cd}_v+i\mathfrak{U}^{cd}_u\Sigma_{\beta^{cd}}]\Sigma_{\delta}.
\end{align*}
Since $\Sigma_{\gamma}$ is invertible, we obtain
\begin{align*}
\mathfrak{U}_{cd} \Delta_{\pm}(t)\Sigma_{\gamma}^{-1}=&\frac{1}{2}[\mathfrak{U}^{cd}_v-i\mathfrak{U}^{cd}_u\Sigma_{\beta^{cd}}]+[\mathfrak{U}^{cd}_u\Sigma_{\hat{\omega}_{11}}+\mathfrak{U}^{cd}_v\Sigma_{\hat{\omega}_{21}}]\mathfrak{W}_{cd}^{\pm}\Sigma_{\gamma}^{-1}\nonumber\\&+\frac{1}{2}[\mathfrak{U}^{cd}_v+i\mathfrak{U}^{cd}_u\Sigma_{\beta^{cd}}]\Sigma_{\delta}\Sigma_{\gamma}^{-1}.
\end{align*}

From \eqref{eq4.75} and \eqref{eq4.76}, there exists an invertible matrix $\mathbf{Z}_{\pm}(t)$ such that
\begin{align}\label{eq5.16}
Y(t)\mathbf{Z}_{\pm}(t)(I_{n-\mu}\oplus\Sigma_{\gamma}^{-1})=\left[\begin{array}{c}\mathbf{U}_{1,\pm}+ \Delta\mathbf{U}_{1,\pm}(t)\\\mathbf{U}_{2,\pm}+ \Delta\mathbf{U}_{2,\pm}(t)\end{array}\right]+O(t^{-1}),
\end{align}
as $t\rightarrow \pm\infty$, where  for $j=1,2$,  $ \Delta\mathbf{U}_{j,\pm}(t)\equiv \left[0,\Delta\mathbf{U}^{cd}_{j,\pm}(t)\right]=\Delta\mathbf{U}^{cd}_{j,\pm}(t)\mathsf{E}_{\mu}^H$ and $\mathsf{E}_{\mu}$ and $\Delta\mathbf{U}^{cd}_{j,\pm}(t)$ are defined in \eqref{eq4.71} and \eqref{eq4.78}, respectively. It follows from the definition of $W(t)$ and \eqref{eq5.16} that there are matrix functions, $M^{\varepsilon}_1(t)=O(t^{-1})$ and $M^{\varepsilon}_2(t)=O(t^{-1})$ as $ t\rightarrow \pm\infty$, such that
\begin{align}
W(t)&=P(t)Q(t)^{-1}=(\mathbf{U}_{2,\pm}+ \Delta\mathbf{U}_{2,\pm}(t)+M^{\varepsilon}_2(t))(\mathbf{U}_{1,\pm}+ \Delta\mathbf{U}_{1,\pm}(t)+M^{\varepsilon}_1(t))^{-1}\notag\\
&=\left(\mathbf{U}_{2,\pm}+ \Delta\mathbf{U}^{cd}_{2,\pm}(t)\mathsf{E}_{\mu}^H\right)\left[(\mathbf{U}_{1,\pm}+M^{\varepsilon}_1(t))+ \Delta\mathbf{U}^{cd}_{1,\pm}(t)\mathsf{E}_{\mu}^H\right]^{-1}\notag\\
&\ \ \ \ +M^{\varepsilon}_2(t)\left[(\mathbf{U}_{1,\pm}+M^{\varepsilon}_1(t))+ \Delta\mathbf{U}^{cd}_{1,\pm}(t)\mathsf{E}_{\mu}^H\right]^{-1}\label{eq5.17}.
\end{align}
We assume that $\mathbf{U}^{\varepsilon}_{1,\pm}(t)\equiv \mathbf{U}_{1,\pm}+M^{\varepsilon}_1(t)$ is invertible.
Applying  the Sherman-Morrison-Woodbury formula, we have
\begin{align}\label{eq5.18}
(&\mathbf{U}^{\varepsilon}_{1,\pm}(t)+\Delta\mathbf{U}^{cd}_{1,\pm}(t)\mathsf{E}_{\mu}^H)^{-1}=\mathbf{U}^{\varepsilon}_{1,\pm}(t)^{-1}\nonumber\\&\hspace{1cm}-\mathbf{U}^{\varepsilon}_{1,\pm}(t)^{-1}\Delta\mathbf{U}^{cd}_{1,\pm}(t)(I_{\mu}+\mathsf{E}_{\mu}^H\mathbf{U}^{\varepsilon}_{1,\pm}(t)^{-1}\Delta\mathbf{U}^{cd}_{1,\pm}(t))^{-1}\mathsf{E}_{\mu}^H\mathbf{U}^{\varepsilon}_{1,\pm}(t)^{-1}.
\end{align}
Since $\mathbf{U}_{1,\pm}$ is invertible and $M^{\varepsilon}_1(t)=O(t^{-1})$ as $ t\rightarrow \pm\infty$, we see that
\begin{align}\label{eq5.19}
\mathbf{U}^{\varepsilon}_{1,\pm}(t)^{-1}=(\mathbf{U}_{1,\pm}+M^{\varepsilon}_1(t))^{-1}=\mathbf{U}_{1,\pm}^{-1}+O(t^{-1}),
\end{align}
as $t\rightarrow \pm\infty$.
Substituting \eqref{eq5.18} and \eqref{eq5.19} into \eqref{eq5.17}, it turns out
\begin{align*}
W(t)&=\mathbf{U}_{2,\pm}\mathbf{U}^{\varepsilon}_{1,\pm}(t)^{-1}+ \Delta\mathbf{U}^{cd}_{2,\pm}(t)\mathsf{E}_{\mu}^H\mathbf{U}^{\varepsilon}_{1,\pm}(t)^{-1}\\& - \Delta\mathbf{U}^{cd}_{2,\pm}(t)\mathsf{E}_{\mu}^H \mathbf{U}^{\varepsilon}_{1,\pm}(t)^{-1}\Delta\mathbf{U}^{cd}_{1,\pm}(t)(I_{\mu}+\mathsf{E}_{\mu}^H\mathbf{U}^{\varepsilon}_{1,\pm}(t)^{-1}\Delta\mathbf{U}^{cd}_{1,\pm}(t))^{-1}\mathsf{E}_{\mu}^H\mathbf{U}^{\varepsilon}_{1,\pm}(t)^{-1}\\& -\mathbf{U}_{2,\pm} \mathbf{U}^{\varepsilon}_{1,\pm}(t)^{-1}\Delta\mathbf{U}^{cd}_{1,\pm}(t)(I_{\mu}+\mathsf{E}_{\mu}^H\mathbf{U}^{\varepsilon}_{1,\pm}(t)^{-1}\Delta\mathbf{U}^{cd}_{1,\pm}(t))^{-1}\mathsf{E}_{\mu}^H\mathbf{U}^{\varepsilon}_{1,\pm}(t)^{-1}\\&+M^{\varepsilon}_2(t)\left[\mathbf{U}^{\varepsilon}_{1,\pm}(t)+\Delta\mathbf{U}^{cd}_{1,\pm}(t)\mathsf{E}_{\mu}^H\right]^{-1}\\ 
=&\mathbf{U}_{2,\pm}\mathbf{U}_{1,\pm}^{-1}+\Delta\mathbf{U}^{cd}_{2,\pm}(t) (I_{\mu}+\mathsf{E}_{\mu}^H\mathbf{U}^{\varepsilon}_{1,\pm}(t)^{-1}\Delta\mathbf{U}^{cd}_{1,\pm}(t))^{-1}\mathsf{E}_{\mu}^H\mathbf{U}^{\varepsilon}_{1,\pm}(t)^{-1}\\& -\mathbf{U}_{2,\pm} \mathbf{U}^{\varepsilon}_{1,\pm}(t)^{-1}\Delta\mathbf{U}^{cd}_{1,\pm}(t)(I_{\mu}+\mathsf{E}_{\mu}^H\mathbf{U}^{\varepsilon}_{1,\pm}(t)^{-1}\Delta\mathbf{U}^{cd}_{1,\pm}(t))^{-1}\mathsf{E}_{\mu}^H\mathbf{U}^{\varepsilon}_{1,\pm}(t)^{-1}\\& -M^{\varepsilon}_2(t)\mathbf{U}^{\varepsilon}_{1,\pm}(t)^{-1}\Delta\mathbf{U}^{cd}_{1,\pm}(t)(I_{\mu}+\mathsf{E}_{\mu}^H\mathbf{U}^{\varepsilon}_{1,\pm}(t)^{-1}\Delta\mathbf{U}^{cd}_{1,\pm}(t))^{-1}\mathsf{E}_{\mu}^H\mathbf{U}^{\varepsilon}_{1,\pm}(t)^{-1} +O(t^{-1})\\
=&\mathbf{U}_{2,\pm}\mathbf{U}_{1,\pm}^{-1}+[\Delta\mathbf{U}^{cd}_{2,\pm}(t)-\mathbf{U}_{2,\pm} \mathbf{U}_{1,\pm}^{-1}\Delta\mathbf{U}^{cd}_{1,\pm}(t)+O(t^{-1})]\\& [I_{\mu}+\mathsf{E}_{\mu}^H\mathbf{U}_{1,\pm}^{-1}\Delta\mathbf{U}^{cd}_{1,\pm}(t)+O(t^{-1})]^{-1}[\mathsf{E}_{\mu}^H\mathbf{U}_{1,\pm}^{-1}+O(t^{-1})]+O(t^{-1}),
\end{align*}
as $t\rightarrow \pm\infty$.

From \eqref{eq5.16} we have $Q(t)^{-1}=\mathbf{Z}_{\pm}(t)(I_{n-\mu}\oplus\Sigma_{\gamma}^{-1})(\mathbf{U}^{\varepsilon}_{1,\pm}(t)+ \Delta\mathbf{U}_{1,\pm}(t))^{-1}$ as $ t\rightarrow \pm\infty$. Using \eqref{eq4.74}, there exists matrix function, $M_3^{\varepsilon}(t)=O(t^{-1})$ as $t\rightarrow \pm\infty$, such that $\mathbf{Z}_{\pm}(t)=\mathfrak{Z}_{cd}^{\pm}\mathsf{E}_{\mu}^H+M_3^{\varepsilon}(t)$.
It follows from \eqref{eq5.18} and \eqref{eq5.19} that
\begin{align*}
Q(t)^{-1}&=(\mathfrak{Z}_{cd}^{\pm}\mathsf{E}_{\mu}^H+M_3^{\varepsilon}(t))(I_{n-\mu}\oplus\Sigma_{\gamma}^{-1})(\mathbf{U}^{\varepsilon}_{1,\pm}(t)+ \Delta\mathbf{U}_{1,\pm}(t))^{-1}\\
=&\mathfrak{Z}_{cd}^{\pm}\left([0,\Sigma_{\gamma}^{-1}]\mathbf{U}_{1,\pm}^{{\varepsilon}}(t)^{-1}\right)-\mathfrak{Z}_{cd}^{\pm}\left([0,\Sigma_{\gamma}^{-1}]\mathbf{U}_{1,\pm}^{{\varepsilon}}(t)^{-1}\Delta\mathbf{U}^{cd}_{1,\pm}(t)\right)\\&\left[I_{\mu}+\mathsf{E}_{\mu}^H\mathbf{U}_{1,\pm}^{{\varepsilon}}(t)^{-1}\Delta\mathbf{U}^{cd}_{1,\pm}(t)\right]^{-1}\left(\mathsf{E}_{\mu}^H\mathbf{U}_{1,\pm}^{{\varepsilon}}(t)^{-1}\right)+M_3^{\varepsilon}(t)(\mathbf{U}^{\varepsilon}_{1,\pm}(t)+ \Delta\mathbf{U}_{1,\pm}(t))^{-1}\\
=&\mathfrak{Z}_{cd}^{\pm}\Sigma_{\gamma}^{-1}\left[I_{\mu}-\mathsf{E}_{\mu}^H\mathbf{U}_{1,\pm}^{{\varepsilon}}(t)^{-1}\Delta\mathbf{U}^{cd}_{1,\pm}(t)\left(I_{\mu}+\mathsf{E}_{\mu}^H\mathbf{U}_{1,\pm}^{{\varepsilon}}(t)^{-1}\Delta\mathbf{U}^{cd}_{1,\pm}(t)\right)^{-1}\right]\\&\left(\mathsf{E}_{\mu}^H\mathbf{U}_{1,\pm}^{{\varepsilon}}(t)^{-1}\right) +M_3^{\varepsilon}(t)(\mathbf{U}^{\varepsilon}_{1,\pm}(t)+ \Delta\mathbf{U}_{1,\pm}(t))^{-1}\\
=&[\mathfrak{Z}_{cd}^{\pm}\Sigma_{\gamma}^{-1}+O(t^{-1})]\left[I_{\mu}+\mathsf{E}_{\mu}^H\mathbf{U}_{1,\pm}^{\varepsilon}(t)^{-1}\Delta\mathbf{U}^{cd}_{1,\pm}(t)\right]^{-1}\left(\mathsf{E}_{\mu}^H\mathbf{U}_{1,\pm}^{{\varepsilon}}(t)^{-1}\right)+O(t^{-1})\\
=&[\mathfrak{Z}_{cd}^{\pm}\Sigma_{\gamma}^{-1}+O(t^{-1})]\left[I_{\mu}+\mathsf{E}_{\mu}^H\mathbf{U}_{1,\pm}^{-1}\Delta\mathbf{U}^{cd}_{1,\pm}(t)+O(t^{-1})\right]^{-1}\mathsf{E}_{\mu}^H\left[\mathbf{U}_{1,\pm}^{-1}+O(t^{-1})\right]\\& +O(t^{-1}),
\end{align*}
as $t\rightarrow\pm \infty$, where $\mathfrak{Z}_{cd}^{\pm}\in \mathbb{C}^{n\times \mu}$ is defined in \eqref{eq4.71}. This completes the proof.
\end{proof}

\addcontentsline{toc}{section}{Bibliography}
\bibliographystyle{plain}
\bibliography{flow}

\begin{thebibliography}{10}

\bibitem{Abou-Kandil03}
H.~Abou-Kandil, G.~Freiling, V.~Ionescu, and G.~Jank.
\newblock {\em {Matrix Riccati equations: in control and systems theory}}.
\newblock Birkhauser, 2003.

\bibitem{Ammar199155}
G.~Ammar and V.~Mehrmann.
\newblock On {Hamiltonian} and symplectic {Hessenberg} forms.
\newblock {\em Linear Algebra Appl.}, 149:55 -- 72, 1991.

\bibitem{Anderson90}
W.~N. Anderson, T.~D. Morley, , and G.~E. Trapp.
\newblock Positive solutions to {$X=A-BX^{-1}B^*$}.
\newblock {\em Linear Algebra Appl.}, 134:53--62, 1990.

\bibitem{Ascher_Mattheij_Russell:1988}
U.~M. Ascher, R.~M. Mattheij, and R.~D. Russell.
\newblock {\em Numerical solution of boundary value problems for ordinary
  differential equations}.
\newblock Prentice-Hall, 1988.

\bibitem{Babuska_Majer:1987}
I.~Babu\v{s}ka and V.~Majer.
\newblock The factorization method for the numerical solution of two point
  boundary value problems for linear {ODE's}.
\newblock {\em SIAM J. Num. Anal.}, 24(6):1301--1334, 1987.

\bibitem{Bai_Demmel_Gu1997}
Z.~Bai, J.~Demmel, and M.~Gu.
\newblock An inverse free parallel spectral divide and conquer algorithm for
  nonsymmetric eigenproblems.
\newblock {\em Numerische Mathematik}, 76(3):279--308, 1997.

\bibitem{Benner:1997}
P.~Benner.
\newblock {\em Contributions to the numerical solution of algebraic {Riccati}
  equations and related eigenvalue problems}.
\newblock Verlag Berlin, 1997.

\bibitem{Benner_Byers:2001}
P.~Benner and R.~Byers.
\newblock Evaluating products of matrix pencils and collapsing matrix products.
\newblock {\em Numer. Linear Algebra}, 8(6-7):357--380, 2001.

\bibitem{Callier_Willems:1981}
F.~Callier and J.~Willems.
\newblock Criterion for the convergence of the solution of the {Riccati}
  differential equation.
\newblock {\em IEEE Trans. Automat. Control}, 26(6):1232--1242, 1981.

\bibitem{Callier:1995}
F.M. Callier and J.~Winkin.
\newblock Convergence of the time-invariant {Riccati} differential equation
  towards its strong solution for stabilizable systems.
\newblock {\em J. Math. Anal. Appl.}, 192(1):230 -- 257, 1995.

\bibitem{CALLIER_WINKIN_WILLEMS:1994}
F.M Callier, J.~Winkin, and J.L. Willems.
\newblock Convergence of the time-invariant {Riccati} differential equation and
  {LQ}-problem: mechanisms of attraction.
\newblock {\em Int. J. Control}, 59(4):983--1000, 1994.

\bibitem{ccghlx09}
C.Y. Chiang, E.~K.-W. Chu, C.H. Guo, T.M. Huang, W.W. Lin, and S.F. Xu.
\newblock Convergence analysis of the doubling algorithm for several nonlinear
  matrix equations in the critical case.
\newblock {\em SIAM J. Matrix Anal. Appl.}, 31:227--247, 2009.

\bibitem{Choi_Laub:1990}
C.H. Choi and AJ. Laub.
\newblock Efficient matrix-valued algorithms for solving stiff {Riccati}
  differential equations.
\newblock {\em IEEE Trans. Automat. Control}, 35(7):770--776, 1990.

\bibitem{Chu_Fan_Lin_Wang:2004}
E.~K.-W. Chu, H.-Y. Fan, W.-W. Lin, and C.-S. Wang.
\newblock Structure-preserving algorithms for periodic discrete-time algebraic
  {Riccati} equations.
\newblock {\em Int. J. Control.}, 77(8):767--788, 2004.

\bibitem{Chu:1984}
M.T. Chu.
\newblock The generalized {Toda} flow, the {QR} algorithm and the center
  manifold theory.
\newblock {\em SIAM J. Alg. Discrete Meth.}, 5(2):187--201, 1984.

\bibitem{Chu:1984c}
M.T. Chu.
\newblock On the global convergence of the {Toda} lattice for real normal
  matrices and its applications to the eigenvalue problem.
\newblock {\em SIAM J. Matrix Anal. Appl.}, 15(1):98--104, 1984.

\bibitem{Chu:1985}
M.T. Chu.
\newblock Asymptotic analysis of {Toda} lattice on diagonalizable matrices.
\newblock {\em Nonlinear Anal-Theor}, 9(2):193 -- 201, 1985.

\bibitem{Chu:1995}
M.T. Chu.
\newblock Scaled {Toda-like} flows.
\newblock {\em Linear Algebra Appl.}, 215:261 -- 273, 1995.

\bibitem{Chu:2008}
M.T. Chu.
\newblock Linear algebra algorithms as dynamical systems.
\newblock {\em Acta Numerica}, 17:1--86, 5 2008.

\bibitem{Davison_Maki:1973}
E.J. Davison and M.~Maki.
\newblock The numerical solution of the matrix {Riccati} differential equation.
\newblock {\em IEEE Trans. Automat. Control}, 18(1):71--73, 1973.

\bibitem{DeNICOLAO:1992}
G.~de~Nicolao.
\newblock On the convergence to the strong solution of periodic {Riccati}
  equations.
\newblock {\em Int. J. Control}, 56(1):87--97, 1992.

\bibitem{Dieci:1992}
L.~Dieci.
\newblock Numerical integration of the differential {Riccati} equation and some
  related issues.
\newblock {\em SIAM J. Num. Anal.}, 29(3):781--815, 1992.

\bibitem{Dieci98}
L.~Dieci.
\newblock Real {Hamiltonian} logarithm of a symplectic matrix.
\newblock {\em Linear Algebra Appl.}, 281:227--246, 1998.

\bibitem{Dieci_Osborne_Russell:1988}
L.~Dieci, M.~Osborne, and R.~Russell.
\newblock A {Riccati} transformation method for solving linear {BVPs. I}:
  Theoretical aspects.
\newblock {\em SIAM J. Num. Anal.}, 25(5):1055--1073, 1988.

\bibitem{Dieci_Osborne_Russell:1988b}
L.~Dieci, M.~Osborne, and R.~Russell.
\newblock A {Riccati} transformation method for solving linear {BVPs. II}:
  Computational aspects.
\newblock {\em SIAM J. Num. Anal.}, 25(5):1074--1092, 1988.

\bibitem{Engwerda:1993}
J.~C. Engwerda.
\newblock On the existence of a positive definite solution of the matrix
  equation {$X+A^TX^{-1}A = I$}.
\newblock {\em Linear Algebra Appl.}, 194:91--108, 1993.

\bibitem{err93}
J.~C. Engwerda, A.~C.~M. Ran, and A.~L. Rijkeboer.
\newblock Necessary and sufficient conditions for the existence of a positive
  definite solution of the matrix equation {$X + A^* X^{-1}A = Q$}.
\newblock {\em Linear Algebra Appl.}, 186:255--275, 1993.

\bibitem{Ferrante1996359}
A.~Ferrante and B.~C. Levy.
\newblock Hermitian solutions of the equation {$X = Q + NX^{-1}N^*$}.
\newblock {\em Linear Algebra Appl.}, 247(0):359 -- 373, 1996.

\bibitem{Francis:1987}
B.A. Francis.
\newblock {\em A course in $H_{\infty}$ control theory, Lecture Notes in
  Control and Information Science}, volume~88.
\newblock Springer, Heidelberg, 1987.

\bibitem{Freiling:2002}
G.~Freiling.
\newblock A survey of nonsymmetric {Riccati} equations.
\newblock {\em Linear Algebra Appl.}, 351--352(0):243 -- 270, 2002.
\newblock Fourth Special Issue on Linear Systems and Control.

\bibitem{Freiling_Hochhaus:2002}
G.~Freiling and A.~Hochhaus.
\newblock Convergence and existence results for continuous- and discrete-time
  {Riccati} equations.
\newblock {\em Results Math.}, 42(3-4):252--276, 2002.

\bibitem{Freiling_Jank:1991}
G.~Freiling and G.~Jank.
\newblock Matrix {Riccati} equations.
\newblock {\em Schriftenreihe FB Math. der Universit\"{a}t-GH-Duisburg}, 198,
  1991.

\bibitem{Freiling_Jank:1995}
G.~Freiling and G.~Jank.
\newblock Non-symmetric {Riccati} equations.
\newblock {\em Z. Anal. Anwendungen}, 14:259--284, 1995.

\bibitem{Gantmacher59}
F.~R. Gantmacher.
\newblock {\em The Theory of Matrices}.
\newblock Chelsea, New York, 1959.

\bibitem{Geerts93}
T.~Geerts.
\newblock Solvability conditions, consistency, and weak consistency for linear
  differential algebraic equations and time-invariant linear systems: The
  general case.
\newblock {\em Linear Algebra Appl.}, 181:111--130, 1993.

\bibitem{Gudmundsson:1992}
T.~Gudmundsson, C.~Kenney, and A.~J. Laub.
\newblock Scaling of the discrete-time algebraic {Riccati} equation to enhance
  stability of the {Schur} solution method.
\newblock {\em IEEE Trans. Automat. Control}, 37(4):513--518, 1992.

\bibitem{Guo:1998}
C.~H. Guo.
\newblock {Newton's} method for discrete algebraic {Riccati} equations when the
  closed-loop matrix has eigenvalues on the unit circle.
\newblock {\em SIAM. J. Matrix Anal. Appl.}, 20(2):279--294, 1998.

\bibitem{Guo:2001}
C.~H. Guo.
\newblock Convergence rate of an iterative method for a nonlinear matrix
  equation.
\newblock {\em SIAM. J. Matrix Anal. Appl.}, 23(1):295--302, 2001.

\bibitem{gula99}
C.~H. Guo and P.~Lancaster.
\newblock Iterative solution of two matrix equations.
\newblock {\em Math. Comput.}, 68:1589--1603, 1999.

\bibitem{GuoLin_Sci:2010}
C.~H. Guo and W.~W. Lin.
\newblock The matrix equation {$X + A^T X^{-1} A = Q$} and its application in
  nano research.
\newblock {\em SIAM J. Sci. Comput.}, 32:3020--3038, 2010.

\bibitem{GuoLin:2010}
C.~H. Guo and W.~W. Lin.
\newblock Solving a structured quadratic eigenvalue problem by a
  structure-preserving doubling algorithm.
\newblock {\em SIAM J. Matrix Anal. Appl.}, 31:2784--2801, 2010.

\bibitem{Hench_Laub:1994}
J.J. Hench and A.J. Laub.
\newblock Numerical solution of the discrete-time periodic {Riccati} equation.
\newblock {\em IEEE Trans. Automat. Control}, 39(6):1197--1210, 1994.

\bibitem{Higham08}
N.~J. Higham.
\newblock {\em Functions of matrices: {T}heory and {C}omputation}.
\newblock SIAM, 2008.

\bibitem{Hirsch_Smale:1974}
M.~Hirsch and S.~Smale.
\newblock {\em Differential Equations, Dynamical Systems, and Linear Algebra}.
\newblock Academic Press, 1974.

\bibitem{Horn91}
R.~A. Horn and C.~R. Johnson.
\newblock {\em Topics in Matrix Analysis}.
\newblock Cambridge University Press, 1991.

\bibitem{HuangLin:2009}
T.~M. Huang and W.~W. Lin.
\newblock {Structured doubling algorithms for weakly stabilizing Hermitian
  solutions of algebraic Riccati equations}.
\newblock {\em Linear Algebra Appl.}, 430:1452 -- 1478, 2009.

\bibitem{Kenney_Leipnik:1985}
C.S. Kenney and R.~Leipnik.
\newblock Numerical integration of the differential matrix {Riccati} equation.
\newblock {\em IEEE Trans. Automat. Control}, 30(10):962--970, 1985.

\bibitem{Kimura:1988}
M.~Kimura.
\newblock Convergence of the doubling algorithm for the discrete-time algebraic
  {Riccati} equation.
\newblock {\em Int. J. Syst. Sci.}, 19(5):701--711, 1988.

\bibitem{Kuo12}
Y.C. Kuo and S.F. Shieh.
\newblock A structure-preserving curve for symplectic pairs and its
  applications.
\newblock {\em SIAM. J. Matrix Anal. Appl.}, 33:597--616, 2012.

\bibitem{Kwakernaak_Sivan:1972}
H.~Kwakernaak and R.~Sivan.
\newblock {\em Linear Optimal Control Systems}.
\newblock Wiley-Interscience, 1972.

\bibitem{Lainiotis:1976}
D.G. Lainiotis.
\newblock Generalized {Chandrasekhar} algorithms: Time-varying models.
\newblock {\em IEEE Trans. Automat. Control}, 21(5):728--732, 1976.

\bibitem{Lainiotis:1976b}
D.G. Lainiotis.
\newblock Partitioned {Ricatti} solutions and integration-free doubling
  algorithms.
\newblock {\em IEEE Trans. Automat. Control}, 21(5):677--689, 1976.

\bibitem{Lainiotis1994}
D.G. Lainiotis, N.D. Assimakis, and S.K. Katsikas.
\newblock New doubling algorithm for the discrete periodic {Riccati} equation.
\newblock {\em Applied Mathematics and Computation}, 60(2--3):265 -- 283, 1994.

\bibitem{Lancaster95}
P.~Lancaster and L.~Rodman.
\newblock {\em Algebraic {Riccati} Equations}.
\newblock Oxford University Press, Oxford, 1995.

\bibitem{Laub:1979}
AJ. Laub.
\newblock A schur method for solving algebraic {Riccati} equations.
\newblock {\em IEEE Trans. Automat. Control}, 24(6):913--921, 1979.

\bibitem{Laub:1982}
A.J. Laub.
\newblock {Schur} techniques for riccati differential equations.
\newblock In D.~Hinrichsen and A.~Isidori, editors, {\em Feedback Control of
  Linear and Nonlinear Systems}, volume~39 of {\em Lecture Notes in Control and
  Information Sciences}, pages 165--174. Springer Berlin Heidelberg, 1982.

\bibitem{Li:2012}
R.C. Li and W.~Kahan.
\newblock A family of anadromic numerical methods for matrix {R}iccati
  differential equations.
\newblock {\em Math. Comput.}, 81(277):233--265, 2012.

\bibitem{Lin1999469}
W.W. Lin, V.~Mehrmann, and H.~Xu.
\newblock Canonical forms for hamiltonian and symplectic matrices and pencils.
\newblock {\em Linear Algebra Appl.}, 302--303(0):469 -- 533, 1999.

\bibitem{lin06}
W.W. Lin and S.F. Xu.
\newblock Convergence analysis of structure-preserving doubling algorithms for
  {R}iccati-type matrix equations.
\newblock {\em SIAM J. Matrix Anal. Appl.}, 28:26--39, 2006.

\bibitem{Lu_Lin_Pearce:1999}
L.-Z. Lu, W.-W. Lin, and C.~E.~M. Pearce.
\newblock An efficient algorithm for the discrete-time algebraic {Riccati}
  equation.
\newblock {\em IEEE Trans. Automat. Control}, 44(6):1216--1220, 1999.

\bibitem{Lu_Lin1993}
L.~Z. Lu and W.W. Lin.
\newblock An iterative algorithm for the solution of the discrete-time
  algebraic {Riccati} equation.
\newblock {\em Linear Algebra Appl.}, 188-189(0):465 -- 488, 1993.

\bibitem{Mehrmann91}
V.L. Mehrmann.
\newblock {\em The Autonomous Linear Quadratic Control Problem: Theory and
  Numerical Solution}, volume 163 of {\em Lecture Notes in Control and Inform.
  Sci.}
\newblock Springer-Verlag, Berlin, 1991.

\bibitem{Mehrmann12}
V.L. Mehrmann and F.~Poloni.
\newblock Doubling algorithms with permuted {Lagrangian} graph bases.
\newblock {\em SIAM. J. Matrix Anal. Appl.}, 33:780--805, 2012.

\bibitem{Mehrmann1996}
Volker Mehrmann.
\newblock A step toward a unified treatment of continuous and discrete time
  control problems.
\newblock {\em Linear Algebra Appl.}, 241-243(0):749 -- 779, 1996.
\newblock Proceedings of the Fourth Conference of the International Linear
  Algebra Society.

\bibitem{Paige1981}
C.~Paige and C.~Van Loan.
\newblock A {Schur} decomposition for {Hamiltonian} matrices.
\newblock {\em Linear Algebra Appl.}, 41(0):11 -- 32, 1981.

\bibitem{Park_Kailath:1997}
P.G. Park and T.~Kailath.
\newblock Convergence of the {DRE} solution to the {ARE} strong solution.
\newblock {\em IEEE Trans. Automat. Control}, 42(4):573--578, 1997.

\bibitem{Radon27}
J.~Radon.
\newblock \"{U}ber die oszillationstheoreme der kunjugierten punkte beim
  probleme {von Lagrange}.
\newblock {\em M\"{u}nchener Sitzungsberichte}, 57:243-- 257, 1927.

\bibitem{Reid:1970}
W.~Reid.
\newblock Monotoneity properties of solutions of hermitian {Riccati} matrix
  differential equations.
\newblock {\em SIAM J. Math. Anal.}, 1(2):195--213, 1970.

\bibitem{Symes:1982}
W.~W. Symes.
\newblock The {QR} algorithm and scattering for the finite nonperiodic {Toda}
  lattice.
\newblock {\em Physica D: Nonlinear Phenomena}, 4(2):275--280, 1982.

\bibitem{VanDooren1981}
P.~Van~Dooren.
\newblock A generalized eigenvalue approach for solving {Riccati} equations.
\newblock {\em SIAM J. Sci. Comput.}, 2(2):121--135, 1981.

\end{thebibliography}

\end{document}